\documentclass[ twoside,openright,titlepage,numbers=noenddot,headinclude,liststotoc, bibtotoc
                footinclude=true,cleardoublepage=empty,abstractoff, 
                BCOR=5mm,paper=a4,fontsize=11pt,
                ngerman,american%
                ]{scrreprt}
           

\PassOptionsToPackage{eulerchapternumbers,
				listings,
				 pdfspacing, floatperchapter,
				 subfig,beramono,
				 eulermath,parts}{classicthesis}										

\usepackage{ifthen}
\usepackage[]{currvita}
\newboolean{enable-backrefs} 
\setboolean{enable-backrefs}{false} 

\newcommand{\myTitle}{The Baum-Connes conjecture, Swan group actions and controlled topology}

\newcommand{\myName}{Fabian Lenhardt\xspace}

\newcommand{\myFaculty}{Fachbereich Mathematik und Informatik \xspace}

\newcommand{\myUni}{Freie Universit\"at Berlin\xspace}

\newcounter{dummy} 
\providecommand{\mLyX}{L\kern-.1667em\lower.25em\hbox{Y}\kern-.125emX\@}


\PassOptionsToPackage{latin9}{inputenc}	
 \usepackage{inputenc}				

 \usepackage{babel}					

\PassOptionsToPackage{square,numbers}{natbib}
 \usepackage{natbib}				

\PassOptionsToPackage{fleqn}{amsmath}		
 \usepackage{amsmath}

\PassOptionsToPackage{T1}{fontenc} 
	\usepackage{fontenc}                 
\usepackage{xspace} 
\usepackage{mparhack} 
\usepackage{fixltx2e} 
\PassOptionsToPackage{printonlyused,smaller}{acronym}
	\usepackage{acronym} 

\usepackage{tabularx} 
	\setlength{\extrarowheight}{3pt} 

\usepackage{caption}
\captionsetup{format=hang,font=small}
\usepackage{subfig}  

\usepackage{listings} 
\lstset{language=[LaTeX]Tex,
    keywordstyle=\color{RoyalBlue},
    basicstyle=\small\ttfamily,
    commentstyle=\color{Green}\ttfamily,
    stringstyle=\rmfamily,
    numbers=none,
    numberstyle=\scriptsize,
    stepnumber=5,
    numbersep=8pt,
    showstringspaces=false,
    breaklines=true,
    frameround=ftff,
    frame=single,
    belowcaptionskip=.75\baselineskip
} 

\PassOptionsToPackage{pdftex,hyperfootnotes=false,pdfpagelabels}{hyperref}
	\usepackage{hyperref}  
\pdfcompresslevel=9
\pdfadjustspacing=1 
\PassOptionsToPackage{pdftex}{graphicx}
	\usepackage{graphicx} 

\newcommand{\backrefnotcitedstring}{\relax}
\newcommand{\backrefcitedsinglestring}[1]{(Cited on page~#1.)}
\newcommand{\backrefcitedmultistring}[1]{(Cited on pages~#1.)}
\ifthenelse{\boolean{enable-backrefs}}%
{%
		\PassOptionsToPackage{hyperpageref}{backref}
		\usepackage{backref} 
		   \renewcommand*{\backref}[1]{}  
		   \renewcommand*{\backrefalt}[4]{
		      \ifcase #1 %
		         \backrefnotcitedstring%
		      \or%
		         \backrefcitedsinglestring{#2}%
		      \else%
		         \backrefcitedmultistring{#2}%
		      \fi}%
}{\relax}    

\hypersetup{%
    colorlinks=true, linktocpage=true, pdfstartpage=3, pdfstartview=FitV,%
    breaklinks=true, pdfpagemode=UseNone, pageanchor=true, pdfpagemode=UseOutlines,%
    plainpages=false, bookmarksnumbered, bookmarksopen=true, bookmarksopenlevel=1,%
    hypertexnames=true, pdfhighlight=/O,
    urlcolor=webbrown, linkcolor=RoyalBlue, citecolor=webgreen, 
    pdftitle={\myTitle},%
    pdfauthor={\textcopyright\ \myName, \myUni, \myFaculty},%
    pdfsubject={},%
    pdfkeywords={},%
    pdfcreator={pdfLaTeX},%
    pdfproducer={LaTeX with hyperref and classicthesis}%
}   

\makeatletter
\@ifpackageloaded{babel}%
    {%
       \addto\extrasamerican{%
				}%
       \addto\extrasngerman{%
				}%
			%
    }{\relax}
\makeatother

\listfiles

\usepackage{classicthesis} 
\usepackage{amsthm}
\usepackage{amssymb}
\usepackage{amscd}
\usepackage{amsfonts}  
\usepackage{makeidx}
\usepackage{hyperref}
\usepackage[all]{xy}
\usepackage{makeidx}     
\usepackage{graphicx}    
\usepackage{multicol}    
\usepackage{xspace}
\usepackage{ifthen}
\usepackage{verbatim}
\usepackage{xkeyval}
\usepackage{fixme}
\usepackage{url}
\usepackage{color}
\usepackage{titlesec}
\textheight 220mm
\topmargin -20mm

\DeclareMathAlphabet\EuR{U}{eur}{m}{n}
\SetMathAlphabet\EuR{bold}{U}{eur}{b}{n}



\newcommand{\cala}{{\cal A}}
\newcommand{\calb}{{\cal B}}

\newcommand{\cald}{{\cal D}}
\newcommand{\cale}{{\cal E}}
\newcommand{\calf}{{\cal F}}
\newcommand{\calg}{{\cal G}}
\newcommand{\calh}{{\cal H}}

\newcommand{\calo}{{\cal O}}
\newcommand{\calp}{{\cal P}}

\newcommand{\calv}{{\cal V}}


\newcommand{\IC}{{\mathbb C}}

\newcommand{\IK}{{\mathbb K}}

\newcommand{\IN}{{\mathbb N}}

\newcommand{\IQ}{{\mathbb Q}} 
\newcommand{\IR}{{\mathbb R}}

\newcommand{\IZ}{{\mathbb Z}}



\newcommand{\alg}{\operatorname{alg}}

\newcommand{\coker}{\operatorname{coker}}
\newcommand{\colim}{\operatorname{colim}}

\newcommand{\hocolim}{\operatorname{hocolim}}

\newcommand{\id}{\operatorname{id}}
\newcommand{\im}{\operatorname{im}}
\newcommand{\ind}{\operatorname{ind}}
\newcommand{\inj}{\operatorname{inj}}

\renewcommand{\Im}{\operatorname{im}}

\newcommand{\map}{\operatorname{map}}
\newcommand{\Map}{\operatorname{Map}}

\newcommand{\res}{\operatorname{res}}

\newcommand{\Sw}{\operatorname{Sw}}

\newcommand{\tr}{\operatorname{tr}}

\newcommand{\End}{\operatorname{End}}

\newcommand{\Id}{\operatorname{Id}}

\newcommand{\C}{\mathcal{C}}
\newcommand{\D}{\mathcal{D}}
\newcommand{\Idem}{\operatorname{Idem}}

\newcommand{\Hom}{\operatorname{Hom}}
\newcommand{\supp}{\operatorname{supp}}

\newcommand{\Mod}{\operatorname{mod}}
\newcommand{\Aut}{\operatorname{Aut}}

\newcommand{\Or}{\operatorname{Or}}
\newcommand{\Grp}{\operatorname{GROUPOIDS}}
\newcommand{\Sp}{\operatorname{SPECTRA}}
\newcommand{\Top}{\operatorname{TOP}}
\newcommand{\minus}{\operatorname{-}}
\newcommand{\ev}{\operatorname{ev}}
\newcommand{\euc}{\operatorname{euc}}
\newcommand{\fin}{\mathcal{FIN}}


\newtheorem{lem}{Lemma}[section]

\newtheorem{thm}[lem]{Theorem}
\newtheorem*{thm*}{Theorem}
\newtheorem{prop}[lem]{Proposition}

\theoremstyle{remark}
\newtheorem{rem}[lem]{Remark}
\newtheorem{example}[lem]{Example}

\theoremstyle{definition}
\newtheorem{definition}[lem]{Definition}
\newtheorem*{definition*}{Definition}




\title{The Baum-Connes conjecture, Swan group actions and controlled topology}
\author{Fabian Lenhardt}

\begin{document}
\setlength\parindent{0pt}
\setcounter{chapter}{-1}
\frenchspacing
\raggedbottom
\selectlanguage{american} 
\pagestyle{plain}


\begin{titlepage}
	\begin{addmargin}[-1cm]{-3cm}
    \begin{center}
        \large  

        \hfill

        \vfill

        \begingroup
          \spacedallcaps{\myTitle} \\ \bigskip
        \endgroup
        
        \vspace{3cm}
        \begingroup
        \textbf{Inauguraldissertation}
        \endgroup
	
	\begingroup
	zur Erlangung des Doktorgrades (Dr. rer. nat.)
	\endgroup
        
        \vspace{1cm}
        
        \begingroup
        am Fachbereich Mathematik und Informatik \\
        der Freien Universit\"at Berlin
        \endgroup
        
        \vspace{1cm}
        \begingroup
        vorgelegt von \\
        \endgroup
        
        \spacedlowsmallcaps{\myName}
	
	\vspace{3 cm}
	
	\begingroup
	Berlin 2013
	\endgroup

        \vfill                      

    \end{center}  
  \end{addmargin}       
\end{titlepage}   

\thispagestyle{empty}
\refstepcounter{dummy}

\vspace*{16cm}

\vspace{10ex}

Erstgutachter: Professor Holger Reich \\
Zweitgutachter: Professor Wolfgang L\"uck \\

Tag der Disputation: 15.01.2013



\begin{flushright}{\slshape    
  Seek simplicity, and distrust it.
} \\
---\emph{A.N. Whitehead}
\end{flushright}

\bigskip

\begingroup
\let\clearpage\relax
\let\cleardoublepage\relax
\let\cleardoublepage\relax
\chapter*{Acknowledgements}
First and foremost, I would like to thank my advisor Holger Reich for suggesting the topic of this thesis and for many fruitful discussions, and for answering many questions. \\
Furthermore, I would like to thank the Graduiertenkolleg 1150 "Homotopy and Cohomology" for their financial support and the lectures and seminars they organized. After moving to Berlin, the Berlin Mathematical School provided both financial and organizational support, for which I am grateful. \\
I would also like to thank Mark Ullmann for many mathematical discussions, and Ferit Deniz, Henrik R\"uping and Wolfgang Steimle for their participation in a joint seminar on the Farrell-Jones conjecture.
\endgroup

\chapter{Preface}

In geometric topology, the algebraic $K$- and $L$-groups of the group ring $\IZ \pi_{1}X$ of the fundamental group of some space $X$ play a fundamental role as the domain of many obstructions. For example, the $s$-cobordism theorem tells us that all $h$-cobordisms over a given manifold $M$ are trivial if and only if the Whitehead group of $\pi_{1}M$, a quotient of $K_{1}(\IZ \pi_{1}(M))$, vanishes. \\
Unfortunately, the $K$-and $L$-groups of group rings are extremely hard to compute. The Farrell-Jones conjecture is a structured approach to obtain information about these groups. In its simplest form for algebraic $K$-theory, it predicts that for torsion-free groups $G$, the so-called assembly map is an isomorphism
\[
H_{*}(BG, \IK^{-\infty}\IZ) \rightarrow K_{*}(\IZ G)
\]
where $ \IK^{-\infty} \IZ$ is the $K$-theory spectrum of $\IZ$ and $H_{*}(-; \IK^{-\infty} \IZ)$ is the associated homology theory. The left-hand side is quite accessible: Once one knows the group homology of $G$ and some $K$-groups of $\IZ$, one can attack it with the Atiyah-Hirzebruch spectral sequence. \\
This simple form of the Farrell-Jones conjecture already fails when $G$ is finite, or infinite cyclic when one also considers group rings $RG$ over an arbitrary ring $R$ due to nil group phenomena. We will now explain the general form of the Farrell-Jones conjecture with coefficients in a ring, for which no counterexample is known. Associated to $R$, there is a $G$-homology theory $H_{*}^{G}(-, \IK R)$ such that
\[
H_{*}^{G}(*, \IK R) \cong K_{*}(RG)
\] 
The simple version of the Farrell-Jones conjecture is now obtained by plugging the map $EG \rightarrow pt$ into the homology theory to obtain the assembly map
\[
H_{*}(BG, \IK R)  \cong H_{*}^{G}(EG, \IK R)  \rightarrow  H_{*}^{G}(*, \IK^{-\infty} R) = K_{*}(RG)
\]
where $\IK^{-\infty} R$ is the non-connective $K$-theory spectrum of the ring $R$. We refer the reader to \cite{KThandbook} for further information on $G$-homology theories. Since the simple version fails for some groups, we have to account for these failures in general. This is encoded into the conjecture by considering not the space $EG$, which classifies free actions, but the space $E_{\mathcal{VCYC}}G$ which classifies $G$-actions with virtually cyclic isotropy. So the Farrell-Jones assembly map is the map
\[
 H_{*}^{G}(E_{\mathcal{VCYC}}G, \IK^{-\infty} R)  \rightarrow  H_{*}^{G}(*, \IK R) \cong K_{*}(RG)
\]
which is predicted to be an isomorphism.  In some sense, one pushes the problems with finite and infinite cyclic groups into the left-hand side of the conjecture.  The left-hand side is now more complicated, but still accessible to Atiyah-Hirzebruch spectral sequence type methods, and it has the advantage that now the conjecture stands a chance to be true. We refer the reader to \cite{KThandbook} for an overview over the Farrell-Jones conjecture. \\
In the world of $C^{*}$-algebras, the Baum-Connes conjecture is formally very similar to the Farrell-Jones conjecture. It can be formulated as follows. Associated to a $C^{*}$-algebra $A$, there is an equivariant homology theory $H_{*}^{G}(-, KA)$ such that
\[
H_{*}^{G}(*, KA) = K^{\mathcal{TOP}}_{*}(C^{*}_{r}A)
\]
where $C^{*}_{r}A$ is the reduced group $C^{*}$-algebra of $G$, a certain completion of the ring $AG$ to a $C^{*}$-algebra, and $K^{\mathcal{TOP}}$ denotes the topological $K$-theory of a $C^{*}$-algebra. Then the Baum-Connes conjecture predicts that the assembly map 
\[
H_{*}^{G}(E_{\mathcal{VCYC}}G, KA)  \rightarrow  H_{*}^{G}(*, KA) = K_{*}(C^{*}_{r}A)
\]
is an isomorphism. Typically, the Baum-Connes conjecture is formulated with the classifying space $E_{\mathcal{FIN}}G$ of proper $G$-actions, but since topological $K$-theory is better behaved than algebraic $K$-theory and in particular has no nil group phenomena, this does not make a difference. For general information on the Baum-Connes conjecture, we refer the reader to \cite{KThandbook} or \cite{valette}.\\
Both the Farrell-Jones and Baum-Connes conjecture are known for large classes of groups. The proofs of the Baum-Connes conjecture rely on equivariant $KK$-theory, see the original \cite{Kasparov} and, for example, \cite{Blackadar} and \cite{kkprimer}, and the Dirac-dual Dirac method, see for example \cite{trout}. Actual proofs of the conjecture are contained in \cite{haagerup} for groups with the Haagerup property, in \cite{mineyev} for hyperbolic groups and in \cite{lafforgue} for certain groups which have property $T$. \\
In recent years, there has been a lot of progress on the Farrell-Jones conjecture based on the geodesic flow techniques developed in \cite{kd1} and \cite{kd2}, starting with \cite{BFJR} and \cite{BR} for fundamental groups of closed Riemannian manifolds with negative sectional curvature. In \cite{BLR} and \cite{hypercover}, this is extended to hyperbolic groups, and in \cite{CAT0} and \cite{catflow} to groups acting cocompactly by isometries on a $CAT(0)$-spaces, at least for lower $K$-theory. The result is extended to all $K$-groups in \cite{wegner}. A somewhat different type of argument is given in \cite{FH} for certain groups, including for example crystallographic groups.\\
The strategy of the proofs of the Farrell-Jones and Baum-Connes conjectures are rather unrelated. The point of this work is to employ the strategy of one of the proofs of the Farrell-Jones conjecture, namely the one in \cite{FH}, to obtain a proof of the Baum-Connes conjecture for certain groups. This strategy has two main ingredients: Controlled algebra and induction theorems. \\
The basic idea of controlled algebra is to capture ``large-scale'' invariants of spaces endowed with some additional structure. For example, given a locally compact metric space $X$ and a ring $R$, we form the following category. Objects are sequences $(M_{x})_{x \in X}$ of free $R$-modules such that for each compact $K \subset X$, only finitely many $R_{k}$ with $k \in K$ are nonzero. A morphism $\phi \colon M \rightarrow N$ consists of morphisms of $R$-modules $\phi_{(x,y)} \colon R_{y} \rightarrow R_{x}$ such that for each $z \in X$, only finitely many $\phi_{(x,z)}$ and $\phi_{(z,y)}$ are nonzero; and furthermore we demand that $\phi$ is controlled: there is a constant $C > 0$ such that $\phi_{(x,y)} = 0$ whenever $d(x,y) > C$. This yields an additive category and we can take its algebraic $K$-theory. It turns out that this is well-behaved and yields something like a ``large-scale homology theory''. For example, this construction can distinguish the large-scale geometry of  $\IR^{n}$ for different values of $n$. \\
The properties of the metric $d$ which allow these kind of constructions can be axiomatized, yielding the notion of a coarse space. Furthermore, we can take a $G$-action on the space into account to obtain an equivariant variant of the additive category. The first step in the proofs of the Farrell-Jones conjecture essentially is to write down an equivariant  coarse space such that the algebraic $K$-groups whose construction we just indicated vanish if and only if the Farrell-Jones conjecture is true. \\
Similarly, one can associate topological $K$-groups to a coarse space, and the topological $K$-groups of the very same coarse space as in the Farrell-Jones conjecture are the only obstruction for the Baum-Connes conjecture to hold. Since the coarse space is quite explicit, this allows us to perform explicit constructions to obtain information about $K$-theory. \\
The other main ingredient, induction theory, is used to create more room for certain constructions. Given a finite group $G$, consider its complex representation ring $K_{0}(\IC G)$. For each subgroup $H \subset G$, we obtain an induction map
\[
\ind_{H}^{G} \colon K_{0}(\IC H) \rightarrow K_{0}(\IC G)
\] 
Given a collection $H_{i}$ of subgroups of $G$, one can ask under which conditions we obtain a surjection
\[
\oplus \ind_{H_{i}}^{G} \colon \oplus K_{0}(\IC H_{i}) \rightarrow K_{0}(\IC G)
\]
Some answers are provided by the Brauer and Artin induction theorems. These will play a crucial role, allowing us to create more room to make some constructions work. \\

Instead of looking at the ordinary Baum-Connes map
\[
H_{*}^{G}(E_{\mathcal{VCYC}}G, KA)  \rightarrow  H_{*}^{G}(*, KA) 
\]
we can take any family $\calf$ of subgroups of $G$ and consider the Baum-Connes map for the family $\calf$
\[
\operatorname{A}_{\calf}: H_{*}^{G}(E_{\calf}G, KA)  \rightarrow  H_{*}^{G}(*, KA)
\]
which yields a version of the Baum-Connes conjecture with respect to the family $\calf$, namely that $\operatorname{A}_{\calf}$ is an isomorphism.

Under the following condition on a group, we will verify the Baum-Connes conjecture with respect to the family $\calf$. The idea of the condition goes back to \cite{farrellhsiang} and was later on considered in \cite{quinn} and \cite{FH} in the context of the Farrell-Jones conjecture. Recall that an elementary group is a finite  group which is a product of a $p$-group for some prime $p$ and a cyclic group.

\begin{definition*}
Let $G$ be a group with a word metric $d_{G}$ and $\calf$ a family of subgroups of $G$.
We say that $G$ is a Farrell-Hsiang group with respect to $\calf$ if the following condition is satisfied: \\
There exists a natural number $N$ such that for each $n \in \IN$ there is a surjective group homomorphism $\alpha_{n}: G \rightarrow F_{n}$ with $F_{n}$ finite such that for each elementary subgroup $C \subset F_{n}$ and $H = \alpha_{n}^{-1}(C) \subset G$, there is a simplicial complex $E_{H}$ of dimension at most $N$ and a simplicial $H$-action with isotropy in $\calf$, and an $H$-equivariant map $f_{H}: G \rightarrow E_{H}$ such that for all $g,h \in G$ with $d_{G}(g,h) \leq n$, we have
\[
d^{1}_{E_{H}}(f_{H}(g), f_{H}(h)) \leq \frac{1}{n}
\] 
where $d^{1}_{E_{H}}$ is the $l^{1}$-metric on $E_{H}$.
\end{definition*}

In the context of the Farrell-Jones conjecture, elementary groups have to be replaced with hyperelementary groups, i.e. extensions of $p$-groups and finite cyclic groups since the integral analogue of the Brauer induction theorem requires hyperelementary, so this definition does not quite agree with the one in \cite{FH}. However, the difference is rather small and we use the induction results as a blackbox anyway, so we do not invent a new name for such groups.

Our main result is the following.

\begin{thm*}
If $G$ is a Farrell-Hsiang group with respect to $\calf$, then the Baum-Connes conjecture with respect to $\calf$ for $G$ is true.
\end{thm*}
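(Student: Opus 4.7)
The plan is to adapt the Farrell--Hsiang strategy from \cite{FH} from algebraic $K$-theory to the topological $K$-theory of $C^{*}$-algebras. First, I would reformulate the Baum--Connes conjecture with respect to $\calf$ as the vanishing of the topological $K$-theory of a controlled equivariant $C^{*}$-category $\calc^{G}(G \times [1,\infty);\calf)$ whose objects are $G$-invariant geometric modules parametrized by $G \times [1,\infty)$ with control condition coming from the metric on $G$ and ``eventual'' support near the family $\calf$. By standard arguments (Bartels--L\"uck--Reich--style assembly identification, carried over to the topological setting) the $\calf$-assembly map $\operatorname{A}_{\calf}$ is an isomorphism if and only if $K_{*}$ of this category vanishes. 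So it suffices to kill an arbitrary class $[x]$ in that group.

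Second, fix such a class $[x]$, represented by a morphism with some concrete control constant. For each $n$, apply the surjection $\alpha_{n} \colon G \to F_{n}$ to endow a suitably restricted version of $[x]$ with an $F_{n}$-equivariant structure. Here I would use a complex Brauer-type induction theorem for equivariant topological $K$-theory of $C^{*}$-algebras (which, in the complex setting needed for $C^{*}_{r}A$, only requires elementary subgroups, matching the definition above) to write $[x]$ as a finite sum
\[
[x] = \sum_{C} \ind_{H}^{G} [x_{C}],
\]
where $C$ ranges over elementary subgroups of $F_{n}$, $H = \alpha_{n}^{-1}(C)$, and $[x_{C}]$ lives in the corresponding controlled category for $H$.

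Third, for each such $H$, use the $H$-equivariant map $f_{H} \colon G \to E_{H}$ to push $[x_{C}]$ forward to a controlled $C^{*}$-category $\calc^{H}(E_{H} \times [1,\infty);\calf)$ associated with the simplicial complex $E_{H}$. Since $E_{H}$ has dimension at most $N$ (uniformly in $n$) and carries a simplicial $H$-action with isotropy in $\calf$, a finite-dimensional vanishing result (a topological-$K$-theory analogue of the Bartels--Reich squeezing/vanishing theorem: controlled $K$-theory of a bounded-dimensional $H$-CW complex with $\calf$-isotropy vanishes once the control is small enough, using that the associated assembly map over a simplex is trivially an isomorphism) kills $f_{H\ast}[x_{C}]$ provided the control over $E_{H}$ is finer than some threshold $\varepsilon(N)$. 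The contraction estimate $d^{1}_{E_{H}}(f_{H}(g),f_{H}(h)) \leq 1/n$ whenever $d_{G}(g,h) \leq n$ guarantees that, by choosing $n$ large compared to both the control of the original representative of $[x]$ and the threshold $\varepsilon(N)$, the pushed-forward class can be represented with arbitrarily fine control over $E_{H}$, and therefore vanishes. Summing over $C$ and using the induction decomposition, $[x] = 0$.

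The main obstacle I anticipate is the second step: setting up the induction theorem in the controlled equivariant topological $K$-theory setting and verifying that the decomposition $[x] = \sum \ind_{H}^{G}[x_{C}]$ is compatible with the control structure, so that the pieces $[x_{C}]$ really live over $H \backslash G$ with uniformly bounded control. Once the induction is in place at the level of controlled $C^{*}$-categories, the geometric Farrell--Hsiang transfer via $f_{H}$ and the finite-dimensional squeezing argument should proceed in essentially the same way as in the algebraic setting of \cite{FH}, the main technical novelty being the replacement of additive categories and algebraic $K$-theory by $C^{*}$-categories and their topological $K$-theory.
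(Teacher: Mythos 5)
Your outline follows the same Farrell--Hsiang template as the paper (reformulate $\operatorname{A}_{\calf}$ being an isomorphism as vanishing of the $K$-theory of a controlled $C^{*}$-category, use Brauer induction over the elementary subgroups of $F_{n}$, transfer to the contracted complexes $E_{H}$, and exploit the uniform dimension bound $N$). But your third step rests on a quantitative squeezing statement that is not available: you assume there is a threshold $\varepsilon(N)$ such that any class over an $N$-dimensional complex with $\calf$-isotropy and control finer than $\varepsilon(N)$ vanishes, and you then choose $n$ large enough to beat that threshold. No such theorem exists in this setting (for higher algebraic $K$-theory or for topological $K$-theory of $C^{*}$-categories); the obstruction is that a class being zero is witnessed by a homotopy or a larger matrix whose control you cannot bound a priori in terms of the control of the representative. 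The correct substitute, both in \cite{FH} and here, is a \emph{stability} theorem: the inclusion $\bigoplus_{n}\calo_{*}^{G}(E,X_{n},d_{X_{n}})\rightarrow\calo_{*}^{G}(E,(X_{n},d_{X_{n}})_{n\in\IN})$ into the uniformly controlled product is a $K$-equivalence (proved by induction on $N$ using Eilenberg swindles and a coarse Mayer--Vietoris argument, not by a pointwise $\varepsilon$-vanishing statement). To use it, one cannot work with one $n$ at a time: one must produce a \emph{single} class $a'$ in the product category $\calo_{*}^{G}(E,(S_{n},d_{S_{n}})_{n})$ with $(P_{k})_{*}(a')=a$ for \emph{every} $k$, push it into the product over the $X_{n}$, lift it to the direct sum, observe it is supported on finitely many factors, and conclude $a=P_{k+1}(a')=0$ for $k$ large. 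Your argument, which fixes one large $n$ and tries to kill the class there, has no way to close without the nonexistent threshold.

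Relatedly, your second step is phrased as a decomposition $[x]=\sum_{C}\ind_{H}^{G}[x_{C}]$ coming from ``Brauer induction for equivariant topological $K$-theory,'' which is not how the induction enters and would be hard to make uniform in $n$. The paper instead applies the Brauer induction theorem to the unit $1\in\Sw^{u}(G)$ of the complex representation ring (pulled back from $F_{n}$), writes $1=\sum_{H}\ind_{H}^{G}T_{H}$, and lets these virtual representations act on the controlled category via explicit transfer functors $\tr_{H}(T_{H}^{\pm},-)$ landing over $G\times G/H$. The essential technical points are that these functors are $C^{*}$-functors of norm at most one and do not worsen the $G$-control, which is exactly what allows the transfers for all $n$ to be assembled into the single class $a'$ in the product category that the stability argument requires. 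If you replace your $\varepsilon(N)$-squeezing step by the $\bigoplus$ versus bounded-product stability theorem and make your induction step uniform in $n$ via the Swan-group transfer, your sketch becomes the paper's proof.
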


To pass from the family version to the usual version of the Baum-Connes conjecture, there is the transitivity principle, which roughly states that if $G$ satisfies the Baum-Connes conjecture with respect to a family $\calf$ and all elements of $\calf$ satisfy the ordinary Baum-Connes conjecture, then also $G$ satisfies the ordinary Baum-Connes conjecture. The strategy employed in \cite{BFL} to prove the Farrell-Jones conjecture is essentially to prove that $G$ is a Farrell-Hsiang group with respect to a family for which we already know the conjecture by induction. To make up for the quite high level of abstraction, we have included an account of some of the results of \cite{BFL} in this thesis. Somewhat unfortunately, the groups considered in \cite{BFL} are amenable or the proof strategy also requires the Farrell-Jones conjecture for groups for which we do not know the Baum-Connes conjecture, and so we do not obtain new results on the Baum-Connes conjecture, but different proofs for already known results.

\section*{Notation and conventions}

Both algebraic and topological $K$-theory will play a role in this thesis. When we write an undecorated $K$, we will always mean topological $K$-theory, and use the notation $K^{\alg}$ when we have to refer to algebraic $K$-theory.  \\
We will also adopt a point of view which involves more category theory than the usual more analytic approaches to the Baum-Connes conjecture. We will usually not deal with $C^{*}$-algebras, but with $C^{*}$-categories and their $K$-theory. Actually, most of our arguments are not even about $C^{*}$-categories, but only about pre-$C^{*}$-categories: Categories with a norm which can be completed to a $C^{*}$-category. We will almost never explicitly work with an arbitrary morphism of a $C^{*}$-category; instead, all constructions take place in the uncompleted setting, where one has a better grasp of the morphisms, and are then completed to extend them to the whole $C^{*}$-category. \\
Throughout the thesis, $A$ will denote a $C^*$-algebra with a $G$-action, and we will consider the Baum-Connes conjecture with coefficients in $A$. This is in some sense a necessary evil: without coefficients, we do not have the inheritance properties on which parts of \cite{BFL} crucially rely on, and we want to apply the theory in \cite{BFL} to the Baum-Connes conjecture. However, coefficients add an additional layer of complication which distracts from the actual arguments. The reader who is interested in the actual proof and not in technical annoyances related to coefficients should concentrate on the case $A = \IC$ with the trivial $G$-action. \\
Aside from basic $C^*$-algebra theory, the only functional analysis we will use are some basics about Hilbert $C^*$-modules over $C^*$-algebras. Hilbert modules are generalizations of Hilbert spaces from $\IC$ to an arbitrary $C^*$-algebra $A$. The reader not familiar with Hilbert modules can either consult \cite{Lance} for some background, or can ignore coefficients and concentrate on $A = \IC$, replacing Hilbert modules with Hilbert spaces and ignoring the word adjointable whenever it appears.

\section*{Organization}

In the irst section, we give a short review of topological $K$-theory of $C^{*}$-algebras, with a special emphasis on representing $K$-groups as homotopy groups of a space. The sections 2, 3 and 4 set up the necessary topological $K$-theory machinery for $C^*$-categories, which is not developed in the literature in a way which fulfills all of our needs. Section 5 is a review of the necessary representation theory. The actual proof of the Baum-Connes conjecture, along the lines of the proof of the Farrell-Jones conjecture in \cite{FH}, is contained in the sections 6, 7 and 8. Section 6 introduces the basic strategy, section 7 is devoted to the transfer and section 8 to a stability result. Section 9 shows how to obtain rational results on the Baum-Connes conjecture using the Artin induction theorem. Section 10 is devoted to geometrically trivial examples which nevertheless yield interesting results. Finally,  section 11 is a short account of some of the results of \cite{BFL} to get our hands at some concrete, nontrivial examples of Farrell-Hsiang groups.

\tableofcontents

\chapter{$C^{*}$-algebras and their $K$-theory}

This section is a short review of topological $K$-theory of $C^{*}$-algebras. The material is well-known and mostly well-documented in the literature, see for example \cite{HigsonRoe}, \cite{WO} or \cite{RLL}.  We will follow \cite{HigsonRoe}  since the exposition given there is more in the spirit of topo\-logy and algebraic $K$-theory and does not focus on explicit matrix manipulations so much. We will also consider a space-level version of topological $K$-theory, i.e. representing $K$-groups as homotopy groups of a certain space, which is well-known but not well-covered in the literature. \\
We assume a working knowledge of basic $C^*$-algebra theory as developed, for example, in \cite{rudin} or \cite{Murphy}. In particular, we will make use of the continuous function calculus for $C^*$-algebras. \\
Throughout this section, let $A$ be a $C^{*}$-algebra. Representing $A$ as an algebra of bounded operators on a Hilbert space $\calh$, we obtain a $*$-representation of the $n \times n$-matrices $M_{n}(A)$ over $A$ on $\calh^{n}$. This turns $M_{n}(A)$ into a $C^{*}$-algebra as well. We do not demand that $A$ has a unit; if it has, we call $A$ unital.

\fxnote{some reference for C*-algebras}
\section{Topological $K$-theory of $C^{*}$-algebras}

Let $A$ be a unital $C^*$-algebra. The cycles in the $K$-groups of $A$ will be given by projections and unitaries.

\begin{definition}
A projection in a $C^{*}$-algebra $A$ is a self-adjoint element $p$ with $p^{2} = p.$ A projection over $A$ is a projection in some matrix algebra $M_{n}(A)$.  
A unitary in $A$ is an element $u \in A$ such that $uu^{*} = u^{*}u = 1$. 
\end{definition}
\noindent Now we can immediately give the definition of the $K$-theory of $A$, starting with $K_{0}$. 

\begin{definition}
Let $A$ be a unital $C^{*}$-algebra. The group $K_{0}(A)$ is the free abelian group with one generator $[p]$ for each projection $p$ over $A$ modulo the following relations:
\begin{enumerate}
\item $[p]+[q] = [p \oplus q]$ where $\oplus$ denotes the direct block sum of matrices.
\item If $p,q \in M_{n}(A)$ are connected by a continuous path of projections in $M_{n}(A)$, then $[p] = [q]$.
\item $[0] = 0$ for any $0$-matrix.
\end{enumerate}
\end{definition}

\begin{rem}
Alternatively, one can define $K_{0}(A)$ as the Grothendieck group of the monoid of conjugacy classes of projections under the direct sum. It follows from \ref{kcat} and \ref{murneu} that this yields the same group $K_{0}(A)$.
\end{rem}

In the definition of $K_{0}(A)$,there are two other relations one could use instead of $[p] = [q]$ whenever $p$ and $q$ are connected by a path.

\begin{definition}
\label{murrayneumann}
Let $p, q \in M_{n}(A)$ be two projections.
\begin{enumerate}
\item We say that $p$ and $q$ are unitarily equivalent if there is a unitary $u \in M_{n}(A)$ with $upu^{*} = q$.
\item We say that $p$ and $q$ are Murray-von Neumann equivalent if there is $v \in M_{n}(A)$ such that $vv^{*} = p$ and $v^{*}v = q$.
\end{enumerate}
\end{definition}

\begin{prop}
\label{murneu}
Let $p,q$ be as above.
\begin{enumerate}
\item If $p$ and $q$ are connected by a path of projections inside $M_{n}(A)$, they are unitarily equivalent.
\item If $p$ and $q$ are unitarily equivalent, they are Murray-von Neumann equivalent.
\item If $p$ and $q$ are Murray-von Neumann equivalent, $p \oplus 0_{n}$ and $q \oplus 0_{n}$ are connected by a path of projections in $M_{2n}(A)$.
\end{enumerate}
It follows that we could have used any of the three relations in our definition of $K_{0}$.
\end{prop}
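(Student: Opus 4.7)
For \emph{(i)}, the plan is to first handle the local case $\|p-q\| < 1$. Set $v = qp + (1-q)(1-p)$; a direct expansion gives $v^*v = 1 - (p-q)^2$, which is invertible under the norm hypothesis, and one checks $vp = qv$ as well as $[v^*v, p] = 0$. Thus the polar part $u = v(v^*v)^{-1/2}$ is a unitary in $M_n(A)$ satisfying $upu^* = q$. For a general continuous path $\{p_t\}_{t\in[0,1]}$, uniform continuity on the compact interval lets us pick $0 = t_0 < t_1 < \cdots < t_N = 1$ with $\|p_{t_i} - p_{t_{i+1}}\| < 1$ for each $i$, and the product of the associated local unitaries conjugates $p_0$ to $p_1$.

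For \emph{(ii)}, one line suffices: if $u$ is a unitary with $upu^* = q$, put $v := up$; then $v^*v = pu^*up = p$ and $vv^* = upu^* = q$.

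For \emph{(iii)}, the real task is to write down an explicit interpolation. Given $v \in M_n(A)$ with $v^*v = p$ and $vv^* = q$, first note $v = vp$ (since $(v-vp)^*(v-vp) = 0$) and dually $qv = v$. Then for $t \in [0,1]$ consider
\[
e(t) = \begin{pmatrix} tq & \sqrt{t(1-t)}\, v \\ \sqrt{t(1-t)}\, v^* & (1-t)p \end{pmatrix} \in M_{2n}(A).
\]
Self-adjointness is immediate, and the identities $v^*v = p$, $vv^* = q$, $vp = v$, $qv = v$ together give $e(t)^2 = e(t)$, so $e(\cdot)$ is a continuous path of projections from $0 \oplus p$ at $t=0$ to $q \oplus 0$ at $t=1$. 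A rotation path $R_\theta = \begin{pmatrix} \cos\theta\cdot 1 & -\sin\theta\cdot 1 \\ \sin\theta\cdot 1 & \cos\theta\cdot 1 \end{pmatrix}$, $\theta \in [0,\pi/2]$, conjugates $p \oplus 0$ to $0 \oplus p$ through projections; concatenating the two paths produces the desired path from $p \oplus 0$ to $q \oplus 0$ in $M_{2n}(A)$.

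The bulk of the work lies in \emph{(iii)}, where the key ingredient is the explicit formula for $e(t)$ above; without it, it is not obvious how to interpolate through genuine projections rather than merely self-adjoint elements. Parts \emph{(i)} and \emph{(ii)} follow routinely from the standard polar-decomposition toolkit for $C^*$-algebras.
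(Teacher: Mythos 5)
Your proposal is correct. Parts (i) and (ii) coincide with the argument in the text: the same element $x = qp+(1-q)(1-p)$, the same polar trick $u = x(x^{*}x)^{-1/2}$, and the same one-line partial isometry for (ii) (the text uses $v = pu^{*}$, you use $v=up$; both work). One small point in (i): invertibility of $v^{*}v = 1-(p-q)^{2}$ alone only makes $v$ left-invertible in a general $C^{*}$-algebra; you should either note that $vv^{*}$ equals the same element $1-(p-q)^{2}$ by the symmetric computation, or observe as the text does that $v-1=(2q-1)(p-q)$ has norm $<1$, so that $v$ is genuinely invertible and $u$ genuinely unitary. Part (iii) is where you diverge. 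The text first proves $v = vv^{*}v$, assembles the $2\times 2$ unitary $\bigl(\begin{smallmatrix} v & 1-vv^{*} \\ v^{*}v-1 & v^{*}\end{smallmatrix}\bigr)$ conjugating $p\oplus 0$ to $q\oplus 0$, connects that unitary to the identity through a trigonometric path of unitaries, and then transports $q\oplus 0$ along the conjugation. You instead write down the projection-valued path $e(t)$ directly and verify $e(t)^{2}=e(t)$ by hand from $v^{*}v=p$, $vv^{*}=q$, $vp=v$, $qv=v$, finishing with a rotation from $p\oplus 0$ to $0\oplus p$. Your route is shorter and stays entirely inside the projections, at the cost of having to guess the formula for $e(t)$; the text's route is longer but produces as a by-product the unitary equivalence of $p\oplus 0$ and $q\oplus 0$ via a unitary connected to the identity, which is exactly the form of the statement re-used later in the categorical version \ref{murray}(iii). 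Both are standard and both are complete.
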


\begin{proof}
This is a standard fact; see for example \cite[4.1.11]{HigsonRoe}. We may assume that $p,q  \in A$ by renaming $M_{n}(A)$ if necessary. To prove i), it suffices to see that any two projections whose distance from one another is less than $1$ are unitarily equivalent; then one can cut the path connecting $p$ and $q$ into finitely many pieces, each of which has length $< 1$. \\
So assume $\left\Vert p-q \right\Vert < 1$. Let $x = qp+(1-q)(1-p)$. An easy calculation shows $xp = qx$ and $x-1 = (2q-1)(p-q)$. Since $\left\Vert p-q \right\Vert < 1$, $\left\Vert x-1 \right\Vert < 1$ as well and so $x$ is invertible. Now define a unitary $u$ as $u = x(x^{*}x)^{-\frac{1}{2}}$, compare \ref{unitaryvsinvertible}.
Since $xp = qx$, $p$ commutes with $x^{*}x$ and hence also with all elements obtained from $x^{*}x$ by functional calculus, in particular with $(x^{*}x)^{-\frac{1}{2}}$. It follows that $up = xp(x^{*}x)^{-\frac{1}{2}} = qx(x^{*}x)^{-\frac{1}{2}} = qu$ as desired. \\
If $up = qu$, setting $v = pu^{*}$ proves ii).  \\
Now let $v$ be given with $vv^{*} = p$ and $v^{*}v = q$. Let $ x = (1-vv^{*})v$. We compute
\begin{align*}
x^{*}x &= v^{*}(1-p)(1-p)v = v^{*}(1-p)v = q-v^{*}pv \\ 
&= q-v^{*}vv^{*}v = q-q^{2} = 0
\end{align*}
and hence $x = 0$ and $v = vv^{*}v$. Now consider the matrix 
\[ u = 
\begin{pmatrix}
v & 1-vv^{*} \\
v^{*}v-1 & v^{*}
\end{pmatrix}
\]
A straightforward calculation, using $v = vv^{*}v$, yields that this is a unitary matrix conjugating $p \oplus 0$ to $q \oplus 0$. Considering the path of  unitary matrices
\[ u('t) = 
\begin{pmatrix}
\cos(t)v & 1-(1-\sin(t))vv^{*} \\
(1-\sin(t))v^{*}v-1 & \cos(t)v^{*}
\end{pmatrix}
\]
we see that $v$ is connected by unitaries to 
\[
\begin{pmatrix}
0 & 1\\
-1 & 0
\end{pmatrix}
\]
which, in turn, is connected by unitaries to the identity matrix - just pick $v = 1$ in the above path. Concatenating these two paths, we obtain a path $u(t)$ connecting $u$ and the identity. It follows that $p \oplus 0$ and $q \oplus 0$ are connected by the path 
\[
t \mapsto u(t)(q \oplus 0)u(t)^{*} 
\]
This implies iii).
\end{proof}

\begin{prop}
The group $K_{0}(A)$ is canonically isomorphic to the algebraic $K$-group $K_{0}^{alg}(A)$.
\end{prop}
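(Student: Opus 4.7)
The plan is to exhibit both groups as the Grothendieck group of the same abelian monoid, namely the monoid of Murray-von Neumann equivalence classes of projections in $M_{\infty}(A) = \bigcup_n M_n(A)$ under block sum. By the Remark preceding Definition \ref{murrayneumann} together with Proposition \ref{murneu}, the topological $K_{0}(A)$ as defined already coincides with the Grothendieck group of the monoid of Murray-von Neumann equivalence classes of projections, since unitary equivalence, MvN equivalence and (stable) homotopy of projections all produce the same equivalence relation on the stabilized set $M_{\infty}(A)$. So the target of the comparison map is clear.

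For the algebraic side, I would use the standard description of $K_{0}^{\alg}(A)$ as the Grothendieck group of isomorphism classes of finitely generated projective right $A$-modules under direct sum. Every such module is isomorphic to $pA^{n}$ for some projection $p \in M_{n}(A)$, and $pA^{n} \cong qA^{m}$ as $A$-modules if and only if the projections are \emph{algebraically} Murray-von Neumann equivalent, meaning there exist $x, y$ in a common matrix algebra with $xy = p$ and $yx = q$ (after stabilizing the smaller of $n,m$ by adding zeros). The natural candidate for the isomorphism $K_{0}^{\alg}(A) \to K_{0}(A)$ is then $[pA^{n}] \mapsto [p]$, with inverse sending a projection to the class of its image module.

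The crucial step is to verify that for projections in a $C^{*}$-algebra, algebraic Murray-von Neumann equivalence coincides with the partial-isometry version of Definition \ref{murrayneumann}. Given $x, y$ with $xy = p$ and $yx = q$, one replaces $x$ by $v = pxq$, checks that $vq = v = pv$ and $vv^{*}v = v$, and then uses the continuous functional calculus (essentially the $C^{*}$-algebra polar decomposition of $v$) to produce a partial isometry $w$ with $w^{*}w = q$ and $ww^{*} = p$. This is the only place where the analytic structure really enters; elsewhere the comparison is formal.

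Once that is in place, both groups are identified with the Grothendieck group of $(M_{\infty}(A)/\!\!\sim, \oplus)$, and the map $[p] \mapsto [pA^{n}]$ is a well-defined inverse pair of monoid homomorphisms on representatives, hence induces mutually inverse group homomorphisms between $K_{0}(A)$ and $K_{0}^{\alg}(A)$. The naturality in $A$ (and hence the word ``canonical'' in the statement) is immediate since the map is defined by the same formula regardless of $A$ and commutes with $*$-homomorphisms. The main obstacle, as indicated, is the $C^{*}$-algebraic passage from algebraic to partial-isometric equivalence; everything else is bookkeeping with Grothendieck groups and Proposition \ref{murneu}.
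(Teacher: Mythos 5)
Your proposal is correct and follows essentially the same route as the paper, which defers this statement to \ref{kcat}(iii): in both cases the point is that the defining relations on the two sides coincide once one knows (a) every idempotent is equivalent to a self-adjoint one (\ref{projections}) and (b) algebraically equivalent projections become unitarily / Murray-von Neumann equivalent after stabilization, the analytic input being the polar decomposition $a \mapsto a(a^{*}a)^{-\frac{1}{2}}$; the passage to $K_{0}(A)$ as defined by homotopy of projections is then \ref{murneu}. The only real difference is in how (b) is realized: the paper polar-decomposes the explicit self-inverse matrix $\begin{pmatrix} 1-p & pg \\ qf & 1-q \end{pmatrix}$ built from an isomorphism $f$ and its inverse $g$, obtaining a unitary conjugating $p \oplus 0$ to $0 \oplus q$, whereas you polar-decompose $v = pxq$ directly to get a partial isometry. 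One small slip in your writeup: for $v = pxq$ the identity $vv^{*}v = v$ is \emph{false} in general (take $p = q = 1$ and $x$ invertible but not unitary); what is true, and what you actually need, is that $v^{*}v$ is invertible in the corner $qM_{n}(A)q$ — indeed $q = (yv)^{*}(yv) = v^{*}y^{*}yv \leq \left\Vert y \right\Vert^{2} v^{*}v$ — so that $w = v(v^{*}v)^{-\frac{1}{2}}$ is a genuine partial isometry with $w^{*}w = q$ and $ww^{*} = p$. Since you invoke the polar decomposition in the very next clause, this does not damage the argument, but the intermediate claim should be removed.
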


\begin{proof}
We defer the proof to \ref{kcat}, where it is a special case of iii).
\end{proof}

We now turn to the definition of $K_{1}$.

\begin{definition}
The group $K_{1}(A)$ for a unital $C^*$-algebra $A$ is the free abelian group with one generator $[u]$ for each unitary $u$ over $A$, modulo the following relations:
\begin{enumerate}
\item $[u]+[v] = [u \oplus v]$ for all unitaries $u, v$.
\item Whenever $u,v \in M_{n}(A)$ are connected by a path of unitaries in $M_{n}(A)$, we have $[u] = [v]$.
\item $[1_{n}]=0$ for all $n$.
\end{enumerate}
\end{definition}

\begin{prop}
\label{standardk1}
If $u,v \in U_{n}(A)$, the following relations hold in $K_{1}(A)$:
\[
[u \oplus v] = [v \oplus u] = [uv] = [vu]
\]
In particular, $[u^{*}]$ is inverse to $[u]$ in $K_{1}(A)$.
\end{prop}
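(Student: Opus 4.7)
The plan is to prove everything via one standard device — the ``rotation trick'' in $M_{2n}(A)$ — and then read off all four equalities and the inverse statement.

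First I would observe that for any continuous path $t \mapsto w(t)$ of unitaries in $M_{2n}(A)$ connecting $u \oplus v$ to $uv \oplus 1_n$, the relation $[u \oplus v] = [uv \oplus 1_n] = [uv] + [1_n] = [uv]$ follows directly from axioms (i)--(iii) of the definition of $K_1$. To produce such a path, consider the rotation
\[
R(t) = \begin{pmatrix} \cos(t)\,1_n & -\sin(t)\,1_n \\ \sin(t)\,1_n & \cos(t)\,1_n \end{pmatrix}, \quad t \in [0, \pi/2],
\]
which is a path of unitaries in $M_{2n}(A)$ with $R(0) = 1_{2n}$ and $R(\pi/2) = \bigl(\begin{smallmatrix} 0 & -1_n \\ 1_n & 0 \end{smallmatrix}\bigr)$. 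Define
\[
w(t) = \begin{pmatrix} u & 0 \\ 0 & 1_n \end{pmatrix} R(t) \begin{pmatrix} v & 0 \\ 0 & 1_n \end{pmatrix} R(-t).
\]
This is manifestly a continuous path of unitaries. At $t = 0$ it equals $\mathrm{diag}(uv, 1_n)$; a direct block-matrix computation at $t = \pi/2$ yields $\mathrm{diag}(u, v)$. Hence $[u \oplus v] = [uv]$.

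Swapping the roles of $u$ and $v$ in the same argument gives $[v \oplus u] = [vu]$. Since $K_1(A)$ is abelian by definition and $\oplus$ is commutative on the level of $K$-classes (the map $u \oplus v \mapsto v \oplus u$ is implemented by conjugation with the permutation unitary $\bigl(\begin{smallmatrix} 0 & 1_n \\ 1_n & 0 \end{smallmatrix}\bigr)$, which lies in the path-component of $1_{2n}$ in $U_{2n}(A)$ via the same rotation $R$), we conclude
\[
[uv] = [u \oplus v] = [v \oplus u] = [vu],
\]
establishing the chain of equalities. Finally, applying the identity $[uv] = [u] + [v]$ to $v = u^*$ gives $[u] + [u^*] = [uu^*] = [1_n] = 0$, so $[u^*] = -[u]$.

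The only real obstacle is verifying that $w(\pi/2) = \mathrm{diag}(u,v)$, which is a short $2 \times 2$ block computation using $R(-\pi/2) = R(\pi/2)^*$; all other steps are bookkeeping against the three defining relations of $K_1$.
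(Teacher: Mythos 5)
Your proof is correct and takes essentially the same route as the paper: both reduce everything to the homotopy $u \oplus v \sim uv \oplus 1_n$ through unitaries via the standard rotation trick (the paper factors $\operatorname{diag}(u,v) = \operatorname{diag}(uv,1)\operatorname{diag}(v^{*},v)$ and homotopes $\operatorname{diag}(v^{*},v)$ to the identity, which is the same computation packaged slightly differently from your path $w(t)$), and then reads off the remaining identities and $[u^{*}] = -[u]$ exactly as you do. One cosmetic remark: the parenthetical about the permutation unitary is unnecessary, since $[u \oplus v] = [u]+[v] = [v]+[u] = [v \oplus u]$ already follows from relation (i) and commutativity of the group, and in any case your rotation $R$ connects $1_{2n}$ to $\bigl(\begin{smallmatrix} 0 & -1_{n} \\ 1_{n} & 0 \end{smallmatrix}\bigr)$ rather than to the symmetric permutation matrix, so that aside would require a slightly different path.
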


\begin{proof}
This is a standard $K$-theory computation. We have
\[
\begin{pmatrix} u & 0 \\ 0 & v \end{pmatrix}  = \begin{pmatrix} uv & 0 \\0 & 1  \end{pmatrix}
 \begin{pmatrix} v^{*} & 0 \\0 & v\end{pmatrix}
\]
Now the path
\[
\begin{pmatrix} \cos(t)v^{*} & \sin(t) \\ -\sin(t) & \cos(t)v \end{pmatrix}  
\]
is a path of unitaries connecting
\[
 \begin{pmatrix} v^{*} & 0 \\0 & v\end{pmatrix}
\]
to
\[
 \begin{pmatrix} 0 & 1 \\-1 & 0\end{pmatrix}
\]
which in turn is connected by a path of unitaries to the identity - for example, by using the above path with $v = 1$. Now the relation $[u \oplus v]  = [uv \oplus 1]$ is obvious, and since $[1] = 0$, we obtain $[u \oplus v] = [uv]$. The other statements are direct consequences of this and $[u \oplus v]  =  [v \oplus u] $
\end{proof}

Both $K_{0}$ and $K_{1}$ are functorial for $*$-homomorphisms $f \colon A \rightarrow B$ since for a projection $p$ over $A$, $f(p)$ is a projection over $B$ and similarly for unitaries. We leave it to the reader to check the details. \\
To compare algebraic and topological $K_1$, we need another proposition.

\begin{definition}
If $A$ is unital, let $U(A)$ be the unitary group of $A$, i.e.
\[
U(A) = \{ u \in A \mid uu^{*} = u^{*}u = 1 \}
\]
and let
\[
GL(A) = \{a \in A \mid a \text{ is invertible } \}
\]
If $A$ is possibly nonunital, let $\tilde A$ be its unitalization and $p: \tilde A \rightarrow \IC$ the canonical map, compare \ref{nonunital}. We set
\[
U(A) = \{ u \in \tilde A \mid uu^{*} = u^{*}u = 1, p(u) = 1 \}
\]
and
\[
GL(A) = \{a \in \tilde A \mid a \text{ is invertible }, p(a) = 1 \}
\]
Since for unital $A$, the map $\tilde A \rightarrow A \oplus \IC, (a, \lambda) \mapsto (a+\lambda \cdot 1_A, \lambda)$ is an algebra isomorphism, both definitions give the same result for unital algebras.
\end{definition}

\begin{prop}
\label{unitaryvsinvertible}
Let $A$ be a $C^{*} \minus$ algebra. Then $U(A)$ is a deformation retract of $GL(A)$.
\end{prop}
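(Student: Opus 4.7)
The plan is to use polar decomposition to produce an explicit retraction $r\colon GL(A) \to U(A)$ together with a straight-line style homotopy to the identity that fixes $U(A)$ pointwise. Throughout I work in $\tilde A$ (taking $A = \tilde A$ if $A$ is unital); everything produced will lie in the augmentation ideal of $\tilde A$ over $1$, so the condition $p(-)=1$ will be preserved automatically.

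First I would define, for $a \in GL(A)$, the positive element $|a| := (a^{*}a)^{1/2}$ via continuous functional calculus. Since $a$ is invertible so is $a^{*}a$, so $\sigma(a^{*}a) \subset [\epsilon, M]$ for some $\epsilon > 0$ depending on $a$, and the functions $x \mapsto x^{1/2}$ and $x \mapsto x^{-1/2}$ are continuous on this interval. Hence $|a|$ and $|a|^{-1}$ are well-defined positive invertible elements. Setting
\[
r(a) := a \, |a|^{-1},
\]
a direct computation gives $r(a)^{*}r(a) = |a|^{-1} a^{*} a \, |a|^{-1} = |a|^{-1}|a|^{2}|a|^{-1} = 1$, and similarly $r(a) r(a)^{*} = 1$, so $r(a) \in U(A)$. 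If $a \in U(A)$ then $a^{*}a = 1$, hence $|a|^{-1} = 1$ and $r(a) = a$, so $r$ is a retraction.

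Next I would define the homotopy
\[
H \colon GL(A) \times [0,1] \longrightarrow GL(A), \qquad H(a,t) := a \bigl( (1-t) \cdot 1 + t \, |a|^{-1}\bigr).
\]
For each fixed $a$, the element $(1-t) + t|a|^{-1}$ is a positive combination of $1$ and a positive invertible element, hence is itself positive and invertible for every $t \in [0,1]$; therefore $H(a,t) \in GL(A)$. One has $H(a,0) = a$, $H(a,1) = r(a)$, and if $a \in U(A)$ then $|a|^{-1}=1$ and $H(a,t) = a$ for all $t$. These are exactly the conditions for a strong deformation retraction.

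The only nontrivial issue is the continuity of the map $a \mapsto |a|^{-1}$, and hence of $r$ and $H$, because $x \mapsto x^{-1/2}$ is unbounded near $0$. To handle this, I would cover $GL(A)$ by open sets
\[
V_{\epsilon,M} := \{ a \in GL(A) \mid \epsilon < \|a\|^{-2}, \ \|a\| \le M,\ \|a^{-1}\| \le M^{1/2}/\epsilon^{1/2}\}
\]
on each of which the spectrum $\sigma(a^{*}a)$ is uniformly contained in a compact interval $[\epsilon, M^{2}]$ bounded away from $0$. On such a set we may apply continuous functional calculus with any fixed continuous extension $\tilde f$ of $x \mapsto x^{-1/2}$ to the real line; the resulting map $a \mapsto \tilde f(a^{*}a) = |a|^{-1}$ is then norm-continuous by the standard continuity of the functional calculus (if $a_n \to a$ then $a_n^{*}a_n \to a^{*}a$, and $\|\tilde f(a_n^{*}a_n) - \tilde f(a^{*}a)\| \to 0$). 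Patching over the open cover gives continuity of $r$ and hence of $H$, finishing the argument. The main obstacle is exactly this local-uniform control of the spectrum; everything else is algebraic manipulation.
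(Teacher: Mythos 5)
Your proof is correct and is essentially the argument the paper invokes by citing \cite[4.2.3]{WO}: the polar-decomposition retraction $a \mapsto a\lvert a\rvert^{-1}$ with the affine homotopy through $a((1-t)+t\lvert a\rvert^{-1})$, which also respects the condition $p(-)=1$ in the non-unital case. The continuity worry is even milder than you make it, since $a\mapsto \lvert a\rvert^{-1}=((a^{*}a)^{-1})^{1/2}$ is a composition of inversion on $GL$ and the square root, both norm-continuous (the latter uniformly on bounded sets), so no open cover is needed.
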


\begin{proof}
See \cite[4.2.3]{WO}.
\end{proof}

\begin{prop}
\label{invk1}
If we replace unitary by invertible in the definition of $K_{1}(A)$, the group $K_{1}(A)$ does not change.
\end{prop}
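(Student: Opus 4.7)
The plan is to construct the obvious comparison homomorphism and verify it is an isomorphism by feeding Proposition \ref{unitaryvsinvertible} into both directions. Write $K_1'(A)$ for the group defined by invertibles in place of unitaries. Since every unitary is invertible, and each of the three defining relations for $K_1(A)$ is a special case of the corresponding relation for $K_1'(A)$, the assignment $[u] \mapsto [u]$ on generators yields a well-defined homomorphism $\Phi \colon K_1(A) \to K_1'(A)$. It remains to show $\Phi$ is bijective.

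For surjectivity, I would unpack Proposition \ref{unitaryvsinvertible} as a strong deformation retraction $H \colon GL_n(A) \times [0,1] \to GL_n(A)$ with $H_0 = \id$, $H_1 = r_n$ a retraction onto $U_n(A)$, and $H_t$ restricted to $U_n(A)$ equal to the identity for all $t$ (the standard retraction $a \mapsto a(a^*a)^{-1/2}$ already used in the proof of Proposition \ref{murneu} has this property, since $u^*u = 1$ forces $u(u^*u)^{-1/2} = u$). Then for any $a \in GL_n(A)$, the path $t \mapsto H_t(a)$ connects $a$ to the unitary $r_n(a)$ through invertibles, so $[a] = [r_n(a)]$ in $K_1'(A)$, and the latter is manifestly in the image of $\Phi$.

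For injectivity I would argue that every relation usable in $K_1'(A)$ can be lifted, when both sides are unitary, to a relation in $K_1(A)$. Relations (i) and (iii) transfer verbatim: block sums of unitaries are unitary, and the identity matrix is unitary. The only interesting case is (ii): if $u, v \in U_n(A)$ are connected by a path $\gamma \colon [0,1] \to GL_n(A)$ of invertibles, then the composition $r_n \circ \gamma$ is a path inside $U_n(A)$, and because $r_n$ fixes unitaries it connects $r_n(u) = u$ to $r_n(v) = v$. Consequently any chain of moves in $K_1'(A)$ that identifies two unitary representatives can be translated into a chain of moves in $K_1(A)$ identifying the same representatives, so $\Phi$ is injective.

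The only delicate point is the one just used, namely that the deformation retraction fixes unitaries throughout the homotopy (not merely at the endpoint), so that a path of invertibles with unitary endpoints can be pushed inside $U_n(A)$ without disturbing those endpoints; I do not expect any other obstacle.
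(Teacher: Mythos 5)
Your proposal is correct and takes essentially the same route as the paper: the paper's proof is simply the observation that Proposition \ref{unitaryvsinvertible} (the deformation retraction of $GL_n(A)$ onto $U_n(A)$, concretely $a \mapsto a(a^*a)^{-1/2}$, which fixes unitaries) identifies the two groups. Your elaboration — surjectivity by retracting a generator, injectivity by applying the retraction to paths/relations so that unitary endpoints are undisturbed — is exactly the argument the paper leaves implicit.
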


\begin{proof}
This is a consequence of the fact  \ref{unitaryvsinvertible} that $Gl_{n}(A)$ is homotopy equivalent to $U_{n}(A)$.
\end{proof}

On the level of $K_{1}$, algebraic and topological $K$-theory differ, but we still have he following:

\begin{prop}
\label{k1quotient}
There is a natural, surjective map $K_{1}^{\alg}(A) \rightarrow K_{1}(A)$.
\end{prop}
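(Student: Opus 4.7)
The plan is to construct the map on representatives and then check that it factors through the algebraic relations. Recall that for a unital ring, $K_{1}^{\alg}(A)$ can be described as $GL(A)^{ab}$, the abelianization of the stable general linear group $GL(A) = \colim_n GL_n(A)$ (by Whitehead's lemma, $E(A) = [GL(A), GL(A)]$). In the non-unital case, one uses invertibles in $\tilde A$ projecting to $1$ under $p\colon \tilde A\to\IC$.

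First I would define the map on the level of invertibles. Given $a\in GL_n(A)$, assign to it the class $[a]\in K_1(A)$, where we use Proposition \ref{invk1} to view $[a]$ as a legitimate element of $K_1(A)$ (i.e., invertibles may be used in place of unitaries). This gives a map of sets $GL_n(A)\to K_1(A)$, which is compatible with the stabilization $GL_n(A)\hookrightarrow GL_{n+1}(A)$ sending $a\mapsto a\oplus 1$, since $[a\oplus 1] = [a]+[1] = [a]$. Hence we obtain a map $GL(A)\to K_1(A)$.

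Next I would verify it is a group homomorphism. This is exactly the content of Proposition \ref{standardk1}, which (together with Proposition \ref{invk1}) gives $[ab] = [a\oplus b] = [a]+[b]$ for invertibles $a,b\in GL_n(A)$. Since $K_1(A)$ is abelian by construction, the homomorphism $GL(A)\to K_1(A)$ kills the commutator subgroup and therefore descends to a homomorphism
\[
K_1^{\alg}(A) = GL(A)/[GL(A),GL(A)] \longrightarrow K_1(A).
\]
Naturality in $A$ is immediate: a $\ast$-homomorphism $f\colon A\to B$ induces compatible maps on $GL_n$ and sends invertibles to invertibles, and the induced maps commute with passage to $K_1$ and $K_1^{\alg}$.

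Finally, surjectivity is essentially tautological. Every generator of $K_1(A)$ is the class $[u]$ of a unitary $u\in U_n(A)\subset GL_n(A)$, which by construction is the image of the class of $u$ in $GL(A)^{ab}=K_1^{\alg}(A)$. The one step to watch is checking that the relation $[1_n]=0$ in $K_1(A)$ is respected, but this is automatic because $1_n\in GL(A)$ maps to the identity in $K_1^{\alg}(A)$, which is then sent to $0$. No genuine obstacle arises; the content of the statement is really the observation that the relations defining topological $K_1$ are all implied by those defining algebraic $K_1$ (the failure of injectivity will come later, from the fact that topological $K_1$ also imposes $[u]=[v]$ for homotopic unitaries, which is not an algebraic relation).
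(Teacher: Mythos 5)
Your argument is correct, but it takes a different route from the paper's. The paper adopts Rosenberg's presentation of $K_{1}^{\alg}(A)$ as the free abelian group on invertibles modulo (a) $[a]+[b]=[ab]$ and (b) the relation $[b]+[c]=[a]$ coming from a short exact sequence of free modules with compatible automorphisms; it then checks that both relations already hold in $K_{1}(A)$ (viewed, via \ref{invk1}, as a quotient of the same free abelian group). The only nontrivial step is relation (b), which the paper handles by writing $a$ as a block-triangular matrix $\left(\begin{smallmatrix} b & x \\ 0 & c\end{smallmatrix}\right)$ and contracting $x$ to $0$ along the linear path, giving $[a]=[b\oplus c]=[b]+[c]$ in $K_1(A)$. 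You instead invoke the identification $K_{1}^{\alg}(A)\cong GL(A)^{ab}$ and only need that $a\mapsto[a]$ is a homomorphism from $GL(A)$ into the abelian group $K_1(A)$, which follows from \ref{standardk1} and \ref{invk1}. Both are valid; note, however, that with the paper's chosen definition of $K_1^{\alg}$ your proof implicitly leans on the Bass--Whitehead theorem ($E(A)=[GL(A),GL(A)]$ and the equivalence of the two presentations), which is exactly the content hiding behind relation (b) — so the paper's direct triangular-matrix homotopy is the more self-contained argument given its definitions, while yours is shorter if one is willing to quote the standard presentation of algebraic $K_1$. Your surjectivity and naturality observations match the paper's.
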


\begin{proof}
We define $K_{1}^{\alg}(A)$ as in \cite[3.1.6]{Ros} as the free abelian group generated by the invertibles over $A$, modulo the following relations:
\begin{enumerate}
\item $[a]+[b] = [ab]$
\item If there is a commutative diagram with exact rows
\[
\xymatrix{
0 \ar[rr]  & & A^{n} \ar[rr]^{i} \ar[dd]^{b} & & A^{m} \ar[rr]^{p} \ar[dd]^{a} & & A^{k} \ar[rr] \ar[dd]^{c} & & 0 \\
\\
0 \ar[rr]  & & A^{n} \ar[rr]^{i}  & & A^{m} \ar[rr]^{p} & & A^{k} \ar[rr] & & 0 \\
}
\]
we have the relation $[b] + [c] = [a]$.
\end{enumerate}
By the above proposition \ref{invk1}, we can also view $K_{1}(A)$ as a quotient of the free abelian group generated by the invertibles over $A$. So to obtain a surjective map $K_{1}^{\alg}(A) \rightarrow K_{1}(A)$, we only have to check that the two types of relations in $K_{1}^{\alg}$ are also satisfied in $K_{1}(A)$. For the first, this is clear. For the second, note that we can write
\[
a = \begin{pmatrix} b & x \\0 & c  \end{pmatrix}
\]
after identifying $A^{m}$ with $A^{n+k}$ as prescribed by the short exact sequence in the rows in the diagram. Letting $x$ go to $0$ along the linear path, we obtain 
\[
[a] = \left [ \begin{pmatrix} b & 0 \\0 & c  \end{pmatrix} \right ]
\]
in $K_{1}(A)$ which implies $[a] = [b]+[c]$.
\end{proof}

\section{Non-unital algebras}

We will also consider $C^{*}$-algebras without identity and their $K$-theory. The definition is standard and uses a standard trick to extend a functor a priori only defined for unital algebras to all algebras.

\begin{definition}
\label{nonunital}
Let $A$ be a $C^{*}$-algebra. We define its unitalization $\tilde A$ to be the algebra $A \oplus \IC$ with involution and addition defined as prescribed by the direct sum decomposition and with multiplication
\[
(a, \lambda) \cdot (b, \mu) = (ab+\lambda \cdot b+\mu \cdot a, \lambda \cdot \mu)
\]
The norm is given by
\[
\left\Vert (a, \mu) \right\Vert = \left\Vert a \right\Vert + \left\vert \mu \right\vert
\]
It is easy to check that this is a unital $C^{*}$-algebra with unit $(0,1)$. The algebra $A$ embeds into $\tilde A$ via $a \mapsto (a,0)$. Also $\tilde A$ comes with a canonical ${}*$-homomorphism $p: \tilde A \rightarrow \IC$ sending $(a, \mu)$ to $\mu$. We define
\[
K_{i}(A) = \ker(p_{*} \colon K_{i}(\tilde A) \rightarrow K_{i}(\IC))
\]
\end{definition}

The following proposition shows that we have defined nothing new in case $A$ already was unital.

\begin{prop}
If $A$ is unital, $\tilde A$ is isomorphic to the direct sum $C^{*}$-algebra $A \oplus \IC$. Consequently, the two possible definitions of $K_{i}(A)$ agree.
\end{prop}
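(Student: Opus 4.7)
The plan is to write down an explicit $*$-algebra isomorphism $\phi\colon \tilde A \to A \oplus \IC$, verify its compatibility with the $C^*$-structure, and then deduce the $K$-theory statement by functoriality.

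First I would define $\phi\colon \tilde A \to A \oplus \IC$ by $\phi(a,\lambda) = (a + \lambda \cdot 1_A, \lambda)$, with candidate inverse $\psi(b,\lambda) = (b - \lambda \cdot 1_A, \lambda)$. It is immediate that $\phi$ and $\psi$ are mutually inverse $\IC$-linear maps, and that $\phi$ commutes with the involutions (since $\phi(a,\lambda)^* = (a^* + \bar\lambda 1_A, \bar\lambda) = \phi(a^*,\bar\lambda)$). The only nontrivial check is multiplicativity: computing
\[
\phi(a,\lambda)\cdot \phi(b,\mu) = (a + \lambda 1_A)(b + \mu 1_A), \lambda\mu) = (ab + \lambda b + \mu a + \lambda\mu 1_A, \lambda\mu),
\]
which matches $\phi\bigl((a,\lambda)(b,\mu)\bigr) = \phi(ab+\lambda b + \mu a, \lambda\mu)$ by the definition of the multiplication on $\tilde A$.

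Thus $\phi$ is a unital $*$-algebra isomorphism. Since the $C^*$-norm on a $*$-algebra is unique whenever it exists, and since both $\tilde A$ and $A \oplus \IC$ carry $C^*$-norms, $\phi$ is automatically isometric and hence a $C^*$-algebra isomorphism. (If one wished to avoid uniqueness, one could embed $A$ faithfully as operators on a Hilbert space $\calh$, embed $\tilde A$ as operators on $\calh$ via $(a,\lambda)\mapsto a+\lambda \cdot \id_\calh$, and observe that this is exactly the image of $\phi$ followed by the standard representation of $A \oplus \IC$.)

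For the $K$-theory assertion, by the just-established isomorphism we get $K_i(\tilde A) \cong K_i(A \oplus \IC)$. The functor $K_i$ (defined via projections/unitaries) respects finite direct sums of $C^*$-algebras, so $K_i(A \oplus \IC) \cong K_i(A) \oplus K_i(\IC)$. Under the isomorphism $\phi$, the canonical projection $p\colon \tilde A \to \IC$, $(a,\lambda)\mapsto \lambda$, becomes the projection onto the second factor of $A \oplus \IC$, and therefore $p_*\colon K_i(\tilde A) \to K_i(\IC)$ is identified with the projection $K_i(A)\oplus K_i(\IC) \to K_i(\IC)$. Its kernel is visibly $K_i(A)$, which is exactly the definition of $K_i(A)$ given for nonunital algebras. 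Hence the two definitions of $K_i(A)$ coincide. The only potentially tricky point in this plan is justifying that $\phi$ is isometric; I would resolve it as sketched via uniqueness of the $C^*$-norm rather than by direct norm computation, since the $l^1$-type formula defining the norm on $\tilde A$ is only being used here as a convenient auxiliary.
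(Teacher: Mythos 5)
Your proposal is correct and follows essentially the same route as the paper, which uses exactly the map $(a,\lambda)\mapsto(a+\lambda\cdot 1_A,\lambda)$ as the splitting $\tilde A\cong A\oplus\IC$ (the paper's proof is a one-line version of your argument, and the same formula already appears in its definition of $U(A)$ and $GL(A)$). Your additional details — the multiplicativity check, the automatic isometry of a $*$-isomorphism between $C^*$-algebras, and the identification of $p_*$ with the projection $K_i(A)\oplus K_i(\IC)\to K_i(\IC)$ — are all accurate fillings-in of what the paper leaves implicit.
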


\begin{proof}
The embedding $A \rightarrow \tilde{A}$ is split by the map $(a, \mu) \mapsto a+\mu \cdot 1$. Note that the corresponding splitting $\tilde A \cong A \oplus \IC$ is not the splitting used in the definition of $\tilde{A}$.
\end{proof}

One can also find a direct description of $K_{1}(A)$ for any algebra $A$: It is the free abelian group with one generator for each unitary over the unitalisation of $A$ which maps to the identity matrix over $\IC$, i.e. each element of $U_{n}(A)$, modulo the same relations we used in defininig $K_{1}$ for unital algebras. We leave it to the reader to check that this definition is the same as the one above.

\section{A space-level description of $K$-theory}

Now we have defined topological $K$-theory on the group level; we will however also need a space-level version of $K$-theory, i.e. a natural functor from $C^{*}$-algebras to spaces or, later on, spectra representing $K$-theory. Nothing in this section is new, but there seems to be no place in the literature where the space-level version is investigated in detail.  \\
For definiteness, we will always work in the category of compactly generated weak Hausdorff spaces; see \cite{cgwh}. This does not really make a difference since most spaces we consider are metric spaces anyway.  In particular, we will use  \cite[3.6]{cgwh} to commute certain colimits and maps out of compact spaces with one another. \\
 Let $A$ be a $C^{*}$-algebra, possibly non-unital. Let $\tilde A$ be its unitalization and $p: \tilde A \rightarrow \IC$ the canonical map.

\begin{definition}
Recall that the unitary group of $A$ is
\[
U(A) = \{u \in \tilde A \mid u \text{ unitary }, p(u) = 1  \}
\]
If $A$ is already unital, $U(A)$ is isomorphic to the usual unitary group of $A$ by sending an actual unitary $u$ of $A$ to $u-1_{A}+1_{\IC} \in U(A)$. If $A$ is non-unital, we can (and often will) identify $U(A)$ with
\[
\{x \in A \mid xx^{*}+x+x^{*} = 0 = x^{*}x+x+x^{*} \}
\]
the identification being $x \mapsto x+1_{\IC}$. \\
For a natural number $n$, let $p \colon M_{n}(\tilde A) \rightarrow M_{n}(\IC)$ be the canonical map. Define
\[
U_{n}(A) = \{u \in M_{n}(\tilde A) \mid u \text{ unitary}, p(u) = 1_{n} \}
\]
We have $U_{n}(A) = U(\widetilde{M_{n}(A)})$. The reader should convince himself that this is true, and that this is not a total triviality: After all, $M_{n}(\tilde A)$ and $\widetilde{M_{n}(A)}$ are quite different. \\
Let $s_{n}: U_{n}(A) \rightarrow U_{n+1}(A)$ be the stabilization map sending a matrix $u$ to 
\[
\begin{pmatrix}
u &0 \\
0 & 1
\end{pmatrix}
\]
Let $U_{\infty}(A)$ be the colimit of the $U_{i}(A)$ along these stabilization maps. Note that $U_{\infty}(A)$ is a topological group, being the colimit of topological groups along group homomorphisms.
\end{definition} 

\begin{prop}
There is a canonical isomorphism of groups
\[
\phi \colon \pi_{0}(U_{\infty}) \rightarrow K_{1}(A)
\]
\end{prop}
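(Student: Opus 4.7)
The plan is to define $\phi$ by sending the path component of $u \in U_n(A) \hookrightarrow U_\infty(A)$ to the class $[u]_{K_1} \in K_1(A)$, and then to construct an explicit inverse. Well-definedness reduces to two observations. First, for $u \in U_n(A)$, the stabilized element $s_n(u) = u \oplus 1 \in U_{n+1}(A)$ has the same $K_1$-class as $u$, since $[u \oplus 1] = [u] + [1_1] = [u]$ from the defining relations of $K_1$. Second, any two unitaries representing the same path component in $U_\infty(A)$ can, after suitable stabilization, be joined by a path inside some common $U_N(A)$: since the stabilization maps are closed inclusions of topological groups, any compact subset of $U_\infty(A)$, in particular the image of a path, factors through some $U_N(A)$. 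The path-connectedness relation in $K_1(A)$ then yields equal classes.

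To check that $\phi$ is a group homomorphism, I would use that $\pi_0(U_\infty(A))$ inherits its group operation from matrix multiplication in the topological group $U_\infty(A)$; so after stabilizing into a common $U_N(A)$, the product of the classes of $u$ and $v$ is $[uv]$. Proposition \ref{standardk1} supplies the identity $[u \oplus v] = [uv]$ in $K_1(A)$, whence $\phi([uv]) = [u \oplus v]_{K_1} = [u]_{K_1} + [v]_{K_1}$, as required. Surjectivity is immediate because the classes of unitaries generate $K_1(A)$.

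For injectivity I would construct the inverse $\psi \colon K_1(A) \to \pi_0(U_\infty(A))$ directly. Send each generator $[u]_{K_1}$ with $u \in U_n(A)$ to the class of $u$ in $\pi_0(U_\infty(A))$, and verify that the defining relations of $K_1(A)$ are respected. The relation $[u] + [v] = [u \oplus v]$ requires $[uv] = [u \oplus v]$ in $\pi_0(U_\infty(A))$, which is exactly the path of unitaries constructed in the proof of Proposition \ref{standardk1}. The path-connectedness and unit relations are immediate from the construction. Since $\phi$ and $\psi$ are mutually inverse on generators, this will complete the proof.

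The main obstacle is the topological point that $\pi_0$ should commute with the sequential colimit defining $U_\infty(A)$, so that both the well-definedness of $\phi$ and the description of $\pi_0(U_\infty(A))$ as the set of equivalence classes of unitaries under stabilization and path-connection go through. This hinges on working in compactly generated weak Hausdorff spaces together with the fact that the stabilization maps are closed inclusions; once this technicality is in place, the argument is a direct translation between two presentations of the same abelian group.
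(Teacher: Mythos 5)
Your proposal is correct, and up to the injectivity step it coincides with the paper's argument: the same map $\phi$ sending a unitary $u$ to $[u]$, well-definedness via compactness of paths (so that a path in $U_{\infty}(A)$ lies in some $U_{N}(A)$), the homomorphism property via the rotation trick of \ref{standardk1}, and surjectivity because classes of unitaries generate $K_{1}(A)$. The only real divergence is injectivity. The paper argues directly: if $[u]=0$ in $K_{1}(A)$, then $u \oplus 1_{n}$ is connected to the identity for some $n$, hence $u$ is trivial in $\pi_{0}(U_{\infty}(A))$; this quietly uses the standard (but not completely formal) fact that triviality in the quotient presentation of $K_{1}$ forces such a stabilized homotopy, a fact the paper only spells out later in the $C^{*}$-category setting. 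You instead build the inverse $\psi$ on generators and verify that the three defining relations of $K_{1}(A)$ hold in $\pi_{0}(U_{\infty}(A))$ (the relation $[u]+[v]=[u\oplus v]$ again being the rotation path of \ref{standardk1}, which also gives the commutativity needed for $\psi$ to be defined on the free abelian group). Your route is slightly longer but self-contained, since it never needs the concrete criterion for vanishing in $K_{1}$; the paper's route is shorter at the cost of leaning on that criterion. Both rest on the same topological input about compact subsets of the metric colimit, which you correctly isolate.
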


\begin{proof}
Any point of $U_{\infty}$ is given by a unitary matrix $u$ of some size $n$. We define the map $\phi$ by sending $u$ to $[u]$. This is indeed well-defined: If $u, v \in U_{\infty}$ lie in the same path-component, each path connecting them actually lies in some $U_{k}(A)$ for some large $k$ since the unit interval is compact, and this implies $[u] = [v]$ in $K_{1}(A)$. The map $\phi$ is a group homomorphism by \ref{standardk1}, and it is clearly surjective. If $u \in U_{\infty}$ goes to $0$ in $K_{1}(A)$, there must be an $n$ such that $u \oplus 1_{n}$ is connected to the identity matrix, which in turn means that $u = 0$ in $\pi_{0}(U_{\infty})$.
\end{proof}

We also record the following:

\begin{prop}
\label{Morita}
Let $A$ be a $C^{*}$-algebra. Then for each $n$, the inclusion
\begin{align*}
A &\rightarrow M_{n}(A) \\
a &\mapsto \begin{pmatrix} a & 0 \\ 0 & 0 \end{pmatrix}
\end{align*}
induces an isomorphism $K_{i}(A) \cong K_{i}(M_{n}(A))$
\end{prop}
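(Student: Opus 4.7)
The plan is to exploit the canonical identification $M_k(M_n(A)) = M_{kn}(A)$, which says that the matrix algebras used to build $K_*(M_n(A))$ are just a cofinal subsystem of those used to build $K_*(A)$. Under this identification, the inclusion $\iota$ should, up to a permutation-matrix conjugation (which is unitary, hence inessential for $K$-theory classes), just become the iterated corner/stabilization embedding, and an explicit inverse to $\iota_*$ is then available.

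For $K_0$, I would first compute that $M_k(\iota) \colon M_k(A) \to M_k(M_n(A)) = M_{kn}(A)$ sends $(a_{ij})$ to the $k \times k$ block matrix whose $(i,j)$-block is $\operatorname{diag}(a_{ij}, 0, \ldots, 0) \in M_n(A)$. Conjugating by the permutation matrix that reorders the basis of $A^{kn}$ to group the ``first $A$-coordinate of each $M_n$-block'' together, this becomes $a \oplus 0_{(n-1)k}$. Hence for a projection $p \in M_k(A)$, $\iota_*[p]_A = [p \oplus 0]_{M_n(A)}$. I would then define $\phi \colon K_0(M_n(A)) \to K_0(A)$ by sending a projection $q \in M_k(M_n(A)) = M_{kn}(A)$ to $[q]_A$; this is well-defined because stabilizations in $M_n(A)$ add $n$ zero rows in $A$ and Murray–von~Neumann/homotopy equivalence in $M_k(M_n(A))$ is the same relation in $M_{kn}(A)$. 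Verifying $\phi \iota_* = \operatorname{id}$ and $\iota_* \phi = \operatorname{id}$ are then direct: both amount to observing that adding zeros does not change the $K_0$-class.

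For $K_1$, I would use the colimit description just established: $K_1(B) = \pi_0 U_\infty(B)$. The same permutation-conjugation argument applied to the unitalized map $\tilde\iota \colon \tilde A \to \widetilde{M_n(A)}$ shows that the induced map $U_k(A) \to U_k(M_n(A))$, after identifying $U_k(M_n(A))$ with the group of $a \in M_{kn}(A)$ satisfying $aa^*+a+a^*=0 = a^*a+a+a^*$, is, up to conjugation by a permutation matrix, the iterated stabilization $u \mapsto u \oplus I_{(n-1)k}$. Passing to colimits, $U_\infty(A) \to U_\infty(M_n(A))$ sits inside the cofinal subsystem $\{U_{kn}(A)\}_k$ of $\{U_k(A)\}_k$, and up to an inner automorphism by a permutation unitary — which acts trivially on $\pi_0$ of a topological group — is an equivalence. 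Hence $\iota_*$ is an isomorphism on $\pi_0$, i.e.\ on $K_1$.

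The arguments are conceptually straightforward but the main obstacle is bookkeeping: one must keep track of the two non-isomorphic algebras $\widetilde{M_n(A)}$ and $M_n(\tilde A)$ when talking about unitaries over non-unital $A$, and one must verify that the permutation-matrix conjugation really lives in the stabilized unitary (or projection) setup where it is homotopic to the identity. Once these identifications are made carefully, the proof reduces to the trivial observation that passing to a cofinal subsystem of the colimit defining $M_\infty(A)$ or $U_\infty(A)$ does not change the resulting group.
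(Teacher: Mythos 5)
Your argument is correct: it is the standard cofinality proof, using $M_k(M_n(A)) \cong M_{kn}(A)$ and the fact that the inclusion becomes, up to conjugation by a scalar permutation unitary, the usual stabilization. The paper itself gives no proof of this proposition but simply defers to \cite[4.2.4]{HigsonRoe}, which argues along exactly these lines, so you have in effect supplied the omitted standard argument. One small point: the cleanest justification that the permutation conjugation is harmless is that a permutation matrix is a scalar unitary, hence connected to $1$ through scalar unitaries, so conjugation by it is homotopic to the identity on $U_{kn}(A)$ (and preserves the condition $p(u)=1$ in the non-unital setting); alternatively, since $K_1$ is abelian, the induced inner automorphism of $\pi_0$ is trivial.
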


\begin{proof}
This is straightforward; see for example \cite[4.2.4]{HigsonRoe}.
\end{proof}

\begin{prop}
\label{conjugation}
Let $J$ be an ideal inside the unital $C^{*}$-algebra $A$. Then for each unitary $u \in A$, the $*$-homomorphism $Ad_{u} \colon J \rightarrow J$ given by conjugation with $u$ induces the identity on $K_{i}(J)$. Furthermore, the map induced on $U_{\infty}(J)$ is homotopic to the identity of $U_{\infty}(J)$.
\end{prop}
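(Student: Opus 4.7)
The proof is built on the classical Whitehead rotation trick. In $U_2(A)$, the unitary $\operatorname{diag}(u, u^*)$ is joined to $I_2$ by the explicit continuous path
\[
W(t) = R(t) \begin{pmatrix} u & 0 \\ 0 & 1 \end{pmatrix} R(-t) \begin{pmatrix} u^* & 0 \\ 0 & 1 \end{pmatrix}, \qquad R(t) = \begin{pmatrix} \cos t & -\sin t \\ \sin t & \cos t \end{pmatrix},
\]
with $W(0) = I_2$ and $W(\pi/2) = \operatorname{diag}(u, u^*)$. For each $n$, setting $V_n(t) := W(t) \otimes I_n \in U_{2n}(A)$ yields a path from $I_{2n}$ to $\operatorname{diag}(u I_n, u^* I_n)$.

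For the $K_0$-statement: a class in $K_0(J)$ can be represented by a projection $p$ in $M_n(J)$, since an element of the kernel of $K_0(\tilde J) \to K_0(\IC)$ can, after stabilization and unitary conjugation, be arranged to have vanishing $\IC$-part. Because $J$ is an ideal in $\tilde A$, $M_{2n}(J)$ is a two-sided ideal of $M_{2n}(\tilde A)$, so conjugation by $V_n(t)$ preserves $M_{2n}(J)$. Hence
\[
t \mapsto V_n(t) \operatorname{diag}(p, 0_n) V_n(t)^*
\]
is a continuous path of projections in $M_{2n}(J)$ connecting $\operatorname{diag}(p, 0_n)$ to $\operatorname{diag}(Ad_u(p), 0_n)$, giving $[Ad_u(p)] = [p]$ in $K_0(J)$.

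For $x \in U_n(J)$, the analogous path $H_n(x, t) := V_n(t) \operatorname{diag}(x, I_n) V_n(t)^*$ takes values in $U_{2n}(J)$, since $\operatorname{diag}(x, I_n) - I_{2n} \in M_{2n}(J)$ is preserved by conjugation with $V_n(t) \in U_{2n}(\tilde A)$. This connects the stabilization of $x$ to the stabilization of $Ad_u(x)$, so that $[Ad_u(x)] = [x]$ in $K_1(J) = \pi_0 U_\infty(J)$.

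The subtle part is upgrading the $H_n$ to a genuinely continuous homotopy $H\colon U_\infty(J) \times [0,1] \to U_\infty(J)$. The maps $H_n\colon U_n(J) \times [0,1] \to U_{2n}(J) \hookrightarrow U_\infty(J)$ are not strictly compatible with the stabilization $s_n\colon U_n(J) \to U_{n+1}(J)$: a direct block-matrix computation shows that $H_{n+1}(s_n(x), t)$ agrees with $\operatorname{diag}(H_n(x, t), I_2)$ only up to conjugation by a fixed permutation matrix $P \in U_{2n+2}(\IC)$ which interleaves the two stabilization coordinates. Since $U_m(\IC)$ is path-connected, each such $P$ is connected to the identity by a path of unitaries, so conjugation by $P$ is homotopic to the identity on $U_{2n+2}(J)$; absorbing these permutation corrections iteratively into the $H_n$ produces a strictly compatible family, which by the universal property of the colimit $U_\infty(J) = \operatorname{colim}_n U_n(J)$ together with the compactness of $[0,1]$ assembles into the required homotopy between $\operatorname{id}$ and $Ad_u$. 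The main obstacle is precisely this coherence bookkeeping; the pointwise paths and their $K$-theoretic consequences are immediate from the rotation trick.
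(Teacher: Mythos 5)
Your $K_0$ step rests on a false claim: for a non-unital algebra such as the ideal $J$, it is \emph{not} true that every class in $K_0(J)$ is represented by a projection in some $M_n(J)$. Take $J = C_0(\IR^2)$ as an ideal in the unital algebra $C(S^2)$: any projection in $M_n(J)$ vanishes outside a compact set, its rank is locally constant and $\IR^2$ is connected, so the only such projection is $0$; nevertheless $K_0(J) \cong \IZ$. What one can arrange is a representative $[p]-[q]$ with $p,q$ projections over $\tilde J$ and $p-q \in M_n(J)$ (equivalently $[p]-[s(p)]$ with $s(p)$ the scalar part), and your conjugation path applied to such a $p$ need not stay in matrices over $\tilde J$, because $V_n(t)$ has entries in $A$ and conjugation does not preserve the scalar part. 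So, as written, your argument only treats the subgroup of $K_0(J)$ generated by projections in $M_n(J)$. The repair is to run the homotopy through $*$-homomorphisms rather than through single elements: $t \mapsto \operatorname{Ad}_{W(t)}$ is a point-norm continuous path of $*$-endomorphisms of $M_2(J)$ (it preserves $M_2(J)$ since $J$ is an ideal), its unital extension to the unitalization moves every projection along a continuous path, so $\operatorname{Ad}_{\operatorname{diag}(u,u^*)}$ is the identity on $K_*(M_2(J))$, and the corner embedding $J \rightarrow M_2(J)$ being a $K$-equivalence finishes the $K$-group statement; this is essentially the paper's route (which simply quotes \cite[4.6.1]{HigsonRoe} for the $K$-group part). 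A smaller slip: with your $W(t)$ one gets $W(\pi/2) = \operatorname{diag}(u^*,u)$, not $\operatorname{diag}(u,u^*)$; harmless (reorder the factors or replace $u$ by $u^*$), and your $\pi_0$/$K_1$ computation itself is fine.

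The more serious gap is the step you yourself flag as subtle: upgrading the level-wise paths $H_n$ to a homotopy on $U_\infty(J)$. ``Absorbing the permutation corrections iteratively'' is an assertion, not an argument: if you conjugate $H_{n+1}$ by a path from the shuffle permutation $P_n$ to $1$, you destroy the endpoint conditions (at $t=0$ the map must remain the standard stabilization $x \mapsto \operatorname{diag}(x,1)$, at $t=1$ the stabilized $\operatorname{Ad}_u$), and the correction needed at level $n+2$ depends on the choice made at level $n+1$, so the coherence you need is exactly what must be proved. The clean fix is again to work with $*$-homomorphisms: the path $\phi_t = \operatorname{Ad}_{W(t)} \colon M_2(J) \rightarrow M_2(J)$ induces, by entrywise application, maps on each $U_k(M_2(J))$ that are \emph{strictly} compatible with stabilization and jointly continuous, hence an honest homotopy on $U_\infty(M_2(J))$ from the identity to $(\operatorname{Ad}_{\operatorname{diag}(u,u^*)})_*$; one then transfers back to $U_\infty(J)$ along the corner inclusion, which is precisely how the paper argues (``since the inclusion $J \rightarrow M_2(J)$ is a $K$-equivalence, the claim then easily follows''). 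In spirit your rotation trick is the same as the paper's use of the connectedness of $\operatorname{diag}(u,u^*)$ to $1$ in $U(M_2(A))$; the difference is that the paper runs the homotopy at the level of $*$-homomorphisms, which is exactly what makes both the $K_0$ case and the colimit coherence go through, while the element-by-element version leaves these two gaps open.
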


\begin{proof}
For the first claim, see \cite[4.6.1]{HigsonRoe}. For the second claim, note that this is clear as long as $u$ is connected to $1$ inside the unitaries of $A$, since then $Ad_{u}$ and $Ad_{1} = \Id$ are connected by a path of $*$-homomorphisms. This situation can always be arranged by replacing $J$ by $M_{2}(J)$ and conjugation by $u$ with conjugation by 
\[
\begin{pmatrix}
u & 0 \\ 0 & u^{*}
\end{pmatrix}
\]
which is connected to $1$ inside the unitaries of $M_{2}(A)$. Since the inclusion $J \rightarrow M_{2}(J)$ is a $K$-equivalence, the claim then easily follows.
\end{proof}

\begin{prop}
Let $J \subset A$ be an ideal in $A$. Then the map 
\[
U_{\infty}(A) \rightarrow U_{\infty}(A/J)
\]
is a Serre fibration with fiber $U_{\infty}(J)$.
\end{prop}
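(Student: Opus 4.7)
\medskip

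My plan is to exhibit $p\colon U_\infty(A) \to U_\infty(A/J)$ as a locally trivial principal $U_\infty(J)$-bundle over its image and to invoke the standard fact that locally trivial fiber bundles are Serre fibrations. The core technical input is a continuous local section of $p$ on a neighborhood of the identity; once this is in hand the group structure of $U_\infty(A)$ propagates local trivializations everywhere, and everything else is formal.

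First I identify the fiber over $1$. An element $u \in U_n(A) \subset M_n(\tilde A)$ maps to $1_n \in U_n(A/J)$ precisely when, writing $u = 1_n + x$, the element $x$ lies in $M_n(J)$; so the kernel of $p$ at each finite level is $U_n(J)$, and passing to the colimit gives kernel $U_\infty(J)$. Since $p$ is a continuous homomorphism of topological groups, every non-empty fiber is homeomorphic to $U_\infty(J)$ by left-translation by a chosen preimage.

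Next I construct the local section via continuous functional calculus. On $V_n := \{u \in U_n(A/J) : \|u-1_n\| < 2\}$ the condition $\|u-1_n\| < 2$ forces $-1 \notin \operatorname{spec}(u)$, so the principal branch of $\log$ is defined on $\operatorname{spec}(u)$ and yields a continuous skew-adjoint map $u \mapsto \log(u)$; write $\log(u) = ih(u)$ with $h(u) \in M_n(\widetilde{A/J})_{\mathrm{sa}}$. The Bartle-Graves theorem supplies a continuous (non-linear) set-theoretic section $\sigma$ of the Banach-space surjection $M_n(\tilde A) \to M_n(\widetilde{A/J})$, and symmetrizing via $\tilde\sigma(a) := (\sigma(a) + \sigma(a)^*)/2$ produces a continuous self-adjoint lift on self-adjoint elements. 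Then $s_n(u) := \exp(i\,\tilde\sigma(h(u)))$ defines a continuous section $s_n\colon V_n \to U_n(A)$ of $p$ with $s_n(1_n)=1_n$. Choosing the $\sigma$'s compatibly with stabilization (or equivalently, since any HLP problem for a disk factors through some $U_N$ by compactness, working at a single large level $N$ at a time), these assemble into a continuous section $s\colon V \to U_\infty(A)$ on the open neighborhood $V := \bigcup_n V_n$ of the identity.

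Finally I extend by translation. The image $W$ of $p$ is a subgroup of $U_\infty(A/J)$ containing $V$, so $W = \bigcup_{w \in W} wV$ is open. For any $w_0 \in W$ with chosen lift $x_0 \in p^{-1}(w_0)$ the map $(w,z) \mapsto x_0\, s(w_0^{-1}w)\, z$ is a homeomorphism $w_0V \times U_\infty(J) \xrightarrow{\cong} p^{-1}(w_0 V)$ commuting with $p$ and the projection to the first factor. This exhibits $p$ as a locally trivial principal $U_\infty(J)$-bundle over $W$; the HLP for $D^k \times I$ then follows from the usual Lebesgue-number subdivision argument, lifting cube by cube through the local trivializations. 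Since the initial lift of a homotopy automatically sits over $W$, the conclusion applies to every HLP problem. The main obstacle is producing the continuous self-adjoint lift, which is exactly what Bartle-Graves supplies; the rest is bookkeeping with the group operation and the colimit topology on $U_\infty$.
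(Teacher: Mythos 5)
Your proof is correct, but it follows a genuinely different route from the one in the text. The paper never constructs local sections: it uses the compactness of $D^{n}$ to convert the whole parametrized lifting problem into a statement about the mapping algebras $\Map(D^{n},A)$ and $\Map(I,A)$ (using that $\Map(D^{n},J)\rightarrow\Map(D^{n},A)\rightarrow\Map(D^{n},A/J)$ is again an ideal sequence, by \ref{tensors}), reduces to the case $n=0$, translates the path to start at $1$, and then quotes the standard $C^{*}$-fact that a surjection of $C^{*}$-algebras induces a surjection $U_{0}(\Map(I,A))\rightarrow U_{0}(\Map(I,A/J))$ on identity components. Your argument instead produces an honest continuous local section near the identity via the principal logarithm, a Bartle--Graves selection for $M_{n}(\tilde A)\rightarrow M_{n}(\widetilde{A/J})$ symmetrized to lift self-adjoints, and $\exp$, and then translates to exhibit $p$ as a locally trivial principal $U_{\infty}(J)$-bundle over its image $W$; since $W$ is an open, hence clopen, subgroup and the bottom homotopy starts in $W$, connectedness keeps every HLP problem over $W$. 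This buys a stronger structural conclusion (local triviality, not just the HLP) at the cost of invoking a selection theorem and some bookkeeping with the colimit topology. On that last point, the phrase ``choosing the $\sigma$'s compatibly with stabilization'' would need a real argument, but your parenthetical alternative --- solving each lifting problem at a single finite level $U_{N}$, legitimate because compact sets in a sequential colimit of closed inclusions factor through a finite stage (the same device the paper uses elsewhere) --- is correct and makes that issue moot. Also note, as you implicitly do, that $p$ need not be surjective, so ``fiber'' means the fiber over $1$, which both you and the paper identify with $U_{\infty}(J)$ in the same way.
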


\begin{proof}
Given a diagram
\[
\xymatrix{
D^{n} \ar[d] \ar[rr] & & U_{\infty}(A) \ar[d] \\
D^{n} \times I \ar[rr] & & U_{\infty}(A/J) }
\]
we have to find a lift $D^{n} \times I \rightarrow U_{\infty}(A)$. Since $D^{n}$ is compact, we may view the top horizontal map as an element of $U_{\infty}(\Map(D^{n},A))$, and the lower horizontal map as a map $I \rightarrow U_{\infty}(\Map(D^{n}, A/J))$. Since also 
\[
\Map(D^{n}, J) \rightarrow \Map(D^{n}, A) \rightarrow \Map(D^{n}, A/J)
\]
is an ideal sequence of $C^{*}$-algebras, we see that it suffices to treat the case $n = 0$. To see that $\Map(D^{n}, A) \rightarrow \Map(D^{n}, A/J)$ is indeed onto, use \ref{tensors} and that $^*$-homomorphism have closed image. By maybe passing from $A$ to $M_{n}(A)$ for some $n$, we may assume that the map $I \rightarrow U_{\infty}(A/J)$ actually takes values in $U(A/J)$. Consider the diagram
\[
\xymatrix{
pt \ar[d] \ar[rr] & & U(A) \ar[d] \\
I \ar[rr]^{f} & & U(A/J) }
\]
We have a path $f$ of unitaries in $U(A/J)$ and a lift $u$ of $f(0)$ to a unitary in $A$. By multiplying everything with $u^{*}$, we can assume that $u = 1$ and $f$ is a path starting at $1$. Now view $f$ as an element of the identity component $U_{0}(\Map(I, A/J))$ of $U(\Map(I,A/J)$. Since 
\[
\Map(I,A) \rightarrow \Map(I, A/J)
\] is onto, we can apply \cite[2.1.7]{RLL} to see that also 
\[
U_{0}(\Map(I,A)) \rightarrow U_{0}(\Map(I, A/J))
\]
is onto. The reader should note that this completely fails when we leave out the zero. Now any preimage of $f$ yields the desired lift. The fiber over $1 \in U_{\infty}(A/J)$ is clearly $U_{\infty}(J)$.
\end{proof}

\section{Bott periodicity}

We define for $n \geq 1$ the higher $K$-groups of $A$ to be
\[
K_n(A) = \pi_{n-1} U_{\infty}(A)
\]
The fundamental theorem of topological $K$-theory is the Bott periodicity theorem.

\begin{thm}
For each $n \geq 0$, there is a natural isomorphism
\[
K_{n}(A) \cong K_{n+2}(A)
\]
On the space level, for each $C^{*}$-algebra $A$, there is a natural weak homotopy equivalence $b_{A} \colon U_{\infty}(A) \rightarrow \Omega^{2} U_{\infty}(A)$. 
\end{thm}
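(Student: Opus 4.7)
The plan is to (a) reduce the space-level statement to the group-level isomorphism $K_0(A) \cong K_2(A)$ via a suspension-loop adjunction, (b) construct an explicit Bott map, and (c) prove it is an isomorphism using the Toeplitz extension.

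First I would establish a natural weak homotopy equivalence $U_\infty(SA) \simeq \Omega_1 U_\infty(A)$ for the suspension $SA = C_0((0,1)) \otimes A$, where $\Omega_1$ denotes loops based at the identity: a loop $\gamma\colon [0,1] \to U_\infty(A)$ at $1$ corresponds to an element of $U_\infty(\widetilde{SA})$ by the evaluation adjunction, and a standard compactness argument (using that each loop factors through some $U_n(A)$) upgrades this to a weak equivalence. Iterating gives $U_\infty(S^2 A) \simeq \Omega^2 U_\infty(A)$. The Bott map at $K_0$ is then: for a projection $p$ over $A$, define $\beta(p)(t) = \exp(2\pi i t p)$; since $\exp(2\pi i p) = 1$ for a projection, this is a loop at $1$, hence an element of $U_\infty(\widetilde{SA})$ giving $\beta_A\colon K_0(A) \to K_1(SA) = K_2(A)$. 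Well-definedness, functoriality, and additivity are direct checks.

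The main obstacle is showing $\beta_A$ is an isomorphism. I would invoke the Toeplitz extension $0 \to \calk \to \calt \to C(S^1) \to 0$, where $\calt$ is the $C^*$-algebra generated by the unilateral shift on $\ell^2(\IN)$ and $\calk$ is the algebra of compact operators. Tensoring with $A$ yields a short exact sequence of $C^*$-algebras. The crucial input is that $\calt$ has the $K$-theory of a point (equivalently, $K_*(\calt \otimes A) \cong K_*(A)$), which follows from an explicit Fredholm index calculation for Toeplitz operators. The resulting six-term exact sequence, combined with the Künneth-type decomposition $K_*(C(S^1) \otimes A) \cong K_*(A) \oplus K_{*-1}(A)$ arising from $C(S^1) \cong \widetilde{C_0(\IR)}$, isolates an isomorphism $K_2(A) \cong K_0(A)$. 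Tracking generators through the boundary map via the Toeplitz index formula identifies this with the inverse of $\beta_A$.

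Finally, I would upgrade to the space-level statement. By naturality in $A$ of the group-level Bott map together with the suspension-loop adjunction of the first paragraph, one assembles a natural map $b_A\colon U_\infty(A) \to \Omega^2 U_\infty(A)$; concretely, it arises by pairing with the Bott generator $\beta \in \pi_2 U_\infty(\IC)$ via the tensor pairing $U_\infty(A) \wedge U_\infty(\IC) \to U_\infty(A \otimes \IC) \cong U_\infty(A)$. The group-level periodicity (applied to $A$ and its iterated suspensions) then shows that $b_A$ induces isomorphisms on all $\pi_n$, hence is a natural weak homotopy equivalence. The hardest step by far is the $K$-theoretic computation for the Toeplitz algebra, which is where the classical Fredholm index-theoretic content of Bott periodicity resides.
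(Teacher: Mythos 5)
Your group-level outline is essentially the standard Toeplitz-extension route, which is also what the sources the paper defers to actually do: the paper gives no proof of this theorem but cites \cite[4.7, 4.8]{HigsonRoe} and \cite[11.4]{RLL}, and your suspension isomorphism $U_{\infty}(SA)\simeq \Omega U_{\infty}(A)$, the projection-loop Bott map, and the six-term argument match that route. One inaccuracy there: the input $K_{*}(\calt\otimes A)\cong K_{*}(A)$ does \emph{not} follow from "an explicit Fredholm index calculation". The index computation only identifies the boundary map on generators (essentially the case $A=\IC$); for arbitrary coefficients you need Cuntz's separate argument that the reduced Toeplitz algebra $\ker(\calt\otimes A\rightarrow A)$ is $K$-contractible (the isometry/homotopy trick), or an equivalent ingredient. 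That is a citable standard fact, but it is a different ingredient from the index formula and should be named as such.

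The genuine gap is your final, space-level step. First, $\pi_{2}U_{\infty}(\IC)=0$; the Bott class lives in $\pi_{1}U_{\infty}(\IC)\cong K_{2}(\IC)$ (equivalently in $\pi_{2}$ of $\Omega U_{\infty}(\IC)\simeq \IZ\times BU$), so there is no "Bott generator $\beta\in\pi_{2}U_{\infty}(\IC)$" to pair with. Second, independently of indexing, the pointwise tensor pairing of unitaries cannot realize the external product: for fixed $u$ one has $u\otimes\beta(s)=(u\otimes 1)\cdot(1\otimes\beta(s))$, so the $S^{2}$-family, once translated to the basepoint, is independent of $u$ (and for a fixed unitary $v$ over $\IC$, $u\mapsto u\otimes v$ is homotopic to $u\mapsto u^{\oplus m}$, i.e.\ multiplication by an integer on homotopy groups); moreover the recipe does not respect basepoints, so it does not even define a map into $\Omega^{2}U_{\infty}(A)$. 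Hence your construction does not produce a map inducing $K_{n}(A)\cong K_{n+2}(A)$. What the group-level argument plus the suspension identification gives you naturally is a weak equivalence $U_{\infty}(S^{2}A)\rightarrow\Omega^{2}U_{\infty}(A)$ together with a natural isomorphism of $K$-groups, i.e.\ a zig-zag whose wrong-way arrow is only a weak equivalence; producing an honest natural map $b_{A}\colon U_{\infty}(A)\rightarrow\Omega^{2}U_{\infty}(A)$ (which the paper needs on the nose to assemble the $\Omega$-spectrum $K(A)$ functorially) requires a real construction, e.g.\ the classical clutching/Bott-projection formula that mixes $u$ with the sphere coordinates, or a functorial model of the $K$-theory space. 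This last step is exactly the "space-level" content of the theorem, and as written your proposal does not supply it.
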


\begin{proof}
This is a space-level version of the Bott periodicity theorem; see for example \cite[4.7. 4.8]{HigsonRoe} and \cite[11.4]{RLL}.
\end{proof}

Out of the Bott periodicity map, we can build an $\Omega$-spectrum $K(A)$ as follows: In even dimensions, we have $K(A)_{n} = \Omega U_{\infty}(A)$ and in odd dimensions, we have $K(A)_{n} = U_{\infty}(A)$. The structure maps $K(A)_{n} \rightarrow \Omega K(A)_{n+1}$ is the identity if $n$ is even and the Bott map if $n$ is odd. It is straightforward to verify that this spectrum represents $K$-theory of $A$ in the sense that
\[
\pi_{n}(K(A)) = K_{n}(A)
\]
for each $n$. This yields a functor
\[
K \colon C^{*} \minus \alg \rightarrow SPECTRA
\]
representing $K$-theory. For an ideal $J \subset A$, the fibration sequence of spaces $U_{\infty}(J) \rightarrow U_{\infty}(A) \rightarrow U_{\infty}(A/J)$ is carried to a levelwise fibration of spectra $K(J) \rightarrow K(A) \rightarrow K(A/J)$ since $\Omega$ preserves fibrations. 

\section{Continuity of $K$-theory}

We will consider colimits of $C^{*}$-algebras; while it is well-documented that $K$-theory on the group level behaves well with respect to colimits, we need that $K$-theory also behaves well on the space level. This section supplies the necessary tools. We begin with recalling the group-level version of the continuity of $K$-theory.

Recall that for a directed system $(A_{i})_{i \in I}$ of $C^{*}$-algebras, its colimit in the category of $C^{*}$-algebras is the following algebra: First form the algebraic colimit $A'$. This algebra inherits an involution and a pseudonorm, see \cite[Appendix L]{WO}.
We then divide out the ideal of elements of norm $0$ and finally complete the resulting pre-$C^{*}$-algebra to a $C^{*}$-algebra $A$ which is the colimit of $(A_{i})_{i \in I}$. The only case of interest for us is the case of a directed union, i.e. a directed system $(A_{i})_{i \in I}$ where all structure maps $A_{i} \rightarrow A_{j}$ are injective. In this case, $A'$ is just the union of all $A_{i}$, with the unique norm such that each inclusion $A_{i} \rightarrow A'$ is isometric, which is well-defined since all structure maps are automatically isometric. Then $A$ is the completion of $A'$ under this norm. Note that due to the completion process, a directed union in this case is not a set-theoretic directed union. \\

\begin{prop}
Let $(A_{i})_{i \in I}$ be a directed system of $C^{*}$-algebras. Then the canonical map
\[
\colim K_{n}(A_{i}) \rightarrow K_{n} \colim A_{i}
\]
is an isomorphism for all $n$.
\end{prop}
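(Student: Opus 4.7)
The plan is to reduce, via Bott periodicity, to the cases $n=0$ and $n=1$, and then handle those by direct approximation arguments in $A := \colim A_{i}$. Recall that $A$ is the norm completion of the algebraic directed colimit $A' = \bigcup_{i} A_{i}$, so $A'$ sits as a dense $*$-subalgebra of $A$. Naturality of the Bott map in $A$ gives, for each $n$, a commutative square relating $\colim K_{n}(A_{i}) \to K_{n}(A)$ to $\colim K_{n+2}(A_{i}) \to K_{n+2}(A)$ with isomorphisms on the vertical arrows, so I may induct downward and it suffices to treat $n=0$ and $n=1$.

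For $K_{1}$, I would prove surjectivity as follows. Given a unitary $u \in M_{k}(\tilde A)$ representing a class, pick an approximation $a \in M_{k}(\tilde A_{i})$ with $\|u-a\| < \tfrac{1}{2}$; then $a^{*}a$ lies within norm $1$ of the identity, hence is invertible in $M_{k}(\tilde A_{i})$, and the functional calculus element $v := a(a^{*}a)^{-1/2}$ is a genuine unitary in $U_{k}(A_{i})$ close to $u$. Two close unitaries in $M_{k}(\tilde A)$ are joined by an explicit path (using the principal logarithm of $v^{*}u$, whose spectrum is close to $1$), so $[u]=[v]$ in $K_{1}(A)$ and $[v]$ lies in the image of $K_{1}(A_{i})$. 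Injectivity is analogous: if $u \in U_{k}(A_{i})$ maps to $0$ in $K_{1}(A)$, then after stabilising $u \oplus 1_{l}$ is joined to $1$ by a path of unitaries in $U_{k+l}(A)$, and such a path is exactly an element of $U_{k+l}(C(I,A))$. Since $I = [0,1]$ is compact, one has $C(I,A) \cong \colim C(I,A_{i})$, so the same polar-decomposition trick produces a path of unitaries in $U_{k+l}(A_{j})$ for some $j \geq i$ whose endpoints are close to $u \oplus 1_{l}$ and to $1$; short corrective unitary paths at the endpoints then show $[u] = 0$ already in $K_{1}(A_{j})$.

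The case of $K_{0}$ runs on identical lines, with unitaries replaced by projections and the polar-decomposition fix-up replaced by functional calculus with a cutoff function. More precisely, any self-adjoint $a \in M_{k}(\tilde A_{i})$ with $\|a-p\|$ sufficiently small has spectrum contained in a small neighbourhood of $\{0,1\}$, so $\chi(a)$ is a genuine projection in $M_{k}(\tilde A_{i})$ close to $p$, where $\chi \colon \IR \to \IR$ is any continuous function equal to $0$ near $0$ and $1$ near $1$. Paths of projections are handled by the same argument applied to $C(I,-)$.

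The main obstacle I expect is purely technical rather than conceptual: setting up the approximation cleanly so that the lifts of elements to $A_{i}$ and of paths to $C(I, A_{i})$ are consistent (their endpoints can be reconciled by short corrective paths); verifying the auxiliary continuity statement $C(X,\colim A_{i}) \cong \colim C(X, A_{i})$ for $X$ compact (which comes down to the fact that $C(X) \otimes -$ commutes with directed colimits of $C^{*}$-algebras, since $C(X)$ is nuclear); and performing the polar or cutoff functional calculus inside the correct $A_{i}$ rather than in $A$ itself. None of these steps is deep, but the bookkeeping takes some care.
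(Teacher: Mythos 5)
Your proof is correct and is essentially the standard argument: the paper does not prove this proposition itself but defers to \cite[6.2.9, 7.1.7]{WO}, whose proofs run exactly along your lines, and the paper's own proof of the space-level continuity statement later in this chapter uses the very same approximate-then-repair-by-functional-calculus technique (there restricted to directed unions, with the same $C(X,\colim A_i)\cong\colim C(X,A_i)$ input). The only touch-ups needed are cosmetic: $\left\Vert u-a\right\Vert<\tfrac{1}{2}$ yields only $\left\Vert a^{*}a-1\right\Vert\leq 2\cdot\tfrac{1}{2}+\tfrac{1}{4}>1$, so the tolerance must be taken below $\sqrt{2}-1$, and the approximant $a$ should first be normalized so that its scalar part in $M_{k}(\IC)$ equals $1_{k}$ (multiply by the inverse of that scalar part) before applying the polar decomposition, exactly as the paper does at the end of its proof of the space-level version.
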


\begin{proof}
See \cite[6.2.9, 7.1.7]{WO}.
\end{proof}

To obtain a space-level version, we will restrict for the sake of simplicity to the case of directed unions of $C^{*}$-algebras, i.e. directed systems in which all structure maps are isometric inclusions. \\
For a directed system $(X_{i})_{i \in I}$ of metric spaces and isometric inclusions, we can put two potentially different topologies on the set-theoretic union of the $X_{i}$: On the one hand, we can use the usual colimit topology to obtain $\colim X_{i}$, and on the other hand, $\cup X_{i}$ inherits a metric by defining $d(x,y) = d_{i}(x,y)$ where $i$ is such that $x,y \in X_{i}$. This is well-defined since all structure maps are isometric. We denote the space we obtain by $\colim^{met} X_{i}$.

\begin{prop}
\label{cont}
Let $(A_{i})_{i \in I}$ be a directed union of $C^{*}$-algebras. Then the canonical maps
\[
\colim^{met} U(A_{i}) \rightarrow U(\colim A_{i})
\]
and
\[
\colim^{met} U_{\infty}(A_{i}) \rightarrow U_{\infty}(\colim A_{i})
\]
are weak homotopy equivalences. Furthermore, the canonical maps induce isomorphisms
\[
\pi_n \colim^{met}(U(A_i)) \cong \colim \pi_n(U(A_i))
\]
and
\[
\pi_n \colim^{met}(U_{\infty}(A_i)) \cong \colim \pi_n(U_{\infty}(A_i))
\]
Even stronger, for each compact Hausdorff space $X$, we have
\[
[X, \colim^{met}(U(A_i))] \cong \colim [X, U(A_{i})]
\]
and
\[
[X, \colim^{met}(U_{\infty}(A_i))] \cong \colim [X,U_{\infty}(A_i)]
\]
\end{prop}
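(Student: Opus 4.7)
The plan is to treat the final compact-$X$ statement as the main content and deduce the rest from it. Once $[X, \colim^{met} U(A_i)] \cong \colim_i [X, U(A_i)]$ is known for every compact Hausdorff $X$, specializing to $X = S^n$ gives the $\pi_n$-isomorphisms. The $U_\infty$-version reduces to the $U$-version via $U_n(A) = U(M_n(A))$, using that $M_n$ commutes with directed unions and that a map from a compact space into the closed-embedding colimit $U_\infty(A) = \colim_n U_n(A)$ factors through some finite stage. The weak homotopy equivalences then follow by combining these $\pi_n$-isomorphisms with the group-level continuity of $K$-theory from the preceding proposition for the $U_\infty$-case, and by running the same approximation argument directly inside $U(\colim A_i)$ for the unstabilized case.

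The core technical claim is thus that $\phi\colon \colim_i [X, U(A_i)] \to [X, \colim^{met} U(A_i)]$ is bijective for every compact Hausdorff $X$. For surjectivity, take $f\colon X \to \bigcup_i U(A_i) = \colim^{met} U(A_i)$ and fix small $\varepsilon > 0$. By uniform continuity of $f$, cover $X$ by finitely many open sets $V_k$ on each of which $f$ oscillates by less than $\varepsilon/2$, pick $x_k \in V_k$ with $f(x_k) \in \tilde A_{j_k}$, take $j$ above all $j_k$ in the directed system, and choose a subordinate partition of unity $\{\phi_k\}$. Setting $g(x) := \sum_k \phi_k(x) f(x_k)$ gives a continuous map $X \to \tilde A_j$ with $\|g(x) - f(x)\| < \varepsilon$ and $p(g(x)) = \sum_k \phi_k(x) = 1$. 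For $\varepsilon$ small, $g(x)^* g(x)$ has spectrum in a small neighborhood of $1$ and is invertible inside $\tilde A_j$, so the polar decomposition $u(x) := g(x)(g(x)^* g(x))^{-1/2}$ defines a continuous $u\colon X \to U(A_j)$.

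The homotopy $f \simeq u$ inside $\colim^{met} U(A_i)$ is then supplied by the $\log/\exp$-trick: for $\varepsilon$ sufficiently small, $f(x)^* u(x)$ is unitary and close to $1$, hence $-1$ is outside its spectrum, and
\[
H(x,t) := f(x) \exp\bigl(t \log(f(x)^* u(x))\bigr)
\]
is a continuous path of unitaries from $f(x)$ to $u(x)$. Since $f(x) \in U(A_{i(x)})$ and $u(x) \in U(A_j)$, the element $f(x)^* u(x)$ and its logarithm and exponential all lie in the $C^*$-subalgebra $\tilde A_{\max(i(x),j)}$, so $H$ takes values in $\bigcup_i U(A_i)$. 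For injectivity, apply the same recipe to a homotopy $H\colon X \times I \to \colim^{met} U(A_i)$ between $g_0, g_1 \colon X \to U(A_j)$ to obtain an approximating $\tilde H\colon X \times I \to U(A_{j'})$ with $j' \geq j$; its endpoints $\tilde H(\cdot,0), \tilde H(\cdot,1)$ are $\varepsilon$-close to $g_0, g_1$ inside $U(A_{j'})$, and the same $\log$-paths connect them there, so concatenation yields a homotopy $g_0 \simeq g_1$ in $U(A_{j'})$.

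The main obstacle is the functional-calculus step in the non-unital setting: one must verify that for a uniformly small $\varepsilon$, all relevant spectra stay in a region where $x \mapsto x^{-1/2}$ and $\log$ are holomorphic (so that $u$ and $H$ are genuinely continuous in $x$), and that these functional calculi preserve both the subalgebras $\tilde A_j \subset \tilde A$ and the condition $p(\cdot) = 1$. Once this is under control, the remaining work is bookkeeping: the passage from $U$ to $U_\infty$ amounts to commuting two directed colimits, and the weak equivalences follow by combining the $\pi_n$-isomorphisms just established with continuity of $K$-theory.
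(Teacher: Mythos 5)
Your proposal is correct and follows essentially the same route as the paper: approximate a map out of a compact Hausdorff space by one landing in a single stage $A_j$ (your partition-of-unity construction is exactly the paper's Lemma \ref{tensors} together with Proposition \ref{exactness}), correct the resulting invertible-valued map to a unitary-valued one (your polar decomposition is precisely the content of \ref{unitaryvsinvertible}), and prove injectivity by the same argument applied to $X\times I$.

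Two differences are worth recording. Where the paper joins $f$ to the corrected map by a linear homotopy inside $GL$ followed by the retraction onto $U$, you use the $\log/\exp$ path; this is in fact more careful, since you verify explicitly that the whole homotopy stays inside the set-theoretic union $\bigcup_i U(A_i)$, i.e. inside $\colim^{met}U(A_i)$ and not merely inside $U(\colim A_i)$ -- a point the paper leaves implicit (cosmetically, write ``a common upper bound of $i(x)$ and $j$'' rather than $\max(i(x),j)$, since $I$ is only directed); also, because your approximant is a convex combination of unitary sample values, the condition $p(\cdot)=1$ comes for free, whereas the paper repairs it afterwards by multiplying with $(p\circ f')^{-1}$. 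The one place to be cautious is your derivation of the weak equivalence $\colim^{met}U_\infty(A_i)\to U_\infty(\colim A_i)$ from the group-level continuity of $K$-theory: in this paper $K_{n+1}(A)$ is \emph{defined} as $\pi_n U_\infty(A)$, so in higher degrees that group-level statement is essentially the assertion at issue, and invoking it requires first matching it with the suspension-based continuity result of the cited reference. The safer route, which you already have, is to rerun the approximation argument for $U_\infty$ directly, using $U_n(A)=U(M_n(A))$, that $M_n$ commutes with directed unions, and that maps out of compact spaces factor through finite matrix stages of the sequential closed-inclusion colimit; this is what the paper does.
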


The reader should note that this is not true on the level of maps, i.e. for example there is no bijection between
$\Map(S^{n}, U(\colim A_{i})$ and $\colim \Map(S^{n}, U(A_{i}))$ since $U(\colim A_{i})$ is strictly bigger than $\colim U(A_{i})$. Furthermore, we will consider colimits over indexing posets where we cannot commute maps out of compact spaces and colimits anyway, compare \cite{cgwh}. \\
We will prove the above result in a series of lemmas.

\begin{definition}
For a compact Hausdorff space $X$ and a $C^{*}$-algebra $A$, let $C(X,A)$ be the $C^{*}$-algebra of continuous functions from $X$ to $A$. The norm is the $\sup$-norm
\[
\left\Vert f \right\Vert_{C(X,A)} = \sup_{x \in X} \left\Vert f(x) \right\Vert_{A}
\]
\end{definition}

\begin{lem}
\label{tensors}
For $f \in C(X) = C(X, \IC)$ and $a \in A$, let $(f,a)\colon X \rightarrow A$ be the function sending $x$ to $f(x)a$. Then the linear span of all $(f,a)$ with $f \in C(X)$ and $a \in A$ is dense in $C(X,A)$.
\end{lem}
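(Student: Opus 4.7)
The plan is to prove this via a partition-of-unity argument, which is the standard way to establish density of such ``elementary tensors'' in vector-valued function spaces on compact spaces.

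First I would fix an arbitrary $F \in C(X,A)$ and $\epsilon > 0$, and aim to produce a finite linear combination of functions of the form $(f,a)$ within $\epsilon$ of $F$ in the sup-norm. The key continuity input is that for every $x \in X$ there is an open neighborhood $U_x$ such that $\|F(y) - F(x)\|_A < \epsilon$ for all $y \in U_x$. Since $X$ is compact Hausdorff, finitely many of these neighborhoods $U_{x_1}, \ldots, U_{x_n}$ cover $X$.

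Next I would invoke the existence of a partition of unity $\{f_1, \ldots, f_n\} \subset C(X)$ subordinate to this cover (compact Hausdorff spaces are normal, hence admit such partitions). Define
\[
G = \sum_{i=1}^n (f_i, F(x_i)) \in C(X,A),
\]
a finite sum of the desired form. For any $y \in X$, using $\sum_i f_i(y) = 1$,
\[
\|F(y) - G(y)\|_A = \Bigl\|\sum_i f_i(y)\bigl(F(y) - F(x_i)\bigr)\Bigr\|_A \leq \sum_i f_i(y) \|F(y) - F(x_i)\|_A.
\]
For each $i$ with $f_i(y) \neq 0$ we have $y \in \operatorname{supp}(f_i) \subset U_{x_i}$, so $\|F(y) - F(x_i)\|_A < \epsilon$. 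Therefore $\|F(y) - G(y)\|_A < \epsilon$ for all $y$, giving $\|F - G\|_{C(X,A)} \leq \epsilon$, as desired.

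There is no real obstacle here; the only mild subtlety is making sure the partition of unity is available, which follows from $X$ being compact Hausdorff (hence normal and paracompact). An alternative would be a Stone-Weierstrass style argument, but the partition-of-unity approach is more direct and avoids having to deal with subalgebra closure issues in the non-commutative target.
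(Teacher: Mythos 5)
Your proof is correct and is essentially the same argument as the paper's: cover $X$ by finitely many open sets on which $F$ oscillates by less than $\epsilon$, take a subordinate partition of unity, and approximate $F$ by $\sum_i (f_i, F(x_i))$ with the same sup-norm estimate. No substantive difference.
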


\begin{proof}
Let $g: X \rightarrow A$ be an arbitrary continuous map. Let $\epsilon > 0$ be given.  Since $X$ is compact, there is a finite open covering $U_{1}, \dots U_{n}$ of $X$ such that $\left\Vert g(x)-g(y) \right\Vert < \epsilon$ whenever $x,y \in U_{i}$. Let $\phi_{i}:X \rightarrow \IC$ be a subordinate partition of unity and pick points $x_{i} \in U_{i}$. Then the function $\phi = \sum\limits_{i=1}^{n} g(x_{i})\phi_{i}$ satisfies
\begin{align*}
\left\Vert g-\phi \right\Vert =& \sup\limits_{y \in X} \left\Vert g(y)- \sum\limits_{i=1}^{n} g(x_{i})\phi_{i}(y) \right\Vert = \\
=& \sup\limits_{y \in X}  \left\Vert \sum\limits_{i=1}^{n} g(y)\phi_{i}(y) - \sum\limits_{i=1}^{n}  g(x_{i})\phi_{i}(y) \right\Vert \\
&\leq \epsilon \sum\limits_{i=1}^{n} \phi_{i}(y) = \epsilon
\end{align*}
Since $\phi$ is in our span and $\epsilon$ was arbitrary, we are done.
\end{proof}

\begin{rem}
For the reader familiar with tensor products, this lemma says that $C(X,A)$ is a (and actually, the) completion of $C(X) \otimes A$ to a $C^*$-algebra, where an elementary tensor $f \otimes a$ corresponds to $(f,a) \in C(X,A)$.
\end{rem}

\begin{prop}
\label{exactness}
For each compact Hausdorff space $X$, the functor
\[
C(X,-): C^{*}\minus \alg \rightarrow C^{*} \minus \alg
\]
preserves directed unions.
\end{prop}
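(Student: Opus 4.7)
The plan is to unpack what it means for the functor $C(X,-)$ to preserve directed unions and then reduce the claim to the density statement of Lemma \ref{tensors}. Let $(A_i)_{i \in I}$ be a directed union of $C^*$-algebras with colimit $A$. Since each structure map $A_i \to A_j$ is an isometric inclusion, the induced map $C(X, A_i) \to C(X, A_j)$, $f \mapsto f$, is again an isometric inclusion: both norms are sup-norms and the values of $f$ are unchanged under a norm-preserving inclusion of target. Similarly, each $C(X, A_i) \to C(X, A)$ is an isometric inclusion. The directed union colimit of the system $\bigl(C(X, A_i)\bigr)_{i \in I}$ in the category of $C^*$-algebras is therefore the completion of the set-theoretic union $\bigcup_i C(X, A_i)$, regarded as a subset of $C(X, A)$, with respect to the sup-norm.

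So the task reduces to showing that $\bigcup_i C(X, A_i)$ is dense in $C(X, A)$. First I would note that $\bigcup_i A_i$ is dense in $A$ by the very construction of the colimit of $C^*$-algebras as recalled before Proposition \ref{cont}. By Lemma \ref{tensors}, the linear span $S$ of the elementary functions $(f,a) \colon x \mapsto f(x)a$, with $f \in C(X)$ and $a \in A$, is dense in $C(X, A)$. So it suffices to approximate each such $(f, a)$ by an element of $\bigcup_i C(X, A_i)$. Given $\varepsilon > 0$, pick $i \in I$ and $a' \in A_i$ with $\|a - a'\| < \varepsilon/(\|f\|_{\sup} + 1)$; then $(f, a') \in C(X, A_i)$ and
\[
\|(f,a) - (f,a')\|_{C(X,A)} = \sup_{x \in X} |f(x)|\cdot \|a - a'\| \leq \|f\|_{\sup} \cdot \|a - a'\| < \varepsilon.
\]
Hence every element of $S$ lies in the sup-norm closure of $\bigcup_i C(X, A_i)$, and by Lemma \ref{tensors} the same holds for every element of $C(X, A)$.

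Combining these two paragraphs: the directed union colimit of the $C(X, A_i)$ sits isometrically and densely inside $C(X, A)$, and since $C(X, A)$ is itself complete, the induced map from this colimit to $C(X, A)$ is a surjective isometric $\ast$-homomorphism, hence an isomorphism of $C^*$-algebras. I do not expect a genuine obstacle here; the only point that deserves care is the bookkeeping around the colimit construction, namely keeping track of the fact that for isometric inclusions the completion step in the definition of the colimit does not collapse anything and amounts simply to completing the honest union in the ambient sup-norm.
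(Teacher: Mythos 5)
Your argument is correct and follows essentially the same route as the paper: identify the colimit as the closure of $\bigcup_i C(X,A_i)$ inside $C(X,A)$ via isometric inclusions, then use Lemma \ref{tensors} to reduce surjectivity to approximating the elementary functions $(f,a)$ by $(f,a')$ with $a'$ in some $A_i$. The only cosmetic difference is that you phrase the last step via density plus completeness, while the paper invokes that the image of an isometric inclusion of $C^*$-algebras is closed.
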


\begin{proof}
Let $A = \cup A_{i}$ (recall the completion process).  There are compatible maps $C(X,A_{i}) \rightarrow C(X,A)$ induced by the inclusions $A_{i} \rightarrow A$, and for $i \leq j$, the canonical map $C(X,A_{i}) \rightarrow C(X, A_{j})$ is also an inclusion. So we get an induced map
\[
\cup C(X,A_{i}) \rightarrow C(X,A)
\]
which is an isometric inclusion: It is clearly isometric when restricted to the set-theoretic union of the $C(X,A_{i})$, and this carries over to the completions. For surjectivity, it is by \ref{tensors} enough to see that all functions of the form $(f,a)$ are in the image. But $a$ can be approximated arbitrarily well by elements $a_{i}$ lying in some $A_{k_{i}}$, and clearly $(f,a_{i})$ is in the image. The claim now follows since the image of an isometric inclusion of $C^{*}$-algebras is closed.
\end{proof}

\begin{proof}[Proof of \ref{cont}:] Let us first discuss the case where all structure maps $A_{i} \rightarrow A_{j}$ are unital, and hence also all inclusions $A_{i} \rightarrow A = \colim A_{i}$. \\
We begin with the isomorphism
\[
\pi_n \colim^{met}(U(A_i)) \cong \colim \pi_n(U(A_i))
\]
Let $f: S^{n} \rightarrow \colim^{met}(U(A_{i}))$ be a continuous map. We may consider $\colim^{met}(U(A_{i}))$ as a subspace of $A = \colim A_{i}$ and hence $f$ as an element of $C(S^{n}, A)$. By \ref{exactness}, we find an $f': S^{n} \rightarrow A_{i}$ such that $f'$ and $f$ are arbitrarily close. If, say, $\left\Vert f'-f \right\Vert < \frac{1}{10}$, $f'$ has to take values inside the invertibles of $A$ and then also in the invertibles of $A_{i}$, since $C^{*}$-subalgebras are closed under taking inverses. Also $f$ and $f'$ are homotopic as maps to $Gl(A)$ by the linear homotopy $t\cdot f+ (1-t)\cdot f'$. By \ref{unitaryvsinvertible}, we then find $f'': S^{n} \rightarrow U(A_{i})$ which is homotopic to $f'$ as maps to $U(A_{i})$. Together, it follows that $f$ comes from $\colim \pi_{n}(U(A_{i}))$, i.e. the canonical map $\colim \pi_{n}(U(A_{i})) \rightarrow \pi_n \colim^{met}(U(A_i))$ is onto. The argument for injectivity is pretty much the same, using that also $S^{n} \times I$ is compact.  The argument for the map
\[
\pi_n \colim^{met}(U_{\infty}(A_i)) \cong \colim \pi_n(U_{\infty}(A_i))
\]
is similar, using that for any $C^{*}$-algebra $B$, we have an isomorphism $\pi_{n} U_{\infty}(B) = \colim_{k} \pi_{n} U_{k}(B)$ since sequential colimits of closed inclusions of metric spaces commute with maps out of compact sets, see \cite{cgwh}. The only property of the sphere we have used is Hausdorff and compactness, so the same proof applies to all compact Hausdorff spaces $X$. \\
Now let us consider
\[
\colim^{met} U(A_{i}) \rightarrow U(\colim A_{i})
\]
Let $f: S^{n} \rightarrow U(\colim A_{i})$ be given. Again viewing $f$ as a map to $A$, we find by \ref{exactness} $f': S^{n} \rightarrow A_{i}$ arbitrarily close to $f$. Again $f'$ takes values in $GL(A_{i})$ and is homotopic to $f$ inside $GL(A)$. By \ref{unitaryvsinvertible}, we can homotope $f'$ to $f'': S^{n} \rightarrow U(A_{i})$. This proves surjectivity of 
$\pi_{n} \colim U(A_{i}) \rightarrow \pi_{n} U(\colim A_{i})$. Injectivity is again the same argument with $S^{n} \times I$ instead of $S^{n}$. The map
\[
\colim^{met} U_{\infty}(A_{i}) \rightarrow U_{\infty}(\colim A_{i})
\]
is treated similarly.\\
If the inclusions $A_{i} \rightarrow A_{j}$ are possibly non-unital, one can unitalize all involved algebras and use nearly the same argument, using the non-unital part of \ref{unitaryvsinvertible}. The only difficulty is that $f'$ may not take values in $GL(A) = \{a \in \tilde{A} \mid a \text{ invertible}, p(a) = 1\}$  directly since $p(f'(x)) \in \IC$ may not be $1$. This, however, is easily repaired  by multiplying $f'$ with $(p \circ f')^{-1}$. 
\end{proof}

\section{$G$-$C^{*}$-algebras}

For definiteness, we include the following definition.

\begin{definition}
Let $G$ be a (discrete) group. A $G$-$C^{*}$-algebra $A$ s a $C^{*}$-algebra $A$ together with a left action of $G$ on $A$ through $*$-homomorphisms.
\end{definition}

\begin{example}
If $X$ is a $G$-space, the $C^{*}$-algebra $C(X)$ inherits a $G$-action.
\end{example}

\fxnote{fix this}

\chapter{Controlled topology and $C^{*}$-categories}

This section introduces the basic definitions of controlled algebra, with an emphasis on $C^*$-categories. Throughout this section, let $G$ be a discrete  group and $X$ a $G$-space.

\section{$C^*$-categories}

We begin with the definition of a $C^*$-category. The main source is \cite{cstarcat}, but note that unlike there, we will usually consider unital and additive $C^*$-categories.

\begin{definition}
Let $\C$ be an additive category.
\begin{itemize}
\item[(i)]
The category $\C$ is a $\IC$-category if all morphism sets are not only abelian groups, but complex vector spaces, with composition being complex bilinear.
\item[(ii)]
Let $\C$ be a $\IC$-category. An involution on $\C$ consists of a map $(-)^*\colon \Hom(A,B) \rightarrow \Hom(B,A)$ for each two objects $A$ and $B$ of $\C$ such that  $f^{**} = f$, $(\alpha f+\beta g)^* = \bar{\alpha}f^*+\bar{\beta}g^*$ and $(fg)^* = g^*f^*$ whenever $f$ and $g$ are composable.
\item[(iii)]
The $\IC$-category $\C$ is a normed category if each morphism space carries a norm such that composition is submultiplicative, i.e. we have $\left\Vert f \circ g \right\Vert \leq \left\Vert f \right\Vert \left\Vert g \right\Vert$ for all composable morphisms $f,g$.  It is a normed category with involution if it also carries an involution which is isometric, i.e. satisfies $\left\Vert f \right\Vert = \left\Vert f^{*} \right\Vert$ for all morphisms $f$.
\item[(iv)] A normed $\IC$-category with involution is a pre-$C^*$-category if it satisfies the $C^*$-identity $\left\Vert f^* \circ f \right\Vert = \left\Vert f \right\Vert^2$ for each $f \colon A \rightarrow B$ and when, additionally, $f^*f$ is a positive element of the pre-$C^*$-algebra $\End(A)$. It is a $C^*$-category if in addition each morphism space is complete with respect to the norm.
\item[(v)] A $C^*$-functor $F \colon \C \rightarrow \D$ between two $C^{*}$-categories is an additive functor which is compatible with the involutions in the sense that $F(\phi)^{*} = F(\phi^{*})$. Such a functor is automatically continuous with norm at most one, i.e. it satisfies $\left\Vert F(\phi) \right\Vert \leq \left\Vert \phi \right\Vert$: The norm of $F(\phi): A \rightarrow B$ is by the $C^{*}$-identity determined by the norm of $F(\phi^{*}\phi): A \rightarrow A$.  Since $F$ induces a $*$-homomorphism from the $C^{*}$-algebra $\End(A)$ to $\End(F(A))$ and $*$-homomorphisms of $C^{*}$-algebras are automatically continuous with norm at most one, we have
\[
\left\Vert F(\phi) \right\Vert^{2} =   \left\Vert F(\phi)^{*} \circ F(\phi)\right\Vert \leq \left\Vert  \phi^{*}\phi \right\Vert = \left\Vert \phi \right\Vert^{2}
\]
as desired.
\end{itemize}
\end{definition}

\begin{rem}
The positivity condition in (iv) is really necessary, as the example in \cite{cstarcat}[2.10] shows.
\end{rem}

\begin{rem}
Morally, we have to demand some compatibility between the norms and the direct sum. Fortunately, $C^{*}$-algebras are nice enough that we do not need such a condition: The norm on a $C^{*}$-category is uniquely determined by the norms on the endomorphism algebras because of the $C^{*}$-identity, and on the endomorphism algebras, basic $C^{*}$-algebra theory tells us the norm is uniquely determined by the algebra structure and the involution. In particular,  an equivalence of categories with involution between $C^{*}$-categories is automatically isometric. 
\end{rem}

\begin{example}
A concrete $C^*$-category is a norm-closed and involution-closed subcategory of the $C^*$-category of all Hilbert spaces and boun\-ded operators between them. By \cite[6.12]{cstarcat}, every $C^*$-category is isomorphic to a concrete one.
\end{example}

\begin{example}
Let $A$ be a $C^{*}$-algebra. Then we can view $A$ as a $C^{*}$-category with one object which has $A$ as its endomorphism ring. This category is not additive, so we pass to its additive completion $A_{\oplus}$ which is equivalent to the category of finitely generated free $A$-modules. Explicitly, for a (possibly non-additive) $C^{*}$-category $\C$,  define its additive completion $A_{\oplus}$ to be the category with objects finite sequences $(A_{1}, A_{2},\dots A_{n})$, and a morphism from $(A_{1}, A_{2},\dots A_{n})$ to $(B_{1}, \dots, B_{k})$ is a matrix of morphisms $A_{i} \rightarrow B_{j}$, $1 \leq i \leq n, 1 \leq j \leq m$. Composition is matrix multiplication. The category $\C_{\oplus}$ is additive, the direct sum of two objects just being the concatenation of the underlying sequences. It also inherits an involution which is given by conjugate-transposing matrices. Finally, by the preceding example, we can represent $\C$ on the category of Hilbert spaces; this representation can be extended to $\C_{\oplus}$ in the evident way. In this fashion, $\C_{\oplus}$ inherits a $C^{*}$-norm. Since there is at most one norm making $\C_{\oplus}$ into a $C^{*}$-category, this norm is independent of all choices.
\end{example}

\section{$C^*$-categories and idempotent completion}

Recall the following definition:

\begin{definition}
For an additive category $\C$, the idempotent completion $\Idem(\C)$ is the category with objects $(A,p)$ with $A$ an object of $\C$ and $p \colon A \rightarrow A$ an idempotent. A morphism from $(A,p)$ to $(B,q)$ is a morphism $f \colon A \rightarrow B$ such that $qfp = f$. Composition is inherited from $\C$: the composite of $f \colon (A,p) \rightarrow (B,q)$ and $g \colon (B,q) \rightarrow (C,e)$ is $gf$ which is possible since $egfp  = gf$.
\end{definition}
We would like the idempotent completion of a $C^*$-category to be a $C^*$-category again; however, using the above definition of idempotent completion, there is a problem: For a morphism $f \colon (A,p) \rightarrow (B,q)$, $f^*$ is only a morphism $(B,q^*) \rightarrow (A,p^*)$ and not a morphism $(B,q) \rightarrow (A,p)$ as required; or in other words, though the idempotent completion inherits an involution, this involution is no longer the identity on objects. The following very useful fact about $C^*$-algebras allows us to circumvent this problem:

\begin{prop}
\label{projections}
Let $e$ be an idempotent over a unital $C^*$-algebra $A$. Then the projective module $A^n e$ defined by $e$ can also be defined by a projection (i.e. a selfadjoint idempotent) $p$. By considering an idempotent $e:X \rightarrow X$ in a $C^*$-category $\C$ as an element of the $C^*$-algebra $\End(X)$, we get an analogous statement for $\C$ and objects of $\Idem(\C)$. 
\end{prop}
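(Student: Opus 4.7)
The plan is to produce an explicit projection $p$ in the unital $C^{*}$-algebra $B$ containing $e$ (so $B=M_{n}(A)$ in the module version, $B=\End(X)$ in the $C^{*}$-category version) and then display an isomorphism between $e$ and $p$ in the idempotent completion. Once this is done, both claims fall out of the same algebraic identities: in the module case the identities force equality of the associated submodules $A^{n}e = A^{n}p$, and in the categorical case they produce mutually inverse morphisms in $\Idem(\C)$.

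The construction rests on the single element $z := e^{*}e+(1-e)(1-e^{*}) \in B$. It is manifestly self-adjoint and positive as a sum of two positive elements, and a short Hilbert space calculation (based on $e^{2}=e$, which makes the cross term $\langle ev,(1-e^{*})v\rangle$ vanish) yields the bound $z \geq 1$, so $z$ is invertible. The key computation is the commutation $ez=ze=ee^{*}e$: in both products the mixed terms vanish because $e(1-e)=0$. Since $z$ is self-adjoint, commutation with $e$ upgrades to commutation with $e^{*}$, and therefore also with $z^{-1}$ via continuous functional calculus. I then set $p := e^{*}e\cdot z^{-1}$. Self-adjointness of $p$ is immediate, and the identity $p^{2}=p$ reduces, after pushing $z^{-1}$ past $e^{*}e$, to $(e^{*}e)^{2} = (e^{*}e)z$, which in turn reduces to $e^{*}e(1-e)(1-e^{*})=0$ --- again from $e(1-e)=0$.

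To compare $p$ with $e$, I would verify the two relations $ep = e$ and $pe = p$: the first from $ep = ee^{*}ez^{-1} = (ez)z^{-1} = e$ using $ez = ee^{*}e$, the second from $pe = e^{*}ez^{-1}e = e^{*}e\cdot e\cdot z^{-1} = p$ using $e^{2}=e$. These two relations immediately produce mutually inverse morphisms $e\colon(X,p)\to(X,e)$ and $p\colon(X,e)\to(X,p)$ in $\Idem(\C)$ (the required source/target and composition conditions all reduce to $ep=e$ or $pe=p$), which settles the categorical claim. The module claim follows from the same two identities, since $ep=e$ and $pe=p$ force $A^{n}e = A^{n}p$ as subsets of $A^{n}$. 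The main conceptual obstacle is choosing $z$ correctly: the more obvious candidate $1+(e-e^{*})^{*}(e-e^{*})$ is positive and invertible but does not commute with $e$, whereas the asymmetric combination $e^{*}e+(1-e)(1-e^{*})$ works precisely because $e$ and $e^{*}$ cross-annihilate the two summands in the right way. Once $z$ is in place, everything else is formal bookkeeping.
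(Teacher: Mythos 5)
Your proof is correct and is essentially the paper's own argument in disguise: expanding $(e-e^{*})(e^{*}-e)=ee^{*}-e-e^{*}+e^{*}e$ shows that your $z=e^{*}e+(1-e)(1-e^{*})$ is literally the paper's element $h=1+(e-e^{*})(e^{*}-e)$, and your $p=e^{*}ez^{-1}$ is the mirror image of the paper's $ee^{*}h^{-1}$ (satisfying $ep=e$, $pe=p$ instead of $ep=p$, $pe=e$), which works equally well. Two small caveats: the orthogonality of $ev$ and $(1-e^{*})v$ does not by itself give $z\geq 1$ (those are pieces of two \emph{different} decompositions of $v$), but invertibility --- all you actually need --- is immediate from $z=1+(e-e^{*})(e^{*}-e)$; and in the product $ze$ the mixed term $(1-e)(1-e^{*})e$ does not vanish outright but equals $ee^{*}e-e^{*}e$, which cancels correctly against the first term.
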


\begin{proof}
Replacing $A$ by matrices over $A$ if necessary, we may assume that $e$ is an actual element of $A$. Consider the element $h = 1+(e-e^*)(e^*-e)$. Since $(e-e^*)(e^*-e)$ is positive, $h$ is invertible, see \cite[Section 2.2]{Murphy}. It is also easy to check that $eh = ee^*e = he$ and $e^*h =  e^*ee^* = he^*$. Now set $p = ee^*h^{-1}$. Since $e$ and  $e^*$ commute with $h^{-1}$ and $e^*ee^*h^{-1} = e^*$, $p$ is a projection. Clearly $ep = p$, and it is easily checked that $pe = e$. Hence $p$ and $e$ define isomorphic projective modules; $e: (A,e)  \rightarrow (A,p)$ inducing the isomorphism with inverse $p$.  
\end{proof}

This leads to the following definition.

\begin{definition}
For a $C^*$-category $\C$, the idempotent completion $\Idem_{*}(\C)$ is the category with objects $(A,p)$ with $A$ an object of $\C$ and $p: A \rightarrow A$ a projection. A morphism from $(A,p)$ to $(B,q)$ is a morphism $f: A \rightarrow B$ such that $qfp = f$. Composition is inherited from $\C$: the composite of $f: (A,p) \rightarrow (B,q)$ and $g: (B,q) \rightarrow (C,e)$ is $gf$ which is possible since $egfp  = gf$.
\end{definition}

By applying the above proposition, we see that this idempotent completion is equivalent to the usual idempotent completion:
\begin{prop}
For a $C^{*}$-category $\C$, the inclusion functor $\Idem_{*}(\C) \rightarrow \Idem(\C)$ is an equivalence of categories.
\end{prop}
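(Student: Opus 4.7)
My plan is to verify the two standard conditions for an equivalence of categories: full faithfulness and essential surjectivity. Full faithfulness is essentially tautological: the objects of $\Idem_*(\C)$ are by definition a subclass of the objects of $\Idem(\C)$ (those $(A,p)$ where the idempotent $p$ happens to be self-adjoint), and the hom-set from $(A,p)$ to $(B,q)$ is defined by the same condition $qfp = f$ in both categories. So the inclusion induces the identity on each hom-set, which is trivially bijective.

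The essential content lies in essential surjectivity, and this is where the preceding Proposition \ref{projections} does all the work. Given an arbitrary object $(A,e) \in \Idem(\C)$, I would apply \ref{projections} to the idempotent $e \colon A \to A$, viewed as an element of the $C^*$-algebra $\End(A)$, to produce a projection $p \in \End(A)$ such that $pe = e$ and $ep = p$. These two identities immediately imply that $e$ is a well-defined morphism from $(A,e)$ to $(A,p)$ (since $p \cdot e \cdot e = pe = e$) and that $p$ is a well-defined morphism from $(A,p)$ to $(A,e)$ (since $e \cdot p \cdot p = ep = p$). Composing, $p \circ e = pe = e$ is the identity on $(A,e)$ in $\Idem(\C)$, and $e \circ p = ep = p$ is the identity on $(A,p)$. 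Hence $(A,e) \cong (A,p)$ in $\Idem(\C)$, and $(A,p)$ lies in the image of the inclusion functor.

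There is no real obstacle once \ref{projections} is invoked; the only conceptual point worth emphasizing is that the proposition was formulated in $\Idem(\C)$ (not in $\Idem_*(\C)$), so the isomorphism $(A,e) \cong (A,p)$ it produces lives in the ambient idempotent completion, which is exactly what essential surjectivity of an inclusion functor requires. Combining full faithfulness with essential surjectivity yields the claimed equivalence of categories.
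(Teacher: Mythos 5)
Your proposal is correct and follows exactly the route the paper intends: full faithfulness is immediate from the definitions, and essential surjectivity is exactly the content of Proposition \ref{projections}, whose proof already exhibits $e \colon (A,e) \rightarrow (A,p)$ as an isomorphism with inverse $p$. The paper leaves these details implicit, so your write-up simply makes explicit what the paper takes for granted.
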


From now on, we will drop the $*$ and just write $\Idem(\C)$ for the latter definition. This definition allows us to make $\Idem(\C)$ into a $C^*$-category. To define the involution, given a morphism $f \colon (A,p) \rightarrow (B,q)$, i.e. a morphism $f \colon A \rightarrow B$ such that $qfp = f$, the morphism $f^*$ of $\C$ satisfies $pf^*q = f^*$ since $p$ and $q$ are selfadjoint, and is hence a morphism $(B,q) \rightarrow (A,p)$ as desired.  The morphisms from $(A,p)$ to $(B,q)$ form a subspace of $\Hom(A,B)$ which is closed since it is the subspace determined by the relation $pfq = f$. Hence $\Hom((A,p),(B,q))$ inherits a norm in which it is complete. The $C^*$-identity is inherited from $\C$. It remains to check that for any morphism $f: (A,p)\rightarrow (B,q)$, $f^*f \colon (A,p) \rightarrow (A,p)$ is positive. But this can be checked by inspecting the spectrum of $f^*f$ which is not changed by passing from $\End(A)$ to the smaller $C^*$-algebra $\End((A,p),(A,p))$, see \cite[2.1.11]{Murphy}, and by definition of a $C^*$-category, $f^*f$ is positive in $\End(A)$. Hence $\Idem(\C)$ is again a $C^*$-category which is equivalent to the usual $\Idem(\C)$ as additive categories.

\section{Controlled topology and $C^*$-categories}

Given a metric space, we can study its large-scale geometry with the help of the sets
\[
E_r = \{ (x,y) \in X \times X \mid d(x,y) < r \}
\] 
The properties of these sets are the motivation for the following definition; compare \cite[2.3]{BFJR}.

\begin{definition}
A coarse structure $(\cale, \calf)$ on the $G$-space $X$ consists of a family $\cale$ of subsets of $X \times X$ and a family $\calf$ of subsets of $X$ such that the following holds:
\begin{enumerate}
\item For $E, E' \in \cale$, there is an $E''$ in $\cale$ such that 
\[
E \circ E' = \{(x,z) \in X \times X \mid \exists y \in X: (x,y) \in E, (y,z) \in E' \} \subset E''
\]
\item
For $E, E' \in \cale$, there is an $E''$ such that $E \cup E' \subset E''$.
\item The diagonal of $X \times X$ is in $\cale$.
\item For $F, F' \in \calf$ there is an $F'' \in \calf$ such that $F \cup F' \subset F''$. 
\item For $E \in \cale$, also the flip $E^*=\{(y,x) \mid (x,y) \in E \}$ is in $\cale$. 
\item For $E \in \cale$ and $g \in G$, we have $E = gE = \{(gx,gy) \mid (x,y) \in E\}$, and for $F \in \calf$, we have $F = gF = \{ gx \mid x \in F \}$.
\end{enumerate}
Often, $\calf$ will be the power set of $X$ and will be omitted from the notation. We say that $(X, \cale, \calf)$ is a coarse $G$-space. \\
A $G$-equivariant coarse map $f: (X, \cale, \calf) \rightarrow (Y, \cale', \calf')$ is a $G$-equivariant map $f: X \rightarrow Y$ such that:
\begin{enumerate}
\item For each compact $K \subset Y$ and each $F \in \calf$, the set $f^{-1}(K) \cap F$ is relatively compact in $X$.
\item For each $E \in \cale$, there is $E' \in \cale'$ such that $(f \times f)(E) \subset E'$.
\item For each $F \in \calf$, there is $F' \in \calf'$ such that $f(F) \subset F'$.
\end{enumerate}
The definition is precisely such that the claim in ix) of the below definition \ref{controlcat} below is true.
\end{definition}

\begin{definition}
\label{controlcat}
Let $A$ be a $G$-$C^{*}$-algebra. Let $\calv$ be the additive category with objects $A^n, n \geq 0$, equipped with the standard $A$-valued inner product
\[
\langle (a_{1}, a_{2}, \cdots a_{n}), (b_{1}, b_{2}, \cdots b_{n}) \rangle = \sum\limits_{i=1}^{n} a_{i}b_{i}^{*}
\]
and with morphisms the right $A$-linear maps. The category $\calv$ has a left $G$-action by letting $g \in G$ act as identity on objects and by applying $g$ pointwise to the entries of a matrix in $\Hom(A^{n}, A^{m})$.

\begin{enumerate}
\item The category $\C(X)$ is the category with objects given by sequences $(M_{x})_{x \in X}$ with $M_{x} \in \calv$ such that $\{x \mid M_{x} \neq 0 \}$ is discrete in $X$. A morphism $\phi: M \rightarrow N$ is given by maps $\phi_{x,y}: M_{y} \rightarrow N_{x}$ in $\calv$ such that for all $x \in X$, the sets $\{s \in X \mid \phi_{s,x} \neq 0 \}$ and $\{s \in X \mid \phi_{x,s} \neq 0 \}$ are finite. Note that it is not a typo that $\phi_{x,y}$ is a map from $M_{y}$ to $N_{x}$. Composition is given by matrix multiplication: For two morphisms $\phi, \psi$, we set 
\[(\phi \circ \psi)_{x,y} = \sum\limits_{s \in X} \phi_{x,s} \circ \psi_{s,y}
\]
which is a finite sum. This definition is modeled on the usual matrix multiplication; this is the reason we are working with right modules. $\C(X)$ is an additive category with direct sum $M \oplus N$ defined by $(M \oplus N)_{x} = M_{x} \oplus N_{x}$. The direct sum is strictly associative, and it is commutative in the sense that $M \oplus N = N \oplus M$ as objects of $\C(X)$ - the various maps implementing the universal property are of course different.
\item The category $\C(X)$ also comes with an involution $^{*}$ which is the identity on objects and acts as the usual adjoint on morphisms, i.e. $(\phi^{*})_{x,y}$ is given by $(\phi_{y,x})^{*}$. This is a strict involution.
\item For an object $M$ of $\C(X)$, the total object $T(M)$ of $M$ is the pre-Hilbert-$A$-module $\bigoplus_{x \in X} M_{x}$, where the sum is orthogonal. We consider $T(M)$ as a right $A$-module. For each morphism $\phi \colon M \rightarrow N$, we obtain a morphism of right $A$-modules 
\[
T(\phi) \colon T(M) \rightarrow T(N)
\]
In this way, we can view $T$ as a functor from $\C(X)$ to the category of pre-Hilbert-$A$-modules and (possibly unbounded) $A$-linear maps between them, at least after choosing an order for the summands in the sum for each involved space $X$. We will mainly ignore this point, since a different order of the basis elements does not affect whether an operator $T(M) \rightarrow T(N)$ is bounded or not. The functor $T$ preserves involutions when the category of pre-Hilbert $A$-modules of the form $\oplus A$ and possibly unbounded operators is equipped with the involution given by conjugate-transposing matrices. 
\item $\C(X)$ comes with a right $G$-action given by $(M\cdot g)_{x} = M_{gx}$ on objects and $(\phi \cdot g)_{x,y} = g^{-1}(\phi_{gx,gy})$ on morphisms, where the $g^{-1}$ makes up for the fact that $G$ acts from the left on $\calv$, not from the right. Let $\C^{G}(X) \subset \C(X)$ be the fixed category under this action. Note that the $G$-action on $A$ only enters into the morphisms.
\item Let $\C_{b}(X)$ respectively $\C_{b}^{G}(X)$ be the subcategories of $\C(X)$ respectively $\C^{G}(X)$ with the same objects, but with morphisms $\phi \colon M \rightarrow N$ only those for which $T(\phi) \colon T(M) \rightarrow T(N)$ is a bounded, adjointable $A$-linear operator in the sense of Hilbert $A$-modules. Then $\C_{b}(X)$ and $\C_{b}^{G}(X)$ become normed categories with involution, with the norm of a morphism $\phi$ given by the operator norm of $T(\phi)$. We can view $T$ as a functor from $\C_{b}(X)$ respectively $\C_{b}^{G}(X)$  to the category of pre-Hilbert-$A$-modules and bounded, adjointable operators and hence, by completion, to the category of Hilbert-$A$-modules and bounded, adjointable operators. The latter category also has an involution given by the adjoint, and $T$ is strictly compatible with the involutions.  
\item For an object $M$ of $\C(X)$ or $\C^{G}(X)$, we define 
\[ \supp M = \{ x \in X \mid M_{x} \neq 0 \} \subset X
\]  
For a morphism $\phi$ in $\C(X)$ or $\C^{G}(X)$, we define 
\[
\supp(\phi) = \{ (x,y) \in X \times X \mid \phi_{y,x} \neq 0 \} \subset X \times X
\]
Given a coarse structure $(\cale, \calf)$, we define categories 
\[
\C(X; \cale, \calf), \C^{G}(X; \cale, \calf), \C_{b}(X; \cale, \calf), \C_{b}^{G}(X; \cale, \calf)
\]
as those subcategories of $\C(X), \C^{G}(X), \C_{b}(X), \C_{b}^{G}(X)$ with objects those objects $M$ for which there is an $F \in \calf$ with $\supp(M) \subset F$ and  morphisms those $\phi$ for which there is an $E \in \cale$ with $\supp(\phi) \subset E$. The axioms of a coarse structure are made such that these categories are actually closed under composition of morphisms, closed under the involution and additive.
\item The functor $T$ represents $\C_{b}(X; \cale, \calf)$ and $\C_{b}^{G}(X; \cale, \calf)$ as categories of Hilbert modules and bounded adjointable operators; by completion, we hence obtain $C^{*}$-categories $\C_{*}(X; \cale, \calf)$ respectively $\C_{*}^{G}(X; \cale, \calf)$. Explicitly, $\C_{*}^{G}(X; \cale, \calf)$ is defined as follows: objects are the same as the objects of $\C_{b}^{G}(X; \cale, \calf)$. For each two objects $N,M$ of $\C_{b}^{G}(X; \cale, \calf)$, $\Hom_{\C_{b}^{G}(X; \cale, \calf)}(N,M)$ is a normed space, and the involution is an isometric map
\[
\Hom_{\C_{b}^{G}(X; \cale, \calf)}(N,M) \rightarrow \Hom_{\C_{b}^{G}(X; \cale, \calf)}(M,N)
\]  
We define 
\[
\Hom_{\C_{*}^{G}(X; \cale, \calf)}(N,M)
\]
to be the completion of  $\Hom_{\C_{b}^{G}(X; \cale, \calf)}(N,M)$ to a Banach space. Since the involution is continuous, we obtain an involution
\[
\Hom_{\C_{*}(^{G}X; \cale, \calf)}(N,M) \rightarrow \Hom_{\C_{*}(X; \cale, \calf)}(M,N)
\]  
by extending the involution of $\C_{b}^{G}(X; \cale, \calf)$ to the completions, Since for a third object $L$, the composition map
\[
\Hom_{\C_{b}^{G}(X; \cale, \calf)}(N,M) \times \Hom_{\C_{b}^{G}(X; \cale, \calf)}(M,L) \rightarrow \Hom_{\C_{b}^{G}(X; \cale, \calf)}(N,L)
\]
is continuous, we can complete the composition map to obtain a map
\[
\Hom_{\C_{*}^{G}(X; \cale, \calf)}(N,M) \times \Hom_{\C_{*}^{G}(X; \cale, \calf)}(M,L) \rightarrow \Hom_{\C_{*}^{G}(X; \cale, \calf)}(N,L)
\]
which defines the composition in $\C_{*}^{G}(X; \cale, \calf)$. The $C^{*}$-identity is inherited from $\C_{b}^{G}(X; \cale, \calf)$, so $\C_{*}^{G}(X; \cale, \calf)$ is a $C^{*}$-category.
\item To simplify notation, we adopt the following conventions. If $M$ is an object of any of our categories, we let $m_{x}$ be the rank of $M_{x}$ as an $A$-module. Furthermore, $M_{x} = A^{m_{x}}$ has a canonical basis, which we will denote by $e_{(x,1)}, e_{(x,2)}, \cdots e_{(x,m_{x})}$ as long as $M$ is clear from the context. We will usually be sloppy and write something like $\{ e_{(x,n)} \mid x \in X, n \in \IN \}$ for the collection of all such vectors occuring in $M$; of course, $e_{(x,n)}$ does not really make sense when $n > m_{x}$. 
\item A coarse $G$-invariant map $f \colon X \rightarrow Y$ induces functors 
\begin{align*}
f_{*} \colon \C^{G}(X; \cale, \calf) &\rightarrow \C^{G}(Y; \cale, \calf) \\ 
f_{*} \colon \C^{G}_{b}(X; \cale, \calf) &\rightarrow \C^{G}_{b}(Y; \cale, \calf) \\ 
f_{*} \colon \C_{*}^{G}(X; \cale, \calf) &\rightarrow \C_{*}^{G}(Y; \cale, \calf)
\end{align*}
via $f_{*}(M)_{y} = \oplus_{x \in f^{-1}y} M_{y}$. The conditions on a coarse map guarantee that this sum is finite and that the resulting object over $Y$ is locally finite.  Since this does not change the total object $T(M)$ and the total maps of morphisms, we indeed get a functor on $\C^{G}_{b}$. For simplicity, we choose to ignore the problem of how to order the summands, which becomes a problem once one wants $(f \circ g)_* = f_* \circ g_*$ for two coarse maps $f,g$. This can be fixed as indicated in \cite[2.4]{BFJR}.
\end{enumerate}
\end{definition}

\begin{example} 
\begin{enumerate}
\item Let $(X,d)$ be a metric space with $G$ acting on $X$ by isometries. Then we define the metric coarse structure $\cale_{d}$ on $X$ by declaring $E \subset X \times X$ to be controlled if and only if $E$ is symmetric and $G$-invariant and there is $R > 0$ such that for all $(x,y) \in E$, $d(x,y) < R$.
\item If $X$ is any $G$-space, we define the $G$-compact object support condition $\calf_{Gc}$ by declaring that $F \subset X$ is in $\calf$ if and only if $F$ is relative $G$-compact, i.e. the closure of $F$ is $G$-compact.
\item Given two coarse structures $(\cale, \calf)$ and $(\cale', \calf')$, we define their intersection $(\cale \cap \cale', \calf \cap \calf')$ to be the coarse structure obtained by taking any $E \in \cale$, $E' \in \cale'$ and forming the intersection $E \cap E'$ which is an element of $\cale \cap \cale'$, and similarly for $\calf \cap \calf'$. Sloppily speaking, this means that we enforce both control conditions; for example, in the category $C^{G}(X; \cale \cap \cale', \calf \cap \calf')$, a morphism is allowed if and only if it is both $\cale$- and $\cale'$-controlled.
\item If $f: X \rightarrow Y$ is any $G$-equivariant map and $(Y;  \cale, \calf)$ is a coarse structure, we obtain a pullback coarse structure $(X; f^{-1}(\cale), f^{-1}(\calf))$, where for each $E \in \cale$, we form $(f \times f)^{-1}(E) \subset X \times X$ and let $f^{-1}(\cale)$ be the collection of subsets of $X \times X$ obtained in this way, and $f^{-1}(\calf)$ consists of the sets $f^{-1}(F)$ for $F \in \calf$.
\end{enumerate}
\end{example}

\noindent For a $G$-algebra $A$, we define the twisted group ring $A \rtimes G$ to have elements finite formal sums
\[
\sum\limits_{g \in G} a_{g} g
\]
with $a_{g} \in A$ and the evident addition. Multiplication is defined such that
\[
g \cdot a = \sigma_{g}(a)g
\]
for $g \in G, a \in A$ and extended distributively, where $\sigma_{g}: A \rightarrow A$ denotes the action of $g$ on $A$. It is easy to check that a right $A \rtimes G$-module $M$ is the same as a right $A$-module with a right $G$-action such that for $m \in M, g \in G, a \in A$ we have
\[
(v \cdot g) \cdot a = (v \cdot \sigma_{g}(a)) \cdot g
\]
A map of right modules is the same as a $G$-equivariant $A$-module map. There is a dual statement for left modules. \\
Let $l^{2}(G,A)$ be the Hilbert module with basis $G$. Then $A$ acts from the left on $l^{2}(G,A)$ via $a \cdot h = \sigma_{h^{-1}}(a)h$ for $h \in G$. Also, $G$ acts on $l^{2}(G,A)$ from the left by left multiplication. The reduced group $C^{*}$-algebra $A \rtimes_{r} G$ is the completion of the resulting representation of $A \rtimes G$ on the bounded, adjointable operators on $l^{2}(G,A)$.

\begin{prop}
\label{twistedgroup}
For each object $M$ of $\C^{G}(X; \cale, \calf)$, $\C^{G}_{b}(X; \cale, \calf)$ respectively $\C^{G}_{*}(X; \cale, \calf)$, $T(M)$ is a right $A \rtimes G$-module and for each morphism $f \colon M \rightarrow N$ in $\C^{G}(X; \cale, \calf)$ $\C^{G}_{b}(X; \cale, \calf)$ respectively $\C^{G}_{*}(X; \cale, \calf)$, \\ $T(f) \colon T(M) \rightarrow T(N)$ is an $A \rtimes G$-module map. \\
Furthermore, if the $G$-action on $X$ is free, $T(M)$ is a free $A \rtimes G$-module. If $X = G$ with no control conditions, the category $\C^{G}_{b}(X; \cale, \calf)$ is equivalent to the category of free right $A \rtimes G$ - modules and the category $\C^{G}_{*}(X; \cale, \calf)$ is equivalent to the category of free right $A \rtimes_{r} G$ - modules.
\end{prop}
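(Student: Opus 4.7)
The plan is to unpack the fixedness conditions into explicit formulas, read off an $A \rtimes G$-module structure on $T(M)$, and then establish the equivalence claims by constructing explicit inverse functors.

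For an object $M$ of $\C^G(X;\cale,\calf)$, the identity $M \cdot g = M$ unravels to the equality $M_{gx} = M_x$ of objects of $\calv$; for a fixed morphism $\phi$, it becomes the equivariance relation $\phi_{gx,gy} = g(\phi_{x,y})$, where $g(-)$ denotes applying $\sigma_g$ entry-wise to matrices. Since $T(M)$ is already a right $A$-module, I would define a right $G$-action on $T(M)$ by $v \cdot g := \sigma_{g^{-1}}(v)$, viewing $\sigma_{g^{-1}}(v)$ as an element of the summand $M_{g^{-1}x} = M_x$ for $v \in M_x$. Associativity is immediate from $\sigma_{h^{-1}}\sigma_{g^{-1}} = \sigma_{(gh)^{-1}}$, and the crossed-product compatibility
\[
(v \cdot g) \cdot a = \sigma_{g^{-1}}(v) \cdot a = \sigma_{g^{-1}}(v \cdot \sigma_g(a)) = (v \cdot \sigma_g(a)) \cdot g
\]
upgrades $T(M)$ to a right $A \rtimes G$-module. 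Since $\sigma_{g^{-1}}$ is a $*$-automorphism and hence isometric, the action extends to the Hilbert module completion relevant for $\C^G_*$. That $T(\phi)$ is $A \rtimes G$-linear is a direct computation: $A$-linearity is built in, and $G$-equivariance reduces to applying $\sigma_{g^{-1}}$ to both sides of $\phi_{gy,x} = g(\phi_{y,g^{-1}x})$.

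For freeness when $G$ acts freely on $X$, I would pick a set-theoretic fundamental domain $F \subset X$ and decompose
\[
T(M) = \bigoplus_{x \in F} \bigoplus_{g \in G} M_{gx}.
\]
Each orbit piece $\bigoplus_g M_{gx}$ is freely generated as a right $A \rtimes G$-module by $e_{(x,1)},\ldots,e_{(x,m_x)} \in M_x$: under the $G$-action just defined, $e_{(x,i)} \cdot g = e_{(g^{-1}x,i)}$, so sweeping by $A \rtimes G$ produces all of $\bigoplus_g M_{gx}$, and linear independence follows from the direct sum decomposition. Therefore $T(M)$ is free on $\{e_{(x,i)} : x \in F,\ 1 \le i \le m_x\}$.

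For the equivalence in the case $X = G$ with $G$ acting by left translation, a fixed object $M$ is determined by $M_e = A^n$, and a fixed morphism $\phi$ by the family $\{\phi_{g,e}\}_{g \in G}$ via $\phi_{h,g} = g(\phi_{g^{-1}h, e})$. I would construct the inverse functor by sending a free right $A \rtimes G$-module $P$ of rank $n$ with chosen basis $b_1,\ldots,b_n$ to the object $M$ with $M_g = A^n$ and identification $e_{(g,i)} \leftrightarrow b_i \cdot g$; a bounded $A \rtimes G$-linear map $\Psi \colon P \to Q$ is specified by the images $\Psi(b_i)$, which expand along the group-basis of $Q$ into elements of the form $\sum_g v_{i,g}$ with $v_{i,g} \in A^m$, and these expansion coefficients become the columns of the morphisms $\phi_{g,e}$. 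Full faithfulness and essential surjectivity then follow directly. For the $C^*$-version, the rank-one endomorphism algebra is by construction of $\C^G_*(G)$ the operator-norm completion of $A \rtimes G$ acting by left multiplication on $l^2(G,A)$ --- that is, the reduced crossed product $A \rtimes_r G$ --- and matrix amplification handles higher ranks.

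The principal obstacle is bookkeeping: $\calv$ already carries a $G$-action applied entry-wise to matrices, while the crossed product introduces a separate $\sigma$-twist, and keeping the two actions and all their inverses straight is where mistakes are easiest to make. Once the conventions are pinned down, each verification reduces to a short calculation.
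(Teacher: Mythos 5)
Your proposal is correct and follows essentially the same route as the paper: the same right $G$-action $e_{(x,i)}\cdot g = e_{(g^{-1}x,i)}$ twisted by $\sigma_{g^{-1}}$ on coefficients, the same verification of the crossed-product relation and of equivariance of $T(f)$ via $\phi_{gx,gy}=g(\phi_{x,y})$, the same orbit decomposition for freeness, and the same identification of the rank-one endomorphism algebra with $A\rtimes G$ and of its completion with $A\rtimes_r G$ via $l^2(G,A)$. The only place you compress more than the paper is the final norm identification, where the paper explicitly matches the representation (including the twisted $A$-action $a\mapsto(g\mapsto\sigma_{g^{-1}}(a)g)$ under the identification $g\mapsto g^{-1}$) with the one defining the reduced crossed product; this is a matter of detail, not of substance.
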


\begin{proof}
To decrease the confusion potential, let us first discuss the case $A = \IC$ with the trivial $G$-action. Let $M$ be an object of  $\C^{G}(X; \cale, \calf)$. Then $T(M)$ is a complex vector space. To define a $G$-action on $T(M)$, we use the $G$-action on the underlying space $X$. Let for $x \in X$ \\ $e_{(x,1)}, \dots , e_{(x,m_{x})}$ be the standard basis of $M_{x} = \IC^{m_x}$. Varying over all $x$, the vectors $e_{(x,i)}$ by definition form a basis for $T(M)$. The $G$-action is now defined by setting
\[
e_{(x,n)} \cdot g = e_{(g^{-1}x,n)}
\]
which is possible since $M_{x}$ and $M_{g^{-1}x}$ have the same dimension. This gives $T(M)$ the structure of a right $\IC G$-module. Note that, at least in the case of non-commutative coefficients, our definitions force us to use right modules: Composition of morphisms in $\C_{*}^{G}(X)$ is modeled on the usual matrix multiplication, and multiplying with a matrix from the left is a right module homomorphism. Consequently, $T(\phi)$ for $\phi$ a  morphism in $\C_{*}^{G}(X)$ is naturally a right module homomorphism, so to make it into a $\IC G$-module map, $G$ has to act from the right.\\
Now let $f \colon M \rightarrow N$ be a morphism in  $\C^{G}(X; \cale, \calf)$. By definition, $T(f)$ is complex linear. We have to check that $T(f)$ is $G$-equivariant. It suffices to check this on the basis vectors $e_{(x,n)}$ where we compute
\begin{align*}
T(f)(e_{x,n} \cdot g) &= T(f)(e_{g^{-1}x,,n}) \\
&= \sum\limits_{y \in X} f_{y,g^{-1}x}(e_{g^{-1}x,n}) \\
&= \sum\limits_{y \in X} f_{g^{-1}y,g^{-1}x}(e_{g^{-1}x,n}) \\
&= (\sum\limits_{y \in X} f_{y,x}(e_{x,n})) \cdot g \\
&= T(f)(e_{x,n}) \cdot g
\end{align*}
where we changed indexing from $y$ to $g^{-1}y$ after line $2$ and used that $f_{g^{-1}y,g^{-1}x}$ and $f_{y,x}$ are the same matrices by $G$-invariance of $f$. \\
If $X$ is a free $G$-space and $M$ is only nonzero over a single $G$-orbit, then clearly $T(M)$ is a free $\IC G$-module. If $M$ is an arbitrary object, decompose it as a direct sum of objects, each of which is only nonzero over a single $G$-orbit. Since the functor $T$ is compatible with direct sums, $T(M)$ is a free $\IC G$-module. \\
Finally, assume $X = G$ without any control conditions. Then the category $\C_{*}^{G}(G)$ has one object for each natural number $n$, namely the one whose dimension is $n$ at each point - this dimension is constant by $G$-invariance. Hence $\C_{*}^{G}(G)$ is equivalent to the category of free modules over the endomorphism ring of the object $M$ whose dimension is one at each point, so we are done once we see that
\[
\End_{\C_{*}^{G}(G)}(M) = C_{*}^{r}(G)
\]
By definition, $\End_{\C_{*}^{G}(G)}(M)$ is the completion of $\End_{\C_{b}^{G}(G)}(M)$. We will identify $\End_{\C_{b}^{G}(G)}(M)$ with $\IC G$ and the norm on $\End_{\C_{b}^{G}(G)}(M)$ with the usual norm used in defining the reduced $C^{*}$-algebra. \\
Let $\phi: M \rightarrow M$ be a morphism in $C_{b}^{G}(G)$, where $M$ is still the object with the dimension of $M_{g}$ equal to one for all $g$. Let $e_{g}$ be the standard generator (i.e. the $1$) of $M_{g} = \IC$. The morphism $\phi$ is uniquely determined by 
\[
\phi(e_{1}) = \sum\limits_{g \in G} a_{g} e_{g}
\]
since we have by definition
\[
\phi(e_{h}) = \sum\limits_{g \in G} \phi_{hg,h} e_{hg}
\]
and by $G$-invariance of $\phi$, we have $a_{g} = \phi_{g,1} = \phi(hg,h)$. \\
When identifying $\phi$ with 
 \[
 \sum\limits_{g \in G} a_{g}g^{-1} \in \IC G
 \]
composition in the endomorphisms and multiplication in $\IC G$ agree. To see this, note we only have to consider $\phi_{g}$, $\psi_{h}$ with $\phi(e_{1}) = e_{g^{-1}}$ and $\psi(e_{1}) = e_{h^{-1}}$ for some $g,h \in G$ since any morphism is a complex linear combination of these and both the composition in the category and multiplication in the group ring are bilinear. Note that $\phi_{g}(e_{h}) = e_{hg^{-1}}$ and not $e_{g^{-1}h}$; this forces the occurence of $g^{-1}$ instead of $g$. We hence have 
\[
(\psi_{h} \circ \phi_{g})(e_{1}) = \psi_{h}(e_{g^{-1}}) = e_{g^{-1}h^{-1}}
\] 
Since $\psi_{h}$ corresponds to $h$ and $\phi_{g}$ to $g$ in the group ring, it follows that the compositions agree.\\
Any coefficients $a_{g}$ are allowed here, i.e. we actually have $\C_{b}^{G}(G) = \C^{G}(G)$: Again, each  $\phi$ is a linear combination of the maps $\phi_{g}$ with $\phi_{g}(e_{1}) = e_{g^{-1}}$, and $\phi_{g}: T(M) \rightarrow T(M)$ is bounded. In fact, $T(M)$ is the pre-Hilbert space with basis $G$ and $T(\phi_{g})$ is right multiplication by $g^{-1}$.

Hence  $\End_{\C_{b}^{G}(G)}(M) =  \End_{\C^{G}(G)}$ is canonically identified with the group ring, and the norm it inherits is precisely the norm we obtain from letting $G$ act on $l^{2}(G)$ by left multiplication once one identifies the completion of $T(M)$ with $l^{2}(G)$ via $e_{g} \mapsto g^{-1}$. Completing, we hence obtain
\[
\End_{\C_{*}^{G}(G)}(M) = C_{*}^{r}(G)
\]
as desired.  \\
Now we turn to the case of arbitrary coefficients. The principle is the same, but the presence of the $G$-action on $A$ complicates the formulas. Clearly, $T(M)$ is a right $A$-module. To avoid confusion, we will write $\sigma_g: A \rightarrow A$ for the action of $g \in G$ on $A$. Since the action will be on the right, we will view $A$ as a right $G-C^{*}$-algebra by letting $g$ act as $\sigma_{g^{-1}}$. We define a right $G$-action on $T(M) = \bigoplus_{x \in X} A^{m_{x}}$ by 
\[
 (\sum\limits_{x \in X, n \in \IN} a_{x,n} e_{x,n}) \cdot g= \sum\limits_{x \in X, n \in \IN} (\sigma_{g^{-1}}(a_{x,n})) e_{g^{-1}x,n}
\]
which is well-defined since $Mg = M$ implies $m_{x} = m_{gx}$ for all $x$.
For $v = \sum\limits_{x \in X, n \in \IN} a_{x,n} e_{x,n}$ and $a \in A$, we have to check $(va) \cdot g = (v \cdot g) \sigma_{g^{-1}}(a) $ to obtain a right $A \rtimes G$-module. We compute
\begin{align*}
((\sum\limits_{x \in X, n \in \IN} a_{x,n} e_{x,n})a) \cdot g &= (\sum\limits_{x \in X, n \in \IN} a_{x,n} a e_{x,n}) \cdot g \\
&= \sum\limits_{x \in X, n \in \IN} \sigma_{g^{-1}}(a_{x,n}) \sigma_{g^{-1}}(a) e_{g^{-1}x,n} \\
&=  (v \cdot g) \sigma_{g^{-1}}(a)
\end{align*}
If $f \colon M \rightarrow N$ is a morphism in $\C^{G}(X; \cale, \calf)$, $T(f)$ is a right $A$-module homomorphism. We have to check $T(f)(v \cdot g) = T(f)(v) \cdot g$. We compute for one of the canonical basis vectors $e_{x,n}$
\begin{align*}
T(f)(e_{x,n} \cdot g) &= T(f)(e_{g^{-1}x,n}) \\
&= \sum\limits_{y \in X}  f_{y,g^{-1}x} (e_{g^{-1}x,n}) \\
&=  \sum\limits_{y \in X} f_{g^{-1}y, g^{-1}x} (e_{g^{-1}x,n})  \\
&= \sum\limits_{y \in X} \sigma_{g^{-1}}(f_{y,x}) (e_{g^{-1}x,n}) \\
&=  T(f)(e_{x,n}) \cdot g
\end{align*}
From the second to the third line, we just changed the indexing from $y$ to $g^{-1}y$. \\
Furthermore, if  $T(f)(v \cdot g) = T(f)(v) \cdot g$ and  $T(f)(w \cdot g) = T(f)(w) \cdot g$, then $T(f)((v+w) \cdot g) = T(f)(v+w) \cdot g$. It remains to check that for $a \in A$, if  $T(f)(v \cdot g) = T(f)(v) \cdot g$, then also  $T(f)(va \cdot g) = T(f)(va) \cdot g$; then by linearity and since we have checked  $T(f)(v \cdot g) = T(f)(v) \cdot g$ on a basis, we can conclude it is true for all $v$. We compute, using $va \cdot g = v \cdot g \cdot \sigma_{g^{-1}}(a)$, $T(f)(v \cdot g) = T(f)(v) \cdot g$ and the $A$-linearity of $T(f)$
\begin{align*}
T(f)(va \cdot g) &= T(f)(v \cdot g \cdot \sigma_{g^{-1}}(a)) \\
&= T(f)(v \cdot g) \cdot \sigma_{g^{-1}}(a) \\
&= T(f)(v) \cdot g  \cdot \sigma_{g^{-1}}(a) \\
&= T(f)(v) \cdot a \cdot g  \\
&= T(f)(va) \cdot g
\end{align*}
\noindent The same is true for $\C^{G}_{b}(X; \cale, \calf)$. By continuity, also all morphisms of $\C^{G}_{*}(X; \cale, \calf)$ induce right $A \rtimes G$-module homomorphisms. \\
If the $G$-action on $X$ is free, we can write $M = \bigoplus_{s \in G \backslash X} M_{s}$, where $M_{s}$ is a module supported only over the orbit $s$. Note that $M_{s}$ is determined by a single number $n$. By decomposing $M_{s}$ further as in $n = 1+1+\dots+1$, we can assume we are in the situation $X = G$ with $M$ the object with $M_{g} = A$ for all $g$. Then it is easy to check that $T(M)$ is a free $A \rtimes G$-module. \\
Now let $X = G$ and consider $\C_{b}^{G}(X; \cale, \calf)$. Let $M$ be the object which has $M_{g} = A$ for all $g \in G$. Since $G$ has only one $G$-orbit, each object of  $\C_{b}^{G}(X; \cale, \calf)$ is a direct sum of copies of $M$, so it suffices to see that $\End(M) = A \rtimes G$ as rings. \\
Let $\phi: M \rightarrow M$. Since $\phi$ is $G$-invariant, it is determined by the components $\phi_{e,g}$. We send $\phi$ to $\sum \phi_{e,g} g \in A \rtimes G$. This is a bijection; we have to check that it is compatible with composition. For this, let $\phi, \psi: M \rightarrow M$ be given. By decomposing $\phi$ and $\psi$, we may assume that only $\phi_{e, g}$ and $\psi_{e, h}$ are nonzero. Then the only potentially nonzero component of $\phi \circ \psi$ into $e$ is the one from $gh$, and we have 
\[
(\phi \circ \psi)_{e, gh} = \phi_{e, g} \circ \psi_{g,gh} = \phi_{e, g}  (g \cdot \psi_{e, h})
\]
Hence $\phi \circ \psi$ is sent to $\phi_{e, g}  (g \cdot \psi_{e, h}) gh \in A \rtimes G$, which happens to be the same  as the product of $\phi_{e,g} g$ and $\psi_{e,h} h$ in $A \rtimes G$. \\
For the final claim about $\C^{G}_{*}(G; \cale, \calf)$, we regard $M$ as an object of the category $\C^{G}_{*}(G; \cale, \calf)$. It again suffices to see that $\End(M) = A \rtimes_{r} G$. Both are completions of $A \rtimes G$, so it suffices to see that the norms are the same. Consider $T(M) = \bigoplus_{g \in G} A$, which we regard as a free $A$-module with basis $G$. The ring $A \rtimes G$ now acts on $T(M)$ from the left since $A \rtimes G$ lies in the endomorphisms of $M$. For $g \in G$, consider the endomorphism $\phi: M \rightarrow M$ with $\phi_{e,g} = 1$ and $\phi_{e,h} = 0$ for $h \neq g$. \\
Now $T(\phi): T(M) \rightarrow T(M)$ is the map sending a basis element $h$ to $hg^{-1}$, since $\phi_{hg^{-1},h}$ is the only component of $\phi$ leaving $h$. We hence obtain a representation $\rho$ of $G$ on $T(M)$.
For $a \in A$, consider the endomorphism $\psi$ of $M$ with $\psi_{e,e} = a$ and $\psi_{e,h} = 0$ for $h \neq e$. Then $T(\phi): T(M) \rightarrow T(M)$ is the map sending a basis element $g$ to  
$\sigma_{g}(a)g$ since the only nonzero component of $\psi$ leaving $g$ is $\phi_{g,g}$ which is $\sigma_{g}(a)$. \\
Now identify the completion of $T(M)$ with $l^{2}(G,A)$ by the map sending $g$ to $g^{-1}$. The representation of $G$ on $l^{2}(G,A)$ obtained from the representation $\rho$ constructed above is given by left multiplication. The representation of $A$ on $T(M)$ constructed above identifies with the representation
\[
g \mapsto \sigma_{g^{-1}}(a) g
\]
Hence the obtained representation of $A \rtimes G$ is precisely the one used in the definition of the reduced group $C^{*}$-algebra.
\end{proof}

\begin{rem}
As long as the $G$-action on $A$ is nontrivial, the category $\C_{*}^{G}(X; \cale, \calf, A)$ is \emph{not} an (right) $A$-linear category: The only way to make it so in general would be to define $f \cdot a$ as having components $(f \cdot a)_{(x,y)} = f_{(x,y)} \cdot a$, which fails to be $G$-invariant.  However, in all of our examples, $X$ will be of the form $G \times Y$, i.e. all orbits of the $G$-action have a canonical starting point, namely $(e,y)$. Then we can define
\[
(f \cdot a)_{(g,y),(h,z)}  = f_{(g,y),(h,z)} \cdot g(a)
\]
which is $G$-invariant. Then we have that $T(f \cdot a) = T(f) \cdot a$, where the latter multiplication is right multiplication by $a$ as given by the $A \rtimes G$-module structure on the target of $T(f)$. This is different from obvious right action of $A$. After all, regarding $A$ as a subring of $A \rtimes G$ via $a \mapsto a \cdot e$, $A \rtimes G$ is an $A$-bimodule. The identification (as, say, abelian groups) $A \rtimes G \cong \oplus_{G} A$ is an isomorphism of left $A$-modules, but not of right $A$-modules since we have $g \cdot a = \sigma_{g} a \cdot g$ in $A \rtimes G$ and not $ga = ag$.  
\end{rem}

\begin{example}
\label{noinverse}
This example illustrates an easy-to-miss subtlety in the definition of $\C^{G}(X)$ respectively $\C^{G}_{*}(X)$. We use the ring $\IC$ as coefficients for this example and to keep matters simple assume that $G$ is trivial. Pick a coarse space $X$. Let $\phi: M \rightarrow M$ be a morphism in $\C(X)$ such that $T(\phi): T(M) \rightarrow T(M)$ is an isomorphism of $\IC$-modules.  \\
Then it does in general \emph{not} follow that $\phi$ is an isomorphism in $\C(X)$ or $\C_{*}(X)$, and it may happen that $\phi$ is an isomorphism in $\C_{*}(X)$, but not in $\C(X)$. In other words, control does not carry over to inverses of morphisms. For a concrete example, let $X = \IN$ with its metric control structure. Let $M$ be the object of $\C(X)$ with $M_{x} = \IC$ for all $x$. We write $M = M_{1} \oplus M_{2} \oplus M_{3} \oplus \cdots$ where $M_{1}$ is the part of $M$ supported on $1$, $M_{2}$ is the part supported on $2$ and $3$, $M_{3}$ is the part supported on $4,5$ and $6$ etc., such that the support of $M_{i}$ consists of $i$ subsequent natural numbers. Pick a sequence $a_{n} \in \IC$.  Let $e_{1}, \cdots ,e_{n}, \cdots$ be the canonical orthonormal basis of $M$ with $e_{j}$ supported over the $j$-th number in the support of $M$. Then the map $\phi_{i}: M_{i} \rightarrow M_{i}$ given by 
\[
e_{j} \mapsto \begin{cases} e_{j}+ a_{i}e_{j+1} &\text{ if } j < i  \\ e_{j} &\text{ else } \end{cases}
\]
is $1$-controlled. Hence the direct sum of the various $\phi_{i}$ fits together to a map
\[
\phi: M \rightarrow M
\]
which is $1$-controlled. The matrix form of $\phi_{i}$ is
\[
\begin{pmatrix}
1 & a_{i} & 0 & 0 & \cdots \\
0 & 1 & a_{i} & 0 & \cdots \\
0 & 0 & 1 & a_{i} & \cdots \\
\cdots & \cdots & \cdots  & \cdots  & \cdots 
\end{pmatrix} =  \Id + a_i N
\]
where 
\[
N = \begin{pmatrix}
0 & 1 & 0 & 0 & \cdots \\
0 & 0 & 1 & 0 & \cdots \\
0 & 0 & 0 & 1 & \cdots \\
\cdots & \cdots & \cdots  & \cdots  & \cdots 
\end{pmatrix}
\]
is nilpotent. 
However, the inverse of $\phi_{i}: M_{i} \rightarrow M_{i}$ is $i$-controlled but not $(i-1)$-controlled: The inverse of the above matrix is $\sum\limits_{k = 0}^{i-1} (-a)^k N^{k}$, which has an entry in the top-right corner. Since $i$ can be arbitrarily large, the inverse of $\phi$ is not controlled and  $\phi$ is not an isomorphism in $\C(X)$. \\
If $\left\Vert a_{i} \right\Vert < 1$ for all $i$, it is straightforward to see that each $T(\phi_{i})$ is bounded by $2$, and hence so is $T(\phi)$ as an orthonormal sum of the $T(\phi_{i})$. If $a_{i}$ converges to zero sufficiently fast, the potential inverse for $\phi$ is still not controlled, but can be approximated by controlled morphisms since the components of the inverse of the above matrix become very small towards the top right corner if $a_{i}$ is small, and hence $\phi$ is invertible in $\C_{*}(X)$. \
\end{example}

\section{Quotient categories}

This section collects some language concerning coarse spaces and their associated categories. The terminology is mainly from \cite{BFJR}. Let $(X, \cale, \calf)$ be a coarse $G$-space and $Y \subset X$ a $G$-invariant subspace. For any $E \in \cale$, we define the $E$-thickening of $Y$ to be
\[
Y^{E} = \{x \in X \mid \text{ there is } y \in Y \text{ such that } (y,x) \in E \}
\]
This is again $G$-invariant: If $(y,x)$ is in $E$, then so is $(gy,gx)$ by our assumptions on $\cale$. In terms of the controlled category $C^{G}(X; \cale, \calf)$, one should think of $Y^{E}$ as the subset of $X$ you can reach from an object supported in $Y$ by an $E$-controlled morphism. 
In particular, if an object $M$ over $X$ is isomorphic to an object $N$ supported over $Y$ via an $E$-controlled morphism, $M$ must be supported over $Y^{E}$. Even more precisely, if the support of $N$ is contained in $F \in \calf$, then the support of $M$ is contained in $(F \cap Y)^{E}$. Since also the support of $M$ is contained in some $F' \in \calf$, we find that $\supp(M) \subset (F \cap Y)^{E} \cap F'$. Since $F'' = F \cup F'$ is also in $\calf$, we conclude $\supp(M) \subset (F'' \cap Y)^{E} \cap F''$. So for an object $M$ of $\C^{G}(X)$ to be isomorphic to one supported over $Y$, it is necessary that $\supp(M) \subset (F \cap Y)^{E} \cap F$ for some $F \in \calf$, $E \in \cale$. This leads us to define a new class of object support conditions as
\[
\calf(Y) = \{ (F \cap Y)^{E} \cap F \mid F \in \calf, E \in \cale   \}
\]
Now we can form the categories $\C^{G}_{b}(X; \cale, \calf(A))$ and $\C^{G}_{*}(X; \cale, \calf(Y))$. One has the inclusion functors
\begin{align*}
\C^{G}_{b}(Y; i^{-1}\cale, i^{-1}\calf ) &\rightarrow \C^{G}_{b}(X; \cale, \calf(Y)) \\
\C^{G}_{*}(Y; i^{-1}\cale, i^{-1}\calf ) &\rightarrow \C^{G}_{*}(X; \cale, \calf(Y))
\end{align*}

and the definitions are nearly made such that these two functors are equivalences. However, this is not true in general, as the following example shows:
\begin{example} Let $X$ be an infinite-dimensional Banach space and $Y = \{ 0 \}$. Let $\cale$ be the metric control condition and $\calf$ be the power set of $X$, i.e. there is no control at all for the support of objects. Pick a discrete, infinite subset of the unit disk in $X$ and let $M$ be some object of $\C^{G}_{b}(X; \cale, \calf(Y))$ supported on this set. This object cannot be equivalent to an object supported only on $Y$. 
\end{example}
The following definition is tailor-made to circumvent this problem.

\begin{definition}\cite[4.1]{BFJR}
Let $(X, \cale, \calf)$ be a coarse $G$-space. We say that $X$ is $G$-proper if for each $G$-compact  $K \subset X$, each $E \in \cale$ and each $F \in \calf$, we find a $G$-compact $K'$ such that $(F \cap K)^{E} \cap F \subset K'$.
\end{definition}

\begin{prop}
\label{G-properness}
If the coarse structure on $X$ is $G$-proper, the $G$-action on $X$ is free and $Y \subset X$ is closed and $G$-invariant, the inclusion functors
\begin{align*}
\C^{G}_{b}(Y; i^{-1}\cale, i^{-1}\calf ) &\rightarrow \C^{G}_{b}(X; \cale, \calf(Y)) \\
\C^{G}_{*}(Y; i^{-1}\cale, i^{-1}\calf ) &\rightarrow \C^{G}_{*}(X; \cale, \calf(Y))
\end{align*}
are equivalences of categories.
\end{prop}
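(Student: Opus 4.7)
The inclusion functors are fully faithful essentially by definition: a morphism $\phi$ in $\C_b^G(X; \cale, \calf(Y))$ between objects supported on $Y$ has components $\phi_{x,z}$ which can only be nonzero when both $x, z \in Y$, so $\phi$ arises uniquely from a morphism in $\C_b^G(Y; i^{-1}\cale, i^{-1}\calf)$. The control conditions, $G$-equivariance, and boundedness of $T(\phi)$ transfer directly since $T$ is insensitive to the ambient space and $E \cap (Y \times Y) \in i^{-1}\cale$ when $E \in \cale$. The entire substance of the statement lies in essential surjectivity.

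Given $M$ in $\C_b^G(X; \cale, \calf(Y))$ with $\supp(M) \subset (F \cap Y)^E \cap F$ for some $F \in \calf$, $E \in \cale$, the plan is to retract $M$ onto $Y$. By the very definition of the thickening, for each $x \in \supp(M)$ there exists some $y \in F \cap Y$ with $(y, x) \in E$. Since the $G$-action on $X$ is free, we may choose such a $y =: \rho(x)$ on a system of $G$-orbit representatives of $\supp(M)$ and extend $G$-equivariantly via $\rho(gx) := g\rho(x)$; freeness guarantees well-definedness, and the $G$-invariance of $E$ guarantees $(\rho(gx), gx) \in E$. This produces a $G$-equivariant set map $\rho \colon \supp(M) \to F \cap Y$ with $(\rho(x), x) \in E$ for all $x$. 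Now define a new object $N$ over $Y$ by $N_y := \bigoplus_{x \in \rho^{-1}(y)} M_x$, and define $\phi \colon M \to N$ by $\phi_{y,x} = \Id_{M_x}$ if $y = \rho(x)$ and zero otherwise. By construction $\phi$ is $E$-controlled and $G$-equivariant, with evident $E^*$-controlled inverse; $T(\phi)$ and $T(\phi^{-1})$ are unitary since they merely regroup the orthonormal basis vectors of $T(M)$ according to $\rho$.

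It remains to verify that $N$ is a legal object of $\C_b^G(Y; i^{-1}\cale, i^{-1}\calf)$, meaning that its support is discrete in $Y$ and each $N_y$ has finite rank, and this is precisely where $G$-properness enters. For fixed $y \in F \cap Y$, applying $G$-properness to the $G$-compact set $K := G \cdot \{y\}$ produces a $G$-compact $K'$ with $\{y\}^E \cap F \subset (F \cap K)^E \cap F \subset K'$; writing $K' = G \cdot C$ for $C$ compact and combining with the freeness of the $G$-action together with the discreteness of $\supp(M)$ forces $\rho^{-1}(y) \cap \supp(M)$ to be finite, so $N_y$ has finite rank. An analogous argument shows $\rho(\supp(M))$ is discrete in $Y$, and $\supp(N) \subset F \cap Y \in i^{-1}\calf$ by construction. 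The extension to the $C^*$-category statement is automatic: the inclusion on $\C_b^G$ is isometric, because the norm is the operator norm of $T$ which is independent of the ambient space, and any isometric equivalence of pre-$C^*$-categories extends to an equivalence of their $C^*$-completions. The one real technical obstacle in the proof is the verification of the locally-finite conditions on $N$; this is what forces the joint hypotheses of $G$-properness and freeness, without which the orbit-by-orbit lift $\rho$ would either fail to exist equivariantly or fail to yield a discrete object.
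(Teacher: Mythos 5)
Your proposal is correct and follows essentially the same route as the paper: fullness and faithfulness are immediate because the allowed morphisms agree on both sides, and essential surjectivity is obtained by choosing, via the definition of the thickening $(F\cap Y)^{E}$, a (discontinuous) $G$-equivariant retraction $\rho\colon \supp(M)\to F\cap Y$ with $(\rho(x),x)\in E$, pushing $M$ forward along it, and invoking $G$-properness to verify the pushforward is a legitimate locally finite object; the $C^{*}$-case then follows by completion since the inclusion is isometric. Your verification of local finiteness is carried out pointwise at each $y$ rather than for an arbitrary $G$-compact $K\subset Y$ as in the paper, but this is only a cosmetic difference.
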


\begin{proof}
We only consider the first functor; the claim for the second just follows by completion since the first functor is isometric.\\
The only thing to check is essential surjectivity; full and faithful follow immediately from the definitions: The allowed morphisms are the same on both sides. So let $M$ be an object of $\C^{G}_{b}(X; \cale, \calf(Y))$. By definition of $\calf(Y)$, there is $F \in \calf$ and $E \in \cale$ such that $\supp(M) \subset (F \cap Y)^{E} \cap F$. Hence we find for $x \in \supp(M)$ an $f(x) \in F \cap Y$ such that $(x, f(x)) \in \cale$. This yields a (probably uncontinuous) map $f: \supp(M) \rightarrow F \cap Y$ which we can choose to be $G$-equivariant, and $f_{*}(M)$ is controlled by $\calf$. We only have to check that $f_{*}(M)$ is locally finite; then $M \cong f_{*}(M)$ along an $\cale$-controlled isomorphism since $(x,f(x)) \in \cale$ for all $x$ and $f_{*}(M)$ is a legit object of $\C^{G}_{b}(Y; i^{-1}\cale, i^{-1}\calf )$. \\
If $K \subset Y$ is $G$-compact, then we find by $G$-properness a $G$-compact $K'$ such that $(F \subset K)^{E} \cap F \subset K'$. Hence, if $f(x) \in K$, we must have $x \in K'$. Since $M$ is locally finite, it follows that $M'$ is locally finite.
\end{proof}

\begin{definition}
\label{quotients}
Let $X$ be a $G$-proper coarse space and $Y \subset X$ a $G$-invariant subspace. Consider the inclusion of categories
\[
\C_{*}^{G}(X; \cale, \calf(Y)) \rightarrow \C_{*}^{G}(X; \cale, \calf)
\]
We define the quotient category 
\[
\C_{*}^{G}(X; \cale, \calf)^{>Y}
\]
of this inclusion as follows. Its objects are the objects of $\C_{*}^{G}(X; \cale, \calf)$. We say that a morphism of $\C_{*}^{G}(X; \cale, \calf)$ is completely continuous if it factors through an object of $\C_{*}^{G}(X; \cale, \calf(Y))$. The completely continuous morphisms form an ideal, in the sense that if $f$ is completely continous, then so are $f \circ g$ and $g \circ f$ for any $g$. We say that $f: M \rightarrow N$ is approximately completely continuous if it  is in the norm closure of the completely continuous maps inside $\Hom(M, N)$.  Note that also the approximately completely continuous maps form an ideal in the above sense. Finally, we define the morphisms of $\C_{*}^{G}(X; \cale, \calf)^{>Y}$ to be the morphisms of $\C_{*}^{G}(X; \cale, \calf)$, modulo the approximately continuous maps. The composition of morphisms in $\C_{*}^{G}(X; \cale, \calf)$ descends to the quotients because of the ideal property. The resulting category $\C_{*}^{G}(X; \cale, \calf)^{>Y}$ inherits a $C^{*}$-norm under which it is again a $\C^{*}$-category, namely
\[\left\Vert f \right\Vert_{\C_{*}^{G}(X; \cale, \calf)^{>Y}} = \inf \left\Vert f-g \right\Vert_{\C_{*}^{G}(X; \cale, \calf)}
\]
where the infimum is taken over all approximately completely continuous maps $g$ with the same source and target as $f$.
\end{definition}

\begin{rem}
\label{quots}
The reader familiar with algebraic $K$-theory should note that this is not a Karoubi quotient as defined in \cite{karoubifil}. Indeed, the relationship of $\C_{*}^{G}(X; \cale, \calf)^{>Y}$ and the Karoubi quotient $\C_{b}^{G}(X; \cale, \calf)^{>Y}$ is as follows. The category  $\C_{b}^{G}(X; \cale, \calf)^{>Y}$ inherits a pseudonorm from  $\C_{b}^{G}(X; \cale, \calf)$, defined for a morphism $f: M \rightarrow N$ as
\[
\left\Vert f \right\Vert_{Y} = \inf_{g} \left\Vert f-g \right\Vert
\]
where the infimum is taken over all $g: M \rightarrow N$ which go to zero in  $\C_{b}^{G}(X; \cale, \calf)^{>Y}$. It is easy to see that $\C_{b}^{G}(X; \cale, \calf)^{>Y}$ with this norm satisfies all conditions of a normed category except that $\left\Vert f \right\Vert$ can be $0$ even if $f$ is nonzero, i.e. it is a pseudonormed category. Indeed, the approximately completely continuous maps  which are not completely continuous do not go to $0$ in  $\C_{b}^{G}(X; \cale, \calf)^{>Y}$, but by their very definition have norm $0$. We can make a normed category out of a pseudo-normed category by dividing out all morphisms of norm $0$. The resulting category is a normed category which we can complete to obtain a complete normed category. The result of this procedure is precisely  $\C_{*}^{G}(X; \cale, \calf)^{>Y}$. In particular, $\C_{b}^{G}(X; \cale, \calf)^{>Y}$ is not a subcategory of  $\C_{*}^{G}(X; \cale, \calf)^{>Y}$. \\
The reader should compare this situation to the corresponding situation in normed spaces: Given two normed spaces $B \subset A$, the quotient $A/B$ only inherits a pseudonorm, unless B is closed in A. Dividing out the elements of norm $0$ and then completing the resulting normed space to a Banach space $\overline{A/B}$ gives the same result as completing $A$ and $B$ and then considering $\bar{A}/\bar{B}$. The above is the categorical analogue of this. \\
Put another way, $A$ and $B$ give rise to the following diagram:
\[
\xymatrix{
B  \ar@{^{(}->} [rr] \ar@{^{(}->}[d] & & A \ar@{->>}[rr]  \ar@{=}[d] & & A/B \ar@{->>}[d] \\
\overline{B} \cap A \ar@{^{(}->} [rr] \ar@{^{(}->}[d] & & A \ar@{->>}[rr]  \ar@{^{(}->}[d] & & A/(\overline{B} \cap A) \ar@{^{(}->}[d] \\
\overline{B} \cap A \ar@{^{(}->} [rr]  & & \overline{A} \ar@{->>}[rr]  & & \overline{A/B} \\
}
\]
and similarly, we have a diagram of categories and functors, with $\D = \C_{*}^{G}(X; \cale, \calf(Y)) \cap \C_{b}^{G}(X; \cale, \calf)$:
\[
\xymatrix{
\C_{b}^{G}(X; \cale, \calf(Y)) \ar@{^{(}->} [r] \ar@{^{(}->}[d]  & \C_{b}^{G}(X; \cale, \calf) \ar@{->>}[r]  \ar@{=}[d] &  \C_{b}^{G}(X; \cale, \calf)^{> Y} \ar@{->>}[d] \\
\C_{*}^{G}(X; \cale, \calf(Y)) \cap \C_{b}^{G}(X; \cale, \calf) \ar@{^{(}->} [r] \ar@{^{(}->}[d] &  \C_{b}^{G}(X; \cale, \calf) \ar@{->>}[r]  \ar@{^{(}->}[d] &  \C_{b}^{G}(X; \cale, \calf)/\D \ar@{^{(}->}[d] \\
\C_{*}^{G}(X; \cale, \calf(Y)) \ar@{^{(}->}[r]  & \C_{*}^{G}(X;  \cale, \calf) \ar@{->>}[r]  &  \C_{*}^{G}(X;  \cale, \calf)^{> Y} \\
}
\]
where in the last column, we first divide out elements of norm $0$ and then complete the resulting category.
\end{rem}

\chapter{$K$-theory of controlled $C^*$-categories}

In this section, we will give two definitions of the topological $K$-theory of controlled $C^{*}$-categories, one by directly defining groups $K_{0}$ and $K_{1}$ and one by associating a $C^{*}$-algebra and hence a space respectively spectrum to the category. We will check that these two definitions yield the same groups and play out the definitions against each other to prove various properties of $K$-theory. For example, it is not at all obvious from the $C^{*}$-algebra definition that equivalent categories have the same $K$-theory, but it is quite obvious that the directly defined $K$-groups are isomorphic, which allows us to conclude that the two $C^{*}$-algebras are $K$-equivalent through the backdoor. Since we need a spectrum-level definition of $K$-theory, using only the group approach would not suffice.

\section{$K$-theory of $C^{*}$-categories}

In this section, we define the abelian groups $K_0$ and $K_{1}$ associated to a $C^{*}$-category $\C$. Here it will not be necessary to assume that $\C$ is of the form $\C_{*}^{G}(X; \cale, \calf)$. The definitions are similar to those in \cite[II.3]{kar} for Banach categories.

\begin{definition}
Let $\C$ be a small $C^*$-category. A projection in $\C$ is a self-adjoint idempotent $p \colon A \rightarrow A$ for some object $A$ of $\C$. The group $K_0(\C)$ is the free abelian group generated by the projections $p \colon A \rightarrow A$ in $\C$, modulo the following relations:
\begin{itemize}
\item[(i)] For two projections $p \colon A \rightarrow A$ and $q \colon B \rightarrow B$, we form the projection $p \oplus q \colon A \oplus B \rightarrow A \oplus B$ and force the relation $p \oplus q = p+q$.
\item[(ii)] For any object $A$ and the zero projection $0_A: A \rightarrow A$, we force $0_A = 0$.
\item[(iii)] If $p,q: A \rightarrow A$ are two projections joined by a path of projections inside $\End(A)$, we have the relation $p = q$. 
\end{itemize}
A unitary in $\C$ is an endomorphism $u \colon A \rightarrow A$ such that $uu^{*} = 1 = u^{*}u$. The group $K_1(\C)$ is the free abelian group generated by the unitary endomorphisms $u \colon A \rightarrow A$ in $\C$, modulo the following relations:
\begin{itemize}
\item[(i)] For unitaries $u,v \colon A \rightarrow A$ which can be joined by a path of unitaries inside $\End(A)$, we have $u = v$.
\item[(ii)] $1_A = 0$ for any object $A$
\item[(iii)] For $u \colon A \rightarrow A$ and $v \colon B \rightarrow B$, $u \oplus v \colon A \oplus B \rightarrow A \oplus B$ is unitary and we force the relation $u \oplus v = u + v$.
\end{itemize}
\end{definition}

\begin{rem}
By definition, only endomorphisms can be unitaries. A map $u: A \rightarrow B$ satisfying $uu^{*} = \Id_{B}$ and $u^{*}u = \Id_{A}$ will be called unitary isomorphism. 
\end{rem}

Instead of the relation of  being connected by a path, we could have used unitary equivalence or Murray-von Neumann equivalence in our definition of $K_0$:

\begin{definition}
Let $p \colon A \rightarrow A$ and $q \colon B \rightarrow B$ be two projections in $\C$.
\begin{itemize}
\item[(i)] $p$ and $q$ are unitarily equivalent if there is a unitary morphism $u \colon A \rightarrow B$ such that $upu^* = q$.
\item[(ii)] $p$ and $q$ are Murray-von Neumann equivalent if there is a morphism $v: A \rightarrow B$ such that $v^*v = p$ and $vv^* = q$
\end{itemize}
\end{definition}

\begin{prop}
\label{murray}
Let $p$ and $q$ be as above.
\begin{itemize}
\item[(i)] If $p \oplus 0_B$ and $0_A \oplus q$ are connected by a path of projections, they are unitarily equivalent.
\item[(ii)] If $p$ and $q$ are unitarily equivalent, they are Murray-von Neumann equivalent. 
\item[(iii)] If $p$ and $q$ are Murray-von Neumann equivalent, $p \oplus 0_B \oplus 0_A \oplus 0_B$ and $0_A \oplus q \oplus 0_A \oplus 0_B$ are unitarily equivalent via a unitary connected to $1 \colon A \oplus B \rightarrow A \oplus B$ with a path of unitaries, and hence $p \oplus 0_B \oplus 0_A \oplus 0_B$ and $0_A \oplus q \oplus 0_A \oplus 0_B$ are connected by a path of projections.
\end{itemize}
It follows that we could have used any of the three relations as (iii) of our definition of $K_0$.
\end{prop}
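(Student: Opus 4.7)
The plan is to reduce each of the three claims to the already--established analogue for $C^{*}$-algebras, namely Proposition~\ref{murneu}, by moving the entire argument into the unital $C^{*}$-algebras $\End(A \oplus B)$ and $\End(A \oplus B \oplus A \oplus B)$ that sit naturally inside $\C$. The direct sum $A \oplus B$ plays the role of a common ambient object, and once everything has been pushed into a genuine $C^{*}$-algebra the matrix manipulations of~\ref{murneu} apply verbatim.

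For (i), both $P := p \oplus 0_B$ and $Q := 0_A \oplus q$ are projections in the unital $C^{*}$-algebra $\End(A \oplus B)$. Given a path of projections in $\End(A \oplus B)$ joining them, I will subdivide it into finitely many sub-paths of norm-length strictly less than $1$, and for each consecutive pair $r,s$ with $\|r-s\|<1$ run the construction from \ref{murneu}: form $x = sr + (1-s)(1-r)$, note that $\|x-1\|<1$ so that $x^{*}x$ is a positive invertible element and admits a canonical square root $(x^{*}x)^{-1/2}$ via continuous functional calculus, and define $u = x(x^{*}x)^{-1/2}$. The identities $xr = sx$ and ``$r$ commutes with $x^{*}x$'' are purely algebraic and remain valid, so $uru^{*} = s$. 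Concatenating the resulting unitaries yields a unitary in $\End(A \oplus B)$ carrying $P$ to $Q$, i.e.\ the asserted unitary equivalence.

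For (ii), given a unitary $u \colon A \to B$ with $upu^{*} = q$, set $v := up \colon A \to B$. Using $u^{*}u = 1_A$ and $p^{2} = p$ one computes $v^{*}v = pu^{*}up = p$ and $vv^{*} = up^{2}u^{*} = upu^{*} = q$, giving Murray--von Neumann equivalence.

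Part (iii) is the main step, and the key move is to transfer the Murray--von Neumann datum from a morphism between different objects into an endomorphism of $C := A \oplus B$. Given $v \colon A \to B$ with $v^{*}v = p$ and $vv^{*} = q$, set $V := \iota_B \, v \, \pi_A \in \End(C)$, where $\iota_B \colon B \to C$ and $\pi_A \colon C \to A$ are the canonical inclusion and projection. A direct computation gives $V^{*}V = \iota_A (v^{*}v) \pi_A = P$ and $VV^{*} = \iota_B (vv^{*}) \pi_B = Q$, so $P$ and $Q$ are Murray--von Neumann equivalent \emph{inside the unital $C^{*}$-algebra} $\End(C)$. Applying part (iii) of \ref{murneu} to this $C^{*}$-algebra produces a path of unitaries in $M_{2}(\End(C)) = \End(C \oplus C) = \End(A \oplus B \oplus A \oplus B)$ starting at the identity and ending at a unitary that conjugates
\[
P \oplus 0_C \;=\; p \oplus 0_B \oplus 0_A \oplus 0_B
\]
to
\[
Q \oplus 0_C \;=\; 0_A \oplus q \oplus 0_A \oplus 0_B.
\]
Conjugating $P \oplus 0_C$ along this path of unitaries then supplies the required path of projections, and the final sentence of the proposition follows. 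The only delicate point in the whole argument is the positivity/invertibility step needed for $(x^{*}x)^{-1/2}$ in (i); this uses only that $\End(A \oplus B)$ is a unital $C^{*}$-algebra, so no new analytic input beyond~\ref{murneu} is required.
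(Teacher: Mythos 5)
Your proposal is correct and takes essentially the same route as the paper, which likewise reduces everything to the $C^{*}$-algebra computations of \ref{murneu} carried out inside $\End(A\oplus B)$ and $\End(A\oplus B\oplus A\oplus B)$ (same $x=qp+(1-q)(1-p)$ construction, same $2\times 2$ matrix and rotation-to-the-identity trick for (iii)). The only caveat is that the "path of unitaries starting at the identity" you invoke is supplied by the \emph{proof} of \ref{murneu}(iii) rather than by its stated conclusion, which is precisely the extra bit the paper re-derives explicitly in the categorical setting.
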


\begin{proof}
The proofs are the obvious adaptions of the corresponding facts for $C^*$-algebras, see for example \cite[4.1.10]{HigsonRoe}or \ref{murneu}.  \\
For the first part, to simplify notation, we write $p$ for $p \oplus 0_B$ and $q$ for $0_A \oplus q$ and work in the $C^*$-algebra $D = \End(A \oplus B)$. By subdividing the path into smaller paths, we may assume $\left\Vert p-q \right\Vert < 1$. Now set $x = qp+(1-q)(1-p)$. We have $xp = qx$ and 
\[
x-1 = (2q-1)(p-q)
\]

Since $\left\Vert p-q \right\Vert < 1$ and $\left\Vert 2q-1 \right\Vert \leq 1$, we have $\left\Vert x-1\right\Vert < 1$ and hence $x$ is invertible.  Set $u = x(x^*x)^{-\frac{1}{2}}$. It is then easy to check that $up = qu$ since $x^*x$ and hence $(x^*x)^{-\frac{1}{2}}$ commute with $p$. \\
For the second claim, just set $v = qu$ for $u: A \rightarrow B$ with $p = u^*qu$. For the third claim, let $v: A \rightarrow B$ be a Murray-von Neumann equivalence between $p$ and $q$. We may as well consider $v$ as a morphism 
\[
v: A \oplus B \rightarrow A \oplus B
\]
where it implements a Murray- von Neumann equivalence between $p \oplus 0_B$  and $0_A \oplus q$. \\
Consider the matrix
\[
\begin{pmatrix} v & 1-vv^* \\ v^*v-1 & v^*  \end{pmatrix}
\] 
as an endomorphism of $A \oplus B \oplus A \oplus B$. A straightforward calculation using the fact that $v^*v$ and $vv^*$ are projections shows that this matrix is a unitary automorphism $u$ of $A \oplus B \oplus A \oplus B$ with 
\[
u(p \oplus 0_B \oplus 0_A \oplus 0_B)u^* = 0_A \oplus q \oplus 0_A \oplus 0_B
\]
The path
\[
t \rightarrow \begin{pmatrix} \cos(\frac{\pi}{2}t)v & 1-(1-\sin(\frac{\pi}{2}t))vv^* \\ (1-\sin(\frac{\pi}{2}t))v^*v-1 & \cos(\frac{\pi}{2}t)v^*  \end{pmatrix}
\] 
connects the matrix to 
\[
\begin{pmatrix} 0 & 1 \\ -1 & 0\end{pmatrix}
\]
through unitaries. Specializing the path to $v = 1$, we see that this matrix is connected to the identity as claimed. Hence $p \oplus 0_B \oplus 0_A \oplus 0_B$ and $0_A \oplus q \oplus 0_A \oplus 0_B$ are connected by a path of projections.
\end{proof}

The reader may be worried that we are not passing to isomorphism classes of unitaries or projections; the following proposition shows that this is not necessary.

\begin{prop}
Let $\C$ be a $C^{*}$-category and $A,B$ objects of $\C$. Let 
\[
\phi \colon A \rightarrow B
\]
be a unitary isomorphism, i.e. $\phi \circ \phi^{*} = \Id_B$ and $\phi^* \circ \phi = \Id_A$, and  \\$u \colon A \rightarrow A$ a unitary. Let $v = \phi u \phi^{*} \colon B \rightarrow B$. Then $u = v$ in $K_{1}(\C)$. The same holds for  projections and $K_{0}(\C)$.
\end{prop}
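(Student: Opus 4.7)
\medskip

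\noindent\textbf{Proof plan.} The plan is to exhibit both $u$ and $v$ (respectively $p$ and $q$) as endpoints of a path of unitaries (respectively projections) inside the endomorphism algebra of $A \oplus B$, after harmlessly stabilizing by an identity (respectively zero) morphism. The path will be obtained by conjugation by a rotation-type unitary in $\End(A \oplus B)$ that rotates the $A$- and $B$-summands into each other via $\phi$. Once such a path exists, the relations defining $K_0$ and $K_1$ finish the job.

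\medskip

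\noindent First I would define, for $t \in [0, \pi/2]$, the morphism
\[
R_t = \begin{pmatrix} \cos(t)\,\Id_A & -\sin(t)\,\phi^{*} \\ \sin(t)\,\phi & \cos(t)\,\Id_B \end{pmatrix} \colon A \oplus B \to A \oplus B.
\]
A direct block-matrix computation using $\phi^{*}\phi = \Id_A$ and $\phi\phi^{*} = \Id_B$ shows that $R_t R_t^{*} = R_t^{*} R_t = \Id_{A \oplus B}$, so each $R_t$ is a unitary endomorphism, and the assignment $t \mapsto R_t$ is continuous in the operator norm. Note that $R_0 = \Id_{A \oplus B}$ and $R_{\pi/2} = \begin{pmatrix} 0 & -\phi^{*} \\ \phi & 0 \end{pmatrix}$.

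\medskip

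\noindent For the $K_1$ statement I would then consider the unitary $U = u \oplus \Id_B \colon A \oplus B \to A \oplus B$. Conjugation gives a continuous path $t \mapsto R_t U R_t^{*}$ of unitaries in $\End(A \oplus B)$ starting at $U = u \oplus \Id_B$; a short computation at $t = \pi/2$ shows
\[
R_{\pi/2}\, U\, R_{\pi/2}^{*} = \begin{pmatrix} \phi^{*}\phi & 0 \\ 0 & \phi u \phi^{*} \end{pmatrix} = \Id_A \oplus v.
\]
Applying relation (i) of the definition of $K_1$ (path invariance) and then relations (ii) and (iii) (that $\Id$-summands vanish and direct sum equals addition), I obtain
\[
[u] = [u] + [\Id_B] = [u \oplus \Id_B] = [\Id_A \oplus v] = [\Id_A] + [v] = [v]
\]
in $K_1(\C)$, as desired. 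The $K_0$ statement proceeds identically with the projection $p \oplus 0_B$ in place of the unitary $u \oplus \Id_B$: the conjugates $R_t(p \oplus 0_B)R_t^{*}$ form a path of projections (conjugation by a unitary preserves the projection property) whose endpoints are $p \oplus 0_B$ and $0_A \oplus q$, and the relations defining $K_0$ yield $[p] = [q]$.

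\medskip

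\noindent There is no real obstacle here; the only points requiring care are (a) checking unitarity of $R_t$, which hinges on using \emph{both} defining identities of the unitary isomorphism $\phi$ in exactly the right places in the $2 \times 2$ multiplication, and (b) making sure when invoking the path-of-projections relation for $K_0$ that one has a genuine path of self-adjoint idempotents, which is automatic since $R_t(p \oplus 0_B)R_t^{*}$ is visibly self-adjoint and squares to itself. The argument is intrinsic to the $C^{*}$-category $\C$, so no passage to a representation on Hilbert modules is needed.
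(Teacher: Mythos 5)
Your proof is correct and rests on essentially the same mechanism as the paper's: a rotation homotopy twisted by $\phi$ (your $R_t$ is precisely the conjugate of the standard rotation matrix by $\Id_A \oplus \phi$, using $\phi\phi^* = \Id_B$), followed by the defining relations of $K_0$ and $K_1$. The only organizational difference is that you connect $u \oplus \Id_B$ directly to $\Id_A \oplus v$ by a path of unitaries, whereas the paper shows $u \oplus v^*$ is connected to $\Id_{A \oplus B}$ and then invokes $[v^*] = -[v]$; your variant is, if anything, slightly more self-contained and handles the $K_0$ case by the identical path of projections, so there is nothing to fix.
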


\begin{proof}
Since $-v = v^{*}$ in $K_{1}(\C)$, it suffices to see that 
\[
u \oplus v^{*} \colon A \oplus B \rightarrow A \oplus B
\] is zero in $K_{1}(\C)$. But under conjugation with the isomorphism 
\[
1 \oplus \phi \colon A \oplus A \rightarrow A \oplus B
\]
this map corresponds to $u \oplus u^{*} \colon A \oplus A \rightarrow A \oplus A$, which is connected to the identity of $A \oplus A$ via a path of unitaries. This path yields under conjugation with $1 \oplus \phi$ a path of unitaries from $u \oplus v^{*}$ to $\Id_{A \oplus B}$ as desired. The same argument applies to the $K_{0}$-case.
\end{proof}

\begin{rem}
\label{isos}
Let $\phi \colon A \rightarrow B$ be an isomorphism in a $C^{*}$-category. Then also $\phi^{*}$ is an isomorphism and $\phi^{*} \phi \colon A \rightarrow A$ is an automorphism of $A$. Via the continuous calculus and since $\phi^{*} \phi$ is by definition of a $C^{*}$-category an positive element of $\End(A)$, we can form 
\[
(\phi^{*} \phi)^{-\frac{1}{2}} \colon A \rightarrow A
\]
Then $\phi (\phi^{*} \phi)^{-\frac{1}{2}} \colon A \rightarrow B$ is a unitary isomorphism. So isomorphic objects in $C^{*}$-categories are in fact unitarily isomorphic.
\end{rem}

\begin{prop}
\label{kcat}
Let $\C$ be a $C^{*}$-category.
\begin{enumerate}
\item If $A$ is a $C^{*}$-algebra, then the $K$-theory of the $C^{*}$-category $A_{\oplus}$ is the same as the $K$-theory of $A$.
\item The groups $K_{i}(\C)$ depend functorially on $\C$, i.e. $C^{*}$-functors induce maps on $K_{i}(\C)$.
\item The group $K_{0}(\C)$ is isomorphic to $K_{0}^{\alg}(\Idem(\C))$.
\item In the definition of $K_{1}(\C)$, we may replace unitary by invertible. Then we obtain a quotient map $K_{1}^{\alg}(\C) \rightarrow K_{1}(\C)$.
\item Naturally isomorphic $C^{*}$-functors induce the same maps on $K$-groups.
\end{enumerate}
\end{prop}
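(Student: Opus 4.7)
Parts (i) and (ii) are essentially bookkeeping. For (i), unwinding the construction $A_{\oplus}$ shows that a projection at the object $(A,\dots,A)$ ($n$ copies) is the same data as a self-adjoint idempotent in $M_n(A)$, hence a projection over $A$ in the sense of Section 1; similarly for unitaries. The direct-sum operation on $A_{\oplus}$ corresponds to block-diagonal sum of matrices, and by Propositions \ref{murneu} and \ref{murray} the three potential equivalence relations (path, unitary, Murray-von Neumann) agree on both sides, so the presentations of $K_i(A_{\oplus})$ and $K_i(A)$ literally coincide. For (ii), a $C^*$-functor $F\colon \C\to\D$ preserves the direct sum, composition, and involution strictly, is automatically continuous by the norm estimate noted in the definition, and therefore carries projections to projections, unitaries to unitaries, and paths to paths; the induced map on generators respects all relations.

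For (iii), I define $\Phi\colon K_0(\C)\to K_0^{\alg}(\Idem(\C))$ by $[p]\mapsto [(A,p)]$ for $p\colon A\to A$. Well-definedness on the direct-sum and zero relations is immediate from the formula $(A\oplus B,p\oplus q)\cong (A,p)\oplus(B,q)$ in $\Idem(\C)$. For the path relation, Proposition \ref{murray} shows that path-equivalent projections are, after stabilization, Murray-von Neumann equivalent, and a Murray-von Neumann equivalence $v$ with $v^*v=p$, $vv^*=q$ is exactly a pair of mutually inverse morphisms $v\colon(A,p)\to(B,q)$ and $v^*\colon(B,q)\to(A,p)$ in $\Idem(\C)$. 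For the inverse map I use that $\Idem(\C)\simeq \Idem_*(\C)$ (so every object is represented by a projection), that $K_0^{\alg}$ of an idempotent-complete additive category admits the standard presentation by isomorphism classes, and again Proposition \ref{murray}(iii) to match isomorphism in $\Idem(\C)$ with path-equivalence after stabilization.

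For (iv), the unitary-vs-invertible equivalence follows from Proposition \ref{unitaryvsinvertible} applied pointwise to the $C^*$-algebra $\End(A)$: given invertible $x$, its polar form $u=x(x^*x)^{-1/2}$ exists via continuous calculus and is a unitary connected to $x$ through invertibles. The quotient map $K_1^{\alg}(\C)\to K_1(\C)$ is defined on generators; the multiplicativity relation $[ab]=[a]+[b]$ is proven by the same matrix rotation as in Proposition \ref{standardk1} (whose path uses only matrices that are invertible throughout), and the short exact sequence relation reduces, after choosing a splitting and writing $a=\bigl(\begin{smallmatrix}b & x \\ 0 & c\end{smallmatrix}\bigr)$, to contracting $x$ linearly to $0$ through invertibles, exactly as in Proposition \ref{k1quotient}.

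Part (v) is the substantive step. Given a natural isomorphism $\eta\colon F\Rightarrow G$, the component $\eta_A\colon F(A)\to G(A)$ is only an isomorphism, not a unitary, so I pass through Remark \ref{isos} and polar-decompose $\eta_A=v_A w_A$, where $w_A=(\eta_A^*\eta_A)^{1/2}\in\End(F(A))$ is a positive invertible endomorphism and $v_A=\eta_A w_A^{-1}$ is a unitary isomorphism $F(A)\to G(A)$. For a projection $p\colon A\to A$, naturality gives $\eta_A F(p)=G(p)\eta_A$; taking adjoints and multiplying yields $\eta_A^*\eta_A\,F(p)=F(p)\,\eta_A^*\eta_A$, so $F(p)$ commutes with $w_A$, and a short computation collapses $v_A F(p)v_A^*$ to $G(p)$. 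Hence $F(p)$ and $G(p)$ are unitarily equivalent in $\D$ and give the same class in $K_0(\D)$. For a unitary $u\colon A\to A$, commutativity with $w_A$ fails, but one has $v_A^* G(u) v_A = w_A F(u) w_A^{-1}$, and the linear path $t\mapsto t w_A+(1-t)1_{F(A)}$ stays positive and invertible, so by the invertible-version of $K_1$ furnished by (iv) we get $[w_A F(u) w_A^{-1}]=[F(u)]$; unitary conjugation by $v_A$ then yields $[G(u)]=[F(u)]$. The main obstacle, as the above makes clear, is this last step: naturality alone produces only an invertible conjugation between $F(u)$ and $G(u)$, and the argument only closes by essentially using (iv) to treat $K_1$ with invertibles rather than unitaries.
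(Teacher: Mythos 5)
Your proposal is correct, and all five parts land on the same key inputs as the paper (\ref{projections}, \ref{murray}, \ref{unitaryvsinvertible}, \ref{k1quotient} and the polar decomposition of \ref{isos}), but two steps are executed by a genuinely different computation. In (iii), the paper's ``difficult step'' --- that an isomorphism $f\colon (A,p)\to(B,q)$ in $\Idem(\C)$ forces $p\oplus 0$ and $0\oplus q$ to be unitarily equivalent --- is handled there by the explicit self-inverse matrix $\bigl(\begin{smallmatrix}1-p & pg\\ qf & 1-q\end{smallmatrix}\bigr)$ followed by polar decomposition; you instead apply \ref{isos} \emph{inside} the $C^{*}$-category $\Idem(\C)$ to replace $f$ by a unitary isomorphism, read that off as a Murray--von Neumann equivalence between $p$ and $q$ in $\C$, and then quote \ref{murray}(iii). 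This is shorter and reuses machinery already established, at the cost of leaning on the (previously proved) fact that $\Idem(\C)$ is again a $C^{*}$-category; the paper's matrix trick is self-contained. In (v), the paper's primary argument reduces everything to the corresponding statement for $K_{1}^{\alg}$ and then passes to the quotient, and its alternative argument unitarizes the entire natural transformation $\eta$ by checking that $\tau_{X}=\eta_{X}(\eta_{X}^{*}\eta_{X})^{-1/2}$ is natural; your argument is a pointwise polar decomposition $\eta_{A}=v_{A}w_{A}$ that never needs naturality of $v$, dispatching $K_{0}$ by the commutation $w_{A}F(p)=F(p)w_{A}$ and $K_{1}$ by conjugating along the positive path $tw_{A}+(1-t)1$ and invoking the invertible model of $K_{1}$ from (iv). Your route is more elementary (no appeal to algebraic $K$-theory and no naturality check), while the paper's first argument generalizes for free to any situation where the topological group is known to be a quotient of the algebraic one.
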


\begin{proof}
Item (i) follows by definition. (ii) is also easy: If $F \colon \C \rightarrow \D$ is a $C^{*}$-functor, we define $K_{0}(F) \colon K_{0}(\C) \rightarrow K_{0}(\D)$ by sending a projection $p$ in $\C$ to the projection $F(p)$ in $\D$. Since $F$ is additive, continuous and sends zero morphisms to zero morphisms, the relations defining $K_{0}$ are respected. The same argument applies to $K_{1}$. \\
For (iii), we define $K_0^{\alg}(\Idem(\C))$ as in  \cite[3.1.6]{Ros} as the free abelian group on the isomorphism classes of objects of $\Idem(\C)$ modulo the relation $(A,p) + (B,q) = (A \oplus B, p \oplus q)$. The definition in \cite{Ros} is more general, covering exact categories, but this is what it boils down to in the split exact context. We have already seen in \ref{projections} that any object in $\Idem(\C)$ is isomorphic to an object $(A,p)$ with $p$ a projection, i.e. $p = p^{*}$ in addition to $p^{2} = p$. \\
Now let $(A,p)$ and $(B,q)$ be two isomorphic objects of $\Idem(\C)$ with $p$ and $q$ self-adjoint. The difficult step is to see that $p \oplus 0$ and $0 \oplus q$ are unitarily equivalent projections in $\End(A \oplus B)$. Once we have this, the relations defining $K_{0}$ and $K_{0}^{\alg}$ are the same, so the groups are the same. \\
So let $f \colon (A,p) \rightarrow (B,q)$ be an isomorphism with inverse $g \colon (B,q) \rightarrow (A,p)$. This means that
\begin{align*}
fp &= qf \quad  & gf &= p \\
gq &= pg \quad & fg &= q  \\
\end{align*}
The trick is now to consider
\[
\begin{pmatrix}
1-p & pg \\
qf & 1-q
\end{pmatrix} \colon A \oplus B \rightarrow A \oplus B
\]
compare \cite[1.2.1]{Ros}. Using the above relations, one checks that
\[
\begin{pmatrix}
1-p & pg \\
qf & 1-q
\end{pmatrix} \cdot \begin{pmatrix}
1-p & pg \\
qf & 1-q
\end{pmatrix} = \begin{pmatrix}
1 & 0 \\
0 & 1
\end{pmatrix} 
\]
and
\[
\begin{pmatrix}
1-p & pg \\
qf & 1-q
\end{pmatrix} \cdot \begin{pmatrix}
p & 0 \\
0 & 0
\end{pmatrix}  \cdot \begin{pmatrix}
1-p & pg \\
qf & 1-q
\end{pmatrix} = \begin{pmatrix}
0 & 0\\
0 & q
\end{pmatrix} 
\]
To simplify notation, we write $p$ for $p \oplus 0$ and $q$ for $0 \oplus q$. We have now found 
\[
a = \begin{pmatrix}
1-p & pg \\
qf & 1-q
\end{pmatrix} 
\]
such that $apa^{-1} = q$, but $a$ is not yet a unitary. Since $a$ and hence $a^{*}a$ is invertible, we can form $(a^{*}a)^{-\frac{1}{2}}$ via the continuous function calculus and we define $u = a(a^{*}a)^{-\frac{1}{2}}$. We compute
\[
uu^{*} = a (a^{*}a)^{-\frac{1}{2}}(a^{*}a)^{-\frac{1}{2}}a^{*} = a (a^{*}a)^{-1} a^{*} = 1
\]
and similarly $u^{*}u = 1$. So $u$ is a unitary. Furthermore, $p$ commutes with $a^{*}a$ since
\[
pa^{*}a = a^{*}qa = a^{*}ap
\]
and hence $p$ also commutes with $(a^{*}a)^{-\frac{1}{2}}$. It follows that
\begin{align*}
upu^{*} &= a(a^{*}a)^{-\frac{1}{2}} p(a^{*}a)^{-\frac{1}{2}}a^{*} = a(a^{*}a)^{-\frac{1}{2}} a (a^{*}a)^{-\frac{1}{2}} pa^{*}= \\
&=a(a^{*}a)^{-\frac{1}{2}} (a^{*}a)^{-\frac{1}{2}} a^{*}q = q
\end{align*}

\noindent Part (iv) follows directly from \ref{unitaryvsinvertible} and the same proof as in \ref{k1quotient}. For the last statement, note that also induced maps on $K_{1}$ are the quotient maps of the induced maps on $K_{1}^{\alg}$. Naturally isomorphic functors induce the same map on $K_{1}^{\alg}$ and hence also on the quotient groups $K_{1}$. Note that the claim really requires a proof since we do not assume that the natural isomorphism is unitary. The claim for $K_{0}$ is immediate from (iii). \\
Alternatively, let $\eta: F \rightarrow G$ be a natural isomorphism of $C^{*}$-functors. We argue that then $F$ and $G$ are unitarily isomorphic, i.e. there is a natural transformation $\tau: F \rightarrow G$ such that $\tau_{X}^{*}\tau_{X}= \Id$ and $\tau_{X}\tau_{X}^{*} = \Id$ for all objects $X$. As in \ref{isos}, we set
\[
\tau_{X}= \eta_{X} (\eta_{X}^{*}\eta_{X})^{-\frac{1}{2}}
\]
We have to check that $\tau$ is natural. First note that $\eta^{*}$ defines a natural transformation from $G$ to $F$: We have for $f: X \rightarrow Y$
\begin{align*}
F(f) \eta_{X}^{*} &= (\eta_{X} F(f^{*}))^{*} \\
&= (G(f)^{*} \eta_{Y})^{*} \\
&= \eta^{*}_{Y} G(f) 
\end{align*}
It follows that $\eta^{*} \eta \colon F \rightarrow F$ is a self-adjoint natural self-transformation of $F$. Then clearly every polynomial in $\eta^{*}\eta$ is also a natural self-transformation of $F$, and since for each object $X$ $(\eta_{X}^{*}\eta_{X})^{-\frac{1}{2}}$ is a norm-limit of polynomials in $\eta_{X}^{*}\eta_{X}$, it follows that $(\eta_{X}^{*}\eta_{X})^{-\frac{1}{2}}$ is natural as well. Then also $\tau$ is natural as a composition of natural transformations. 
\end{proof}

\begin{rem}
Let us issue one warning. Given a coarse $G$-space $X$, consider the following (wrong) argument for the wrong statement that the map $K^{\alg}_{1}(\C_{b}^{G}(X)) \rightarrow K_{1}(\C_{*}^{G}(X))$ is always surjective: \\
Start with an invertible morphism $\phi: M \rightarrow M$ in $\C_{*}^{G}(X)$ representing an element of $K_{1}(\C_{*}^{G}(X))$. Since the morphisms of $\C_{b}^{G}(X)$ are dense inside the morphisms of $\C_{*}^{G}(X)$, we find $\phi': M \rightarrow M$ in $\C_{b}^{G}(X)$ arbitrarily close to $\phi$, say such that $\left\Vert \phi-\phi' \right\Vert < \frac{1}{10}$. Then $\phi'$ is invertible and represents the same $K_{1}$-class as $\phi$. Now $\phi'$ represents an element of $K^{\alg}_{1}(\C_{b}^{G}(X))$ which is a preimage for $\phi$. \\
The argument fails because $\phi'$, while being a morphism in $\C_{b}^{G}(X)$ and invertible in $\C_{*}^{G}(X)$, may not be invertible in $\C_{b}^{G}(X)$, see \ref{noinverse} for a counterexample. 
\end{rem}

\begin{prop}
Any element of $K_{0}(\C)$ can be represented in the form $p-q$ for two projections $p,q$. Any element of $K_{1}(\C)$ can be represented by a single unitary $u$.
\end{prop}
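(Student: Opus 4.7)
The plan is to reduce a general formal $\IZ$-linear combination to a single representative term by systematic use of the direct-sum relations, handling the two cases in parallel.

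For $K_{0}(\C)$: a general element has the form $\sum_{i=1}^{k} n_{i}[p_{i}]$ with $n_{i} \in \IZ$ and $p_{i} \colon A_{i} \to A_{i}$ a projection. I would split the sum into its positive and negative parts and rewrite it as $\sum_{n_{i} > 0} n_{i}[p_{i}] - \sum_{n_{i} < 0} (-n_{i})[p_{i}]$. Relation (i) of the definition of $K_{0}$ gives $n[p] = [p^{\oplus n}]$ for $n > 0$, and iterating across the terms in each half collapses the positive part to a single projection $p$ on some $A_{i_{1}}^{\oplus m_{1}} \oplus \cdots \oplus A_{i_{r}}^{\oplus m_{r}}$ and the negative part to a single projection $q$. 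The class is then $[p]-[q]$.

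For $K_{1}(\C)$: a general element is $\sum_{i=1}^{k} n_{i}[u_{i}]$. The key observation is that the identity $[u^{*}] = -[u]$ holds in $K_{1}(\C)$, which lets me trade every negative coefficient for a positive one, and relation (iii) then absorbs the resulting formal direct sum into the single unitary $[\bigoplus_{j} v_{j}]$. The identity $[u^{*}] = -[u]$ is the categorical analogue of Proposition \ref{standardk1}: its proof transfers verbatim to a $C^{*}$-category because the path of unitaries exhibited there lives entirely inside $\End(A \oplus A)$ and uses only the structure (composition, involution, direct sum, continuous function calculus on endomorphism algebras) available in any $C^{*}$-category.

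The only step requiring a moment of care is checking that the $C^{*}$-algebra computation behind Proposition \ref{standardk1} carries over to $C^{*}$-categories; once this is recorded, both assertions are immediate bookkeeping with the defining relations of $K_{0}$ and $K_{1}$.
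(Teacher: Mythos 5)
Your proof is correct and follows essentially the same route as the paper: split the formal sum into positive and negative parts, collapse each with the direct-sum relation, and for $K_{1}$ use $[u^{*}]=-[u]$ (the rotation-homotopy argument of \ref{standardk1}, which indeed lives entirely inside $\End(A\oplus A)$ and so transfers to $C^{*}$-categories) to absorb the negative part into a single unitary. The paper merely performs the two steps in the opposite order for $K_{1}$, first reducing to $u-v$ and then replacing $-v$ by $v^{*}$; the content is identical.
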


\begin{proof}
Let $x = \sum p_{i}- \sum q_{i}$ be an arbitary element of the free abelian group generated by the projections, with $p_{i}$ and $q_{i}$ projections. Then set $p = \oplus p_{i}$ and $q = \oplus q_{i}$. The relations in $K_{0}(\C)$ then give $x = p-q$. The same argument applies to $K_{1}$, so we can write any element of $K_{1}(\C)$ as $u-v$ for unitaries $u$ and $v$. If $u: A \rightarrow A$ and $v: B \rightarrow B$, we may replace $u$ by $u \oplus 1_{B}: A \oplus B \rightarrow A \oplus B$ and $v$ by $1_{A} \oplus v: A \oplus B \rightarrow A \oplus B$, hence we can assume that $u$ and $v$ are unitary endomorphisms of the same object. Now a standard argument yields $-v = v^{*}$ and $u-v = u+v^{*} = u \oplus v^{*}$, proving the claim; compare \ref{standardk1}.

\end{proof}

\noindent We also record the following for reference:

\begin{prop}
If $p_{i}: A \rightarrow A$ and $q_{i}: B \rightarrow B$ are projections for $i=0,1$, then $p_{0}-q_{0}=p_{1}-q_{1}$ in $K_{0}(\C)$ if and only if there is an object $C$ such that $p_{0} \oplus q_{1} \oplus 0_{C}$ and $p_{1} \oplus q_{0} \oplus 0_{C}$ are homotopic in the projections of $A \oplus B \oplus C$.
 Similarly, if $u: A \rightarrow A$ and $v: B \rightarrow B$ are two unitaries, then $u=v$ in $K_{0}(\C)$ if and only if there is a third object $C$ such that $u \oplus 1_{B} \oplus 1_{C}$ and $1_{A} \oplus v \oplus 1_{C}$ are homotopic in the unitaries of $A \oplus B \oplus C$.
\end{prop}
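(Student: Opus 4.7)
Plan: The ``if'' directions are immediate from the defining relations of $K_0(\mathcal{C})$ and $K_1(\mathcal{C})$. In the $K_0$ case, a path of projections from $p_0 \oplus q_1 \oplus 0_C$ to $p_1 \oplus q_0 \oplus 0_C$ gives via relation (iii) the equality $[p_0 \oplus q_1 \oplus 0_C] = [p_1 \oplus q_0 \oplus 0_C]$ in $K_0$; expanding the direct sums via (i) and using $[0_C]=0$ from (ii) yields $[p_0]-[q_0]=[p_1]-[q_1]$. The $K_1$ case is analogous, with $[1_C]=0$ playing the role of $[0_C]=0$.

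For the ``only if'' direction of the $K_0$ claim, I would first apply (i) to rewrite $[p_0]-[q_0]=[p_1]-[q_1]$ as $[p_0\oplus q_1]=[p_1\oplus q_0]$ in $K_0(\mathcal{C})$. This reduces the problem to showing: for two projections $P,Q\colon X\to X$ on the same object with $[P]=[Q]$ in $K_0(\mathcal{C})$, there is an object $C$ such that $P\oplus 0_C$ and $Q\oplus 0_C$ are joined by a path of projections in $\End(X\oplus C)$. Applying this with $P=p_0\oplus q_1$, $Q=p_1\oplus q_0$, and $X=A\oplus B$ yields the statement.

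To prove this reduced claim, I would identify $K_0(\mathcal{C})$ with the Grothendieck group of the commutative monoid $V(\mathcal{C})$ of stable homotopy classes of projections under direct sum, using Proposition \ref{murray} to see that path-connectedness, unitary equivalence, and Murray--von Neumann equivalence of projections all agree after stabilization by zero projections. By the standard Grothendieck group description, $[P]=[Q]$ in $K_0(\mathcal{C})$ is equivalent to the existence of an object $D$ and a projection $r\colon D\to D$ such that $P\oplus r$ and $Q\oplus r$ represent the same element of $V(\mathcal{C})$, i.e.\ are joined by a path of projections after zero stabilization. The main technical step, and the chief obstacle, is converting this general stabilization into one purely by a zero projection $0_C$: the strategy is to further stabilize by the complementary projection $1-r\colon D\to D$ and use the explicit path of projections in $\End(D\oplus D)$ from $r\oplus(1-r)$ to $1_D\oplus 0_D$ given by $t\mapsto \left(\begin{smallmatrix} r+(1-r)\sin^2 t & (1-r)\sin t\cos t \\ (1-r)\sin t\cos t & (1-r)\cos^2 t\end{smallmatrix}\right)$, together with a rearrangement of summands by rotation-swap unitaries in the spirit of the proof of Proposition \ref{murray}, absorbing the resulting full-rank factor into the ambient object $C$.

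The $K_1$ case follows the same three-step pattern. Proposition \ref{standardk1} provides the analogue of relation (i) for $K_1$ (namely $[uv]=[u]+[v]$ and $[u\oplus v]=[u]+[v]$), reducing the hypothesis $[u]=[v]$ to $[u\oplus 1_B]=[1_A\oplus v]$ as unitaries on $A\oplus B$. One then represents $K_1(\mathcal{C})$ as the Grothendieck group of the monoid of stable homotopy classes of unitaries, and applies the Grothendieck group description. Here the argument is in fact cleaner than in the $K_0$ case: the natural stabilization is already by $1_C$, which is the identity element of $K_1(\mathcal{C})$ by relation (ii) in the definition of $K_1$, so no complementary-element trick is required, and one arrives directly at the desired $1_C$-stabilized homotopy of unitaries.
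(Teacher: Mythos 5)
Your ``if'' directions and your $K_1$ argument follow essentially the same route as the paper (whose proof, note, only treats the $K_1$ half). One caveat on $K_1$: the claim that ``no complementary-element trick is required'' is not right. The Grothendieck-group description of equality in $K_1(\C)$ only yields $u\oplus w\oplus 1_{C}\simeq v\oplus w\oplus 1_{C'}$ for some auxiliary unitary $w$, which need not be an identity; to eliminate $w$ one must stabilize further by $w^{*}$ and use that $w\oplus w^{*}$ is homotopic to the identity, i.e.\ the rotation of \ref{standardk1}. (Equivalently, one may first observe that the monoid of identity-stabilized homotopy classes of unitaries is already a group --- but that observation \emph{is} the same rotation.) This is exactly the step the paper performs explicitly, and it is easy to add to your argument.

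The $K_0$ half has a genuine gap at the step you yourself flag as the main technical one. Stabilizing $P\oplus r\simeq Q\oplus r$ by the complement $1_D-r$ and deforming $r\oplus(1_D-r)$ to $1_D\oplus 0_D$ gives $P\oplus 1_D\oplus 0_D\simeq Q\oplus 1_D\oplus 0_D$; the summand $1_D$ is an identity projection, and no rearrangement by unitaries can ``absorb'' it into a zero projection $0_C$, since conjugation preserves the isomorphism class of the image. In fact no argument can close this gap, because the statement with a bare $0_C$ asserts that the monoid of homotopy classes of projections in $\C$ is cancellative, which fails in general: take $\C$ to be the category of finitely generated free modules over $C(S^{5})$ and let $p_0$ be a projection in $M_3(C(S^5))$ whose image is the nontrivial rank-two bundle $E$ classified by the nonzero element of $\pi_4(U(2))\cong\IZ/2$. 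Since $\pi_4(U(3))=0$, the sum of $E$ with a trivial line bundle is trivial, so $[p_0]=[p_1]$ in $K_0$ for $p_1$ the trivial rank-two projection; but $p_0\oplus 0_C$ is never homotopic to $p_1\oplus 0_C$, as homotopic projections are unitarily equivalent and hence define isomorphic bundles, while $E$ is not trivial. What your argument actually establishes --- homotopy after stabilization by $1_D\oplus 0_{D}$, equivalently by an arbitrary projection --- is the correct and standard form of the statement; the proposition as printed needs that correction, and the paper's own proof does not address the $K_0$ case at all.
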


\begin{proof}
We discuss the $K_{1}$-case. We can also define $K_{1}(\C)$ as the free abelian group generated by the homotopy classes of unitaries, modulo the relations $\Id_{D} = 0$ for each object $D$ and $u \oplus v = u +v$ for all unitaries $u,v$, which also makes sense on the homotopy level. If $u,v$ are as in the proposition, we may pass to $1 \oplus v, u \oplus 1: A \oplus B \rightarrow A \oplus B$ and hence can assume that $u,v$ are endomorphisms of the same object $A$. Now $u = v$ in $K_{1}$ if and only if we find finitely many unitaries $u_{i}, v_{i}$ and finitely many objects $A_{j}$, $B_{k}$ such that
\[
u+\sum\limits_{i \in I} v_{i} \oplus u_{i} + \sum\limits_{j \in J} \Id_{A_{j}} = v+ \sum\limits_{i \in I} u_{i}+ \sum\limits_{i \in I} v_{i} + \sum\limits_{k \in K} \Id_{B_{k}}
\] 
as formal sums of homotopy classes of unitaries. In particular, this means that the $B_{k}$ are only a reordering of the $A_{j}$.  But now, also the homotopy classes of
\[
u \oplus \bigoplus\limits_{i \in I} (v_{i} \oplus u_{i}) \oplus \bigoplus\limits_{j \in J} \Id_{A_{j}} 
\]
and
\[
v \oplus \bigoplus\limits_{i \in I} u_{i}\oplus \bigoplus\limits_{i \in I} v_{i} \oplus \bigoplus\limits_{k \in K} \Id_{B_{k}}
\]
have to agree once we view them as unitaries of the same object after reordering the sums. So $u = v$ if and only if there is a unitary $w: C' \rightarrow C'$ such that $u \oplus w$ and $v \oplus w$ are homotopic, where $w = \bigoplus\limits_{i \in I} u_{i}\oplus \bigoplus\limits_{i \in I} v_{i} \oplus \bigoplus\limits_{k \in K} \Id_{B_{k}}$. Finally, this implies that $u \oplus w \oplus w^{*}$ and $v \oplus w \oplus w^{*}$ are homotopic, and since $w \oplus w^{'}$ is in turn homotopic to the identity of $C = C' \oplus C'$, we see that 
$u \oplus \Id_{C}$ and  $v \oplus \Id_{C}$ are homotopic. The converse direction is obvious.
\end{proof}

\begin{prop}
\label{unions}
Let $\C$ be a $C^*$-category, and let $\C_{i} \subset \C$ be a directed system of full subcategories such that $\C = \cup \C_{i}$. Then the canonical map
\[
\colim_{i} K_{n} \C_{i} \rightarrow K_{n}(\C)
\] 
is an isomorphism for $n = 0,1$.
\end{prop}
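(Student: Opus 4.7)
The plan is to verify surjectivity and injectivity of the comparison map separately for $K_0$ and $K_1$, exploiting throughout the simple observation that fullness of $\C_i \subset \C$ gives $\End_\C(A) = \End_{\C_i}(A)$ as $C^*$-algebras whenever $A \in \C_i$ (and more generally $\Hom_\C(A,B) = \Hom_{\C_i}(A,B)$ for $A,B \in \C_i$). I tacitly assume that each $\C_i$ is closed under the involution (automatic from fullness, since adjoints exist in $\C$ and then belong to the full subcategory) and under finite direct sums, so that the $K_n(\C_i)$ are defined; the directedness of the system together with $\C = \cup \C_i$ makes it costless to absorb finitely many objects into a single $\C_j$.

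For surjectivity, any projection $p \colon A \to A$ (respectively unitary $u \colon A \to A$) in $\C$ has $A$ living in some $\C_i$, and fullness then places $p$ (respectively $u$) in $\End_{\C_i}(A)$. The corresponding class therefore lifts to $K_0(\C_i)$ (respectively $K_1(\C_i)$), and so to $\colim_i K_n(\C_i)$.

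For injectivity at $K_0$, I would represent an arbitrary class in $\colim_i K_0(\C_i)$ as $[p] - [q]$ with $p,q$ projections in some $\C_i$. By the preceding proposition, vanishing of $[p] - [q]$ in $K_0(\C)$ is witnessed by an object $C$ of $\C$ together with a continuous path of projections in $\End_\C(A \oplus B \oplus C)$ connecting $p \oplus 0_B \oplus 0_C$ to $0_A \oplus q \oplus 0_C$. Pick $j \geq i$ with $C \in \C_j$; then $A \oplus B \oplus C \in \C_j$, and by fullness the entire path of projections, being continuous into $\End_\C(A \oplus B \oplus C) = \End_{\C_j}(A \oplus B \oplus C)$, already lives in $\C_j$. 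Hence $[p] - [q] = 0$ in $K_0(\C_j)$, so our class vanishes in the colimit. Injectivity at $K_1$ is identical, using the analogous stabilized-homotopy criterion for $[u] - [v] = 0$ (pass to $u \oplus 1_B \oplus 1_C$ and $1_A \oplus v \oplus 1_C$ and find $C \in \C_j$).

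The only obstacle is pure bookkeeping: one must track that every auxiliary object appearing in a defining relation of $K_n(\C)$ -- the direct-sum partners of projections or unitaries, the stabilizing object $C$ in the vanishing criterion, and the endpoints of homotopies -- can be absorbed into a single $\C_j$ after enlarging the index. Because each such relation involves only finitely many objects and $(\C_i)$ is directed, this is straightforward, and the whole argument reduces to the slogan that fullness together with directedness lets one concentrate any finite amount of $K$-theoretic data inside a single $\C_j$.
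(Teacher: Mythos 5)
Your argument is correct and is precisely the standard absorption argument the paper has in mind — its own proof consists of the single line ``This is straightforward.'' Your write-up (surjectivity by fullness, injectivity by pushing the finitely many objects and the witnessing homotopy into a single $\C_j$ via directedness, using the stabilized-homotopy criterion from the preceding proposition) is the intended elaboration, so there is nothing to correct.
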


\begin{proof}
This is straightforward.
\end{proof}

\section{Biexact functors and topological $K$-theory}

Let $\C$ be a $C^{*}$-category and $\cale$ an exact category in the sense of Quillen, see for example \cite[3.1.1]{Ros}.

\begin{definition}
A bilinear functor $F: \cale \times \C \rightarrow \C$ is called $C^*$-biexact if for each $A \in \cale$, $F(A,-): \C \rightarrow \C$ is a $C^{*}$-functor and for each exact sequence $A \rightarrow B \rightarrow C$ in $\cale$ and each object $M$ of $\C$, the sequence $0 \rightarrow F(A,M) \rightarrow F(B,M) \rightarrow F(C,M) \rightarrow 0$ is split exact.
\end{definition}

\begin{prop}
A $C^*$-biexact functor $F: \cale \times \C \rightarrow \C$ induces a $K^{\alg}_{0}(\cale)$-module structure on $K_{i}(\C)$, $i=0,1$, where an element $[A]-[B] \in K^{\alg}_{0}(\cale)$ acts on $K_{i}(\C)$ as $F(A,-)_{*}-F(B,-)_{*}: K_{i}(\C) \rightarrow K_{i}(\C)$.
\end{prop}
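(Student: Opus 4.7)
The plan is to define the action by sending $[A] \in K_0^{\alg}(\cale)$ to the group endomorphism $F(A,-)_*$ of $K_i(\C)$, and then to verify that this prescription descends through the defining relations of $K_0^{\alg}(\cale)$. That reduces to two assertions: first, $A \cong A'$ in $\cale$ implies $F(A,-)_* = F(A',-)_*$; second, for every exact sequence $0 \to A \to B \to C \to 0$ in $\cale$, the equality $F(B,-)_* = F(A,-)_* + F(C,-)_*$ holds on $K_i(\C)$. The first assertion is immediate from Proposition~\ref{kcat}(v): functoriality of $F$ in the first variable turns an isomorphism $A \cong A'$ into a natural isomorphism of $C^*$-functors $F(A,-) \Rightarrow F(A',-)$, and naturally isomorphic $C^*$-functors induce equal maps on $K$-theory.

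For the exact sequence relation I exploit the biexactness hypothesis. For each $M \in \C$ it furnishes a split exact sequence
\[
0 \to F(A,M) \xrightarrow{\alpha_M} F(B,M) \xrightarrow{\beta_M} F(C,M) \to 0
\]
in $\C$; choosing a splitting presents $F(B,M)$ as a biproduct $F(A,M) \oplus F(C,M)$. The splitting need not be natural in $M$, but naturality of $\alpha_M$ and $\beta_M$ alone forces, for any projection $p$ or unitary $u$ on $M$, the endomorphism $F(B,\xi)$ for $\xi \in \{p,u\}$ to take the upper triangular shape
\[
\begin{pmatrix} F(A,\xi) & \phi \\ 0 & F(C,\xi) \end{pmatrix},
\]
the off-diagonal entry $\phi$ recording the failure of naturality of the splitting.

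For $K_1$ the conclusion is then quick: by Proposition~\ref{kcat}(iv) I may use invertibles in place of unitaries, and the linear path $t \mapsto \left(\begin{smallmatrix} F(A,u) & t\phi \\ 0 & F(C,u) \end{smallmatrix}\right)$ is a path of invertibles connecting $F(B,u)$ at $t=1$ to $F(A,u) \oplus F(C,u)$ at $t=0$, yielding $[F(B,u)] = [F(A,u)] + [F(C,u)]$ in $K_1(\C)$. For $K_0$ I pass through $K_0(\C) \cong K_0^{\alg}(\Idem(\C))$ from Proposition~\ref{kcat}(iii): idempotency of the upper triangular matrix gives the relation $F(A,p)\phi + \phi F(C,p) = \phi$, whence $F(A,p)\phi F(C,p) = 0$ after multiplying on the left by $F(A,p)$. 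This lets one decompose $\phi$ into pieces living in $F(A,p)(\cdot)(1-F(C,p))$ and $(1-F(A,p))(\cdot)F(C,p)$ and produce an explicit $\psi$ such that conjugation by the unipotent $\left(\begin{smallmatrix} 1 & \psi \\ 0 & 1 \end{smallmatrix}\right)$ carries the diagonal idempotent $F(A,p) \oplus F(C,p)$ to the upper triangular one. This isomorphism in $\Idem(\C)$ delivers $[F(B,p)] = [F(A,p)] + [F(C,p)]$ in $K_0(\C)$. The hard point, and the main obstacle, is precisely this last manoeuvre: because the splitting is not natural in $M$ the off-diagonal term does not vanish in $\C$, and one must work inside the idempotent completion and exploit the algebraic identity forced by idempotency of $F(B,p)$ to absorb $\phi$ through a unipotent conjugation.
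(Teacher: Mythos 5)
Your proof is correct, but it takes a genuinely different route from the paper. The paper disposes of the proposition in a few lines: the required identity $F(A,-)_*-F(B,-)_* = F(A',-)_*-F(B',-)_*$ is already known for the induced maps on \emph{algebraic} $K$-theory of $\C$ and of $\Idem(\C)$ (the classical fact that a biexact pairing acts on $K_0^{\alg}$, as used in \ref{action}), and since by \ref{kcat} the topological groups $K_0(\C)$ and $K_1(\C)$ are (isomorphic to, respectively quotients of) $K_0^{\alg}(\Idem(\C))$ and $K_1^{\alg}(\C)$, with the induced maps being the induced quotient maps, the relation simply descends. You instead verify well-definedness directly on cycles: isomorphism invariance via \ref{kcat}(v), and the exact-sequence relation by choosing a (non-natural) splitting, observing that naturality of $F(i,-)$ and $F(p,-)$ forces $F(B,\xi)$ into upper-triangular form, and then absorbing the off-diagonal term --- by a linear path of invertibles for $K_1$ (legitimate by \ref{kcat}(iv)), and for $K_0$ by the standard unipotent conjugation $\psi=\phi F(C,p)-F(A,p)\phi$ inside $\Idem(\C)$, using $F(A,p)\phi F(C,p)=0$. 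Both arguments are sound; the paper's is shorter and leans on the algebraic-to-topological comparison as a black box, while yours is self-contained at the level of projections and unitaries and makes explicit exactly where biexactness and the failure of naturality of the splitting enter. The only small points you should make explicit are that transporting $F(B,u)$ (resp.\ $F(B,p)$) along the splitting isomorphism does not change its class --- which follows from the conjugation-invariance proposition and \ref{isos} --- and, as in the paper, neither proof checks the module axioms beyond well-definedness of the action map.
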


\begin{proof}
Note that we need the $C^{*}$-functor condition to even make sense of $F(A,-)_{*}-F(B,-)_{*}$.  What we have to check is that this definition is well-defined, i.e. that if $[A]-[B] = [A']-[B']$ in $K^{\alg}_{0}(\cale)$, we have $F(A,-)_{*}-F(B,-)_{*}  = F(A',-)_{*}-F(B',-)_{*}$.  However, this equation is true for the functors induced in the \emph{algebraic} $K$-theory of $\C$, and even of $\Idem(\C)$, and since the topological $K$-theory is a quotient of the algebraic $K$-theory (and induced maps are the induced quotient maps), the claim follows.
\end{proof}

We will usually abuse notation and call a $C^*$-biexact functor just biexact.

\section{A $C^{*}$-algebra representing $K$-theory of controlled $C^*$-categories}

Fix a $G$-$C^{*}$-algebra $A$. Let $\C = \C^{G}_{*}(X; \cale, \calf)$ be the $C^{*}$-category associated to the coarse $G$-space $(X, \cale, \calf)$. We associate a $C^{*}$-algebra to $\C$ whose $K$-groups are the $K$-groups of $\C$. Everything we say will also apply to quotient categories $ \C^{G}_{*}(X; \cale, \calf, A)^{> Y}$; we will not explicitly carry them through the notation. The construction is a more category-friendly and less functional-analytic variant of the Roe algebra; see \cite[Chapter 6]{HigsonRoe} and \cite{hpr}, in particular the remarks after 6.3. We will indicate at the end of this chapter that our constructions and the ones in \cite{HigsonRoe} are more closely related than one may think on the first glance. \\
In the following, when we say that two objects are equal, we really mean equal, not isomorphic.

\begin{definition}
Define a partial order on the set of objects of $\C$ as follows: If $M$, $N$ are two objects, then for each $x \in X$ look at $M_{x}$ and $N_{x}$. These are determined by their dimensions $m_{x}$ and $n_{x}$, and we say $M \leq N$ if for all $x$, $m_{x} \leq n_{x}$. Let $P$ be the poset obtained in this way, which we regard as a category in the usual way. Define a functor 
\[
T: P \rightarrow C^{*} \minus \alg
\]
as follows: Send $M \in \C$ to $\End(M)$. If $M \leq N$, we embed $\End(M)$ into $\End(N)$ as follows: There is a (in fact unique) way to write $M = N \oplus L$ with $L_{x} = A^{n_{x}-m_{x}}$. Then embed $\End(M)$ to $\End(M \oplus L)$ via $f \mapsto f \oplus 0$. This is clearly compatible with compositions, i.e. if $M \leq N \leq L$, then the two maps $\End(M) \rightarrow \End(L)$ agree. Now define the $C^*$-algebra 
\[
T(X; \cale, \calf) = T(\C)  = \colim_{P} T
\]
By abuse of notation, we will often leave out $(\cale, \calf)$ and just write $T(X)$ if no confusion is possible.
Similarly, define a functor
\[
U: P \rightarrow \mathcal{TOP}
\]
by sending $M \in \C$ to the topological space $U(M)$ of unitary endomorphisms of $M$. If $M \leq N$, we embed $U(M)$ into $U(N)$ as above, where we view $\End(M)$ and $\End(N)$ as being non-unital since the induced map between them is not unital. Now define
\[
U(\C)  = \colim^{met}_{P} U
\]
\end{definition}

From a philosophical point of view, this definition has serious flaws. For example, we obtain maps $U(M) \rightarrow U(N \oplus M)$ which will almost never be the obvious map $u \rightarrow 1 \oplus u$, which is annoying. However, the actual map $U(M) \rightarrow U(N \oplus M)$  differs from the obvious one only by conjugation by an automorphism of $N \oplus M$, so everything works out fine on the $K$-theory level, as we will now see.  Let us first record the following:

\begin{prop}
The induced map $U(\C) \rightarrow U(T(\C))$ is a weak homotopy equivalence.
\end{prop}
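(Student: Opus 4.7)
The plan is to recognize the map $U(\C) \to U(T(\C))$ as the canonical map associated to a directed union of $C^*$-algebras, and then invoke Proposition \ref{cont} directly. First I would verify that the indexing poset $P$ is actually directed: given $M, N \in P$, form $K$ with $K_x = A^{\max(m_x, n_x)}$ supported on $\supp(M) \cup \supp(N)$. By axiom (iv) of a coarse structure, $\supp(M) \cup \supp(N)$ is contained in some $F \in \calf$, so $K$ is a legitimate object of $\C$, and manifestly $M, N \leq K$.

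Next I would check that each structure map $T(M) \to T(N)$ for $M \leq N$ is an isometric $*$-homomorphism of (non-unital) $C^*$-algebras. Writing $N = M \oplus L$ with $L_x$ free of rank $n_x - m_x$, the map is $f \mapsto f \oplus 0$. At the level of total objects this is the corner embedding $a \mapsto pap$, where $p$ is the projection of $T(N)$ onto the $T(M)$-summand; this is a $*$-homomorphism, and it is isometric because $\|f \oplus 0\| = \|f\|$. Hence $T(\C)$ is a directed union of $C^*$-algebras in exactly the sense required by Proposition \ref{cont}.

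Applying the unitary-group functor $U$ (for non-unital algebras, since the transition maps are non-unital) yields a compatible directed system of metric spaces with isometric structure maps, whose metric colimit is $U(\C)$ by construction. Proposition \ref{cont} then states precisely that the canonical map
\[
U(\C) \;=\; \colim^{met}_{M \in P} U(\End(M)) \;\longrightarrow\; U\bigl(\colim_{M \in P} \End(M)\bigr) \;=\; U(T(\C))
\]
is a weak homotopy equivalence.

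I do not expect a real obstacle: the statement is essentially a direct corollary of Proposition \ref{cont} once the definitions are unwound. The one potential source of worry is the remark preceding the proposition, namely that the structure map $U(M) \to U(N \oplus M)$ is not the naive $u \mapsto 1 \oplus u$ but differs from it by conjugation by an automorphism of $N \oplus M$. However, this is irrelevant for the present argument, because Proposition \ref{cont} only requires isometric inclusions of $C^*$-algebras, and the map $\End(M) \hookrightarrow \End(N \oplus M)$ induced by $M \leq N \oplus M$ is such, regardless of the particular identification of $M \oplus L$ with $N \oplus M$ that is used.
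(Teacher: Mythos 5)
Your argument is correct and is exactly the paper's approach: the paper's proof consists of the single line ``This follows from \ref{cont}.'' Your verification that the poset $P$ is directed, that the transition maps $\End(M)\rightarrow\End(N)$ are isometric (non-unital) $*$-monomorphisms, and that the conjugation issue in the preceding remark is irrelevant for invoking \ref{cont}, simply supplies the details the paper leaves implicit.
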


\begin{proof}
This follows from \ref{cont}.
\end{proof}

\begin{prop}
There is a canonical isomorphism 
\[
f \colon K_{i}(T(\C)) \rightarrow K_{i}(\C)
 \]
\end{prop}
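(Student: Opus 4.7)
The plan is to construct an explicit inverse $g\colon K_i(\C)\to K_i(T(\C))$ to $f$ and to check that $f\circ g$ and $g\circ f$ act as the identity on generators. A projection $p\colon M\to M$ of $\C$ is by definition a self-adjoint idempotent of the $C^{*}$-algebra $\End(M)$, so the canonical map $\End(M)\to T(\C)$ turns it into a projection of $T(\C)$, and I set $g([p]):=[p]\in K_0(T(\C))$. Similarly, for $K_1$, a unitary $u$ in $\End(M)$ defines an element of $U(T(\C))$ via the (non-unital) inclusion $\End(M)\hookrightarrow T(\C)$, and I set $g([u]):=[u]\in K_1(T(\C))$.

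Next I would verify well-definedness by checking that each defining relation of $K_i(\C)$ holds in $K_i(T(\C))$. The nontrivial relation is the additivity $[p]+[q]=[p\oplus q]$ (respectively $[u]+[v]=[u\oplus v]$) when $p,q$ are endomorphisms of different objects $M,N$. I pass to the common subalgebra $\End(M\oplus N)\subset T(\C)$: the images of $p$ and $q$ become the orthogonal projections $p\oplus 0_N$ and $0_M\oplus q$, whose sum is $p\oplus q$, so the standard additivity for orthogonal projections in a $C^{*}$-algebra gives the relation. The unitary case reduces similarly to $u\oplus 1_N$ and $1_M\oplus v$, whose product is $u\oplus v$, and Proposition \ref{standardk1} finishes it. The zero relation, identity relation, and path relations transfer verbatim from $\End(M)$ to $T(\C)$, since a path inside $\End(M)$ remains a path inside $T(\C)$.

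For the other direction, I would invoke continuity of topological $K$-theory for directed unions (Proposition \ref{cont}) applied to the directed poset $P$: every projection (resp.\ unitary) over $T(\C)$ lies in $M_n(\End(M))=\End(M^n)$ for some $M$ and some $n$, where it is visibly a projection (resp.\ unitary) of $\C$. This shows that $f$ is also well-defined on generators and that $f\circ g$ and $g\circ f$ fix the generating classes, hence equal the identity.

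The principal obstacle to watch is that the structure map $\End(M)\hookrightarrow\End(N)$ for $M\leq N$ is \emph{not} the unital block-diagonal embedding but rather $\phi\mapsto\phi\oplus 0_L$ (where $N=M\oplus L$). However, after unitalization this non-unital inclusion sends a unitary $u$ to $u\oplus 1_L$ — exactly the stabilization map already used in the definition of $U_\infty$ — and sends a projection $p$ to $p\oplus 0_L$, whose $K_0$-class in $\End(N)$ agrees with that of $p$. Thus the ambiguity in choosing an ambient object in the poset $P$ does not affect the resulting $K$-class, and $f$ and $g$ are mutually inverse group homomorphisms.
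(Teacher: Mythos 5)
Your proposal is correct and follows essentially the same route as the paper: both reduce to $\colim_{P}K_{i}(\End(M))$ via continuity of $K$-theory, match generators, and handle the only delicate relation ($[p]+[q]=[p\oplus q]$ for endomorphisms of different objects) by passing to $\End(M\oplus N)$, while noting that the non-unital structure maps send $u$ to $u\oplus 1_{L}$ after unitalization. The only organizational difference is that you exhibit an explicit two-sided inverse, whereas the paper constructs one map and verifies surjectivity and injectivity separately; the mathematical content is identical.
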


\begin{proof}
The algebra $T(\C)$ is a filtered colimit of $C^{*}$-algebras. Since $K$-theory of $C^{*}$-algebras commutes with filtered colimits, it suffices to see that $K_{i}(\C)$ is isomorphic to $\colim_{P} K_{i}(\End(M))$. \\
For each $M$, we have a canonical map $K_{i}(\End(M)) \rightarrow K_{i}(\C)$ coming from the obvious $C^{*}$-functor $\End(M)_{\oplus} \rightarrow \C$. This map sends a projection $p: M^{\oplus k} \rightarrow M^{\oplus k}$ to the projection $p$ itself, where we now consider $M^{\oplus k}$ as an object of $\C$, and similarly for unitaries. For $M \leq N = M \oplus L$, the map $K_{i}(\End(M)) \rightarrow K_{i}(\End(N))$ is compatible with the two maps to $K_{i}(\C)$: If $p: M^{\oplus k} \rightarrow M^{\oplus k}$ is a projection, $p$ is sent (up to conjugation with the canonical isomorphism $M^{\oplus k} \oplus L^{\oplus k} \cong N^{\oplus k}$, which does not affect matters in $K$-theory) to $p \oplus 0_{L^{\oplus k}}$ in $\End(M^{\oplus k} \oplus L^{\oplus k})$ and then to $p \oplus 0_{L^{\oplus k}} = p$ in $K_{0}(\C)$, and similarly for unitaries. So we obtain a map $\colim_{P} K_{i}(\End(M)) \rightarrow K_{i}(\C)$. This map is clearly surjective: a projection $p \colon M \rightarrow M$ certainly comes from $K_{0}(\End(M))$, and similarly for unitaries. It remains to see injectivity.  \\
To see this, note that $\colim_{P} K_{i}(\End(M))$ is also a quotient of the free abelian group generated by the projections respectively unitaries in $\C$. So it is enough to see that the relations in $K_{i}(\C)$ are also true in  $\colim_{P} K_{i}(\End(M))$. But this is now nearly obvious, except for the relation $p \oplus q = p+q$ respectively $u \oplus v = u+v$. However, if for example $u: A \rightarrow A$, $v: B \rightarrow B$, then $u \oplus v \in \End(A \oplus B)$ and the relations in $K_{1}(\End(A \oplus B))$ give $u \oplus v = u+v$ in the colimit. The same argument applies to $K_{0}$. This finishes the proof.
\end{proof}

\section{Representing $T(X)$ as a concrete $C^{*}$-algebra}

\fxnote{Relative case for quotients?}

Let $(X; \cale, \calf)$ be a coarse space. We consider the Hilbert $A$-module $\calh = l^{2}(X \times \IN,A)$ with basis $X \times \IN$.  The total object of each object $M$ of $\C_{*}^{G}(X; \cale, \calf)$ can be viewed as a Hilbert submodule of $\calh$: For each $x$, if $M_{x}$ is $n$-dimensional, we take the vectors $(x,1), \dots (x,n)$ and $T(M)$ is the Hilbert submodule of $\calh$ spanned by all these vectors. Note that this submodule is complemented. 
A morphism $f: M \rightarrow N$ gives rise to an operator on $\calh$ by first projecting onto $T(M)$, then using $T(f)$ to map to $T(N)$ and then include $T(N)$ into $\calh$. This gives rise to a bounded, adjointable operator on $\calh$ since the projection onto $T(M)$ and the inclusion of $T(N)$ into $\calh$ are adjointable.

\begin{prop}
\label{concrete}
The algebra $T(X; \cale,\calf)$ is the completion of the bounded, adjointable operators on $\calh$ which come from a morphism $f \in \C_{*}^{G}(X; \cale,\calf)$. Hence we can view $T(X; \cale,\calf)$ as a subalgebra of the bounded, adjointable operators on $\calh$.
\end{prop}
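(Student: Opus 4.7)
The plan is to exhibit $T(X;\cale,\calf)$ as a subalgebra of $B(\calh)$ (the bounded adjointable operators on $\calh$) by using the universal property of the colimit defining $T(X;\cale,\calf)$, and then identify the image. First, for each object $M$ of $\C = \C_*^G(X;\cale,\calf)$ I would define a $*$-homomorphism $\phi_M \colon \End_\C(M) \to B(\calh)$ sending an endomorphism $f \colon M \to M$ to the operator obtained by composing the orthogonal projection $\calh \to T(M)$, the map $T(f)$, and the inclusion $T(M) \hookrightarrow \calh$; this makes sense because $T(M)$ is a complemented Hilbert $A$-submodule of $\calh$. The map $\phi_M$ is a $*$-homomorphism because $T$ is a $*$-functor and the projection is self-adjoint, and it is injective since $T$ is faithful on morphisms; being an injective $*$-homomorphism between $C^*$-algebras, $\phi_M$ is automatically isometric.

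Second, I would check compatibility with the poset $P$. If $M \leq N$ and $N = M \oplus L$, the embedding $\End(M) \hookrightarrow \End(N)$ used in the definition of $T(X;\cale,\calf)$ sends $f$ to $f \oplus 0_L$, and $\phi_N(f \oplus 0_L)$ acts on $\calh$ as $T(f)$ on $T(M)$ and as zero on the orthogonal complement $T(M)^\perp = T(L) \oplus T(N)^\perp$, so $\phi_N(f \oplus 0_L) = \phi_M(f)$. By the universal property of the algebraic colimit the $\phi_M$ assemble to a $*$-homomorphism $\phi$ from the algebraic union $\bigcup_M \End_\C(M)$ into $B(\calh)$, and since each $\phi_M$ is isometric so is $\phi$. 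Completing, we obtain an isometric $*$-homomorphism $\bar\phi \colon T(X;\cale,\calf) \hookrightarrow B(\calh)$.

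By construction the image of $\bar\phi$ is the closure of operators $\phi_M(f)$ for $M$ an object of $\C$ and $f$ an endomorphism of $M$. The remaining step, which I expect to be the main obstacle because it is the only point at which one has to leave the colimit system, is to identify this image with the closure of operators coming from \emph{all} morphisms of $\C$, not just endomorphisms. One direction is immediate. For the other, given a morphism $f \colon M \to N$ in $\C$, I would set $M' \in P$ to be the object with $M'_x = A^{\max(m_x, n_x)}$, so that both $M \leq M'$ and $N \leq M'$ with the inclusions $T(M), T(N) \hookrightarrow T(M')$ given by the first $m_x$, respectively $n_x$, basis vectors at each $x$. The matrix entries $f_{x,y} \colon M_y \to N_x$ extend by zero to matrix entries $\tilde f_{x,y} \colon M'_y \to M'_x$ which inherit from $f$ their $G$-invariance, local finiteness, and $\cale$-control, so $\tilde f$ is a genuine endomorphism in $\End_\C(M')$; by construction $\phi_{M'}(\tilde f)$ equals the operator on $\calh$ coming from $f$. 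This gives both inclusions of the identification and completes the proof.
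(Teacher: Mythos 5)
Your proposal is correct and follows essentially the same route as the paper: realize the directed system $\End_{\C}(M)\hookrightarrow\calb(\calh)$ as the one defining $T(X;\cale,\calf)$, then absorb an arbitrary morphism $f\colon M\to N$ into an endomorphism of the padded object with fibers $A^{\max(m_x,n_x)}$ by extending the matrix entries by zero. The paper phrases this last step as choosing complements $M'$, $N'$ with $M\oplus M'=N\oplus N'$ and taking $\begin{pmatrix} f & 0\\ 0 & 0\end{pmatrix}$, which is literally the same object and the same endomorphism.
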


\begin{proof}
For an object $M$, let $E(M) \subset \calb(\calh)$ be the image of $\End(M)$ in $\calb(\calh)$. Letting $M$ vary over the objects of $\C_{*}^{G}(X; \cale,\calf)$, we obtain a directed system of $C^{*}$-subalgebras of $\calb(\calh)$ which is easily seen to be the same directed system we used to define $T(X; \cale,\calf)$. \\
Potentially, the bounded, adjointable operators coming from morphisms in $C_{*}^{G}(X)$ might be bigger than this algebra since we have so far only handled endomorphisms, not arbitrary morphisms. However, we can view any morphism as an endomorphism in the following way: Begin with a morphism $f \colon M \rightarrow N$.
Define an object $M'$ as 
\[
M'_{x} = \begin{cases} 
0 \text{ if } m_{x} \geq n_{x} \\
A^{n_{x}-m_{x}} \text{ else }
\end{cases}
\]
and similarly $N'$ as 
\[
N'_{x} = \begin{cases} 
0 \text{ if } n_{x} \geq m_{x} \\
A^{m_{x}-n_{x}} \text{ else }
\end{cases}
\]
These are vaild objects of $\C_{*}^{G}(X)$ since their support is contained in the support of $N \oplus M$. We have
\[
M \oplus M' = N \oplus N'
\]
and can view $f$ as an endomorphism of this object via
\[
\begin{pmatrix}
f & 0 \\
0 & 0
\end{pmatrix}: M \oplus M' \rightarrow N \oplus N'
\]
The images of $f$ and of 
\[
\begin{pmatrix}
f & 0 \\
0 & 0
\end{pmatrix}
\]
in $\calb(\calh)$ are the same, so we see that the morphism $f$ is already covered in the endomorphisms.
\end{proof}

\section{Functoriality and lack thereof}

Unfortunately, $T(\C) = T(\C^{G}_{*}(X; \cale, \calf))$ does not depend functorially on the coarse space $X$. For example, let $X = S^{0}$ and $Y = pt$ without any control conditions and with $G$ the trivial group. The map $X \rightarrow Y$ induces a $C^{*}$-functor $F: \C(X) \rightarrow \C(Y)$. But let $M$ be the object over $X$ with $M_{x} = A$ for both points of $X$ and $N$ the object with $N_{x} = A^{2}$. Then the square
\[
\xymatrix{
\End(M) \ar[rr] \ar[d] & & \End(N) \ar[d] \\
\End(F(M)) \ar[rr] & & \End(F(N)) \\
}
\]
does not commute: The upper composition corresponds to embedding $T(M) = A^{2}$ into $T(F(N)) = A^{4}$ as the first and third summand, whereas the lower composition corresponds to embedding it as the first and second summand. Consequently, we cannot hope for an induced map on the colimits. There seems to be no way around this problem; but using some tricks, we can at least turn $\C^{G}_{*}(X; \cale, \calf)$ into a functor from coarse spaces to the homotopy category of spaces or spectra.  Let us first notice that everything is fine as long as we only consider injective maps. 
\fxnote{Are there induced maps somewhere?}

\begin{prop}
Let $(X; \cale, \calf)$ and $(Y; \cale', \calf')$ be two coarse $G$-spaces and $f: X \rightarrow Y$ a coarse injective $G$-map. Then we get a functorial $C^{*}$-homomorphism $T(f): T(X) \rightarrow T(Y)$, i.e. if $g: Y \rightarrow Z$ is another injection, we have $T(g \circ f) = T(g) \circ T(f)$.
\end{prop}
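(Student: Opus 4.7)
The plan is to exploit injectivity to upgrade the push-forward functor $f_*$ on objects to a strictly order-preserving functor between the indexing posets used to build $T(X)$ and $T(Y)$, and then simply pass to colimits.

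First I would recall the push-forward on objects: for an object $M$ of $\C^G_*(X;\cale,\calf)$ define $(f_*M)_y = M_x$ if $y = f(x)$ (necessarily unique by injectivity) and $(f_*M)_y = 0$ otherwise. This avoids the main annoyance in the general case, where $(f_*M)_y$ would be a direct sum $\bigoplus_{x\in f^{-1}(y)} M_x$ that depends on the ordering of $f^{-1}(y)$. Because injectivity eliminates this choice, $f_*$ preserves the partial order on objects strictly: if $M \leq N$ with $N = M \oplus L$ as in the definition of the poset $P$, then $f_*N = f_*M \oplus f_*L$ with the splitting again canonical. On morphisms, $f_*$ is the obvious $C^*$-functor, giving for each $M$ a $*$-homomorphism $\End(M) \to \End(f_*M)$.

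Next I would verify the key naturality square
\[
\xymatrix{
\End(M) \ar[r] \ar[d] & \End(N) \ar[d] \\
\End(f_*M) \ar[r] & \End(f_*N)
}
\]
commutes for every $M \leq N$ in $P_X$. Both horizontal inclusions send $\phi$ to $\phi \oplus 0_L$ (resp.\ to $\phi \oplus 0_{f_*L}$), and by injectivity the identification $f_*N = f_*M \oplus f_*L$ is the push-forward of $N = M \oplus L$. This is precisely the place where the counterexample before the proposition breaks down: for a non-injective map, the push-forward of $N = M \oplus L$ would rearrange the summands, so the square would fail. With $f$ injective, no rearrangement occurs, and the square commutes on the nose. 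Therefore $f_*$ together with these maps defines a natural transformation of functors $P_X \to C^*\text{-}\mathcal{ALG}$ from $T_X$ to $T_Y \circ f_*$, and passing to colimits gives the desired $C^*$-homomorphism
\[
T(f) : T(X;\cale,\calf) = \colim_{P_X} T_X \longrightarrow \colim_{P_Y} T_Y = T(Y;\cale',\calf').
\]

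For strict functoriality $T(g\circ f) = T(g)\circ T(f)$, I would observe that for two injections $f,g$ one has $(g\circ f)_* = g_*\circ f_*$ strictly on objects and morphisms, again because $g\circ f$ is injective and so no grouping/ordering of preimages is needed. Consequently the two maps of diagrams $P_X \to C^*\text{-}\mathcal{ALG}$ used to build $T(g\circ f)$ and $T(g)\circ T(f)$ agree, and the induced maps on colimits coincide. The main obstacle is conceptual rather than technical: one must make clear why injectivity is essential and that, once it is assumed, the definition of the indexing poset together with the canonical splittings $N = M\oplus L$ is transported verbatim by $f_*$.
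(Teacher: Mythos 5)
Your proposal is correct and follows essentially the same route as the paper: push objects forward along the injective map, note that injectivity makes the structure maps $\End(M)\rightarrow\End(N)$ compatible with $\End(f_*M)\rightarrow\End(f_*N)$ on the nose, and pass to colimits, with strict functoriality coming from $(g\circ f)_*=g_*\circ f_*$. The paper's proof is just a terser version of the same argument, so no further comparison is needed.
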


\begin{proof}
For each object $M$ of $\C^{G}_{*}(X)$, we obtain a $C^{*}$-homomorphism $\End(M) \rightarrow \End(F(M))$. For $M \leq N$, we have to check commutativity of the square 
\[
\xymatrix{
\End(M) \ar[rr] \ar[d] & & \End(N) \ar[d] \\
\End(F(M)) \ar[rr] & & \End(F(N)) \\
}
\]
to obtain an induced map on the colimits. But this is immediate to check since $f$ is injective.

\end{proof}

Now let $f \colon X \rightarrow Y$ be an arbitrary coarse map with $X$ and $Y$ free $G$-spaces. We will employ a trick to make $f$ injective up to equivalence of categories.

\begin{definition}
\label{dummys}
Let $f \colon X \rightarrow Y$ be a coarse $G$-map. We define a new coarse structure on $X \times Y$ with its diagonal $G$-action as follows:
\begin{enumerate}
\item[-] For morphisms, we pull back the morphism support $\cale$ of $Y$ back to $X \times Y$. \fxnote{explain}
\item[-] For the object support, we also pull back the object support $\calf$ of $Y$ and also insist on finiteness in $X$-direction: $F \subset X \times Y$ is controlled if and only if it is the pullback of a controlled subset of $Y$ and for each $y \in Y$, there are only finitely many $x \in X$ with $(x,y) \in F$. Note that for any object $M$ over $X$ and any $y \in Y$, the conditions on a coarse map force that $\supp(M) \cap f^{-1}(y)$ is finite, so that pushing $M$ forward along $x \mapsto (x, f(x))$ yields a vaild object over $X \times Y$.
\end{enumerate}
We denote the category $\C^{G}_{*}(X \times Y)$ with the above control conditions as $\C^{G}_{*}(Y)_{X}$ since the $X$ is only a dummy variable we need to create enough space. Let $f_{X} \colon X \rightarrow X \times Y$ be the map sending $x$ to $(x, f(x))$. This is again a coarse $G$-map and, since it is injective,  we obtain an induced map $(f_{X})_{*} \colon T(X) \rightarrow T(X \times Y)$, which by abuse of notation we will often write as $f_{*}$ again.
\end{definition}

\begin{prop}
\label{welldef}
Let $g \colon Y \rightarrow X$ be any $G$-equivariant map of sets, which exists since $X$ and $Y$ are free $G$-spaces. Then the map $g \times \Id \colon Y \rightarrow X \times Y$ is coarse and induces an equivalence of categories $F \colon \C_{*}^{G}(Y) \rightarrow \C_{*}^{G}(X \times Y)$. Also the induced $C^{*}$-homomorphism 
\[
i \colon T(Y) \rightarrow T(X \times Y)
\] 
is a $K$-equivalence and in particular induces a weak homotopy equivalence of $K$-theory spectra. The homotopy class of this map does not depend on the choice of $g$. By inverting $K(i)$, we hence obtain a well-defined homotopy class 
\[
K(f) \colon K(T(X)) \stackrel{K(f_X)}{\rightarrow} K(T(X \times Y)) \stackrel{K(i)^{-1}}{\rightarrow} K(T(Y))
\]
\end{prop}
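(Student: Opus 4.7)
The plan is to verify the three assertions in sequence: $g \times \Id$ is coarse, the induced functor is an equivalence, and the induced spectrum-level map is independent of $g$ up to homotopy.

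First I would check that $g \times \Id \colon Y \to X \times Y$ is coarse. Since the morphism support on $X \times Y$ is the pullback of $\cale$ along the projection $\pi_Y$, and $\pi_Y \circ (g \times \Id) = \Id_Y$, the morphism-support axiom is immediate. For the object support, the image $(g \times \Id)(F) = \{(g(y), y) \mid y \in F\}$ sits over each $y \in F$ with exactly one preimage, is therefore locally finite in the $X$-direction, and its pullback along $\pi_Y$ is $F$ itself. Properness follows because $g \times \Id$ is a section of $\pi_Y$, and $G$-equivariance is built in.

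Next I would establish that the induced pushforward $F = (g \times \Id)_{*} \colon \C_{*}^{G}(Y) \to \C_{*}^{G}(Y)_X$ is an equivalence. Fullness and faithfulness are essentially tautological: since the morphism support on $X \times Y$ does not constrain the $X$-coordinates, $\Hom(F(M), F(M'))$ is naturally identified with $\Hom(M,M')$. For essential surjectivity, given an object $N$ over $X \times Y$, I use finiteness in the $X$-direction to set $M_y = \bigoplus_{x : (x,y) \in \supp N} N_{(x,y)}$ after fixing an ordering; the canonical block inclusions assemble into a unitary isomorphism $N \to F(M)$ whose only nontrivial components move in the $X$-direction and are therefore controlled. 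Once $F$ is known to be an equivalence of $C^{*}$-categories, it induces isomorphisms on $K_0$ and $K_1$ of the categories, so by the natural isomorphism $K_{*}(T(\C)) \cong K_{*}(\C)$ established earlier, the $C^{*}$-homomorphism $i \colon T(Y) \to T(X \times Y)$ is an isomorphism on $K_0$ and $K_1$. Bott periodicity then upgrades this to an isomorphism on all $K_n$, so $K(i)$ is a weak equivalence of $\Omega$-spectra and is invertible in the homotopy category.

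The main obstacle, and the last step, is to show that the homotopy class of $K(i)$ is independent of $g$. For two choices $g_0, g_1 \colon Y \to X$, the resulting functors $F_0, F_1$ are naturally isomorphic: the component $\tau_M \colon F_0(M) \to F_1(M)$ applies the identity on $M_y$ to move the summand sitting at $(g_0(y), y)$ to $(g_1(y), y)$. This is a unitary isomorphism, and it is controlled because no $Y$-coordinate changes. By the last clause of Proposition \ref{kcat}, combined with the direct-sum trick of passing from $F_i$ to $F_0 \oplus F_1$ and applying Proposition \ref{conjugation} to the path realising $\tau \oplus \tau^{*}$ as homotopic to the identity, this natural isomorphism lifts to a homotopy between $K(F_0)$ and $K(F_1)$ as maps of $K$-theory spectra, not merely to agreement on homotopy groups. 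Consequently $K(i_0) \simeq K(i_1)$ in the homotopy category, their homotopy inverses agree, and $K(f) = K(f_X) \circ K(i)^{-1}$ is a well-defined morphism in the homotopy category of spectra.
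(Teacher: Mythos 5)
The first two thirds of your argument (coarseness of $g \times \Id$, the equivalence $F$ via collapsing the $X$-fibers, and the resulting $K$-equivalence of $i$) match the paper's proof. The issue is in the last step. You want the homotopy class of $K(i)$ as a map of \emph{spectra} to be independent of $g$, and you try to get this from a natural unitary isomorphism $\tau\colon F_0 \Rightarrow F_1$ together with Proposition \ref{kcat}(v) and Proposition \ref{conjugation}. But \ref{kcat}(v) only yields equality of the induced maps on the $K$-\emph{groups} of the categories, which is strictly weaker than what you need; and \ref{conjugation} requires a single unitary $u$ in an algebra containing $T(X\times Y)$ as an ideal, whereas your $\tau$ is an object-indexed family of unitary isomorphisms $\tau_M\colon F_0(M)\to F_1(M)$ between \emph{different} objects. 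Inside the colimit algebra $T(X\times Y)$ each $\tau_M$ is only a partial isometry, and the rotation trick you allude to would have to be performed coherently and simultaneously over all objects $M$ to produce a single homotopy of maps $U_\infty(T(Y))\to U_\infty(T(X\times Y))$. As written, your argument does not produce that global homotopy; this is precisely the kind of object-level bookkeeping that makes $T$ non-functorial in the first place, so it cannot be waved away.

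The paper closes this gap by assembling your $\tau_M$'s into one global unitary: for each $y\in Y$ choose a self-bijection $\phi_y$ of $X$ with $\phi_y(g_0(y))=g_1(y)$, let $F(x,y)=(\phi_y(x),y)$, and let $V$ be the induced unitary on the Hilbert module $l^2(X\times Y\times\IN,A)$. One checks (using that there is no control in the $X$-direction and that $F$ preserves the $Y$-coordinate) that $V$ is a multiplier of $T(X\times Y)\subset\calb(\calh)$ and that $i_1=\operatorname{Ad}_V\circ i_0$ on the nose. A single application of Proposition \ref{conjugation} then gives $\operatorname{Ad}_V\simeq\Id$ on $U_\infty(T(X\times Y))$, hence $K(i_0)\simeq K(i_1)$ as maps of spectra. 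Your ingredients are the right ones, but you should make this assembly step explicit; without it the passage from a natural isomorphism of functors to a homotopy of spectrum maps is unjustified.
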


\begin{proof}
It is easy to check that $g \times \id$ is compatible with the control conditions. Let $M$ be an object in $\C_{*}^{G}(X \times Y)$. Let $M'$ be the object in $\C_{*}^{G}(X \times Y)$ with
\[
M'_{(x,y)} = \begin{cases} 0 \text{ if } x \neq g(y) \\
		\bigoplus_{x' \in X: M_{(x',y) \neq 0}} M_{(x',y)} \text{ else }
		\end{cases}
\]
The object $M'$ is clearly in the image of the functor 
\[
F \colon \C_{*}^{G}(Y) \rightarrow \C_{*}^{G}(X \times Y)
\] 
Since there are no morphism control conditions in the $X$-direction of $X \times Y$, it is easy to write down an isomorphism from $M$ to $M'$, so $F$ is essentially surjective.  That $F$ is full and faithful is immediate. Hence $F$ induces an isomorphism on the $K$-groups of the categories $\C_{*}^{G}(Y)$ and $\C_{*}^{G}(X \times Y)$. Since these $K$-groups are also the $K$-groups of $T(Y)$ respectively $T(X \times Y)$, the map $i \colon T(Y) \rightarrow T(X \times Y)$ is a $K$-equivalence. \\
It remains to see that  the homotopy class of $K(i)$ does not depend on the choice of $g$. Let $g'$ be another choice. For each $y \in Y$, pick a self-bijection $\phi_y \colon X \rightarrow X$ sending $g(y)$ to $g'(y)$ and consider the bijection
\begin{align*}
F \colon X \times Y &\rightarrow X \times Y \\
(x,y) &\mapsto (\phi_y(x),y)
\end{align*}
$F$ induces a unitary operator $V$ from the Hilbert module with basis $X \times Y \times \IN$ to itself by acting on the basis as $(x,y,n) \mapsto (F(x,y),n)$. Considering $T(X \times Y)$ as a subalgebra of $\calb(\calh(X \times Y \times \IN))$, it is easy to check that $V$ is a multiplier of $T(X \times Y)$, i.e. for each operator $S \in T(X \times Y)$, also $VS$ and $SV$ are in $T(X \times Y)$: By \ref{concrete}, it suffices to check that for any morphism $f: M \rightarrow N$ in $\C_{*}^{G}(X \times Y)$, which we can view as a bounded, adjointable operator on $l^{2}(X \times Y \times \IN, A)$ as in \ref{concrete},  $Vf$ and $fV$ - again viewed as bounded operators on $l^{2}(X \times Y \times \IN, A)$ - also come from morphisms in the category. However $Vf$ comes from 
\[
f' \colon M \rightarrow F_{*}N
\]
which is obtained by composing $f$ with the isomorphism $N \cong F_{*}(N)$ identifying $N_{(x,y)}$ with $F_{*}(N)_{F(x,y)}$, and similarly for $fV$. Since there are no control conditions in the $X$-direction and $F$ preserves the fibers in $X$-direction, the isomorphism $N \cong F_{*}(N)$ is indeed controlled. \\
Furthermore, $i$ and $i'$ precisely differ by conjugation with $V$. Since conjugation with $V$ induces the identity in $K$-theory up to homotopy, $K(i)$ and $K(i')$ are homotopic. 
\fxnote{Explain}
\end{proof}

So we now have defined a potential functor $K$ from coarse $G$-spaces to the homotopy category spectra on objects and morphisms and have shown this depends on no choices. It remains to see that $K$ is compatible with compositions.

\begin{prop}
Let $f: X \rightarrow Y$ and $g: Y \rightarrow Z$ be two coarse $G$-maps, with $X$ and $Y$ free $G$-spaces. Then $K(g) \circ K(f) = K(g \circ f)$ in the homotopy category of spectra. 
\end{prop}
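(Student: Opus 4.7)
The strategy is to realise both $K(g) \circ K(f)$ and $K(g \circ f)$ as compositions inside a common ambient $C^*$-category attached to the triple product $V = X \times Y \times Z$. I equip $V$ with the coarse structure in which a morphism is controlled iff its $Z$-projection is $\cale_Z$-controlled (i.e.\ the pullback of $\cale_Z$ along $\pi_Z \colon V \to Z$), and an object is admissible iff its support is contained in $\pi_Z^{-1}(F_Z)$ for some $F_Z \in \calf_Z$ and, for every $z \in Z$, meets the fibre over $z$ in a finite subset of $X \times Y$. Using the freeness hypotheses, I fix $G$-set maps $g_0 \colon Y \to X$ and $g_0' \colon Z \to Y$, put $g_0'' = g_0 \circ g_0'$, and note that $g_0$, $g_0'$, $g_0''$ simultaneously specify the three zigzag sections $i_{XY}$, $i_{YZ}$, $i_{XZ}$ used in the definitions of $K(f)$, $K(g)$, $K(g \circ f)$. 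I then introduce six injective coarse $G$-maps into $V$:
\begin{align*}
\iota_1 \colon x &\mapsto (x,f(x),g(f(x))), &
\iota_Y \colon y &\mapsto (g_0(y),y,g(y)), &
\iota_Z \colon z &\mapsto (g_0''(z),g_0'(z),z), \\
\alpha_{XY} \colon (x,y) &\mapsto (x,y,g(y)), &
\alpha_{YZ} \colon (y,z) &\mapsto (g_0(y),y,z), &
\alpha_{XZ} \colon (x,z) &\mapsto (x,g_0'(z),z).
\end{align*}
Each is coarse because every $Z$-coordinate appearing in it is either unchanged or the image of a $Y$-coordinate under the coarse map $g$.

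The first step is to record the strict equalities of coarse maps
\[
\alpha_{XY} \circ f_X = \iota_1, \qquad \alpha_{XY} \circ i_{XY} = \alpha_{YZ} \circ g_Y = \iota_Y, \qquad \alpha_{YZ} \circ i_{YZ} = \alpha_{XZ} \circ i_{XZ} = \iota_Z,
\]
which are immediate from the formulas together with $g_0'' = g_0 \circ g_0'$. The second step is the central geometric input: $\alpha_{YZ}$, $\alpha_{XZ}$, and $\iota_Z$ each induce an equivalence of $C^*$-categories, hence a $K$-equivalence, by the collapsing argument of Proposition~\ref{welldef}. For instance, for $\alpha_{XZ}$: given an object $M$ of $\C_*^G(V)$, the object $M'$ with $M'_{(x,g_0'(z),z)} = \bigoplus_y M_{(x,y,z)}$ (a finite sum because the $z$-fibre of $\supp M$ in $X \times Y$ is finite) and zero elsewhere is a legitimate object supported in the image of $\alpha_{XZ}$, and the obvious collapse morphism $M \to M'$ has $Z$-projection on the diagonal of $Z \times Z$, so is $\cale_Z$-controlled automatically. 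The cases of $\alpha_{YZ}$ and $\iota_Z$ are identical, collapsing instead in the $X$-direction, or in both $X$ and $Y$.

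Combining these ingredients, substituting $K(g_Y) = K(\alpha_{YZ})^{-1} \circ K(\iota_Y)$ and $K(\iota_Y) = K(\alpha_{XY}) \circ K(i_{XY})$ into the zigzag definition of $K(g) \circ K(f)$ makes the $K(i_{XY})$'s cancel and yields
\[
K(g) \circ K(f) = K(i_{YZ})^{-1} \circ K(\alpha_{YZ})^{-1} \circ K(\alpha_{XY}) \circ K(f_X) = K(\iota_Z)^{-1} \circ K(\iota_1),
\]
using $K(\alpha_{YZ}) \circ K(i_{YZ}) = K(\iota_Z)$ and $K(\alpha_{XY}) \circ K(f_X) = K(\iota_1)$. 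An entirely parallel substitution gives $K(g \circ f) = K(\iota_Z)^{-1} \circ K(\alpha_{XZ}) \circ K((g \circ f)_X)$. The remaining content of the proof is therefore the single identification $K(\alpha_{XZ}) \circ K((g \circ f)_X) \simeq K(\iota_1)$, and this is the main obstacle: the two $*$-homomorphisms $T(X) \to T(V)$ differ in the $Y$-coordinate of their images, placing $f(x)$ against $g_0'(g(f(x)))$ at the point $(x,\ast,g(f(x)))$. Following the proof of Proposition~\ref{welldef}, I would define a $G$-equivariant self-bijection $F$ of $V$ which is the identity outside the union of the two image sets and swaps $(x,f(x),g(f(x)))$ with $(x,g_0'(g(f(x))),g(f(x)))$ for each $x \in X$. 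The resulting permutation unitary $V_F$ on $l^2(V \times \IN, A)$ conjugates $\iota_1$ to $\alpha_{XZ} \circ (g \circ f)_X$ on the nose; preservation of the object-support finiteness conditions under $F$ uses exactly that $f^{-1}(y_0)$ and $(g \circ f)^{-1}(z_0)$ meet any controlled support in a finite set (a consequence of $f$ and $g \circ f$ being coarse). A final appeal to Proposition~\ref{conjugation}, doubling up to $V_F \oplus V_F^*$ on $M_2(T(V))$ to connect $V_F$ to the identity via a path of unitaries, then shows conjugation by $V_F$ induces the identity on $K$-theory in the homotopy category, closing the argument.
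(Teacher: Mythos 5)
Your proposal is correct and follows essentially the same route as the paper: both work in the triple product $X \times Y \times Z$ with the coarse structure carried by the $Z$-factor, use the injective "graph" maps and the collapsing equivalences of \ref{welldef} to identify everything inside $T(X \times Y \times Z)$, and reconcile the one remaining discrepancy (you compare the two composites out of $T(X)$; the paper compares the two maps $f_X \times \id$ and $\id \times k_Z$ out of $T(X \times Z)$, which is the same discrepancy in the $Y$-coordinate) by conjugating with a $Z$-fibre-preserving permutation multiplier unitary and invoking \ref{conjugation}. The reorganization is cosmetic, not a genuinely different argument.
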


\begin{proof}
For a map $s: A \rightarrow B$, we write $s_{A} \colon A \rightarrow A \times B$ or \\ $s_{A} \colon A \rightarrow B \times A$, depending on context, for the map sending $a$ to $(a, s(a))$ respectively $(s(a),a)$. Fix $G$-equivariant maps (probably non-continuous) $h \colon Y \rightarrow X$ and $k \colon Z \rightarrow Y$.
Consider the following diagram of $C^*$-algebras and $*$-homomorphisms:

\[
\xymatrix{
T(X) \ar[rr]^{(g \circ f)_{X}} \ar[ddr]_{f_{X}} \ar@{.>}@/^5pc/[rrrr]^{K(g \circ f)} \ar@{.>}@/_5pc/[ddddrr]_{K(f)}  & & T(X \times Z) \ar@<-2ex>@{-->}[d]_{f_{X} \times \id}^{\simeq}  \ar@<2ex>@{-->}[d]^{\id \times k_{Z}}_{\simeq}  & & T(Z) \ar[ll]^{(h \circ k)_{Z}}_{\simeq} \ar[ddl]^{k_{Z}}_{\simeq} \\
& & T(X \times Y \times Z) \\
& T(X \times Y) \ar[ur]^{\id \times g_{Y}} & & T(Y \times Z) \ar[ul]_{h_{Y} \times \id}^{\simeq} \\
\\
& & T(Y) \ar[uul]^{h_{Y}}_{\simeq} \ar[uur]_{g_{Y}} \ar@{.>}@/_5pc/[uuuurr]_{K(g)} \\
\\}
\]

Some explanations are in order.  We always use the last variable of the various products to build the actual coarse structures, and the remaining factors are dummies. For example, $T(X \times Y \times Z)$ is $T(\C_{*}^{G}(Z)_{X \times Y})$ with notation as in \ref{dummys}. All solid maps in the diagram are induced by the maps of spaces as indicated. The left-hand dashed arrow is such that the left-hand part of the diagram commutes and the right-hand dashed arrow is such that the right-hand part commutes. All arrows labeled with a $\simeq$ are $K$-equivalences. The dotted arrows only exist in the homotopy category of spaces or spectra after taking $K$-theory of the involved $C^{*}$-algebras. They are defined such that the triangles
\[
\xymatrix{
& & & T(X \times Y) \\
T(X) \ar[urrr] \ar@{.>}[rrr]_{K(f)} & & & T(Y) \ar[u]_{\cong}
}
\]
\[
\xymatrix{
& & & T(Y \times Z) \\
T(Y) \ar[urrr] \ar@{.>}[rrr]_{K(g)} & & & T(Z) \ar[u]_{\cong}
}
\]
\[
\xymatrix{
& & & T(X \times Z) \\
T(X) \ar[urrr] \ar@{.>}[rrr]_{K(g \circ f)} & & & T(Z) \ar[u]_{\cong}
}
\]
commute in the homotopy category after taking $K$-theory. The claim is that the triangle consisting of the three dotted arrows is commutative in the homotopy category. \\
The two dashed arrows are different on-the-nose, but induce homotopic maps on $K$-theory. This argument is similar to the multiplier argument in the proof of \ref{welldef}. 
There is a bijection 
\[
F \colon X \times Y \times Z \rightarrow X \times Y \times Z
\]
 such that 
\[
F \circ (f_{X} \times \id) = \id \times k_{Z}
\]
and such that $F$ preserves the $Z$-component, i.e. we have $F(x,y,z) = (x',y',z)$ for all $x,y,z$.
The map $F$ induces a unitary operator on $l^{2}(X \times Y \times Z, A)$ which again is a multiplier of $T(X \times Y \times Z)$, and the two maps $T(X \times Z) \rightarrow T(X \times Y \times Z)$ differ by conjugation with this unitary. So when we consider things in the homotopy category after taking $K$-theory, we can replace the two dashed arrows with a single arrow which makes the whole diagram commutative. Now a diagram chase and using that many maps are actually isomorphisms immediately implies that the triangle consisting of the three dotted arrows is commutative.
\end{proof}

\section{Properties of $K$-theory}

This section collects some properties of topological $K$-theory of controlled categories. We will assume that all involved coarse $G$-spaces are free $G$-spaces.

\begin{prop}
\label{homotopyfiber}
Let $X$ be a proper coarse $G$-space and $Y \subset X$ a $G$-invariant subspace. Then:
\begin{enumerate}
\item The inclusion $\C^{G}_{b}(X; \cale, \calf(Y))  \rightarrow \C^{G}_{b}(X; \cale, \calf)$ is a Karoubi filtration. Hence there is a fibration sequence
\[
\IK^{-\infty}\C^{G}_{b}(Y; i^{-1}\cale, i^{-1}\calf) \rightarrow \IK^{-\infty}\C^{G}_{b}(X; \cale, \calf) \rightarrow \IK^{-\infty}\C^{G}_{b}(X; \cale, \calf)^{> Y}
\]
where $\IK^{-\infty}$ denotes the non-connective algebraic $K$-theory spectrum and we used $\ref{G-properness}$ to replace $\C^{G}_{b}(X; \cale, \calf(Y))$ by $\C^{G}_{b}(Y; i^{-1}\cale, i^{-1}\calf)$.
\item There is a fibration sequence
\[
K \C^{G}_{*}(Y; i^{-1}\cale, i^{-1}\calf) \rightarrow K \C^{G}_{*}(X; \cale, \calf) \rightarrow K \C^{G}_{*}(X; \cale, \calf)^{> Y}
\]
\end{enumerate}
\end{prop}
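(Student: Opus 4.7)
My plan is to handle the two parts separately: the algebraic part by verifying the Karoubi filtration axioms directly, and the topological part by passing to $C^{*}$-algebras via the $T(-)$-construction and using the Serre fibration established earlier.

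For part (i), I would verify the standard Karoubi filtration axioms as in the proof of \cite[Proposition~3.3]{BFJR}. Given an object $B$ of $\C^{G}_{b}(X; \cale, \calf(Y))$ with $\supp(B) \subset (F_{0} \cap Y)^{E_{0}} \cap F_{0}$ and any $E_{1}$-controlled morphism $f \colon A \to B$ in $\C^{G}_{b}(X; \cale, \calf)$, only the restriction of $A$ to $\supp(B)^{E_{1}}$ contributes. This set lies in $(F_{0} \cap Y)^{E_{0} \circ E_{1}} \cap F_{0}^{E_{1}}$, and after enlarging to an $F \in \calf$ containing $F_{0}^{E_{1}}$ and an object-support set of $A$, it lies in $(F \cap Y)^{E} \cap F \in \calf(Y)$ for suitable $E \in \cale$. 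Decomposing $A = A_{1} \oplus A_{2}$ so that $A_{1}$ is supported on this set gives $A_{1} \in \C^{G}_{b}(X; \cale, \calf(Y))$ through which $f$ factors; the symmetric argument handles morphisms $B \to A$. This checks the Karoubi filtration axioms, and the fibration sequence in non-connective algebraic $K$-theory then follows from the standard C\'ardenas--Pedersen theorem on $K$-theory of Karoubi filtrations.

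For part (ii), I would translate to a short exact sequence of $C^{*}$-algebras. First I would show that $T(\C^{G}_{*}(X; \cale, \calf(Y)))$ sits as a closed two-sided ideal $J$ inside $T(\C^{G}_{*}(X; \cale, \calf))$. At each stage $M$ of the directed system defining $T$, the subspace of $\End(M)$ consisting of endomorphisms factoring through an object in $\C^{G}_{*}(X; \cale, \calf(Y))$ is closed under composition with arbitrary endomorphisms of $M$ on either side (because factoring through an object of $\calf(Y)$-support is preserved by pre- and post-composition), so its closure is an ideal in $\End(M)$; these ideals are compatible under the structure maps of the directed system, and the colimit yields $J$ as a closed ideal in $T(\C^{G}_{*}(X; \cale, \calf))$. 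Next I would identify the quotient $C^{*}$-algebra $T(\C^{G}_{*}(X; \cale, \calf))/J$ with $T(\C^{G}_{*}(X; \cale, \calf)^{>Y})$: the key point is that the quotient norm on each endomorphism algebra $\End_{\C^{G}_{*}}(M)/(J \cap \End_{\C^{G}_{*}}(M))$ agrees with the norm on $\End_{\C^{G}_{*}(X;\cale,\calf)^{>Y}}(M)$ defined in \ref{quotients}, because both are infima over the closure of the completely continuous morphisms, and the identification is then compatible with the colimits defining $T$ of both sides. Finally, applying the Serre fibration $U_{\infty}(J) \to U_{\infty}(T(\C^{G}_{*}(X; \cale, \calf))) \to U_{\infty}(T(\C^{G}_{*}(X; \cale, \calf))/J)$ established earlier, together with the identification of these $K$-theory spectra with those of the controlled categories, yields the desired fibration sequence.

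The main obstacle will be the identification of the quotient $C^{*}$-algebra with $T(\C^{G}_{*}(X; \cale, \calf)^{>Y})$. Concretely, one needs to verify that a morphism in $\C^{G}_{*}(X; \cale, \calf)$ has arbitrarily small norm modulo the approximately completely continuous morphisms if and only if its image in $T(\C^{G}_{*}(X; \cale, \calf))/J$ is zero. The ``only if'' direction is straightforward; the ``if'' direction requires arguing that a morphism $f \colon M \to N$ whose image under $T$ lies in the closure of $J$ can be approximated by completely continuous morphisms \emph{of the same source and target}, which is where the discussion in \ref{quots} about completing after modding out versus modding out after completing becomes essential, and where I would need to use the compatibility between the ``inflate to an endomorphism'' trick of \ref{concrete} and the direct sum decomposition of objects.
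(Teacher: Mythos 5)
Your overall route is essentially the paper's: part (i) is the standard verification of the Karoubi filtration axioms plus the Karoubi-filtration fibration theorem, and for part (ii) the paper likewise produces an ideal sequence of $T(-)$-algebras and feeds it into the fibration machinery for $U_{\infty}$. The one genuine gap is in your first step, where you assert that the colimit of the stage-wise closures of the ``factoring'' endomorphisms is $T(\C^{G}_{*}(X;\cale,\calf(Y)))$ sitting as a closed ideal. What your construction actually produces is the ideal $J$ of approximately completely continuous morphisms; identifying it (or even just its $K$-theory) with the algebra of the subcategory --- and then with $\C^{G}_{*}(Y;i^{-1}\cale,i^{-1}\calf)$, which is the fiber claimed in the statement --- is not automatic and is nowhere argued in your plan. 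The point is that a completely continuous endomorphism of an arbitrary object $M$ factors through an object supported near $Y$ via morphisms of the \emph{completed} category, which are only norm-limits of controlled morphisms; its matrix support is therefore not confined to a set in $\calf(Y)$, so it does not literally lie in a corner $\End(M_F)$ with $F\in\calf(Y)$. One has to approximate the factorization by controlled morphisms to see that it is a limit of such corner elements, and then still invoke \ref{G-properness} (hence $G$-properness and freeness of the action) together with the invariance of $K\circ T$ under equivalences to land on $Y$ itself. The paper avoids the algebra-level identification altogether: it proves only that $T(\C^{G}_{*}(Y;i^{-1}\cale,i^{-1}\calf))\to T(J)$ is a $K$-equivalence, once by writing $T(J)$ as a colimit of algebras attached to thickened support conditions $\calf(E)$ whose structure maps are equivalences, and once via the space-level continuity of $K$-theory \ref{cont} applied to $\End_J(M)$ as the completion of the ascending union of the $\End(M_F)$. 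Some version of this argument must appear in your write-up.

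The second half of your plan --- identifying $T(\C^{G}_{*}(X;\cale,\calf))/J$ with $T(\C^{G}_{*}(X;\cale,\calf)^{>Y})$ --- is the right remaining issue and matches the paper exactly; your worry about the quotient norm is the real one. The resolution in the paper is short: comparing the quotient norm computed in $T$ (where the subtracted element $g$ may live on a larger object than $M$) with the norm of \ref{quotients} (where $g$ runs over approximately completely continuous endomorphisms of $M$ itself), one decomposes $f-g$ orthogonally into the part over the corner belonging to $M$ and the rest; discarding the rest only decreases the norm, and the remaining part is already allowed in the smaller infimum. With that supplied, your appeal to the fibration $U_{\infty}(J)\to U_{\infty}(A)\to U_{\infty}(A/J)$ gives the fibration sequence exactly as in the paper.
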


\begin{proof}
The first part is standard; see \cite{karoubifil}. 
For the second part, consider the projection $\C^{G}_{*}(X; \cale, \calf) \rightarrow \C^{G}_{*}(X; \cale, \calf)^{> Y}$ and let $J$ be the kernel of this projection, i.e. the category without identities with the same objects as $\C^{G}_{*}(X; \cale, \calf)$, but with only those morphisms which map to zero in $\C^{G}_{*}(X; \cale, \calf)^{> Y}$. In the language of \ref{quotients}, $J$ consists precisely of the approximately continuous morphisms. Even though $J$ is not really a category - not all objects have identites-, nothing keeps us from defining the $C^{*}$-algebra $T(J)$ as in the usual case as
\[
T(J) = \colim_{P} \End_{J}(M)
\]
where now $\End_{J}(M)$ is a potentially non-unital algebra. \\
We will prove the following facts:
\begin{enumerate}
\item The induced inclusion $T(\C^{G}_{*}(Y; i^{-1}\cale, i^{-1}\calf)) \rightarrow T(J)$ induces an isomorphism in $K$-theory.
\item The sequence $J \rightarrow T(\C^{G}_{*}(X; \cale, \calf))  \rightarrow T(\C^{G}_{*}(X; \cale, \calf)^{> Y})$ is an ideal sequence and hence induces a fiber sequence in $K$-theory.
\end{enumerate}
Together, the two facts imply the claim. \\
Let us start with (i). We will actually give two different proofs of this. For the first one, let $E \in \cale$ be arbitrary. We define a new object support condition $\calf(E)$ on $X$ as follows: A subset $F \subset X$ is in $\calf(E)$ if and only if there is $F' \in \calf$ such that $F= (F' \cap Y)^{E}$. We define furthermore the $C^{*}$-algebras
\[
T(E) = T(\C_{*}^{G}(X; \cale, \calf(E))) 
\]
for each $E \in \cale$. The categories $\C_{*}^{G}(X; \cale, \calf(E))$ form a directed system of categories in the $E$-variable, and for $E \subset E'$ the inclusion
\[
\C_{*}^{G}(X; \cale, \calf(E)) \rightarrow \C_{*}^{G}(X; \cale, \calf(E'))
\]
is an equivalence of categories: Indeed, any object in any of our categories is by definition isomorphic to an object supported only over $Y$ since $X$ was assumed to be $G$-proper. Now we claim that
\[
T(J) = \colim_{E} T(E)
\]
which is directed by the definition of a coarse structure. Since all the structure maps on the right-hand side are $K$-equivalences, the claim follows from this. \\
For each $E$, the category $\C_{*}^{G}(X; \cale, \calf(E))$ is a subcategory of $J$. This gives rise to compatible inclusion maps
\[
T(E) \rightarrow T(J)
\]
and hence to an injection
\[
\colim_{E} T(E) \rightarrow T(J)
\]
By definition of $J$, each morphism in $J$ can be approximated arbitrarliy closely by morphisms in some $\C_{*}^{G}(X; \cale, \calf(E))$: Each morphism factoring through $\C_{*}^{G}(Y; \cale, \calf)$ lies in some $\C_{*}^{G}(X; \cale, \calf(E))$. This implies surjectivity of the map $\colim_{E} T(E) \rightarrow T(J)$ and hence
\[
T(\C^{G}_{*}(Y; i^{-1}\cale, i^{-1}\calf)) \rightarrow T(J)
\]
is a $K$-equivalence. \\
Unfortunately, we would like to obtain the statement of the proposition in a slightly more general setting in the proof of \ref{stability}, where the proof above does not apply. So we give a second proof of (i) which will also apply in the more general setting. This proof will apply whenever we are in a situation where we have a Karoubi filtration before completing to $C^*$-categories. \\
We will make heavy use of the space-level continuity of $K$-theory \ref{cont}. Pick a map $f: S^{n} \rightarrow U(T(J))$. By \ref{cont}, we find a map homotopic to $f$ which factors through some $U(\End_{J}(M))$ where $M$ is an object of $J$. For simplicity, let us call this map $f$ again, i.e. $f: S^{n} \rightarrow U(\End_{J}(M))$. Recall that the object $M$  is actually an object of $ \C^{G}_{*}(X; \cale, \calf)$.  For each $F \in \calf(Y)$, we write $M = M_{F} \oplus M_{X-F}$ where $M_{F}$ is supported only over $F$ and $M_{X-F}$ is only supported over $X-F$. This yields a (non-unital) inclusion map $\End_{J}(M_{F}) \rightarrow \End_{J}(M)$. By definition of the quotient construction, $\End_{J}(M)$ has to be the completion of the ascending union of the $\End_{J}(M_{F})$: The morphisms coming from some $\End(M_{F})$ are precisely those factoring over $Y$ (thanks to $G$-properness, compare \ref{G-properness}), and these morphisms are dense in $\End_{J}(M) $ by definition. Now apply \ref{cont} again to conclude that $f$ is actually homotopic to a map $S^{n} \rightarrow U(M_{F})$ for some $F$. But $M_{F}$ is an object of $\C_{*}^{G}(X; \cale, \calf(Y))$ which is equivalent to $\C^{G}_{*}(Y; i^{-1}\cale, i^{-1}\calf)$. This implies surjectivity of
\[
T(\C^{G}_{*}(Y; i^{-1}\cale, i^{-1}\calf)) \rightarrow T(J)
\]
on the $K$-theory level. Injectivity is the same argument, using that also $S^{n} \times I$ is compact. \\
For ii), we need some more notation. Let $T^{\alg}(J)$ be the colimit of $\End_{J}(M)$ in the category of (non-unital) algebras, and similarly we define $T^{\alg}(\C^{G}_{*}(X; \cale, \calf) )$ and $T^{\alg}(\C^{G}_{*}(X; \cale, \calf)^{> Y})$. Since algebraic colimits are exact, we have a short exact sequence of algebras
\[
T^{\alg}(J) \rightarrow T^{\alg}(\C^{G}_{*}(X; \cale, \calf) ) \rightarrow T^{\alg}(\C^{G}_{*}(X; \cale, \calf)^{> Y})
\]
We have to argue this carries over to the completions to $C^{*}$-algebras. Certainly, after completing the composition of the two maps is still $0$. Also, the left-hand map is an isometric inclusion, which also carries over to the completions. What we have to see is that the quotient pre-$C^{*}$-norm $T^{\alg}(\C^{G}_{*}(X; \cale, \calf)^{> Y})$ inherits is the same as the one it inherits by the colimit structure. Let $f: M \rightarrow M$ be an endomorphism of an object of $\C^{G}_{*}(X; \cale, \calf)$. Its norm in the quotient is
\[
\left\Vert f \right\Vert_{Y} = \inf_{g:M \rightarrow M \in J} \left\Vert f-g \right\Vert
\]
Considering $f$ as an element of $T^{\alg}(\C^{G}_{*}(X; \cale, \calf) )$, its quotient norm is
\[
\left\Vert f \right\Vert_{Y} = \inf_{g \in T^{\alg} J} \left\Vert f-g \right\Vert
\]
The second norm is clearly at most as large as the first one. However, we can decompose $f-g$ in the second norm as the direct orthogonal sum of two operators: one whose support is precisely the support of $M$, and one whose support is the rest. The second operator makes the norm larger, and the first operator is already covered in the first norm. This proves the claim.

\fxnote{ii) is too sketchy.}
\end{proof}

The following proposition concerns potential Eilenberg Swindles on controlled categories.

\begin{prop}
\label{swindle}
Let $(X, \cale, \calf)$ be a coarse $G$-space. Let $s: X \rightarrow X$ be a $G$-equivariant map such that:
\begin{enumerate}
\item For every compact $K \subset X$, every $F \in \calf$ and every $n \geq 0$ the set $(s^{n})^{-1}(K) \cap F$ is compact and eventually empty.
\item For every $E \in \cale$ and $F \in \calf$, there is an $E' \in \cale$ with
\[
\bigcup_{n \geq 1} (s \times s)^{n} (E \cap F \times F) \subset E'.
\]
\item For every $F \in \calf$ there exists an $F' \in \calf$ with  $\bigcup_{n \geq 1}s^{n}(F) \subset F'$.
\item For every $F \in \calf$ the set $\{(x, s(x)) \mid x \in \bigcup_{n \geq 1} s^{n}(F)\}$ is contained in some $E \in \cale$.
\end{enumerate}
Then the topological $K$-theory of $\C_{*}^{G}(X; \cale, \calf)$ is trivial.
\end{prop}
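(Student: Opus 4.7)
The plan is to run a classical Eilenberg swindle along $s$. Concretely, I would construct an additive $C^{*}$-endofunctor
\[
\Sigma \colon \C_{*}^{G}(X;\cale,\calf) \longrightarrow \C_{*}^{G}(X;\cale,\calf), \qquad \Sigma(M) = \bigoplus_{n \geq 0} s^{n}_{*}M,
\]
defined fibrewise by $\Sigma(M)_{x} = \bigoplus_{n \geq 0} (s^{n}_{*}M)_{x}$, and acting as $s^{n}_{*}\phi$ on the $n$-th summand on morphisms, together with the shifted variant $\Sigma'(M) = \bigoplus_{n \geq 1} s^{n}_{*}M$. The very first task is to check that both $\Sigma$ and $\Sigma'$ are honest $C^{*}$-endofunctors; this is exactly what the four standing hypotheses on $s$ buy us. Condition (iii) bounds the support of $\Sigma(M)$ inside some $F' \in \calf$; condition (i), applied to compact singletons, makes each stalk $\Sigma(M)_{x}$ a finite direct sum, so $\Sigma(M)$ is locally finite. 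Condition (ii) gives $\supp \Sigma(\phi) \subset E' \in \cale$, and the operator-norm estimate $\|\Sigma(\phi)\| = \|\phi\|$ follows because the summands $s^{n}_{*}\phi$ act on pairwise orthogonal pieces of $\Sigma(M)$ and $\Sigma(N)$ and each has the same norm as $\phi$. Additivity and involution-compatibility are then immediate.

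With $\Sigma$ and $\Sigma'$ in hand, the on-the-nose identification $\Sigma(M) = M \oplus \Sigma'(M)$ (and $\Sigma(\phi) = \phi \oplus \Sigma'(\phi)$) yields, for $i = 0, 1$, the identity
\[
K_{i}(\Sigma)(x) = x + K_{i}(\Sigma')(x) \qquad \text{for all } x \in K_{i}(\C_{*}^{G}(X;\cale,\calf)).
\]
The heart of the argument is then the construction of a natural unitary isomorphism $\tau \colon \Sigma \Rightarrow \Sigma'$, implemented by the tautological shift of summands $s^{n}_{*}M \to s^{n+1}_{*}M$: a basis vector of $M_{y}$ sitting in the $n$-th summand at $s^{n}(y)$ is sent to the same basis vector of $M_{y}$ sitting in the $(n+1)$-st summand at $s^{n+1}(y)$. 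As a map between the underlying bases this is a bijection, so $\tau_{M}$ is a unitary isomorphism; naturality in $M$ is built into the construction. The substantive point is controlledness: the support of $\tau_{M}$ consists of pairs $(s(x), x)$ with $x$ running through the forward orbits of $s$ meeting $\supp M$, and condition (iv) is precisely what ensures this set lies in some $E \in \cale$.

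With $\tau$ in hand, the last part of \ref{kcat} tells us $K_{i}(\Sigma) = K_{i}(\Sigma')$, and combined with the identity above this forces $x = 0$ for every $x \in K_{0}$ and $K_{1}$ of $\C_{*}^{G}(X;\cale,\calf)$; Bott periodicity then upgrades the vanishing to all $K_{n}$. The main obstacle I anticipate is the bookkeeping in the first step, i.e.\ verifying carefully that the formal infinite direct sums $\Sigma(M)$ and $\Sigma(\phi)$ really yield locally finite, $\cale$-controlled, bounded adjointable data, and that the shift $\tau$ is controlled; each of the four hypotheses on $s$ has been tailored to exactly one of these verifications.
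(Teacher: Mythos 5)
Your proposal is correct and follows essentially the same route as the paper: the paper forms $S=\bigoplus_{n\geq 1}s^{n}_{*}$ and uses the controlled natural isomorphism $\tau=s_{*}\colon \Id\oplus S\cong S$, matching conditions (i)--(iv) to local finiteness, morphism control, object support and controlledness of $\tau$ exactly as you do, and then concludes via the fact that naturally isomorphic $C^{*}$-functors agree on $K$-theory. One small precision: local finiteness of $\Sigma(M)$ requires applying hypothesis (i) to arbitrary compact $K$ (not only singletons, which only give finite stalks), but since (i) is stated for all compact $K$ this is immediate.
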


\begin{proof}
The proof is the same as the one in \cite[4.4]{BFJR}, with some continuity arguments added in.  For each $n \geq 1$, consider the map $s^{n}: X \rightarrow X$ and the induced $C^{*}$-functor $s^{n}: \C^{G}_{*}(X; \cale, \calf) \rightarrow \C^{G}_{*}(X; \cale, \calf)$. Now consider $S = \bigoplus_{n \geq 1} s_{n}: \C^{G}_{*}(X; \cale, \calf) \rightarrow \C^{G}_{*}(X; \cale, \calf)$. As long as this makes sense, i.e. respects the control conditions and does not create non-locally finite objects, there are no worries about continuity since if $f: M \rightarrow N$ is bounded, $s^{n}(f)$ also has norm $\left\Vert f \right\Vert$ and an infinite direct sum of bounded operators with uniformly bounded norm is bounded. The conditions i)-iv) guarantee that $S$ respects the control conditions and that $\Id \oplus S \cong S$.  The condition i) guarantees that $S(M)$ is locally finite for each object $M$, ii) guarantees that $S(f)$ is controlled for each controlled $f$, iii) checks that $S(M)$ is a controlled object and finally iv) guarantees that $\tau  = s_{*}: \Id \oplus S \cong S$ is controlled.
\end{proof}

\fxnote{A counterexample to Mayer-Vietoris? And some text somewhere}

To obtain homological properties of topological $K$-theory of controlled categories, we need a kind of Mayer-Vietoris principle. For this end, we make the following definition.

\begin{definition}
Let $(X, \cale, \calf)$ be a $G$-proper coarse space and $A,B \subset X$ be two $G$-invariant subsets with $A \cup B = X$. We say that the triple $(X,A,B)$ is coarsely excisive if the following condition holds: \\
For each $E \in \cale$ and $F \in \calf$ there are $E' \in \cale$ and $F' \in \calf$ such that $(A \cap F)^{E} \cap (B \cap F)^{E} \cap F \subset (A \cap B \cap F')^{E'} \cap F'$.
\end{definition}

\begin{rem}
Let $M$ be an object of $C_{*}^{G}(X; \cale, \calf)$. Then $\supp(M) \subset (A \cap F)^{E} \cap (B \cap F)^{E} \cap F$ says that $M$ is close to both $A$ and $B$, and $\supp(M) \subset (A \cap B \cap F')^{E'} \cap F'$ says that $M$ is close to $A \cap B$. \fxnote{Bad remark}
\end{rem}

\begin{prop}
\label{MV}
Let $(X, \cale, \calf)$ be a $G$-proper coarse space and $A,B \subset X$ be two $G$-invariant subsets with $A \cup B = X$. If the triple $(X,A,B)$ is coarsely excisive,  the following diagram induces a homotopy pull-back square in $K$-theory
\[
\xymatrix{
\C_{*}^{G}(A \cap B, i^{-1}\cale, i^{-1}\calf) \ar[dd] \ar[rr] & & \C_{*}^{G}(A, i_{A}^{-1}\cale, i_{A}^{-1}\calf) \ar[dd] \\
\\
\C_{*}^{G}(B, i_{B}^{-1}\cale, i_{B}^{-1}\calf) \ar[rr] & & \C_{*}^{G}(X, \cale, \calf) 
}
\]
where $i: A \cap B \rightarrow X$, $i_{A}: A \rightarrow X$ and $i_{B}: B \rightarrow X$ are the inclusions.
\end{prop}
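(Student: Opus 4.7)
The plan is to apply Proposition \ref{homotopyfiber} to the pairs $(A, A \cap B)$ and $(X, B)$, producing horizontal fibration sequences of $K$-theory spectra. Since $K$-theory lands in spectra, the Mayer-Vietoris square is a homotopy pullback if and only if the induced map of horizontal cofibers
\[
K\C_*^G(A; i_A^{-1}\cale, i_A^{-1}\calf)^{>A\cap B} \to K\C_*^G(X; \cale, \calf)^{>B}
\]
is a weak equivalence. I will prove the stronger statement that the underlying $C^*$-functor $F$, induced by the inclusion $i_A \colon A \hookrightarrow X$, is an equivalence of $C^*$-categories.

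For essential surjectivity, given an object $M$ of $\C_*^G(X)$, decompose it along its support as $M = M_A \oplus M_{B \setminus A}$, where $M_A$ is supported on $\supp(M) \cap A$ and $M_{B \setminus A}$ on $\supp(M) \setminus A \subset B$. The identity of $M_{B \setminus A}$ factors through itself, an object already supported in $\calf(B)$, so $M_{B \setminus A}$ becomes zero in $\C_*^G(X)^{>B}$; hence $M \cong M_A$ in the quotient, with $M_A$ in the image of $F$. Fullness is automatic: any morphism in $\C_*^G(X)$ between objects supported in $A$ has support contained in $A \times A$ intersected with some $E \in \cale$, and therefore already represents a morphism of $\C_*^G(A; i_A^{-1}\cale, i_A^{-1}\calf)$.

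The heart of the argument is faithfulness, where coarse excision enters. Suppose $g \colon M \to N$ in $\C_*^G(A)$ becomes approximately completely continuous when viewed in $\C_*^G(X)^{>B}$; I must show it is already approximately completely continuous in $\C_*^G(A)^{>A \cap B}$. Choose a sequence $g_n \to g$ in norm where $g_n = h_{2,n} \circ h_{1,n}$ factors through an object $L_n$ of $\C_*^G(X)$ with $\supp(L_n) \in \calf(B)$. The step is to cut each $L_n$ down to the summand whose support lies within the control-neighborhood of $\supp(M) \cup \supp(N) \subset A$; since only this part of $L_n$ interacts with $h_{1,n}$ and $h_{2,n}$, the factorization is preserved. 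The resulting support lies simultaneously in an $\cale$-neighborhood of $A$ and of $B$, so coarse excision forces it into $\calf(A \cap B)$. Then $G$-properness (Proposition \ref{G-properness}) provides a controlled isomorphism of the cut-down object with one genuinely supported in $A \cap B$, so each $g_n$ factors through an object of $\C_*^G(A \cap B) \subset \C_*^G(A)$, witnessing $g_n$ as completely continuous in $\C_*^G(A)^{>A \cap B}$, and passing to the norm limit gives the required conclusion for $g$.

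I expect the main obstacle to be keeping the support-cutting and the application of coarse excision uniform enough that the modified sequence still converges to $g$ in the $\C_*^G(A)$-norm, along with minor technical care for the inherited $G$-properness of the subspace coarse structures. Once $F$ is established as an equivalence of $C^*$-categories it induces a weak equivalence on $K$-theory, and combining this with the fibration sequences from Proposition \ref{homotopyfiber} yields the desired homotopy pullback square.
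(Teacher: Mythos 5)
Your overall strategy is the paper's: reduce via the two fibration sequences of \ref{homotopyfiber} to showing that the induced functor on quotient categories is an equivalence, prove essential surjectivity by discarding the summand supported over $B$, get fullness for free, and let coarse excisiveness do the real work in the faithfulness step by intersecting the support of the factorizing object with an $\cale$-neighborhood of $A$. The only genuine difference is where the completion is handled: the paper first establishes that the functor between the \emph{uncompleted} Karoubi quotients $\C_{b}^{G}(X,\cale,\calf(A))^{>A\cap B}\rightarrow\C_{b}^{G}(X,\cale,\calf)^{>B}$ is an \emph{isometric} equivalence of pseudo-normed categories and then completes (cf.\ \ref{quots}), whereas you argue directly in the completed quotients by approximating $g$ with completely continuous $g_{n}$ and cutting down their factorizations. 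These are two packagings of the same content -- your faithfulness argument is exactly the paper's isometry statement in disguise.

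One step of yours does need repair. The factorization $g_{n}=h_{2,n}\circ h_{1,n}$ lives in the \emph{completed} category $\C_{*}^{G}(X;\cale,\calf)$, so $h_{1,n}$ and $h_{2,n}$ are only norm limits of controlled morphisms and need not admit any $E\in\cale$ bounding their support; hence ``the control-neighborhood of $\supp(M)\cup\supp(N)$'' through which you want to cut $L_{n}$ is not defined, and the part of $L_{n}$ interacting with $h_{1,n}$ need not lie in any $\cale$-thickening of $A$. The fix is to first replace $h_{1,n},h_{2,n}$ by controlled morphisms $h'_{1,n},h'_{2,n}$ with $\left\Vert h'_{2,n}h'_{1,n}-g_{n}\right\Vert<1/n$ (possible since controlled morphisms are dense and composition is continuous); the composite $h'_{2,n}h'_{1,n}$ still converges to $g$, is genuinely controlled, and factors through the cut-down of $L_{n}$, whose support then lies in $(A\cap F)^{E}\cap(B\cap F)^{E}\cap F$ so that coarse excisiveness and \ref{G-properness} apply as you intend. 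With that insertion your argument closes; you partially anticipated the issue but did not identify that its source is the non-controlledness of completed morphisms rather than uniformity of the cutting.
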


\begin{proof}
Note that by $G$-properness of the coarse structure, we may as well work with the square
\[
\xymatrix{
\C_{*}^{G}(X, \cale, \calf(A \cap B)) \ar[dd] \ar[rr] & & \C_{*}^{G}(X, \cale, \calf(A)) \ar[dd] \\
\\
\C_{*}^{G}(X, \cale, \calf(B)) \ar[rr] & & \C_{*}^{G}(X, \cale, \calf) 
}
\]
The idea of the proof is the same as the one used in \cite[4.3]{BFJR}: Check that the induced $C^{*}$-functor on the quotients
\[
F: \C_{*}^{G}(X, \cale,\calf(A))^{> A \cap B} \rightarrow \C_{*}^{G}(X, \cale, \calf)^{> B}
\]
is an equivalence of categories and then use that both rows in the resulting diagram induce fiber sequences in $K$-theory. \\
The execution is a bit more subtle since our quotient process is more complicated. We will follow the usual strategy: Prove a suitable statement for the $\C^{G}_{b}$-categories and complete it. First consider the uncompleted categories
\[
\C_{b}^{G}(X, \cale, \calf(A \cap B)) \rightarrow \C_{b}^{G}(X, \cale, \calf(A)) \rightarrow \C_{b}^{G}(X, \cale, \calf(A))^{> A \cap B}
\]
where the right-hand category is the Karoubi quotient of the left-hand inclusion functor. The problem is now that $\C_{*}^{G}(X, \cale, \calf(A))^{> A \cap B}$ is \emph{not} a completion of $\C_{b}^{G}(X, \cale, \calf(A))^{> A \cap B}$: Indeed, a morphism of 
\[
\C_{b}^{G}(X, \cale, \calf(A))^{> A \cap B}
\]
which is a norm-limit of morphisms factoring through $\C_{b}^{G}(X, \cale, \calf(A \cap B))$ is $0$ in 
\[
\C_{*}^{G}(X, \cale, \calf(A))^{> A \cap B}
\]
but not necessarily in $\C_{b}^{G}(X, \cale, \calf(A))^{> A \cap B}$, compare \ref{quots}. \\
As in \ref{quots}, we can define a pseudonorm on $\C_{b}^{G}(X, \cale, \calf(A))^{> A \cap B}$, and similarly we can put a pseudonorm on the other quotient category $\C_{b}^{G}(X; \cale, \calf)^{> B}$. It follows from the discussion in \ref{quots} that it suffices to see that the induced functor
\[
F_{b}:  \C_{b}^{G}(X, \cale, \calf(A))^{> A \cap B} \rightarrow \C_{b}^{G}(X; \cale, \calf)^{> B}
\]
is an isometric equivalence of pre-normed categories. Then the "completion" (which really consists of first dividing out morphisms of norm $0$ and then completing) of $F_{b}$ to a $C^{*}$-functor is also an equivalence of categories as desired. \\
So let us set out to prove this. First we prove that $F_{b}$ is essentially surjective. 
Let $M$ be an object of $\C_{b}^{G}(X, \cale, \calf)^{> B}$. Let $M'$ be the object of $\C_{b}^{G}(X, \cale, \calf)^{> B}$ with
\[
M'_{x} = \begin{cases} M_{x} &\text{ if } x \notin B \\ 0 & \text{ else } \end{cases}
\]
Since we disregard things happening over $B$, $M'$ and $M$ are isomorphic in $\C_{b}^{G}(X, \cale, \calf)^{> B}$. But $M'$ clearly is in the image of $F_{b}$.  \\
That $F_{b}$ is full is a direct consequence of the definitions. \\
Let $f \colon M \rightarrow N$ be a morphism in $\C_{b}^{G}(X, \cale, \calf(A))^{> A \cap B}$. We can assume that both $M$ and $N$ are not supported over $A \cap B$. Let $M'$ be the direct summand of $M$ consisting of those $M_{x}$ on which $f$ does not vanish, and let $N'$ be the direct summand of $N$ consisting of those $N_{x}$ which $f$ actually hits. 
If $F_{b}(f) = 0$, $F_{b}(f)$ has to factor over an object $K$ supported only over $B$, say via $g: M \rightarrow K$ and $h: K \rightarrow N$. Let $g$ be $E$-controlled, $h$ $E'$-controlled and $N'$ $F$-controlled, with $E,E' \in \cale$ and $F \in \calf$. It follows that
\begin{align*}
&\supp(N') \subset \supp(M')^{E} \cap \supp(K)^{E'} \cap F \\ &\subset (A \cap \supp(M'))^{E \cup E'} \cap (B \cap \supp(K))^{E \cup E'} \cap F 
\end{align*}
By the coarse excisiveness of the triple $(X,A,B)$, we conclude that there is an $E'' \in \cale$ and $F'' \in \calf$ with 
\[
\supp(N') \subset (A \cap B \cap F'')^{E''} \cap F''
\]
But this implies $N' = 0$ in $\C_{b}^{G}(X, \cale, \calf(A))^{> A \cap B}$ and hence $f = 0$ in \\ $\C_{b}^{G}(X, \cale, \calf(A))^{> A \cap B}$. \\
It remains to see that $F_{b}$ is indeed isometric. Let $f: M \rightarrow N$ be a morphism of $\C_{b}^{G}(X, \cale, \calf(A))$, representing a morphism of $\C_{b}^{G}(X, \cale, \calf(A))^{> A \cap B}$. We may assume that $M$ and $N$ are supported over $A \setminus B$. Its norm as a morphism in the quotient category is
\[
\left\Vert f \right\Vert_{A \cap B}  = \inf\{ \left\Vert f-g \right\Vert \mid g: M \rightarrow N \text{ completely continuous }\}
\] 
where $g$ is a morphism of $\C_{b}^{G}(X, \cale, \calf(A))$. Considering $f$ as a morphism of $\C_{b}^{G}(X; \cale, \calf)$, the norm of the morphism represented by $f$ in $\C_{b}^{G}(X; \cale, \calf)^{> B}$ is
\[
\left\Vert f \right\Vert_{B}  = \inf\{ \left\Vert f-g \right\Vert \mid g: M \rightarrow N \text{ completely continuous }\}
\]
where now $g$ is a morphism of $\C_{b}^{G}(X; \cale, \calf)$. It follows that we are done once we can prove that each completely continuous map $g: M \rightarrow N$ in $\C_{b}^{G}(X; \cale, \calf)$ is the image of a completely continuous map under the inclusion functor $\C_{b}^{G}(X; \cale, \calf(A)) \rightarrow \C_{b}^{G}(X; \cale, \calf)$. Since $M$ and $N$ are supported over $A \setminus B$, each morphism $g: M \rightarrow N$ is in the image of the inclusion functor: $g: M \rightarrow N$ just is a valid morphism in $\C_{*}^{G}(X; \cale, \calf)$. If $g$ where not completely continuous as a morphism of $\C_{b}^{G}(X; \cale, \calf(A))$, the class of $g$ in the quotient would be nonzero and its image in $\C_{b}^{G}(X; \cale, \calf)^{>B}$ would be zero since $g$ is completely continuous as a morphism of $\C_{b}^{G}(X; \cale, \calf)$. This contradicts the faithfulness of $F_{b}$.
\fxnote{There should really be an easier argument for the last part}
\end{proof}

\section{Comparison with the Roe algebra}

In this section, we outline a relationship between our constructions and the Roe algebra as defined in, for example, \cite{HigsonRoe}. For simplicity, we only give the argument for the case of a metric space with its metric control and without group actions.

\begin{definition}
Let $X$ be a proper metric space. Let 
\[
\rho: C_{0}(X) \rightarrow \calb(\calh)
\] 
be a representation of $C_{0}(X)$ on a separable Hilbert space $H$.
\begin{enumerate}
\item We say that $\rho$ is nondegenerate if $\rho(C_{0}(X))(H)$ is dense in $H$.
\item We say that $\rho$ is ample if in addition, no nonzero $f \in C_{0}(X)$ acts as a compact operator on $\calh$.
\end{enumerate}
\end{definition}

\begin{example}
Let $S \subset X$ be a countable dense subset and let $\calh = l^{2}(S \times \IN)$. Then $C_{0}(X)$ acts on $H$ via 
\[
\rho_{S}(f)(s,n) = f(s) \cdot (s,n)
\]
This is nondegenerate, since each $(s,n)$ is in the image of some $\rho(f)$: just pick $f$ such that $f(s) \neq 0$. It is also ample, because if $f(s) \neq 0$ for some $s$, the image of unit ball in $l^{2}(s \times \IN) \subset l^{2}(S \times \IN)$ is the ball of radius $f(s)$ in $l^{2}(\{s\} \times \IN)$ which is certainly not compact.
\end{example}

\begin{definition}
Let $\rho$ be an ample representation of $C_{0}(X)$ on $\calh$.
\begin{enumerate}
\item An operator $T: \calh \rightarrow \calh$ is locally compact if for all $f$, the operators $\rho(f)T$ and $T\rho(f)$ are compact.
\item The support of an operator $T: \calh \rightarrow \calh$ is the complement of the set of all $(x,y) \in X \times X$ such that there are open neighborhoods $U$ of $x$ and $V$ of $y$ such that $\rho(f)T\rho(g) = 0$ for all $g \in C_{0}(V)$, $f \in C_{0}(U)$. 
\item We say that $T$ is controlled if the support of $T$ is a controlled subset of $X \times X$ with respect to the metric control condition.
\item Let the Roe-algebra $C_{*}^{Roe}(X,\rho)$ of $X$ be the $C^{*}$-algebra generated by the controlled, locally compact operators.
\end{enumerate}
\end{definition}

\begin{prop}
Let $C_{*}(X)$ be the controlled $C^{*}$-category associated to $X$ with its metric coarse structure. Let $S \subset X$ be dense and countable and $\rho_{S}$ the associated representation of $C_{0}(X)$. Then there is a canonical algebra homomorphism
\[
C_{*}^{Roe}(X,\rho_{S}) \rightarrow T(C_{*}(X)) 
\]
inducing an isomorphism on $K$-theory.
\end{prop}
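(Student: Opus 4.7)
The plan is to realize both algebras as $C^{*}$-subalgebras of a common operator algebra on $l^{2}(Z \times \IN)$, where $Z \subset X$ is an auxiliary locally finite, coarsely dense subset, and then to invoke the standard independence of the Roe algebra under a change of ample representation. By properness of $X$, I fix $Z$ as a maximal $\epsilon$-separated subset of $X$ for some $\epsilon > 0$, which is automatically locally finite in $X$, and define the ample representation $\tilde\rho \colon C_{0}(X) \to \calb(l^{2}(Z \times \IN))$ by $\tilde\rho(f)(z,n) = f(z)(z,n)$.

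The core step is to identify $C_{*}^{Roe}(X, \tilde\rho)$ with $T(\C_{*}(X))$ as $C^{*}$-algebras. For each $k$, consider the object $M^{(k)}$ of $\C_{*}(X)$ with $M^{(k)}_{z} = \IC^{k}$ for $z \in Z$ and zero elsewhere; these are locally finite since $Z$ is, and their total objects $T(M^{(k)})$ exhaust $l^{2}(Z \times \IN)$ as $k \to \infty$. Any $\C_{b}(X)$-endomorphism of $M^{(k)}$ is a controlled matrix of finite-rank blocks and acts on $l^{2}(Z \times \IN)$ as a controlled, locally compact operator, giving an inclusion of the algebraic union $\bigcup_{k} \End(M^{(k)})$ into the Roe algebra. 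Conversely, a controlled locally compact operator $T$ of propagation $R$ admits the block decomposition $T = \sum_{z,z'} P_{z} T P_{z'}$ (with $P_{z}$ the projection onto $l^{2}(\{z\} \times \IN)$), where each block $P_{z} T P_{z'}$ is compact by local compactness and zero when $d(z,z') > R$ by controlled support; approximating each block by a finite-rank operator produces a norm approximation of $T$ by elements of some $\End(M^{(k)})$. Using that the $Z$-supported objects are cofinal in the colimit defining $T(\C_{*}(X))$ — any object $M$ can be pushed along a coarse retraction $r \colon X \to Z$ to yield an isomorphic $Z$-supported object, by coarse density of $Z$ — this gives a canonical isomorphism $T(\C_{*}(X)) \cong C_{*}^{Roe}(X, \tilde\rho)$ of $C^{*}$-algebras.

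The second step is to pass from $\tilde\rho$ back to $\rho_{S}$. For any two ample representations of $C_{0}(X)$ on separable Hilbert spaces, there exists a covering isometry $V \colon l^{2}(S \times \IN) \to l^{2}(Z \times \IN)$ with orthogonally complemented image that approximately intertwines $\rho_{S}$ and $\tilde\rho$ modulo compact operators; conjugation by $V$ induces a canonical $K$-equivalence $C_{*}^{Roe}(X, \rho_{S}) \to C_{*}^{Roe}(X, \tilde\rho)$, whose $K$-theory class is independent of the choice of $V$ (see \cite[Chapter 6]{HigsonRoe}). Composing with the identification of the previous step yields the canonical algebra homomorphism $C_{*}^{Roe}(X, \rho_{S}) \to T(\C_{*}(X))$, which induces an isomorphism on $K$-theory. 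The main technical obstacle is verifying the cofinality claim — that every object of $\C_{*}(X)$ can be replaced by a $Z$-supported object without changing the $C^{*}$-algebra up to $K$-equivalence — which amounts to checking that the coarse retraction $r$ is well-defined and yields a controlled isomorphism of objects, but this is a routine consequence of the coarse density of $Z$ combined with the observation that the pushforward functor is a $K$-theory equivalence.
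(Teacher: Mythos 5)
There is a genuine gap, and it sits at the hinge of your argument: the auxiliary representation $\tilde\rho$ on $l^{2}(Z \times \IN)$ for a maximal $\epsilon$-separated (hence locally finite, hence non-dense) set $Z$ is \emph{not} an ample representation of $C_{0}(X)$ --- it is not even faithful. Take $X = \IR$, $Z = \IZ$ and a bump function $f$ supported in $(1/4,3/4)$: then $\tilde\rho(f) = 0$ while $f \neq 0$, so a nonzero element of $C_{0}(X)$ acts compactly. Consequently the ``independence of the Roe algebra under a change of ample representation'' theorem does not apply, and in fact no covering isometry $V \colon l^{2}(S \times \IN) \rightarrow l^{2}(Z \times \IN)$ can exist: such a $V$ would make $V\rho_{S}(f) = V\rho_{S}(f) - \tilde\rho(f)V$ compact, forcing $\rho_{S}(f)$ to be compact since $V$ is an isometry, which is false because $S$ is dense. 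So your second step breaks down as stated. (What \emph{is} true is that $Z \subset X$ is a coarse equivalence and Roe algebras are coarse invariants, but that is a different argument --- covering isometries for the coarse map, not for two representations of $C_{0}(X)$ --- and it also only yields a map canonical up to homotopy, whereas the proposition asks for a canonical algebra homomorphism.)

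The fix, which is what the paper does, is to dispense with $Z$ entirely and work with the given countable \emph{dense} set $S$: density is exactly what makes $\rho_{S}$ ample, and the full subcategory $C_{*}(X,S)$ of objects supported in $S$ is equivalent to $C_{*}(X)$ (push each support point to a point of $S$ within distance $1$; local finiteness survives because only finitely many support points of a given object lie within distance $1$ of a fixed $s$). Then $T(C_{*}(X,S))$ is \emph{literally} a subalgebra of $\calb(l^{2}(S \times \IN))$ and one proves it equals $C_{*}^{Roe}(X,\rho_{S})$ directly, so the desired homomorphism is just the inclusion $T(C_{*}(X,S)) \rightarrow T(C_{*}(X))$ induced by the injective map $S \hookrightarrow X$ --- no change of representation is needed. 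Two further points in your first step deserve more care even in the corrected setting: approximating a locally compact operator block-by-block does not automatically give a norm approximation of the whole operator (the paper decomposes $T$ into an \emph{orthogonal} sum of column blocks $\chi_{M_{i}}T$ and controls the errors by a geometric series), and your chain $M^{(k)}$ of constant-multiplicity objects is not cofinal among $Z$-supported objects, so the closure of $\bigcup_{k}\End(M^{(k)})$ need not a priori exhaust the relevant colimit.
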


\begin{proof}
In the proof, we omit $\rho_{S}$ from the notation. Let $C_{*}(X,S)$ be the full subcategory of $C_{*}(X)$ spanned by the objects whose support is in $S$. The inclusion $C_{*}(X,S) \rightarrow C_{*}(X)$ is an equivalence of categories: If $M$ is any object of $C_{*}(X)$, we can pick  for each $x \in \supp(M)$ an $s \in S$ with, say, $d(s,x) < 1$ and push $M_{x}$ away from $x$ to $s$. Since there are only finitely many $x$ with distance $<1$ from $s$ such that $M_{x} \neq 0$, this indeed yields an object of $C_{*}(X,S)$ which is clearly isomorphic to $M$. In other words, $(S,d)$ and $(X,d)$ yield coarsely equivalent coarse structures. \\
Since the inclusion $C_{*}(X,S) \rightarrow C_{*}(X)$ is induced by the injective space map $S \rightarrow X$, we obtain an induced map
\[
T(C_{*}(X,S)) \rightarrow T(C_{*}(X))
\]
which induces an isomorphism on $K$-theory since the functor $C_{*}(X,S) \rightarrow C_{*}(X)$ does. We will be done once we can prove
\[
T(C_{*}(X,S)) = C_{*}^{Roe}(X)
\]
where we think of $T(C_{*}(X,S))$ as an algebra of operators on $l^{2}(S \times \IN)$.
Let $f: M \rightarrow M$ be an endomorphism of an object $M$ of $C_{*}(X,S)$. Then the bounded operator $f: l^{2}(S,\IN) \rightarrow l^{2}(S, \IN)$ is controlled and locally finite-dimensional, i.e. for all $\phi \in C_{0}(X)$ with compact support we have that $\phi f$ and $f \phi$ are finite-dimensional operators. Since the norm-limit of a locally finite-dimensional operator is locally compact, it follows that
\[
T(C_{*}(X,S)) \subset C_{*}^{Roe}(X)
\]
The other direction is more involved. Note that also bounded Borel functions act on $\calh$ in the obvious way. Decompose $X$ into a disjoint, countable union of Borel-measurable, relatively compact subsets $M_{i}$. Let $T$ be a locally compact operator and let $1 > \epsilon > 0$ be given. Denote by $\calh(M_{i})$ the sub-Hilbertspace of $\calh$ spanned by the vectors $(s,n)$ with $s \in M_{i}$. Let $\chi_{M_{i}}$ be the characteristic function of $M_{i}$. Then $\chi_{M_{i}}: \calh_{S} \rightarrow \calh_{S}$ is the orthogonal projection onto $\calh(M_{i})$. Consider the operator 
\[
\chi_{M_{i}}T: \calh = \oplus \calh(M_{i}) \rightarrow \calh(M_{i})
\]
This operator is compact since $M_{i}$ is relative compact and we have $T = \oplus \chi_{M_{i}} T$. \\
Note that $\calh(M_{i})$ has $M_{i} \cap S$ as an orthogonal basis. Enumerate the countably many elements of $(S \cap M_i) \times \IN$ in a way such that for $x \in X$ and $n < n'$, $(x,n)$ occurs before $(x,n')$ and let $S_k: \calh(M_{i}) \rightarrow \calh(M_{i})$ be the projection onto the first $k$ basis vectors with respect to this enumeration. \\
For each $i$, pick $k_{i}$ such that $S_{k_{i}}: \calh{M_{i}} \rightarrow \calh M_{i})$ satisfies 
\[
\left\Vert \chi_{M_{i}}T-S_{k_{i}}\chi_{M_{i}}T \right\Vert^{2} < \epsilon^{i}
\] 
This can be arranged since $\chi_{M_{i}}T: \calh(M_{i}) \rightarrow \calh(M_{i})$ is compact and the usual argument proving that compact operators on a Hilbert space can be approximated by finite-dimensional ones proceeds precisely by using projection operators $S_{k}$ as above. \\
Set $S = \oplus S_{k_{i}} \colon \calh_{S} \rightarrow \calh_{S}$. Clearly, $S$ is locally finite-dimensional, and we have 
\begin{align*}
\left\Vert T-ST \right\Vert &= \left\Vert \oplus (\chi_{M_{i}}T-S_{k_{i}} \chi_{M_{i}}T) \right\Vert \\ &\leq \sqrt{\sum \left\Vert \chi_{M_{i}}T - S_{k_{i}}\chi_{M_{i}}T \right\Vert^{2}} \leq \sqrt{\frac{\epsilon}{1-\epsilon}}
\end{align*}
which goes to zero as $\epsilon$ goes to zero. Now $ST$ may not be locally finite-dimensional, but it is at least left locally finite-dimensional in the sense that $\rho(f)ST$ is finite-dimensional for each $f \in C_{0}(X)$. If $T$ was right finite-dimensional to start with, then $ST$ is finite-dimensional. We now apply the same arguments to the other sides to find an operator $S' = \oplus S_{n_{i}}$ such that $STS'$ approximates $ST$ arbitrarily close. Finally, $STS'$ comes from a morphism in $\C_{*}(X)$: Let $M$ be the object of $\C_{*}(X)$ with the dimension of $M_{x}$ the dimension of the image of $S'$ intersected with $l^{2}(\{x\} \times \IN)$ and $N$ the object with the dimension of $N_{x}$ the dimension of the image of $S$ intersected with $l^{2}(\{x\} \times \IN)$. Then $STS'$ is more or less a morphism from $M$ to $N$: Pick a  basis vector $e_{(x,m)}$ of $M$. Then write $STS'((x,m))$ as 
\[
\sum a_{i} (x_{i}, m_{i})
\] 
and set 
\[
f(e_{(x,m)}) = \sum a_{i} e_{(x_{i}, m_{i})}
\]
By definition of $N$, $e_{(x_{i}, m_{i})}$ is a basis vector of $N$ as long as $a_{i} \neq 0$. This finishes the argument.

\end{proof}

\section{A functor to spectra}

For some of our purposes, a functor to the homotopy category of spectra is not enough - we need an honest functor to spectra. Using infinite loop space techniques, we can lift our functor from coarse $G$-spaces to the homotopy category of spectra to an actual functor to spectra:

\begin{prop}
There is a functor 
\[
K: C^{*} \minus cat \rightarrow \Sp
\]
lifting the $K$-theory functor from coarse spaces to $Ho(\Sp)$ we have constructed.
\end{prop}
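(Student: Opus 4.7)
The plan is to apply an infinite-loop-space machine to a permutative topological category derived from each $C^{*}$-category $\C$, thereby obtaining a strictly functorial spectrum whose homotopy groups recover the $K_{n}(\C)$ defined earlier. Concretely, I would assign to $\C$ the topological permutative category $\operatorname{Iso}(\C)$ whose objects are the finite formal direct sums of objects of $\C$ (so that $\oplus$ becomes a strict symmetric monoidal structure via concatenation) and whose morphisms are the unitary isomorphisms, topologized as subspaces of the hom-spaces. A $C^{*}$-functor $F \colon \C \rightarrow \D$ preserves direct sums and unitary isomorphisms and is automatically continuous, so it induces a strict symmetric monoidal topological functor $\operatorname{Iso}(F)$, making $\operatorname{Iso}(-)$ a strict functor from $C^{*}$-categories to permutative topological categories. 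Applying Segal's $\Gamma$-space machine then yields a strictly functorial connective $\Omega$-spectrum $\mathbf{k}(\C)$. By Segal's theorem together with \ref{murray} and \ref{kcat}, $\pi_{0} \mathbf{k}(\C)$ is the group completion of the monoid of unitary-isomorphism classes of objects of $\C$, which coincides with $K_{0}(\C)$; a comparison of the loop space of the classifying space of $\operatorname{Iso}(\C)$ with $U_{\infty}(\C)$ then identifies $\pi_{n} \mathbf{k}(\C)$ with $K_{n}(\C)$ for all $n \geq 1$.

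To obtain a Bott-periodic, non-connective spectrum $\mathbf{K}(\C)$, I would use the natural Bott equivalence $\mathbf{k}(\C) \rightarrow \Omega^{2} \mathbf{k}(\C)$ inherited from Bott periodicity for the endomorphism $C^{*}$-algebras and pass to the mapping telescope
\[
\mathbf{K}(\C) = \hocolim \bigl( \mathbf{k}(\C) \rightarrow \Omega^{2} \mathbf{k}(\C) \rightarrow \Omega^{4} \mathbf{k}(\C) \rightarrow \cdots \bigr),
\]
which is strictly functorial since the Bott map is natural in $\C$. For the lifting statement, for a coarse $G$-space $X$ one sets $\mathbf{K}(X) := \mathbf{K}(\C_{*}^{G}(X; \cale, \calf))$, and for a coarse map $f \colon X \rightarrow Y$ one applies $\mathbf{K}$ to the zig-zag of \ref{welldef}, whose backward leg is a weak equivalence of spectra because the relevant $C^{*}$-functor is an equivalence of categories. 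After inverting this equivalence in $Ho(\Sp)$ one recovers precisely the map from coarse spaces to $Ho(\Sp)$ constructed before.

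The hardest step will be ensuring the full strict functoriality of both Segal's machine and of the Bott-periodicity extension. The output of Segal's machine is strictly functorial in several modern models (symmetric spectra, orthogonal spectra, or an explicit bar construction applied to the $\Gamma$-space), but one has to pin down a model in which this is manifest and in which the Bott map can be rigidified to honest structure maps rather than merely a natural weak equivalence. These are standard infinite-loop-space technicalities, entirely orthogonal to the controlled-topology content of the paper, and the verification that the homotopy groups of $\mathbf{K}(\C)$ compute the $K_{n}(\C)$ of the earlier definition reduces to what was already done on the level of $T(\C)$.
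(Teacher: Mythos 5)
Your route---feed the topological symmetric monoidal groupoid of unitary isomorphisms into an infinite-loop-space machine and then force Bott periodicity by a telescope---is essentially the approach the paper itself takes, since its proof consists of a citation to \cite[6.4]{hpr} and \cite[Section 2.2]{PH} plus the remark that the partition-of-unity argument there is an instance of \ref{cont}. The step you defer at the end (rigidifying the machine and the Bott map, and comparing the output with $K(T(\C))$ so that the $Ho(\Sp)$-valued functor of \ref{welldef} is genuinely lifted) is exactly the content of those references, and the comparison with $U_{\infty}$ respectively $T(\C)$ is where \ref{cont} does the work; this should be stated as the substantive point rather than filed under standard technicalities, but it is the same gap the paper leaves to the literature.

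One intermediate claim is, however, wrong as stated: $\pi_{0}$ of the Segal spectrum of $\operatorname{Iso}(\C)$ is the group completion of the monoid of unitary isomorphism classes of objects of $\C$, and this is \emph{not} $K_{0}(\C)$ in general. By \ref{kcat}(iii) one has $K_{0}(\C) \cong K_{0}^{\alg}(\Idem(\C))$, and the map from the Grothendieck group of objects of $\C$ into this group need not be surjective: for $\C$ the finitely generated free Hilbert modules over $C(S^{2})$ the Grothendieck group of objects is $\IZ$, while $K_{0}(\C) \cong K_{0}(C(S^{2})) \cong \IZ^{2}$. So neither \ref{murray} nor \ref{kcat} gives the coincidence you assert for the connective spectrum $\mathbf{k}(\C)$. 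The defect is repairable in either of two ways: apply your construction to the unitary groupoid of $\Idem_{*}(\C)$, which is still strictly functorial in $\C$; or keep $\operatorname{Iso}(\C)$ and observe that the error is killed by your telescope, since $\pi_{0}(\Omega^{2}\mathbf{k}(\C)) = \pi_{2}\mathbf{k}(\C)$ is $\pi_{1}$ of the stabilized unitary group, i.e. $K_{2}(\C) \cong K_{0}(\C)$, so the periodic spectrum has the correct homotopy in all degrees. Without one of these fixes the spectrum you build would fail to agree with $K(T(X))$ on $\pi_{0}$ and hence would not lift the functor on coarse spaces.
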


\begin{proof}
The argument is the same as the one in \cite[6.4]{hpr} or \cite[Section 2.2]{PH}. The partition of unity-argument alluded to in \cite[2.3]{PH} is really an application of \ref{cont}.
\end{proof}

Luckily, it is enough for our purposes that such a functor exists; we will always use the up to homotopy-version, tacitly assuming that there is an actual functor to spectra in the background.

\chapter{An equivariant homology theory}

In this section, we define the equivariant homology theory which occurs in the definition of the Baum-Connes map. Furthermore, we use controlled topology to give a very explicit description for this homology theory. This yields an explicit coarse space $X$ whose $K$-theory is the homotopy fiber of the Baum-Connes map, i.e. such that the Baum-Connes conjecture holds if and only if
\[
K \C_{*}^{G}(X) = 0
\]
which we will later on use in the proof of the Baum-Connes conjecture for Farrell-Hsiang groups.

\section{$G$-homology theories}

\begin{definition}
A $G$-homology theory $h^{G}(-)$ is a functor from $G \minus CW$-complexes to $SPECTRA$ such that
\begin{enumerate}
\item $h^{G}$ is homotopy invariant: If $f: X \rightarrow Y$ is a $G$-equivariant homotopy equivalence, the induced map $h^{G}(f)$ is an equivalence of spectra.
\item $h^{G}$ turns homotopy pushout squares of $G \minus CW$-complexes into homotopy pullback squares, i.e. satisfies Mayer-Vietoris.
\item For a $ G \minus CW$-complex $X$, let $\{X_{i} \mid i \in I\}$ be the ordered set of $G$-finite subcomplexes of $X$. Then the natural map
\[
\operatorname{hocolim}_{i} h^{G}(X_{i}) \rightarrow h^{G}(X)
\]
is an equivalence of spectra.
\end{enumerate}
We denote $\pi_{*}(h^{G}(X))$ by $H_{*}^{G}(X)$.
\end{definition}

\section{A review of the Davis-L\"uck construction}

Let $G$ be a group. 

\begin{definition}
The orbit category $\Or(G)$ of $G$ is the category with objects the transitive $G$-spaces $G/H$ and morphisms the $G$-maps. 
\end{definition}

The orbit category is a convenient way to keep track of the group $G$. For example, any $G$-space $X$ gives rise to a contravariant functor from $\Or(G)$ to $\Top$ by sending $G/H$ to $X^{H} = \map_{G}(G/H, X)$. \\

Recall that a non-equivariant homology theory is described by a spectrum, at least as long as one restricts attention to $CW$-complexes.  For $G$-homology theories, we get a similar description using $\Or(G)$-spectra:

\begin{definition}
An $\Or(G)$-spectrum is a covariant functor from $\Or(G)$ to the category of spectra. 
\end{definition}

\begin{thm}
Associated to each $\Or(G)$-spectrum $E$, there is a $G$-homology theory $H^{G}(-; E)$ such that
\[
H_{*}^{G}(G/H; E) \cong \pi_{*}(E(G/H))
\]
\end{thm}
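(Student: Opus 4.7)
The plan is to build $H^G(-;E)$ via a tensor product (coend) construction over the orbit category, and then verify the axioms directly.

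First I would define the functor. For a $G$-CW-complex $X$, form the contravariant functor
\[
\Phi_X \colon \Or(G)^{\mathrm{op}} \to \mathcal{TOP}_*, \qquad G/H \mapsto \map_G(G/H, X)_+ = (X^H)_+.
\]
Then set
\[
H^G(X; E) := \Phi_X \wedge_{\Or(G)} E = \int^{G/H \in \Or(G)} (X^H)_+ \wedge E(G/H),
\]
that is, the balanced smash product of the contravariant $\Or(G)$-space $\Phi_X$ with the covariant $\Or(G)$-spectrum $E$. This is manifestly functorial in $G$-maps $X \to Y$, and functoriality in $E$ is automatic. To make sure the construction has good homotopical properties, I would either pass to a cofibrant replacement of $\Phi_X$ in the projective model structure on $\Or(G)^{\mathrm{op}}$-spaces, or use the fact that $\Phi_X$ is cofibrant when $X$ is a $G$-CW-complex because the cells $G/H \times D^n$ produce free cells $\mathrm{mor}(-,G/H) \times D^n$ in the functor category.

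Next I would verify the three axioms. For homotopy invariance, a $G$-equivariant homotopy equivalence $f \colon X \to Y$ induces ordinary homotopy equivalences $X^H \to Y^H$ for every subgroup $H$, hence a levelwise equivalence of $\Or(G)^{\mathrm{op}}$-spaces $\Phi_X \to \Phi_Y$; since both source objects are cofibrant the induced map on coends is an equivalence of spectra. For Mayer–Vietoris, a homotopy pushout $A \leftarrow C \to B$ of $G$-CW-complexes yields, on fixed points, a levelwise homotopy pushout $\Phi_A \leftarrow \Phi_C \to \Phi_B$ of cofibrant $\Or(G)^{\mathrm{op}}$-spaces, and the balanced smash product sends homotopy pushouts in the first variable to homotopy pushouts of spectra. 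For the colimit axiom, a $G$-CW-complex is the filtered colimit of its $G$-finite subcomplexes, fixed points commute with filtered colimits of CW-inclusions, and smash products and coends commute with filtered homotopy colimits, giving the required equivalence $\hocolim_i H^G(X_i;E) \simeq H^G(X;E)$.

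Finally I would compute the value on orbits. For $X = G/H$ the functor $\Phi_{G/H}(G/K) = \map_G(G/K, G/H)$ is, up to adding a disjoint basepoint, the contravariant representable functor at $G/H \in \Or(G)$. By the enriched co-Yoneda lemma, coending any covariant $\Or(G)$-spectrum against a representable gives back its value, so
\[
H^G(G/H; E) = \map_G(G/?, G/H)_+ \wedge_{\Or(G)} E(?) \simeq E(G/H),
\]
and passing to homotopy groups yields $H^G_*(G/H;E) \cong \pi_*(E(G/H))$. The main technical obstacle in the whole argument is the first one: checking that the coend is a genuine homotopy coend, for which one must work in a point-set model of spectra where the balanced smash product over an enriched category is homotopically meaningful (e.g.\ symmetric spectra or orthogonal spectra, cf.\ the references already used in the paper); once that framework is in place the three axioms and the Yoneda identification are formal.
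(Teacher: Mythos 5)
Your proposal is correct and follows essentially the same route as the paper, which defines $h^G(X;E)$ as the balanced smash product (coend) $E \wedge_{G} \map_{G}(-,X)$ over the orbit category and defers the verification of the axioms and the orbit computation to Davis--L\"uck \cite{DL}. The details you supply — cofibrancy of $\map_G(-,X)$ for $G$-CW-complexes, levelwise equivalences on fixed points, and the co-Yoneda identification $\map_G(-,G/H)_+ \wedge_{\Or(G)} E \simeq E(G/H)$ — are exactly the standard arguments carried out in that reference.
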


\begin{proof}
Given a $G$-space $X$, we form the associated contravariant $G$-space $\map_{G}(-,X): \Or(G) \rightarrow \Top$ as above. Now form the coend of this contravariant functor and the covariant functor $E$ over the smash product of a space and a spectrum. In \cite{DL}, this is called the balanced tensor product of $E$ and $\map_{G}(-,X)$. The result is a spectrum $h^G(X; E) = E \wedge_{G}  \map_{G}(-,X)$, and we set
\[
H_{*}^{G}(X; E) = \pi_{*}(E \wedge_{G}  \map_{G}(-,X))
\]
We refer the reader to \cite{DL} for further details. 
\end{proof}

\section{Groupoids and $\Or(G)$-spectra}
\label{orgktop}
Fix a $C^{*}$-algebra $A$ (with trivial $G$-action for now). As in \cite[2.2]{DL}, we will associate an $\Or(G)$-spectrum
\[
A_{r}^{G} \colon \Or(G) \rightarrow \Sp
\]
to $A$. The first step is a functor
\[
Gr \colon \Or(G) \rightarrow \Grp^{\inj}
\] 
where $\Grp^{\inj}$ is the category of groupoids and faithful functors. On objects, $Gr$ sends a transitive $G$-set $S$ to the groupoid $Gr(S)$ whose objects are the points of $S$. The morphisms in the groupoid $Gr(S)$ are given as
\[
\hom_{Gr(S)}(s,t) = \{ g \in G \mid gs = t \}
\]
and composition is group multiplication. This construction is often called the transport groupoid. Composing the functor $Gr$ with the functor
\[
A^{*}_{r} \colon \Grp^{\inj} \rightarrow \Sp
\]
we will now define yields an $\Or(G)$-spectrum. \\
Let $S$ be a groupoid. Let $AS$ be the category with the same objects as $S$, but with morphism sets $\hom_{AS}(s,t)$ the free left $A$-module with basis $\hom_{S}(s,t)$. Composition is extended bilinearly from the composition in $S$. The category $AS$ carries an involution which is given by $(\sum a_{i} f_{i})^{*} = \sum a_{i}^{*} f_{i}^{-1}$ for $f_{i}$ morphisms in $S$. We define a norm on $\hom_{AS}(s,t)$ as follows. Pick an object $z$ such that $t$ maps to $z$; it does not matter which one since all objects mapping to $z$  are isomorphic in the groupoid anyway. We get a map
\[
i_{s,t;z} \colon \hom_{AS}(s,t) \rightarrow \calb(l^{2}(\hom_{S}(t,z),A), l^{2}(\hom_{S}(s,z),A))
\]
by precomposition, where we consider the Hilbert modules in the target as right modules so that composition from the left is $A$-linear. Note that this definition is not precisely the same as the one in \cite{DL}, where composition from the right is used to define the above map. The map $i_{s,t;z}$ indeed lands in the bounded, adjointable operators: For each morphism $f: s \rightarrow t$ in $S$, composition with $f$ sends the orthogonal basis $\hom_{S}(t,z)$ of the left-hand Hilbert module bijectively to the orthogonal basis $\hom_{S}(s,z)$ of the right-hand one and hence $i_{s,t;z}(f)$ is indeed bounded. The claim follows since any element of $\hom_{AS}(s,t)$ is an $A$-linear combination of such morphisms. We then equip $\hom_{AS}(s,t)$ with the restriction of the operator norm of the right-hand $C^{*}$-algebra. \\
It is easy to check that this representation is compatible with the involution, i.e. it does not matter whether one represents $\hom_{AS}(s,t)$ as bounded operators first and then takes the adjoint, ending up in 
\[ 
\calb(l^{2}(\hom_{S}(s,z),A), l^{2}(\hom_{S}(t,z),A))
\]
or uses the involution on the category $AS$ first to go to $\hom(t,s)$ and then represent this as bounded operators on 
\[
\calb(l^{2}(\hom_{S}(s,z),A), l^{2}(\hom_{S}(t,z),A))
\]
To check this, it suffices by linearity to consider a morphism  $f \colon s \rightarrow t$ in $S$, where the claim is clear. So the involution is isometric and satisfies the $C^{*}$-identity. It is also straightforward to check that composition is submultiplicative, since again it does not matter whether one composes first and then represents or represents first and then composes the arising bounded operators. So $AS$ is a pre-$C^{*}$-category (without direct sums). We then form the additive completion and norm complete the resulting additive pre-$C^{*}$-category to obtain a $C^{*}$-category $A_{r}^{*} S$. Finally, we take its topological $K$-theory to land in $\Sp$. This defines the functor $A^{*}_{r}: \Grp \rightarrow \Sp$ on objects. \\
For morphisms, let $F \colon S \rightarrow T$ be a faithful functor. It is clear how $F$ induces a functor from $AS$ to $AT$, even when $F$ is not faithful. We have to check that $AF \colon AS \rightarrow AT$ is isometric; then it extends to the completions. So let $f = \sum a_{i} f_{i} \colon s \rightarrow s'$ be a morphism in $AS$, where the $f_{i}$ are morphisms of $S$. The involved norms do not change when we compose with some morphisms $s' \rightarrow s$ from $S$, so we may assume $s = s'$. Then we may regard $H = \End_{S}(s)$ as a subgroup of $G = \End_{T}(F(s))$ since $F$ is faithful. Then we have
\[
l^{2}(\hom_{T}(F(s), F(s))) = \bigoplus_{G/H} l^{2}(\hom_{S}(s,s))
\]
where the sum is orthogonal. The operator 
\[
F(f): l^{2}(\hom_{T}(F(s), F(s))) \rightarrow l^{2}(\hom_{T}(F(s), F(s)))
\]
respects this direct sum decomposition since all the $f_{i}$ actually lie in $H = \End_{S}(s,s)$. It follows that $f$ and $F(f)$ have the same norms.

\begin{example}
Let $G$ be a group which we consider as a groupoid with one object. Then it follows straightforwardly from the definitions that the $C^{*}$-category $A^{*}_{r}G$ is equivalent to the category of finitely generated free $A^*_r G$-modules. 
\end{example} 

Now we can define the Baum-Connes map.

\begin{definition}
Let $A_{r}^{G}$ be the $\Or(G)$-spectrum we have just defined. Let $\calf$ be a family of subgroups of $G$ and let $E_{\calf}G$ denote the corresponding classifying space. The Baum-Connes assembly map of $G$ with respect to $\calf$ with coefficients in $A$ is the map
\[
H_{*}^{G}(E_{\calf}G; A_{r}^{G}) \rightarrow H_{*}^{G}(*,A_{r}^{G})
\]
induced by the projection of $E_{\calf}G$ to the point $*$.
\end{definition}

Note that this definition does not yet cover the case of a nontrivial $G$-action on $A$, which we also want to account for. We will discuss below the necessary modifications needed to cover this case as well.  \\
The functor $A_{r}^{*}$ yields a compatible way to write down $\Or(G)$-spectra for any group $G$. This is made precise in \cite[6.1]{KThandbook}, where it is shown that the homology theories we obtain from $A_{r}^{*}$ for different groups $G$ fit together to a so-called equivariant homology theory as defined in \cite{KThandbook}. We are mainly interested in the transitivity properties of assembly maps this additional structure implies:

\begin{thm}[Transitivity principle]
\label{transitivity}
Fix a $C^{*}$-algebra $A$ with trivial $G$-action. Let $G$ be a group and $\calf \subset \calg$ be two families of subgroups of $G$. If the Baum-Connes conjecture for $G$ with coefficients in $A$ with respect to $\calg$ is true and for each $H \in \calg$, the Baum-Connes conjecture for $H$ with coefficients in $A$ with respect to the family $H \cap \calf$ is true, then the Baum-Connes conjecture for $G$ with coefficients in $A$ with respect to $\calf$ is true.
\end{thm}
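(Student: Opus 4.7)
The plan is to reduce the statement about $\calf$-assembly to the $\calg$-assembly by comparing via the intermediate space $E_{\calg}G$, and to handle the extra slack by a cellular induction that feeds in the hypothesis on each $H \in \calg$.

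First I would consider the factorization
\[
H_{*}^{G}(E_{\calf}G; A_{r}^{G}) \longrightarrow H_{*}^{G}(E_{\calg}G; A_{r}^{G}) \longrightarrow H_{*}^{G}(*; A_{r}^{G})
\]
induced by $E_{\calf}G \to E_{\calg}G \to *$, where the first map exists because $\calf \subset \calg$, so $E_{\calf}G$ has isotropy in $\calg$ and therefore admits a $G$-map (unique up to $G$-homotopy) to $E_{\calg}G$. The composite is the $\calf$-assembly map. By hypothesis the second arrow is an isomorphism, so it suffices to show the first arrow is an isomorphism.

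Next, I would replace $E_{\calf}G$ by the $G$-homotopy equivalent space $E_{\calf}G \times E_{\calg}G$ (this is a model for $E_{\calf}G$ because $\calf \subset \calg$ forces the isotropy of the product to lie in $\calf$). Under this replacement, the map in question is
\[
H_{*}^{G}(E_{\calf}G \times E_{\calg}G; A_{r}^{G}) \longrightarrow H_{*}^{G}(E_{\calg}G; A_{r}^{G}),
\]
induced by the projection onto the second factor. Now I would run the standard cellular machinery for $E_{\calg}G$: using the Mayer-Vietoris and colimit axioms of the $G$-homology theory $H_{*}^{G}(-;A_{r}^{G})$, it suffices to verify the map after pulling back along each equivariant cell $G/H \times D^{n} \to E_{\calg}G$ with $H \in \calg$. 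That is, one reduces to showing that for every $H \in \calg$ the map
\[
H_{*}^{G}(G/H \times E_{\calf}G; A_{r}^{G}) \longrightarrow H_{*}^{G}(G/H; A_{r}^{G})
\]
is an isomorphism.

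Finally, I would invoke the induction structure of the equivariant homology theory coming from $A_{r}^{*} \circ Gr$ (as discussed in Section \ref{orgktop} and in \cite{KThandbook}), which gives natural identifications
\[
H_{*}^{G}(G/H \times X; A_{r}^{G}) \;\cong\; H_{*}^{H}(\operatorname{res}_{H}^{G} X; A_{r}^{H})
\]
for any $G$-CW complex $X$. Applied to $X = E_{\calf}G$ and $X = *$, and noting that $\operatorname{res}_{H}^{G} E_{\calf}G$ is a model for $E_{H \cap \calf}H$ (its $K$-fixed points are contractible for all $K \leq H$ with $K \in \calf$, which is precisely the condition of lying in $H \cap \calf$), the required isomorphism becomes the hypothesized Baum-Connes isomorphism for $H$ with respect to $H \cap \calf$.

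The main obstacle is the bookkeeping in the second step: one must verify that the cellular/Mayer-Vietoris reduction really applies here, and in particular that the conclusion ``isomorphism on all cells of the form $G/H$ with $H \in \calg$ implies isomorphism on $E_{\calg}G$'' follows cleanly from the three axioms of a $G$-homology theory together with the induction structure. Given the axioms listed, this is essentially a standard skeletal induction combined with passage to the hocolim, but care is needed to keep track of the coefficients $E_{\calf}G$ on the source side throughout the induction.
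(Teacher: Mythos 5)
Your argument is correct and is precisely the standard proof of the transitivity principle that the paper itself does not reproduce but simply cites (as \cite[2.9]{KThandbook}): factor the assembly map through $E_{\calg}G$, observe $E_{\calf}G\times E_{\calg}G$ models $E_{\calf}G$, reduce by skeletal induction and the homology axioms to orbits $G/H$ with $H\in\calg$, and conclude via the induction structure together with $\res_H^G E_{\calf}G \simeq E_{H\cap\calf}H$. No gaps; the only caveat is the one you already flag, namely that one must use the full equivariant-homology-theory (induction) structure and not just the three $G$-homology axioms for the fixed group $G$.
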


\begin{proof}
See \cite[2.9]{KThandbook}.
\end{proof}

Unfortunately, this result does not cover coefficients, and the above construction cannot take into account a $G$-action on $A$ for a given, fixed group $G$. In \cite{BEL}, the problem of non-trivial $G$-action on the coefficients is treated as follows. \\
Fix a group $G$. Let $A$ be a $G$-$C^*$-algebra. Instead of looking at functors  $\Grp^{\inj}\rightarrow \Sp$ we look at functors from groupoids over $G$ to spectra, where the category of groupoids over $G$ is the category with objects pairs $(S,F)$ where $S$ is a groupoid and $F \colon S \rightarrow G$ is a functor, regarding $G$ as a groupoid with one object. A morphism from $(S,F)$ to $(S',F')$ is a faithful functor $T \colon S \rightarrow S'$ such that $F'T = F \colon S \rightarrow G$. We denote this category as $\Grp^{\inj} \downarrow G$. The most canonical objects are group homomorphism from a group $H$ to $G$, and group homomorphisms into $G$ are a good setup if we want to handle coefficients since we can now regard a $G \minus C^{*}$-algebra $A$ also as an $H \minus C^{*}$-algebra by restriction along the given homomorphism and compare the two arising assembly maps. \\
If a functor 
\[
E: \Grp^{\inj} \downarrow G \rightarrow \Sp
\]
is given, we obtain for each group $H$ equipped with a group homomorphism $f \colon H \rightarrow G$ an $\Or(H)$-spectrum as follows. First consider for a transitive $H$-set $S$ the groupoid $Gr(S)$ as above. This groupoid comes with a functor to $G$ which on morphisms is given by sending a morphism $h$, which is really an element of $H$, to $f(h)$. So $Gr$ actually takes values in groupoids over $G$ and hence can be composed with $E$ to yield an $\Or(H)$-spectrum
\[
A_{r}^{G}: \Or(H) \rightarrow \Sp
\] 

The advantage of this approach is that one can now prove the transitivity principle for all coefficients, see \cite[3.3]{BEL}. \\
It remains to construct a suitable functor 
\[
E: \Grp^{\inj} \downarrow G \rightarrow \Sp
\]
Its construction is deferred to an unpublished paper in the proof of \cite[7.1]{BEL}, so we give the details of the construction of a functor $\Grp^{\inj} \downarrow G \rightarrow \Sp$. Of course, when $G$ is trivial, we would like to recover the definitions we made above. \\
Let $(S, F: S \rightarrow G)$ be a groupoid over $G$ and $A$ a $G$-$C^{*}$-algebra. We define a category $A \rtimes_{G} S$ as follows. Its objects are those of $S$. As above, the morphism set $\hom_{A \rtimes_{G} S}(s,t)$ is the free $A$-module generated by $\hom_{S}(s,t)$. The functor $F$ only enters into the composition law and the involution. Composition will be $\IZ$-bilinear, but only $A$-linear in one of the two variables. To avoid confusion, we write $\sigma_{g} \colon A \rightarrow A$ for the action of $g \in G$ on $A$. If $e \colon s \rightarrow t$ and $f: t \rightarrow z$ are morphisms in $S$ and $a,b \in A$, we define the composition of $ae$ and $bf$ to be $(b \sigma_{F(f)}(a) )(f \circ e)$ and extend $\IZ$-bilinearly. It is straightforward to verify that this composition law is associative. As above, we define a norm on $\hom_{A \rtimes_{G} S}(s,t)$ by picking an object $z$ which is the target of some map from $t$, consider the map
\[
i_{s,t;z}: \hom_{A \rtimes_{G} S}(s,t) \rightarrow \calb(l^{2}(\hom_{S}(t,z),A), l^{2}(\hom_{S}(s,z),A))
\]
given by composition and equip $\hom_{A \rtimes_{G} S}(s,t)$ with the operator norm of the right-hand side. The involution is defined by
\[
(\sum a_{i} f_{i})^{*} = \sum \sigma_{F(f_{i})^{-1}}(a_{i})^* f_{i}^{-1}
\]
for morphisms $f_{i}$ in $S$. It is checked as above that this gives $A \rtimes_{G} S$ the structure of a (non-additive) pre-$C^{*}$-category. By completing with respect to the norm , we obtain a (non-additive) $C^{*}$-category $A \rtimes_{r} S$ and  consider its additive completion $(A \rtimes_{r} S)_{\oplus}$. Taking its $K$-theory finally yields a spectrum. If $G: (S,F) \rightarrow (S',F')$ is a morphism in $\Grp^{\inj} \downarrow G$, it is checked as above that $G$ induces a $C^{*}$-functor $A \rtimes_{r} S \rightarrow A \rtimes_{r} S'$. This finishes the construction. \\

\begin{example}
We consider $G$ as a groupoid over itself via the identity map. Then the category $A \rtimes_{r} G$ we just constructed is equivalent to the $C^{*}$-category of finitely generated free modules over the algebra $A \rtimes_{r} G$. We ignore the conflict of notation since we will never again use the notation $A \rtimes_{r} G$ for the first category.
\end{example}
\fxnote{this requires an argument}

Now we can define the Baum-Connes assembly map with coefficients in $A$ and with respect to the family of subgroups $\calf$ as above to be the map
\[
H_{*}^{G}(E_{\calf}G; A_{r}^{G}) \rightarrow H_{*}^{G}(*,A_{r}^{G})
\]
induced by projection onto a point.  The classical Baum-Connes conjecture is the case $\calf = \mathcal{FIN}$. Since the classical Baum-Connes conjecture is true for virtually cyclic groups, we see using the transitivity principle that the classical Baum-Connes conjecture is equivalent to the Baum-Connes conjecture for the family $\mathcal{VCYC}$. We will always use the family $\mathcal{VCYC}$ instead of $\mathcal{FIN}$.

\section{Equivariant homology theories from controlled topology}

Fix a group $G$ and a $G$-$C^{*}$-algebra $A$. Let $X$ be a $G$-space. We will define a coarse structure on the $G$-space $G \times X \times [1, \infty)$. We begin by defining a morphism support condition $\cale_{Gcc}$ on $X \times [1, \infty)$ which we will pull back to  $G \times X \times [1, \infty)$ eventually. The definition is the same as \cite[2.7]{BFJR}.

\begin{definition}
Let $E \subset (X \times [1, \infty))^{2}$. We say that $E$ is continuously controlled if the following holds:
\begin{enumerate}
\item For any $x \in X$ and any $G_{x}$-invariant neighborhood $U$ of $(x, \infty)$ in $X \times [1, \infty]$, there is a smaller $G_{x}$-invariant neighborhood $V \subset U$ of $(x ,\infty)$ such that
\[
(U^{c} \times V) \cap E = \varnothing
\]
where $U^{c}$ is the complement of $U$ in $X \times [1, \infty]$.
\item There is $R > 0$ such that whenever $((x,t), (y,t')) \in E$, we have $d(t,t') < R$.
\item $E$ is symmetric and $G$-invariant.
\end{enumerate}
\end{definition}

\begin{definition}
Let $X$ be a $G$-space. Pick a word metric $d_{G}$ on $G$. We define a coarse structure $(\cale_{X}, \calf_{X})$ on $G \times X \times [1,\infty)$. Let 
\begin{align*}
p \colon G \times X \times [1, \infty) &\rightarrow X \times [1, \infty) \\
q \colon G \times X \times [1, \infty) &\rightarrow G \times X \\
s \colon G \times X \times [1, \infty)  &\rightarrow G
\end{align*}
be the projections. We let
\[
(\cale, \calf) = (p^{-1} \cale_{Gcc} \cap s^{-1} \cale_{d_{G}}, q^{-1} \calf_{G \minus \text{compact}})
\]   
We define the categories $\cald_{b}^{G}(X)$ and $\cald_{*}^{G}(X)$ as
\[
\cald_{b}^{G}(X) = \C_{b}^{G}(G \times X \times [1,\infty), \cale_{X}, \calf_{X})
\]
and similarly for $\cald_{*}^{G}$. 
\end{definition}

\begin{definition}
Consider the inclusion $i \colon G \times X \subset G \times X \times [1, \infty)$. We define $\cald_{*}^{G}(X)^{\infty}$ to be the quotient of the inclusion
\[
\C_{*}^{G}(X \times G \times [1, \infty),\cale_{X}(X \times G \times \{1\}), \calf_X(X \times G \times \{1\}) \rightarrow \D_{*}^{G}(X)
\]
and similarly, we define $\cald_{b}^{G}(X)^{\infty}$. Even though $(\cale_X, \calf_{X})$ may not be proper, it is proper in the $[1, \infty)$-direction, i.e. there is an equivalence of categories
\[
\xymatrix{
\C_{*}^{G}(X \times G \times \{1\}, i^{-1}\cale_{X}, i^{-1} \calf_{X}) \ar[d] \\ \C_{*}^{G}(X \times G \times [1, \infty),\cale_{X}(X \times G \times \{1\}), \calf_X(X \times G \times \{1\})
}
\]
\end{definition}

\begin{prop}
\label{constant}
For each $X$, the category $\C_{*}^{G}(X \times G \times [1, \infty),\cale_{X}, \calf(X \times G))$ is equivalent to the category of finitely generated free $A \rtimes_{r} G$-modules, and each map $X \rightarrow Y$ induces an equivalence $\C_{*}^{G}(X \times G \times [1, \infty),\cale_{X}, \calf(X \times G)) \rightarrow\C_{*}^{G}(Y \times G \times [1, \infty),\cale_{X}, \calf(Y \times G))$.
\end{prop}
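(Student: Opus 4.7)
The plan is to identify a distinguished rank-one generator in $\C_{*}^{G}(X \times G \times [1, \infty), \cale_{X}, \calf(X \times G))$ whose endomorphism algebra is $A \rtimes_{r} G$, and to show every object is a finite direct sum of copies of it.

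First I would reduce to objects literally supported on $X \times G \times \{1\}$. All morphisms in $\cale_{X}$ have bounded extent in the $[1,\infty)$-direction (by construction), while the object-support condition $\calf(X \times G \times \{1\})$ forces supports into a bounded $\cale$-thickening of the base. This makes the situation proper in the vertical direction, and a direct adaptation of \ref{G-properness} (in the $[1, \infty)$-factor) identifies the full subcategory on objects supported on $X \times G \times \{1\}$ with the ambient category up to equivalence.

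Next, for a fixed $x_{0} \in X$ let $M_{x_{0}}$ denote the object supported on the free $G$-orbit $\{(g, g x_{0}, 1) \mid g \in G\}$ with fibre $A$. Given any object $M$ of the reduced category, its support is $G$-invariant, locally finite, and contained in $F_{0} \times \{1\}$ for some $G$-compact $F_{0} \subset G \times X$. Since a discrete subset of a compact Hausdorff space (here $F_{0}/G$) is finite, $\supp(M)$ is a finite union of $G$-orbits $G \cdot (e, x_{i}, 1)$; freeness of the $G$-action on the $G$-coordinate then lets me write $M$ as a finite direct sum of objects of the form $M_{x_{i}}^{\oplus k_{i}}$. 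Moreover, for any two base\-points $x, x' \in X$, the objects $M_{x}$ and $M_{x'}$ are isomorphic via the ``identity'' morphism with components $A \to A$ equal to $\id$: the $s^{-1}\cale_{d_{G}}$-condition is trivially satisfied (diagonal in the $G$-coordinate), and the continuous control condition imposes nothing on pairs lying at $t = 1$, away from $X \times \{\infty\}$. Iterating, every object of the category is (unitarily) isomorphic to $M_{x_{0}}^{\oplus n}$ for some $n$.

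It then remains to identify $\End(M_{x_{0}})$. The orbit $G \cdot (e, x_{0}, 1)$ is $G$-equivariantly identified with $G$ (with the free left action), and on this subspace the morphism control collapses to the word-metric control on $G$, since $\cale_{Gcc}$ again imposes no condition at $t = 1$. Hence the argument of Proposition \ref{twistedgroup} applies verbatim and yields $\End_{\C_{*}^{G}}(M_{x_{0}}) \cong A \rtimes_{r} G$ as $C^{*}$-algebras; combined with the decomposition above this gives the claimed equivalence with the category of finitely generated free $A \rtimes_{r} G$-modules. Finally, a $G$-equivariant map $f \colon X \to Y$ induces a coarse map of the total spaces that sends the generator $M_{x_{0}}$ to $M_{f(x_{0})}$ and acts as the identity on $A \rtimes_{r} G$ under these identifications, so the induced functor is an equivalence. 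The main technical obstacle will be the clean execution of the ``finitely many orbits'' step: one has to control the pushforward carefully so that it produces a \emph{locally finite} object in a coarse structure where the $X$-direction carries no control at the level $t = 1$, which is precisely what the discreteness-in-a-$G$-compact-set argument buys.
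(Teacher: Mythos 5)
Your proposal is correct and follows essentially the same route as the paper's proof: reduce to objects supported at $t=1$, use the $G$-compact object support condition to see that only finitely many orbits occur, pass to a single-orbit generator, and identify its (completed) endomorphism algebra with $A \rtimes_{r} G$ as in \ref{twistedgroup} (the paper simply moves all summands onto one orbit and extends equivariantly, which is the same manipulation as your decomposition plus the isomorphisms $M_{x}\cong M_{x'}$). One cosmetic fix: the finiteness of the support modulo $G$ should be justified by local finiteness (equivalently, closed discreteness) of the support inside a $G$-compact set, since a merely discrete subset of a compact Hausdorff space need not be finite.
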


\begin{proof}
The allowed morphisms are the same in the two categories. Since any object is isomorphic to one only supported over $X \times G \times \{1\}$ respectively $Y \times G \times \{1\}$, we can concentrate on such objects. By $G$-equivariance, such an object is determined by its restriction to $X \times {e} \times \{1\}$, where it is finitely supported thanks to the $G$-compactness condition. Now pick a point $x \in X$ and move all summands of the object over $X \times {e} \times \{1\}$ to $(x,e,1)$, which are only finitely many thanks to the $G$-compactness condition. Extending this $G$-equivariantly yields a new object isomorphic to the old one which is supported over the orbit of $(x,e,1)$. The claims are now easy to check.
\end{proof}

\begin{prop}
A $G$-equivariant map $f: X \rightarrow Y$ induces a functor
\[
\cald_{*}^{G}(X) \rightarrow \cald_{*}^{G}(Y)
\]
and similarly for $\cald_{b}^{G}$.
\end{prop}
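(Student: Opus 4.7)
The plan is to apply the pushforward construction of Definition 2.3.2 (ix) once we verify that the induced map
\[
F := \mathrm{id}_G \times f \times \mathrm{id}_{[1,\infty)} \colon G \times X \times [1,\infty) \longrightarrow G \times Y \times [1,\infty)
\]
is a $G$-equivariant coarse map with respect to the coarse structures $(\cale_X, \calf_X)$ and $(\cale_Y, \calf_Y)$. $G$-equivariance is immediate since $f$ is $G$-equivariant and $F$ is the identity on both the $G$- and $[1,\infty)$-factors. Once coarseness is established, Definition 2.3.2 (ix) directly produces the pushforward functors $F_{*} \colon \C^{G}_b(G \times X \times [1,\infty)) \to \C^{G}_b(G \times Y \times [1,\infty))$ and $F_{*} \colon \C^{G}_*(\cdot) \to \C^{G}_*(\cdot)$, which by definition of $\cald^G_{b/*}$ give the desired functors, and we take $f_* := F_*$.

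For the object support condition, recall $\calf_X = q_X^{-1}(\calf_{G\minus\mathrm{compact}})$ where $q_X \colon G \times X \times [1,\infty) \to G \times X$ is the projection. If $F' = q_X^{-1}(K) \in \calf_X$ with $K \subset G \times X$ relatively $G$-compact, then $F(F') \subset q_Y^{-1}((\mathrm{id}_G \times f)(K))$, and $(\mathrm{id}_G \times f)(K)$ is relatively $G$-compact in $G \times Y$ since $\mathrm{id}_G \times f$ is continuous and $G$-equivariant. The local finiteness condition (relative compactness of $F^{-1}(C) \cap F'$ for compact $C \subset G \times Y \times [1,\infty)$) follows similarly: intersecting $F^{-1}(C)$ with $q_X^{-1}(K')$ forces the $(G,X)$-component into $K'$ and the $[1,\infty)$-component into the bounded projection of $C$, so the intersection lies in a compact set.

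For the morphism support, we have $\cale_X = p_X^{-1}(\cale_{Gcc}) \cap s^{-1}(\cale_{d_G})$. The projection $s$ commutes with $F$ exactly (since $F$ is the identity on the $G$-factor), so the $\cale_{d_G}$-bounded condition on $s$-images is preserved. For the continuously controlled part, note that $p_Y \circ F = (f \times \mathrm{id}_{[1,\infty)}) \circ p_X$, so the problem reduces to showing that $f \times \mathrm{id}_{[1,\infty)} \colon X \times [1,\infty) \to Y \times [1,\infty)$ sends continuously controlled sets to continuously controlled sets. The bounded-distance-in-$[1,\infty)$ condition is obviously preserved, and given $y \in Y$ with a $G_y$-invariant neighborhood $U$ of $(y,\infty)$, continuity of $f$ lets us pull $U$ back to a $G_y$-invariant open set containing $f^{-1}(y) \times \{\infty\}$. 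Choosing, for each $G_y$-orbit of preimages $G_y \cdot x \subset f^{-1}(y)$, a smaller $G_x$-invariant neighborhood via the continuously controlled property of the original $E \in \cale_{Gcc}$, and combining their images under $f \times \mathrm{id}$, gives the required smaller neighborhood $V \subset U$ of $(y,\infty)$.

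The main obstacle is the continuously controlled verification: continuity and $G$-equivariance of $f$ are both essential, and care is required because $f$ may be neither injective nor surjective, so the fiber $f^{-1}(y)$ can consist of many $G_y$-orbits that must be handled simultaneously. Once the coarseness of $F$ is in place, the functors on $\cald^G_b$ and $\C^G$ follow immediately from Definition 2.3.2 (ix), and because pushforward leaves the total object $T(M)$ unchanged, the induced maps on morphism spaces are isometric and hence extend continuously to $\cald^G_*$.
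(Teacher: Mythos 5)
There is a genuine gap, and it sits exactly at the point the paper flags as the non-trivial one. The paper's own ``proof'' is a citation to \cite[3.3]{BFJR} together with the warning that the statement \emph{fails} if one does not pass from $X$ to $G \times X$; your argument, however, reduces everything to the claim that $f \times \id_{[1,\infty)} \colon X \times [1,\infty) \to Y \times [1,\infty)$ carries continuously controlled sets to continuously controlled sets, which is precisely the false statement that the $G$-factor is there to repair. Concretely: given $y \in Y$ and a $G_y$-invariant neighborhood $U$ of $(y,\infty)$, continuous control of $E$ gives you, for each $x \in f^{-1}(y)$, a $G_x$-invariant $V_x \subset (f \times \id)^{-1}(U)$ with $((f\times\id)^{-1}(U)^{c} \times V_x) \cap E = \varnothing$. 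To conclude you need a neighborhood $V$ of $(y,\infty)$ with $(f\times\id)^{-1}(V) \subset \bigcup_{x \in f^{-1}(y)} V_x$. When $f^{-1}(y)$ meets infinitely many $G_y$-orbits this amounts to an infinite ``intersection'' of neighborhoods, or equivalently to $f$ being a closed map at $y$, and neither holds for an arbitrary continuous $G$-map. Your ``combining their images'' step silently assumes this.

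The correct argument (this is what \cite[3.3]{BFJR} does) is not to show that $\id_G \times f \times \id$ is a coarse map — it is not, in the sense that not every $E \in \cale_X$ is carried into some $E' \in \cale_Y$ — but to verify control of $f_*(\phi)$ separately for each morphism $\phi \colon M \to N$ between actual objects of $\cald^{G}_{b}(X)$. There one may use that $\supp(\phi)$ lies over $\supp(M) \times \supp(N)$, that these supports are $G$-compact in the $G \times X$-direction, and that $\phi$ is $R$-controlled with respect to $d_G$; together with $G$-equivariance this guarantees that only finitely many orbits of $f^{-1}(y)$ are relevant near any $(y,\infty)$, so the intersection of the corresponding $V_x$ is a finite one and does produce the required $V$. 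So the object-support and $G$-metric conditions are not side checks to be done after the continuous-control step, as in your write-up; they are the inputs without which that step is false. The remaining parts of your proposal (preservation of the $d_G$-control, $G$-compactness of images of supports, and the isometry of $f_*$ on morphism spaces allowing extension to $\cald^{G}_{*}$) are fine.
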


\begin{proof}
This is not a triviality and is proven in \cite[3.3]{BFJR}. The reader should note that this fails if we do not pass from $X$ to $G \times X$.
\end{proof}

\begin{thm}
The functor sending the $G$-$CW$-complex $X$ to the topological $K$-theory spectrum of $\D_{*}^{G}(X)^{\infty}$ is an equivariant homology theory. Similarly, also sending $X$ to the algebraic $K$-theory of $\D_{b}^{G}(X)^{\infty}$ is an equivariant homology theory.
\end{thm}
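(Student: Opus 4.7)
I would verify the three axioms of an equivariant homology theory separately for $X \mapsto K(\D_*^G(X)^\infty)$; the argument for $K^{\alg}$ of $\D_b^G(X)^\infty$ is parallel, using that Karoubi filtrations yield fibration sequences in non-connective algebraic $K$-theory.

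The plan is to begin with the continuity axiom, which is the easiest. If $X = \colim_i X_i$ with $X_i$ the $G$-finite subcomplexes, then the $G$-compact object support condition $\calf_X$ forces every object of $\C_*^G(G \times X \times [1,\infty);\cale_X,\calf_X)$ to be supported in some $G \times X_i \times [1,\infty)$, and similarly every morphism lies in a finite piece. Thus $\D_*^G(X)$ is the filtered union of the full subcategories $\D_*^G(X_i)$, and the same holds after passing to the quotient at infinity. By \ref{unions}, $K$-theory commutes with such filtered unions on the group level; the spectrum-level version then follows from the construction of $K$ via the continuous colimit description \ref{cont} and the space-level assembly used to build the functor to $\Sp$.

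Next I would handle Mayer--Vietoris. For a homotopy pushout $X = A \cup B$ along a $G$-cofibration, I would apply \ref{MV} to the $G$-proper coarse space $G \times X \times [1,\infty)$ with the two subspaces $G \times A \times [1,\infty)$ and $G \times B \times [1,\infty)$. The key point is to check that this triple is coarsely excisive; this is where continuous control at infinity is used decisively, since for a continuously controlled $E$ and a $G$-compact $F$, a point in $(G\times A \times [1,\infty) \cap F)^E \cap (G \times B \times [1,\infty) \cap F)^E \cap F$ can only be close to both $A$ and $B$ if either its $[1,\infty)$-coordinate is bounded (controlled by $G$-compactness in the base) or it lies close to $A \cap B$ by the neighborhood condition defining $\cale_{Gcc}$. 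Combining \ref{MV} with \ref{homotopyfiber} then produces a homotopy pullback square for the subcategories $\C_*^G(-, \cale, \calf(A \cap B))$ etc., and passing to the quotient by $G \times X$ at infinity (using that the axiom \ref{homotopyfiber} relativizes correctly) gives the required pullback square after applying $K$.

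The hard step is homotopy invariance, which I would reduce to showing that the projection $p \colon X \times [0,1] \to X$ induces an equivalence $K(\D_*^G(X \times [0,1])^\infty) \to K(\D_*^G(X)^\infty)$. The strategy is to observe that on the level of the ``germ at infinity'' categories, continuous control forces morphisms to concentrate at points $(x,s,\infty) \in X \times [0,1] \times [1,\infty]$ where the $[0,1]$-coordinate $s$ stabilizes, so one can build a section $s \colon X \to X \times [0,1]$, $x \mapsto (x,0)$, and construct a natural transformation between $s_* p_*$ and the identity using the continuous homotopy $(x,t,s,\tau) \mapsto (x, t \cdot \min(1, \tau^{-1}\log\tau), s, \tau)$ (or similar) which is $\cale_{Gcc}$-controlled because its displacement tends to zero as $\tau \to \infty$. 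This produces a natural unitary isomorphism between the two composites in $\D_*^G(X \times [0,1])^\infty$, and by \ref{kcat}(v) induces the same map on $K$-theory. The main obstacle will be keeping the constructions consistent with the involution and the $G$-action and verifying that all the intermediate objects are honestly locally finite and controlled; this is really the analogue of \cite[5.1]{BFJR} in the $C^*$-setting, and I would follow that outline, using the fact that the categorical constructions lift to spectra by the final discussion of the preceding chapter.
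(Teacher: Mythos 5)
Your treatment of the colimit axiom and of Mayer--Vietoris essentially matches the paper's argument (the paper additionally replaces $Y \to X$ and $Y \to Z$ by mapping cylinder inclusions first, and uses \ref{constant} to observe that for these two axioms one may argue in $\D_*^G(-)$ rather than in the quotient $\D_*^G(-)^\infty$, since the fiber of $K\D_*^G(X) \to K\D_*^G(X)^\infty$ is a constant spectrum --- this spares you the worry about whether \ref{homotopyfiber} ``relativizes correctly''). The genuine gap is in homotopy invariance. A natural transformation from the identity to $s_*p_*$ would have to be, for each object $M$ over $G \times X \times [0,1] \times [1,\infty)$, a single morphism carrying the component at $(g,x,t,\tau)$ to $(g,x,0,\tau)$; its displacement in the cylinder coordinate is $t$, independent of the control parameter $\tau$. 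Taking $U$ to be a neighborhood of $((x,1),\infty)$ disjoint from $\{\text{cylinder coordinate} < 1/2\}$, every smaller neighborhood $V$ of $((x,1),\infty)$ still contains points $((x,1),\tau)$ for large $\tau$, which the morphism connects to $((x,0),\tau) \in U^c$; so this morphism violates $\cale_{Gcc}$ and is not a morphism of the category at all. Your rescaling $(x,t,\tau)\mapsto(x,t\cdot f(\tau),\tau)$ with $f(\tau)\to 0$ does not repair this: its displacement is $t(1-f(\tau))\to t$, not $\to 0$; and if you instead force the displacement to tend to $0$, the map no longer reaches the slice and is merely a controlled perturbation of the identity. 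This incompatibility is exactly why the paper argues differently: following \cite[5.6]{BFJR} and \cite[7.1]{ull}, it places the inclusion $X\times\{1\}\hookrightarrow X\times[0,1]$ into a coarsely excisive square whose lower corners are the half-open cylinders $X\times[i,\infty)$ with object support relaxed in the $[i,\infty)$-direction; these are killed by the Eilenberg swindle $(a,x,g,r)\mapsto(a+\tfrac{1}{r},x,g,r)$, which is continuously controlled precisely because its displacement $\tfrac{1}{r}$ decays in the control direction and the relaxed support permits unbounded pushing. The infinitely many small moves of the swindle are the legitimate substitute for the single large move your natural transformation would require.

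A smaller omission: you never address the disjoint union axiom (equivalently, Mayer--Vietoris with $Y=\emptyset$), which is the one place where the germ construction $(-)^\infty$ is genuinely needed; the paper handles it via \ref{connected}, showing that germs at infinity of morphisms cannot connect distinct components of $X$, so that $\D_b^G(Y\amalg Z)^\infty$ splits as a product.
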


We prove this in several steps, concentrating on the statement for $\D_{*}^{G}(X)^{\infty}$; the statement about $\D_{b}^{G}(X)^{\infty}$ is proven in the same way. First note that by \ref{constant}, in the fiber sequence 
\[
K \C_{*}^{G}(X \times G \times [1, \infty),\cale_{X}, \calf(X \times G)) \rightarrow K \cald_{*}^{G}(X) \rightarrow K \cald_{*}^{G}(X)^{\infty}
\]
the left-hand term is constant. It follows that the right-hand term is homotopy invariant and satisfies Mayer-Vietoris if and only if the middle term does, and we will argue in the middle term. The quotient construction is only necessary to obtain the disjoint union axiom. 

\begin{prop}
For an arbitrary $G$-$CW$-complex $X$, the natural map
\[
\hocolim_{C \subset X} K \D_{*}^{G}(X_{C}) \rightarrow K \D_{*}^{G}(X)
\]
is an equivalence, where $C$ runs over all $G$-compact subsets of $X$. 
\end{prop}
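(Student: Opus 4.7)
The plan is to identify the left-hand side with a filtered colimit of $K$-theory spectra and then apply the continuity results proved earlier in the thesis.

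First, I would observe that $\D_{*}^{G}(X)$ is a directed union of full sub-$C^{*}$-categories $\D_{*}^{G}(X_C)$ as $C$ ranges over the $G$-compact subcomplexes of $X$. Every object of $\D_{*}^{G}(X) = \C_{*}^{G}(G \times X \times [1,\infty), \cale_X, \calf_X)$ has support contained in some $F \in \calf_X$, which is (the preimage of) a $G$-compact subset of $G \times X$, and hence its image under projection to $X$ lies in a $G$-compact subcomplex $C$. Fullness is automatic, since the inclusion $\D_{*}^{G}(X_C) \hookrightarrow \D_{*}^{G}(X)$ does not change the allowed morphism control, and the norm is intrinsic to the operators on the total Hilbert $A$-modules, which is independent of the ambient category. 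Consequently the indexing poset of $G$-compact subcomplexes is directed and $\D_{*}^{G}(X) = \bigcup_C \D_{*}^{G}(X_C)$.

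Next, I would pass to the associated $C^{*}$-algebras $T(\D_{*}^{G}(X_C))$. Using the explicit description $T(\C) = \colim_P \End(M)$ from the construction of $T$, together with the fact that every object of $\D_{*}^{G}(X)$ already lives in some $\D_{*}^{G}(X_C)$, one obtains that $T(\D_{*}^{G}(X))$ is the directed union (in the sense of completions of directed unions of $C^{*}$-algebras) of the $T(\D_{*}^{G}(X_C))$. Now \ref{cont} applies: the space $U_{\infty}(T(\D_{*}^{G}(X)))$ is weakly equivalent to $\colim^{met} U_{\infty}(T(\D_{*}^{G}(X_C)))$, and the latter computes the filtered colimit of homotopy groups. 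In spectrum language, this shows that the canonical map
\[
\colim_{C \subset X} K\D_{*}^{G}(X_C) \longrightarrow K\D_{*}^{G}(X)
\]
is a weak equivalence of spectra.

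Finally, since the poset of $G$-compact subcomplexes of a $G$-$CW$-complex is filtered (any finite family is contained in a single $G$-compact subcomplex), the filtered colimit agrees with the homotopy colimit on spectra. Combining this with the equivalence above yields the claim for $\D_{*}^{G}$; the same argument, replacing \ref{cont} with the analogous group-level continuity of algebraic $K$-theory of additive categories (as in \ref{unions}), gives the result for $\D_{b}^{G}$. The main technical obstacle is the verification in the second step that the completion involved in forming $T(\D_{*}^{G}(X))$ genuinely agrees with the directed-union completion of the $T(\D_{*}^{G}(X_C))$; this is precisely the content built into \ref{cont}, so once the full-subcategory structure of step one is in place, the rest is essentially formal.
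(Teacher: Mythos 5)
Your argument is correct and follows the same route as the thesis: the $G$-compact object support condition makes $\D_{*}^{G}(X)$ the directed union of the full subcategories $\D_{*}^{G}(X_C)$ (with no completion needed, since any two objects already lie in a common $\D_{*}^{G}(X_C)$ whose hom-sets agree with those of the ambient category), and then one invokes continuity of topological $K$-theory under filtered unions. The paper simply cites the category-level continuity statement \ref{unions} rather than routing through $T(-)$ and \ref{cont}, but this is only a difference in which form of the continuity lemma is quoted.
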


\begin{proof}
Because of the $G$-compact object control condition in the definition of $\D_{*}^{G}$, it is easy to see that $\D_{*}^{G}(X) = \cup \D_{*}^{G}(X_{c})$. This time, no completion is required: The union is basically a union of objects. Since $K$-theory of $C^{*}$-categories commutes with filtered unions by \ref{unions}, the claim follows.  
\end{proof}

It hence suffices to prove the axioms of a homology theory for $G$-compact $G$-$CW$-complexes. We will from now on assume that all involved $G$-$CW$-complexes are $G$-compact.

\begin{prop}
The functor sending $X$ to $\cald_{*}^{G}(X)$ is homotopy invariant.
\end{prop}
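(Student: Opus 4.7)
Given a $G$-homotopy $H \colon X \times [0,1] \to Y$ between $G$-maps $f_{0}$ and $f_{1}$, my plan is to show that the induced functors $(f_{0})_{*}, (f_{1})_{*} \colon \cald_{*}^{G}(X) \to \cald_{*}^{G}(Y)$ induce the same map on $K$-theory. Homotopy invariance for a $G$-homotopy equivalence $f$ with $G$-homotopy inverse $g$ then follows by the usual two-out-of-three argument applied to $f_{*}g_{*} \simeq \id$ and $g_{*}f_{*} \simeq \id$.

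The construction I would use is an auxiliary $G$-equivariant coarse map
\[
\tilde{H} \colon G \times X \times [1, \infty) \to G \times Y \times [1, \infty), \qquad (g, x, t) \mapsto (g, H(x, \psi(t)), t),
\]
for a non-decreasing surjection $\psi \colon [1, \infty) \to [0,1]$ with $\psi(t) = 0$ for $t \le 2$ and $\psi(t) = 1$ for $t \ge 3$. The first step is to verify that $\tilde{H}$ respects the control conditions; the only nontrivial point is the continuously controlled condition, which concerns behavior only near infinity in the $[1, \infty]$-direction. Since $\tilde{H}$ agrees with $f_{1} \times \id$ for $t \ge 3$, continuous control of images at infinity is inherited directly from $f_{1}$.

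Next I would exhibit a natural isomorphism $\tilde{H}_{*} \cong (f_{1})_{*}$ of $C^{*}$-functors, which by \ref{kcat}(v) yields $K\tilde{H}_{*} = K(f_{1})_{*}$ in the homotopy category. The two functors agree on the subcategory of objects and morphisms supported in $t \ge 3$. Over the bounded region $t \in [1,3]$, where the continuously controlled condition is vacuous, one builds an isomorphism $(f_{1})_{*}(M)|_{t<3} \to \tilde{H}_{*}(M)|_{t<3}$ by sending, for each $(g, x, t)$, the copy of $M_{(g,x,t)}$ at position $(g, f_{1}(x), t)$ to its copy at $(g, H(x, \psi(t)), t)$. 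Because this morphism is supported in a bounded slab of $t$-values, it satisfies the continuously controlled condition automatically; being a finite permutation of basis elements on each $(g,t)$-fiber, it is also bounded and adjointable as an operator on the relevant Hilbert module.

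The remaining and hardest step is to relate $\tilde{H}_{*}$ to $(f_{0})_{*}$ on $K$-theory, since these disagree unboundedly in $t$ and so no direct natural isomorphism exists. The plan is to invoke an Eilenberg-swindle-type argument along the $[1, \infty)$-direction: iterating the shift $s \colon (g,x,t) \mapsto (g,x,t+2)$, which satisfies the hypotheses of \ref{swindle} relative to an appropriate subcategory, one can push the bounded transition region of $\tilde{H}$ arbitrarily far out. An infinite orthogonal direct sum $\bigoplus_{n \ge 0}(\tilde{H}_{*} \circ s^{n}_{*})$ is then comparable both to $\bigoplus_{n \ge 0}((f_{0})_{*} \circ s^{n}_{*})$ by the bounded-slab argument of the previous paragraph applied shift-by-shift, and by the swindle to the trivial functor, giving $\tilde{H}_{*} \simeq (f_{0})_{*}$ on $K$-theory. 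The main obstacle is making this swindle rigorous while maintaining the control structure, adjointability, and boundedness of the infinite sums; the analogue in the algebraic setting is carried out in \cite[Section 5]{BFJR}, and the $C^{*}$-version requires only routine analytic adjustments.
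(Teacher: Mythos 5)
The first two steps of your outline are sound: $\tilde{H}$ is a coarse $G$-map, and since $\tilde{H}$ agrees with $\id_G\times f_1\times\id$ for $t\geq 3$, the two pushforwards differ only over the bounded slab $t\leq 3$, where the comparison morphism is automatically continuously controlled; so $\tilde{H}_*\cong (f_1)_*$ and they agree on $K$-theory by \ref{kcat}. But that is all this construction can deliver, and the final step, which is the only place $(f_0)_*$ enters, does not work. For $n\geq 1$ the shift pushes every object past the transition region, so $\tilde{H}\circ s^n=(\id_G\times f_1\times\id)\circ s^n$ on the nose; hence comparing $\tilde{H}_*\circ s^n_*$ with $(f_0)_*\circ s^n_*$ means comparing the $f_1$- and $f_0$-pushforwards over the unbounded region $t\geq 2n+1$, not over a bounded slab. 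The required transformation would have components joining $(g,f_0(x),t)$ to $(g,f_1(x),t)$ for arbitrarily large $t$, and whenever $f_0(x)\neq f_1(x)$ this violates the continuous-control condition: choose a $G_{f_1(x)}$-invariant neighborhood $U$ of $(f_1(x),\infty)$ whose closure misses $(f_0(x),\infty)$ and test on an object supported on the $G$-orbit of $\{(e,x,n)\mid n\in\IN\}$. So $(f_0)_*$ and $(f_1)_*$ are not related by any controlled natural transformation, and the difficulty your swindle was meant to absorb is untouched.

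The swindle itself is also unavailable in the form you invoke it. The constant shift $s(g,x,t)=(g,x,t+2)$ fails hypothesis (ii) of \ref{swindle} on the categories $\cald_*^G(Y)$ for non-discrete $Y$: the union of all shifts of a continuously controlled set need not be continuously controlled, since pairs that become close only at large $t$ get translated to arbitrary heights while staying a fixed distance apart in $Y$. Worse, if some version of the argument did produce a functor $S$ with $\tilde{H}_*\oplus S\cong S$, it would yield $K(\tilde{H}_*)=K((f_1)_*)=0$, which is absurd already for $f_1=\id_X$. The paper takes a different route: it reduces homotopy invariance to showing that $X\times\{1\}\hookrightarrow X\times[0,1]$ induces an equivalence, extends the cylinder coordinate to a half-line $[i,\infty)$ with relaxed object support in that direction, verifies that the triple $(Z_0^{cc'},Z_1^{cc'},X_0^{cc})$ is coarsely excisive so that \ref{MV} applies, and kills the two $Z_i^{cc'}$-terms by Eilenberg swindles of the form $(a,x,g,r)\mapsto(a+\tfrac{1}{r},x,g,r)$. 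Note that there the displacement in the cylinder direction decays like $1/r$ as one goes to infinity in the cone direction, precisely so that continuous control is preserved; some such decay (or an equivalent device, e.g.\ germ categories as in \cite[5.6]{BFJR}) is the ingredient a constant shift cannot supply.
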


\begin{proof}
The proof is nearly the same as the one in \cite[5.6]{BFJR} or the one in \cite[7.1]{ull}. We only have to be careful about the metric control condition in the $G$-direction which does not appear in \cite{BFJR}. \\
Let $X$ be a $G$-$CW$-complex. We have to see that $\cald_{*}^{G}(X \times I) \rightarrow \cald_{*}^{G}(X)$ is an equivalence. Let $X_{i} = X \times [i,1]$ and $Z_{i} = X \times [i, \infty)$ for $i = 0,1$. We obtain coarse spaces $X_{i}^{cc} = (G \times X_{i} \times [1, \infty), \cale_{X_{i}}, \calf_{X_{i}})$ and $Z_{i}^{cc} = (G \times Z_{i} \times [1, \infty), \cale_{Z_{i}}, \calf_{Z_{i}})$. Let $Z_{i}^{cc'}$ be the coarse space obtained from $Z_{i}^{cc}$ by relaxing the object support condition $\calf_{Z_{i}} = p^{-1}(\calf_{Gc}(G \times Z_{i}))$ to $G$-compact support only in $G \times X$-direction, i.e. not in the $[i, \infty)$-direction. Now consider the triple of coarse spaces $(Z_{0}^{cc'}, Z_{1}^{cc'}, X_{0}^{cc})$.  We want to check that this triple is coarsely excisive, i.e. satisfies the coarse Mayer-Vietoris principle \ref{MV}. Using \cite[5.3]{BFJR}, this is not hard to verify. \fxnote{really? where does the relaxed object support condition enter?} \\
Hence we get a homotopy pullback square
\[
\xymatrix{
K(\C_{*}^{G}(X_{1}^{cc})) \ar[dd] \ar[rr] & & K(\C_{*}^{G}(X_{0}^{cc})) \ar[dd] \\
\\
K(\C_{*}^{G}(Z_{1}^{cc'})) \ar[rr] & & K(Z_{0}^{cc'}) \\
}
\]
We will argue in the next lemma that both lower categories admit an Eilenberg Swindle. It follows that the upper map is an equivalence of spectra, which was our goal.
\fxnote{continue}
\end{proof}

\begin{lem}
For $i = 0, 1$, the map 
\begin{align*}
s: [i, \infty) \times G \times X \times [1, \infty) &\rightarrow [i, \infty) \times X \times G \times [1, \infty) \\
(a,x,g,r) &\mapsto (a+\frac{1}{r},x,g,r)
\end{align*}
induces an Eilenberg swindle on $\C_{*}^{G}(Z_{i}^{cc'})$
\end{lem}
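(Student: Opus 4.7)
The plan is to apply Proposition \ref{swindle} to the map $s$ and verify its four hypotheses on the coarse space $Z_i^{cc'}$. The essential point, and the reason for having relaxed the object support to $G$-compactness only in the $G \times X$-direction, is that iterates $s^n(a,x,g,r) = (a+n/r,\,x,\,g,\,r)$ act solely on the $[i,\infty)$-coordinate and leave the $G \times X \times [1,\infty)$-coordinates untouched; without the relaxation, the unbounded shifts produced by iteration would push objects out of the support condition, and no swindle would be possible.

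First I would dispatch conditions (i) and (iii). For (iii), any $F \in \calf_{Z_i}$ projects to a $G$-compact subset of $G \times X$, and since $s^n$ is the identity in the $G \times X \times [1,\infty)$-directions, $\bigcup_n s^n(F)$ has the same projection and so is itself in $\calf$. For (i), fix a compact $K$ and $F \in \calf$. Because $K$ is compact its projection to the $[i,\infty)$-factor lies in some bounded interval $[i,a_1]$ and its projection to the $[1,\infty)$-factor lies in some $[1,r_1]$. Then $(s^n)^{-1}(K) \cap F$ forces $a \in [i - n/r_{\min},\,a_1 - n/r_1]$ with $r$ in a bounded set, so for $n$ sufficiently large we obtain $a < i$ and the set is empty; compactness for each fixed $n$ is routine from the same bounds plus $G$-compactness of $F$ in the $G \times X$-direction.

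Next I would check (ii) and (iv), which is the delicate part because here one must ensure that iterated shifts remain controlled in the continuously controlled sense. The bounded-control component of $\cale_{Gcc}$ concerns only the $[1,\infty)$-coordinate, and the metric condition $\cale_{d_G}$ concerns only the $G$-coordinate; both are preserved by $s$ on the nose. There is, crucially, no bounded-control requirement in the $[i,\infty)$-direction, so the unbounded cumulative shifts $n/r$ are harmless for that aspect. For continuous control at a point $((y,s'),\infty) \in Z_i \times [1,\infty]$ and a given $G_{(y,s')}$-invariant neighborhood $U$, I would, given the continuously controlled $E$, first pick a $V_E \subset U$ witnessing continuous control for $E$, then shrink $V_E$ further so that $r > R_V$ on $V$ and, using the bounded-control $|r-r'| < R$ of $E$, so that both points of any pair $(p',q') \in E$ landing with $q' \in V$ have $r$-coordinate exceeding any prescribed threshold. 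The shifts $n/r$ and $n/r'$ in the first coordinate then move points only within an arbitrarily small neighborhood of the original ones (for the first $n$ iterations), and by choosing the thresholds dependent on the local description of $U$ one obtains a single $E' \supset \bigcup_n (s \times s)^n (E \cap F \times F)$ lying in $\cale$. Condition (iv) is the special case $E = \{(x,s(x))\}$ and follows by the same argument.

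The main obstacle, as suggested in the previous paragraph, is setting up the bookkeeping for condition (ii) cleanly: one must exhibit a \emph{single} controlled set absorbing \emph{all} iterates, and the continuous control of $E'$ has to be verified at every point at infinity simultaneously. I expect the cleanest way is to describe $E'$ intrinsically as the set of pairs $((a,x,g,r),(b,y,h,r'))$ satisfying the bounded $G$- and $r$-control conditions inherited from $E$ together with the weaker continuous-control condition that $(x,r)$ and $(y,r')$ approach the diagonal at $r=\infty$ in $X \times [1,\infty]$, and then to verify directly that this $E'$ lies in $\cale_{Z_i}$ and contains all iterates. Once this is in place, Proposition \ref{swindle} yields the desired Eilenberg swindle.
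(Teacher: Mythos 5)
Your overall plan---feeding the shift map into Proposition \ref{swindle} and checking its four hypotheses, with the relaxed object support condition of $Z_{i}^{cc'}$ being what makes (i) and (iii) possible---is the right shape; the paper itself gives no argument here and simply refers to \cite[7.12]{ull}, so what you are attempting is exactly what that citation outsources. Your treatment of (i), (iii) and (iv) is essentially correct (for (iv) the displacement of a single application of $s$ is exactly $1/r$, so your ``small shift for large $r$'' reasoning genuinely applies there). The gap is in condition (ii).

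The step that fails is the claim that the shifts $n/r$ and $n/r'$ ``move points only within an arbitrarily small neighborhood of the original ones (for the first $n$ iterations)''. Condition (ii) demands a single $E'$ absorbing the images of $E\cap F\times F$ under \emph{all} iterates simultaneously, and for $n$ large compared with $r$ the shift $n/r$ is not small, no matter how large a threshold on $r$ you impose by shrinking $V$. The actual mechanism is different: a pair shifted by $s^{n}$ can only have an end close to a point at infinity $((y,a_{0}),\infty)$ with $a_{0}<\infty$ if $n/r\leq a_{0}+\epsilon-i$ (the unshifted $[i,\infty)$-coordinate is at least $i$), so the shifts relevant at that point are bounded by its height; but then the \emph{unshifted} pair lies near some lower point $((y,b),\infty)$ with $b\in[i,a_{0}+\epsilon]$, about which the neighborhood $V_{E}$ you chose---witnessing continuous control of $E$ only at the single point $((y,a_{0}),\infty)$---says nothing. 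A correct verification has to invoke the continuous control of $E$ along the whole compact segment of points at infinity below $(y,a_{0})$ and combine the resulting neighborhoods by a compactness argument, together with the estimate $n\lvert 1/r-1/r'\rvert\leq nR/(rr')\leq(a_{0}+\epsilon-i)R/r'$ showing that the two ends of a pair are shifted by nearly equal amounts.

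Your fallback ``intrinsic'' description of $E'$ does not repair this: a set of pairs constrained only by the inherited $G$- and $r$-control plus a continuous-control condition formulated in $X\times[1,\infty]$ puts no restriction on the $[i,\infty)$-coordinates, whereas $\cale_{Z_{i}}$ is pulled back from continuous control towards $Z_{i}\times\{\infty\}$ with $Z_{i}=X\times[i,\infty)$, so control near $((y,a_{0}),\infty)$ does constrain displacement in the $[i,\infty)$-direction. For instance, a family of pairs $((a,x,g,r),(a+1,x,g,r))$ with $r\to\infty$ satisfies your proposed conditions but fails continuous control at $((x,a),\infty)$---contradicting your own correct observation that it is precisely the $1/r$-damping of the shift that keeps continuous control alive. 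So the finishing step for (ii) must be replaced by the segment-plus-compactness argument indicated above (or one simply falls back on the cited \cite[7.12]{ull}).
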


\begin{proof}
See \cite[7.12]{ull}.
\fxnote{do it yourself}
\end{proof}

\begin{prop}
The functor sending $X$ to $\cald_{*}^{G}(X)$ satisfies Mayer-Vietoris.
\end{prop}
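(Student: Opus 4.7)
The plan is to reduce Mayer-Vietoris for $\cald_*^G(-)$ to the coarse Mayer-Vietoris principle \ref{MV} applied to the ambient coarse space $G \times X \times [1,\infty)$, then promote the statement to the quotient $\cald_*^G(-)^\infty$.

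Let $X$ be a $G$-compact $G$-$CW$-complex and $A, B \subset X$ two $G$-invariant subcomplexes with $A \cup B = X$. Consider the coarse $G$-space $(G \times X \times [1,\infty), \cale_X, \calf_X)$ and its two $G$-invariant subspaces $G \times A \times [1, \infty)$ and $G \times B \times [1, \infty)$, whose intersection is $G \times (A \cap B) \times [1, \infty)$. First I would check that the coarse structure $(\cale_X, \calf_X)$ is $G$-proper; this follows from $G$-compactness of $X$ together with the $G$-word metric control in the $G$-direction and the continuously controlled structure in the $X \times [1,\infty)$-direction, compare \cite[5.3]{BFJR}.

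The key technical step is to verify that the triple
\[
(G \times X \times [1, \infty),\; G \times A \times [1, \infty),\; G \times B \times [1, \infty))
\]
is coarsely excisive. Given $E \in \cale_X$ and $F \in \calf_X$, one must produce $E' \in \cale_X$ and $F' \in \calf_X$ so that every point close (in the sense of $E$) to both $G \times A$ and $G \times B$ is close (in the sense of $E'$) to their intersection $G \times (A \cap B)$. In the $G$-direction this is immediate because the word metric on $G$ is $G$-invariant and closeness means boundedness. In the $[1,\infty)$-factor, the $R$-bounded control in the time variable reduces us to a finite window in $[1,\infty)$. The essential content is thus to show that in $X$ (equipped with the continuously controlled structure at $\infty$), being close to $A$ and close to $B$ forces closeness to $A \cap B$. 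This is where I would invoke the CW-structure: because $A$ and $B$ are subcomplexes, there exist $G$-equivariant neighborhood retracts, and the continuously controlled neighborhoods of $(x, \infty)$ for $x \in A \cap B$ can be chosen compatibly with those for $x \in A$ and $x \in B$. This is precisely the argument used in \cite[5.3]{BFJR}, and the main obstacle is just carefully bookkeeping these neighborhoods in the presence of the extra $G$-factor; since the $G$-factor carries only metric control, it does not interact with the continuously controlled condition and can be treated separately.

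Once the triple is coarsely excisive, \ref{MV} gives a homotopy pullback square
\[
\xymatrix{
K\C_*^G(G{\times}(A{\cap}B){\times}[1,\infty)) \ar[d] \ar[r] & K\C_*^G(G{\times}A{\times}[1,\infty)) \ar[d] \\
K\C_*^G(G{\times}B{\times}[1,\infty)) \ar[r] & K\C_*^G(G{\times}X{\times}[1,\infty))
}
\]
where each category is equipped with the pulled-back coarse structure. I would then identify each such pulled-back category with $\cald_*^G(-)$ for the corresponding subcomplex; because $A, A \cap B, B$ are subcomplexes of $X$, the restriction of the continuously controlled structure of $X$ to each of them agrees with the one intrinsically defined for that subcomplex, so the identifications hold on the nose.

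Finally, I would pass from $\cald_*^G(-)$ to $\cald_*^G(-)^\infty$ using the fibration sequence from \ref{homotopyfiber}: the constant fiber term $K\C_*^G(-\times G \times [1,\infty); \cale, \calf(- \times G \times \{1\}))$ is, by \ref{constant}, the $K$-theory of free $A \rtimes_r G$-modules, independent of the $G$-$CW$-complex. Thus this fiber contributes a constant pullback square, so the Mayer-Vietoris pullback square for $\cald_*^G(-)$ descends to a Mayer-Vietoris pullback square for $\cald_*^G(-)^\infty$, which is what we wanted.
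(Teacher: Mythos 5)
Your argument is essentially the paper's own: verify that the triple of coarse spaces is coarsely excisive by splitting the control conditions, handling the $G$-direction with the (invariant) word-metric control and the thickened object supports, and quoting \cite[5.3]{BFJR} for the continuously controlled condition in the $X \times [1,\infty)$-direction, then apply \ref{MV}; the descent to $\cald_{*}^{G}(-)^{\infty}$ via the fibration sequence with constant fiber \ref{constant} is also exactly how the surrounding text of the theorem handles the quotient.

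The one real omission is at the start: Mayer-Vietoris, as defined in this paper, means that an arbitrary homotopy pushout square built from a diagram $X \leftarrow Y \rightarrow Z$ of $G$-$CW$-complexes is sent to a homotopy pullback, not only a square coming from a decomposition $X = A \cup B$ into $G$-invariant subcomplexes. You treat only the latter case. The reduction is standard but should be stated: using the homotopy invariance of $\cald_{*}^{G}(-)$ (proved just before), replace $Y \rightarrow X$ and $Y \rightarrow Z$ by the inclusions of $Y$ into the mapping cylinders $A$ and $B$, so that $A \cup B$ is the double mapping cylinder with $A \cap B = Y$, and also dispose of the case $Y = \emptyset$ separately via the disjoint union axiom. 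With that reduction added (and a word justifying that the restricted continuously controlled structure on a closed subcomplex agrees with the intrinsic one, which is what lets you identify the four corners with $\cald_{*}^{G}$ of the respective complexes), your proof matches the paper's.
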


\begin{proof}
Let $X \leftarrow Y \rightarrow Z$ be a diagram of $G$-$CW$-complexes. Assume $Y$ is nonemtpy; the case $Y = \emptyset$ is a special case of the disjoint union axiom and will be proven below. By homotopy invariance of $\cald_{*}^{G}(-)$, we can replace $Y \rightarrow X$ and $Y \rightarrow Z$ by the inclusions of $Y$ into the respective mapping cylinders $A = M_{X}$ and $B = M_{Z}$, such that $A \cap B = Y$. We now verify the conditions of the coarse Mayer-Vietoris principle \ref{MV}:  For each $E \in \cale_{X}, F \in \calf_{X}$, there is $E' \in \cale_{X}, F' \in \calf_{X}$ such that
\begin{align*}
(G \times &A \times [1, \infty) \cap F)^{E} \cap (G \times B \times [1, \infty) \cap F)^{E}  \cap F \subset \\ & \subset (G \times A \cap B \times [1, \infty) \cap F')^{E'} \cap F'
\end{align*}
Since nothing really happens in the $G$-direction, the metric control of $E'$ in this direction can be taken to be the same as the control of $E$ with respect to $d_{G}$. The difficult part is the continuous control condition. If $p: G \times X \times [1, \infty) \rightarrow X \times [1, \infty)$ is the projection, $U \subset X \times [1, \infty)$ and $E'' \in \cale_{cc}(X)$ are given, it is easy to verify that
\[
p^{-1}(U)^{p^{-1}(E'')} = p^{-1}(U^{E''})
\] 
(this is a general statement and has really nothing to do with continuous control). It follows that it (nearly) suffices to see the Mayer-Vietoris condition for $A \times [1, \infty)$, $B \times [1, \infty)$ and $A \cap B \times [1, \infty)$ with respect to the continuous control condition. This is verified in \cite[5.3]{BFJR}. Finally, if $E$ is $k$-controlled in the $G$-direction, let $F' \subset G \times X$ be the set consisting of all $(h,x)$ such that there is $g \in G$ with $d_{G}(g,h) \leq k$ and $(g,x) \in F$. It is easy to check that $F'$ is $G$-compact and that this $F'$ does the job. 
\end{proof}

\begin{prop}
\label{connected}
Let $X$ be a $G$-compact $G$-$CW$-complex. Let $f$ be any morphism in $\cald_{b}^{G}(X)^{\infty}$. Then $f$ has a representative $F$ in $\cald_{b}^{G}(X)$ with the additional property that whenever $x,y$ lie in different components of $X$, $F_{x,y} = 0$.
\end{prop}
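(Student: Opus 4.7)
The plan is to take any representative $F \colon M \to N$ of $f$ in $\cald_b^G(X)$ and split it as $F = F^{\text{diag}} + F^{\text{off}}$, where $F^{\text{diag}}$ collects the matrix entries that stay in a single component and $F^{\text{off}}$ collects those that cross components. I will then show that $F^{\text{off}}$ is completely continuous, so that $F^{\text{diag}}$ represents the same class as $F$ in $\cald_b^G(X)^\infty$ and by construction satisfies the required vanishing.

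Concretely, since $X$ is a $CW$-complex, its connected components $\{C_i\}$ are clopen; set $M_i = M|_{G \times C_i \times [1,\infty)}$ and $N_i = N|_{G \times C_i \times [1,\infty)}$, so that $M = \bigoplus_i M_i$ and $N = \bigoplus_i N_i$ (with only finitely many $G$-orbits of nonzero summands, by $G$-compactness of the supports of $M$ and $N$). Write $F_{ij} \colon M_j \to N_i$ for the matrix entries of $F$, put $F^{\text{diag}} = \bigoplus_i F_{ii}$, and $F^{\text{off}} = F - F^{\text{diag}}$. The morphism- and object-support conditions are inherited from $F$ since both summands have support contained in $\supp(F)$, and both total operators are bounded because they arise from $T(F)$ by restricting to the invariant direct-sum decomposition.

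The key and hardest step is to show that $F^{\text{off}}$ has support bounded in the $[1,\infty)$-direction, i.e.\ there exists $T$ with $\supp(F^{\text{off}}) \subset G \times X \times [1,T] \times G \times X \times [1,T]$. Assuming otherwise, I obtain a sequence $((g'_n,x'_n,t'_n),(g_n,x_n,t_n)) \in \supp(F^{\text{off}})$ with, say, $t_n \to \infty$ and $x_n$, $x'_n$ lying in distinct components. Since $\supp(M)$ and $\supp(N)$ are $G$-compact in $G \times X$ and there are only finitely many $G$-orbits of components, I may apply $G$-translates $h_n^{-1}$ (using $G$-invariance of the support and of $\cale_X$) and pass to a subsequence so that $h_n^{-1}x_n \to x_\infty$ inside a fixed component $D$ (closed in $X$) while $h_n^{-1}x'_n$ stays in a fixed component $D' \neq D$. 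Applying the continuous control condition with the $G_{x_\infty}$-invariant neighborhood $U = D \times (T_0,\infty]$ of $(x_\infty,\infty)$ yields a smaller $V \subset U$ with $(U^c \times V) \cap \widetilde E = \varnothing$ for the controlled set $\widetilde E \in \cale_{Gcc}$ through which $\supp(F)$ factors after projection to $X \times [1,\infty)$; for large $n$, $(h_n^{-1}x_n,t_n) \in V$, which by symmetry of $\widetilde E$ forces $(h_n^{-1}x'_n,t'_n) \in U \subset D \times [1,\infty]$, contradicting $h_n^{-1}x'_n \in D'$.

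Given this bound $T$, let $K$ be the truncation of $N$ with $K_{(g,x,t)} = N_{(g,x,t)}$ for $t \leq T$ and $0$ otherwise. Then $F^{\text{off}}$ factors as $M \xrightarrow{F^{\text{off}}} K \hookrightarrow N$, and $\supp(K) \subset G \times X \times [1,T]$ lies in $\calf_X(Y)$ for $Y = G \times X \times \{1\}$: take $F_K = q^{-1}(\pi_{G \times X}\supp(N)) \in \calf_X$ and $E_K \in \cale_X$ the $G$-invariant diagonal thickening of displacement at most $T-1$ in the $[1,\infty)$-coordinate, so that $(F_K \cap Y)^{E_K} \cap F_K \supset \supp(K)$. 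Hence $F^{\text{off}}$ factors through an object of $\C_b^G(G\times X\times[1,\infty);\cale_X,\calf_X(Y))$, so it is completely continuous, and $F^{\text{diag}}$ is the desired representative of $f$.
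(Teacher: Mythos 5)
Your proof is correct and rests on the same key mechanism as the paper's: applying the continuous control condition to a neighborhood of the form $C \times (T_0,\infty]$ built from a component $C$, together with $G$-compactness, to conclude that component-crossing entries can only occur at bounded height and hence die in the quotient. The paper packages this as a direct pointwise argument (a truncation time $t(x)$ for each of finitely many orbit representatives) rather than your sequential-compactness contradiction, but this is only a difference in bookkeeping.
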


\begin{proof}
Pick any representative $F$ of $f$, with $F$ $E$-controlled. Fix a point $x \in X$. The component $C_{x}$ containing $x$ is certainly invariant under the stabiliser of $X$. We can apply the defining property of  the continuous control condition to $U = C_{x} \times [1, \infty]$ to find a $G_{x}$-invariant neighborhood $V$ of $(x, \infty)$ such that $f$ cannot connect points from inside $G \times V$ to points outside of $G \times U$. By definition of the product topology, we can assume $V$ after maybe throwing away some parts to be of the form $S \times [t, \infty]$ where $S$ is a neighborhoof of $x$ in $C_{x}$ and $t \in \IR$, and $t$ may depend on $x$. It follows that after time $t$, $F$ does not connect $(g,x,t')$ to any point outside of $G \times C_{x} \times [1, \infty)$ as long as $t' > t$. Since we throw away all information in $[1, t]$ when passing to $f$, we may as well assume that $F$ does not connect points from inside $G \times C_x \times [1, \infty)$ to any point outside of $G \times C_{x} \times [1, \infty)$ at all. By $G$-invariance, this is also true for all points in the orbit of $G$. Finally, since the source object of $F$ is locally finite and $X$ is $G$-compact, there are only finitely many $G$-orbits on which $F$ can have components, so a finite repitition of the above argument yields the claim.

\end{proof}

\begin{prop}
If $X = Y \coprod Z$ as $G$-$CW$-complexes, then
\[
K \cald_{*}^{G}(X)^{\infty} \cong K \cald_{*}^{G}(Y)^{\infty} \vee K \cald_{*}^{G}(Z)^{\infty}
\]
\end{prop}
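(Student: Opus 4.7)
The plan is to show that the natural $C^{*}$-functor
\[
F \colon \cald_{*}^{G}(Y)^{\infty} \times \cald_{*}^{G}(Z)^{\infty} \to \cald_{*}^{G}(X)^{\infty}
\]
induced by the inclusions $i_{Y} \colon Y \hookrightarrow X$ and $i_{Z} \colon Z \hookrightarrow X$ together with direct sum is an equivalence of $C^{*}$-categories. Once this is established, the wedge decomposition follows, since $K$-theory of a product of two $C^{*}$-categories splits as the product of $K$-theory spectra, which is the same as the wedge after stabilization.

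First I would verify essential surjectivity. Because $X = Y \coprod Z$ is a set-theoretic disjoint union, the space $G \times X \times [1, \infty)$ is the disjoint union of $G \times Y \times [1, \infty)$ and $G \times Z \times [1, \infty)$. Any object $M$ of $\cald_{*}^{G}(X)$ therefore decomposes on the nose as $M = M_{Y} \oplus M_{Z}$ with $M_{Y}$ (resp.\ $M_{Z}$) the restriction of $M$ to $G \times Y \times [1, \infty)$ (resp.\ $G \times Z \times [1, \infty)$). Both summands inherit the object support condition from $X$ and thus define legitimate objects of $\cald_{*}^{G}(Y)$ and $\cald_{*}^{G}(Z)$.

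For full faithfulness I would argue as follows. Since $Y$ and $Z$ are both clopen in $X$, the set $U = Y \times [1, \infty]$ is a $G_{y}$-invariant neighborhood of $(y, \infty)$ in $X \times [1, \infty]$ for every $y \in Y$, and the continuous control condition supplies a smaller $G_{y}$-invariant neighborhood $V = S \times [t_{y}, \infty]$ with $S \subset Y$ such that any morphism with control $E$ has no component from $(U^{c} \times V) = (Z \times [1, \infty]) \times V$. Repeating this orbit-by-orbit, the argument of Proposition \ref{connected} shows that any morphism $f \colon M \to N$ in $\cald_{b}^{G}(X)$ between objects with $G$-compact support admits a representative in $\cald_{b}^{G}(X)^{\infty}$ whose off-diagonal components $f_{YZ} \colon M_{Z} \to N_{Y}$ and $f_{ZY} \colon M_{Y} \to N_{Z}$ are supported in $G \times X \times [1, T]$ for some $T < \infty$, i.e.\ are completely continuous. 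For $\cald_{*}^{G}$ one uses the density of the $\cald_{b}^{G}$-morphisms in $\cald_{*}^{G}$-morphisms, together with the fact that the completely continuous morphisms form a closed ideal modulo which the quotient is formed, to deduce the same conclusion after approximation. Hence $f_{YZ}$ and $f_{ZY}$ vanish in $\cald_{*}^{G}(X)^{\infty}$ and every morphism in the quotient is represented by a diagonal pair $(f_{YY}, f_{ZZ})$, giving fullness; faithfulness is the converse direction, where a morphism coming from the product that becomes approximately completely continuous in $\cald_{*}^{G}(X)^{\infty}$ already does so separately on each summand because the restrictions to $Y$ and $Z$ are orthogonal.

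The main obstacle is the rigorous passage between the $\cald_{b}^{G}$-level statement of Proposition \ref{connected}, which is a clean geometric observation about the continuous control condition, and its $C^{*}$-completed analogue: one must verify that the approximation of an arbitrary norm-limit morphism by $\cald_{b}^{G}$-representatives is compatible with the clopen decomposition $X = Y \coprod Z$, and that the ideal of approximately completely continuous morphisms in $\cald_{*}^{G}(X)$ is compatible with the orthogonal splitting of each object into its $Y$- and $Z$-parts. These checks are routine once the continuous control argument has been made, and they let us conclude that $F$ is an equivalence of $C^{*}$-categories, from which the desired wedge decomposition of $K$-theory spectra follows.
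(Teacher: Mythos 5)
Your proposal is correct and follows essentially the same route as the paper: both arguments hinge on Proposition \ref{connected} (morphisms in the quotient cannot connect distinct components), from which the quotient category splits as a product of the quotient categories over $Y$ and $Z$, and the wedge decomposition of $K$-theory follows. The paper performs the splitting at the level of the pre-normed categories $\cald_{b}^{G}$ and then completes, whereas you work more directly in the completed categories with a density argument, but this is only a difference in bookkeeping, not in substance.
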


\begin{proof}
Applying the above proposition, we see that
\[
\cald_{b}^{G}(X)^{\infty} = \cald_{b}^{G}(Y)^{\infty} \times \cald_{b}^{G}(Z)^{\infty}
\]
as pre-normed categories. By completion, it follows 
\[
\cald_{*}^{G}(X)^{\infty} = \cald_{*}^{G}(Y)^{\infty} \times \cald_{*}^{G}(Z)^{\infty}
\]
The claim then easily follows.
\end{proof}

\section{Identifying the coefficients}

To identify the $G$-homology theory we have constructed, we need to study the associated $\Or(G)$-spectrum, i.e. the functor
\[
G/H \mapsto K \cald_{*}^{G}(G/H)^{\infty}
\]
Then we can employ \cite[Section 6]{DL} which shows that the homology theory is uniquely determined by the $\Or(G)$-spectrum.
Recall that there is a fiber sequence
\[
K \C_{*}^{G}( G \times G/H, i^{-1}(\cale_{G/H}), i^{-1}(\calf_{G/H})) \rightarrow K \cald_{*}^{G}(G/H) \rightarrow K \cald_{*}^{G}(G/H)^{\infty}
\]
The only control condition playing a role in the left-hand spectrum is the $G$-compact object support condition, i.e.
\[
K \C_{*}^{G}( G \times G/H, i^{-1}(\cale_{G/H}), i^{-1}(\calf_{G/H}))  = K \C_{*}^{G}( G \times G/H, \calf_{Gc})
\]
By \ref{connected}, we may add the additional control condition to the left and middle categories that a morphism is not allowed to connect different components of $G/H$ without changing the quotient. So let $\cale_{\Delta}$ be the morphism support condition on $G/H$ which only contains the diagonal. We obtain a homotopy fiber sequence
\[
\xymatrix{
K \C_{*}^{G}( G \times G/H, p^{-1}(\cale_{\Delta}), \calf_{Gc}) \ar[d] \\ K \C_{*}^{G}(G \times G/H \times [1, \infty); \cale_{G/H} \cap p^{-1} \cale_{\Delta}, \calf_{G/H}) \ar[d] \\ K \cald_{*}^{G}(G/H)^{\infty} \\
}
\]
Since $G/H$ is discrete, the continuous control condition in the middle category is no additional restriction once one has accounted for metric control in the $[1, \infty)$-direction. Hence the map $G \times G/H \times [1, \infty) \rightarrow G \times G/H \times [1, \infty)$, $(g, g'H,t) \mapsto (g,g'H, t+1)$ induces an Eilenberg swindle on the middle term of the above homotopy fiber sequence. As in the proof of \ref{homotopyfiber}, up to equivalence we may replace $K \C_{*}^{G}( G \times G/H, p^{-1}(\cale_{\Delta}), \calf_{Gc})$ by the $K$-theory of the non-unital category $J$ which is the kernel of 
\[
\C_{*}^{G}(G \times G/H \times [1, \infty); \cale_{G/H} \cap p^{-1} \cale_{\Delta}, \calf_{G/H}) \rightarrow \cald_{*}^{G}(G/H)^{\infty}
\]
Then the homotopy fiber sequence becomes an actual fibration of spectra, coming from the short exact sequence of $C^{}*$-algebras
\[
T(J) \rightarrow T(G \times G/H) \rightarrow T(G \times G/H)/J
\]
Regarding all three spectra as depending functorially on $G/H$, we hence obtain a pointwise fiber sequence of $\Or(G)$-spectra
\[
\xymatrix{
K J_{*}^{G}( G \times -, p^{-1}(\cale_{\Delta}), \calf_{Gc}) \ar[d] \\ K \C_{*}^{G}(G \times - \times [1, \infty); \cale_{-} \cap p^{-1} \cale_{\Delta}, \calf_{-}) \ar[d] \\ K \cald_{*}^{G}(-)^{\infty}
}
\]
where ''$-$`` denotes functoriality in $\Or(G)$. The middle $\Or(G)$-spectrum is contractible, and since we have a pointwise fibration, the boundary map of the fiber sequence gives an identification of $\Or(G)$-spectra
\[
\Omega K J_{*}^{G}( G \times -, p^{-1}(\cale_{\Delta}), \calf_{Gc}) \cong K \cald_{*}^{G}(-)^{\infty}
\]
and we have an equivalence of $\Or(G)$-spectra
\[
K J_{*}^{G}( G \times -, p^{-1}(\cale_{\Delta}), \calf_{Gc})  \cong K \C_{*}^{G}( G \times -, p^{-1}(\cale_{\Delta}), \calf_{Gc})
\]
Altogether, it follows:

\begin{prop}
The homology theory $\cald_{*}^{G}(X)$ is, up to a dimension shift of $1$, equivalent to the homology theory given by the $\Or(G)$-spectrum
\[
G/H \mapsto K \C_{*}^{G}( G \times G/H, p^{-1}(\cale_{\Delta}), \calf_{Gc})
\]
\end{prop}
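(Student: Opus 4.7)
The plan is to invoke \cite[Section 6]{DL}: an equivariant homology theory on $G$-CW-complexes is determined up to natural equivalence by its associated $\Or(G)$-spectrum, so it suffices to produce an equivalence of $\Or(G)$-spectra
\[
\Omega\,K\cald_{*}^{G}(-)^{\infty}\ \simeq\ K\C_{*}^{G}(G\times - ,\,p^{-1}\cale_{\Delta},\,\calf_{Gc})
\]
natural in $G/H \in \Or(G)$. Up to the single-dimension loop shift, this is what the statement asks for.

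First I would apply Proposition \ref{homotopyfiber}(ii) to the inclusion $G\times G/H \hookrightarrow G\times G/H\times[1,\infty)$, producing a fibre sequence whose fibre is $K\C_{*}^{G}(G\times G/H, i^{-1}\cale_{G/H}, i^{-1}\calf_{G/H})$. Since $G/H$ is discrete, the continuous-control condition imposes no extra restriction once metric control in the $[1,\infty)$-direction is present, and Proposition \ref{connected} lets me refine the fibre and total spectrum by the additional morphism-support condition $\cale_{\Delta}$ (the diagonal in $G/H\times G/H$) without changing the cofibre. This turns the fibre into $K\C_{*}^{G}(G\times G/H, p^{-1}\cale_{\Delta}, \calf_{Gc})$, which is the right-hand spectrum in the statement.

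Next I would show that the middle spectrum
\[
K\C_{*}^{G}(G\times G/H\times[1,\infty);\,\cale_{G/H}\cap p^{-1}\cale_{\Delta},\,\calf_{G/H})
\]
is contractible. Because the diagonal condition $\cale_{\Delta}$ forbids morphisms from connecting different components of $G/H$, the translation $s\colon (g,g'H,t)\mapsto (g,g'H,t+1)$ preserves all remaining control conditions, and its iterates $s^{n}$ evidently satisfy the pointwise-finiteness requirements of Proposition \ref{swindle}. Hence $s$ induces an Eilenberg swindle on this middle term, and the fibre sequence collapses to the desired boundary equivalence $\Omega K\cald_{*}^{G}(G/H)^{\infty}\simeq K\C_{*}^{G}(G\times G/H,p^{-1}\cale_{\Delta},\calf_{Gc})$.

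The main obstacle is functoriality in $\Or(G)$: I need the boundary equivalence to be natural with respect to $G$-maps $G/H \to G/K$, not merely to exist pointwise. For this I would follow the strategy of the proof of \ref{homotopyfiber}(ii): replace the fibre category by the non-unital kernel ideal $J$ of the projection onto the cofibre, thereby producing a strictly natural short exact sequence of $C^{*}$-algebras $T(J)\to T(\cdots)\to T(\cdots)^{\infty}$ of $\Or(G)$-diagrams, whose induced pointwise fibration of $K$-theory spectra carries a strictly natural boundary map. Given this naturality, the Eilenberg-swindle contractibility of the middle $\Or(G)$-spectrum is automatically pointwise, and the boundary map provides the required equivalence of $\Or(G)$-spectra. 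Applying \cite[Section 6]{DL} then finishes the proof.
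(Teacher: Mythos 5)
Your proposal is correct and follows essentially the same route as the paper: the fibration sequence from \ref{homotopyfiber}, refinement by $\cale_{\Delta}$ via \ref{connected}, an Eilenberg swindle on the middle term, and naturality in $\Or(G)$ obtained by passing to the kernel ideal $J$ and the strictly natural short exact sequence of $C^{*}$-algebras, before invoking the Davis--L\"uck identification. No gaps to report.
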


It remains to identify the $\Or(G)$-spectrum on the right-hand side with the $\Or(G)$-spectrum $A^{G}_{r}$ constructed before. 

\begin{prop}
The $\Or(G)$-spectrum given by 
\[
G/H \mapsto K \C_{*}^{G}(G \times G/H; p^{-1}\cale_{\Delta}, \calf_{Gc}; A)
\]
is naturally weakly equivalent to the functor $A_{r}^{G}$ constructed after \ref{transitivity}.
\end{prop}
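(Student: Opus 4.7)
The plan is to construct, for each transitive $G$-set $G/H$, a $*$-equivalence of $C^{*}$-categories
\[
F_{G/H} \colon A \rtimes_{r} Gr(G/H) \to \C_{*}^{G}(G \times G/H; p^{-1}\cale_{\Delta}, \calf_{Gc})
\]
natural in $G/H$, and then take $K$-theory. First I would identify the $G$-orbits of the diagonal $G$-action on $G \times G/H$: for each $xH$ the orbit of $(e,xH)$ is $\mathcal{O}_{xH} = \{(g,gxH) \mid g \in G\}$, which is free and in bijection with $G$ via $(g,gxH) \leftrightarrow g$, and distinct cosets $xH$ give distinct orbits. Let $M_{xH}$ be the object supported on $\mathcal{O}_{xH}$ with $(M_{xH})_{p} = A$ for each $p \in \mathcal{O}_{xH}$. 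Define $F_{G/H}$ on generators by $xH \mapsto M_{xH}$ and, for $g \in \hom_{Gr(G/H)}(xH,yH)$ (so $gxH = yH$), by $g \mapsto \phi^{g}$, where the only nonzero component of $\phi^{g}$ with source $(e,xH)$ is $\phi^{g}_{(g,yH),(e,xH)} = 1_{A}$, extended by $G$-invariance and $A$-linearly. Direct calculation shows $F(g_{2}g_{1}) = F(g_{2}) \circ F(g_{1})$ and $F(g^{-1}) = F(g)^{*}$, and the $A$-twisted involution formula in $A \rtimes Gr(G/H)$ matches the involution induced from $\C_{*}^{G}$.

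The crucial step is fully faithful and isometric. For fully faithful, unwinding the definitions shows that a $G$-invariant morphism $M_{xH} \to M_{yH}$ respecting $p^{-1}\cale_{\Delta}$ is determined by its components $\phi_{(h,hyH),(e,xH)}$, and the control forces $h \in \hom_{Gr(G/H)}(xH,yH)$, yielding a natural $A$-linear bijection with the free $A$-module on $\hom_{Gr(G/H)}(xH,yH)$. For the isometry, \ref{twistedgroup} identifies $T(M_{xH})$ with the free $A \rtimes_{r} G$-module of rank one, and the endomorphisms of $M_{xH}$ subject to $p^{-1}\cale_{\Delta}$ are exactly those elements of $A \rtimes_{r} G$ whose support lies in $xHx^{-1} = \End_{Gr(G/H)}(xH)$, so $\End_{\C_{*}^{G}}(M_{xH}) \cong A \rtimes_{r}(xHx^{-1})$ as a $C^{*}$-subalgebra of $A \rtimes_{r} G$; on the groupoid side, the intrinsic reduced norm on $\End_{A \rtimes_{r} Gr(G/H)}(xH) = A \rtimes_{r}(xHx^{-1})$ agrees with this, because the defining regular representation on $l^{2}(\hom_{Gr(G/H)}(xH, z), A)$ is a direct summand of the $A \rtimes_{r} G$-module $T(M_{xH}) = l^{2}(G, A)$ (decomposed by left cosets). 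An analogous Hilbert module argument handles $\Hom(M_{xH}, M_{yH})$ for $xH \neq yH$ by factoring through an endomorphism algebra via an auxiliary $A$-linear isometry.

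Essential surjectivity after additive and norm completion is then easy: an object of $\C_{*}^{G}(G \times G/H)$ has $G$-compact support meeting only finitely many $G$-orbits, and on each orbit it is a finite direct sum of copies of $M_{xH}$, hence lies (up to isomorphism) in the image of the additive closure of $F_{G/H}$. For naturality in $G/H$, any $G$-map $\alpha \colon G/H \to G/K$ induces both a functor $Gr(\alpha) \colon Gr(G/H) \to Gr(G/K)$ over $G$ and, since $\id \times \alpha \colon G \times G/H \to G \times G/K$ is coarse for $p^{-1}\cale_{\Delta}$, a push-forward $*$-functor between the controlled categories. Both agree on objects via $xH \mapsto M_{\alpha(xH)}$ and on generating morphisms, yielding a natural transformation of $\Or(G)$-spectra which is a levelwise equivalence by the pointwise statement; taking $K$-theory preserves the equivalence by Proposition \ref{kcat}.

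The hard part will be the isometry in the second paragraph: one must carefully compare the intrinsic reduced norm defining $A \rtimes_{r} Gr(G/H)$ with the operator norm on $\C_{*}^{G}(G \times G/H)$, in particular verifying that the inclusion $A \rtimes_{r}(xHx^{-1}) \hookrightarrow A \rtimes_{r} G$ is isometric for the twisted crossed products with $G$-action on $A$, and extending this identification to Hom-spaces between different orbits; everything else is a bookkeeping exercise in the definitions.
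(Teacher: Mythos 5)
Your strategy is essentially the paper's: single out, for each $s \in G/H$, the rank-one object supported on the $G$-orbit of $(e,s)$, note that every object of the controlled category decomposes canonically into finitely many such pieces, match morphism spaces using $G$-invariance together with the control condition $p^{-1}\cale_{\Delta}$, and compare the groupoid norm with the operator norm on $T(M_{s}) \cong l^{2}(G,A)$ via the orthogonal decomposition of $G$ into the sets $\hom_{Gr(G/H)}(s,p)$, $p \in G/H$. The paper runs the functor in the opposite direction (from the controlled category to $(A\rtimes_{G} G/H)_{\oplus}$) and reads a morphism off from its components \emph{entering} $(e,t)$, but the computation is the same, including the norm comparison you flag as the hard part.

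There is, however, a concrete error in your formula for $F$ on a generator $g \in \hom_{Gr(G/H)}(xH,yH)$. The component $\phi^{g}_{(g,yH),(e,xH)}$ has target $(g,yH)$, which is neither in $\supp(M_{yH}) = \{(h,hyH) \mid h \in G\}$ (that would force $g \in yHy^{-1}$) nor compatible with $p^{-1}\cale_{\Delta}$, since $(g,yH)$ and $(e,xH)$ lie over different points $yH \neq xH$ of $G/H$. The unique point of the orbit of $(e,yH)$ lying over $xH$ is $(g^{-1},xH)$, so the component leaving $(e,xH)$ must be $\phi^{g}_{(g^{-1},xH),(e,xH)}$; equivalently, and closer to the paper's bookkeeping, the component \emph{entering} $(e,yH)$ has source $(g,gxH)=(g,yH)$ --- presumably the origin of your index. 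As written, the asserted identities $F(g_{2}g_{1}) = F(g_{2})\circ F(g_{1})$ and $F(g^{-1})=F(g)^{*}$ cannot be verified because the relevant composites of components vanish; with the corrected indexing they do hold, provided one also inserts the $\sigma$-twist on the $A$-coefficients when extending $A$-linearly, so as to match the composition law $(ae)\circ(bf) = (b\,\sigma_{F(f)}(a))(f\circ e)$ of $A\rtimes_{G}Gr(G/H)$. The remaining steps --- full faithfulness, the isometry via the coset decomposition of $l^{2}(G,A)$, essential surjectivity after additive completion, and naturality in $G/H$ --- are sound and agree with the paper's argument.
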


\begin{proof}
Both functors are actually functors from $\Or(G)$ to $C^{*}$-categories composed with topological $K$-theory. So it suffices to see that the functors from $\Or(G)$ to $C^{*}$-categories are equivalent. \\
We first argue that the uncompleted versions $\C_{b}^{G}(G \times G/H; p^{-1}\cale_{\Delta}, \calf_{Gc})$ and $(A \rtimes_{G} G/H)_{\oplus}$ are equivalent (in fact, even isomorphic).  
For $s \in G/H$, let $A_{s}$ be the object of $\C_{b}^{G}(G \times G/H; p^{-1}\cale_{\Delta}, \calf_{Gc})$ which is only supported over the $G$-orbit of $(e,s)$, where it is $A$. Because of the $G$-compact object support condition,  each object of the category 
\[
\C_{b}^{G}(G \times G/H; p^{-1}\cale_{\Delta}, \calf_{Gc})
\]
is a direct sum of copies of $A_{s}$ for varying $s$ in a canonical way. So it suffices to see that the full subcategory of $\C_{b}^{G}(G \times G/H; p^{-1}\cale_{\Delta}, \calf_{Gc})$  spanned by the $A_{s}$ is equivalent to $A \rtimes_{G} G/H$. Both categories have precisely one object for each $s \in G/H$, and the desired equivalence sends $A_{s}$ to the object $s$ of $A \rtimes_{G} G/H$. Now pick a morphism $f \colon A_{s} \rightarrow A_{t}$. By $G$-invariance, $f$ is determined by its components entering $(e,t)$. Because of the morphism support condition $\cale_{\Delta}$, the only potential sources for these components are the $(g,gs)$ for $g \in G$ with $gs = t$. Now 
\[
\sum\limits_{g \in G; gs = t} f_{(e,t),(g,gs)}
\]
is a valid morphism of $A \rtimes_{G} G/H$. 
Since we may choose the $f_{(e,t),(g,gs)}$ arbitrary, this gives a bijection between the Hom-sets $\Hom(A_{s}, A_{t})$ in $\C_{b}^{G}(G \times G/H; p^{-1}\cale_{\Delta}, \calf_{Gc})$ and $\Hom(s,t)$ in $A \rtimes_{G} G/H$. We have to check compatibility with compositions. Since composition is bilinear, it suffices to consider $f: A_{s} \rightarrow A_{t}$ which has only a single nonzero component $f_{(e,t),(g,gs)}$ entering $(e,t)$ and $d: A_{t} \rightarrow A_{r}$ with only one nonzero component $d_{(e,r),(h,ht)}$. The composition of $f$ and $d$ in $\C_{b}^{G}(G \times G/H; p^{-1}\cale_{\Delta}, \calf_{Gc})$ also has only one nonzero component entering $(e,r)$, namely the one from $(hg,hgs)$. This component is 
\[
d_{(e,r),(h,ht)} \cdot (h \cdot f_{(e,t),(g,gs)})
\]
This morphism is sent to $d_{(e,r),(h,ht)} \cdot (h \cdot f_{(e,t),(g,gs)}) hg$, which is the same as the composition of $d_{(e,r),(h,ht)}h $ and $f_{(e,t),(g,gs)} g$ in $A \rtimes_{G} G/H$. \\
It is straightforward to check that this equivalence of categories is natural in the $\Or(G)$-variable $G/H$. It remains to see that the norms on $\C_{b}^{G}(G \times G/H; p^{-1}\cale_{\Delta}, \calf_{Gc})$ and $A \rtimes_{G} G/H$ agree under this isomorphism.  Fix $s,t \in G/H$ and consider the map
\[
i_{s,t} \colon \hom_{A \rtimes_{G} G/H}(s,t) \rightarrow \calb(l^{2}(\hom_{G/H}(t,z),A), l^{2}(\hom_{G/H}(s,z),A))
\]
which defines the norm on $\hom_{A \rtimes_{G} G/H}(s,t)$. To compute the norm on the category $\C_{b}^{G}(G \times G/H; p^{-1}\cale_{\Delta}, \calf_{Gc})$, we obtain an orthogonal splitting
\[
T(A_{s}) = \oplus_{(g,gs)} A  \cong \oplus_{p \in G/H} \oplus_{g \in \hom_{G/H}(s,p)} A_{(g,p)}
\]
and similarly for $T(A_{t})$. For each morphism $f \colon A_{s} \rightarrow A_{t}$ in $\C_{b}^{G}(G \times G/H; p^{-1}\cale_{\Delta}, \calf_{Gc})$, $T(f)$ respects this splitting, i.e. $T(f)$ is an orthogonal direct sum over $G/H$ of maps 
\[
\oplus_{g \in \hom_{G/H}(s,p)} A_{(g,p)} \rightarrow \oplus_{g \in \hom_{G/H}(t,p)} A_{(g,p)}
\]
where $p \in G/H$ is fixed. The norm of each of these maps is the norm of $f$ in $A \rtimes_{G} G/H$ by definition, and since the sum is orthogonal, also the norm of $T(f)$ is the norm of $f$ in $A \rtimes_{G} G/H$. This finishes the proof.
\end{proof}

Altogether, we arrive at the following result.

\begin{prop}
\label{criterion}
The Baum-Connes conjecture for the group $G$ with coefficients in $A$ with respect to the family $\calf$ is true if and only if
\[
K_{*} \D_{*}^{G}(E_{\calf}) = 0
\]
\end{prop}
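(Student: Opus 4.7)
The plan is to combine the fiber sequence defining $\D_{*}^{G}(X)^{\infty}$ with the identification of the equivariant homology theory and an Eilenberg swindle. Let me introduce the abbreviations $M(X) = K\D_{*}^{G}(X)$, $Q(X) = K\D_{*}^{G}(X)^{\infty}$, and
\[
F(X) = K\C_{*}^{G}\bigl(G \times X \times [1,\infty);\, \cale_{X}(G \times X \times \{1\}),\, \calf_{X}(G \times X \times \{1\})\bigr).
\]
By the definition of the quotient category $\D_{*}^{G}(X)^{\infty}$ together with \ref{homotopyfiber}, we obtain a fibration sequence of spectra $F(X) \to M(X) \to Q(X)$, functorial in the $G$-$CW$-complex $X$.

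First I would argue that $F(X)$ is essentially constant in $X$. By \ref{constant}, the category $\C_{*}^{G}(G \times X \times [1,\infty);\, \cale_{X},\, \calf(G \times X))$ is equivalent to the category of finitely generated free $A \rtimes_{r} G$-modules, so $F(X) \simeq K(A \rtimes_{r} G)$ and the map $F(E_{\calf}G) \to F(*)$ induced by the projection is an equivalence. Comparing the fibration sequences for $X = E_{\calf}G$ and $X = *$, a five-lemma style argument therefore shows that $M(E_{\calf}G) \to M(*)$ is an equivalence if and only if $Q(E_{\calf}G) \to Q(*)$ is one.

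Next I would identify the map $Q(E_{\calf}G) \to Q(*)$ with the Baum-Connes assembly map (up to the dimension shift by one that was isolated in the preceding section). Indeed, $X \mapsto Q(X)$ is an equivariant homology theory, and its associated $\Or(G)$-spectrum was shown to be equivalent to $A_{r}^{G}$. Since assembly maps for equivariant homology theories are characterized by the induced map on $\Or(G)$-spectra and the map induced by $E_{\calf}G \to *$, this identifies $Q(E_{\calf}G) \to Q(*)$ as the Baum-Connes assembly map for $\calf$. Hence Baum-Connes with respect to $\calf$ holds if and only if $M(E_{\calf}G) \to M(*)$ is a weak equivalence.

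The final step, and the key computation, is to show that $M(*) \simeq 0$, so that the last condition reduces to $K_{*}\D_{*}^{G}(E_{\calf}G) = 0$. For this I would apply \ref{swindle} to the shift map
\[
s \colon G \times [1,\infty) \to G \times [1,\infty), \qquad (g,t) \mapsto (g,t+1).
\]
All four conditions of \ref{swindle} are easy to verify: condition (i) uses that any compact $K \subset G \times [1,\infty)$ is eventually shifted out; conditions (ii)–(iv) use that $s$ preserves the $G$-coordinate (hence the metric control in the $G$-direction and the pulled-back $G$-compact object control are both preserved), while in the $[1,\infty)$-direction continuous control collapses to metric control, which is invariant under translation. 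I expect the mildly technical point here to be checking that the continuous control condition behaves well under the swindle, but this is precisely the reason the definition of $\cale_{Gcc}$ is formulated using neighborhoods of $(x,\infty)$: a shift by $n$ moves support toward $\infty$, which tightens rather than violates continuous control. Once the swindle is in place, $M(*) \simeq 0$, and combining the three displayed equivalences above yields the claimed criterion.
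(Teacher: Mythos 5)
Your proposal is correct and follows essentially the same route as the paper: compare the fibration sequences for $E_{\calf}$ and $pt$, note that the fiber terms are both equivalent to finitely generated free $A \rtimes_{r} G$-modules so that the Baum-Connes map (the map on the quotient categories) is an isomorphism iff the map on the middle terms is, and then kill $\D_{*}^{G}(pt)$ with the Eilenberg swindle $(g,t) \mapsto (g,t+1)$. The only cosmetic difference is that the paper phrases the comparison as a diagram of categories with long exact $K$-theory sequences rather than invoking \ref{swindle} and a five-lemma argument explicitly, but the content is identical.
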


\begin{proof}
Consider the following diagram of categories
\[
\xymatrix{
\C_{*}^{G}(G \times E_{\calf}) \ar[d] \ar[rr] & & \D_{*}^{G}(E_{\calf}) \ar[d] \ar[rr] & & \D_{*}^{G}(E_{\calf})^{\infty} \ar[d] \\
\C_{*}^{G}(G) \ar[rr] & & \D_{*}^{G}(pt) \ar[rr] & & \D_{*}^{G}(pt)^{\infty} \\
}
\] 
induced by the projection $E_{\calf} \rightarrow pt$. The Baum-Connes map is the map in $K$-theory induced by the right-hand vertical map. The left-hand vertical map induces an isomorphism in $K$-theory since both categories are equivalent to finitely generated free $A \rtimes_{r} G$-modules, and both rows give rise to long exact sequences of $K$-groups. Hence the Baum-Connes map is an isomorphism if and only if the middle vertical map is an isomorphism in $K$-theory.  The category 
\[
\D_{*}^{G}(pt) = \C_{*}^{G}(G \times pt \times [1, \infty))
\]
allows an Eilenberg swindle $(g,t) \mapsto (g,t+1)$ and hence has trivial $K$-theory. So the middle vertical map is an isomorphism in $K$-theory if and only if $K_{*} \D_{*}^{G}(E_{\calf}) = 0$.
\end{proof}

Later on, we will need more flexible notation, so we also make the following definition:

\begin{definition}
\label{o-notation}
We denote $\D_{*}^{G}(E_{\calf})$ as $\calo_{*}^{G}(E_{\calf}, G, d_{G})$.
\end{definition}

The reason for this is that we later on want to vary the metric space $(G,d_{G})$ and want a uniform notation for this.

\section{Inheritance properties}

We have the following inheritance properties for the Baum-Connes conjecture.

\begin{thm}
Consider a short exact sequence of groups 
\[
0 \rightarrow G \rightarrow \Gamma \rightarrow H \rightarrow 0
\]
 Suppose that $H$ satisfies the Baum-Connes conjecture with coefficients and that for each virtually cyclic subgroup $V$ of $H$, its preimage in $\Gamma$ satisfies the Baum-Connes conjecture. Then $\Gamma$ satisfies the Baum-Connes conjecture with coefficients.
\end{thm}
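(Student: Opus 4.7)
Let $\pi\colon \Gamma \twoheadrightarrow H$ denote the projection. Define the family of subgroups of $\Gamma$
\[
\calf = \{\pi^{-1}(V) \mid V \leq H \text{ virtually cyclic}\}.
\]
By hypothesis, every element of $\calf$ satisfies the Baum-Connes conjecture with coefficients. Hence, by the transitivity principle \ref{transitivity} (in its version for coefficients, as formulated in \cite{BEL}), it suffices to verify the Baum-Connes conjecture for $\Gamma$ with coefficients in $A$ with respect to the family $\calf$; combining that with the assumption on preimages of virtually cyclic subgroups yields the conjecture for $\Gamma$ with respect to $\mathcal{VCYC}$, which is what we want.

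First I would observe that if $Y$ is a model for $E_{\mathcal{VCYC}} H$, then $\pi^{*}Y$ (the same space, viewed as a $\Gamma$-space via $\pi$) is a model for $E_{\calf}\Gamma$: the $\Gamma$-isotropy of a point is the $\pi$-preimage of its $H$-isotropy, and as the $H$-isotropies range over the virtually cyclic subgroups of $H$, the $\Gamma$-isotropies range over $\calf$; and $\pi^{*}Y$ is $\Gamma$-CW, with all isotropy in $\calf$. Therefore, using the criterion \ref{criterion}, the Baum-Connes conjecture for $\Gamma$ with respect to $\calf$ amounts to the vanishing of $K_{*}\D_{*}^{\Gamma}(\pi^{*}Y)$.

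The key step is then to identify this $K$-theory with the analogous obstruction group for $H$, with a suitably chosen $H$-$C^{*}$-algebra of coefficients. The natural candidate is the reduced crossed product $A\rtimes_{r} G$ equipped with the $H$-action induced from the $\Gamma$-action on $A$ together with the conjugation action of $\Gamma$ on $G$ (well-defined on $A \rtimes_r G$ up to inner automorphisms, which is enough at the level of $K$-theory and of the $\Or$-spectra of Section \ref{orgktop}). Concretely, I would construct, for each $H$-space $X$ such as $Y$, a $C^{*}$-equivalence
\[
\D_{*}^{\Gamma}(\pi^{*}X; A) \;\simeq\; \D_{*}^{H}(X; A\rtimes_{r}G)
\]
by bundling together, within every fibre of $\pi\times\mathrm{id}\colon \Gamma\times\pi^{*}X\to H\times X$, all $G$-translates of a basis vector into a single $A\rtimes_{r}G$-module summand; the reduced crossed product then arises precisely as the $C^{*}$-completion of the endomorphism algebras. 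Naturality of this equivalence in $X$ matches the two assembly maps under consideration, and since $H$ satisfies Baum-Connes with coefficients (in particular with coefficients $A\rtimes_{r}G$), the vanishing $K_{*}\D_{*}^{H}(Y; A\rtimes_{r}G)=0$ follows, giving $K_{*}\D_{*}^{\Gamma}(\pi^{*}Y)=0$ as required.

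The hard part will be this last identification: one must carefully justify the $H$-algebra structure on $A\rtimes_{r}G$ (dealing with the subtlety that the action of $H$ on $A\rtimes_{r}G$ is only well-defined up to inner automorphisms, so really one has to work either with the $\Or(\Gamma)/\Or(H)$-spectrum description from Section \ref{orgktop} or directly with the controlled categories) and then show that the pushforward along $\pi\times\mathrm{id}$ interacts correctly with the coarse structure, the $\Gamma$-equivariance, and the completion to the reduced crossed product. Everything else in the argument is formal, consisting of an application of the transitivity principle together with the translation between orbit-category spectra and controlled categories already established in the preceding sections.
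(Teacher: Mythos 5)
The paper does not actually prove this statement: its ``proof'' consists of the citation \cite[3.3]{oyono}. There is therefore no in-text argument to compare yours against; what your proposal is, in outline, is a sketch of how the cited result (and its Farrell--Jones analogues) is in fact proved in the literature, namely by reducing via the transitivity principle to the family $\calf=\{\pi^{-1}(V)\mid V\le H \text{ virtually cyclic}\}$ and then identifying the $\calf$-assembly map for $\Gamma$ with coefficients $A$ with the $\mathcal{VCYC}$-assembly map for $H$ with coefficients built from $A\rtimes_{r}G$. So the strategy is sound and standard.

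Two points in your sketch are, however, genuine gaps as written. First, the coefficient algebra: since the extension need not split, $H$ acts on $A\rtimes_{r}G$ only up to inner automorphisms, i.e.\ one has a twisted action rather than an honest $H$-$C^{*}$-algebra. Saying this is ``enough at the level of $K$-theory'' is too quick, because the $\Or(H)$-spectrum of Section \ref{orgktop} must be defined \emph{before} passing to $K$-theory; one needs either the Packer--Raeburn stabilization trick (replace $A\rtimes_{r}G$ by $(A\rtimes_{r}G)\otimes\calk$, which does carry an honest $H$-action, and verify $(A\rtimes_{r}\pi^{-1}(L))\otimes\calk\cong((A\rtimes_{r}G)\otimes\calk)\rtimes_{r}L$ for all $L\le H$) or the ``groupoids over a fixed group'' formalism of \cite{BEL}. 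Second, and more seriously, the proposed equivalence $\D_{*}^{\Gamma}(\pi^{*}X;A)\simeq\D_{*}^{H}(X;A\rtimes_{r}G)$ runs into a mismatch of coarse structures: the left-hand category imposes metric control with respect to the word metric $d_{\Gamma}$, whereas on the right a $d_{H}$-controlled morphism with entries in $A\rtimes_{r}G$ unbundles to components joining points of a single $G$-coset that are arbitrarily far apart in $d_{\Gamma}$. The bundling functor is therefore not full before completion, and to see that it becomes an equivalence after completion you must argue that entries supported on finitely many group elements are dense in each Hom-space and that the image of a $C^{*}$-functor is closed --- none of which appears in your sketch. A cleaner route, which avoids the controlled categories entirely, is the induction isomorphism $H_{*}^{\Gamma}(\pi^{*}Z;\bfE)\cong H_{*}^{H}(Z;\ind_{\pi}\bfE)$ for Davis--L\"uck homology theories, after which only the identification of the coefficient $\Or(H)$-spectrum $L\mapsto K(A\rtimes_{r}\pi^{-1}(L))$ with the one attached to the stabilized $H$-algebra $(A\rtimes_{r}G)\otimes\calk$ remains.
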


\begin{proof}
This is \cite[3.3]{oyono}. The assumptions there are even somewhat weaker, replacing virtually cyclic subgroups of $H$ with finite ones.
\end{proof}

\begin{thm}
The Baum-Connes conjecture with coefficients passes to subgroups, i.e. if $H \subset G$ and $G$ satisfies the Baum-Connes conjecture with coefficients, so does $H$.
\end{thm}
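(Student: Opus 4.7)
The plan is to deduce the Baum--Connes conjecture for $H$ from the conjecture for $G$ via an induction-of-coefficients construction. To an $H$-$C^*$-algebra $A$ we associate the induced $G$-$C^*$-algebra $\operatorname{Ind}_H^G A$, the $C^*$-completion of the space of $H$-equivariant functions $G \to A$ with $G$ acting by left translation. The central technical step is to establish, natural in the $H$-$CW$-complex $X$, an induction isomorphism
\[
H_*^H(X; A_r^H) \xrightarrow{\cong} H_*^G(G \times_H X; (\operatorname{Ind}_H^G A)_r^G).
\]
In the Davis--L\"uck formalism of Section \ref{orgktop}, this reduces to an equivalence between the $\Or(H)$-spectrum $A_r^H$ and the pullback of $(\operatorname{Ind}_H^G A)_r^G$ along the subgroup-inclusion functor $\Or(H) \to \Or(G)$, $H/K \mapsto G/K$. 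At the level of the underlying $C^*$-categories this amounts to a natural Morita equivalence between $A \rtimes_H Gr(H/K)$ and $(\operatorname{Ind}_H^G A) \rtimes_G Gr(G/K)$ for every subgroup $K \leq H$, implemented by the standard imprimitivity bimodule.

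Granting the induction isomorphism, I apply it to $X = E_{\mathcal{VCYC}}H$ and $X = \mathrm{pt}$ to obtain a commutative square identifying the $H$-assembly map $\mu_H^A$ with the natural map
\[
H_*^G(G \times_H E_{\mathcal{VCYC}}H; (\operatorname{Ind}_H^G A)_r^G) \longrightarrow H_*^G(G/H; (\operatorname{Ind}_H^G A)_r^G).
\]
The $G$-$CW$-complex $G \times_H E_{\mathcal{VCYC}}H$ has isotropy in the subfamily $\calf \subset \mathcal{VCYC}(G)$ of subgroups subconjugate to some virtually cyclic subgroup of $H$, hence is a model for $E_{\calf}G$; composing with the canonical $G$-maps $E_{\calf}G \to E_{\mathcal{VCYC}}G$ and $G/H \to \mathrm{pt}$, the displayed map fits above the full $G$-Baum--Connes assembly map $\mu_G^{\operatorname{Ind}_H^G A}$, which is an isomorphism by hypothesis.

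To close the diagram chase, I apply the transitivity principle \ref{transitivity} to $\calf \subset \mathcal{VCYC}(G)$ with coefficients in $\operatorname{Ind}_H^G A$: the hypothesis of the transitivity principle reduces to Baum--Connes for the virtually cyclic subgroups $V$ of $G$ with respect to the various intersections $V \cap \calf$, which holds because virtually cyclic groups are amenable and so satisfy Baum--Connes with coefficients for every family of their subgroups. A parallel argument compares $H_*^G(G/H; (\operatorname{Ind}_H^G A)_r^G)$ with $H_*^G(\mathrm{pt}; (\operatorname{Ind}_H^G A)_r^G)$, using the imprimitivity identification $K_*((\operatorname{Ind}_H^G A) \rtimes_r G) \cong K_*(A \rtimes_r H)$ to see that the right-hand vertical map of the square is also an isomorphism. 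Hence $\mu_H^A$ is an isomorphism, and since $A$ was arbitrary, $H$ satisfies the conjecture with coefficients. The main obstacle is the verification of the induction equivalence of $\Or(H)$-spectra with full coefficient-action bookkeeping, since the twisted category construction $A \rtimes_G S$ of Section \ref{orgktop} must be shown to behave correctly under both change of coefficient algebra and change of groupoid; the subsequent diagram chase and application of the transitivity principle are essentially formal.
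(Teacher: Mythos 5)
Your overall route---induce the coefficient algebra, compare the two assembly maps, and use Green imprimitivity to identify the right-hand sides---is in outline the strategy behind the result the paper simply cites (\cite[2.5]{inheritanceBC}), but the two steps you treat as formal are false as stated, and they are exactly where the substance of that reference lies. The claimed natural induction isomorphism $H_*^H(X;A_r^H)\cong H_*^G(G\times_H X;(\operatorname{Ind}_H^GA)_r^G)$ does not hold for general $X$: your orbitwise reduction would require $A\rtimes_H Gr(H/K)$ to be Morita equivalent to $(\operatorname{Ind}_H^GA)\rtimes_G Gr(G/K)$, i.e.\ $(\res_K A)\rtimes_r K\sim(\res_K\operatorname{Ind}_H^GA)\rtimes_r K$, and already for $K=H=\{1\}$ and $G$ finite this would assert that $\IC$ is Morita equivalent to $C(G)$, contradicting $K_0(\IC)=\IZ$ versus $K_0(C(G))=\IZ^{|G|}$. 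Green's imprimitivity theorem only identifies the values at the one-point orbit, $A\rtimes_rH\sim(\operatorname{Ind}_H^GA)\rtimes_rG$. The formal induction isomorphism available in the Davis--L\"uck framework keeps a fixed $G$-algebra $B$ and restricts it, $H_*^H(X;\res_HB)\cong H_*^G(G\times_HX;B)$; taking $B=\operatorname{Ind}_H^GA$ produces the coefficients $\res_H\operatorname{Ind}_H^GA$ on the $H$-side, of which $A$ is only a direct summand, so your square identifies the induced map with the $H$-assembly map for the wrong (larger) coefficients.

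The second half of the argument also fails. The $G$-$CW$-complex $G\times_HE_{\mathcal{VCYC}}H$ is not a model for $E_{\calf}G$: its underlying space is a disjoint union of $[G\colon H]$ copies of $E_{\mathcal{VCYC}}H$, and more generally its $K$-fixed sets are disjoint unions over $(G/H)^K$ of contractible pieces, hence not contractible; so the identification with the relative assembly map for $\calf$ and the ensuing use of the transitivity principle do not apply. (That step is additionally flawed on its own terms: a virtually cyclic group $V$ satisfies Baum--Connes for families containing all its finite subgroups, but $V\cap\calf$ need not contain them---take $G=\IZ/2$, $H=\{1\}$, where Baum--Connes for $\IZ/2$ relative to the trivial family is false.) Likewise the right-hand vertical map $H_*^G(G/H;\operatorname{Ind}_H^GA)\to H_*^G(\mathrm{pt};\operatorname{Ind}_H^GA)$ is, after the identifications above, a map $K_*((\res_H\operatorname{Ind}_H^GA)\rtimes_rH)\to K_*(A\rtimes_rH)$, e.g.\ $\IZ^{[G\colon H]}\to\IZ$ for $G$ finite, $H$ trivial, $A=\IC$; it is not an isomorphism. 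What the cited Chabert--Echterhoff argument actually establishes, and what your proof needs, is the non-formal theorem that the induction map $K^{\topo}_*(H;A)\to K^{\topo}_*(G;\operatorname{Ind}_H^GA)$ between the left-hand sides of the two assembly maps is an isomorphism; this is specific to classifying spaces for the families in question and to induced coefficients (proved via a compression isomorphism in equivariant $KK$-theory), and cannot be deduced orbit by orbit in the way you propose.
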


\begin{proof}
See \cite[2.5]{inheritanceBC}.
\end{proof}

\chapter{Swan group actions}

In this section, we collect the necessary tools from the representation theory of finite groups and give a classical application to algebraic $K$-theory, which is the original motivation for the constructions we will consider later on.

\section{Swan groups and induction}

Let $G$ be a group. We will consider the categories of finite-dimensional complex respectively integral representations of $G$, together with the involution which associates to a representation $\rho \colon G \rightarrow \Aut(V)$ the dual representation $\rho^{*} \colon G \rightarrow \Aut(V^*)$ given by $\rho^{*}(g) = \rho(g^{-1})^{*}$. However, to make our constructions work, we need a stricter version of these categories. Additionally, for technical reasons which will be apparent later on we only want to consider unitary complex representations of $G$. This leads to the following definition.

\begin{definition}
Fix a group $G$. Let $\Mod_{(\IC,G)}$ be the category with objects pairs $(\IC^{n}, \rho)$ where $\rho \colon G \rightarrow U(n)$ is a unitary representation of $G$. A morphism from $(\IC^{n}, \rho)$ to $(\IC^{k}, \tau)$ is a linear map $f \colon \IC^{n} \rightarrow \IC^{k}$ such that $f(\rho(g)v) = \tau(g)(f(v))$ for all $g \in G$ and $v \in \IC^{n}$. This is an exact category where we call a sequence exact if its underlying sequence of $\IC$-modules is exact. Since we imposed the unitary condition, this category is in fact split exact, i.e. each exact sequence
\[
0 \rightarrow (\IC^{n}, \rho) \rightarrow (\IC^{k}, \tau) \stackrel{f}{\rightarrow} (\IC^{m}, \eta) \rightarrow 0
\]
splits: The orthogonal complement of the kernel of $f$ is $G$-invariant and isomorphic to $ (\IC^{m}, \eta)$. \\
The category $\Mod_{(\IC,G)}$ also carries an involution, which is the identity on objects since $\rho$ is unitary and so $\rho(g^{-1})^{*} = \rho(g)$, and is given on a morphism $f: (\IC^{n}, \rho) \rightarrow (\IC^{k}, \tau)$ by sending the matrix $A$ representing $f$ to its conjugate transpose $A^{*}$ to obtain
\[
f^* \colon (\IC^{k}, \tau) \rightarrow (\IC^{n}, \rho)
\]
Finally, we define the unitary Swan group $\Sw^{u}(G)$ of $G$ as
\[
Sw^u(G) = K_{0}(\Mod_{(\IC,G)})
\]
\end{definition}

\begin{definition}
Let $\Mod_{(\IZ,G)}$ be the category with objects pairs $(\IZ^{n}, \rho)$ where $\rho: G \rightarrow Gl_{n}(\IZ)$ is a representation of $G$.  Morphisms are defined as above. This is an exact category where we call a sequence exact if its underlying sequence of $\IZ$-modules is exact. Note that this category is not split exact. The involution on $\Mod_{(\IZ, G)}$ is given by $(\IZ^{n}, \rho)^{*} = (\IZ^{n}, \rho^{*})$ where $\rho^{*}(g) = (\rho(g^{-1}))^{*}$, where the last $*$ means transposing the $n \times n$-matrix $\rho(g)$. Finally, we define the (integral) Swan group $\Sw(G)$ of $G$ to be $K_{0}(\Mod_{(\IZ,G)})$.
\end{definition}

Note that for a unitary representation, $ (\rho(g^{-1}))^{*} = \rho(g)$. This is the reason we can arrange the involution to be the identity on objects in the unitary case.  We will  concentrate on the complex case.\\
Let $H \subset G$ be a finite-index subgroup. Then there are functors 
\[
\res_{H}^{G} \colon \Mod_{(\IC,G)} \rightarrow \Mod_{(\IC,H)}
\]
and 
\[
\ind_{H}^{G} \colon \Mod_{(\IC,H)} \rightarrow \Mod_{(\IC,G)}
\]
defined as follows. Let $i \colon H \rightarrow G$ be the inclusion. The functor $\res_{H}^{G}$ just sends a $G$-representation $(\IC^{n}, \rho)$ to the $H$-representation 
\[
\res_H^G(\IC^{n}, \rho)  =  (\IC^{n}, \rho \circ i) 
\]
and acts as identity on the underlying morphisms of vector spaces. The induction functor is more complicated to define since we have to write it down explicitly in our fixed coordinates; compare \cite[2.C]{FH} for the following definition. \\ 
Pick representatives $g_{1}, \dots, g_{m}$ for $G/H$. Let $(\IC^{n}, \rho) \in \Mod_{(\IC,H)}$ be a complex representation of $H$. Its image under $\ind_{H}^{G}$ is the object $(\IC^{mn}, \eta)$, where $\eta(g) \in U(mn)$ is the matrix whose entry at $(kn+i, k'n+i')$ is $0$ as long as $gg_{k'}H \neq g_{k}H$, and is $\rho(g_{k}^{-1}gg_{k'})_{i,i'}$ if $gg_{k}'H =  g_{k}H$. On morphisms, $\ind_H^G f$ is just $\oplus_{G/H} f$.
 \fxnote{some explanation?}

When $G$ is finite, $\Sw^{u}(G)$ is just the complex representation ring of $G$. The following classical result is an important ingredient for our strategy. Recall that a finite group $G$ is called $p$-elementary if it is the product of a cyclic group $C$ and a $p$-group $P$, and elementary if it is $p$-elementary for some prime $p$.

\begin{thm}
\label{brauer}
\emph{\textbf{The Brauer Induction Theorem:}} Let $G$ be a finite group. Let $\calh$ be the family of elementary subgroups of $G$. Then the map
\[
\bigoplus_{H \in \calh} \ind^{G}_{H}: \bigoplus_{H \in \calh}  \Sw^{u}(H) \rightarrow \Sw^{u}(G)
\]
is onto. In particular, we find $T_{H} \in \Sw^{u}(H)$ such that 
\[
1_{\Sw^{u}(G)} = \sum\limits_{H \in \calh} \ind_{H}^{G} T_{H}
\]
\end{thm}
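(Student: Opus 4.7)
The plan is to reduce to the classical Brauer induction theorem for the complex representation ring of a finite group.

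First, I would observe that since $G$ is finite, every finite-dimensional complex representation of $G$ admits a $G$-invariant Hermitian inner product, obtained by averaging any inner product over $G$. Consequently the forgetful functor from $\Mod_{(\IC,G)}$ to the usual category of finite-dimensional complex $G$-representations is essentially surjective, and also fully faithful because morphisms are $G$-linear maps in either setting. This induces a canonical isomorphism $\Sw^u(G) \cong R(G)$ with the classical representation ring. Under this isomorphism the functor $\ind_H^G$ defined via the matrix formula in the excerpt is canonically isomorphic to the usual induction $\IC[G] \otimes_{\IC[H]} (-)$, as one verifies by identifying the chosen coset representatives with a basis of $\IC[G]$ as a right $\IC[H]$-module. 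So the theorem is equivalent to the statement that every complex character of $G$ is a $\IZ$-linear combination of characters induced from elementary subgroups.

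The second step is the standard reduction to an ideal-theoretic statement. By Frobenius reciprocity $\ind_H^G(x \cdot \res_H^G y) = \ind_H^G(x) \cdot y$, the subgroup $I := \sum_{H \in \calh} \ind_H^G R(H) \subseteq R(G)$ is in fact an ideal. Hence surjectivity of the induction map is equivalent to $1_G \in I$, which is exactly the second statement of the theorem; there is nothing more to prove than the existence of elements $T_H$ with $\sum_H \ind_H^G T_H = 1$.

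The remaining (and genuinely non-trivial) task is to prove $1_G \in I$. I would do this in two stages. The first stage (Artin) uses the character-inner-product pairing and the fact that every $g \in G$ lies in the cyclic subgroup it generates to show that the $\IQ$-span $I_\IQ$ already equals $R(G) \otimes \IQ$; quantitatively, $|G|\cdot 1_G \in I$ because cyclic subgroups are elementary. The second stage (Brauer) refines this by showing that for each prime $p$ dividing $|G|$ there exists an integer $n_p$ coprime to $p$ with $n_p \cdot 1_G \in \sum_{H\ p\text{-elementary}} \ind_H^G R(H) \subseteq I$. Given such $n_p$ for every prime $p \mid |G|$, the $\gcd$ of the $n_p$ is $1$, so by Bezout $1_G \in I$.

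The main obstacle is the construction of the integers $n_p$ in the second stage. This is the technical heart of Brauer's theorem and is proved by a delicate character-theoretic argument: one decomposes every element of $G$ into its $p$-regular and $p$-singular parts and constructs explicit virtual characters on $p$-elementary subgroups whose induced characters have $p$-adic values controlled in the required way. Since the paper uses this induction theorem purely as a black box, I would not reproduce the argument and would instead cite a standard reference such as Serre's \emph{Linear Representations of Finite Groups} or Curtis--Reiner for the details.
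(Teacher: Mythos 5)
Your proposal is correct and matches the paper's treatment: the paper proves nothing here and simply cites \cite[Theorem 10.19]{Serre}, exactly the black-box use you anticipate for the hard $p$-elementary step. Your additional care in identifying $\Sw^{u}(G)$ with the classical representation ring $R(G)$ via averaging an inner product, and in checking that the coordinate formula for $\ind_H^G$ agrees with $\IC[G]\otimes_{\IC[H]}(-)$, is a worthwhile point that the paper only asserts informally ("When $G$ is finite, $\Sw^{u}(G)$ is just the complex representation ring of $G$").
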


For a proof, see \cite[Theorem 10.19]{Serre}. We will also make use of the following, exchanging a simpler family of subgroups for a weaker statement.

\begin{thm}
\label{repcyclic}
Let $G$ be a finite group of order $n$. Let $\mathcal{CYC}$ be the family of cyclic subgroups. Then the map
\[
\bigoplus_{C \in \mathcal{CYC}} \ind^{G}_{C}: \bigoplus_{C \in \mathcal{CYC}}  \Sw^{u}(C) \rightarrow \Sw^{u}(G)
\]
has finite cokernel of exponent at most $n$, i.e. the cokernel is annihilated by multiplication with $n$. In particular, we find $T_{C} \in \Sw(C)$ such that
\[
n \cdot 1_{\Sw^{u}(G)} = \sum\limits_{C \in \mathcal{CYC}} \ind_{C}^{G} T_{C}
\]
\end{thm}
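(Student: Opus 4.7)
The plan is to reduce this statement to the classical Artin induction theorem from the character theory of finite groups. Since $G$ is finite, every finite-dimensional complex representation admits a $G$-invariant Hermitian inner product (by averaging), so the forgetful functor $\Mod_{(\IC,G)} \to \Mod_{\IC}[G]$ to all complex representations induces an isomorphism $\Sw^u(G) \cong R(G)$, and our $\ind_H^G$ corresponds under this isomorphism to the usual induction of representations. Thus the problem becomes purely representation-theoretic, and one can pass to characters: the map $\chi \colon R(G) \otimes_\IZ \IC \to \Con(G, \IC)$ sending a virtual representation to its character is an isomorphism onto the complex class functions, and Frobenius reciprocity gives the usual formula for induced characters.

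First I would prove rational surjectivity, i.e.\ that
\[
\phi_\IQ \colon \bigoplus_{C \in \mathcal{CYC}} R(C) \otimes \IQ \longrightarrow R(G) \otimes \IQ
\]
is onto. Equipping class functions with the standard inner product, it suffices to show that the orthogonal complement of the image of $\phi_\IQ$ in $\Con(G,\IC)$ is trivial. If $f$ lies in this complement, Frobenius reciprocity applied to $\langle f, \ind_C^G \psi\rangle_G = \langle \res_C^G f, \psi\rangle_C$ for all cyclic $C$ and all characters $\psi$ of $C$ forces $\res_C^G f = 0$ for every cyclic $C$. Taking $C = \langle g \rangle$ for arbitrary $g \in G$ gives $f(g) = 0$, so $f=0$. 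This already shows the cokernel of $\phi$ is finite.

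To sharpen this to exponent $n$, I would produce an explicit expression $n \cdot 1_{\Sw^u(G)} = \sum_C \ind_C^G T_C$ in the image and then use that the image is an ideal (by the projection formula $\chi \cdot \ind_C^G \psi = \ind_C^G(\res_C^G \chi \cdot \psi)$) to propagate this multiplication-by-$n$ to every $\chi \in \Sw^u(G)$. To obtain such an identity, I would use Möbius inversion on the poset of cyclic subgroups of $G$: for each cyclic $C \subset G$ let $\theta_C \in R(C)$ be the class function on $C$ that is $1$ on generators of $C$ and $0$ elsewhere; then $\ind_C^G \theta_C$ has character supported on conjugates of generators of $C$ and taking value $[N_G(C) : C]$ there. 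Summing these with Möbius coefficients from the subgroup lattice recovers the regular character $\ind_{\{1\}}^G 1$, which as a class function equals $n \cdot 1_{\{e\}}$; rearranging gives $n \cdot 1_{\Sw^u(G)}$ in the image of $\phi$.

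The principal obstacle is the explicit bound $n$ on the exponent rather than just the finiteness of the cokernel. Finiteness is painless after the Frobenius-reciprocity argument above, but controlling denominators requires either the Möbius inversion argument sketched above or an equivalent analysis of the character-table matrix together with orthogonality; this is the only genuinely arithmetic step in the proof. Once $n \cdot 1_{\Sw^u(G)}$ is produced in the image, the ideal property of the image and bilinearity of the tensor structure on $\Sw^u(G)$ immediately yield $n \cdot \chi \in \text{im}(\phi)$ for every $\chi$, which is the claim.
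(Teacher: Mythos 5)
The paper does not actually prove this statement: it is quoted as the sharpened Artin induction theorem and the proof is deferred wholesale to Serre. Your proposal reconstructs exactly that classical proof, so in approach you are aligned with the source the paper cites. The identification of $\Sw^{u}(G)$ with the ordinary representation ring $R(G)$ by averaging inner products, the rational surjectivity via Frobenius reciprocity (restricting a class function orthogonal to the image to every $\langle g\rangle$ forces it to vanish), and the final propagation of $n\cdot 1 \in \im(\phi)$ to $n\cdot\chi \in \im(\phi)$ via the projection formula are all correct, and the logical order (produce $n\cdot 1$ first, then deduce the exponent bound) is the right one.

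The one place your sketch goes wrong is the arithmetic step you yourself identify as the crux. With your normalization, $\theta_{C}$ (value $1$ on generators of $C$, value $0$ elsewhere) is in general \emph{not} an element of $R(C)$: for $C=\IZ/2$ it equals $\tfrac{1}{2}(1-\epsilon)$ with $\epsilon$ the sign character, which is only a rational virtual character, so you cannot legitimately form $\ind_{C}^{G}\theta_{C}$ inside the image of $\phi$. The correct Artin element takes the value $\left\vert C \right\vert$ on generators of $C$ and $0$ elsewhere; the assertion that \emph{this} class function lies in $R(C)$ is precisely the content of the inductive (M\"obius-type) step, via the identity $\theta_{C} = \left\vert C \right\vert\cdot 1_{C} - \sum_{C'\subsetneq C}\ind_{C'}^{C}\theta_{C'}$ applied by induction on $\left\vert C \right\vert$. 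With that normalization a direct count shows $\sum_{C}\ind_{C}^{G}\theta_{C}$ is the \emph{constant} class function $n$, i.e. $n\cdot 1_{\Sw^{u}(G)}$; the regular character $\ind_{\{1\}}^{G}1$, whose character is $n\delta_{e}$ rather than the constant $n$, plays no role and your "rearranging" step from it to $n\cdot 1_{\Sw^{u}(G)}$ does not go through. Once these two points are repaired, the argument closes exactly as you describe.
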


This is a stronger version of the Artin induction theorem. For a proof, see \cite[Proposition 9.27]{Serre}.

\section{Action of the Swan group on the algebraic $K$-theory of complex group rings}

For completeness' sake, we include this classical material which is the original motivation for our later constructions. \\
Given a $\IC$-algebra $A$, a right $AG$-module $M$ and an element \\ $T = (\IC^{n}, \rho)$ of $\Mod_{(\IC, G)}$,  we can form the tensor product over $\IC$
\[
T \otimes M = T \otimes_{\IC} M
\]
and equip it with the diagonal $G$-action, where we also view $T$ as a right $G$-module via $vg = \rho(g^{-1})(v)$. We will denote this module as $T \otimes_{\Delta} M$, to emphasize that the $G$-action is diagonal.

\begin{prop}
\label{action}
If $M$ is a finitely generated projective $AG$-module, then $T \otimes_{\Delta} M$ is also a finitely generated projective $AG$-module. Hence we get a biexact pairing
\[
\Mod_{(\IC,G)} \times \calp(AG) \rightarrow \calp(AG)
\]
and hence an induced $\Sw^{u}(G)$-module action on the algebraic $K$-theory of $AG$. The same holds for a complex algebra with $G$-action $A$ and the twisted group ring $A \rtimes G$.
\end{prop}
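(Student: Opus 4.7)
The plan is to reduce the projectivity claim to an explicit freeness statement for the case $M = AG$, then deduce the general case by taking summands, and finally verify biexactness formally.

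First, I would observe that it suffices to show $T \otimes_{\Delta} AG$ is a free right $AG$-module of rank $n = \dim_{\IC} T$. Indeed, if $M$ is finitely generated projective, choose $N$ with $M \oplus N \cong (AG)^k$; since $T \otimes_{\IC} -$ respects direct sums and the diagonal $G$-action is compatible with the direct sum decomposition, we obtain
\[
(T \otimes_{\Delta} M) \oplus (T \otimes_{\Delta} N) \;\cong\; (T \otimes_{\Delta} AG)^{k},
\]
so once the case $M = AG$ is handled, $T \otimes_{\Delta} M$ is a summand of a finitely generated free $AG$-module, hence finitely generated projective.

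To treat the case $M = AG$, let $e_{1}, \dots, e_{n}$ be the standard basis of $T = \IC^{n}$ and set $v_{i} = e_{i} \otimes 1 \in T \otimes_{\IC} AG$. I would compute
\[
v_{i} \cdot g \;=\; \rho(g^{-1})(e_{i}) \otimes g \;=\; \sum_{j=1}^{n} \rho(g^{-1})_{ji}\, e_{j} \otimes g
\]
and extend $A$-linearly. For each fixed $g \in G$, the matrix $\rho(g^{-1})$ is invertible, so $\{v_{i} \cdot g\}_{i=1}^{n}$ is an $A$-basis of the summand $T \otimes_{\IC} (A \cdot g)$. Varying $g$ over $G$, the full collection $\{v_{i} \cdot g\}_{i,g}$ is an $A$-basis of $T \otimes_{\IC} AG$. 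This shows that the natural $AG$-linear map
\[
\bigoplus_{i=1}^{n} AG \;\longrightarrow\; T \otimes_{\Delta} AG, \qquad (x_{1}, \dots, x_{n}) \;\longmapsto\; \sum_{i} v_{i} \cdot x_{i},
\]
is a bijection, establishing the desired freeness.

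Next, I would verify biexactness. For fixed $T$, the functor $T \otimes_{\Delta} -$ sends a short exact sequence $0 \to M' \to M \to M'' \to 0$ of $AG$-modules to a sequence which is exact as a sequence of $\IC$-vector spaces (since $T$ is $\IC$-flat) and whose maps are $AG$-linear; exactness in $\calp(AG)$ follows. For fixed $M \in \calp(AG)$, a split exact sequence in $\Mod_{(\IC,G)}$ tensored with $M$ over $\IC$ remains split, with the splitting $G$-equivariant, so exactness is preserved in $\calp(AG)$. The biexact pairing then induces a $K_{0}(\Mod_{(\IC,G)}) = \Sw^{u}(G)$-module structure on $K_{*}^{\alg}(AG)$ by the standard formalism (an element $[T_{1}] - [T_{2}]$ acts as $(T_{1} \otimes_{\Delta} -)_{*} - (T_{2} \otimes_{\Delta} -)_{*}$).

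The twisted case $A \rtimes G$ proceeds identically once one views $T$ as having trivial $A$-action. The only point needing care, and the mildest obstacle in the argument, is checking that $v_{i} \cdot (ag)$ computed via the $A \rtimes G$-action agrees with $\rho(g^{-1})(e_{i}) \otimes ag$; this is immediate from the definition $(t \otimes m) \cdot (ag) = \rho(g^{-1})(t) \otimes (ma) g$ and the fact that $A$ acts only on the second tensor factor. The freeness argument then goes through verbatim, with $\rho(g^{-1})$ providing the $A$-linear change-of-basis on each right coset $A \cdot g \subset A \rtimes G$.
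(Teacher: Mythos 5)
Your proof is correct and follows essentially the same route as the paper: reduce to $M = AG$ by additivity, then untwist the diagonal action — your basis $\{v_i \cdot g\}$ is exactly the paper's isomorphism $T \otimes_{\Delta} AG \cong T \otimes_{r} AG$ written out in coordinates — and deduce biexactness from exactness of $-\otimes_{\IC}-$ together with the (unitary) splittings in $\Mod_{(\IC,G)}$. The only cosmetic difference is that the paper observes the unitarity assumption is not actually needed for this proposition, constructing a section without equivariance, whereas you invoke the equivariant splitting; both are fine.
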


\begin{proof}
The reader should note that the unitary condition we imposed on our representations is not necessary here. It is easy to check that $T \otimes_{\Delta} -$ is an additive functor. Hence it suffices to check that $T \otimes_{\Delta} AG$ is projective as an $AG$-module. Let $T \otimes_{r} AG$ denote the tensor product with $G$ only acting on $AG$. There is an $AG$-module isomorphism $T \otimes_{\Delta} AG \rightarrow T \otimes_{r} AG$ sending $t \otimes g$ to $tg^{-1} \otimes g$ and extending $A$-linearly, with inverse sending $t \otimes g$ to $tg \otimes g$. Hence $T \otimes_{\Delta} AG$ is free since 
\[
T \otimes_{r} AG \cong \bigoplus\limits_{1}^{dim T} AG
\]
clearly is. \\
The only potential difficulty in checking biexactness is that for a short exact sequence $0 \rightarrow R \rightarrow S \rightarrow T \rightarrow 0$ in $\Mod_{(\IC,G)}$ and a projective $AG$-module $M$, also the sequence 
\[
0 \rightarrow R \otimes_{\Delta} M \rightarrow S \otimes_{\Delta} M \rightarrow T \otimes_{\Delta} M \rightarrow 0
\]
is an exact sequence of projective $AG$-modules. But the underlying sequence of $\IC$-modules is exact since tensoring over $\IC$ is exact, hence the sequence is also exact as a sequence of $AG$-modules. Alternatively, we can construct a section $S \otimes_{\Delta} M \rightarrow T \otimes_{\Delta} M$ as follows: Consider the case $M = AG$. First pick a section $q \colon T \rightarrow S$ as $\IC$-module maps. Then define 
\[
s \colon T \otimes_{\Delta} AG \rightarrow S \otimes_{\Delta} AG
\]
as $s(t \otimes g) = q(tg^{-1})g \otimes g$. To understand this formula, note that $s$ is the composition of the isomorphism $T \otimes_{\Delta} AG \cong T \otimes_{r} AG$ from above, the map $q \otimes \Id \colon T \otimes_{r} AG \rightarrow S \otimes_{r} AG$ which is $G$-equivariant since $G$ does not act on $T$ respectively $S$, and finally the inverse of the isomorphism $S \otimes_{\Delta} AG \cong S \otimes_{r} AG$. Since $p(g \cdot -) = g \cdot (p(-))$ and $p \circ q = Id_{T}$, it follows that $(p \otimes \Id) \circ s = Id_{T \otimes_{\Delta} AG}$. For general $M$, the claim now follows easily since a direct sum of two potential short exact sequences is exact if and only if the two original sort sequences where exact. \\ 
The claim about the module structure is a standard $K$-theory fact, see for example \cite[Cor. 1 on page 106]{quillenktheory}. The argument for twisted group rings is the same once one convinces oneself that $S \otimes_{\Delta} (A \rtimes G)$ is indeed an $A \rtimes G$-module.
\end{proof}

\begin{rem}
On first glance, the formulas we used here for the isomorphism $T \otimes_{\Delta} AG \rightarrow T \otimes_{r} AG$ are different from the ones we will use in the construction of the transfer in \ref{defoftransfer}. The difference is that in the controlled setup, the right action of $G$ on $G$ will not come from right multiplication, but from left multiplication with the inverse, i.e. we view the $A$-module $AG$ as a right $G$-module via $g \cdot h = h^{-1}g$. This is still a free $AG$-module of rank one - a canonical isomorphism to  $AG$ with the usual right action is given by $g \mapsto g^{-1}$. As a consequence, formulas where we used an inverse here will be without an inverse later on and conversely.
\end{rem}

\section{Some computations}

Fortunately, we will never need any computations of concrete Swan groups; but for a few easy groups, we will indicate some results. \\
The integral Swan group $\Sw(G)$ is extremely hard to compute, even if $G$ is an easy group. For some partial results for $G = \IZ$ see \cite[Cor. 5]{graysonendo}. For finite groups, \cite{swaninduced} gives at least some structural induction results, but concrete computations are not easy. The interested reader may consult \cite{curtis-reiner} for some concrete results.
 \\
In the complex case, one can do better. For finite groups, we are just considering the complex representation ring which is a well-studied object for a finite group.  For abelian groups, we have the following:

\begin{prop}
For $G = \IZ^{n}$ and $s = (s_{1}, \dots s_{n}) \in (S^{1})^{n}$, sending $s$ to the $\IZ^{n}$-representation on $\IC$ given by letting the $i$-th unit vector $e_{i}$ act as multiplication by $s_{i}$ induces an isomorphism
 \[
 \IZ[(S^{1})^{n}] \rightarrow \Sw^{u}(\IZ^n)
 \]
Similarly, if $G$ is abelian, $\Sw^{u}(G)$ is the free abelian group on the one-dimensional representations, or in other words, the free abelian group on the set of group homomorphisms
\[
G \rightarrow S^{1}
\]
\end{prop}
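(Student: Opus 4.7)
The plan is to reduce the statement for an arbitrary abelian group to the statement about simultaneous diagonalization of commuting unitary matrices, and then use split-exactness of $\Mod_{(\IC,G)}$ to identify $\Sw^u(G)$ with the Grothendieck group of a free abelian monoid.

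First I would identify the set of (isomorphism classes of) $1$-dimensional objects of $\Mod_{(\IC,G)}$: a unitary representation $\rho\colon G\to U(1)=S^1$ is literally a group homomorphism $G\to S^1$, and two such are isomorphic in $\Mod_{(\IC,G)}$ only if they coincide (the only unitary isomorphisms $\IC\to\IC$ are multiplication by elements of $S^1$, and these are central so they do not alter $\rho$). For $G=\IZ^n$, a homomorphism $\IZ^n\to S^1$ is determined by the images of the $n$ standard generators, producing the stated bijection with $(S^1)^n$.

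Next I would show that every object $(\IC^k,\rho)$ of $\Mod_{(\IC,G)}$ splits as an orthogonal direct sum of $1$-dimensional subrepresentations. Since $G$ is abelian and $\rho$ takes values in $U(k)$, the family $\{\rho(g)\}_{g\in G}$ is a commuting family of unitary, hence normal, operators; the spectral theorem gives a simultaneous orthonormal eigenbasis, and each eigenline is $G$-stable with $G$ acting through a character $\chi\colon G\to S^1$. This decomposition is unique up to order in the sense that, for each character $\chi$, the multiplicity of $\chi$ is the dimension of the $\chi$-isotypic component, an invariant of the isomorphism class. In particular, the monoid of isomorphism classes of $\Mod_{(\IC,G)}$ under $\oplus$ is canonically the free commutative monoid on $\Hom(G,S^1)$.

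Finally I would pass to $K_0$. Because $\Mod_{(\IC,G)}$ is split exact (orthogonal complements of $G$-invariant subspaces are $G$-invariant, as noted in the definition), every short exact sequence splits, so $\Sw^u(G)=K_0(\Mod_{(\IC,G)})$ is by definition the Grothendieck group of the monoid of isomorphism classes under $\oplus$. Combining this with the previous paragraph yields
\[
\Sw^u(G)\;\cong\;\IZ[\Hom(G,S^1)],
\]
and specializing to $G=\IZ^n$ gives $\Sw^u(\IZ^n)\cong\IZ[(S^1)^n]$ with the stated explicit generators. The only mildly delicate point is the uniqueness of the decomposition into $1$-dimensional summands; I would address it by the isotypic-component argument just indicated rather than invoking Krull–Schmidt, to keep the proof self-contained.
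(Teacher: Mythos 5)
Your proof is correct and takes essentially the same route as the paper: simultaneous diagonalization of the commuting unitaries $\rho(g)$ reduces every object to a direct sum of one-dimensional representations (characters $G \rightarrow S^{1}$), and split-exactness identifies $\Sw^{u}(G)$ with the Grothendieck group of the monoid of isomorphism classes under $\oplus$. The paper dismisses the final identification as ``easy to check''; your isotypic-component argument for the uniqueness of the multiset of characters supplies precisely the detail needed to conclude freeness.
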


\begin{proof}
A $\IZ^{n}$-representation on $\IC^{k}$ is determined by the actions of $e_{i}$, i.e. by $n$ commuting unitary matrices $A_{i}$. Since unitary matrices are diagonalizable and commuting diagonalizable matrices are simultaneously diagonalizable, we can assume that all $A_{i}$ are diagonal. But then clearly the representation splits as a sum of $k$ one-dimensional representations. Now the claim is easy to check. The same argument works for any abelian group $A$ to reduce to one-dimensional representations.
\end{proof}

Decomposing a representation into its irreducible components and employing Schur's lemma to deduce that this decomposition is basically unique, we obtain the following:

\begin{prop}
For any group $G$, $\Sw^{u}(G)$ is free abelian on the set of isomorphism classes of finite-dimensional irreducible unitary representations of $G$.
\end{prop}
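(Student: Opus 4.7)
The plan is to exploit that the category $\Mod_{(\IC,G)}$ was shown, in its definition, to be split exact. For a split exact category, $K_0$ agrees with the Grothendieck group of the abelian monoid $(M,\oplus)$ of isomorphism classes of objects under direct sum. Hence it suffices to show that $M$ is the free commutative monoid on the set $\Irr(G)$ of isomorphism classes of finite-dimensional irreducible unitary representations of $G$. From this the Grothendieck group is manifestly the free abelian group on $\Irr(G)$.

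First I would establish semisimplicity: every object $(\IC^n,\rho)$ decomposes as an orthogonal direct sum of irreducible unitary subrepresentations. The key point is that for a unitary representation, a $G$-invariant subspace $V\subset \IC^n$ always admits a $G$-invariant orthogonal complement. Indeed, for $v\in V^\perp$ and $w\in V$,
\[
\langle \rho(g)v,w\rangle = \langle v,\rho(g)^{*}w\rangle = \langle v,\rho(g^{-1})w\rangle = 0,
\]
since $\rho(g^{-1})w\in V$. Induction on dimension then produces an orthogonal decomposition into irreducibles, so $M$ is generated by $\Irr(G)$.

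Next I would invoke Schur's lemma: for irreducible finite-dimensional complex representations $V,W$ one has $\Hom_G(V,W)=0$ unless $V\cong W$, and $\Hom_G(V,V)=\IC\cdot\Id$. Consequently, for any representation $X$, the multiplicity of a given irreducible $V$ in any decomposition of $X$ into irreducibles is an isomorphism invariant of $X$; for instance, it equals $\dim_{\IC}\Hom_G(V,X)$. This shows that the decomposition is unique up to isomorphism and reordering of summands, so distinct finite multisets over $\Irr(G)$ yield non-isomorphic objects of $\Mod_{(\IC,G)}$. Together with the previous step this identifies $M$ with the free commutative monoid on $\Irr(G)$, and passing to the Grothendieck group gives $\Sw^u(G)\cong \IZ[\Irr(G)]$.

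There is no real obstacle here: the only points one has to verify carefully are the orthogonal-complement argument (which uses unitarity in an essential way and is why we restricted to unitary representations in the first place) and the appeal to Schur's lemma over $\IC$. The identification of $K_0$ of a split exact category with the Grothendieck group of the monoid of isomorphism classes was already noted in the remark following the definition of $K_0(A)$.
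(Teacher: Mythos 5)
Your proof is correct and follows exactly the route the paper takes (the paper's entire argument is the sentence preceding the proposition: decompose into irreducible components and use Schur's lemma to see the decomposition is essentially unique). You simply fill in the details — the invariant orthogonal complement from unitarity, the multiplicity count via $\dim_{\IC}\Hom_G(V,X)$, and the identification of $K_0$ of the split exact category $\Mod_{(\IC,G)}$ with the Grothendieck group of the monoid of isomorphism classes — all of which are consistent with how the paper sets things up.
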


\section{The relationship with equivariant $KK$-theory}

Let $G$ be a discrete group. We want to shed some light on the relationship between $\Sw^{u}(G)$ and the Kasparov representation ring $R(G) = KK_0^G(\IC,\IC)$. 

\begin{definition}
A Kasparov $G$-module for $(\IC,\IC)$ is a $\IZ/2\IZ$-graded separable Hilbert space $\calh = \calh_{0} \oplus \calh_{1}$, a unitary grading-preserving representation $\rho$ of $G$ on $\calb(\calh)$ and a self-adjoint degree-$1$ operator $F: \calh \rightarrow \calh$ such that $F^{2}-1$ is a compact operator and for each $g \in G$, $\rho(g) \circ F \circ \rho(g^{-1})-F$ is a compact operator. Two Kasparov modules $(\calh, \rho, F)$ and $(\calh', F', \rho')$ are isomorphic if there is a grading-preserving isomorphism $T: \calh \rightarrow \calh'$ preserving all the structure, i.e. such that $F = T^{-1} \circ F' \circ T$ and $\rho(g) = T^{-1}  \circ \rho'(g) \circ T$ for all $g \in G$. \\
A Kasparov $G$-module is degenerate if $F^{2} = 1$ and $\rho(g) \circ F\circ \rho(g)^{-1} = F$ on the nose, not only up to compact operators. Two Kasparov $G$-modules $(\calh, \rho, F)$ and $(\calh',  \rho', F')$ are homotopic if $\calh = \calh', \rho = \rho'$ and $F,F'$ can be connected by a continuous path of operators $F_{t}$ such that $(\calh, \rho, F_{t})$ is a Kasparov $G$-module for all $t$. 
\end{definition}

\noindent
We can form the direct sum of two Kasparov $G$-modules to obtain a new Kasparov $G$-module, so the isomorphism classes of Kasparov $G$-modules form a monoid. When one divides out homotopy and degenerate modules, the following proposition tells us that we obtain a group.

\begin{prop}
Let $(\calh, \rho, F)$ be a Kasparov $G$-module. Let $\calh'$ be $\calh$ with grading reversed. Then $(\calh, \rho, F) \oplus (\calh', \rho, F)$ is homotopic to a degenerate Kasparov module. 
\end{prop}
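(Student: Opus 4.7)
The plan is to write down an explicit homotopy on $\calh \oplus \calh'$ that interpolates between the direct sum operator and a manifestly degenerate operator, via the standard ``rotation'' trick used in $KK$-theory.

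Concretely, I would identify $\calh \oplus \calh'$ as a $\IZ/2$-graded Hilbert space whose even part is $\calh_0 \oplus \calh_1$ and whose odd part is $\calh_1 \oplus \calh_0$; let $J \colon \calh \to \calh'$ denote the identity map, viewed as a degree-$1$ unitary (it swaps $\calh_0$ and $\calh_1$ across the grading reversal). The group action and grading on $\calh'$ are the same as on $\calh$ (the grading convention just flips), and $J$ is $G$-equivariant. Then I would define, for $t \in [0, \pi/2]$,
\[
F_t \;=\; \begin{pmatrix} \cos(t)\, F & \sin(t)\, J^* \\ \sin(t)\, J & -\cos(t)\, F \end{pmatrix}
\]
on $\calh \oplus \calh'$. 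At $t=0$ this is exactly the direct sum operator representing $(\calh,\rho,F) \oplus (\calh',\rho,F)$; at $t = \pi/2$ it is the off-diagonal, purely ``topological'' operator $\begin{pmatrix} 0 & J^* \\ J & 0 \end{pmatrix}$.

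The verification then proceeds in three short steps. First, check that $F_t$ is self-adjoint and of degree one for every $t$: self-adjointness is immediate from $F^* = F$ and the adjoint relationship between the off-diagonal blocks, and the degree-one property holds because both $F$ (on $\calh$ or $\calh'$) and $J$ are odd. Second, compute
\[
F_t^2 \;=\; \begin{pmatrix} \cos^2(t) F^2 + \sin^2(t)\, J^*J & \cos(t)\sin(t)(F J^* - J^* F) \\ \cos(t)\sin(t)(J F - F J) & \cos^2(t) F^2 + \sin^2(t)\, J J^* \end{pmatrix};
\]
since $J$ and $J^*$ commute with $F$ up to the obvious identification and $J J^* = J^* J = \Id$, this simplifies to $\cos^2(t)(F^2) \oplus \cos^2(t)(F^2) + \sin^2(t)\Id$, so $F_t^2 - \Id = \cos^2(t)(F^2 - \Id)\oplus\cos^2(t)(F^2-\Id)$ is compact for every $t$. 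Third, since $\rho \oplus \rho$ acts diagonally and commutes with the scalar entries and with $J$, the commutator $[\rho(g)\oplus\rho(g), F_t]$ reduces to $\cos(t)\cdot([\rho(g),F]\oplus[\rho(g),F])$, which is compact. Continuity in $t$ of $F_t$ in operator norm is obvious from the formula.

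At the endpoint $t = \pi/2$ the operator $\begin{pmatrix} 0 & J^*\\ J & 0\end{pmatrix}$ squares to the identity on the nose and commutes exactly with $\rho \oplus \rho$, so it defines a degenerate Kasparov module, finishing the argument. The only genuinely non-routine point is bookkeeping the grading on $\calh'$ so that the initial operator $F_0$ is actually $F \oplus F$ in the sense of Kasparov modules (rather than $F \oplus -F$), and so that $J$ is honestly odd; once that is set up correctly, the computations above go through mechanically.
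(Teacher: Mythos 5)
Your strategy is the standard one (the paper itself offers no argument here, deferring to \cite[4.2.7]{valette}), and the rotation homotopy is indeed how this is proved. But there is a concrete problem with your formula: at $t=0$ your operator is $F_0=\begin{pmatrix} F & 0\\ 0 & -F\end{pmatrix}$, which is the operator of $(\calh,\rho,F)\oplus(\calh',\rho,-F)$, not of $(\calh,\rho,F)\oplus(\calh',\rho,F)$. This cannot be repaired by ``bookkeeping the grading on $\calh'$'': reversing the grading changes which operators are odd, but it does not turn the operator $-F$ into $F$. Nor can you simply delete the minus sign in the lower-right corner, because then the off-diagonal entries of $F_t^2$ become $\cos(t)\sin(t)(FJ^*+J^*F)$ and $\cos(t)\sin(t)(JF+FJ)$, which are not compact precisely because $J$ \emph{commutes} with $F$; the anticommutation your computation needs is exactly what that minus sign was secretly providing.

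The gap is small and can be closed in one line. Let $\epsilon$ be the grading involution of $\calh$; it is an even, $G$-equivariant unitary with $\epsilon F\epsilon=-F$, so $1\oplus\epsilon$ is an isomorphism of Kasparov modules from $(\calh\oplus\calh',\rho\oplus\rho,F\oplus F)$ to $(\calh\oplus\calh',\rho\oplus\rho,F\oplus(-F))$; since $KK_{0}^{G}(\IC,\IC)$ is built from isomorphism classes, your homotopy then finishes the proof. Equivalently, conjugate your whole path by $1\oplus\epsilon$, or replace $J$ by the twisted identification $V=J\epsilon$, which is odd, unitary, $G$-equivariant and \emph{anticommutes} with $F$, and set
\[
F_t=\begin{pmatrix}\cos(t)\,F & \sin(t)\,V^*\\ \sin(t)\,V & \cos(t)\,F\end{pmatrix}.
\]
This starts at $F\oplus F$ on the nose, the cross terms in $F_t^2$ vanish by the anticommutation, $F_t^2-\Id=\cos^2(t)\bigl((F^2-\Id)\oplus(F^2-\Id)\bigr)$ is compact, and $F_{\pi/2}$ squares to the identity and commutes with $\rho\oplus\rho$ exactly, hence is degenerate. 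With that correction the rest of your verification (self-adjointness, odd degree, compactness of the commutators with $\rho\oplus\rho$, norm continuity in $t$) goes through as you wrote it.
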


\begin{proof}
 See \cite[4.2.7]{valette}.
\end{proof}

\begin{definition}
We define $KK_{0}^{G}(\IC,\IC)$ to be the monoid of isomorphism classes of Kasparov $G$-modules, modulo homotopy and degenerate Kasparov modules. The above lemma tells us that this is a group. 
\end{definition}

We say that a Kasparov module $(\calh, \rho, F)$ is strict if it is strictly $G$-equivariant, i.e. satisfies $\rho(g) \circ F = F \circ \rho(g)$ for all $g \in G$. A strict homotopy between two strict modules is a homotopy $(\calh, \rho, F_{t})$ such that $F_{t}$ is strict for all $t$. 

\begin{definition}
Let $KK_{0}^{G, strict}(\IC,\IC)$ be the abelian group of isomorphism classes of strict Kasparov $G$-modules, modulo strict homotopy and degenerate modules.
\end{definition}

The following theorem gives the precise relationship between $KK$ and the unitary Swan group:

\begin{thm}
Let $G$ be a group.
\begin{enumerate}
\item There is a canonical map $\gamma \colon KK_{0}^{G,strict}(\IC,\IC) \rightarrow KK_{0}^{G}(\IC,\IC)$. 
\item If $G$ is finite, $\alpha$ is an isomorphism.
\item For any group $G$, there is a canonical isomorphism
\[
\alpha: \Sw^{u}(G) \rightarrow KK_{0}^{G,strict}(\IC,\IC)
\]
\end{enumerate}
\end{thm}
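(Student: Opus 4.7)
The plan is to handle the three parts essentially independently, proceeding in the order stated.

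For part (i), the map $\gamma$ is the obvious forgetful map: a strict Kasparov $G$-module $(\calh,\rho,F)$ automatically satisfies the weaker Kasparov condition, and a strict homotopy (respectively a strictly degenerate module) is in particular a homotopy (respectively a degenerate module). So the assignment on representatives descends to a group homomorphism; there is nothing to check beyond additivity, which holds because direct sums commute with the forgetful operation.

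For part (ii), I would prove both surjectivity and injectivity by averaging. Given a Kasparov module $(\calh,\rho,F)$, set $F' = \frac{1}{|G|}\sum_{g\in G}\rho(g)F\rho(g^{-1})$. Then $F'$ is self-adjoint, of degree $1$, and by construction strictly $G$-equivariant. Since every $\rho(g)F\rho(g^{-1})-F$ is compact, so is $F'-F$, and then $F'^{2}-1=(F'-F)F+F'(F'-F)+(F^{2}-1)$ is compact, so $(\calh,\rho,F')$ is a strict Kasparov module. The linear path $tF+(1-t)F'$ lies in Kasparov modules throughout (each interpolant inherits compactness of the square-minus-one and of the commutators), giving a homotopy from $(\calh,\rho,F)$ to a strict module; this yields surjectivity of $\gamma$. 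Applied to a homotopy $(\calh,\rho,F_t)$ between two \emph{strict} modules, the same averaging produces a strict homotopy between the same endpoints (the endpoints are fixed since they are already strict), which gives injectivity.

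For part (iii), define $\alpha$ by $\alpha([V]-[W])=(V\oplus W,\rho_{V}\oplus\rho_{W},0)$, with $V$ in degree $0$ and $W$ in degree $1$. The zero operator is self-adjoint of degree $1$, strictly equivariant, and $0^{2}-1=-1$ is compact since $V\oplus W$ is finite-dimensional; direct sums of representations correspond to direct sums of Kasparov modules, and short exact sequences in $\Mod_{(\IC,G)}$ split by the unitary hypothesis, so $\alpha$ is a well-defined group homomorphism. For the inverse, given a strict Kasparov module, write
\[
F=\begin{pmatrix}0 & T^{*}\\ T & 0\end{pmatrix}
\]
with respect to the grading, where $T\colon\calh_{0}\to\calh_{1}$ is a $G$-equivariant bounded operator with $T^{*}T-1$ and $TT^{*}-1$ compact, i.e.\ a $G$-equivariant Fredholm operator. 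Define $\beta[(\calh,\rho,F)]=[\ker T]-[\ker T^{*}]$; both kernels are finite-dimensional unitary $G$-subrepresentations, so this lies in $\Sw^{u}(G)$. Well-definedness on degenerate modules is immediate (then $T$ is unitary), and well-definedness on strict homotopies is the classical fact that the $G$-equivariant Fredholm index is constant along norm-continuous paths of equivariant Fredholm operators.

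Then $\beta\circ\alpha=\id$ is trivial (the zero operator $V\to W$ has kernel $V$ and cokernel $W$). The content is $\alpha\circ\beta=\id$: given a strict Kasparov module with Fredholm $T$, orthogonally split $\calh_{0}=\ker T\oplus(\ker T)^{\perp}$ and $\calh_{1}=\ker T^{*}\oplus(\ker T^{*})^{\perp}$ (these are $G$-invariant by equivariance of $T$), observe that $T$ restricts to a $G$-equivariant Fredholm isomorphism between $(\ker T)^{\perp}$ and $(\ker T^{*})^{\perp}$, and use the polar decomposition $T=U|T|$ on this complement to homotope $T$ through equivariant Fredholms to a unitary, yielding a degenerate summand; the remaining piece is exactly $\alpha([\ker T]-[\ker T^{*}])$. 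I expect the polar-decomposition homotopy and checking that it remains an equivariant strict Kasparov homotopy throughout to be the main technical obstacle, since one has to verify that the interpolating operators keep the compactness of the square-minus-one, which is where the choice of homotopy (e.g.\ interpolating $|T|$ to the identity only on the orthogonal complement of the finite-dimensional spectral subspace for small eigenvalues) matters.
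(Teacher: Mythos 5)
Your proposal follows essentially the same route as the paper: the forgetful map for (i), averaging the operator (and the homotopies) over the finite group for (ii), and for (iii) the map $\alpha$ given by finite-dimensional modules with $F=0$ together with the inverse $\beta$ given by the $G$-equivariant index $[\ker F_{1}]-[\coker F_{1}]$, checked against degenerate modules and strict homotopies. The differences are only matters of detail: the paper actually proves the homotopy invariance of the equivariant index (which you cite as classical) by the additivity-plus-invertibility argument, and conversely you are more explicit than the paper about the polar-decomposition homotopy needed to make the complementary summand genuinely degenerate in the verification of $\alpha\circ\beta=\id$ (and you should likewise record the swap-matrix homotopy showing $(V\oplus V,\rho\oplus\rho,0)$ is trivial, which is what makes $\alpha$ well defined on formal differences).
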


\begin{proof}
The map $\gamma$ is obtained by considering a strict module as an ordinary module. This is clearly well-defined. \\
To prove  (ii), start with a Kasparov module $(\calh, \rho, F)$ and define
\[
F^{G} = \frac{1}{\left\vert G \right\vert} \sum\limits_{g \in G} \rho(g) \circ F \circ \rho(g^{-1})
\]
which makes sense since $G$ is finite. Since $\rho(g) \circ F \circ \rho(g^{-1})-F$ is compact for all $G$, $F^{G}-F$ is compact, and $F^{G}$ is clearly $G$-equivariant. The linear path from $F$ to $F^{G}$ then defines a homotopy from $F$ to $F^{G}$; this path indeed passes through Kasparov modules since $F^{G}-F$ is compact, so the path is just a compact perturbation. Hence each ordinary module is homotopic to a strict one, proving surjectivity of $\gamma$. Similarly, we can average out any homotopy between equivariant Kasparov modules to obtain an equivariant homotopy, which proves injectivity. \\
For (iii),  pick some element $\eta \in \Sw^{u}(G)$, represented by a formal difference $(\IC^{n}, \rho)-(\IC^{k},\tau)$ of two finite-dimensional representations of $G$. Let $\calh = \IC^{n} \oplus \IC^{k}$ with the representation $\rho \oplus \tau$, with $\IC^{n}$ the even part and $\IC^{k}$ the odd part, and let $F = 0 \colon \calh \rightarrow \calh$. Since $\calh$ is finite-dimensional, the compactness conditions are automatically satisfied, so we obtain a Kasparov $G$-module $(\calh, \rho \oplus \tau, 0)$. In the case $\rho = \tau$, this module is homotopic to a degenerate one: Then
\[
\begin{pmatrix}
0 & 1 \\ 1 & 0
\end{pmatrix} \colon \IC^{n} \oplus \IC^{n} \rightarrow \IC^{n} \oplus \IC^{n}
\]
defines a degenerate module which is homotopic to $(\calh, \rho \oplus \rho, 0)$ via the linear homotopy - again, the compactness conditions are empty since all operators are compact anyway. This construction is compatible with direct sums. If $(\calh, \rho \oplus \tau, 0)$ is homotopic to a degenerate module, then $\rho$ and $\tau$ are isomorphic: The potential degenerate operator on $\calh$ is of the form
\[
F = \begin{pmatrix}
0 & A \\ B & 0
\end{pmatrix} \colon \IC^n \oplus \IC^k \rightarrow \IC^n \oplus \IC^k
\]
and since $F^2 = 1$, we have $AB = 1, BA = 1$ and both $A$ and $B$ are $G$-equivariant since $F$ is equivariant by definition of a degenerate module. Altogether, we see that the class of $(\calh, \rho \oplus \tau, 0)$ in $KK_{0}^{G}(\IC, \IC)$ does not change when we pass from $\eta = \rho-\tau$ to $\eta = \rho \oplus \rho'- \tau \oplus \rho'$, so we indeed get a well-defined group homomorphism
\[
\alpha: \Sw^{u}(G) \rightarrow KK_{0}^{G}(\IC,\IC)
\]
Let $(\calh, \rho, F)$ be a strict Kasparov module. Write $\calh = \calh_{0} \oplus \calh_{1}$ and 
\[
F = \begin{pmatrix}
0 & F_{2} \\ F_{1} & 0
\end{pmatrix} 
\]
with respect to this decomposition. Since $F^{2} = 1$ up to compact operators, $F_{1}$ is a $G$-equivariant Fredholm operator $\calh_{0} \rightarrow \calh_{1}$, so its kernel and cokernel are finite-dimensional unitary representations of $G$. Set 
\[
\beta((\calh, \rho, F)) = [\ker(F_{1})]-[\coker(F_{1})] \in \Sw^{u}(G)
\]
If $(\calh, \rho, F)$ was degenerate to begin with, we have $\beta((\calh, \rho, F)) = 0$ since $F^{2} = 1$ means that $F_{1}$ is invertible. \\
Under on-the-nose equivariant operator homotopies $F^{t}$, the isomorphism classes of $\ker(F_{1}^{t})$ and $\coker(F_{1}^{t})$ can change. We have to argue that their formal difference does not change.  The proof is similar to the fact that the Fredholm Index is locally constant, which indeed is the special case $G = {1}$; compare \cite[2.1.5]{HigsonRoe} and \cite[III.9.4]{LM}. \\
Let $A,B,C$ be Hilbert spaces equipped with unitary $G$-representations. Let $S: A \rightarrow B$ be an equivariant Fredholm operator. We define the equivariant index of $S$ by
\[
index_{G}(S) = [\ker(S)]-[\coker(S)] \in \Sw^{u}(G)
\]
What we want to prove is that this equivariant index is locally constant. If $T \colon B \rightarrow C$ is another equivariant Fredholm operator, we have
\[
index_{G}(TS) = index_{G}(T)+index_{G}(S)
\]
since there is an exact sequence of $G$-modules
\begin{align*}
0 \rightarrow \ker(S) &\rightarrow \ker(TS) \rightarrow \ker(T) \\ &\rightarrow \coker(S) \rightarrow \coker(TS) \rightarrow \coker(T) \rightarrow 0
\end{align*}
and hence the alternating sum of these six terms is zero in $\Sw^{u}(G)$. \\
Let $J \colon A_{1}= \ker(S)^{\perp} \rightarrow A$ be the inclusion and $P \colon  B \rightarrow \Im(S) = B_{1}$ the orthogonal projection. Both $J$ and $P$ are Fredholm and $PSJ: A_{1} \rightarrow B_{1}$ is invertible and hence has index zero. But then for each equivariant Fredholm operator $S'$ sufficiently close to $S$, also $PS'J$ is invertible since invertible operators form an open subset of all bounded operators. Altogether, it follows
\begin{align*}
&index_{G}(P)+index_{G}(S')+index_{G}(J) = index_{G}(PS'J) = 0 \\& 
index_{G}(P)+index_{G}(S)+index_{G}(J) = index_{G}(PSJ) = 0
\end{align*}
and so $index_{G}(S) = index_{G}(S')$. Hence the equivariant index is locally constant and so homotopic Kasparov modules have the same index. Altogether, we obtain from $\beta$ a map which we will also call $\beta$
\[
\beta \colon KK_{0}^{G}(\IC,\IC) \rightarrow \Sw^{u}(G)
\]
Clearly, $\beta \circ \alpha = \id$. For the other direction, start with an equivariant Kasparov module $(\calh, \rho, F)$. Write $\calh_{0} = \ker(F_{0}) \oplus X$ where the sum is orthogonal, and similarly $\calh_{1} = \im(F_{0}) \oplus A$, where $A \cong \coker(F_{0})$. The restriction of $F_{0}$ to $X$ then yields an isomorphism $X \rightarrow \im(F_{0})$ with inverse $F_{2}$. One can then manufacture a new Kasparov module with $\calh' = X \oplus \im(F_{0})$ and the restrictions of $F_{0}$ and $F_{1}$ which is degenerate, and the original Kasparov module is the direct sum of this degenerate module and $(\ker(F_{0}) \oplus \coker(F_{0}), \rho, 0)$, which means that $\alpha \circ \beta$ is the identity.
\end{proof}

\chapter{The core of the argument}

We have now set up the technical machinery and will dive into the actual proof of the Baum-Connes conjecture. Both the mathematics and the notation is heavily influenced by the proof of the Farrell-Jones conjecture in \cite{FH}.  We begin with some categorical preliminaries.

\section{Tensor products on $\IC$-categories}

Let $\C$ be an additive $\IC$-category. Up to equivalence, we can assume that $\C$ has a strictly associative direct sum: If necessary, replace $\C$ by the category $\C_{\oplus}$ whose objects are finite sequences $(A_{1}, A_{2},\dots, A_{n})$ of objects of $\C$ and where a morphism from a sequence $(A_{1}, \dots, A_{n})$ to $(B_{1}, \dots, B_{m})$ consists of a matrix of morphisms $f_{ij} \colon A_{j} \rightarrow B_{i}$. Composition is matrix multiplication. The category $\C_{\oplus}$ has direct sums by just concatenating two sequences to one sequence, which is strictly associative, and the functor $\C \rightarrow \C_{\oplus}$, $A \mapsto (A)$ is an equivalence of categories with inverse
given by
\[
(A_{1}, \dots A_{n}) \mapsto A_{1} \oplus A_{2} \oplus \dots \oplus A_{n}
\]
So we can assume that $\C$ is strictly associative. \\
Let $\calv$ be the category with objects $\IC^{n}, n \in \IN$ and morphisms the complex linear maps. The category $\calv$ is strict monoidal under the tensor product, which is a special case of the product we will define below. For any strictly associative additive $\IC$-category $\C$, there is a strictly associative paring
\[
- \otimes -: \calv \otimes \C \mapsto \C
\]
defined as follows. On objects,
\[
\IC^{n} \otimes A = A \oplus A \oplus \dots \oplus A (n \text{ summands})
\]
and on morphisms, given $f: \IC^{n} \rightarrow \IC^{k}$ and $\phi: A \rightarrow B$, we let
\[
f \otimes \phi: \IC^{n} \otimes A \rightarrow \IC^{k} \otimes B
\]
be the map whose $(i,j)$-component is $f_{ij}\phi: A \rightarrow B$, where $f_{ij} \in \IC$ and we are using the $\IC$-structure on $\C$ to make sense of $f_{ij}\phi$. It is straightforward to check that this yields a strictly associative pairing, i.e. such that the two possible functors
\[
\calv \times \calv \times \C \rightarrow \C
\]
are equal. 

\section{The strategy of the proof}
Throughout this section, fix a $G$-$C^{*}$-algebra $A$. We will usually suppress $A$ from the (already rather overloaded) notation. Our aim is to prove the Baum-Connes conjecture in some special cases. Recall the following result from \ref{criterion}, together with the notation from \ref{o-notation}:

\begin{prop}
The Baum-Connes conjecture for $G$ with respect to the family $\calf$ is true if and only if
\[
K_* \calo_{*}^{G}(E_{\calf}, G, d_{G}) = 0
\]
\end{prop}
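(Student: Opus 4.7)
The plan is short: this proposition is essentially a restatement of \ref{criterion} with the new notation from \ref{o-notation}, so the proof will simply unwind the definition. By \ref{o-notation}, we have $\calo_{*}^{G}(E_{\calf}, G, d_{G}) = \D_{*}^{G}(E_{\calf})$ by definition, and \ref{criterion} asserts that the Baum-Connes conjecture for $G$ with respect to $\calf$ and coefficients in $A$ holds if and only if $K_{*}\D_{*}^{G}(E_{\calf}) = 0$. Combining these two statements gives the claim.

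For completeness, I would briefly recall the structure of the argument in \ref{criterion}. One considers the commutative square of $C^{*}$-categories induced by the projection $E_{\calf} \to \mathrm{pt}$:
\[
\xymatrix{
\C_{*}^{G}(G \times E_{\calf}) \ar[d] \ar[r] & \D_{*}^{G}(E_{\calf}) \ar[d] \\
\C_{*}^{G}(G) \ar[r] & \D_{*}^{G}(\mathrm{pt})
}
\]
together with the quotient columns to $\D_{*}^{G}(E_{\calf})^{\infty}$ and $\D_{*}^{G}(\mathrm{pt})^{\infty}$. By the identification of the homology theory associated to $\D_{*}^{G}(-)^{\infty}$ with the Davis--L\"uck construction for the $\Or(G)$-spectrum $A_{r}^{G}$ (proved in the section ``Identifying the coefficients''), the rightmost vertical map in the extended diagram is, after $K$-theory, the Baum-Connes assembly map with coefficients. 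The leftmost vertical map induces an isomorphism on $K$-theory because both categories are equivalent to finitely generated free $A \rtimes_{r} G$-modules by \ref{twistedgroup}.

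The remaining observation is that $\D_{*}^{G}(\mathrm{pt}) = \C_{*}^{G}(G \times \mathrm{pt} \times [1,\infty))$ admits an Eilenberg swindle via $(g,t) \mapsto (g, t+1)$, which satisfies the hypotheses of \ref{swindle} because there is no continuous control or object support obstruction at the point factor; hence $K_{*} \D_{*}^{G}(\mathrm{pt}) = 0$. From the long exact sequences in $K$-theory associated to the two rows (using the fiber sequence of \ref{homotopyfiber} applied to the quotient $\D_{*}^{G}(-) \to \D_{*}^{G}(-)^{\infty}$), the middle vertical map is a $K$-theory isomorphism if and only if the right vertical map is. Thus the Baum-Connes assembly map is an isomorphism if and only if $K_{*}\D_{*}^{G}(E_{\calf}) = K_{*}\calo_{*}^{G}(E_{\calf}, G, d_{G}) = 0$, which is the claim.

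There is really no obstacle here beyond bookkeeping; the work has already been done in the identification of the coefficient spectrum with $A_{r}^{G}$ and in verifying the homology-theory axioms for $\D_{*}^{G}(-)^{\infty}$. The purpose of the restatement is purely notational: the notation $\calo_{*}^{G}(E_{\calf}, G, d_{G})$ makes explicit the dependence on the metric $d_{G}$, which will be varied in later chapters.
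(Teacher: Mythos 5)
Your proposal is correct and matches the paper exactly: the proposition is indeed just \ref{criterion} restated in the notation of \ref{o-notation}, and your recap of the underlying argument (the commutative square induced by $E_{\calf} \to \mathrm{pt}$, the $K$-equivalence of the left column via \ref{twistedgroup}, the Eilenberg swindle on $\D_{*}^{G}(\mathrm{pt})$, and the comparison of long exact sequences) is precisely the proof given there. Nothing further is needed.
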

For a metric space $(X,d)$ with a free, proper and isometric $G$-action and a $G$-space $E$, let 
\begin{align*}
p: X \times E \times [1, \infty) &\rightarrow E \times [1,\infty) \\
q: X \times E \times [1, \infty) &\rightarrow X \\
r: X \times E \times [1, \infty) &\rightarrow X \times E
\end{align*}  
be the projections. Generalising the notation $\calo_{*}^{G}(E_{\calf}, G, d_{G})$, we set
\[
\calo_{b}^{G}(E, X,d) = \C_{b}^{G}(X \times E \times [1, \infty); p^{-1}\cale_{cc} \cap q^{-1}\cale_{d}, r^{-1}\calf_{Gc})
\]

Let $(X_{n},d_{n})$ be a sequence of metric spaces equipped with free, proper and isometric $G$-actions. Then we define a subcategory 
\[
\calo^{G}_{b}(E, (X_{n}, d_{n})_{n \in \IN})
\]
of 
\[
\prod\limits_{n=1}^{\infty} \calo_{*}^{G}(E,X_{n},d_{n})
\]
as follows:  Objects are those of the product category $\prod\limits_{n=1}^{\infty} \calo_{*}^{G}(E,X_{n},d_{n})$. A morphism $f$ from $(M_{n})_{n \in \IN}$ to $(N_{n})_{n \in \IN}$ in the product category is in $\calo^{G}_{b}(E, (X_{n}, d_{n})_{n \in \IN})$ if the following conditions hold:
\begin{enumerate}
\item There is $R > 0$ such that each $f_{n}$ is $R$-controlled in the $X_{n}$-direction. Then we say that $f$ is $R$-controlled.
\item The sequence of norms $\left\Vert f_{n} \right\Vert$ is bounded.
\end{enumerate}
This category is a normed category under the norm 
\[
\left\Vert f \right\Vert = \sup\limits_{n \in \IN}  \left\Vert f_{n} \right\Vert
\] 
It is also closed under the involution, since if $f$ is $R$-controlled, so is $f^{*}$, and since $\left\Vert f_{n} \right\Vert = \left\Vert f_{n}^{*} \right\Vert$.  In this way, $\calo^{G}_{b}(E, (X_{n}, d_{n})_{n \in \IN})$ becomes a pre-$C^{*}$-category, and we let $\calo_{*}^{G}(E, (X_{n},d_{n})_{n \in \IN})$ be its completion.

\begin{rem}
Assume we have a $C^{*}$-algebra $A$ which is faithfully represented on a Hilbert space $\calh$. Then the algebraic infinite product  $\prod_{i=1}^{\infty} A$ acts on $\bigoplus_{i=1}^{\infty} \calh$ diagonally. However, this representation of $\prod_{i=1}^{\infty} A$ does not land in the bounded operators on $\oplus_{i=1}^{\infty} \calh$; a sequence $(a_{i})_{i \in \IN}$ is sent to a bounded operator if and only if $\left\Vert a_{i} \right\Vert$ is bounded. When imposing this additional condition, it is clear that the resulting subalgebra of $\prod_{i=1}^{\infty} A$ is a $C^{*}$-algebra again. The above is the categorical analogue of this. 
\end{rem}

\begin{rem}
\label{productcoarse}
There is another way to think about the categories 
\[
\calo^{G}_{b}(E, (X_{n}, d_{n})_{n \in \IN})
\]
Namely, consider the $G$-space $Y = \coprod_{n \in \IN} E \times [1, \infty) \times X_{n}$ and equip it with a coarse structure $(\cale, \calf)$ as follows: A set $F \subset Y$ is in $\calf$ if and only if for each $n$, the intersection $F \cap  E \times  [1, \infty) \times X_{n}$  is controlled in  $E \times  [1, \infty) \times X_{n}$, and a set $D \subset Y \times Y$ is in $\cale$ if and only if, whenever $(x,y) \in D$, we have $x, y \in  E \times  [1, \infty) \times X_{n}$ for the same $n$, i.e. morphisms are not allowed to connect different components, and such that the set
\[
\{(x,y)\in D \mid x \in  E \times [1, \infty) \times X_{n} \} \subset  (E \times  [1, \infty) \times X_{n}) \times ( E \times  [1, \infty) \times X_{n})
\]
is controlled in $E \times [1, \infty) \times X_{n}$ for each $n$ and additionally, such that there is an $r > 0$ (independent of $n$) such that, whenever $(x = (e,z,t),y = (e',z',t')) \in \cale$, we have $d(z,z')<r$. It is not hard to see that 
\[
\calo^{G}_{b}(E, (X_{n}, d_{n})_{n \in \IN}) =  \C_{b}^{G}(Y; \cale, \calf)
\]
and
\[
\calo^{G}_{*}(E, (X_{n}, d_{n})_{n \in \IN}) =  \C_{*}^{G}(Y; \cale, \calf)
\]
\end{rem}

\section{The diagram}

Recall the definition of a Farrell-Hsiang group.
 
\begin{definition}
Let $G$ be a group with a word metric $d_{G}$ and $\calf$ a family of subgroups of $G$.
We say that $G$ is a Farrell-Hsiang group with respect to $\calf$ if the following condition is satisfied: \\
There exists a natural number $N$ such that for each $n \in \IN$ there is a surjective group homomorphism $\alpha_{n}: G \rightarrow F_{n}$ with $F_{n}$ finite such that for each elementary subgroup $C \subset F_{n}$ and $H = \alpha_{n}^{-1}(C) \subset G$, there is a simplicial complex $E_{H}$ of dimension at most $N$ and a simplicial $H$-action with isotropy in $\calf$, and an $H$-equivariant map $f_{H}: G \rightarrow E_{H}$ such that
\[
d^{1}_{E_{H}}(f_{H}(g), f_{H}(h)) \leq \frac{1}{n}
\] 
for all $g,h \in G$ with $d_{G}(g,h) \leq n$, where $d^{1}_{E_{H}}$ is the $l^{1}$-metric on $E_{H}$ and $H$ acts by left multiplication on $G$.
\end{definition}
 
 \begin{rem}
 Strictly speaking, this definition is not good. We should define a Farrell-Hsiang group to be a group $G$, together with a collection of finite-index subgroups $H_{i}$ such that the map induced by induction
 \[
 \bigoplus_{i} \Sw^{u}(H_{i}) \rightarrow \Sw^{u}(G)
 \]
 is onto, and such that for each $H_{i}$ there is a simplicial complex $E_{H_{i}}$ as in the above definition. The groups $F_{n}$ are only used to get a hand on such a collection of subgroups. However, whenever we are in such a situation, i.e. have a collection of finite-index subgroups $H_{i}$ such that $\bigoplus_{i} \Sw^{u}(H_{i}) \rightarrow \Sw^{u}(G)$ is onto, there is a finite quotient $F$ of $G$ such that all $H_{i}$ are preimages of subgroups of $F$: If $1 \in \Sw^{u}(G)$ is in the image, it has to come from finitely many of the $H_{i}$, say $H_{1}, \dots, H_{n}$. Then $\bigcap_{i=1}^{n} H_{i}$ still has finite index in $G$ and hence has a finite-index subgroup $K$ which is normal in $G$. Now set $F = G/K$. So all potentially interesting induction results have to arise in finite groups, though not necessarily from the Brauer induction theorem.
 \end{rem}

Let $G$ be a Farrell-Hsiang group with respect to the family $\calf$. 
Let $\calh_{n}$ be the family of subgroups of $G$ obtained by pulling back the family of elementary subgroups of $F_{n}$.  We define pseudo-metric spaces $X_{n}, S_{n}$ as

\begin{align*}
X_{n} &= G \times \coprod_{H \in \calh_{n}} \ind_{H}^{G} E_{H} \\
S_{n} &=   \coprod_{H \in \calh_{n}} G \times G/H
\end{align*}

with metrics

\begin{align*}
d_{X_{n}}((g,x),(h,y)) &= d_{G}(g,h)+n \cdot d^{1} (x,y)\\
d_{S_{n}}((g,aH), (h,bK)) &= \begin{cases} 
d_{G}(g,h) \text{ if } K=H,aH = bK \\
\infty \text{ else }
\end{cases}
\end{align*}
where $d^{1}$ denotes the $l^{1}$-metric. Recall that the $l^{1}$-metric on the realization of an abstract simplicial complex $X$ is defined as follows: If $M = (m_{i})_{i \in I}$ is the set of $0$-simplices of $X$, let $x = \sum a_{i} m_{i}$ and $y = \sum b_{i} m_{i}$ be two points in 
the realization of $X$ and set
\[
d^{1}(x,y) = \sum\limits_{i \in I} \left\vert a_{i}-b_{i} \right\vert
\]
The reader should note that the metric on $X_{n}$ is basically scaled by a factor of $n$, i.e. it is quite hard to be controlled over $X_{n}$ since the metric is so large. The proof of the Baum-Connes conjecture, relative to the family $\calf$, for Farrell-Hsiang groups is now organized around the following diagram, which is our analogue of \cite[Diagram 4.1]{FH}:

\begin{equation}
\label{core}
\xymatrix{
& & \bigoplus_{n \in \IN} \calo^{G}_{*}(E,X_{n}, d_{X_{n}}) \ar[d]^{{\text{inc}}} \\
\calo^{G}_{*}(E, (S_{n}, d_{S_{n}})_{n \in \IN}) \ar[dd]_{P_{k}} \ar[rr]^{F} & & \calo_{*}^{G}(E,(X_{n},d_{X_{n}})_{n \in \IN}) \ar[dd]^{{Q_{k}}} \\
\\
\calo^{G}_{*}(E,G,d_{G}) \ar[rr]^{\id} & &  \calo^{G}_{*}(E,G,d_{G})
}
\end{equation}

Some explanations are in order. The functor $P_{k}$ is obtained by projecting $\calo^{G}_{*}(E, (S_{n}, d_{S_{n}})_{n \in \IN})$ onto its $k$-th factor $\calo^{G}_{*}(E,S_{k}, d_{S_{k}})$ and then using the projection $S_{k} \rightarrow G$, and similarly for $Q_{k}$. The functor $F$ will be constructed in \ref{horizontal} such that the diagram is commutative; here the contracting properties of the maps $G \rightarrow E_{H}$ occuring in the definition of a Farrell-Hsiang group are crucial because the metrics on the right-hand scale are extremely large compared to those on the left-hand side, so to get a well-defined map, we need to contract distances. We will see  
\begin{itemize}
\item[(i)] in \ref{stability} that $\text{inc}$ induces an isomorphism in $K$-theory, and
\item[(ii)] in \ref{surjectivity}. that for each $a \in K_{*}\calo^{G}_{*}(E,G,d_{G})$, there is an $a' \in K_{*} \calo^{G}_{*}(E, (S_{n}, d_{S_{n}})_{n \in \IN})$ such that for all $k$, we have $(P_{k})_{*}(a') = a$. Note that this is stronger than $(P_{k})_{*}$ being onto: $a'$ is independent of $k$.
\end{itemize}
Assuming these facts for the moment, we can prove the Baum-Connes conjecture for $G$:

\begin{thm}
Assuming the above two facts, the topological $K$-theory of  \\ $\calo^{G}_{*}(E,G,d_{G})$ is trivial.
\end{thm}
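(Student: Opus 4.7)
The plan is a direct diagram chase using the two facts. Given an arbitrary class $a \in K_* \calo^G_*(E,G,d_G)$, the goal is to prove $a = 0$. First, apply fact (ii) to lift $a$ to a class $a' \in K_* \calo^G_*(E,(S_n,d_{S_n})_{n \in \IN})$ satisfying $(P_k)_*(a') = a$ for every $k \in \IN$ simultaneously. Push forward along $F$ to obtain $F_*(a') \in K_* \calo^G_*(E,(X_n,d_{X_n})_{n \in \IN})$; by commutativity of diagram \ref{core} we then have $(Q_k)_*(F_*(a')) = (P_k)_*(a') = a$ for all $k$.

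Next invoke fact (i): the inclusion $\text{inc}$ is a $K$-equivalence, so $F_*(a')$ admits a preimage $b \in K_* \bigoplus_{n \in \IN} \calo^G_*(E,X_n)$. Since $\bigoplus_{n \in \IN} \calo^G_*(E,X_n)$ is the filtered union of the finite partial sums $\bigoplus_{n \leq N} \calo^G_*(E,X_n)$, the continuity of $K$-theory (\ref{unions}) together with additivity on finite direct sums yields a decomposition $K_* \bigoplus_{n \in \IN} \calo^G_*(E,X_n) \cong \bigoplus_{n \in \IN} K_* \calo^G_*(E,X_n)$. Consequently $b$ has finite support: there exists $N$ with $b_n = 0$ for all $n > N$.

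To finish, pick any $k > N$. The composite $Q_k \circ \text{inc}$ first projects onto the $k$-th factor of the product and then maps this factor down to $\calo^G_*(E,G,d_G)$, so it annihilates any class whose $k$-th component vanishes. In particular $(Q_k)_*(\text{inc}_*(b)) = 0$. On the other hand, $\text{inc}_*(b) = F_*(a')$ and we already established $(Q_k)_*(F_*(a')) = a$. Combining gives $a = 0$, as required. No genuine obstacle arises in the theorem itself; the entire mathematical content has been concentrated in facts (i) and (ii), whose verifications (the forthcoming stability and surjectivity results) are where the actual difficulty lies, and where the Farrell--Hsiang hypothesis together with Brauer induction will have to enter.
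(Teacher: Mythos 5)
Your argument is correct and is essentially identical to the paper's own proof: lift $a$ via the transfer, pull $F_*(a')$ back through the $K$-equivalence $\mathrm{inc}$, observe that the resulting class in the direct sum has finite support, and project onto a factor beyond that support to conclude $a=0$. The only cosmetic difference is that you justify the finite-support step via continuity of $K$-theory for filtered unions, whereas the paper notes directly that any $K$-theory class is represented by a morphism living in finitely many summands; both are fine.
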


\begin{proof}
Pick $a \in K_{*}\calo^{G}_{*}(E,G,d_{G})$. We find $a' \in K_{*} \calo^{G}_{*}(E, (S_{n}, d_{S_{n}})_{n \in \IN})$ such that $(P_{k})_{*}(a') = a$ for all $k$. Since $K_{*}(\text{inc})$ is an isomorphism, we find $b \in K_{*} (\bigoplus_{n \in \IN} \calo^{G}_{*}(E,X_{n}, d_{X_{n}}))$ with $\text{inc}_{*}(b) = F_{*}(a')$. There is a $k \in \IN$ such that $b$ comes from $K_{*}( \bigoplus_{n=1}^{k} \calo^{G}_{*}(E,X_{n}, d_{X_{n}}))$ since any element in the topological $K$-theory is represented by an actual morphism in the category. But now $Q_{k+1} \circ \text{inc}_{*}(b) = 0$, hence we have $Q_{k+1}(F_{*}(a')) = 0$. By commutativity, this implies $P_{k+1}(a') = 0$. Since $a = P_{k+1}(a')$, it follows $a = 0$, which is what we wanted to prove.
\end{proof}

\noindent The next two chapters are devoted to the proofs of the facts (i) and (ii).

\pagebreak
 
\chapter{The transfer}

In this section, we construct a single section which is a right inverse for all the maps
\[
K_{*} P_{k}: K_{*} \calo^{G}_{*}(E, (S_{n}, d_{S_{n}})_{n \in \IN}) \rightarrow K_{* }\calo^{G}_{*}(E,G,d_{G})
\]
from section $6$. The construction is the topological $K$-theory analogue of the construction in \cite{FH}. 

\section{Swan group actions on controlled categories}
As a warm-up, we want to define a functor
\[
\tr \colon \Mod_{(\IC,G)} \times \calo_{*}^{G}(E,G,d_{G}) \rightarrow \calo_{*}^{G}(E,G,d_{G})
\]
which tries to mimic the Swan group action on $K_*^{\alg}(AG)$. This will be the functor obtained by completion of the functor
\[
\tr \colon \Mod_{(\IC,G)} \times \calo_{b}^{G}(E,G,d_{G}) \rightarrow  \calo_{b}^{G}(E,G,d_{G})
\]
defined as follows. Let $M$  be an object of $\calo_{b}^{G}(E,G,d_{G})$. Let $S = (\IC^{n}, \rho)$ and $T = (\IC^{k}, \tau)$ be objects of $\Mod_{(\IC,G)}$ and $f \colon S \rightarrow T$. On objects, $\tr$ is given by
\[
(\tr(S,M))_{z} = \IC^{n} \otimes M_{z}
\]
and on morphisms, we define
\begin{equation}
\label{defoftransfer}
(\tr(f, \phi))_{z,z'} = (f \circ \rho(g^{-1}g')) \otimes \phi_{z,z'}
\end{equation}
for $z = (g,e,t)$, $z' = (g',e',t')$. Recall that $\phi_{z,z'}$ is the component of $\phi$ mapping from $z'$ to $z$.  \\
Let us say a few words on the motivation for this formula. Since $T(M)$ is a right $A \rtimes G$-module, we have to view a representation $(S, \rho)$ as a right $G$-module via
\[
vg = \rho(g^{-1})v
\]  
Recall that in \ref{action}, we defined a biexact functor
\[
\Mod_{(\IC,G)} \times \calp(AG) \rightarrow \calp(AG)
\]
via the tensor product with the diagonal $G$-action from the right. It turned out that there is an isomorphism, depending on the choice of a basis,
\[
S \otimes_{\Delta} (AG)^{k} \rightarrow S \otimes_{r} (AG)^{k}
\]
So when we restrict to a suitable category of $AG$-modules, say the category with objects only $(AG)^{n}$, which come with a preferred basis, we could define a biexact pairing via
\[
(S,M) \rightarrow S \otimes_{r} M
\]
on objects and push the $G$-action on $S$ entirely into the morphisms: For $f \colon S \rightarrow T$ a morphism in $\Mod_{(\IC,G)}$ and $\phi \colon (AG)^{n} \rightarrow (AG)^{k}$, we define a morphism $S \otimes_{r} (AG)^{n} \rightarrow T \otimes_{r} (AG)^{k}$ as the unique morphism which makes the diagram
\[
\xymatrix{
S \otimes_{r} (AG)^{n} \ar[d]^{\cong} \ar[r] & T \otimes_{r} (AG)^{k} \ar[d]^{\cong} \\
S \otimes_{\Delta} (AG)^{n} \ar[r]^{f \otimes \phi}  & T \otimes_{\Delta} (AG)^{k}
}
\]
commutative. In our controlled setup, we can view an object $M$ of $ \calo_{*}^{G}(E,G,d_{G})$ -- or rather, its total object -- as a  free right $A \rtimes G$-module with a preferred basis, thanks to the $G$-factor in the coarse space.Then we form for $\phi: M \rightarrow N$, $f: S \rightarrow T$ the map $S \otimes_{r}T(M) \rightarrow T \otimes_{r} T(N)$ as just described, where we have to view $S$ and $T$ as right $G$-modules via $vg = \rho(g^{-1})v$. The objects $S \otimes_{r} T(M)$ and $T \otimes_{r} T(N)$ make sense as objects of $ \calo_{*}^{G}(E,G,d_{G})$, basically just as direct sums of copies of $M$ respectively $N$, and we can write out the components of the map $S \otimes_{r}T(M) \rightarrow T \otimes_{r} T(N)$. The result is the above formula \ref{defoftransfer} for $\tr$. \\
More precisely,  the map
\begin{align*}
S \otimes_{r} T(M) &\rightarrow S \otimes_{\Delta} T(M) \\
v \otimes e_{(g',e',t'),n} &\mapsto vg'^{-1} \otimes e_{(g',e',t'),n}
\end{align*}
is an isomorphism with inverse
\[
v \otimes e_{(g',e',t'),n} \mapsto vg' \otimes e_{(g',e',t'),n}
\]
When checking these maps are indeed $G$-equivariant, remember that $G$ acts by $e_{(g',e',t'),n} \cdot g = e_{(g^{-1}g',g^{-1}e',t'),n}$ and not by right multiplication on $G$. This is also the reason the formulas differ from the ones in the proof of \ref{action} by some inverses.\\
When we start with $v \otimes e_{(g',e',t'),n} \in S \otimes_{r} T(M)$, it is mapped to $vg'^{-1} \otimes e_{(g',e',t'),n} \in S \otimes_{\Delta} T(M)$, which in turn is mapped to
\[
\sum\limits_{(g,e,t)} f(v)g'^{-1} \otimes \phi_{(g,e,t),(g',e',t')} e_{(g',e',t'),n} \in T \otimes_{\Delta} T(N)
\]
under $f \otimes \phi$. Mapping back to $T \otimes_{r} T(N)$ and projecting down onto the part of $T \otimes_{r} T(N)$ supported over a single point $(g,e,t)$ - which just means forgetting all but one summands in the sum - we obtain
\[
f(v)g'^{-1}g \otimes \phi_{(g,e,t),(g',e',t')}(e_{(g',e',t'),n})
\]
Since $f(v)g'^{-1}g = f(\rho(g^{-1}g')(v))$, this is precisely 
\[
\tr(f,\phi)_{(g,e,t),(g',e',t')}(v \otimes e_{(g',e',t'),n})
\]
as claimed.

The following proposition shows that $\tr(f, \phi)$ is indeed a morphism of $\calo_{b}^{G}(E,G,d_{G})$.

\begin{prop}
The functor $\tr$ is well-defined, continuous, compatible with involutions in the sense that $\tr(S,-)$ is a $*$-functor, biexact and extends to a biexact functor 
\[
\tr \colon \Mod_{(\IC,G)} \times \calo_{*}^{G}(E,G,d_{G}) \rightarrow \calo_{*}^{G}(E,G,d_{G})
\]
\end{prop}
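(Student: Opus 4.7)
The plan is to check each property by direct computation from formula (\ref{defoftransfer}), with the boundedness estimate being the key technical step. First, functoriality of $\tr$ follows immediately: bilinearity in $\phi$ and the homomorphism property of $\rho$ give $\tr(g\circ f, \psi\circ\phi) = \tr(g,\psi)\circ\tr(f,\phi)$, and $\tr(\id,\id)=\id$. The $G$-equivariance of $\tr(f,\phi)$ is automatic since the first-coordinate $G$-action $(a,a')\mapsto(ha,ha')$ fixes $a^{-1}a'$, while $h(\phi_{z,z'}) = \phi_{hz,hz'}$ by $G$-invariance of $\phi$, and the $\IC$-factor $f\circ\rho(a^{-1}a')$ carries no $A$-entries. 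The control conditions are inherited verbatim from $\phi$, because $\tr(f,\phi)_{z,z'}$ vanishes whenever $\phi_{z,z'}$ does, so $\supp\tr(f,\phi)\subset\supp\phi$.

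The main obstacle is boundedness of $T(\tr(f,\phi))$. The strategy is to identify $T(\tr(S,M))$ with the Hilbert $A$-module $\IC^{n}\otimes_{\IC}T(M)$ and to straighten out the twist $\rho(a^{-1}a')$ by conjugation with a unitary. Concretely, let $U_{S}\colon\IC^{n}\otimes T(M)\to\IC^{n}\otimes T(M)$ act on the summand indexed by $z'=(a',e',t')$ as $\rho(a')\otimes\id_{M_{z'}}$, and similarly define $U_{T}$ for $T\otimes T(N)$; both are unitary since $\rho$ and $\tau$ are unitary. Using the $G$-equivariance relation $f\circ\rho(h) = \tau(h)\circ f$, a direct calculation on basis elements shows
\[
T(\tr(f,\phi)) \;=\; U_{T}^{-1}\circ(f\otimes T(\phi))\circ U_{S}.
\]
This yields the estimate $\|\tr(f,\phi)\|\leq\|f\|\cdot\|\phi\|$, which simultaneously proves boundedness, continuity in both variables, and the existence of an extension to the completion $\calo_{*}^{G}(E,G,d_{G})$.

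For compatibility with the involution, one computes both $\tr(f^{*},\phi^{*})_{z,z'}$ and $(\tr(f,\phi)^{*})_{z,z'} = (\tr(f,\phi)_{z',z})^{*}$ from (\ref{defoftransfer}), using $\rho(h)^{*}=\rho(h^{-1})$ and the $G$-equivariance $f^{*}\circ\tau(h)=\rho(h)\circ f^{*}$; both sides collapse to $(\rho(a^{-1}a')\circ f^{*})\otimes\phi^{*}_{z,z'}$. Finally, for biexactness, any exact sequence $0\to R\to S\to T\to 0$ in $\Mod_{(\IC,G)}$ splits unitarily (take the orthogonal complement of the image of $R$, which is $G$-invariant since $\rho$ is unitary). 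Fiberwise this gives a canonical unitary isomorphism $\tr(S,M)_{z}\cong\tr(R,M)_{z}\oplus\tr(T,M)_{z}$ that is natural in $z$ and compatible with the maps, yielding a split exact sequence in $\calo_{*}^{G}(E,G,d_{G})$. Since each $\tr(S,-)$ is a $C^{*}$-functor by the boundedness estimate, this verifies $C^{*}$-biexactness.
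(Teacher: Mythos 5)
Your proposal is correct and follows essentially the same route as the paper: the support observation $\supp\tr(f,\phi)\subset\supp\phi$ for the control conditions, the unitary conjugation $T(\tr(f,\phi)) = U_T^{-1}\circ(f\otimes T(\phi))\circ U_S$ (the paper phrases the same unitary as the comparison $\IC^n\otimes_r T(M)\cong\IC^n\otimes_\Delta T(M)$, $v\otimes e_{(g,e,t),k}\mapsto vg^{-1}\otimes e_{(g,e,t),k}$) to get $\|\tr(f,\phi)\|\leq\|f\|\,\|\phi\|$, and the split exactness of $\Mod_{(\IC,G)}$ coming from unitarity for biexactness. No gaps.
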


\begin{proof}
The control conditions are clearly satisfied since $\supp(\tr(f,\phi)) = \supp(\phi)$. Biexactness is clear since both categories carry the split exact structure. Alternatively, one can
argue as follows. We have to see that for a short exact sequence
\[
0 \rightarrow R \rightarrow S \rightarrow T \rightarrow 0
\]
in $\Mod_{(\IC,G)}$ and for $M \in \calo_{*}^{G}(E,G,d_{G})$, the sequence
\[
0 \rightarrow \tr(R,M) \rightarrow \tr(S,M) \rightarrow \tr(T,M) \rightarrow 0
\]
is split exact. For this, pick a section $s \colon T \rightarrow S$ of the projection $p:S \rightarrow T$ as $\IC$-modules, i.e. $s$ is not necessarily $G$-equivariant. Then the map $s': \tr(T,M) \rightarrow \tr(S,M)$ with components
\[
s'_{z,z'} = \begin{cases}  s \otimes \id_{M_{z}} \text{ if } z = z' \\ 0 \text{ else }  \end{cases}
\]
is a section for $\tr(p,M)$. Recall that $\tr$ is essentially rewriting $- \otimes_{\Delta} - $ in terms of $- \otimes_{r} - $, and it is clear that $s$, even if it is not equivariant, is a good candidate for a section in the $ - \otimes_{r} -$-picture since the $G$-actions on the $(\IC,G)$-modules do not play a role anyway on objects. \\
Since $T(s') = s \otimes \Id \colon T \otimes T(M) \rightarrow S \otimes T(M)$, $T(s')$ is clearly bounded and adjointable, and so $s'$ is indeed an allowed morphism in the category $\calo_{*}^{G}(E,G,d_{G})$.\\
It is straightforward to check that for a fixed object $S$  of $\Mod_{(\IC,G)}$, the functor $\tr(S,-)$ is a $^{*}$-functor. \\
To check that $\tr(f, \phi)$ is again bounded and adjointable, we have to understand what happens on the total objects.  
Let $M$ be an object of $\calo_{b}^{G}(E,G,d_{G})$. Then we can view $T(M) = \bigoplus_{x \in G \times E \times [1,\infty)} M_x$ as a right $A \rtimes G$-module as in \ref{twistedgroup}.
Given an object $S = (\IC^{n}, \rho)$ of $\Mod_{(\IC, G)}$, we view $S$ as a right $G$-module via $vg = \rho(g^{-1})(v)$ and we can consider $T(\tr(S,M)) \cong \IC^{n} \otimes_{\IC} T(M)$ as a right $A \rtimes G$-module in two different ways: we can let $G$ act diagonally or only on the right-hand side. As usual, we write $\IC^{n} \otimes_{\Delta} T(M)$ for the tensor product with the diagonal action and $\IC^{n} \otimes_{r} T(M)$ with the $G$-action only on $T(M)$.  These two modules are isomorphic as right $A \rtimes G$-Modules via the isomorphism
\begin{align*}
U: \IC^{n} \otimes_{r} T(M) &\rightarrow \IC^{n} \otimes_{\Delta} T(M) \\
v \otimes e_{(g,e,t),k} &\mapsto vg^{-1} \otimes e_{(g,e,t),k}
\end{align*}
and extending $A$-linearly, where we use the notation from \ref{controlcat} (viii). 
Since $\rho$ is a unitary representation, $U$ is a unitary operator: Pick an orthonormal basis $v_{i}$ of $\IC^{n}$. Then 
\[
(v_{i} \otimes e_{(g,e,t),k})_{(g,e,t) \in G \times E \times [1, \infty) ,k,i}
\]
 is an orthonormal basis for $\IC^{n} \otimes T(M)$ as a right Hilbert $A$-module, and $U$ sends this basis to another orthonormal basis since $\rho(g)$ sends $v_{i}$ to an orthonormal basis. This is the reason we had to impose the unitary condition on our representations.
 \\ Now for morphisms $f: S = (\IC^{n}, \rho) \rightarrow T = (\IC^{k}, \tau)$ in $\Mod_{(\IC,G)}$ and $\phi: M \rightarrow N$ in $\calo_{b}^{G}(E,G,d_{G})$, we consider the diagram
\[
\xymatrix{
\IC^{n} \otimes_{r} T(M) \ar[rr]^{T(\tr(f,\phi))} \ar[dd]_{U} & & \IC^{k} \otimes_{r} T(N) \ar[dd]^{U} \\
\\
\IC^{n} \otimes_{\Delta} T(M) \ar[rr]^{f \otimes \phi}  & & \IC^{k} \otimes_{\Delta} T(N)\\
}
\]
The definition of the transfer functor is precisely such that this diagram commutes. Since both vertical maps are unitary operators, the norms of the top and bottom horizontal operators are equal. For the norm of the lower horizontal map, we have $\left\Vert f \otimes \phi \right\Vert \leq  \left\Vert f \right\Vert \left\Vert \phi \right\Vert$. Hence the map
\[
\tr(f,-) \colon \Hom_{\calo_{b}^{G}(E,G,d_{G})}(M,N) \rightarrow \Hom_{\calo_{b}^{G}(E,G,d_{G})}(\tr(T,M), \tr(S,M))
\]
indeed takes values in $ \Hom_{\calo_{b}^{G}(E,G,d_{G})}(\tr(T,M), \tr(S,M))$ and is boun\-ded by $\left\Vert f \right\Vert$. Since $f \otimes \phi$ is adjointable, so is $T(\tr(f,\phi))$. So we can complete this map to obtain
\[
\tr(f,-) \colon  \Hom_{\calo_{*}^{G}(E,G,d_{G})}(M,N) \rightarrow \Hom_{\calo_{*}^{G}(E,G,d_{G})}(\tr(T,M), \tr(S,M))
\]
In this way, we obtain a functor 
\[
\tr \colon \Mod_{(\IC,G)} \times \calo_{*}^{G}(E,G,d_{G}) \rightarrow \calo_{*}^{G}(E,G,d_{G})
\]
as desired.  Exactness in the second variable is automatic since the exact structure is the split exact one and $\tr(S,-)$ is still an additive functor as the completion of an additive functor.  Exactness in the first variable is proven as above. \fxnote{Check}
 \end{proof}
 
 \begin{rem}
 \label{choice}
The definition here relies on the fact that our space has a factor $G$, so we have a canonical way to identify each orbit of the $G$-action with $G$. In general, if we tried to get the same definition working with some arbitrary free $G$-space $X$, we would have to choose isomorphisms of each orbit with $G$, which we cannot do in a coherent way in general. Basically we exploit that we have a canonically chosen basis of $T(M)$ as $A \rtimes G$-module for any object $M$. 
 \end{rem}

Now let $H \subset G$ be a finite-index subgroup of $G$ -- for example one of the subgroups pulled back from a subgroup of the finite group $F_{n}$ under the map $G \rightarrow F_{n}$ appearing in the definition of a Farrell-Hsiang group. We obtain a functor
 \[
 \tr \circ \ind_{H}^{G} \colon \Mod_{(\IC,H)} \times  \calo_{*}^{G}(E,G,d_{G}) \rightarrow  \calo_{*}^{G}(E,G,d_{G})
 \]
 by first inducing up from $\Mod_{(\IC,H)}$ to $\Mod_{(\IC,G)}$ and then applying $\tr$.  Now we want to lift this functor to a functor
 \[
 \tr_{H} \colon \Mod_{(\IC,H)} \times  \calo_{*}^{G}(E,G,d_{G}) \rightarrow  \calo_{*}^{G}(E,G \times G/H,d)
 \]
 where $G \times G/H$ is equipped with the diagonal $G$-action and the quasimetric $d$ is given by
 \[
 d((g,aH), (h,bH)) = \begin{cases} d_{G}(g,h) \text { if } aH = bH \\
 \infty \text{ else }
 \end{cases}
 \]
 More precisely, let $p \colon G \times G/H \rightarrow G$ be the projection. We want to define $\tr_{H}$ such that the following diagram commutes up to equivalence:
 \[
 \xymatrix{
 & & &  \calo_{*}^{G}(E,G \times G/H,d) \ar[d]^{p} \\
 \Mod_{(\IC,H)} \times  \calo_{*}^{G}(E,G,d_{G}) \ar[rrru]^{\tr_{H}} \ar[rrr]^{ \tr \circ \ind_{H}^{G}} & & &  \calo_{*}^{G}(E,G,d_{G}) \\
 }
 \]
\noindent To write down $\tr_{H}$, it is more convenient to consider $G \times_{l} G/H$ where $G$ only acts on the left factor. This is $G$-homeomorphic to $G \times_{\Delta} G/H$ via the map $G \times_{\Delta} G/H \rightarrow G \times_{l} G/H, (g,aH) \mapsto (g,g^{-1}aH)$. Under this isomorphism, the above metric becomes the metric
\[
 d_{l}((g,aH), (h,bH)) = \begin{cases} d_{G}(g,h) \text { if } gaH = hbH \\
 \infty \text{ else }
 \end{cases}
\]
The motivation for the definition of $\tr_{H}$ is as follows: If $T \in \Mod_{(\IC,H)}$ and $P$ is a $\IC G$-Module, we have the Frobenius reciprocity formula
\[
(\ind_{H}^{G} T) \otimes_{\Delta} P \cong \ind_{H}^{G}(T \otimes_{\Delta} \res_{H}^{G} P)
\]
The left-hand side is analogous to $\tr \circ \ind_{H}^{G}$, and we will model the definition of $\tr_{H}$ on the right-hand side.
The formula actually nearly makes sense in our controlled context: We can restrict $G \times E \times [1,\infty)$ to an $H$-space, which induces a functor 
\[
\calo_{*}^{G}(E,G,d_{G}) \rightarrow  \calo_{*}^{H}(E,G,d_{G})
\]
then tensor with $T$, then induce up from $H$ to $G$. However, in the last step of inducing up, we also have to induce up the underlying space of our controlled categories, whence the additional factor $G/H$. Also,  when we want to let $T$ act on $\calo_{*}^{H}(E,G,d_{G})$, we have to explicitly identify the $H$-orbits with $H$, compare \ref{choice}. The easiest way to do this is to identify $G$ with $H \times G/H$ as $H$-spaces. So let $g_{1}, \dots g_{l}$ be a complete set of representatives of $G/H$ and define an $H$-isomorphism
\begin{align*}
\phi:  G &\rightarrow H \times G/H \\
g &\mapsto (gg_{i},g_{i}H)
\end{align*}
 where $i$ is the unique $i$ such that $g^{-1}H = g_{i}H$.
Breaking down what the constructions in the Frobenius reciprocity formula mean on total objects, the design criterion for $\tr_{H}$ is now as follows: For objects $M,N$ of  $\calo_{*}^{G}(E,G,d_{G})$, $\phi: M \rightarrow N$ and $S,T \in \Mod_{(\IC,H)}, f: S \rightarrow T$, we want the following diagram to commute:
\begin{equation}
\label{boundeddiagram}
\xymatrix{
T(\tr_{H}(T,M)) = (\ind_{H}^{G} S) \otimes_{r} T(M) \ar[dd]^{\cong} \ar[rr]^{T(\tr_{H}(f,\phi))} & & T(\tr_{H}(S,N)) = (\ind_{H}^{G} T) \otimes_{r} T(N) \ar[dd]^{\cong} \\
\\
(\ind_{H}^{G} S) \otimes_{\Delta} T(M) \ar[dd]^{\cong} \ar[rr]^{(\ind_{H}^{G} f) \otimes \phi} & &  (\ind_{H}^{G} T) \otimes_{\Delta} T(N) \ar[dd]^{\cong}  \\
\\
\ind_{H}^{G}(S \otimes_{\Delta} \res_{H}^{G} T(M)) \ar[rr]^{\ind_{H}^{G}(f \otimes \phi)} & & \ind_{H}^{G}(T \otimes_{\Delta} \res_{H}^{G} T(N))
}
\\
\end{equation}
Recall that the choice of the $g_{i}$ enters into the coordinate description of $\ind_{H}^{G} S$ and hence into this diagram.
This leads to the following definition: On objects, we define $\tr_{H}(T,M)$ for $T = (\IC^{n}, \rho)$ as
\[
\tr_{H}(T,M)_{(g,aH,e,t)} = \IC^{n} \otimes M_{(g,e,t)}
\]
For a morphism $\phi: M \rightarrow N$ in  $\calo_{b}^{G}(E,G,d_{G})$ and $f: T \rightarrow S$, we define
\[
\tr_{H}(f, \phi)_{(g,g_{j}H,e,t), (g',g_{i}H,e',t')} = \begin{cases} f \circ \rho(g_{j}^{-1}g^{-1}g'g_{i}) \otimes \phi_{(g,e,t),(g',e',t')}  \\  \hspace{2.7 cm} \text{ if }  g_{j}^{-1}g^{-1}g'g_{i} \in H \\
0 \text{ \hspace{2.7 cm} else } \end{cases}
\]

\begin{rem}
The reader should note that the corresponding definition in the published version of \cite{FH} is not correct and correspondingly looks very different. This is fixed in the version available on the homepage of the authors, where the formula still looks slightly different due to the fact that we use $G \times_l G/H$ and not $G \times_{\Delta} G/H$.
\end{rem}

\begin{lem}
\label{surjectivity}
The functor $\tr_{H}$ is well-defined, continuous, compatible with the involutions and extends to the completions, i.e. we obtain a functor
 \[
 \tr_{H}: \Mod_{(\IC,H)} \otimes  \calo_{*}^{G}(E,G,d_{G}) \rightarrow  \calo_{*}^{G}(E,G \times_{l} G/H,d_l)
 \] 
Furthermore, if we let $P: G \times_{l} G/H \rightarrow G$ be the canonical projection, the functors $\tr \circ \ind_H^G$ and $P \circ \tr_{H}$ are equivalent.
\end{lem}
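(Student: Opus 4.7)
The plan is to mirror the proof given for $\tr$, with the design-criterion diagram (\ref{boundeddiagram}) playing the same role that the commutative square with $U$ played before. For well-definedness, on objects $\supp(\tr_H(T,M)) = \supp(M)\times G/H$ is locally finite (using $[G:H]<\infty$) and its image in $G/H\times E$ is contained in the product of $G/H$ with the projection of $\supp(M)$ to $E$, which is contained in a $G$-compact set for the $G\times_l$-action. On morphisms, the nonvanishing condition $g_j^{-1}g^{-1}g'g_i\in H$ is equivalent to $gg_jH = g'g_iH$, so the support of $\tr_H(f,\phi)$ is contained in single $G$-orbits of $G\times_l G/H$, and on those orbits $d_l$ reduces to $d_G$. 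Metric control, continuous control over $E\times[1,\infty)$ and $G$-invariance are then inherited directly from $\phi$, and functoriality is a routine calculation from the formula.

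For continuity and involution compatibility, I first observe that both vertical isomorphisms in (\ref{boundeddiagram}) are unitary: the top one because $\ind_H^G$ of a unitary representation is unitary (in the coordinate description it is a block-diagonal unitary composed with a permutation matrix), and the bottom one is Frobenius reciprocity with its natural inner products. Hence
\[
\left\Vert T(\tr_H(f,\phi))\right\Vert \;=\; \left\Vert(\ind_H^G f)\otimes\phi\right\Vert \;\le\; \left\Vert\ind_H^G f\right\Vert\cdot\left\Vert\phi\right\Vert \;=\; \left\Vert f\right\Vert\cdot\left\Vert\phi\right\Vert,
\]
the last equality because $\ind_H^G f$ is, in a suitable orthonormal basis, block-diagonal with every block a copy of $f$. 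Adjointability of $T(\tr_H(f,\phi))$ is transferred across the diagram from $(\ind_H^G f)\otimes\phi$. Running the same diagram with $\phi$ replaced by $\phi^*$ and using that unitary conjugation preserves adjoints shows that $\tr_H(S,-)$ is a $*$-functor. Biexactness is inherited as in the proof for $\tr$, and by continuity the functor extends to the $C^*$-completions.

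For the equivalence $P\circ\tr_H\simeq\tr\circ\ind_H^G$, the coset representatives $g_1,\dots,g_l$ supply a canonical identification $\bigoplus_{aH\in G/H}\IC^n\cong\IC^{mn}=\ind_H^G\IC^n$ of underlying vector spaces, inducing an isomorphism in $\calo_*^G(E,G,d_G)$
\[
P_*\bigl(\tr_H(T,M)\bigr)_{(g,e,t)} \;=\; \bigoplus_{aH}\IC^n\otimes M_{(g,e,t)} \;\xrightarrow{\cong}\; \IC^{mn}\otimes M_{(g,e,t)} \;=\; \tr\bigl(\ind_H^G T,M\bigr)_{(g,e,t)}.
\]
Naturality in $(f,\phi)$ reduces, via (\ref{boundeddiagram}) and the analogous diagram for $\tr$, to the assertion that the operator $(\ind_H^G f)\otimes\phi$ is the same whether computed through Frobenius reciprocity or through the explicit block-matrix formula for $\ind_H^G f$ in the basis determined by $g_1,\dots,g_l$ — which is precisely what the coordinate formula for $\ind_H^G$ from Section~5 was written down to guarantee.

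The main obstacle is purely bookkeeping: one must carefully track the $g_i^{\pm 1}$ and $g_j^{\pm 1}$ in the componentwise formula for $\tr_H(f,\phi)$ and check they match the nonzero entries of the matrix $\ind_H^G f$ exactly. Once one commits to identifying $G$ with $H\times G/H$ via $g\mapsto (gg_i,g_iH)$ with $g_iH=g^{-1}H$, the formulas are essentially forced; nevertheless, as the paper itself flags, left/right conventions are delicate enough that the published version of \cite{FH} contains an incorrect formula at precisely this point, so care is warranted when extracting the explicit components of $T(\tr_H(f,\phi))$ from the commutative diagram.
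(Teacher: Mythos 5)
Your proposal is correct and follows essentially the same route as the paper: control is checked via the equivalence of $g_j^{-1}g^{-1}g'g_i\in H$ with $gg_jH=g'g_iH$, boundedness comes from the unitarity of the vertical maps in diagram (\ref{boundeddiagram}) together with $\lVert\ind_H^G f\rVert=\lVert f\rVert$, and the equivalence $P\circ\tr_H\simeq\tr\circ\ind_H^G$ is obtained by matching the $G/H$-indexed summands with the coordinates of $\ind_H^G$ and comparing components. The only cosmetic slip is calling the finite-$d_l$-distance classes ``$G$-orbits of $G\times_l G/H$'' (they are orbits only after transporting the diagonal action), which does not affect the argument.
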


\begin{proof}
 We first have to check that the functor respects the control conditions in the right-hand category.
But by definition
\[
\tr_{H}(f, \phi)_{(g,g_{j}H,e,t), (g',g_{i}H,e',t')}
\] 
is nonzero only if $gg_{j}H = g'g_{i}H$, i.e. only if $d_{l}(g,g_{j}H), (g',g_{i}H)) < \infty$ in $G \times_{l} G/H$. If $\phi$ is $k$-controlled in $G$-direction, it is then clear that $\tr_{H}(f, \phi)$ is $k$-controlled in $G \times_{l} G/H$-direction. \\
The transfer is now defined precisely such that the above diagram \ref{boundeddiagram} is commutative. Since the vertical maps are unitary, we find that $\tr_{H}(f,-)$ is bounded by $\left\Vert \ind_{H}^{G} f \right\Vert = \left\Vert f \right\Vert$ and hence extends to the completions. \\
For the second claim, pick $S = (\IC^{k}, \rho) \in \Mod_{(\IC,H)}$ and an object $M$ of $\calo_{*}^{G}(E,G,d_{G})$. Then $(\tr \circ \ind_{H}^{G})(S,M)$ is the object of $\calo_{*}^{G}(E,G,d_{G})$ which is given by
\[
(\tr \circ \ind_{H}^{G})(S,M)_{(g,e,t)} = (\IC^{k})^{n} \otimes M_{(g,e,t)} \cong \oplus_{i=1}^{n} \IC^{k} \otimes M_{(g,e,t)}
\]
and $P(\tr_{H}(S,M))$ is given by
\[
P(\tr_{H}(S,M))_{(g,e,t)} = \oplus_{G/H} \IC^k \otimes M_{(g,e,t)}
\]
since $P$ projects away from $G/H$. These two objects are isomorphic by identifying the $i$-th summand of $\oplus_{i=1}^{n} \IC^{k} \otimes M_{(g,e,t)}$ with the summand given by $g_{i}H$ in $ \oplus_{G/H} \IC^{k} \otimes M_{(g,e,t)}$. We have to check that this isomorphism is natural. However, given $f: S \rightarrow T$ and $\phi: M \rightarrow N$, the $(i,j)$-components of $P(\tr_{H}(f,\phi))_{((g,e,t),(g',e',t'))}$ with respect to the above direct sum decompositions 
\[
P(\tr_{H}(S,M))_{(g,e,t)} = \oplus_{G/H} \IC^{k} \otimes M_{(g,e,t)}
\]
and 
\[
P(\tr_{H}(T,N))_{(g',e',t')} = \oplus_{G/H} \IC^{k'} \otimes N_{(g',e',t')}
\]
is given by
\[
f \circ \rho(g_{j}^{-1}g^{-1}g'g_{i}) \otimes \phi_{(g,e,t),(g',e',t')}
\]
as long as $g_{j}^{-1}g^{-1}g'g_{i} \in H$, and is zero else. The components of $(\tr \circ \ind_{H}^{G})(f,\phi)_{(g,e,t),(g',e',t')}$ with respect to the sum decomposition are the same. This finishes the proof.
\end{proof}

\section{The transfer}

From now on, the argument proceeds as in \cite{FH}. We will construct a map $K_{*}(\calo^{G}_{*}(E,G,d_{G})) \rightarrow  K_{*}(\calo^{G}_{*}(E, (S_{n}, d_{S_{n}})_{n \in \IN}))$ which, for each $k$, gives a $K$-theoretic section to 
\[
P_{k} \colon \calo^{G}_{*}(E, (S_{n}, d_{S_{n}})_{n \in \IN}) \rightarrow \calo^{G}_{*}(E,G,d_{G})
\]
To motivate the following constructions, note that splitting each functor $\calo^{G}_{*}(E,S_{n}, d_{S_{n}}) \rightarrow \calo^{G}_{*}(E,G,d_{G})$ individually nearly gives the desired result: We could try to define a functor 
\[
\calo^{G}_{*}(E,G,d_{G}) \rightarrow \calo^{G}_{*}(E, (S_{n}, d_{S_{n}})_{n \in \IN})
\]
 as the functor induced by the individual sections for each $n$, using that the right-hand category is nearly a product. The situation is not quite that easy, since the category is not a product on the nose and we cannot split the projections $\calo^{G}_{*}(E,S_{n}, d_{S_{n}}) \rightarrow \calo^{G}_{*}(E,G,d_{G})$ on the nose by a functor in the other direction; instead, we construct two functors 
\[
F_{n}^{\pm}: \calo^{G}_{*}(E,G,d_{G}) \rightarrow \calo^{G}_{*}(E,S_{n}, d_{S_{n}})
\]
in the other direction such that $K_{*}(F_{n}^{+})-K_{*}(F_{n}^{-})$ splits the map on $K$-theory induced by the projection; but $F_{n}^{+}-F_{n}^{-}$ does not make sense as a functor. Then we have to be careful with the control conditions to assemble the $F_{n}^{+}$ respectively $F_{n}^{-}$ to two functors
\[
F^{\pm}: \calo^{G}_{*}(E,G,d_{G}) \rightarrow \calo^{G}_{*}(E, (S_{n}, d_{S_{n}})_{n \in \IN})
\]
Finally, $K_*(F^+)-K_*(F^-)$ is the desired splitting.
\\
Let $p_{n} \colon S_{n} = \coprod\limits_{H \in \calh_{n}} G \times G/H \rightarrow G$ be the map which is the projection on each summand. It induces a functor 
\[
p_{n}: \calo_{*}^{G}(E,S_{n}, d_{S_{n}}) \rightarrow \calo_{*}^{G}(E,G,d_{G})
\]

\begin{prop}
\label{pm}
For each $n$, there are $C^{*}$-functors 
\[
F_{n}^{+}, F_{n}^{-} \colon \calo^{G}_{*}(E,G,d_{G}) \rightarrow \calo_{*}^{G}(E,S_{n}, d_{S_{n}})
\]
such that $K_{*}(p_{n} \circ F_{n}^{+})-K_{*}(p_{n} \circ F_{n}^{-})$ is the identity. The functors $F_{n}^{\pm}$ restrict to functors $F_{n}^{\pm} \colon \calo^{G}_{b}(E,G,d_{G}) \rightarrow \calo_{b}^{G}(E,S_{n}, d_{S_{n}})$, and these functors have the property that they send $R$-controlled morphisms to $R$-controlled morphisms. 
\end{prop}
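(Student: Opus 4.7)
My plan is to build $F_n^{\pm}$ by bundling the various transfer functors $\tr_H$ constructed in the previous section, using Brauer induction to arrange that the difference acts as the identity of the Swan module.

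First, I would apply the Brauer induction theorem \ref{brauer} to the finite group $F_n$ and pull back along $\alpha_n \colon G \to F_n$. This produces, for each $H \in \calh_n$, an element $T_H \in \Sw^u(H)$ such that $\sum_{H \in \calh_n} \ind_H^G T_H = 1 \in \Sw^u(G)$. Write each $T_H$ as a formal difference $T_H = [T_H^+] - [T_H^-]$ of two honest unitary representations in $\Mod_{(\IC,H)}$.

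Next, for each $H \in \calh_n$, the previously constructed transfer functor gives
\[
\tr_H(T_H^{\pm}, -) \colon \calo_{*}^{G}(E,G,d_{G}) \rightarrow \calo_{*}^{G}(E,G \times_l G/H, d_l).
\]
Since $S_n = \coprod_{H \in \calh_n} G \times G/H$ with the quasi-metric that sets inter-summand distance to $\infty$, a functor into the product of the pieces is the same as a functor into $\calo_{*}^{G}(E, S_n, d_{S_n})$ provided morphisms never connect different summands; this is automatic because $\tr_H$ is defined summand-wise. I define
\[
F_n^{\pm}(M) = \bigoplus_{H \in \calh_n} \tr_H(T_H^{\pm}, M),
\]
with morphisms going diagonally. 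Since $\calh_n$ is finite (as $F_n$ is finite), the direct sum is a legitimate object. By construction, $\tr_H(f, \phi)$ has the same support in the $G$-direction as $\phi$, so $F_n^{\pm}$ sends $R$-controlled morphisms to $R$-controlled morphisms, and in particular restricts to the bounded subcategories as required. Continuity and compatibility with the involution are inherited from $\tr_H$.

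The $K$-theoretic identity is then a direct consequence of Lemma \ref{surjectivity} and the Swan module structure from \ref{action}. Composing with $p_n$ and using that $p_n$ projects $G \times G/H$ onto $G$, the second part of Lemma \ref{surjectivity} gives a natural equivalence
\[
p_n \circ F_n^{\pm} \simeq \bigoplus_{H \in \calh_n} \tr \circ \ind_H^G(T_H^{\pm}, -),
\]
which on $K$-theory realizes the action of $\sum_{H} \ind_H^G [T_H^{\pm}] \in \Sw^u(G)$ on $K_*(\calo_{*}^{G}(E,G,d_G))$. Therefore
\[
K_*(p_n \circ F_n^+) - K_*(p_n \circ F_n^-) = \sum_{H \in \calh_n} \ind_H^G([T_H^+] - [T_H^-]) = \sum_{H} \ind_H^G T_H = 1 \in \Sw^u(G),
\]
which acts as the identity on $K_*(\calo_{*}^{G}(E,G,d_G))$ by the biexactness of the Swan-module pairing. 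The main technical obstacle I expect is purely bookkeeping: verifying carefully that the direct sum $\bigoplus_H \tr_H(T_H^{\pm}, -)$ really assembles into a single $C^*$-functor into $\calo_*^G(E, S_n, d_{S_n})$ (in particular that the resulting morphism's operator norm is controlled by the maximum over $H$, which is fine since $\calh_n$ is finite), and that the equivalence used to identify $p_n \circ F_n^{\pm}$ with the direct sum of $\tr \circ \ind_H^G(T_H^{\pm}, -)$ is natural enough to pass to $K$-theory; both are straightforward unravellings of the definitions given Lemma \ref{surjectivity}.
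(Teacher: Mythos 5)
Your proposal is correct and follows essentially the same route as the paper: Brauer induction pulled back along $\alpha_n$, splitting $T_H = T_H^+ - T_H^-$, assembling $F_n^{\pm} = \bigoplus_{H} \phi_H \circ \tr_H(T_H^{\pm},-)$, and invoking Lemma \ref{surjectivity} together with the Swan-module action to see that the difference acts as $1_{\Sw^u(G)}$, with the same observation that the transfer does not alter control in the $G$-direction.
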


\begin{proof}
We now consider $G \times G/H$ for $H \in \calh_{n}$   with the diagonal $G$-action again. By the Brauer induction theorem \ref{brauer} and since the family $\calh_{n}$ is the pullback of the family of elementary subgroups of $F_{n}$, we can find elements $T_{H} \in \Sw(\IC,H)$ such that 
\[
1_{\Sw^u(\IC,G)} = \sum\limits_{H \in \calh_{n}} \ind_{H}^{G} T_{H}
\]
 For each $H \in \calh_n$, we can find $T^{+}_{H}$ and $T^{-}_{H}$ in $\Mod_{(\IC,H)}$ such that $T_{H} = T^{+}_{H}-T^{-}_{H}$ and we have an inclusion map $G \times G/H \rightarrow S_{n}$ inducing a map
\[
\phi_{H} \colon \calo_{*}^{G}(E, G \times G/H, d) \rightarrow \calo_{*}^{G}(E, S_{n}, d_{S_{n}})
\]
Now let $F^{+}_{H}\colon \calo_{*}^{G}(E,G,d_{G}) \rightarrow  \calo_{*}^{G}(E, S_{n}, d_{S_{n}})$ be the composition $\phi_{H} \circ \tr_{H}(T^{+}_{H},-)$ and let $F^{-}_{H}$ be the composition 
$\phi_{H} \circ \tr_{H}(T^{-}_{H},-)$. Now set
\[
F^{\pm}_{n}= \bigoplus_{H \in \calh_{n}} F_{H}^{\pm}
\]
As a direct sum of $C^{*}$-functors, these are $C^{*}$-functors. Now we compute for $a \in K_{*}(\calo_{*}^{G}(E,G,d_{G}))$
\begin{align*}
&(K_{*}(p_{n} \circ F_{n}^{+}) - K_{*}(p_{n} \circ F_{n}^{-}))(a) \\
&= \sum\limits_{H \in \calh_{n}} K_{*}(p_{n} \circ F^{+}_{H})(a) - K_{*}(p_{n} \circ F^{-}_{H})(a) \\
&= \sum\limits_{H \in \calh_{n}} K_{*}(p_{n} \circ \phi_{H} \circ \tr_{H}(T_{H}^{+}))(a) - K_{*}(p_{n} \circ \phi_{H} \circ \tr_{H}(T_{H}^{-}))(a) \\
&= \sum\limits_{H \in \calh_{n}} K_{*}(\tr(\ind_{H}^{G} T_{H}^{+}, -))(a) - K_{*}(\tr(\ind_{H}^{G}(T_{H}^{-}, -)))(a) \\
&= \sum\limits_{H \in \calh_{n}} (\ind_{H}^{G} T_{H}) \otimes a \\
&= 1_{\Sw^{u}(\IC,G)} \otimes a = a \\
\end{align*}
where the $\otimes$ in the fourth and fifth line stands for the pairing 
\[
\Sw^{u}(\IC,G) \times K_{*}(\calo_{*}^{G}(E,G,d_{G})) \rightarrow K_{*}(\calo_{*}^{G}(E,G,d_{G}))
\]
induced by the biexact functor $\tr$ and we used that $p_{n} \circ \phi_{H}$ is the projection $G \times G/H \rightarrow G$. The last claim of the proposition involving the $R$-controlled condition is not hard to check: All involved functors are defined as completions of functors between the bounded categories, so also $F_{n}^{\pm}$ are, and the control condition follows since the transfer does not change control in the $G \times G/H$-direction.
\end{proof}

\begin{thm}
\label{surjective}
For each $a \in K_{*}(\calo_{*}^{G}(E,G,d_{G}))$, there is 
\[
b \in K_{*} \calo_{*}^{G}(E,(S_{n}, d_{S_{n}})_{n \in \IN})
\]
such that for all $k$, we have $(K_{*}(P_{k}))(b) = a$.
\end{thm}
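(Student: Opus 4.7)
The plan is to assemble the family of functors $F_n^\pm$ from Proposition \ref{pm} into a single pair of $C^*$-functors landing in the ``restricted product'' category $\calo_*^G(E,(S_n,d_{S_n})_{n\in\IN})$, and then read off the required class $b$ as the difference of the induced maps in $K$-theory.

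First, I would define on bounded categories the functor
\[
F^\pm \colon \calo_b^G(E,G,d_G) \;\longrightarrow\; \calo_b^G(E,(S_n,d_{S_n})_{n\in\IN})
\]
by $F^\pm(M) = (F_n^\pm(M))_{n\in\IN}$ on objects and $F^\pm(\phi) = (F_n^\pm(\phi))_{n\in\IN}$ on morphisms. The key point to check is that this actually lands in the restricted product rather than the full product. By the last claim of Proposition \ref{pm}, if $\phi$ is $R$-controlled in $\calo_b^G(E,G,d_G)$ then each $F_n^\pm(\phi)$ is $R$-controlled in $\calo_b^G(E,S_n,d_{S_n})$ with the \emph{same} constant $R$, independent of $n$. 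Moreover, each $F_n^\pm$ is a $C^*$-functor and hence has norm at most one, so $\|F_n^\pm(\phi)\|\le \|\phi\|$ uniformly in $n$. These two uniformity statements are precisely the two defining conditions of morphisms in $\calo_b^G(E,(S_n,d_{S_n})_{n\in\IN})$, so $F^\pm$ is indeed a functor to that category. Since $F^\pm$ is an isometric $*$-functor between pre-$C^*$-categories, it extends by continuity to a $C^*$-functor on the completions,
\[
F^\pm \colon \calo_*^G(E,G,d_G) \;\longrightarrow\; \calo_*^G(E,(S_n,d_{S_n})_{n\in\IN}).
\]

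Next, I would compare $F^\pm$ with the projection functors $P_k$. By construction $P_k$ is the composition of projecting onto the $k$-th factor of the (restricted) product with $p_k\colon \calo_*^G(E,S_k,d_{S_k})\to\calo_*^G(E,G,d_G)$. From the definition of $F^\pm$ it is immediate that $P_k\circ F^\pm = p_k\circ F_k^\pm$ as $C^*$-functors. Applying $K$-theory and using the identity $K_*(p_k\circ F_k^+)-K_*(p_k\circ F_k^-)=\Id$ from Proposition \ref{pm}, we obtain for every $k$
\[
K_*(P_k)\bigl(K_*(F^+)(a)-K_*(F^-)(a)\bigr)=a.
\]
Thus setting $b := K_*(F^+)(a)-K_*(F^-)(a)\in K_*\calo_*^G(E,(S_n,d_{S_n})_{n\in\IN})$ yields the required class, and the same $b$ works for all $k$ simultaneously.

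The one place that could cause trouble is the passage from the individual $F_n^\pm$ to the combined $F^\pm$: without the uniform control constant and the uniform norm bound, the sequence $(F_n^\pm(\phi))_n$ would land only in the unrestricted product and would not define a morphism in $\calo_*^G(E,(S_n,d_{S_n})_{n\in\IN})$. Both of these uniformities were built into Proposition \ref{pm}: the $C^*$-functor property gives the norm bound for free, while the control bound is preserved because the transfer $\tr_H$ does not enlarge the support in the $G\times G/H$-direction and the inclusions $G\times G/H\hookrightarrow S_n$ are isometric in the $G$-coordinate. Once this uniformity is in hand, the rest of the argument is a formal consequence of the pointwise splitting established in Proposition \ref{pm}.
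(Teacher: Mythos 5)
Your proposal is correct and follows essentially the same route as the paper: assemble the $F_n^\pm$ from \ref{pm} into functors $F^\pm$ into the restricted product using the uniform control constant and the uniform norm bound, extend to the completions, and set $b = K_*(F^+)(a)-K_*(F^-)(a)$ using $P_k\circ F^\pm = p_k\circ F_k^\pm$. The only cosmetic quibble is that $F^\pm$ need not be isometric — continuity with norm at most one (which the $C^*$-functor property of each $F_n^\pm$ gives) is what is needed and used for the extension to the completion.
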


\begin{proof}
Recall the functors $F_{n}^{\pm}: \calo^{G}_{b}(E,G,d_{G}) \rightarrow \calo_{b}^{G}(E,S_{n}, d_{S_{n}})$ from \ref{pm}. Since these are restrictions of $C^{*}$-functors, they all have norm at most $1$. Because of this and the fact that they all send $R$-controlled morphisms to $R$-controlled morphisms, the product functors
\[
\prod\limits_{n} F_{n}^{\pm}: \calo^{G}_{b}(E,G,d_{G}) \rightarrow \prod\limits_n \calo_{b}^{G}(E,S_{n}, d_{S_{n}})
\]
give rise to functors
\[
F^{\pm}: \calo_{b}^{G}(E,G,d_{G}) \rightarrow \calo_{b}^{G}(E,(S_{n}, d_{S_{n}})_{n \in \IN})
\]
These are continuous with norm at most $1$, hence extend to the completions, yielding functors
\[
F^{\pm}: \calo_{*}^{G}(E,G,d_{G}) \rightarrow \calo_{*}^{G}(E,(S_{n}, d_{S_{n}})_{n \in \IN})
\]
Of course, $F^{\pm}$ is the same functor we would have obtained by the restriction of
\[
\prod F_{n}^{\pm}: \calo_{*}^{G}(E,G,d_{G}) \rightarrow \prod \calo_{*}^{G}(E,(S_{n}, d_{S_{n}}))
\]
but we needed the roundabout argument to ensure that it actually takes values in $\calo_{*}^{G}(E,(S_{n}, d_{S_{n}})_{n \in \IN})$. \\
Inspection of the formulas now immediately gives $P_{k} \circ F^{\pm} = p_{k} \circ F_{k}^{\pm}$. Hence we have by \ref{pm}
\[
K_{*}(P_{k})(K_{*}(F^{+})-K_{*}(F^{-}))(a) = a
\]
for all $a \in K_{*} \calo_{*}^{G}(E,G,d_{G})$. Now set $b= K_{*}(F^{+})(a)-K_{*}(F^{-}(a))$. 
\end{proof}

\section{The horizontal map}

Now we will define the horizontal map in our core diagram; compare \cite[7.1]{FH}. We define a map $f_{n}: S_{n} \rightarrow X_{n}$ as follows. First consider the map $f_{H}: G \rightarrow E_{H}$. Inducing up, we obtain a map
\begin{align*}
\ind_{H}^{G} f_{H}: G \times G/H \cong \ind_{H}^{G} G &\rightarrow \ind_{H}^{G} E_{H} \\
(a, gH) \mapsto (g, f_{H}(g^{-1}a))
\end{align*}
and then set
\[
f_{n}(a,gH) = (a, \ind_{H}^{G} f_{H}(a, gH))
\]

\begin{thm}
\label{horizontal}
The sequence of maps $f_{n}$ fit together to induce a functor
\[
F: \calo^{G}_{*}(E, (S_{n}, d_{S_{n}})_{n \in \IN}) \rightarrow \calo^{G}_{*}(E,(X_{n}, d_{X_{n}})_{n \in \IN})
\]
such that $Q_{k} \circ F = P_{k}$ for all $k$, i.e. the core diagram \ref{core} commutes. 
\end{thm}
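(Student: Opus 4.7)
The plan is to verify three things in sequence: that each $f_n$ is a reasonable $G$-equivariant map between the underlying coarse spaces, that the collection $(f_n)_n$ assembles into a functor respecting the global norm and control conditions of the product-style categories, and finally that the composition with $Q_k$ reduces to $P_k$ on the nose.

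First I would check that each $f_n$ is a well-defined, $G$-equivariant injection of sets. The formula $f_n(a,gH) = (a,[g, f_H(g^{-1}a)])$ uses a coset representative $g$, so the main point is to verify independence of this choice. Replacing $g$ by $gh$ for $h \in H$ and using the $H$-equivariance of $f_H$ gives $f_H((gh)^{-1}a) = h^{-1} f_H(g^{-1}a)$, and hence $[gh, f_H((gh)^{-1}a)] = [g, f_H(g^{-1}a)]$ in $\ind_H^G E_H$. $G$-equivariance and injectivity then follow by direct computation, using that distinct cosets $gH$ give distinct equivalence classes on the nose (since only the $H$-equivariance relation identifies points).

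Next comes the crucial coarse estimate: if $\phi$ is a morphism in $\calo^G_*(E,(S_n,d_{S_n})_{n \in \IN})$ that is $R$-controlled in the $S_n$-direction uniformly in $n$, I need a uniform bound $R'$ such that $F\phi$ is $R'$-controlled in the $X_n$-direction. Take two points $(a,gH)$ and $(b,gH)$ with $d_G(a,b) \leq R$ (the only finite case in $d_{S_n}$). Chopping a geodesic word-metric path between $a$ and $b$ into $d_G(a,b)$ unit steps and applying the Farrell--Hsiang contraction at distance one (which gives $d^1_{E_H}(f_H(x),f_H(y)) \leq 1/n$ whenever $d_G(x,y) \leq 1 \leq n$), the triangle inequality yields
\[
d^1_{E_H}\!\bigl(f_H(g^{-1}a), f_H(g^{-1}b)\bigr) \leq d_G(a,b)/n.
\]
Hence $d_{X_n}\bigl(f_n(a,gH), f_n(b,gH)\bigr) = d_G(a,b) + n \cdot d^1 \leq 2d_G(a,b) \leq 2R$, giving the required uniform $R' = 2R$. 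This is the heart of the argument and the place where the scaling factor $n$ in the metric on $X_n$ was engineered for; the main obstacle is nothing more than carefully tracking that one gets a bound uniform in $n$, including the small values $n < R$ where the hypothesis does not directly apply but the chopping trick rescues the estimate.

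Finally, I would assemble $F$ as follows: on a fixed $n$, injectivity of $f_n$ from the first step means the pushforward $(f_n)_*$ is defined on objects and morphisms of $\calo^G_b(E,S_n,d_{S_n})$ and sends $T(M)$ isometrically to $T((f_n)_* M)$ (since injective basis relabeling is a unitary on the underlying Hilbert-$A$-modules). Thus $\|(f_n)_*\phi_n\| = \|\phi_n\|$ and the uniform norm bound passes to $F\phi$. The coarse estimate above together with standard verification of the object support and $G$-compactness conditions shows $F$ lands in $\calo^G_b(E,(X_n,d_{X_n})_{n \in \IN})$, and since each $(f_n)_*$ is a $*$-functor bounded by $1$, the composite extends by continuity to the completion $\calo^G_*(E,(S_n,d_{S_n})_{n \in \IN}) \to \calo^G_*(E,(X_n,d_{X_n})_{n \in \IN})$. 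The commutativity $Q_k \circ F = P_k$ is then a formality: both functors project onto the $k$-th factor and then onto the $G$-coordinate, and the definition of $f_k$ preserves the leading $G$-coordinate $a$, so the two compositions agree on objects and morphisms.
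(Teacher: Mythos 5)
Your proof is correct and follows the same overall strategy as the paper: reduce to the uncompleted categories where continuity is automatic, verify a control bound uniform in $n$, and observe that $Q_k \circ F = P_k$ holds on the nose because $F$ only alters the factors that $Q_k$ projects away. The one place where you genuinely diverge is the key estimate. The paper splits into two cases: for $n > r$ it applies the Farrell--Hsiang contraction hypothesis directly to the pair $(g^{-1}a, g^{-1}a')$ at distance $< r \leq n$, obtaining the bound $r+1$; for $n \leq r$ it falls back on $G$-cofiniteness of $S_n$ and finiteness of $r$-balls to take a maximum over finitely many orbit-pairs. You instead subdivide a word-metric geodesic from $a$ to $b$ into unit steps, apply the hypothesis at distance $1 \leq n$ to each step, and sum by the triangle inequality to get $d^1_{E_H}(f_H(g^{-1}a), f_H(g^{-1}b)) \leq d_G(a,b)/n$, hence the single uniform bound $2R$ valid for every $n$. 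This eliminates the case distinction and the appeal to properness of the word metric, at the cost of using (correctly) that a word metric is geodesic in the integer sense, i.e.\ left-invariant and realized by chains of unit steps. Your additional verification that $f_n$ is well defined independently of the coset representative, via $H$-equivariance of $f_H$, is a check the paper leaves implicit and is carried out correctly.
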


\begin{proof}
\fxnote{FH could be better explained}
The proof is virtually the same as \cite[7.1]{FH}. Again, it is enough to argue that we get a continuous functor on the uncompleted categories
\[
F: \calo^{G}_{b}(E, (S_{n}, d_{S_{n}})_{n \in \IN}) \rightarrow \calo^{G}_{b}(E,(X_{n}, d_{X_{n}})_{n \in \IN})
\]
For this, we only have to check the control conditions; continuity is automatic. We have to see that for each $r > 0$ there is $R > 0$, independent of $n$, such that for all $s,s' \in S_{n}$, we have
\[
d_{S_{n}}(s,s') < r \Longrightarrow d_{X_{n}}(f_{n}(s), f_{n}(s')) < R
\]
First we claim that, as long as $n > r$, $R = r+1$ does the job. Let $s = (a, gH)$; then if $d(s,s') < r$, we must have $s' = (a', gH)$. Also, $d_{G}(a,a') \leq d_{S_{n}}((a,gH), a',gH)) < r$ and by $G$-invariance of $d_{G}$ also $d_{G}(g^{-1}a, g^{-1}a') < r$. It follows
\begin{align*}
&d_{X_{n}}(f_{n}(s), f_{n}(s')) = d_{X_{n}}((a, \ind_{H}^{G} f_{H}(a,gH)), (a', \ind_{H}^{G} f_{H}(a',gH))) \\
&= d_{G}(a,a') +n \cdot d^{1}_{\ind_{H}^{G}E_{H}} ( \ind_{H}^{G} f_{H}(a,gH),  \ind_{H}^{G} f_{H}(a',gH)) \\
&= d_{G}(a,a')+n \cdot d^{1}_{E_{H}}(f_{H}(g^{-1}a), f_{H}(g^{-1}a')) \\ 
&< r + n \cdot \frac{1}{n} = r+1 
\end{align*}
where in the last line, we use the contracting property of $f_{H}$ and that $d_{G}(g^{-1}a, g^{-1}a') < r$. If $n < r$, we use that $S_{n}$ is $G$-cofinite and that for each $s$, there are only finitely many $s'$ with $d_{S_{n}}(s,s') < r$. Together, this implies that there are only finitely many cases to consider for $n < r$, which we can easily handle by letting $R$ be the maximum of these finitely many $d_{X_{n}}(f_{n}(s), f_{n}(s'))$.\\
The commutativity of the diagram is immediate, since we project away from all factors where our map $F$ does anything. 
\end{proof}

\chapter{Stability}
In this section, we want to see that the map

\[
\oplus_{n \in \IN} \calo^{G}_{*}(E,X_{n}, d_{X_{n}}) \rightarrow \calo^{G}_{*}(E, (X_{n}, d_{X_{n}})_{n \in \IN})
\]
induces an equivalence in topological $K$-theory.  

\section{Topological $K$-theory and stability}

The above claim will be a consequence of the following more general statement, which is the topological $K$-theory analogue of \cite[7.2]{BLR}. The proof given here is an adaption to topological $K$-theory of the proof given in \cite{BLR}.

\begin{thm}
\label{stability}
Let $X_{n}$ be a sequence of simplicial complexes equipped with simplicial $G$-actions such that the dimensions of the $X_{n}$ are uniformly bounded by $N \in \IN$. Let $d_{n}$ be a quasimetric on $X_{n}$ satisfying
\[
d_{n}(x,y) \geq nd^{1}(x,y)  \quad \forall x,y \in X_{n}
\]
with equality if $x$ and $y$ lie in the same simplex of $X_{n}$. Assume that all isotropy groups of the $G$-actions on $X_{n}$ are contained in the family $\calf$ and let $E = E_{\calf} G$. Let $\widetilde{d_{n}}$ be the metric on $G \times X_{n}$ given by
\[
\widetilde{d_{n}}((g,x), (g',x')) = d_{G}(g,g')+d_{n}(x,x')
\]
Then the functor
\[
\bigoplus_{n \in \IN} \calo_{*}^{G}(E, G \times X_{n}, \widetilde{d_{n}}) \rightarrow \calo^{G}_{*}(E, (G \times X_{n}, \widetilde{d_{n}}))
\]
induces an isomorphism in topological $K$-theory.
\end{thm}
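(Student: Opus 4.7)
I would proceed by induction on the uniform dimension bound $N$. The base case $N=-1$ (all $X_n$ empty) is trivial, and the case $N=0$ reduces to a direct discreteness argument: since $d_n \geq n \cdot d^1$ and $d^1$ is the $\{0,1\}$-metric on a discrete $G$-set, any uniformly $R$-controlled morphism in the assembled category eventually (for $n > R$) lies over the diagonal, localizing the comparison onto individual points of $X_n$, where the isotropy lies in $\calf$ and things are computable.

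For the inductive step, assume the statement holds for $N-1$. I would filter each $X_n$ by its skeleta and work with the closed, $G$-invariant subspace $Y_n = G \times X_n^{(N-1)} \subset Z_n = G \times X_n$. By Proposition \ref{homotopyfiber}, applied individually for each $n$, one obtains fiber sequences
\[
K \calo_*^G(E, Y_n, \widetilde{d_n}) \to K \calo_*^G(E, Z_n, \widetilde{d_n}) \to K \calo_*^G(E, Z_n, \widetilde{d_n})^{> Y_n}
\]
and these should assemble into fiber sequences both after taking $\bigoplus_n$ and on the assembled category of Remark \ref{productcoarse}, yielding a map of fiber sequences. By the inductive hypothesis the left-hand vertical map is an equivalence, so the heart of the matter is to show the quotient $K$-theories on both sides vanish (or at least that the induced map on them is an equivalence).

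The vanishing should come from an Eilenberg swindle in the sense of Proposition \ref{swindle}. On the complement $X_n \setminus X_n^{(N-1)}$, each open $N$-simplex has a barycenter whose $G$-isotropy lies in $\calf$, hence maps into $E = E_{\calf} G$. Straight-line contraction of each open simplex onto its barycenter, combined with pushing off to infinity along the $[1, \infty)$-coordinate using the $\calf$-contractibility of $E$, should produce the swindle on the quotient. For the direct sum side this works $n$ by $n$. For the assembled side one exploits the key hypothesis $d_n \geq n \cdot d^1$: a uniformly $R$-controlled morphism satisfies $d^1(x,y) \leq R/n$, so for $n$ large enough (depending only on the combinatorics of a standard simplex) morphisms stay inside a single open $N$-simplex, and the swindle can be assembled with control that is genuinely uniform in $n$.

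\textbf{Main obstacle.} The delicate point is verifying that the swindle respects the two-sided constraint of the assembled category: a single bound $R$ on control across all $n$ simultaneously, together with uniformly bounded operator norms. This is where the scaling $d_n \geq n \cdot d^1$ earns its keep, since it turns uniform metric control into increasingly tight $d^1$-proximity. At the technical level, one needs a version of Proposition \ref{homotopyfiber} adapted to the product coarse structure of Remark \ref{productcoarse}; this is exactly the ``slightly more general setting'' flagged in that proposition's second proof of fact (i), where the argument was intentionally rewritten in terms of space-level continuity of $K$-theory (Proposition \ref{cont}) precisely so as to apply to assembled situations of the present kind. Getting these pieces to interact cleanly with the $C^*$-completion, and in particular making sure that the approximately completely continuous ideal in the quotient construction is compatible with the product coarse structure, is where I would expect most of the real work to go.
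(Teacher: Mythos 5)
Your outline shares the paper's skeleton---induction on $N$, fiber sequences coming from \ref{homotopyfiber}, an Eilenberg swindle that uses the universal property of $E = E_{\calf}G$, and the scaling $d_{n} \geq n d^{1}$ as the source of uniformity---but it diverges at the decisive step, and there it has a genuine gap. The paper does not swindle the quotient by the $(N-1)$-skeleton directly. It first proves an excision statement (\ref{MV-stability}): the square comparing $(G \times \partial Y_{n}, G \times Y_{n})$ with $(G \times X_{n}^{N-1}, G \times X_{n})$, where $Y_{n}$ is the disjoint union of the $N$-simplices of $X_{n}$ carrying the quasimetric $d_{n}^{\infty}$ that equals $n d^{1}$ inside a simplex and is $\infty$ between distinct simplices, becomes homotopy cartesian after applying $K$-theory; only then is the swindle performed, namely on the assembled category built from the $Y_{n}$ (\ref{Swindle-stability}), where it is unproblematic because distinct simplices are infinitely far apart and the contraction never has to compare points of different simplices.

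The gap in your version is that the contraction of each open $N$-simplex toward its barycenter is not compatible with the uniform control of the assembled category, and your claim that for large $n$ a uniformly $R$-controlled morphism stays inside a single open simplex is false near the skeleton. If $x$ and $x'$ lie in adjacent $N$-simplices at $d^{1}$-distance at most $R/n$ from a common face, then $\widetilde{d_{n}}$ permits a morphism of control roughly $2R$ to connect them, while their barycenters are at $d_{n}$-distance at least $n$ times a dimensional constant, hence unbounded in $n$; so the proposed functor $S$ destroys uniform control and is not defined on $\calo_{*}^{G}(E,(G \times X_{n}, \widetilde{d_{n}})_{n \in \IN})$, nor automatically on its quotient by the skeleton. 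What saves the day is that such components are supported within uniformly bounded $d_{n}$-distance of the skeleton and therefore die in the quotient---but proving that the quotient by the skeleton agrees in $K$-theory with the corresponding quotient for the disjoint simplices, so that the swindle may be run where different simplices never interact, is exactly the content of \ref{MV-stability}: one writes a morphism as $\psi + \tau$ with $\tau$ the part connecting distinct $N$-simplices, uses \ref{distancel1} to see that $\tau$ factors through a controlled thickening of the skeleton, and then carries out the pseudonorm and completion bookkeeping so that this survives the $C^{*}$-quotient (fullness via the closed image of $C^{*}$-functors, plus the generalized form of \ref{homotopyfiber} you correctly flag). Your proposal subsumes all of this into ``the swindle can be assembled with control uniform in $n$,'' which is precisely the step requiring proof; likewise your $N = 0$ base case is not a formal discreteness statement---even there the vanishing of the assembled quotient needs the uniform swindle built from mapping the $\calf$-isotropy orbits into $E$, which is how the paper handles it, again via \ref{Swindle-stability}.
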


\begin{rem}
In our case, we have
\[
X_{n} = \coprod_{H \in \calh_{n}} \ind_{H}^{G} E_{H} 
\]
and
\[
d_{n}(x,y) =  n \cdot d^{1}(x,y)
\]
The assumptions on the isotropy groups and on the dimension of the $Z_{n}$ are true by definition of a Farrell-Hsiang group. 
\end{rem}

We will split up the proof into a series of propositions. Let us first discuss what we have to prove. We can form the quotient $C^{*}$-category $ \calo^{G}_{*}(E, (X_{n}, d_{X_{n}})_{n \in \IN})^{> \oplus}$ of the inclusion 
\[
\oplus_{n \in \IN} \calo^{G}_{*}(E,X_{n}, d_{X_{n}}) \rightarrow \calo^{G}_{*}(E, (X_{n}, d_{X_{n}})_{n \in \IN})
\]
The second proof of the fiber sequence theorem \ref{homotopyfiber} also applies in this setup to give a homotopy fiber sequence
\[
K (\bigoplus_{n \in \IN} \calo^{G}_{*}(E,X_{n}, d_{X_{n}})) \rightarrow K \calo^{G}_{*}(E, (X_{n}, d_{X_{n}})_{n \in \IN}) \rightarrow K \calo^{G}_{*}(E, (X_{n}, d_{X_{n}})_{n \in \IN})^{> \oplus}
\]
We see that the original claim is equivalent to 
\[
K \calo^{G}_{*}(E, (X_{n}, d_{X_{n}})_{n \in \IN})^{> \oplus} = 0
\]
This is what we will prove. \\

Fix a natural number $N$. Let $Y_{n}$ be the disjoint union of the $N$-simplices of $X_{n}$ with the quasi-metric
\[
d_{n}^{\infty}(x,y) = \begin{cases}  nd^{1}(x,y) &\text{ if $x,y$ lie in the same simplex} \\  \infty &\text{ else } \end{cases}
\]
Let $\partial Y_{n}$ be the disjoint union of the boundaries of the simplices of $X_{n}$ with the restricted metric. Denote by $\widetilde{d_{n}^{\infty}}$ the metric on $G \times Y_{n}$ given by
\[
\widetilde{d^{\infty}_{n}}((g,x), (g',x')) = d_{G}(g,g')+d_{n}(x,x')
\]
Both $Y_{n}$ and $\partial Y_{n}$ inherit $G$-actions whose isotropy is in the familiy $\calf$. 

\fxnote{I think from this point forward the proof is really the same}

\begin{prop}
\label{Swindle-stability}
The $K$-theory of $\calo_{*}^{G}(E, (G \times Y_{n}, d_{n}^{\infty})_{n \in \IN})^{> \oplus}$ vanishes.
\end{prop}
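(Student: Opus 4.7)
My plan is to exhibit an Eilenberg swindle on the product category using the $[1,\infty)$-direction and then read off the vanishing of the quotient from the fibration sequence. By Remark \ref{productcoarse}, the category $\calo_*^G(E, (G \times Y_n, \widetilde{d_n^\infty})_{n \in \IN})$ is of the form $\C_*^G(Z, \cale, \calf)$ where $Z = \coprod_{n \geq 1} G \times Y_n \times E \times [1,\infty)$ with the coarse structure that forbids control pairs crossing different components, imposes metric control in $G$ (uniformly in $n$) and in $Y_n$, requires continuous control in $E \times [1,\infty)$, and demands $G$-compact support in $G \times Y_n \times E$.

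I would then introduce the $G$-equivariant shift $s \colon Z \to Z$ sending $(g, y, e, t)$ to $(g, y, e, t+1)$ inside each component, and verify the four hypotheses of Proposition \ref{swindle}. Condition (i) is clear: a compact subset of $Z$ lies in a finite union of slices $G_0 \times Y_n^0 \times E_0 \times [1,T]$ with $G_0$ finite, and $(s^k)^{-1}$ empties such a slice once $k > T$. For (iii), $s$ fixes the projection to $G \times Y_n \times E$, so $G$-compact supports are stable. For (ii) and (iv) the shift acts trivially on the $G$, $Y_n$, and $E$ coordinates, so metric controls in these factors are preserved verbatim by all iterates; for the continuous-control part, a basic neighborhood $U_0 \times (s_0, \infty]$ of $(x, \infty)$ is sent by $s^n$ to $U_0 \times (s_0 + n, \infty]$, so the single neighborhood $V = U_0 \times (s_0 + 1, \infty]$ simultaneously controls all shifted pairs arising from a given controlled set, giving a common $E' \in \cale$.

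Proposition \ref{swindle} then yields $K \calo_*^G(E, (G \times Y_n, d_n^\infty)_{n \in \IN}) = 0$. Applying the identical swindle to a single component shows $K \calo_*^G(E, G \times Y_n, \widetilde{d_n^\infty}) = 0$ for each $n$, and since $K$-theory commutes with directed colimits (Proposition \ref{unions}), $K(\bigoplus_n \calo_*^G(E, G \times Y_n)) = 0$ as well. Feeding these two vanishings into the fiber sequence supplied by the second proof of Proposition \ref{homotopyfiber}(ii) (which applies verbatim here, as the inclusion of the direct sum is the appropriate Karoubi-style inclusion of pre-$C^*$-categories at the bounded level) gives the desired $K\calo_*^G(E, (G \times Y_n, d_n^\infty)_{n \in \IN})^{>\oplus} = 0$.

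The main obstacle will be the verification of hypothesis (ii) regarding continuous control, which requires tracking how translations in $t$ interact with the neighborhood basis at $(x, \infty)$. Because the shift is a rigid translation that does not touch the $E$-coordinate, this reduces to the observation that iterates of $s$ tighten rather than loosen the neighborhoods, and the analysis is straightforward bookkeeping once formulated carefully; I do not expect any genuine conceptual difficulty beyond this.
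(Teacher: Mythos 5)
Your argument has a genuine gap, and it is exactly at the point you flag as "straightforward bookkeeping": the verification of hypothesis (ii) (and likewise (iv)) of Proposition \ref{swindle} for the translation $s(g,y,e,t) = (g,y,e,t+1)$ fails. The problem is the continuous control condition in the $E \times [1,\infty)$-direction, where $E = E_{\calf}G$ is not a point. Take a single pair $\bigl((e_1,t),(e_2,t)\bigr)$ with $e_1 \neq e_2$ lying in some continuously controlled set $J$ (such pairs exist in abundance, and intersecting with an object support set $F$ does not remove them, since $F$ is unrestricted in the $[1,\infty)$-coordinate). The union $\bigcup_{n\geq 1}(s\times s)^n(J)$ then contains $\bigl((e_1,t+n),(e_2,t+n)\bigr)$ for all $n$, i.e.\ pairs with the \emph{fixed} spatial separation $e_1 \neq e_2$ at arbitrarily large heights. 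Continuous control at $(e_1,\infty)$ demands that for every invariant neighborhood $U$ of $(e_1,\infty)$ avoiding $e_2$ there be a smaller $V$ with $(U^c \times V)$ missing the set; but every neighborhood $V$ of $(e_1,\infty)$ contains $(e_1,t+n)$ for large $n$, and the translated pair connects it to $(e_2,t+n) \in U^c$. So no $E' \in \cale$ can contain the union, and your observation that $s^n$ sends $U_0\times(s_0,\infty]$ to $U_0\times(s_0+n,\infty]$ is beside the point: shrinking the $[1,\infty)$-part of the neighborhood cannot repair a violation that happens in the $E$-coordinate. A useful sanity check: if the pure $t$-shift were an Eilenberg swindle here, the identical argument would apply to $\calo_*^G(E_{\calf},G,d_G) = \D_*^G(E_{\calf})$ itself and, via \ref{criterion}, would prove the Baum--Connes conjecture for every group; the paper only uses this shift swindle when the $E$-factor is a point, as in $\D_*^G(pt)$ in the proof of \ref{criterion}.

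The actual proof cannot avoid using the hypotheses that the simplices making up $Y_n$ have isotropy in $\calf$ and that $E = E_{\calf}G$: this is precisely where the universal property of $E$ enters. The swindle used in the paper is the one of \cite[7.4]{BLR}: since $d_n^{\infty}$ is infinite between distinct simplices, objects decompose over the $G$-orbits of simplices, and for a simplex with stabilizer in $\calf$ one uses an equivariant map of its orbit into $E$ to push objects \emph{within} the $E \times [1,\infty)$-factor towards the corresponding points of $E$ while moving up in $t$; this combined motion shrinks spatial extent in $E$ as $t$ grows and therefore stays continuously controlled, unlike the rigid translation. The remainder of your outline (vanishing of the sum term and the use of the fibration sequence from the second proof of \ref{homotopyfiber} to pass to the quotient) is reasonable, but it rests on the swindle and so does not stand as written.
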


\begin{prop}
\label{MV-stability}
The square induced from the attaching of the $N$-simplices to $X_{n}$
\[
\xymatrix{
\calo_{*}^{G}(E, (G \times \partial Y_{n}, \widetilde{d_{n}^{\infty}})_{n \in \IN})^{> \oplus} \ar[rr] \ar[dd] & & \calo_{*}^{G}(E, (G \times Y_{n}, \widetilde{d_{n}^{\infty}})_{n \in \IN})^{> \oplus} \ar[dd] \\
\\
\calo_{*}^{G}(E, (G \times X_{n}^{N-1}, \widetilde{d_{n}})_{n \in \IN})^{> \oplus} \ar[rr] & & \calo_{*}^{G}(E, (G \times X_{n}, \widetilde{d_{n}})_{n \in \IN})^{>\oplus}\\
}
\]
becomes homotopy cartesian after applying $K$-theory.
\end{prop}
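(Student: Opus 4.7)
The plan is to apply the coarse Mayer-Vietoris principle of Proposition \ref{MV} after re-encoding each corner of the square as the topological $K$-theory of a single controlled $C^{*}$-category via Remark \ref{productcoarse}. Concretely, I form the ambient coarse $G$-space $Z = \bigsqcup_{n \in \IN} G \times E \times [1, \infty) \times X_n$ equipped with the coarse structure prescribed there (uniform $n$-independent control in $G$- and $E$-directions, and the $\widetilde{d_n}$-control in the $n$-th factor). The right column of the square is then $\C^{G}_{*}(Z)^{> \oplus}$, the left column is the corresponding category for the subspace $\bigsqcup_n G \times E \times [1, \infty) \times X_n^{N-1}$, and the top row comes from the ``interior of top simplex'' region $\bigsqcup_n G \times E \times [1, \infty) \times Y_n$ and its boundary $\partial Y_n$, where the quotient $^{>\oplus}$ allows us to work stably in the index $n$.

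The first technical input is an identification, inside the $^{>\oplus}$-quotient, of $\calo^{G}_{*}(E, (G \times Y_n, \widetilde{d_n^\infty})_{n})^{>\oplus}$ with the subcategory of $\calo^{G}_{*}(E, (G \times X_n, \widetilde{d_n})_{n})^{>\oplus}$ supported on (thickenings of) the interiors of the top simplices. The reason is that a morphism which is uniformly $r$-controlled in the sequence has $l^{1}$-displacement at most $r/n$ inside $X_n$, so once $n$ exceeds a threshold depending only on $r$, such a morphism cannot connect different top simplices; hence the $d_n^{\infty}$-infinity between distinct simplices imposes no further restriction beyond what is already true of $\widetilde{d_n}$-controlled morphisms. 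Discarding the finitely many bad $n$ is exactly what the $^{>\oplus}$-quotient permits.

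With this identification in place, the main step is to verify coarse excisiveness of the triple $(Z, \text{``}Y\text{''}, \text{``}X^{N-1}\text{''})$ in the sense of Proposition \ref{MV}. Given $E$ controlled and $F$ in the object support, I need an $E'$ and $F'$ with the usual inclusion of thickenings. The geometry is simple: a point $x \in X_n$ lying within $\widetilde{d_n}$-distance $r$ of both the top-simplex-interior region and the $(N-1)$-skeleton has $l^{1}$-distance at most $r/n$ from both, and so for $n \gg r$ lies within distance $2r/n$ of $\partial Y_n$; the uniform bound $N$ on $\dim X_n$ ensures the constants do not degrade with $n$. Again, uniformity across $n$ is provided by the $^{>\oplus}$-quotient. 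Proposition \ref{MV} then produces a homotopy cartesian square at the unquotiented level, and compatibility follows because the analogous square for each fixed $n$ is itself coarsely excisive (here $d_n^\infty$ and $d_n$ agree on each simplex, so there is no mismatch for an individual $n$), making the $\oplus$-subcategory square a homotopy pullback as well; by comparing the two horizontal fiber sequences associated with $\oplus \to \prod \to {}^{>\oplus}$, the quotient square is homotopy cartesian too.

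The main obstacle is the discrepancy between the metric $d_n^\infty$ on $Y_n$ (which places distinct top simplices at infinite distance) and the subspace metric from $(X_n, d_n)$ (under which different top simplices can be joined through the $(N-1)$-skeleton). This forces the entire argument to live in the $^{>\oplus}$-quotient and requires delicate bookkeeping so that each of the four corner categories is genuinely realised as a controlled subcategory of $\C^{G}_{*}(Z)^{>\oplus}$ in a manner compatible with the structure maps; once this bookkeeping is set up, the coarse-geometric input is the straightforward $l^{1}$-estimate sketched above.
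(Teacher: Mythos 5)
Your overall strategy -- re-encoding each corner as a controlled category over the disjoint-union coarse space of Remark \ref{productcoarse}, replacing the left-hand column by thickened support conditions on the right-hand spaces, and exploiting the $^{>\oplus}$-quotient to discard the finitely many indices $n\leq r$ -- is the same as the paper's. But your ``first technical input'' contains a genuine gap, and it is exactly at the point you yourself identify as the main obstacle. You claim that a uniformly $r$-controlled morphism has $l^{1}$-displacement at most $r/n$ in $X_n$ and therefore, for $n$ larger than a threshold depending only on $r$, ``cannot connect different top simplices.'' This is false: two $N$-simplices of $X_n$ sharing an $(N-1)$-face contain pairs of points at arbitrarily small $l^{1}$-distance, so a $\widetilde{d_n}$-controlled morphism with arbitrarily small displacement can still have nonzero components between distinct top simplices, for every $n$. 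Consequently $\calo_{*}^{G}(E,(G\times Y_n, \widetilde{d_n^{\infty}})_n)^{>\oplus}$ does \emph{not} sit inside $\calo_{*}^{G}(E,(G\times X_n,\widetilde{d_n})_n)^{>\oplus}$ as a full subcategory, and the $d_n^{\infty}$-condition is a genuine extra restriction on morphisms that the $^{>\oplus}$-quotient alone cannot erase. The same problem reappears in your claim that the square is coarsely excisive ``for each fixed $n$,'' so it cannot be repaired by the fiber-sequence comparison at the end.

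The missing idea is that the offending components, while nonzero, are \emph{supported near the $(N-1)$-skeleton} and therefore die in the relative quotient by $A$ (respectively $B$), not in the $^{>\oplus}$-quotient. Concretely, one decomposes a morphism $\phi$ as $\psi+\tau$ where $\tau$ collects the components connecting different $N$-simplices; Lemma \ref{distancel1} (for $x$ in a top simplex $\Delta$ and $y\notin\Delta$ there is $z\in\partial\Delta$ with $d^{1}(x,z)\leq 2d^{1}(x,y)$) shows that the source and target of each such component lie within $2r$ of $\partial Y_n$ in $\widetilde{d_n}$, so $\tau$ factors through an object in $\calf_{A}^{X}$ and vanishes in $\C_{*}^{G}(X;\cale_X,\calf_{Gc}^{X})^{\oplus,A}$, while $\psi$ lifts to the $Y$-side. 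This is why the paper does not apply Proposition \ref{MV} directly to the big space but instead proves that the induced functor between the double quotients $\C_{*}^{G}(Y)^{\oplus,B}\rightarrow\C_{*}^{G}(X)^{\oplus,A}$ is an equivalence and then compares the two fiber sequences; note also that obtaining those fiber sequences requires the second proof of \ref{homotopyfiber}, since these quotients do not arise from killing a subspace. With Lemma \ref{distancel1} inserted at this point your argument can be completed along the paper's lines.
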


Together, these two propositions imply \ref{stability}:

\begin{proof}[Proof of \ref{stability}] The proof is now by induction on $N$. The case $N = 0$ is covered by \ref{Swindle-stability}. For the induction step, we argue in the diagram in \ref{MV-stability}. By induction hypothesis, the $K$-theory of the two left-hand categories vanish since the simplicial complexes there have dimension at most $N-1$. By \ref{Swindle-stability}, the $K$-theory of the upper right-hand category vanishes. We conclude that the $K$-theory of the lower right-hand category also vanishes since the square is homotopy cartesian after $K$-theory.
\end{proof}

\begin{proof}[Proof of \ref{Swindle-stability}]
This is proven by writing down an Eilenberg swindle on the coarse space (compare \ref{productcoarse}) underlying the category $\calo_{*}^{G}(E, (G \times Y_{n}, d_{n}^{\infty})_{n \in \IN})$ which passes to $\calo_{*}^{G}(E, (G \times Y_{n}, d_{n}^{\infty})_{n \in \IN})^{> \oplus}$. The Eilenberg swindle is then the same as the one in \cite[7.4]{BLR}. This is the place where the universal property of $E$ enters into the proof. 
\end{proof}

\begin{proof}[Proof of \ref{MV-stability}]
The proof is nearly the same as \cite[7.5]{BLR}. We give the details here to point out one subtle point one has to be careful about. To simplify notation, we abbreviate
\begin{align*}
B &= \coprod\limits_{n \in \IN} G \times \partial Y_n \times E \times [1, \infty) \\
A &= \coprod\limits_{n \in \IN} G \times X_n^{N-1} \times E \times [1, \infty) \\
Y &= \coprod\limits_{n \in \IN} G \times Y_n \times E \times [1, \infty) \\
X &= \coprod\limits_{n \in \IN} G \times  X_n \times E \times [1, \infty) \\
\end{align*}
and introduce coarse structures on these four spaces as follows. A set $E \subset B \times B$ is in $\cale_{B}$ if and only if there is for each $n$ a continuously controlled subset $J_{n} \subset (E \times [1, \infty))^{2}$ such that $E \subset \coprod\limits_{n \in \IN} p_{n}^{-1}(J_{n})$ and additionally, there is $R > 0$ such that whenever $((g,y,e,t),(g',y',e',t')) \in E$, we have $d(g,g') < R$ and $d_{n}^{\infty}(y,y') < R$. Note that the first condition on $E$ forces that $y$ and $y'$ lie in the same $\partial Y_{n}$ and compare \ref{productcoarse}. Similarly, we define $\cale_{A}, \cale_{Y}$ and $\cale_{X}$ with $d_n$ in place of $d_n^{\infty}$. Let $\calf^{B}_{Gc}$ be the set of subsets of $B$ of the form $\coprod\limits_{n \in \IN} K_{n}$, where $K_{n} \subset G \times \partial Y_n \times E \times [1, \infty)$ is the preimage of a $G$-compact set in $G \times \partial Y_n \times E$ under the canonical projection. Similarly, we define  $\calf^{A}_{Gc}$, $\calf^{X}_{Gc}$ and $\calf^{Y}_{Gc}$. Note that
\[
\calo_{*}^{G}(E, (G \times \partial Y_{n}, \widetilde{d_{n}^{\infty}})_{n \in \IN}) = \C_{*}^{G}(B; \cale_{B}, \calf^{B}_{Gc})
\]
and similarly for $A,X,Y$; compare \ref{productcoarse}.  We furthermore define $\calf^{B}_{\oplus}$ to be the collection of subsets of $B$ of the form
\[
\coprod\limits_{n=1}^{k} G \times \partial Y_{n} \times E \times [1, \infty)
\]
for $k \in \IN$ and similarly we define $\calf^{A}_{\oplus}$, $\calf^{X}_{\oplus}$, $\calf^{Y}_{\oplus}$. We have inclusions
\[
\C_{*}^{G}(B; \cale_{B}, \calf_{Gc}^{B} \cap \calf_{\oplus}^{B}) \rightarrow \C_{*}^{G}(B; \cale_{B}; \calf_{Gc}^{B})
\]
and similarly for $A,X,Y$. The quotient of this inclusion is precisely $\calo_{*}^{G}(E, (G \times \partial Y_{n}, \widetilde{d_{n}^{\infty}})_{n \in \IN})^{> \oplus}$; we from now on denote this quotient category by $\C_{*}^{G}(B; \cale_{B}, \calf_{Gc}^{B})^{> \oplus}$ from now on. The diagram we consider in the proposition then becomes
\[
\xymatrix{
\C_{*}^{G}(B; \cale_{B}, \calf_{Gc}^{B})^{> \oplus} \ar[rr] \ar[dd] & &\C_{*}^{G}(Y; \cale_{Y}, \calf_{Gc}^{Y})^{> \oplus}\ar[dd] \\
\\
\C_{*}^{G}(A; \cale_{A}, \calf_{Gc}^{A})^{> \oplus}\ar[rr] & &\C_{*}^{G}(X; \cale_{X}, \calf_{Gc}^{X})^{> \oplus} \\
}
\]
As in the proof of \ref{MV}, we would like to replace $A$ and $B$ by $X$ and $Y$ with some changed control conditions. For $R > 0$ and $F \in \calf_{Gc}^{B}$, let $F^{R}$ be the subset of $Y$ consisting of all points $(g,y,e,t)$ such that there is $(g, y',e,t) \in F$ with $d_{n}^{\infty}(y,y') < R$ and let $\calf_{B}^{Y}$ be the collection of all these subsets; and similarly for $\calf_{A}^{X}$. We obtain inclusion functors
\begin{align*}
\C_{*}^{G}(B; \cale_{B}; \calf^{B}_{Gc})^{> \oplus} &\rightarrow \C_{*}^{G}(Y; \cale_{Y}, \calf_{Gc}^{Y} \cap \calf_{B}^{Y})^{> \oplus} \\
\C_{*}^{G}(A; \cale_{A}; \calf^{A}_{Gc})^{> \oplus} &\rightarrow \C_{*}^{G}(X; \cale_{X}, \calf_{Gc}^{X} \cap \calf_{A}^{X})^{> \oplus}
\end{align*}
and these functors are equivalences of categories. Indeed, that these functors are full and faithful is a direct consequence of the definitions. To see essential surjectivity, start with an object 
\[
M \in \C_{*}^{G}(Y; \cale_{Y}, \calf_{Gc}^{Y} \cap \calf_{B}^{Y})
\]
It is supported over some $F^{R}$ where $F \in \calf_{Gc}^{B}$. Because of the $> \oplus$-quotient, we can assume that $M$ is supported only over $\coprod\limits_{n=k}^{\infty} G \times Y_{n} \times E \times [1, \infty)$ where $k$ is chosen such that $k > 2R$. For $(g,y,e,t)$ in the support of $M$, we hence find $y' \in B$ such that $d(y,y') < R$, and we can make these choices in a $G$-invariant way. Pushing $M_{(g,y,et)}$ forward to $M_{(g,y',e,t)}$, we obtain a potential object of $\C_{*}^{G}(B; \cale_{B}; \calf^{B}_{Gc})^{> \oplus}$, and similarly for $A,X$. The only potential worry is that this object may not be locally finite. But this is impossible since for fixed $g \in G$, $e \in E$, $t \in [1, \infty)$, only finitely many points of the form $(g,y,e,t)$ in the support of $M$ can have distance $< R$ from a given $(g,y',e,t)$ since any such $y$ has to lie in a simplex intersecting a simplex containing $y'$ thanks to the assumption $k > 2R$, but the set of all such $y'$ has to be compact since else the support of $M$ would not be $G$-compact in the $G \times Y \times E$-direction. Clearly the object we constructed is isomorphic to $M$, which finishes the claim. \\
So we now consider the diagram
\[
\xymatrix{
\C_{*}^{G}(Y; \cale_{Y}, \calf_{Gc}^{Y} \cap \calf_{B}^{Y})^{> \oplus} \ar[rr] \ar[d] & &\C_{*}^{G}(Y; \cale_{Y}, \calf_{Gc}^{Y})^{> \oplus}\ar[d] \\
\C_{*}^{G}(X; \cale_{X}, \calf_{Gc}^{X} \cap \calf_{A}^{X})^{> \oplus}\ar[rr] & &\C_{*}^{G}(X; \cale_{X}, \calf_{Gc}^{X})^{> \oplus} \\
}
\]
and want to see it is homotopy cartesian. To see this, we again argue as in \ref{MV} on the quotient categories; note that we really need the second proof of the fibration sequence theorem \ref{homotopyfiber} to conclude that we obtain fiber sequences in $K$-theory: This is not a quotient which arises out of killing a subspace. \\
So we consider the induced functor $F$ on the quotients
\[
\C_{*}^{G}(Y; \cale_{Y}, \calf^{Y}_{Gc})^{\oplus, B} \rightarrow \C_{*}^{G}(X; \cale_{X}, \calf^{X}_{Gc})^{\oplus,A}
\]
Since in the right-hand category we ignore all objects supported over $A$, we may assume that an object is only supported over $X-A$. But since $X-A \cong Y-B$, this means that $F$ is essentially surjective. \\
The functor $F$ is also faithful since the original functor 
\[
\C_{*}^{G}(Y; \cale_{Y}, \calf^{Y}_{Gc})^{\oplus} \rightarrow \C_{*}^{G}(X; \cale_{X}, \calf^{X}_{Gc})^{\oplus}
\]
was and the preimage of each $F \in \calf_{A}^{X}$ is contained in some $F' \in \calf_{B}^{Y}$ because of the condition $d_{n}(x,y) \geq nd^{1}(x,y)$ in $X_{n}$. \\
Now let $\phi$ be any morphism in the category $\C_{b}^{G}(X; \cale_{X}, \calf^{X}_{Gc})$. We can write $\phi$ as a  sum $\psi + \tau$ where $\psi$ has no component connecting different $N$-simplices of $\coprod_{n \in \IN} X_{n}$ and $\tau$ only has such components. We can lift $\psi$ to $\C_{b}^{G}(Y; \cale_{Y}, \calf^{Y}_{Gc})$. Lemma \ref{distancel1} below implies that $\tau$ factors over some $F \in \calf_{A}^{X}$ and is hence $0$ in the quotient $\C_{b}^{G}(X; \cale_{X}, \calf^{X}_{Gc})^{\oplus,A}$. Since $C^{*}$-functors have closed image, this implies that 
\[
\C_{*}^{G}(Y; \cale_{Y}, \calf^{Y}_{Gc})^{\oplus, B} \rightarrow \C_{*}^{G}(X; \cale_{X}, \calf^{X}_{Gc})^{\oplus,A}
\]
is full. This finishes the proof.
\end{proof}

\begin{lem}
\label{distancel1}
For an $n$-dimensional simplicial complex $X$, a point $x$ in some $n$-simplex $\Delta$ and a point $y \in X-\Delta$ there is $z \in \partial \Delta$ such that $d^{1}(x,z) \leq 2d^{1}(x,y)$.
\end{lem}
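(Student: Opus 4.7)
The plan is to find the desired $z \in \partial\Delta$ by ``radially projecting'' $y$ onto $\Delta$ in barycentric coordinates. Write $x = \sum_{i \in S} a_i m_i$ with $S = V(\Delta)$, and $y = \sum_{i} b_i m_i$ over all vertices of $X$. Let $\sigma = \sum_{i \notin S} b_i$ be the mass of $y$ outside $\Delta$. The candidate will be $z$ defined by $z_i = b_i/(1-\sigma)$ for $i \in S$ and $z_i = 0$ otherwise; this is a convex combination of vertices of $\Delta$, so it lies in $\Delta$.

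The first task is to verify that $z$ actually lies in $\partial \Delta$, not in the interior. Here the dimension hypothesis does the work: since $X$ is $n$-dimensional and $\Delta$ is an $n$-simplex, $\Delta$ is maximal among simplices of $X$. Now $y \in X\setminus\Delta$ lies in some simplex $\Delta'$ of $X$, and if $V(\Delta') \supseteq V(\Delta)$ then $\Delta' = \Delta$ by maximality, forcing $y \in \Delta$ -- a contradiction. Hence there exists $i^* \in V(\Delta) \setminus V(\Delta')$, so $b_{i^*} = 0$, and thus $z_{i^*} = 0$, so $z \in \partial \Delta$ as required. This is the only ``nontrivial'' step conceptually; it is what makes the hypothesis that $\Delta$ has top dimension essential.

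The remaining step is the straightforward estimate. By the triangle inequality applied to $a_i(1-\sigma) = a_i - a_i\sigma$, one gets $|a_i - b_i/(1-\sigma)| \leq (|a_i - b_i| + a_i\sigma)/(1-\sigma)$; summing over $i \in S$ yields
\[
d^1(x,z) \;\leq\; \frac{\sum_{i \in S}|a_i - b_i| + \sigma}{1 - \sigma} \;\leq\; \frac{d^1(x,y)}{1-\sigma}.
\]
If $\sigma \leq \tfrac{1}{2}$ this already gives $d^1(x,z) \leq 2\, d^1(x,y)$. Otherwise, I will use that $\sum_{i \in S}(a_i - b_i) = \sigma$ forces $\sum_{i \in S}|a_i - b_i| \geq \sigma$, hence $d^1(x,y) \geq 2\sigma > 1$, while $d^1(x,z) \leq 2$ since both $x$ and $z$ are probability distributions; the bound follows again. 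Both cases together yield the lemma.
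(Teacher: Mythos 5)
Your proof is correct, and it is essentially the standard argument for this fact: the paper itself gives no proof, deferring to \cite[7.15]{BLR}, and your barycentric rescaling $z_i = b_i/(1-\sigma)$ together with the dichotomy $\sigma \le \tfrac{1}{2}$ versus $\sigma > \tfrac{1}{2}$ is the same computation carried out there; in particular you correctly isolate the top-dimensionality of $\Delta$ as the reason some $b_{i^*}$ vanishes, which is exactly where the hypothesis is needed (the statement fails for non-maximal simplices). The only loose end is the degenerate case $\sigma = 1$, where your formula for $z$ reads $0/0$; but there $d^{1}(x,y) = 2$ and any $z \in \partial\Delta$ satisfies $d^{1}(x,z) \le 2$, so one line disposes of it.
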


\begin{proof}
See \cite[7.15]{BLR}
\end{proof}

\chapter{Rational results}

In this section, we employ the Artin induction theorem \ref{repcyclic} to obtain rational results or results after localisation at a prime for the Baum-Connes or Farrell-Jones conjecture. For this end, we change the definition of a Farrell-Hsiang group from \cite{BFL} slightly so that it fits into this context.

\section{Rational Farrell-Hsiang groups}

\begin{definition}
Let $G$ be a group with a word metric $d_{G}$ and $\calf$ a family of subgroups of $G$.
\begin{enumerate}
\item We say that $G$ is a rational Farrell-Hsiang group with respect to $\calf$ if the following condition is satisfied: \\
There exists a natural number $N$ and for each $n \in \IN$ there is a surjective group homomorphism $\alpha_{n} \colon G \rightarrow F_{n}$ with $F_{n}$ finite such that for each \emph{cyclic} subgroup $C \subset F_{n}$ and $H = \alpha_{n}^{-1}(C) \subset G$, there is a simplicial complex $E_{H}$ of dimension at most $N$ and a simplicial $H$-action with isotropy in $\calf$, and an $H$-equivariant map $f_{H} \colon G \rightarrow E_{H}$ such that
\[
d^{1}_{E_{H}}(f_{H}(g), f_{H}(h)) \leq \frac{1}{n}
\] 
for all $g,h \in G$ with $d_{G}(g,h) \leq n$, where $d^{1}_{E_{H}}$ is the $l^{1}$-metric on $E_{H}$
\item If in addition each of the finite quotients $F_{n}$ has order not divisible by a prime $p$, we say that $G$ is a Farrell-Hsiang group with respect to $\calf$ at the prime $p$.
\end{enumerate}
\end{definition}

\begin{thm}
\label{rationalbcc}
If $G$ is a rational Farrell-Hsiang group, the Baum-Connes assembly map for $\calf$ is a rational isomorphism. If $G$ is a Farrell-Hsiang group at the prime $p$, the Baum-Connes assembly map for $\calf$ is an isomorphism after localising at $p$.
\end{thm}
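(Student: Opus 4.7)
The plan is to rerun the core argument from Chapter 6 almost verbatim, with the Brauer induction theorem replaced by the Artin induction theorem \ref{repcyclic}, and to track carefully the integer factor $|F_n|$ this change introduces. By \ref{criterion}, it suffices to show that $K_* \calo_*^G(E_{\calf}G, G, d_G)$ is torsion (for the rational statement) or vanishes after localisation at $p$ (for the prime statement).

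First I would adapt the surjectivity statement \ref{surjective}. Fix $n$ and apply \ref{repcyclic} to $F_n$: there exist $T_C^0 \in \Sw^u(C)$, indexed by cyclic subgroups $C \subset F_n$, with $|F_n|\cdot 1_{\Sw^u(F_n)} = \sum_C \ind_C^{F_n} T_C^0$. Pulling back along $\alpha_n \colon G \to F_n$ and using that induction commutes with pullback when $H = \alpha_n^{-1}(C)$ is the full preimage, one obtains $T_H \in \Sw^u(H)$ with $|F_n|\cdot 1_{\Sw^u(G)} = \sum_{H\in \calh_n} \ind_H^G T_H$. Writing $T_H = T_H^+ - T_H^-$ and plugging into the construction of \ref{pm} gives $C^*$-functors $F_n^\pm \colon \calo_*^G(E,G,d_G) \to \calo_*^G(E, S_n, d_{S_n})$ with $K_*(p_n\circ F_n^+) - K_*(p_n \circ F_n^-) = |F_n|\cdot \mathrm{id}$. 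Since these are $C^*$-functors (norm $\leq 1$) which still preserve the $R$-controlled property uniformly in $n$, they assemble exactly as in \ref{surjective} to functors $F^\pm \colon \calo_*^G(E,G,d_G) \to \calo_*^G(E,(S_n,d_{S_n})_{n\in\IN})$ such that $P_k \circ F^\pm = p_k \circ F_k^\pm$ for every $k$.

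Now the core diagram argument runs as before. Given $a \in K_*\calo_*^G(E_\calf G, G,d_G)$, set $a' = K_*(F^+)(a) - K_*(F^-)(a)$; then $(P_k)_*(a') = |F_k|\cdot a$ for all $k$. The horizontal map $F$ from \ref{horizontal} still exists (its construction uses only the contracting property of $f_H$, which is unchanged in the definition of a rational Farrell-Hsiang group), and satisfies $Q_k \circ F = P_k$. The stability theorem \ref{stability} applies verbatim since the dimensions of the $E_H$ are still uniformly bounded by $N$ and the isotropy still lies in $\calf$. Hence $F_*(a')$ lies in the image of $K_*\bigl(\bigoplus_{n} \calo_*^G(E,X_n,d_{X_n})\bigr)$ and therefore comes from finitely many summands, say those with $n \leq k_0$. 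Then $(Q_{k_0+1})_*(F_*(a')) = 0$, and by commutativity of the core diagram
\[
|F_{k_0+1}|\cdot a \;=\; (P_{k_0+1})_*(a') \;=\; (Q_{k_0+1})_*(F_*(a')) \;=\; 0.
\]

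This shows that every element of $K_*\calo_*^G(E_\calf G, G,d_G)$ is annihilated by some $|F_n|$, so the group is torsion, whence \ref{criterion} yields that the Baum-Connes assembly map for $\calf$ is a rational isomorphism. In the Farrell-Hsiang-at-$p$ case, each $|F_n|$ is coprime to $p$, hence invertible in $\IZ_{(p)}$, so $K_*\calo_*^G(E_\calf G,G,d_G)\otimes \IZ_{(p)} = 0$ and the assembly map becomes an isomorphism after localisation at $p$. I do not expect any serious obstacle here; the only point requiring care is the compatibility between Artin induction applied inside $F_n$ and the pullback to $G$, i.e.\ checking that $\ind_H^G$ of the pullback of $T_C^0$ equals the pullback of $\ind_C^{F_n} T_C^0$, which follows from the fact that $H$ is the full preimage of $C$ so that $G/H \to F_n/C$ is a bijection on coset representatives.
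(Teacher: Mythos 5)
Your proposal is correct and follows essentially the same route as the paper: replace Brauer by Artin induction to get $|F_n|\cdot 1_{\Sw^u(G)} = \sum_H \ind_H^G T_H$, rerun the transfer construction of \ref{pm} and \ref{surjective} to obtain $(P_k)_*(a') = |F_k|\cdot a$, and then conclude from the core diagram that every class is annihilated by some $|F_k|$ (the paper packages this last step as Propositions \ref{rational} and \ref{p-local}). Your extra remark on the compatibility of induction with pullback along $\alpha_n$ is a point the paper leaves implicit, but it is the same argument.
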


To prove this, recall the main diagram we organized the proof around:

\[
\xymatrix{
& & \oplus_{n \in \IN} \calo^{G}_{*}(E,X_{n}, d_{X_{n}}) \ar[d]^{{\text{inc}}} \\
\calo^{G}_{*}(E, (S_{n}, d_{S_{n}})_{n \in \IN}) \ar[dd]_{P_{k}} \ar[rr]^{F} & & \calo_{*}^{G}(E,(X_{n},d_{X_{n}})_{n \in \IN}) \ar[dd]^{{Q_{k}}} \\
\\
\calo^{G}_{*}(E,G,d_{G}) \ar[rr]^{\id} & &  \calo^{G}_{*}(E,G,d_{G})
}
\]

We will again argue in this diagram, with slightly changed definitions. 
Let $\calh_{n}$ be the family of subgroups of $G$ obtained by pulling back the family of \emph{cyclic} subgroups of $F_{n}$.  As before, we define pseudo-metric spaces $X_{n}, S_{n}$ as

\begin{align*}
X_{n} &= G \times \coprod_{H \in \calh_{n}} \ind_{H}^{G} E_{H} \\
S_{n} &=   \coprod_{H \in \calh_{n}} G \times G/H
\end{align*}

with metrics

\begin{align*}
d_{X_{n}}((g,x),(h,y)) &= d_{G}(g,h)+n \cdot d^{1}_{\ind_{H}^{G} E_{H}} (x,y)\\
d_{S_{n}}((g,aH), (h,bK)) &= \begin{cases} 
d_{G}(g,h) \text{ if } K=H,aH = bK \\
\infty \text{ else }
\end{cases}
\end{align*}
where $d^{1}$ denotes the $l^{1}$-metric.  The only difference to the definitions we had before is that $H$ only runs through the preimages of cyclic subgroups of $F_{n}$. The existence of the upper horizontal map is proven precisely as in \ref{horizontal}. Theorem \ref{rationalbcc} will be a consequence of the following two propositions and some basic representation theory of finite groups.

\begin{prop}
\label{rational}
Assume that for each $a \in K_{*}(\calo_{*}^{G}(E,G,d_{G}))$ there is $b \in K_{*} \calo^{G}_{*}(E, (S_{n}, d_{S_{n}})_{n \in \IN})$ such that for all $k$, the element $K_{*}(P_{k})(b)$ is some integral multiple of $a$. Then we have
\[
K_{*}(\calo_{*}^{G}(E,G,d_{G})) \otimes \IQ = 0
\] 
and hence the assembly map for $\calf$ is a rational isomorphism.
\end{prop}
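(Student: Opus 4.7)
The plan is to adapt, essentially verbatim, the diagram chase used just below diagram \ref{core} to prove that $K_*(\calo_*^G(E,G,d_G))=0$ in the ordinary Farrell-Hsiang setting, taking care to track the integer multipliers produced by the weakened hypothesis. I would begin by picking an arbitrary $a \in K_*(\calo_*^G(E,G,d_G))$ and invoking the hypothesis to obtain $b \in K_* \calo_*^G(E,(S_n,d_{S_n})_{n \in \IN})$ together with integers $N_k$ such that $K_*(P_k)(b)=N_k \cdot a$ for every $k \geq 1$.

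Next I would push $b$ across the top of the core diagram. Setting $c = F_*(b)$ and appealing to the stability theorem \ref{stability}, which asserts that $K_*(\text{inc})$ is an isomorphism, I would produce a preimage $b' \in K_*\bigl(\bigoplus_{n \in \IN} \calo_*^G(E,X_n,d_{X_n})\bigr)$ with $\text{inc}_*(b')=c$. Because the $K$-theory of a countable direct sum of $C^*$-categories splits as the direct sum of the $K$-groups of the summands (compare \ref{unions}), the element $b'$ is automatically concentrated in finitely many summands, so there exists $k \in \IN$ with $Q_{k+1}\bigl(\text{inc}_*(b')\bigr)=0$. By the construction of $F$ in \ref{horizontal}, which makes the core diagram commute, this forces $P_{k+1}(b)=0$, i.e. $N_{k+1}\cdot a = 0$ in $K_*(\calo_*^G(E,G,d_G))$.

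Hence $a$ is annihilated by the integer $N_{k+1}$, so $a \otimes 1 = 0$ in $K_*(\calo_*^G(E,G,d_G)) \otimes \IQ$. Since $a$ was arbitrary, the whole rational $K$-group vanishes, and combining this with \ref{criterion}, which identifies vanishing of $K_*(\calo_*^G(E_{\calf},G,d_G))$ with the Baum-Connes assembly map for $\calf$ being an isomorphism, yields the second claim after rationalization. The one point that will need to be flagged, and which I expect to be the only real obstacle to be explicit about, is that the integers $N_{k+1}$ extracted from the hypothesis must be nonzero for the conclusion to carry any content; this is to be read into the phrase ``some integral multiple'' and is automatic in the intended applications, where the hypothesis will be verified by feeding the Artin induction theorem \ref{repcyclic} into the transfer construction of Section~7, so that $N_{k+1}$ is a divisor of $|F_{k+1}|$ and in particular nonzero.
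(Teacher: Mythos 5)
Your argument is correct and is essentially identical to the paper's own proof: push $b$ across the core diagram via $F$, lift through the stability isomorphism $K_*(\mathrm{inc})$, use that the lift lives in finitely many summands to kill $Q_{k+1}$, and conclude that $a$ is annihilated by an integer. Your closing caveat is well taken — as literally stated the hypothesis is satisfied by $b=0$ with multiplier $0$, so the nonvanishing of the $N_k$ must indeed be read into the statement; the paper's proof glosses over this in exactly the same way, and it is only in the application (where the multiplier is $|F_k|$, or an integer prime to $p$ in \ref{p-local}) that the argument has content.
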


\begin{proof}
Consider $K_{*}(F)(b) \in K_{*}\calo_{*}^{G}(E,(X_{n},d_{X_{n}})_{n \in \IN})$. Pick a preimage $c$ of $K_{*}(F)(b)$ in $K_{*} \oplus_{n \in \IN} \calo^{G}_{*}(E,X_{n}, d_{X_{n}})$ and choose $k$ large enough such that $K_{*}(Q_{k} \circ \text{inc})(c) = 0$, which is possible by \ref{stability}.
It follows that $K_{*}(Q_{k} \circ F)(b) = 0$, but by assumption, $K_{*}(Q_{k} \circ F)(b) = K_{*}(P_{k})(b)$ is a multiple of $a$. So there is an $n$ with $na = 0$ which proves the claim. 
\end{proof}

\begin{prop}
\label{p-local}
Let $p$ be a prime. If  for each $a \in K_{*}(\calo_{*}^{G}(E,G,d_{G}))$ there is $b \in K_{*} \calo^{G}_{*}(E, (S_{n}, d_{S_{n}})_{n \in \IN})$ such that for all $k$, $K_{*}(P_{k})(b) = n_{k}a$ with $n_{k}$ not divisible by $P$, then
\[
K_{*}(\calo_{*}^{G}(E,G,d_{G})) \otimes \IZ_{(p)}= 0
\] 
and hence the assembly map for $\calf$ is an isomorphism after localizing at $p$.
\end{prop}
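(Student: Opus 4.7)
The plan is to repeat verbatim the argument of Proposition \ref{rational}, tracking the integers $n_{k}$ and then invoking localization at $p$, followed by a translation from vanishing of $K_{*}(\calo_{*}^{G}(E,G,d_{G})) \otimes \IZ_{(p)}$ to the assembly map statement via Proposition \ref{criterion}.

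First I would fix $a \in K_{*}(\calo_{*}^{G}(E,G,d_{G}))$ and use the hypothesis to produce $b \in K_{*} \calo^{G}_{*}(E, (S_{n}, d_{S_{n}})_{n \in \IN})$ with $K_{*}(P_{k})(b) = n_{k} a$ for all $k$, where each $n_{k}$ is coprime to $p$. Next, push $b$ forward via $F$ to obtain an element $K_{*}(F)(b) \in K_{*}\calo_{*}^{G}(E,(X_{n},d_{X_{n}})_{n \in \IN})$. By the stability result \ref{stability}, the inclusion $\mathrm{inc}$ induces an isomorphism in topological $K$-theory, so I can lift this class to some $c \in K_{*}\bigl(\bigoplus_{n \in \IN} \calo^{G}_{*}(E,X_{n}, d_{X_{n}})\bigr)$. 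Since any element of this $K$-theory group is represented by an actual morphism, hence lives in finitely many summands, there exists $k$ such that $K_{*}(Q_{k+1} \circ \mathrm{inc})(c) = 0$. Commutativity of the core diagram \eqref{core} then gives
\[
n_{k+1} \cdot a = K_{*}(P_{k+1})(b) = K_{*}(Q_{k+1} \circ F)(b) = K_{*}(Q_{k+1} \circ \mathrm{inc})(c) = 0
\]
in $K_{*}(\calo_{*}^{G}(E,G,d_{G}))$.

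Now I would localize at $p$: since $p \nmid n_{k+1}$, the integer $n_{k+1}$ is a unit in $\IZ_{(p)}$, so the identity $n_{k+1}\cdot a = 0$ forces $a \otimes 1 = 0$ in $K_{*}(\calo_{*}^{G}(E,G,d_{G})) \otimes \IZ_{(p)}$. As $a$ was arbitrary, this yields $K_{*}(\calo_{*}^{G}(E,G,d_{G})) \otimes \IZ_{(p)} = 0$.

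Finally, for the assembly map statement, I would revisit the diagram used in the proof of Proposition \ref{criterion}. There, the homotopy fiber of the assembly map is identified (up to equivalence) with $K \calo_{*}^{G}(E_{\calf},G,d_{G})$ by comparing the two horizontal fiber sequences and noting that $K\D^{G}_{*}(pt)$ vanishes via an Eilenberg swindle and that the left-hand vertical map is a $K$-equivalence. Localization at $p$ commutes with homotopy fibers of spectra and preserves equivalences, so the $p$-localized assembly map is an equivalence iff $K_{*}(\calo_{*}^{G}(E_{\calf},G,d_{G})) \otimes \IZ_{(p)} = 0$, which is exactly what we just proved. The main thing to be careful about is simply that the comparison in \ref{criterion} is natural enough to survive $p$-localization, but this is immediate since everything there is built from maps of spectra.
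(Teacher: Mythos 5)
Your proposal is correct and follows essentially the same route as the paper's proof: push $b$ through $F$, lift along the stability isomorphism to the direct-sum category, use that the lift lives in finitely many summands to find $k$ with $K_{*}(Q_{k}\circ\mathrm{inc})$ killing it, conclude $n_{k}a=0$ by commutativity of the core diagram, and invert $n_{k}$ in $\IZ_{(p)}$. Your added remark on deducing the $p$-local assembly statement from \ref{criterion} (localization commuting with fiber sequences of spectra) is a correct elaboration of what the paper leaves implicit.
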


\begin{proof}
As above, consider $K_{*}(F)(b) \in K_{*}\calo_{*}^{G}(E,(X_{n},d_{X_{n}})_{n \in \IN})$. Pick a preimage $c$ of $K_{*}(F)(b)$ in $K_{*} \oplus_{n \in \IN} \calo^{G}_{*}(E,X_{n}, d_{X_{n}})$ and choose $k$ large enough such that $K_{*}(Q_{k} \circ \text{inc})(c) = 0$. So $K_{*}(Q_{k} \circ F)(b) = 0$, but by assumption, $K_{*}(Q_{k} \circ F)(b) = n_{k}a$. So $n_{k}a = 0$, and since $n_{k}$ is inverted when we localize at $p$, $a = 0$ in  $K_{*}(\calo_{*}^{G}(E,G,d_{G})) \otimes \IZ_{p}$. 
\end{proof}

\begin{proof}[Proof of \ref{rationalbcc}.]
We check the conditions of \ref{rational} respectively \ref{p-local}. Let $n \in \IN$ be given and let $\calh$ be the collection  of subgroups of $G$ consisting of $H \subset G$ which are preimages of cyclic subgroups of $F_{n}$ under $\alpha_{n}$. Let $p_{n} \colon O_{*}^{G}(E,S_{n}, d_{S_{n}}) \rightarrow \calo_{*}^{G}(E,G,d_{G})$ be the projection. As in \ref{pm}, we construct functors
\[
F_{n}^{\pm} \colon \calo_{*}^{G}(E,G,d_{G}) \rightarrow \calo_{*}^{G}(E,S_{n},d_{S_{n}})
\]
such that $K_{*}(p_{n} \circ F_{n}^{+})-K_{*}(p_{n} \circ F_{n}^{-})$ is multiplication with the order of $F_{n}$. Then the argument proceeds as in the proof of \ref{surjective}. Let $m$ be the order of $F_{n}$. 
By the Artin induction theorem \ref{repcyclic}, we find elements $T_{H} \in \Sw(\IC,H)$ for $H \in \calh$ with
\[
m_{\Sw(\IC,G)} = \sum\limits_{H \in \calh} \ind_{H}^{G} T_{H}
\]
 Pick $T_{H}^{\pm} \in \Mod_{(\IC,H)}$ with $T_{H} = T_{H}^{+}-T_{H}^{-}$. Let 
 \[
 \phi_{H} \colon \calo_{*}^{G}(E, G \times G/H, d) \rightarrow \calo_{*}^{G}(E, S_{n}, d_{S_{n}})
 \]
 be the map induced by the inclusion $G \times G/H \rightarrow S_{n}$. Now define $F_{H}^{\pm} \colon \calo_{*}^{G}(E,G,d_{G}) \rightarrow \calo_{*}^{G}(E,S_{n},d_{S_{n}})$ by
 \[
 F_{H}^{\pm} = \phi_{H} \circ \tr_{H}(T_{H}^{\pm},-)
 \]
 and set
 \[
 F_{n}^{\pm} = \oplus_{H \in \calh} F_{H}^{\pm}
 \]
 The proof of and the definitions in \ref{surjective}  can be copied word-by-word, with the only difference in the end being that
 \[
 K_{*}(p_{n} \circ F_{n}^{+}) - K_{*}(p_{n} \circ F_{n}^{-})(a) = ma
 \]
 instead of $a$. 
 \end{proof}

The same proof yields a similar statement for the Farrell-Jones conjecture:

\begin{thm}
Let $G$ be a rational Farrell-Hsiang group with respect to the family $\calf$. Let $A$ be an additive category with $G$-action which is enriched over a field $F$ of characteristic $0$. Then the Farrell-Jones conjecture with coefficients in $A$ with respect to the family $\calf$ is true rationally. Similarly, if $G$ is a Farrell-Hsiang group with respect to $\calf$ at the prime $p$, then the Farrell-Jones conjecture with coefficients in $A$ with respect to $\calf$ is true after inverting $p$.
\end{thm}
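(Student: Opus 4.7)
The plan is to run the entire machinery developed for the Baum--Connes conjecture in the preceding sections, but with the topological $K$-theory of $C^{*}$-categories replaced by the non-connective algebraic $K$-theory of additive categories with involution. Most of the bookkeeping is identical; we only need to check that each ingredient has an algebraic counterpart which uses no more than ``characteristic $0$'' on the coefficient side.

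First I would set up the algebraic analogue of the core diagram. Replace $\calD_{*}^{G}(-)$ with the additive category $\calD_{b}^{G,A}(-)$ built from $A$ in the standard BFJR-style way (objects are locally finite $A$-modules indexed by $G\times X\times[1,\infty)$, morphisms are matrix morphisms subject to the same continuously controlled, $d_{G}$-bounded, $G$-compact conditions), and analogously define the product-like categories $\calO^{G,A}_{b}(E,(S_{n},d_{S_{n}})_{n})$ and $\calO^{G,A}_{b}(E,(X_{n},d_{X_{n}})_{n})$ by the norm-free analogue (fiberwise $\calD_{b}$ with a uniform bound on morphism control in the $X_{n}$-direction). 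The analogue of Proposition \ref{criterion} for $\calD_{b}^{G,A}$, namely that the Farrell--Jones assembly map for $G$ with coefficients in $A$ and family $\calf$ is an equivalence iff $K^{-\infty}_{*}\calD_{b}^{G,A}(E_{\calf})=0$, is exactly the identification used in \cite{BFJR}/\cite{BLR}; the same argument via the equivariant homology theory chapter applies word-for-word, only the fibration sequence Proposition \ref{homotopyfiber}(i) (which is the truly algebraic Karoubi filtration statement) is invoked.

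Next I would reproduce the horizontal map $F$ and the stability result. The construction of $F$ in Theorem \ref{horizontal} is purely about metric spaces and control conditions and does not see whether one later completes the hom-spaces or not, so the identical formulas give an additive functor $F$ fitting into the algebraic analogue of diagram \eqref{core}, with $Q_{k}\circ F = P_{k}$. The stability statement \ref{stability} in the algebraic setting is precisely \cite[7.2]{BLR}; its proof (Eilenberg swindle at the top dimension, Mayer--Vietoris induction on simplicial dimension using \ref{distancel1}) is the blueprint for our version and carries over verbatim once one uses the algebraic Mayer--Vietoris and Karoubi-filtration fibration sequences.

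The crucial step is the transfer. One defines the Swan-group pairing
\[
\mathrm{tr}\colon\Mod_{(F,G)}\times\calD_{b}^{G,A}(E,G,d_{G})\longrightarrow\calD_{b}^{G,A}(E,G,d_{G})
\]
by the very same formula \eqref{defoftransfer}, with the tensor product now taken over $F$; biexactness and functoriality are checked as in \ref{action}, and the unitarity condition (which was only needed to bound operator norms in the $C^{*}$-setting) can be dropped entirely. Exactly the same formula gives $\mathrm{tr}_{H}$ for each finite-index $H\subset G$, and the analogue of Lemma \ref{surjectivity} follows. With these pairings in hand, the construction of the functors $F_{n}^{\pm}$ in the proof of Theorem \ref{rationalbcc} goes through unchanged: for the $n$-th finite quotient $F_{n}=\alpha_{n}(G)$ of order $m_{n}$, Artin induction \ref{repcyclic} (valid over any characteristic $0$ field $F$, since its proof only uses that $|F_{n}|$ is invertible in the coefficient ring of the representation ring) produces classes $T_{H}\in\Sw^{u}(F,H)$ with $\sum_{H\in\calh_{n}}\ind_{H}^{G}T_{H}=m_{n}\cdot1$, and the sum of the corresponding $\mathrm{tr}_{H}(T_{H}^{\pm},-)$ composed with the obvious inclusions yields $F_{n}^{\pm}$ with
\[
K^{-\infty}_{*}(p_{n}\circ F_{n}^{+})-K^{-\infty}_{*}(p_{n}\circ F_{n}^{-})=m_{n}\cdot\mathrm{id}.
\]
Assembling the $F_{n}^{\pm}$ into honest functors $F^{\pm}$ to the product-style category uses only that they are additive and respect control uniformly in $n$, which is immediate. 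Then applying the same diagram chase as in \ref{rational} and \ref{p-local} to the algebraic analogue of diagram \eqref{core} gives: for every $a\in K^{-\infty}_{*}\calD_{b}^{G,A}(E_{\calf})$ and every $k$, there is $b$ with $K(P_{k})(b)=m_{k}a$, and hence, choosing $k$ large enough via stability, $m_{k}a=0$. Rationally this forces $a=0$; if $G$ is Farrell--Hsiang at the prime $p$, each $m_{k}$ is a unit in $\IZ_{(p)}$ and the same conclusion holds after localizing at $p$.

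The main obstacle I anticipate is purely notational: checking that the ``uniform-in-$n$'' control conditions defining $\calO^{G,A}_{b}(E,(X_{n},d_{X_{n}})_{n})$ behave correctly under the Karoubi quotient used in the algebraic stability proof, because the $C^{*}$-completion did a lot of soft work (closed ideals, norm-continuity of functors). In the algebraic setting one has to verify the second proof of Proposition \ref{homotopyfiber} directly, exhibiting the quotient as a Karoubi filtration and using the explicit colimit description of the category over dummy simplices; this is standard but needs to be written out carefully. Once this is done, every other step is a literal transcription of the proof of Theorem \ref{rationalbcc}.
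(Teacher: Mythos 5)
Your proposal is correct and is exactly what the paper intends: the paper's entire proof of this theorem is the remark that ``the same proof yields'' the Farrell--Jones statement, with the characteristic-$0$ hypothesis needed precisely so that Artin induction applies over $F$, and your write-up is a faithful unpacking of that (core diagram, transfer with the unitarity condition dropped, stability, Artin induction). The one obstacle you anticipate is in fact a non-issue: in the algebraic setting the stability result and the Karoubi-filtration fibration sequence are the \emph{original} statements of \cite{BLR} and \cite{karoubifil} (the paper's ``second proof'' of \ref{homotopyfiber} was only needed to cope with the $C^{*}$-completion), so no adaptation is required there.
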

\noindent
Note that we need the assumption about the field $F$ to ensure that we can employ the Artin induction theorem.

\chapter{Varying the family of subgroups}

In this section, we will show how even in seemingly trivial cases, we can extract quite some information from the fact that Farrell-Hsiang groups satisfy the Baum-Connes or Farrell-Jones conjectures, which yields induction theorems for the Farrell-Jones and Baum-Connes conjecture allowing one to restrict to smaller families than $\mathcal{VCYC}$ and $\mathcal{FIN}$. The results are not new and are mainly contained in \cite{BLind}, but with different proofs, though in the end they also rely on the same induction theorems.

\section{Some examples of Farrell-Hsiang groups}

\begin{definition}
Let $\fin$ be the family of finite groups, $\calh$ the family of hyperelementary groups and $\cale$ the family of elementary groups. For a subfamily $\calf \subset \fin$, let $\calf'$ be the family consisting of all groups $G$ which lie in $\calf$ or for which there is an extension
\[
1 \rightarrow \IZ \rightarrow G \rightarrow F \rightarrow 1
\]
with $F \in \calf$. Note that $\fin' = \mathcal{VCYC}$.
\end{definition}

\begin{thm}
\label{kinduction}
Let $G$ be a group. Let $A$ be a $G$-$C^{*}$-algebra and $\calb$ an additive category with $G$-action Let $K A$ respectively $K^{\alg} \calb$ be the $\Or(G)$-spectra associated to $A$ respectively $\cala$ as in \ref{orgktop} respectively \cite{coeff}. Then the following relative assembly maps are bijective:
\begin{align*}
H^{G}_{*}(E_{\calh}, K^{\alg} \calb) &\rightarrow H^{G}_{*}(E_{\fin}, K^{\alg} \calb) \\
H^{G}_{*}(E_{\calh}, K A) &\rightarrow H^{G}_{*}(E_{\fin}, K A) \\
H^{G}_{*}(E_{\cale}, K A) &\rightarrow H^{G}_{*}(E_{\fin}, K A) \\
H^{G}_{*}(E_{\calh'}, K^{\alg} \calb) &\rightarrow H^{G}_{*}(E_{\mathcal{VCYC}}, K^{\alg} \calb) \\
H^{G}_{*}(E_{\calh'}, K A) &\rightarrow H^{G}_{*}(E_{\mathcal{VCYC}}, K A) \\
H^{G}_{*}(E_{\cale'}, K A) &\rightarrow H^{G}_{*}(E_{\mathcal{VCYC}}, K A) \\
\end{align*}
\end{thm}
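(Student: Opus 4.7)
The plan is to prove all six isomorphisms by a two-step reduction: first exhibit every finite and every virtually cyclic subgroup of $G$ as a (trivial) Farrell-Hsiang group with respect to the appropriate sub-family, then apply the transitivity principle \ref{transitivity} to the inclusions $\calh, \cale \subseteq \fin$ and $\calh', \cale' \subseteq \mathcal{VCYC}$.

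For the first three statements I would start by observing that any finite group $H$ is a Farrell-Hsiang group with respect to its family $\calh|_H$ of hyperelementary subgroups (and analogously $\cale|_H$ of elementary subgroups, in the $C^*$-algebra case). Take $\alpha_n = \id_H$ for all $n \in \IN$ and $E_K = \pt$ with trivial action for each such $K \leq H$: the isotropy of the unique point is $K$ itself, the dimension bound is $N=0$, and the contracting condition is vacuous since the $l^1$-metric on a point vanishes. The main theorem of the thesis (and its algebraic analogue from \cite{BFL} in the $K^{\alg}\calb$-case, where one must use hyperelementary subgroups) then yields the assembly isomorphism for $H$ with respect to $\calh|_H$ and, for topological $K$-theory, also $\cale|_H$. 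Since $\calh$ and $\cale$ are closed under passage to subgroups, $\calh \cap F = \calh|_F$ for every finite $F \leq G$, and the transitivity principle converts this input into the bijectivity of the first three relative assembly maps.

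For the last three statements I would handle each virtually cyclic subgroup $V \leq G$ in the hypothesis of transitivity separately. If $V$ is finite, the previous paragraph applies. Otherwise $V$ is infinite virtually cyclic and therefore fits into an extension $1 \to \IZ \to V \to F \to 1$ with $F$ finite; take $\alpha_n$ to be this projection for all $n$. For any hyperelementary (respectively elementary) subgroup $C \leq F$, the preimage $K = \alpha_n^{-1}(C)$ sits in an extension $1 \to \IZ \to K \to C \to 1$, so $K \in \calh'|_V$ (respectively $\cale'|_V$). Choosing again $E_K = \pt$ with trivial $K$-action, the Farrell-Hsiang condition is satisfied with $N=0$, and the main theorem gives the assembly isomorphism for $V$ with respect to $\calh'|_V$ (respectively $\cale'|_V$). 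A second application of the transitivity principle, now to $\calh' \subseteq \mathcal{VCYC}$ and $\cale' \subseteq \mathcal{VCYC}$, finishes the proof.

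There is no genuine obstacle beyond unwinding the definitions: the argument is entirely formal once the main theorem and the transitivity principle are in hand. The only subtle point worth flagging is the asymmetry between algebraic and topological $K$-theory reflected in the statement. The Swan-group action on $K^{\alg}$ is built from the integral representation ring and therefore forces one to use \emph{hyperelementary} induction, whereas the unitary Swan-group action on topological $K$-theory permits the finer family $\cale$ via the ordinary Brauer induction theorem \ref{brauer} over $\IC$. This is precisely why the $\cale$- and $\cale'$-versions appear in the theorem only with coefficients in a $C^*$-algebra.
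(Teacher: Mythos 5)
Your proposal is correct and follows essentially the same route as the paper: reduce via the transitivity principle to showing that each finite (resp. virtually cyclic) group is a Farrell-Hsiang group with respect to the relevant family, using the identity map onto itself (resp. the projection onto the finite quotient in an extension $1 \to \IZ \to V \to F \to 1$) and the one-point space $E_H = \pt$, then invoke the main theorem and its Farrell-Jones analogue from \cite{FH}. Your closing remark on why the elementary family only appears in the $C^*$-coefficient statements matches the paper's implicit reasoning as well.
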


\begin{proof}
We concentrate on the hyperelementary result; the result about $\cale$ is proven in the same way. By the transitivity principle, it suffices to prove that each group in $\fin$ respectively $\mathcal{VCYC}$ satisfies the Baum-Connes respectively Farrell-Jones conjecture with respect to the family $\calh$ respectively $\calh'$. Let $G$ be a finite group and $\calh(G)$ the family of hyperelementary subgroups of $G$. By the results of \cite{FH} and our own results, it suffices to see that $G$ is a Farrell-Hsiang group with respect to the family $\calh(G)$. For each $n \in \IN$, we set $F_{n} = G$ and $\alpha_{n} = \Id \colon G \rightarrow F_{n}$. For each hyperelementary subgroup $H$ of $G$, we let $E_{H}$ be the one-point space and let $f_{H} \colon G \rightarrow E_H$ be the obvious map. Clearly, $f_{H}$ is $H$-invariant and sufficiently contracting; and since $H$ is in the family with respect to which we want to prove the Baum-Connes respectively Farrell-Jones conjecture, the isotropy of the one-point space is contained in the family. This handles the first three maps. \\
For the second set of maps, it again suffices to prove that each virtually cyclic group $G$ satisfies the Farrell-Jones respectively Baum-Connes conjecture with respect to the family $\calh'$. So let $G$ be virtually cyclic and consider an extension
\[
1 \rightarrow \IZ \rightarrow G \rightarrow F \rightarrow 1
\]
with $F$ finite. We prove that $G$ is a Farrell-Hsiang group with respect to the family $\calh'$. For each $n$, let $F_{n} = F$ and let $\alpha_{n} \colon G \rightarrow F$ be the projection from the extension above. For each hyperelementary subgroup $H \subset F$, the preimage $\alpha^{-1}(H) \subset G$ is contained in the family $\calh'$. This again allows us to set $E_{H} = *$, and hence $G$ is a Farrell-Hsiang group with respect to $\calh'$.
\end{proof}

We can formulate a slightly more general version of the same argument:

\begin{prop}
\label{trivialFH}
Let $G$ be a group, together with a group homomorphism $f: G \rightarrow F$ to a finite group. Assume that for each hyperelementary subgroup $H \subset F$, $f^{-1}(H)$ satisfies the Farrell-Jones or Baum-Connes conjecture. Then also $G$ satisfies the Farrell-Jones or Baum-Connes conjecture.
\end{prop}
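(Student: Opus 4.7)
The plan is to mimic the construction in the proof of \ref{kinduction} and then combine it with the transitivity principle \ref{transitivity}. Let $\calf$ be the family of subgroups of $G$ of the form $f^{-1}(H)$ where $H$ ranges over all hyperelementary subgroups of $F$ (together with all their subgroups, to make $\calf$ closed under passage to subgroups). The core step is to show that $G$ is a Farrell-Hsiang group with respect to $\calf$ in the completely trivial way.

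To verify the Farrell-Hsiang condition, I would set $F_n = F$ and $\alpha_n = f\colon G \to F$ for all $n \in \IN$, and take the dimension bound to be $N = 0$. For each hyperelementary subgroup $C \subset F$, the preimage $H = \alpha_n^{-1}(C) = f^{-1}(C)$ lies in $\calf$ by construction, so I may take $E_H$ to be the one-point space $\{*\}$ with the trivial $H$-action. The isotropy of this action is $H \in \calf$, so the isotropy condition is satisfied, and the unique $H$-equivariant map $f_H\colon G \to \{*\}$ is obviously contracting: $d^1(f_H(g), f_H(h)) = 0 \leq \frac{1}{n}$ for all $g,h \in G$. Thus $G$ is a Farrell-Hsiang group with respect to $\calf$, and the main theorem of this thesis yields that the Baum-Connes conjecture (with coefficients in $A$) holds for $G$ with respect to the family $\calf$. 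For the Farrell-Jones version, one invokes \cite{FH} in place of the main theorem (after replacing hyperelementary with elementary where appropriate; in the Farrell-Jones case the integral Artin--Brauer type induction already works for hyperelementary groups, which matches the hypothesis).

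Now I apply the transitivity principle \ref{transitivity} with the families $\calf \subset \mathcal{VCYC}$ (respectively $\calf \subset \mathcal{FIN}$ in the Baum-Connes setting where one uses $\mathcal{FIN}$). By the step above, $G$ satisfies the conjecture with respect to $\calf$. By hypothesis, for each hyperelementary subgroup $H \subset F$ the preimage $f^{-1}(H)$ satisfies the ordinary conjecture, i.e.\ satisfies the conjecture with respect to $\mathcal{VCYC}$; since subgroups of such preimages again belong to $\calf$ and since the conjecture passes to subgroups (by the inheritance results quoted at the end of Chapter 4), every element of $\calf$ satisfies the ordinary conjecture. The transitivity principle then yields the ordinary conjecture for $G$.

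The only genuine subtlety, and the point I would be most careful about, is the bookkeeping in the transitivity step: one has to check that the groups in $\calf$ really do satisfy the conjecture with respect to $\mathcal{VCYC}$ and not merely with respect to some smaller family, and that the inheritance to subgroups (needed because $\calf$ contains not just the preimages $f^{-1}(H)$ but also their subgroups) is available for the flavour of the conjecture under discussion. For Baum-Connes with coefficients this is covered by the results cited in the inheritance section, and for Farrell-Jones the analogous inheritance results are standard. Once this is in place, there is nothing more to do.
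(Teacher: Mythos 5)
Your proposal is correct and is essentially the paper's own argument: the paper likewise sets $F_{n} = F$, $\alpha_{n} = f$ and $E_{H} = *$ for every $n$, concludes that $G$ is a Farrell-Hsiang group with respect to the family of preimages of hyperelementary subgroups, and then (implicitly) finishes with the transitivity principle exactly as you do. Your extra care about closing the family under subgroups and about the elementary-versus-hyperelementary bookkeeping is sound but does not change the route.
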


\begin{proof}
Again we can set $F_{n} = F$ and $E_{H} = *$ to conclude that $G$ is a Farrell-Hsiang group with respect to the family of preimages of hyperelementary groups. 
\end{proof}

\section{Rational reductions}

With the same kind of arguments and the rational version of Farrell-Hsiang groups, we can prove the following:

\begin{thm}
\label{kinduction}
Let $G$ be a group. Let $A$ be a $G$-$C^{*}$-algebra and $\calb$ an additive category with $G$-action Let $K A$ respectively $K^{\alg} \calb$ be the $\Or(G)$-spectra associated to $A$ respectively $\calb$ as in \ref{orgktop} respectively \cite{coeff}. Let $\mathcal{FINCYC}$ be the family of finite cyclic groups. Then the following relative assembly maps are rational isomorphisms:
\begin{align*}
H^{G}_{*}(E_{\mathcal{FINCYC}}, K^{\alg} \calb) &\rightarrow H^{G}_{*}(E_{\fin}, K^{\alg} \calb) \\
H^{G}_{*}(E_{\mathcal{FINCYC}}, K A) &\rightarrow H^{G}_{*}(E_{\fin}, K A) \\
H^{G}_{*}(E_{\mathcal{FINCYC}'}, K^{\alg} \calb) &\rightarrow H^{G}_{*}(E_{\mathcal{VCYC}}, K^{\alg} \calb) \\
H^{G}_{*}(E_{\mathcal{FINCYC}'}, K A) &\rightarrow H^{G}_{*}(E_{\mathcal{VCYC}}, K A) \\
\end{align*}
\end{thm}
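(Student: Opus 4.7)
The plan is to mimic the proof of Theorem~\ref{kinduction}, replacing the integral Farrell-Hsiang machinery (which relies on Brauer induction and yields integral isomorphisms for the families $\calh$, $\cale$) by its rational counterpart Theorem~\ref{rationalbcc} together with its Farrell-Jones analogue. The key input is the Artin induction theorem~\ref{repcyclic}, which involves only cyclic subgroups at the price of multiplication by the group order; this loss is precisely what forces the conclusion to be merely a rational isomorphism.

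For the first two maps, by the transitivity principle (Theorem~\ref{transitivity}) it suffices to check that every finite group $G$ satisfies the Baum-Connes (resp.\ Farrell-Jones) conjecture rationally with respect to its family $\mathcal{FINCYC}(G)$ of finite cyclic subgroups. I would verify this by observing that $G$ is trivially a rational Farrell-Hsiang group with respect to $\mathcal{FINCYC}(G)$: set $F_n = G$ and $\alpha_n = \operatorname{id}$ for every $n$, and for each cyclic $H \subseteq G$ take $E_H = \mathrm{pt}$ with its unique $H$-action, and let $f_H\colon G \to \mathrm{pt}$ be constant. The isotropy condition holds because $H$ itself is cyclic, the dimension bound is $N=0$, and the contraction inequality is vacuous. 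Theorem~\ref{rationalbcc} (and its Farrell-Jones sibling stated immediately after it) then gives the desired rational isomorphism.

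For the third and fourth maps, by transitivity it is enough to show that every virtually cyclic group $G$ satisfies the respective conjecture rationally with respect to the family $\mathcal{FINCYC}'$. Choose an extension $1 \to \IZ \to G \to F \to 1$ with $F$ finite; this exists for any virtually cyclic group (replacing $G$ by an index-two subgroup in the dihedral case). I would then define $F_n = F$ and let $\alpha_n\colon G \to F$ be the projection from the extension. For each cyclic $H \subseteq F$, the preimage $\alpha_n^{-1}(H)$ fits into $1 \to \IZ \to \alpha_n^{-1}(H) \to H \to 1$ with $H$ cyclic, hence lies in $\mathcal{FINCYC}'$, so taking $E_H = \mathrm{pt}$ as above gives a rational Farrell-Hsiang structure on $G$ with respect to $\mathcal{FINCYC}'$, and Theorem~\ref{rationalbcc} concludes.

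I do not expect a genuine obstacle here since all four statements are parallel to those in Theorem~\ref{kinduction} and reduce to trivial (zero-dimensional, metrically vacuous) instances of the Farrell-Hsiang condition. The only point that deserves care is that the Farrell-Jones variant of Theorem~\ref{rationalbcc} was stated for additive categories enriched over a characteristic zero field, so that the Swan-group action is defined and the Artin theorem applies; for an arbitrary additive category with $G$-action one recovers the statement by tensoring the assembly map with $\IQ$ (equivalently, by using that the integral Swan group $\Sw(G)$ still surjects onto $\Sw(G)\otimes\IQ$ from the cyclic subgroups after multiplication by $|G|$, which is invertible rationally). Everything else is formal transitivity.
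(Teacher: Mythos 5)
Your proposal is correct and is precisely the argument the paper intends (the paper omits the proof, saying only "with the same kind of arguments and the rational version of Farrell-Hsiang groups"): reduce by transitivity to finite respectively virtually cyclic groups and exhibit these as rational Farrell-Hsiang groups via the trivial data $F_n=G$ (resp.\ $F_n=F$), $E_H=\mathrm{pt}$, so that Theorem~\ref{rationalbcc} and its Farrell-Jones analogue apply. Your closing remark correctly flags the only real subtlety --- the Farrell-Jones version of \ref{rationalbcc} was stated for categories enriched over a characteristic-zero field, whereas the theorem allows arbitrary $\calb$ --- and your fix (using the integral Swan group, for which induction from cyclic subgroups still hits $|G|\cdot 1$, which becomes invertible after rationalizing the assembly map) is the right one.
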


\begin{rem}
In the case of the Baum-Connes map, this statement is true even before tensoring with $\IQ$; see \cite[Section 6]{BLind}. The technique used there significantly differs from the rest of \cite{BLind} in that it uses no induction results.
\end{rem}

\chapter{Farrell-Hsiang groups}

The following is mostly an account of the results of \cite{BFL}, which we include to have a concrete example of non-trivial Farrell-Hsiang groups. We will prove that all virtually finitely generated abelian groups satisfy the Baum-Connes conjecture. The strategy of the proof is a mixture of using the Farrell-Hsiang method and inheritance properties of the Baum-Connes conjecture. Note that since virtually finitely generated abelian groups are amenable, the Baum-Connes conjecture is known for these groups and we get no new results, only a different proof.

\section{The case $\IZ^{2}$}

Before going on to more complicated examples, it is worthwile studying the case of $\IZ^{2}$ in detail.

\begin{prop}
\label{C2}
The group $\IZ^{2}$ is a Farrell-Hsiang group with respect to the family of virtually cyclic subgroups and hence satisfies the Baum-Connes Conjecture.
\end{prop}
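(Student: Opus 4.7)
The plan is to verify the Farrell-Hsiang condition for $G = \IZ^{2}$ with respect to $\calf = \mathcal{VCYC}$ directly, and then invoke the main theorem of Chapter 6. I will set the dimension bound $N = 1$, and for each $n \in \IN$ take $F_{n} = (\IZ / N_{n} \IZ)^{2}$ with $\alpha_{n}$ the canonical quotient, where $N_{n}$ is a multiple of $n$ divisible by at least two distinct primes---for concreteness $N_{n} = 6n$. This choice guarantees that $F_{n}$ itself fails to be elementary (its order contains the two different primes $2$ and $3$), so that the elementary subgroups that must be handled are all proper subgroups $C \subsetneq F_{n}$.

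For each elementary $C \subset F_{n}$ with preimage $H = \alpha_{n}^{-1}(C)$, I will exploit the elementary hypothesis $C = C_{1} \times P$ (cyclic of order coprime to $p$, times a $p$-group) via the primary decomposition $C = \bigoplus_{q} C_{q}$: for every prime $q \neq p$, the component $C_{q} \subset (\IZ/q^{v_{q}(N_{n})})^{2}$ is cyclic. Reassembling through the Chinese Remainder Theorem, this cyclicity in all but one primary direction will produce a primitive vector $v_{H} \in H$ of $\IZ^{2}$ whose $l^{1}$-norm is small compared with $\sqrt{[\IZ^{2} : H]}$, together with a much longer complementary generator. I then take $E_{H} = \IR$ with the simplicial structure given by the vertex set $\frac{1}{L} \IZ$ for a large $L = L(H,n)$, equip it with the simplicial $H$-action coming from a surjection $\phi_{H} \colon H \twoheadrightarrow \frac{1}{L}\IZ$ with $\ker \phi_{H} = \langle v_{H} \rangle$, and extend $\phi_{H}$ to an $H$-equivariant map $f_{H} \colon \IZ^{2} \to \IR$ by assigning values on a system of coset representatives for $\IZ^{2}/H$, arranged so that nearby cosets map into adjacent fractional positions inside a single simplex of $E_{H}$. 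Since $\langle v_{H} \rangle$ is infinite cyclic, the isotropy of the $H$-action lies in $\mathcal{VCYC}$, and $\dim E_{H} = 1 = N$.

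The central technical obstacle is the contraction $d^{1}_{E_{H}}(f_{H}(g), f_{H}(h)) \leq 1/n$ for $d_{G}(g,h) \leq n$. The $l^{1}$-metric on a one-dimensional simplicial complex takes values at least $1$ as soon as two points lie on distinct simplices, so the estimate forces the full image of an $n$-ball in $\IZ^{2}$ to sit inside a single simplex of $E_{H}$; within such a simplex, $d^{1}$ is comparable to the Euclidean distance scaled by $L$, which must be tuned against the size of $v_{H}$. Here the elementary hypothesis is essential: the short primitive vector $v_{H}$ ensures $\phi_{H}$ annihilates the dominant portion of $H \cap B_{n}$, while the CRT argument forces the remaining short vectors of $H \cap B_{n}$ to lie on the single line $\IZ v_{H}$, so that shifting by elements of $H$ of norm $\leq n$ does not move $f_{H}(g)$ out of the chosen simplex. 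Once the Farrell-Hsiang condition is verified with these data, the main theorem of Chapter 6 gives the Baum-Connes conjecture for $\IZ^{2}$ with respect to $\mathcal{VCYC}$; combining this with the transitivity principle (Theorem \ref{transitivity}) and the known validity of Baum-Connes for virtually cyclic groups yields the classical Baum-Connes conjecture for $\IZ^{2}$.
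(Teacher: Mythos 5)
Your overall scheme (a finite quotient of $\IZ^{2}$ whose order has two prime factors, reduction of an elementary subgroup to a cyclic image at one prime, a rank-one quotient of $H$ acting on a line, kernel giving cyclic isotropy) is the right shape, but two of your key steps fail. First, the claim that ``the $l^{1}$-metric on a one-dimensional complex takes values at least $1$ as soon as two points lie on distinct simplices'' is false: two points lying in adjacent $1$-simplices sharing a vertex can have arbitrarily small $l^{1}$-distance, so nothing forces the image of an $n$-ball into a single simplex, and your whole construction built around that constraint (the refined vertex set $\tfrac{1}{L}\IZ$, the placement of coset representatives ``inside a single simplex'') is aimed at a non-issue. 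Worse, the refinement parameter $L$ works against you: with vertices at $\tfrac{1}{L}\IZ$, a translation by one vertex spacing already has $l^{1}$-displacement $\geq 1$, so any $h \in H$ of word length $\leq n$ with $\phi_{H}(h) \neq 0$ violates the required bound $d^{1} \leq \tfrac{1}{n}$ outright; making $L$ large only makes the $l^{1}$-metric larger. The contraction has to come from dividing by a \emph{large} modulus, not from subdividing the target.

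This is exactly where your quantitative choice $F_{n} = (\IZ/6n\IZ)^{2}$ breaks down. Even for the easiest elementary subgroup --- the cyclic image of $(1,0)$, with preimage $H = \IZ \times 6n\IZ$ --- the best equivariant map to a line contracts $n$-balls only to diameter of order $\tfrac{n}{6n}$, a constant, nowhere near $\tfrac{1}{n}$; and for $n$ a power of $2$ the odd part of $6n$ is just $3$, so the elementary subgroup equal to the full $2$-primary part of $F_{n}$ has preimage $3\IZ^{2}$, a rank-two lattice with generators of bounded word length, for which no simplicial action on a complex of bounded dimension with virtually cyclic isotropy admits an equivariant map contracting $n$-balls to $l^{1}$-diameter $\tfrac{1}{n}$ (two short commuting generators moving a point by $\tfrac{1}{n}$ would have to stabilize the simplex carrying that point, forcing $\IZ^{2}$ into a virtually cyclic stabilizer). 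The paper avoids both problems by letting the primes grow with $n$: it takes $F_{n} = (\IZ/pq\IZ)^{2}$ with $p,q$ distinct primes chosen large depending on $n$, projects an elementary subgroup to the prime at which its image $C$ is cyclic, and then uses Lemma \ref{lattice} to produce a linear form $r \colon \IZ^{2} \rightarrow \IZ$ whose kernel reduces mod $p$ to $C$ and whose real extension is $\sqrt{2p}$-Lipschitz; the map $f_{H} = \tfrac{1}{p}\, r_{\IR} \circ e$ (with $e$ the \v{S}varc--Milnor quasi-isometry and $H$ acting on $\IR$, vertices at the integers, by the integer translations $\tfrac{r(h)}{p}$, using $r(H) \subset p\IZ$) is then automatically equivariant and contracts $n$-balls to diameter about $\sqrt{2/p}\,(Cn+D)$, which is $\leq \tfrac{1}{n}$ once $p$ is of size roughly $n^{4}$. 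Your ``short primitive vector $v_{H}$'' is morally dual to that short linear form, but without primes growing superlinearly in $n$ the numbers cannot work, and the ad hoc extension of $f_{H}$ over coset representatives does not replace the linearity that gives both equivariance and the Lipschitz bound at once.
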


For this, we need the following lemma.

\begin{lem}
\label{lattice}
Let $p$ be a prime and $C \subset (\IZ/p)^{2}$ a cyclic non-trivial subgroup. Then there is a map $r \colon \IZ^{2} \rightarrow \IZ$ whose kernel reduces modulo $p$ to $C$ and such that
\[
r_{\IR} = r \otimes_{\IZ} \id_{\IR} \colon \IR^{2} \rightarrow \IR
\]
satisfies
\[
d(r_{\IR}(x), r_{\IR}(y)) \leq \sqrt{2p} \cdot d(x,y)
\]
for all $x,y \in \IR^{2}$, where $d$ is the euclidean metric on $\IR^{2}$ respectively $\IR$.
\end{lem}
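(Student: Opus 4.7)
First I would reduce the problem to finding a short vector in a specific sublattice of $\IZ^{2}$. Any $\IZ$-linear map $r\colon \IZ^{2}\to\IZ$ has the form $r(x,y)=ax+by$ for some $(a,b)\in \IZ^{2}$, and the induced real-linear map $r_{\IR}$ is the linear functional $(x,y)\mapsto ax+by$, which by Cauchy--Schwarz has operator norm exactly $\sqrt{a^{2}+b^{2}}$. The metric condition of the lemma is therefore equivalent to $a^{2}+b^{2}\le 2p$. For the kernel condition, let $C^{\perp}\subset(\IZ/p)^{2}$ denote the annihilator of $C$ under the standard $\IF_{p}$-bilinear pairing; since $C$ is a one-dimensional $\IF_{p}$-subspace of $(\IZ/p)^{2}$, so is $C^{\perp}$. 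A direct computation shows that if $(\bar a,\bar b)\in C^{\perp}$ and $(\bar a,\bar b)\ne 0$, then the image of $\ker r$ under the reduction map $\IZ^{2}\twoheadrightarrow(\IZ/p)^{2}$ is $\ker\bar r$, which contains $C$ and (being the kernel of a nonzero $\IF_{p}$-linear form) has order $p=|C|$, so it equals $C$.

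The task thus reduces to producing $(a,b)\in \IZ^{2}$ outside $p\IZ^{2}$ whose reduction lies in $C^{\perp}$ and with $a^{2}+b^{2}\le 2p$. Let $L:=\{(a,b)\in \IZ^{2} : (\bar a,\bar b)\in C^{\perp}\}$ be the preimage of $C^{\perp}$ in $\IZ^{2}$. Since $C^{\perp}$ has order $p$ and $\IZ^{2}/p\IZ^{2}$ has order $p^{2}$, $L$ is a sublattice of $\IZ^{2}$ of index $p$ and hence covolume $p$. Applying Minkowski's lattice point theorem to $L$ and the closed Euclidean disk $K$ of radius $\sqrt{2p}$, whose area $2\pi p$ is at least $4p=2^{2}\cdot\mathrm{covol}(L)$ since $\pi\ge 2$, produces a nonzero $(a,b)\in L$ with $a^{2}+b^{2}\le 2p$.

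The final step is to verify that this $(a,b)$ may be chosen outside $p\IZ^{2}$, since otherwise $\bar r$ would vanish identically and the kernel would reduce to all of $(\IZ/p)^{2}$ rather than to $C$. Any nonzero vector of $p\IZ^{2}$ has Euclidean norm at least $p$. A slightly sharper application of Minkowski, using the closed disk of radius $2\sqrt{p/\pi}$ of area exactly $4p$, produces a vector of $L$ of norm at most $2\sqrt{p/\pi}$, and the inequality $2\sqrt{p/\pi}<p$ is equivalent to $p>4/\pi$ and hence holds for every prime $p\ge 2$. Therefore $(a,b)$ can indeed be chosen outside $p\IZ^{2}$, and the resulting map $r(x,y)=ax+by$ has all the required properties. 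The geometry of numbers does essentially all the work; the only mild subtlety is the verification that the Minkowski vector avoids the zero coset modulo $p$, which is automatic from the quantitative form of the theorem for $p\ge 2$.
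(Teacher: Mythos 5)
Your proof is correct, but it takes a genuinely different route from the one in the text. The proof given here first disposes of the case where $C$ is a coordinate factor (project onto the other factor), and otherwise lifts a generator of $C$ to a vector of the form $(1,k)$; the coefficient vector $(S,T)$ of $r$ is then produced by pigeonhole: among the more than $p$ pairs $(s,t)$ with $0 \le s,t \le \sqrt{p}$, two must agree under $(s,t) \mapsto s+tk \bmod p$, and their difference satisfies $S+Tk \equiv 0 \pmod p$ with $\left\vert S \right\vert, \left\vert T \right\vert \le \sqrt{p}$, giving the operator norm bound $\sqrt{S^{2}+T^{2}} \le \sqrt{2p}$. You instead dualize: the admissible coefficient vectors form the index-$p$ sublattice $L$ lying over the annihilator $C^{\perp}$, and Minkowski's theorem supplies a short nonzero vector of $L$. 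The two arguments are close cousins --- the pigeonhole step is exactly the classical elementary proof of this special case of Minkowski's theorem (Thue's lemma) --- but yours buys uniformity (no case distinction for coordinate axes and no need to normalize a generator to $(1,k)$) and the marginally sharper constant $2\sqrt{p/\pi}$ in place of $\sqrt{2p}$. Your extra step ruling out the coset $p\IZ^{2}$ is genuinely needed in your formulation (for $p=2$ one has $\sqrt{2p}=p$, so the first Minkowski application alone does not exclude $(\pm p,0)$), whereas the pigeonhole version gets this for free from $\left\vert S \right\vert, \left\vert T \right\vert \le \sqrt{p} < p$; you handled it correctly with the second, sharper application of Minkowski.
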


\begin{proof}
If $C$ is one of the two factors of $(\IZ/p)^{2}$, the projection onto the other factor does the job. Otherwise, $C$ has a generator which lifts under projection  mod $p$ to an element of the form $(1,k) \in \IZ^2$, with $k \in \IZ$ and $0 < k < p$. 
Consider all pairs $(s,t) \in \IZ^{2}$ with $\sqrt{p} \geq s,t \geq 0$ and $(s,t) \neq (0,0)$. Since there are at least $p+1$ such pairs, there have to be two such pairs $(s,t), (s',t')$ such that
\[
s+tk = s'+t'k \mod p
\]
or, put otherwise, $(s-s')+(t-t')k = 0 \mod p$. Since $s,s',t,t'$ are nonnegative, the absolute values of $S = s-s'$ and $T = t-t'$ are still at most $\sqrt{p}$. Now set $r(a,b) = Sa+Tb$. Then $r(1,k) = S+Tk$ is divisible by $k$ and hence $(1,k)$ lies in the mod $p$-reduction of the kernel of $r$ as desired. Since $r$ is nonzero and also nonzero mod $p$, $C$ is indeed all of the kernel. The inequality
\[
d(r_{\IR}(x), r_{\IR}(y)) \leq \sqrt{2p} \cdot d(x,y)
\]
 easily follows from $\left\vert S \right\vert \leq \sqrt{p}$ and $\left\vert T \right\vert \leq \sqrt{p}$
\end{proof}

\begin{proof}[ Proof of \ref{C2}] This is adapted from \cite[2.8]{BFL}.
We let $\IZ^{2}$ act on $\IR^{2}$ by isometries in the obvious way. Pick a word metric $d$ on $\IZ^{2}$ and a point $x \in \IR^{2}$; for distinction, we denote the euclidean metric on $\IR^{2}$ by $d_{euc}$. Then evaluation at $x$ defines a map
\[
e: \IZ^{2} \rightarrow \IR^{2}
\]
We could pick $x = 0$, and then $e$ is just the canonical inclusion, but since the situation will be more complicated later on, we directly write things up this way.   
By the \v{S}varc-Milnor Lemma \cite[8.19]{BH}, $e$ is a quasi-isometry, hence there are constants $C, D > 0$ such that
\[
d_{euc}(e(g), e(h)) \leq Cd(g,h)+D
\]
for all $g,h \in \IZ^{2}$. Pick a natural number $n$. To prove that $\IZ^{2}$ is a Farrell-Hsiang group, we need to provide a quotient map 
\[
\alpha_{n} \colon \IZ^{2} \rightarrow F_{n}
\]
into a finite group $F_{n}$ with certain properties. Since $F_{n}$ should not be elementary itself,
the simplest possible quotient of $\IZ^{2}$ to use is $F_{n} = (\IZ/pq)^{2}$ for two different primes $p$ and $q$. As long as one picks $p$ and $q$ big enough, this will actually be enough; one could work out how big precisely in the end, dependent on $n$. So let us set out to verify the conditions on a Farrell-Hsiang group.\\
Let $E \subset (\IZ/pq)^{2}$ be an $r$-elementary subgroup. Without loss of generality, we can assume $r = q$. Then the quotient map $(\IZ/pq)^{2} \rightarrow (\IZ/p)^{2}$ sends $E$ to a cyclic subgroup $C$ of $(\IZ/p)^{2}$. If $C$ is nontrivial, let $r: \IZ^{2} \rightarrow \IZ$ be a map as in the above lemma; if $C$ is trivial, let $r$ be the projection onto the first factor. Let $H$ be the preimage of $E$ under the projection $\IZ^{2} \rightarrow (\IZ/pq)^{2}$. Now consider the following commutative diagram:
\[
\xymatrix{
H \ar[d] \ar@{^{(}->}[rr] & & \IZ^{2} \ar[d] \ar[rr]^{r} & & \IZ \ar[dd]\\
E \ar[d] \ar@{^{(}->}[rr] & & (\IZ/pq)^{2} \ar[d] \\
C \ar@{^{(}->}[rr] & & (\IZ/p)^{2} \ar[rr] & & \IZ/p \\
}
\]
Chasing around this diagram, one sees that $r(H) \subset p\IZ$ since $H$ has to go to zero in the lower right corner. Now let $E_{H}$ be the simplicial complex with underlying space $\IR$, with the integers as $0$-simplices. We define a map $\IZ^2 \rightarrow E_H$ as the composite
\[
\xymatrix{
f_{H}: \IZ^{2} \ar[rr]^{e} & & \IR^{2} \ar[rr]^{r_{\IR}} & & \IR \ar[rr]^{\cdot \frac{1}{p}} & &  \IR = E_{H} \\
}
\]
Now we need to define an $H$-action on $E_{H}$ such that this map is $H$-equivariant. Of course, there is a very easy $H$-action on $\IR$: $r_{\IR}$ sends $H$ to $\IZ$, and $\IZ$ acts on $\IR$ by translations. However, because of the $\frac{1}{p}$-factor, this action will not make $f_{H}$ $H$-equivariant, and we cannot omit this factor because then $f_{H}$ is not contracting at all and hence useless for our purpose. So we would like to divide the $H$-action on $\IR$ by $p$ - and we can because $r(H) \subset p\IZ$. So we define an $H$-action on $\IR$ by letting $h$ act as translation with the integer $\frac{r_{\IR}(h)}{p}$. With this $H$-action, $f_{H}$ is $H$-equivariant and all isotropy groups of $E_{H}$ for the $H$-action are cyclic since the isotropy subgroups have to lie in the kernel of $r$. It remains to see that $f_H$ is sufficiently contracting. Let $g,h \in \IZ^{2}$ be such that $d(g,h) \leq n$. Then we estimate
\begin{align*}
d^{l^{1}}(f_{H}(g), f_{H}(h)) &\leq d_{euc}(f_{H}(g), f_{H}(h)) \\
&= \frac{1}{p} d_{euc}(r_{\IR}(e(g)), r_{\IR}(e(h))) \\
&\leq \frac{1}{p} \sqrt{2p} \cdot d_{euc}(e(g), e(h)) \\
&\leq \frac{\sqrt{2}}{\sqrt{p}}(Cd(g,h)+D) \\
&\leq \frac{\sqrt{2}}{\sqrt{p}}(Cn+D)
\end{align*}
If $p$ (and also $q$, since we picked $p$ without loss of generality above) is sufficiently large, this is smaller than $\frac{1}{n}$. So if we pick $p,q$ large enough, $f_{H}$ has the desired contracting properties. Hence $\IZ^{2}$ is a Farrell-Hsiang group with respect to the family of virtually cyclic subgroups.
\end{proof}

The next-most complicated example is the group $\IZ^{2} \rtimes_{-\id} \IZ/2$. Trying to apply a similar argument, we will run into some problems. \fxnote{what to write here?}

\begin{prop}
The group $\IZ^{2} \rtimes_{-\id} \IZ/2$ is a Farrell-Hsiang group with respect to the family $\mathcal{VCYC}$.
\end{prop}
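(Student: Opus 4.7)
The plan is to adapt the proof of Proposition \ref{C2} to $G = \IZ^2 \rtimes_{-\id} \IZ/2$, exploiting the fact that $G$ acts properly, cocompactly and isometrically on $\IR^2$. First, I would fix a word metric $d$ on $G$, let $e \colon G \to \IR^2$ be evaluation at the origin, and apply the \v{S}varc–Milnor lemma to obtain constants $C, D > 0$ with $d_{\mathrm{euc}}(e(g), e(h)) \leq C d(g,h) + D$. Then, given $n$, I would pick two distinct large odd primes $p, q$ and set $F_n = (\IZ/pq)^2 \rtimes_{-\id} \IZ/2$, with $\alpha_n \colon G \to F_n$ the canonical projection. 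The finite group $F_n$ is not elementary for a suitable choice of $p,q$.

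Given an $r$-elementary subgroup $E \subset F_n$, let $H = \alpha_n^{-1}(E)$, $H_0 = H \cap \IZ^2$, and choose $s \in \{p,q\} \setminus \{r\}$ (either works when $r = 2$). Setting $E_0 = E \cap (\IZ/pq)^2$, the image of $E_0$ under reduction modulo $s$ is a cyclic subgroup $C \subset (\IZ/s)^2$, since the $r$-part dies and what remains is cyclic of order coprime to $r$. Apply Lemma \ref{lattice} (or the trivial projection if $C = 0$) to obtain a linear map $\rho \colon \IZ^2 \to \IZ$ whose kernel reduces modulo $s$ to $C$, with $\rho_{\IR}$ Lipschitz of constant $\sqrt{2s}$. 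A diagram chase as in the proof of Proposition \ref{C2} shows $\rho(H_0) \subset s\IZ$.

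I would now take $E_H = \IR$, triangulated with $\IZ$ as its $0$-skeleton, and define $f_H \colon G \to E_H$ by $f_H(g) = \tfrac{1}{s} \rho_\IR(e(g))$. For $h = (h_0, h_1) \in H$, since the generator of $\IZ/2$ acts on $\IR^2$ by $-\id$, we have $e(hg) = h_0 + \epsilon(h_1) e(g)$ with $\epsilon(h_1) \in \{\pm 1\}$, so defining
\[
h \cdot x = \frac{\rho(h_0)}{s} + \epsilon(h_1) x
\]
gives a well-defined simplicial $H$-action on $\IR$ (well-defined thanks to $\rho(H_0) \subset s\IZ$) making $f_H$ equivariant. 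The isotropy of a point $x \in \IR$ is a subgroup of $(H_0 \cap \ker \rho) \rtimes \IZ/2$; since $\ker \rho$ is a rank-one subgroup of $\IZ^2$, this isotropy is at worst infinite dihedral, hence virtually cyclic. Finally, the contraction estimate from Proposition \ref{C2} carries over verbatim:
\[
d^1_{E_H}(f_H(g), f_H(h)) \leq \frac{\sqrt{2}}{\sqrt{s}}(C d(g,h) + D),
\]
which is at most $\tfrac{1}{n}$ for $p, q$ chosen sufficiently large (depending on $n$).

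The main technical obstacle is the bookkeeping needed to ensure that the $H$-action on $E_H$ really has only virtually cyclic isotropy, since the presence of the order-$2$ flip produces dihedral stabilisers that would be unacceptable for the family $\mathcal{FIN}$ but are exactly at the boundary of $\mathcal{VCYC}$; one has to verify that no larger isotropy can appear, which reduces to the rank-one statement about $\ker \rho \cap H_0$. A secondary subtlety is the case $C = 0$, where one must fall back to a coordinate projection (as in the proof of \ref{C2}) but still arrange $\rho(H_0) \subset s\IZ$; this is handled by swapping the roles of $p$ and $q$ so that the relevant reduction is nontrivial.
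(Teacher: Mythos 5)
Your proposal follows the paper's route step for step (same quotients $G/pq\IZ^{2}$, same use of Lemma~\ref{lattice}, the same diagram chase giving $\rho(H\cap\IZ^{2})\subset s\IZ$, the same map $f_H=\tfrac1s\rho_{\IR}\circ e$ and the same contraction estimate), but there is a genuine gap at exactly the point the paper singles out as the new difficulty compared with Proposition~\ref{C2}, namely the $\IZ/2$-factor. You assert that $h\cdot x=\rho(h_0)/s+\epsilon(h_1)x$ is a \emph{simplicial} action on $\IR$ with $0$-skeleton $\IZ$, ``well-defined thanks to $\rho(H_0)\subset s\IZ$''. That inclusion only controls elements of $H_0=H\cap\IZ^{2}$. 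For a flip element $h=(h_0,-1)\in H$ the translation part $h_0$ is in general \emph{not} in $H_0$, and nothing forces $\rho(h_0)\in s\IZ$. Concretely, let $E\subset(\IZ/pq)^{2}\rtimes\IZ/2$ be the $2$-elementary subgroup generated by $((1,0),-1)$; then $E\cap(\IZ/pq)^{2}=0$, so $\rho$ is a coordinate projection, and $H$ contains all $((1,0)+pqv,-1)$. These act on $\IR$ as reflections about the points $(1+pqv_1)/(2s)$, which (since $s\nmid 1+pqv_1$) lie neither in $\IZ$ nor in $\tfrac12\IZ$, so the action does not preserve your triangulation (nor the paper's half-integer one). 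Your equivariance and isotropy computations are fine --- the stabilisers are generated by $H_0\cap\ker\rho\cong\IZ$ and at most one reflection class, hence virtually cyclic --- but the definition of a Farrell--Hsiang group requires the action to be simplicial, and that is what your justification does not deliver. This is precisely what the paper's detour through $\overline{r}\colon G\to D_\infty$ and the map $\phi$ is meant to handle (and what, in the general crystallographic argument, is handled honestly by the maps $\phi_u$ and the $\phi$-equivariant affine map of Proposition~\ref{exp}); the thesis's own warm-up write-up is terse at this spot, but the affine-shift mechanism is the intended fix.

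The repair is cheap and keeps all your formulas: any two flip elements of $H$ differ by an element of $H_0$, so $\rho$ of their translation parts agree modulo $s\IZ$; hence all reflection centres occurring in the image of your action lie in a single coset $c+\tfrac12\IZ$, while all translations occurring are by integers. Therefore the image of $H$ in the isometries of $\IR$ preserves the triangulation with $0$-skeleton $c+\tfrac12\IZ$; equivalently, replace $f_H$ by $g\mapsto\tfrac1s\rho_{\IR}(e(g))-c$ and use the half-integer triangulation, exactly as in the paper's use of the inverse affine map in the rank-two crystallographic proof. The contraction estimate survives with a universal constant factor (simplices of length $\tfrac12$). Note that you cannot instead refine the triangulation down to mesh $1/(2s)$: the $\ell^{1}$-metric would then grow by a factor comparable to $s$, cancelling the factor $1/s$ you gained from rescaling and destroying the required $\tfrac1n$-contraction. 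Finally, the ``main technical obstacle'' is thus not the size of the isotropy (dihedral isotropy is indeed harmless for $\mathcal{VCYC}$) but the simpliciality of the action, i.e.\ the location of the reflection centres.
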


\begin{proof}
This is \cite[2.8]{BFL}. We will try to mimick the above proof for $\IZ^{2}$. Pick a word metric $d$ on $G = \IZ^{2} \rtimes_{-\id} \IZ/2$. The group $G$ acts by isometries on $\IR^{2}$, and evaluation at some point of $\IR^{2}$ yields a quasi-isometry
\[
e \colon G \rightarrow \IR^{2}
\]
So there are constants $A,B > 0$ such that
\[
d^{euc}(e(g), e(h)) \leq Ad(g,h)+B
\]
for all $g,h \in G$. Now fix a natural number $n > 0$. We need to provide a map from $G$ onto a finite group $F_{n}$. Since $G$ is non-abelian, it does not make sense to form $G/(pq)$ as we did for $\IZ^{2}$; however, we can consider the subgroup $pq\IZ^{2} \subset \IZ^{2} \subset G$, which is normal since $pq\IZ^{2}$ is characteristic in the normal subgroup $\IZ^{2}$. So we define for primes $p,q > 2$ the group
\[
G_{pq} = G/(pq\IZ^{2})
\]
and similarly $G_{p}$ and $G_{q}$. We let $a \colon G \rightarrow G_{pq}$ be the projection. Let $E \subset G_{pq}$ be an elementary subgroup. We may assume without loss of generality that $E$ is either $q$-elementary or $2$-elementary.  Then the image of $E \cap \IZ^{2}/pq\IZ^{2}$ under the projection $ \IZ^{2}/pq\IZ^{2} \rightarrow \IZ^{2}/p\IZ^{2}$ is a cyclic group $C$. If $C$ is trivial, let $r: \IZ^{2} \rightarrow \IZ$ be the projection onto the first factor; if not, let $r: \IZ^{2} \rightarrow \IZ$ be a map as in \ref{lattice} with respect to $C$. Let $r_{\IR} = r \otimes \Id \colon \IR^{2} \rightarrow \IR$. We have
\[
d(r_{\IR}(x_{1}),r_{\IR}(x_{2})) \leq \sqrt{2p} d^{euc}(x_{1}, x_{2})
\]
Let $H \subset G$ be the preimage of $E$ under the projection $G \rightarrow G_{pq}$. Then 
\[
r(H \cap \IZ^2) \subset p\IZ
\]
This would allow us to define an action of $H \cap \IZ^2$ on $\IR$ by letting $e \in H \cap \IZ^2$ act as $\frac{r(e)}{p}$. However, this is not sufficient: We need an action of $H$, so we have to take the $\IZ/2$-factor into account. Out of $r$, we can manufacture a map
\[
\overline{r} = r \rtimes \Id: G = \IZ^{2} \rtimes_{-\Id} \IZ/2 \rightarrow D_{\infty} = \IZ \rtimes_{-\Id} \IZ/2
\]
We know that $\overline{r}(H) \cap \IZ \subset p\IZ$. We now have to extend multiplication by $p$, defined on $\IZ$, to a map $\phi$ defined on all of $D_{\infty}$ such that
\[
\overline{r}(H) \subset \phi(D_{\infty})
\] 
Then we could try to define an $H$-action on $\IR$ by letting $h \in H$ act as the $d \in D_{\infty}$ which satisfies $\overline{r}(h) = \phi(d)$. So let
\[
\phi = p \rtimes \Id \colon \IZ \rtimes_{-\Id} \IZ/2 \rightarrow \IZ \rtimes_{-\Id} \IZ/2
\]
It follows that $\overline{r}(H) \subset \phi(D_{\infty})$. Let $E_{H}$ be the simplicial complex with underlying space $\IR$ and $0$-simplices at $\frac{z}{2}$, $z \in \IZ$. The usual $D_{\infty}$-action on $\IR$ is a simplicial action. Now define a map $f \colon G \rightarrow E_{H}$ as the composition
\[
G \stackrel{e}{\rightarrow} \IR^{2} \stackrel{r_{\IR}}{\longrightarrow} \IR \stackrel{\frac{1}{p}}{\rightarrow} \IR
\] 
Define an $H$-action on $\IR$ by letting $h \in H$ act on $\IR$ as the unique $d \in D_{\infty}$ with $\phi(d) = \overline{r}(h)$. Then $f$ is $H$-equivariant thanks to the fact that
\[
p \cdot gx = \phi(g) \cdot px
\] 
for $x \in \IR, g \in D_{\infty}$. All isotropy groups of the $H$-action on $E_{H}$ are virtually cyclic:  For $x \in \IR$, certainly the kernel $Z$ of $\overline{r} \colon H \rightarrow D_{\infty}$ stabilizes $x$, so we obtain a map $Z \rightarrow H_{x}$. The group $Z$ is cyclic and the cokernel consists at most of the finite stabilizer of $x$ under the $D_{\infty}$-action. 
Finally, we estimate for $g,h \in G$ with $d(g,h) \leq n$
\begin{align*}
d^{l^{1}}(f_{H}(g), f_{H}(h)) &\leq 2d_{euc}(f_{H}(g), f_{H}(h)) \\
&= 2\frac{1}{p} d_{euc}(r_{\IR}(e(g)), r_{\IR}(e(h))) \\
&\leq 2\frac{1}{p} \sqrt{2p} \cdot d_{euc}(e(g), e(h)) \\
&\leq 2\frac{\sqrt{2}}{p}(Ad(g,h)+B) \\
&\leq 2\frac{\sqrt{2}}{p}(An+B)
\end{align*}
Picking $p$ large enough makes this arbitarily small. Since $p$ was picked without loss of generality, note that this also forces us to pick $q$ large.
\end{proof}

When trying to generalize this line of argument to arbitrary groups acting by isometries on some euclidean space, one runs into two problems:
\begin{enumerate}
\item In general, such groups are not semidirect products, so we cannot define a map $\phi$ as in the above proof.
\item Even when we have some suitable map $\phi$, it may not satisfy
\[
p \cdot gx = \phi(g) \cdot px
\] 
\end{enumerate}

Both problems can be solved using group cohomology and will be addressed in the next section.

\section{Crystallographic groups}

In this section, we assemble the necessary facts on virtually finitely generated abelian groups and crystallographic groups. We will use \cite{brown} as a standard reference on group cohomology. 

\begin{definition}
Let $G$ be a group.
\begin{enumerate}
\item We say that $G$ is an abstract crystallographic group of rank $n$ if it has a normal, finite-index subgroup $A$ which is free abelian of rank $n$ and such that $cen(A) = A$, i.e. no element of $G$ which is not already in $A$ commutes with all elements of $A$. 
\item We say that $G$ is a concrete crystallographic group of rank $n$ if $G$ is isomorphic to a cocompact discrete subgroup of the group of isometries of $\IR^{n}$.
\end{enumerate}
\end{definition}

\begin{prop}
The subgroup $A$ in the definition of an abstract crystallographic $G$ is unique and so the holonomy group $F = G/A$ of $G$ is well-defined. Furthermore, a group is an abstract crystallographic group of rank $n$ if and only if it is a concrete crystallographic group. Furthermore, $n$ is unique.
\end{prop}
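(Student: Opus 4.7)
The plan is to establish uniqueness of $A$ first, then each direction of the equivalence with concrete crystallographic groups, and finally to deduce uniqueness of $n$ as a byproduct. For uniqueness of $A$, suppose $A, A' \normal G$ both satisfy the hypotheses. I would show $A' \subset A$ (and then by symmetry conclude $A = A'$) by proving that every element of $A'$ centralizes $A$. The conjugation action of $A'$ on the normal subgroup $A$ yields a homomorphism $A' \to \Aut(A) \cong GL_n(\IZ)$; its kernel contains $A \cap A'$, which has finite index in both $A$ and $A'$. For $g \in A'$ the automorphism $\phi_g$ of $A$ has finite order, since $A'/(A \cap A')$ is finite, and its restriction to $A \cap A'$ is the identity because $A'$ is abelian. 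Hence $\phi_g$ fixes the finite-index subgroup $A \cap A' \subset A$ pointwise, so after tensoring with $\IQ$ it fixes all of $A \otimes_\IZ \IQ$, forcing $\phi_g = \id$. Thus $A' \subset C_G(A) = A$.

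For the implication concrete $\Rightarrow$ abstract, I would appeal to the classical first Bieberbach theorem: if $G$ is a cocompact discrete subgroup of the isometry group $O(n) \semidirect \IR^n$, then $A := G \cap \IR^n$ is a normal, free abelian, finite-index subgroup of $G$ which is a lattice of full rank $n$ in $\IR^n$ (both discreteness and cocompactness are essential here). To see $C_G(A) = A$, note that any $g = (R, t) \in G$ centralizing $A$ satisfies $Ra = a$ for every $a \in A$, and since $A$ spans $\IR^n$ this forces $R = I$, so $g$ is a translation and lies in $A$.

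For the converse, given an abstract crystallographic $G$ with subgroup $A \cong \IZ^n$ and finite quotient $F = G/A$, conjugation produces a representation $\rho \colon F \to \Aut(A) \cong GL_n(\IZ)$, which is faithful thanks to the self-centralizing condition $C_G(A) = A$. The extension $1 \to A \to G \to F \to 1$ is classified by a class $\alpha \in H^2(F; A)$, and tensoring with $\IR$ sends $\alpha$ into $H^2(F; A \otimes_\IZ \IR)$; this group vanishes because $F$ is finite and $|F|$ acts invertibly on the real vector space $A \otimes_\IZ \IR$. Consequently the pushout extension $1 \to A \otimes_\IZ \IR \to G' \to F \to 1$ splits, giving an embedding $G \hookrightarrow G' \cong \IR^n \semidirect_\rho F$. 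Averaging an arbitrary inner product on $\IR^n$ over $F$ yields an $F$-invariant one, so $F$ acts through $O(n)$ and $G$ embeds into $O(n) \semidirect \IR^n$ as a discrete cocompact subgroup (discreteness from the discreteness of $A$ in $\IR^n$ combined with the finiteness of $F$, cocompactness from the fact that a fundamental domain for $A$ is compact and $F$ is finite). Uniqueness of $n$ is then immediate: $n$ is the rank of $A$, and $A$ itself is unique.

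The main obstacle will be the cohomological splitting step and the verification that the resulting embedding is genuinely discrete and cocompact; both ingredients are standard but require care to combine correctly. Everything else — the normality and spanning properties in the concrete case, and the uniqueness argument — amounts to essentially formal manipulation once the correct setup is chosen.
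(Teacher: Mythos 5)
Your proof is correct and follows essentially the same route as the paper: uniqueness of $A$ via the fact that an automorphism of $\IZ^{n}$ fixing a finite-index subgroup is the identity, the abstract-to-concrete direction via the vanishing of $H^{2}(F,\IR^{n})$ and averaging an inner product, and the converse via Bieberbach's theorem. The only difference is cosmetic — you additionally spell out why $C_G(A)=A$ in the concrete case, which the paper leaves to the citation.
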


\begin{proof}
Let $G$ be an abstract crystallographic group of rank $n$ and let $B$ be another subgroup of $G$ with the same properties as $A$. Then $A \cap B$ is of finite index in both $A$ and $B$. Since any group automorphism of $\IZ^{n}$ inducing the identity on a finite-index subgroup is itself the identity, the centralizer of $A \cap B$ is equal to the centralizers of both $A$ and $B$, which implies $A = B$ because $A = cen(A)$ and $B = cen(B)$. It also follows that the $n$ in the definition of an abstract crystallographic group is unique. \\
Let $F$ be the quotient $G/A$. The conjugation action of $F$ on $A$ is faithful since $A$ is its own centralizer and hence conjugation by $g \in G$ only induces the identity on $A$ if $g \in A$. Pick an isomorphism $A \cong \IZ^{n}$ and consider the inclusion $\IZ^{n} \subset \IR^{n}$. We obtain a diagram of extensions as in \cite[Exercise IV.3.1]{brown}
\[
\xymatrix{
0 \ar[rr] & & A \ar[dd] \ar[rr] & & G \ar[rr] \ar[dd] & & F \ar[dd] \ar[rr] & & 0 \\
\\
0 \ar[rr] & & \IR^{n}  \ar[rr] & & G' \ar[rr] & & F \ar[rr] & & 0 \\
}
\]

All vertical maps are injective, and the lower horizontal row is semi\-split since $H^{2}(F, \IR^{n}) = 0$ by finiteness of $F$, see \cite[IV.10.2]{brown}. Picking a scalar product on $\IR^{n}$ such that $F$ acts by isometries on $\IR^{n}$, it is easy to see that $G'$ acts cocompactly and discretely on $\IR^{n}$ by isometries. \\
The other direction is Bieberbach's theorem, see \cite[Theorem 14]{Farkas} or \cite{Auslander}. 
\end{proof}

\begin{example}
Not every crystallographic group $G$ is a split extension of $A$ and $F = G/A$. For example, let $\phi, \psi$ be the isometries of $\IR^2$ given by
\begin{align*}
\phi(x,y) &= (-x,y+1) \\
\psi(x,y) &= (x+1,-y)
\end{align*}
Then $\phi^2$ is translation by $(0,2)$ and $\psi^2$ is translation by $(2,0)$, generating the translation subgroup of the group of isometries generated by $\phi$ and $\psi$. The canonical extension of this crystallographic group is
\[
0 \rightarrow \IZ\phi^2 \oplus \IZ \psi^2 \rightarrow \IZ\phi \oplus \IZ\psi \rightarrow \IZ/2\IZ \oplus \IZ/2\IZ \rightarrow 0
\]
which clearly is not split.
\end{example}

\begin{definition}
Let $G$ be a crystallographic group with $A$ the subgroup of translations and $F$ the finite holonomy group and $s$ an integer. We say that a group homomorphism $\phi: G \rightarrow G$ is $s$-expansive if it fits into a commutative diagram
\[
\xymatrix{
0 \ar[rr] & & A \ar[dd]^{s \cdot -} \ar[rr] & & G \ar[rr] \ar[dd] & & F \ar[dd]^{\id} \ar[rr] & & 0 \\
\\
0 \ar[rr] & & A  \ar[rr] & & G \ar[rr] & & F \ar[rr] & & 0 \\
}
\]
\end{definition}

\begin{prop}
\label{exp}
Let $G$ be a crystallographic group and $s \neq 0$ an integer.
\begin{enumerate}
\item If $s =1 \text{ mod } \left\vert F \right\vert$, there exists an $s$-expansive homomorphism $\phi: G \rightarrow G$.
\item For every such $\phi$, there is $u \in \IR^{n}$ such that the map
\[
f = f_{(s,u)}: \IR^{n} \rightarrow \IR^{n}, x \mapsto sx+u
\]
is $\phi$-equivariant, i.e. satisfies $f(g(x))  = \phi(g)f(x)$ for all $x \in \IR^{n}$.
\end{enumerate}
\end{prop}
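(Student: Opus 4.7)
For part (i), the extension $0 \to A \to G \to F \to 0$ is classified by a cohomology class $\alpha \in H^2(F;A)$, where $F$ acts on $A$ by conjugation. A morphism of extensions covering the identity on $F$ and multiplication by $s$ on $A$ exists if and only if $s\cdot \alpha = \alpha$ in $H^2(F;A)$, i.e.\ if and only if $(s-1)\alpha = 0$. Since $F$ is finite, $H^2(F;A)$ is annihilated by $|F|$ (see e.g.\ \cite[III.10.2]{brown}); hence if $s \equiv 1 \pmod{|F|}$ we are done. Concretely, choosing a set-theoretic section $\sigma \colon F \to G$ with cocycle $c(\bar g, \bar h)=\sigma(\bar g)\sigma(\bar h)\sigma(\overline{gh})^{-1}\in A$, the condition $(s-1)\alpha = 0$ yields $b \colon F \to A$ with $(s-1)c(\bar g, \bar h) = \bar g\cdot b(\bar h) - b(\overline{gh}) + b(\bar g)$, and one then defines $\phi$ on the normal form $g = a\,\sigma(\bar g)$ by $\phi(a\,\sigma(\bar g)) = (sa + b(\bar g))\,\sigma(\bar g)$. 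A direct check using the cocycle identity shows that $\phi$ is a group homomorphism and fits into the required diagram.

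For part (ii), I first push $\phi$ along the left-hand vertical map in the diagram defining $G'$ to obtain, by the universal property of the pushout, a homomorphism $\phi' \colon G' \to G'$ acting as multiplication by $s$ on $\IR^n$ and as the identity on $F$. Since $H^2(F;\IR^n) = 0$, the lower extension splits; fix an isomorphism $G' \cong \IR^n \rtimes F$. In these coordinates, $\phi'$ has the form $\phi'(v,r) = (sv + c(r),\, r)$ for a well-defined map $c \colon F \to \IR^n$, and the requirement that $\phi'$ be a homomorphism forces $c$ to be a $1$-cocycle: $c(r_1r_2) = c(r_1) + r_1\,c(r_2)$.

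Now I look for $u \in \IR^n$ making $f(x) = sx+u$ satisfy $f(g\cdot x) = \phi'(g)\cdot f(x)$ for $g = (v,r) \in G'$. A direct computation gives
\[
f((v,r)\cdot x) = s r(x) + sv + u, \qquad \phi'(v,r)\cdot f(x) = s r(x) + sv + r(u) + c(r),
\]
so the equivariance condition reduces to $c(r) = u - r(u)$ for all $r \in F$; that is, $c$ must be the coboundary of $-u$. Since $F$ is finite, $H^1(F;\IR^n) = 0$, so the $1$-cocycle $c$ is a coboundary and some such $u$ exists. Because the $G$-action on $\IR^n$ factors through $G \hookrightarrow G'$ and $\phi$ is the restriction of $\phi'$ to $G$, the map $f_{(s,u)}$ is in particular $\phi$-equivariant.

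The only genuine issue is bookkeeping: producing $\phi$ requires vanishing of an obstruction in $H^2(F;A)$, while producing $u$ requires vanishing of an obstruction in $H^1(F;\IR^n)$. Both vanish by finiteness of $F$, the first only under the numerical hypothesis $s \equiv 1 \pmod{|F|}$, the second unconditionally after passing to real coefficients; once this is organized, each step is a routine cocycle manipulation.
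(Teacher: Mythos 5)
Your proof is correct and follows essentially the same route as the paper: part (i) reduces to the vanishing of $(s-1)\alpha$ in $H^{2}(F,A)$, which holds because that group is $\left\vert F \right\vert$-torsion, and part (ii) reduces to a $1$-cocycle $F \rightarrow \IR^{n}$ being principal, which holds because $H^{1}(F,\IR^{n}) = 0$. The only cosmetic difference is that in (ii) you pass to the split extension $G' \cong \IR^{n} \rtimes F$ and read off the cocycle $c$ there, whereas the paper works directly with the affine representation $gx = M_{g}x+v_{g}$ and defines the derivation $d(g) = v_{\phi(g)}-s\cdot v_{g}$ on $G$, checking that it descends to $F$; the two computations are identical in substance.
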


\begin{proof}
For i), fix the $F$-module structure on $A$ given by the extension $G$. We certainly find some extension $0 \rightarrow A \rightarrow G' \rightarrow F \rightarrow 0$ inducing the given $F$-action on $A$ and fitting into a commutative diagram
\[
\xymatrix{
0 \ar[rr] & & A \ar[dd]^{s \cdot -} \ar[rr] & & G \ar[rr] \ar[dd]^{\phi} & & F \ar[dd]^{\id} \ar[rr] & & 0 \\
\\
0 \ar[rr] & & A  \ar[rr] & & G' \ar[rr] & & F \ar[rr] & & 0 \\
}
\]
The class of the lower extension in $H^{2}(F,A)$ is the $s$-fold multiple of the class of the upper extension. See \cite[Exercise IV.3.1]{brown}. Since $H^{2}(F,A)$ is $\left\vert F \right\vert$-torsion by \cite[III.10.2]{brown}, it follows that multiplication by $s$ is an isomorphism on $H^{2}(F,A)$ and hence the two extensions are isomorphic. Then the $\phi$ in the above diagram is our desired $s$-expansive map. \\
For ii), let us write out what the condition on $u$ means. For $g \in G$, we find $M_{g} \in F$ and $v_{g} \in \IR^{n}$ such that $gx = M_{g}x+v_{g}$. Then the equation $f(g(x))  = \phi(g)f(x)$ reads
\[
s(M_{g}x+v_{g})+u = M_{\phi(g)}sx+M_{\phi(g)}u+v_{\phi(g)}
\]
However, since $M_{g}$ is the image of $g$ under the projection $G \rightarrow F$ and $\phi$ induces the identity on $F$, we have $M_{g} = M_{\phi(g)}$. Then we can simplify and reach
\[
u-g\cdot u = v_{\phi(g)}-s \cdot v_{g}
\]
where $g \cdot -$ is the conjugation action of $G$ on $A$ and hence on $\IR^{n}$. Define a map $d \colon G \rightarrow \IR^{n}$ by $d(g) = v_{\phi(g)}-s \cdot v_{g}$. It is easy to check that $d$ is a derivation, i.e. satisfies $d(gh) = dg+gdh$.  If $g \in A$, we have $v_{\phi(g)} = sv_{g}$ since $\phi$ is multiplication by $s$ on $A$. Hence $d = 0$ on $A$ and we get an induced derivation
\[
d: F \rightarrow \IR^{n}
\]
Since $H^{1}(F, \IR^{n}) = 0$, this derivation has to be principal by \cite[Exercise III.1.2]{brown}. So there is $u \in \IR^{n}$ such that $d$ is equal to the principal derivation sending $g$ to $u-gu$. This was what we set out to prove.
\end{proof}

\section{The Baum-Connes conjecture for virtually finitely generated abelian groups}

In this section, we put the Farrell-Hsiang method to some serious work to conclude the Baum-Connes conjecture for virtually finitely generated abelian groups. We start with a reduction to crystallographic groups.

\begin{prop}
\label{vfga}
Each virtually finitely generated abelian group $G$ of virtually cohomological dimension $n$ admits an epimorphism with finite kernel onto a crystallographic group of rank $n$.
\end{prop}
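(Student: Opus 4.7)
The plan is to construct, starting from a virtually finitely generated abelian group $G$, a finite normal subgroup $N \normal G$ so that $G/N$ becomes crystallographic. The natural candidate for $N$ is the torsion in the centralizer of a large enough free abelian subgroup. I proceed in four steps.

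First I would extract a convenient free abelian normal subgroup $A \cong \IZ^n$ of $G$. Since $G$ is virtually finitely generated abelian, it has a finite index subgroup that is finitely generated abelian; intersecting its finitely many conjugates yields a finite index normal finitely generated abelian $B \normal G$. The torsion part $T(B)$ is characteristic in $B$, so $T(B) \normal G$, and $B/T(B) \cong \IZ^n$. The characteristic subgroup $A := B^{|T(B)|} \subset B$ then satisfies $A \cong \IZ^n$ and $A \normal G$. Since $A$ is torsion-free of finite index in $G$, we have $n = \operatorname{vcd}(G)$.

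Next I set $K := C_G(A)$, the centralizer of $A$ in $G$. As $G$ acts on $A \cong \IZ^n$ through a quotient of the finite group $G/A$, its image in $\Aut(A) = GL_n(\IZ)$ is finite, hence $K$ is a normal subgroup of finite index in $G$, and by construction $A \subset Z(K)$. In particular $K/Z(K)$ is finite, so Schur's theorem implies that $[K,K]$ is finite. Thus $K$ is finite by abelian, and the torsion elements of $K$ form a subgroup $N$: if $k_1,k_2 \in K$ are torsion, then $(k_1k_2)^m \in [K,K]$ for some $m$ since $K/[K,K]$ is abelian, and $[K,K]$ is finite. Since $N \cap A = 1$, the inclusion $N \hookrightarrow K/A$ is injective, so $N$ is finite. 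Being the torsion subgroup, $N$ is characteristic in $K$ and hence normal in $G$.

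Now I claim $G/N$ is crystallographic of rank $n$. Since $[K,K] \subset N$, the quotient $K/N$ is abelian; it is torsion-free (any finite-order element lifts to a torsion element of $K$, hence to $N$), finitely generated, and contains the image of $A$ as a finite index subgroup, so $K/N \cong \IZ^n$. Set $\bar A := K/N$; it is normal and of finite index in $G/N$. For self-centralization, suppose $\bar g \in G/N$ centralizes $\bar A$; lift $\bar g$ to $g \in G$. For each $a \in A$, the commutator $[g,a]$ lies in $A$ (since $A$ is normal) and in $N$ (since $\bar g$ centralizes the image of $a$), so $[g,a] \in A \cap N = 1$. Hence $g \in C_G(A) = K$, so $\bar g \in K/N = \bar A$, as required.

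Finally, the projection $G \twoheadrightarrow G/N$ is an epimorphism with finite kernel $N$ onto a crystallographic group of rank $n = \operatorname{vcd}(G)$. The main technical obstacle is establishing the finiteness of $N$; everything else is formal manipulation, and finiteness of $N$ is secured by the Schur-type observation that $[K,K]$ is finite because $A$ is a finite index central subgroup of $K$.
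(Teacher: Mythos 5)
Your proof is correct, but it takes a genuinely different route from the paper's. The paper writes a finite-index normal finitely generated abelian subgroup as $\IZ^{n} \oplus H$ with $H$ finite, divides out $H$, pushes the resulting extension of the finite quotient $F$ by $\IZ^{n}$ forward along $\IZ^{n} \subset \IR^{n}$, splits it using $H^{2}(F, \IR^{n}) = 0$, and then kills the kernel $L$ of the resulting holonomy representation $F \rightarrow GL_{n}(\IR)$; the image of $G$ is thereby exhibited as a discrete cocompact group of isometries of $\IR^{n}$, i.e.\ a concrete crystallographic group. You instead produce the finite kernel in a single step, as the torsion subgroup $N$ of the centralizer $K = C_{G}(A)$ of a normal lattice $A \cong \IZ^{n}$, with Schur's theorem ($K/Z(K)$ finite implies $[K,K]$ finite) guaranteeing that the torsion elements of $K$ really form a finite subgroup, and then you verify the abstract axioms (a normal, self-centralizing, finite-index copy of $\IZ^{n}$) for $G/N$ directly. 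The paper's argument buys the concrete isometric model immediately and recycles the cohomological vanishing $H^{2}(F,\IR^{n}) = 0$ that reappears throughout the chapter; yours is purely group-theoretic, avoids the detour through $\IR^{n}$ (leaning instead on the earlier equivalence of abstract and concrete crystallographic groups), and identifies the kernel canonically as the torsion of $C_{G}(A)$ rather than assembling it from $H$ and a splitting-dependent lift of $L$. All the individual steps --- normality and finiteness of $N$, torsion-freeness and rank of $K/N$, and the self-centralization check via $[g,a] \in A \cap N = 1$ --- are sound.
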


\begin{proof}
Let $G$ be a virtually finitely generated abelian group. Let $A$ be a finitely generated abelian subgroup of finite index. We may assume without loss of generality that $A$ is normal in $G$ with quotient $F$. We have $A = \IZ^{n} \oplus H$ for some finite abelian group $H$; let $f: A \rightarrow \IZ^{n}$ be the projection. Since $F$ is finite, there is an $F$-action on $\IZ^{n}$ such that $f$ is an $F$-map. Now consider the extension induced by $f$ and $G$
\[
0 \rightarrow \IZ^{n} \rightarrow G' \rightarrow F \rightarrow 0
 \]
Note that actually $H$ is normal in $G$ and $G' = G/H$, so there is a map $G \rightarrow G'$ with kernel $H$. Now consider the inclusion $\IZ^{n} \rightarrow \IR^{n}$ and the induced map on extensions
\[
\xymatrix{
0 \ar[rr] & & \IZ^{n} \ar[dd] \ar[rr] & & G' \ar[rr] \ar[dd] & & F \ar[dd] \ar[rr] & & 0 \\
\\
0 \ar[rr] & & \IR^{n}  \ar[rr] & & G'' \ar[rr] & & F \ar[rr] & & 0 \\
}
\]
Since $H^{2}(F, \IR^{n}) = 0$, the lower extension is semisplit. Let $L$ be the kernel of the representation $F \rightarrow Gl_{n}(\IR)$ obtained from the splitting. Then we obtain a map of semisplit extensions
\[
\xymatrix{
0 \ar[rr] & & \IR^{n} \ar[dd] \ar[rr] & & G'' \ar[rr] \ar[dd] & & F \ar[dd] \ar[rr] & & 0 \\
\\
0 \ar[rr] & & \IR^{n}  \ar[rr] & & G''' \ar[rr] & & F/L \ar[rr] & & 0 \\
}
\]
and $G'''$ is clearly acting by isometries on $\IR^n$. Since also the map $G'' \rightarrow G'''$ has finite kernel, the composition $G \rightarrow G'''$ has finite kernel. The image of $G$ in $G'''$ is then a crystallographic group of rank $n$ as desired.
\end{proof}

\begin{thm}
The Baum-Connes Conjecture for virtually finitely generated abelian groups is true.
\end{thm}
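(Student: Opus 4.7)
The plan is to reduce the general statement to the case of crystallographic groups and then verify the Farrell--Hsiang condition for those. For the reduction I would argue by induction on the rank, i.e.\ on the virtual cohomological dimension $n$. The base case $n=0$ (finite groups) is covered because all finite subgroups lie in $\mathcal{VCYC}$. For the induction step, let $G$ be virtually finitely generated abelian of rank $n\ge 1$ and apply \ref{vfga} to obtain a short exact sequence
\[
1\rightarrow K\rightarrow G\rightarrow G'\rightarrow 1
\]
with $K$ finite and $G'$ crystallographic of rank $n$. Granted the Baum--Connes conjecture for $G'$ (the content of the next step), the preimage in $G$ of a virtually cyclic subgroup $V\subset G'$ is an extension of $V$ by the finite group $K$, hence virtually finitely generated abelian of rank $\le 1 < n$, so satisfies Baum--Connes by the induction hypothesis. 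The inheritance theorem for short exact sequences then transports the conjecture from $G'$ to $G$.

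For the main step, let $G$ be crystallographic of rank $n$ with translation subgroup $A\cong\IZ^{n}$ and holonomy $F=G/A$, and let $\calf=\mathcal{VCYC}$. Fix a word metric $d_{G}$ on $G$. Since $G$ acts cocompactly and properly by isometries on $\IR^{n}$, evaluation at a basepoint gives a quasi-isometry $e\colon G\to\IR^{n}$. I would verify that $G$ is a Farrell--Hsiang group with respect to $\mathcal{VCYC}$ by imitating, in rank $n$, the arguments carried out for $\IZ^{2}$ in \ref{C2} and for $\IZ^{2}\rtimes_{-\id}\IZ/2$. Concretely, for each natural number $n$ choose two primes $p,q$, both congruent to $1$ modulo $|F|$ and large compared to $n$, and set $F_{n}=G/\phi(A)$ where $\phi\colon G\to G$ is the $pq$-expansive homomorphism produced by \ref{exp}; the homomorphism $\alpha_{n}\colon G\to F_{n}$ is the projection, whose kernel is $\phi(A)\cong\IZ^{n}$. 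An elementary subgroup $E\subset F_{n}$ may be assumed $q$-elementary after rearranging $p$ and $q$, and its intersection with $A/\phi(A)\cong(\IZ/pq)^{n}$ projects to a cyclic subgroup $C\subset(\IZ/p)^{n}$. Iterating the lattice lemma \ref{lattice} (or its higher-rank analogue, which produces a surjection $r\colon\IZ^{n}\to\IZ^{n-1}$ whose reduction mod $p$ has kernel $C$ and whose real extension is $\sqrt{2p}$-Lipschitz on each coordinate) yields a controlled reduction to the rank-$(n-1)$ situation.

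Let $H=\alpha_{n}^{-1}(E)$. The crucial point is to assemble $r$ with an expansive map into an $H$-equivariant map into a low-dimensional simplicial complex. Following \ref{exp}(ii), pick $u\in\IR^{n}$ such that $f_{p,u}(x)=px+u$ intertwines the action of $G$ on $\IR^{n}$ with itself via a $p$-expansive $\phi'\colon G\to G$; the analogous device in the rank-$(n-1)$ direction identifies $\overline r\colon G\to G_{n-1}$ (a crystallographic group of rank $n-1$) with its image after dividing by $p$, using that $r$ sends $H\cap A$ into $pA_{n-1}$. This produces an $H$-equivariant map $f_{H}\colon G\to E_{H}$, where $E_{H}$ is the natural simplicial complex underlying $\IR^{n-1}$ carrying a cocompact isometric action of $G_{n-1}$, whose isotropy groups are virtually cyclic because the kernels of the maps involved are rank $\le 1$ subgroups of crystallographic groups. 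The contracting estimate
\[
d^{1}_{E_{H}}(f_{H}(g),f_{H}(h))\le\tfrac{C'\sqrt{2}}{\sqrt{p}}\bigl(d_{G}(g,h)+D'\bigr)
\]
then follows from the quasi-isometry bound on $e$, the Lipschitz control from \ref{lattice}, and the fact that division by $p$ scales distances by $1/p$; choosing $p$ sufficiently large relative to $n$ gives the required bound $1/n$. The dimension of $E_{H}$ is at most $n-1$, uniform in $n$, so the Farrell--Hsiang condition is satisfied and the main theorem of the thesis applies to yield Baum--Connes for $G'$, completing the induction.

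The principal obstacle is the last step of constructing the $H$-equivariant contracting map in the presence of a possibly non-split holonomy extension: the naive definition is only equivariant up to a cocycle, and one must use group cohomology exactly as in \ref{exp} to choose the translation constant $u$ so that the composite $G\xrightarrow{e}\IR^{n}\xrightarrow{r_{\IR}}\IR^{n-1}\xrightarrow{\,\cdot 1/p\,}E_{H}$ intertwines the given $H$-action on the source with a simplicial $H$-action on the target whose isotropy stays inside $\mathcal{VCYC}$; the vanishing of $H^{1}(F,\IR^{n})$ used in \ref{exp}(ii) is what makes this possible.
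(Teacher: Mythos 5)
Your reduction to crystallographic groups via \ref{vfga} and the inheritance results is essentially the paper's first step and is fine. The gap is in the main step: for a general crystallographic group $G$ of rank $n$ with holonomy $F$ you propose to imitate the $\IZ^{2}$ argument, i.e.\ to use a higher-rank lattice lemma to produce a projection $r\colon\IZ^{n}\to\IZ^{n-1}$ killing (mod $p$) the cyclic group $C$ determined by the elementary subgroup, and then to map $G$ equivariantly into a rank-$(n-1)$ complex. This fails for a reason you do not address: the rank-one kernel of $r$ has no reason to be invariant under the conjugation action of $F$ on $\IZ^{n}$, so there is no induced crystallographic quotient $G_{n-1}$ and no $H$-action on $\IR^{n-1}$ for which your composite could be equivariant. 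The $\IZ^{2}$ and $\IZ^{2}\rtimes_{-\id}\IZ/2$ cases work precisely because there the relevant cyclic subgroups \emph{are} $F$-invariant (this is the content of \ref{2dim}, which is special to rank $2$ and to holonomy inside $\{\pm1\}^{2}$); the paper treats those rank-$2$ groups separately as base cases rather than as a template for the induction step.

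The paper's actual induction step avoids projecting at all: it takes $\alpha_{n}\colon G\to G/p^{r}A$ with $p^{r}\equiv 1\bmod|F|$, runs a case analysis on the hyperelementary subgroup $H$ (if $H$ does not surject onto $F$ its preimage has smaller holonomy; if $H\to F$ is onto but not injective, the number-theoretic lemma \ref{numbers} produces a normal infinite cyclic subgroup and one exits via \ref{inherit}; if $H\to F$ is an isomorphism, $\overline H=\im(\phi)$ for a $p^{r}$-expansive $\phi$), and takes $E_{H}=\IR^{n}$ itself with the action rescaled through $\phi^{-1}$ and the affine map $f_{p^{r},u}$ from \ref{exp}. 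Two further points in your write-up would break even this corrected route: the primes must satisfy $(p-1,l)=1$ and $p\equiv 3\bmod 4$ (not $p\equiv 1\bmod|F|$), since \ref{numbers} needs $|\Aut(\IZ/p^{v})|=(p-1)p^{v-1}$ to be nearly coprime to $|F|$; and the whole argument is a double induction, on the virtual cohomological dimension \emph{and} on the order of the holonomy group, which your proposal does not set up.
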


The proof is quite long and will be by induction on the virtual cohomological dimension of the virtually finitely generated abelian group. It proceeds as follows:

\begin{enumerate}
\item[-] Assume the Baum-Connes conjecture is true for virtually finitely generated abelian groups of $vcd < n$.
\item[-] Then to prove the Baum-Connes conjecture  for virtually finitely generated abelian groups of $vcd = n$, it suffices to prove the Baum-Connes conjecture  for crystallographic groups of rank $n$ by \ref{vfga} and the inheritance results for the Baum-Connes conjecture .
\item[-] Now prove the Baum-Connes conjecture  for a crystallographic group $G$ of rank $n$ by induction on the order of the holonomy group $F = G/\IZ^{n}$. We distinguish 2 cases: If $G$ contains an infinite normal cyclic subgroup, we can reduce to the induction hypothesis by inheritance results for the Baum-Connes conjecture. This also takes care of the induction start $G = \IZ^{n}$. If not, we prove that $G$ is a Farrell-Hsiang group with respect to a family of subgroups for which we already know the Baum-Connes conjecture by the induction hypothesis and use the transitivity principle.
\item[-] A few examples of rank $2$ have to be treated separately to implement the induction start.
\end{enumerate}

Let us begin with the last item. The examples we have to deal with separately are precisely those crystallographic groups of rank $2$ which do have an infinite cyclic normal subgroup, and it turns out there are not so many of those:

\begin{prop}
\label{2dim}
Let $G$ be a crystallographic group of rank $2$ with an infinite cyclic normal subgroup $C$. Then the holonomy group $F$ of $G$ is either trivial, $\IZ/2$ or $\IZ/2 \oplus \IZ/2$ with the $F$-action on $\IZ^{2} \subset G$ being determined by two maps $\sigma_{i}: F \rightarrow \pm 1$ via $f \cdot (a,b) = (\sigma_{1}(f)a, \sigma_{2}(f)b)$. Furthermore, if $F$ is nontrivial $G$ has either precisely two normal, maximal cyclic subgroups or $F = \IZ/2$ and $G = \IZ^{2} \rtimes_{-1} \IZ/2$ is the infinite dihedral group.
\end{prop}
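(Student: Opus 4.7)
The plan is to extract the structure of $G$ from the conjugation representation of the holonomy $F = G/A$ on $A \cong \IZ^{2}$, using the extra constraint imposed by the existence of $C$.

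First I would reduce to the case that $C \subseteq A$ is a primitive rank-$1$ sublattice. Since $[G:A] < \infty$, the intersection $D := C \cap A$ has finite index in $C$ (hence is infinite cyclic) and is normal in $G$ as an intersection of two normal subgroups. Its saturation $C' := \{a \in A \mid na \in D \text{ for some } n \geq 1\}$ is a primitive rank-$1$ $F$-invariant sublattice of $A$, hence normal infinite cyclic in $G$. Replacing $C$ by $C'$, write $C = \IZ v$ with $v \in A$ primitive.

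Next I determine $F$. Since $A = \operatorname{cen}(A)$, the conjugation representation $\rho \colon F \hookrightarrow GL_{2}(\IZ)$ is faithful and preserves the rational line $\IQ v$. Finite-order elements of $GL_{2}(\IZ)$ of order $3,4,$ or $6$ have no real eigenvalues and therefore preserve no rank-$1$ sublattice, so every element of $F$ is an involution; hence $F$ is an elementary abelian $2$-group of order at most $4$, giving $F \in \{1,\, \IZ/2,\, \IZ/2 \oplus \IZ/2\}$. The $F$-actions on $C \cong \IZ$ and on $A/C \cong \IZ$ yield characters $\sigma_{1}, \sigma_{2} \colon F \to \{\pm 1\}$. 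To realize the diagonal description I choose a primitive $w \in A$ with $A = \IZ v \oplus \IZ w$ and such that $f \cdot (a,b) = (\sigma_{1}(f) a,\, \sigma_{2}(f) b)$ in the basis $(v,w)$; existence of such $w$ amounts to splitting the $F$-module extension $0 \to C \to A \to A/C \to 0$ integrally, which I would check by running through the short list of cases for $F$ and exhibiting an explicit integral eigenvector.

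Finally, assume $F \neq 1$ and count maximal normal infinite cyclic subgroups. By the initial reduction, every such subgroup is a primitive rank-$1$ $F$-invariant sublattice of $A$, equivalently a common rational eigenline of $F$ intersected with primitive vectors. In the diagonal model these eigenlines are $\IQ v$ and $\IQ w$. If $F = \IZ/2$ acts as $-\id$, every rational line of $A \otimes \IQ$ is an eigenline and every primitive rank-$1$ sublattice of $A$ is normal in $G$, producing infinitely many such subgroups; this is the exceptional case $G = \IZ^{2} \rtimes_{-\id} \IZ/2$ singled out in the statement. Otherwise the characters $\sigma_{1}, \sigma_{2}$ of $F$ are distinct (and nontrivial, by faithfulness), so there are precisely the two common eigenlines $\IQ v$ and $\IQ w$, hence precisely two maximal normal cyclic subgroups.

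The main obstacle will be the integral diagonalization in the middle paragraph: over $\IQ$ the decomposition is automatic because $|F|$ is invertible, but the relevant class in $H^{1}(F, \Hom(A/C, C))$ need not vanish for an arbitrary rank-$2$ integral involution (compare the ``cm'' wallpaper-type action). A case-by-case verification, using the finiteness of $F$ together with the explicit classification of involutions in $GL_{2}(\IZ)$ preserving a prescribed primitive sublattice, is needed to produce the required integral complement $w$.
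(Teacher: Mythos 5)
Your argument is essentially the paper's: use the conjugation representation $\rho\colon F\to GL_{2}(\IZ)$, observe that normality of $C$ forces an invariant rational line, deduce that $\rho$ factors through $\{\pm 1\}\times\{\pm 1\}$ so faithfulness gives $F\in\{1,\IZ/2,\IZ/2\oplus\IZ/2\}$, and then count invariant rational lines (exactly two unless $F$ acts as $-\id$) to count normal maximal infinite cyclic subgroups. The only real divergence is cosmetic: you bound the order of elements of $F$ via the fact that elements of $GL_{2}(\IZ)$ of order $3,4,6$ have no real eigenvalue, where the paper instead splits $\IQ^{2}=D_{1}\oplus D_{2}$ by Maschke and reads off that $\rho_{\IQ}$ lands in $\{\pm1\}^{2}$; both are fine.

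The one place you are working harder than necessary — and harder than is possible — is the ``integral diagonalization'' you single out as the main obstacle. You are right that the extension $0\to C\to A\to A/C\to 0$ of $F$-modules need not split integrally; the paper does not attempt this, and in fact the remark immediately following the proposition gives the counterexample $i(1,0)=(1,0)$, $i(0,1)=(-1,-1)$, where the two eigenlattices $C_{1},C_{2}$ span a sublattice of index $2$ in $\IZ^{2}$. So the displayed formula $f\cdot(a,b)=(\sigma_{1}(f)a,\sigma_{2}(f)b)$ is only asserted (and only needed) rationally, i.e.\ for the induced action on $\IQ^{2}=D_{1}\oplus D_{2}$; the later applications of the proposition use only the two characters $\sigma_{1},\sigma_{2}$ and the mod-$p$ splitting $\IZ^{2}/p\IZ^{2}=C_{1}/p\oplus C_{2}/p$ for odd $p$, not an integral basis of eigenvectors. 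You should therefore drop the case-by-case search for an integral complement $w$ (it will fail for the $cm$-type involution, exactly as you suspect) and state the diagonal form over $\IQ$. One small further slip: for $F=\IZ/2$ not acting as $-\id$, one of $\sigma_{1},\sigma_{2}$ \emph{is} trivial (e.g.\ $\operatorname{diag}(1,-1)$); what faithfulness gives is that $(\sigma_{1},\sigma_{2})$ is injective, hence $\sigma_{1}\neq\sigma_{2}$, which is all the counting step needs.
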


\begin{proof}
We can assume that $C$ is maximal. Let $\rho \colon F \rightarrow GL_{2}(\IZ)$ be the conjugation action and consider its rationalization $\rho_{\IQ} \colon F \rightarrow GL_{2}(\IQ)$. Since $C$ is invariant under the conjugation action of $F$, $\rho_{\IQ}$ contains an invariant subspace of dimension at least one and hence $\IQ^{2}$ splits as $F$-module into two summands $D_{1}, D_{2}$ with $F$ acting on $D_{i}$ via a homomorphism $\sigma_{i}: F \rightarrow \pm 1$. Since $\rho$ is injective and factors through $\pm 1 \times \pm 1$, we must have $F = 0, \IZ/2$ or $\IZ/2 \oplus \IZ/2$ as claimed. \\
Now $G$ has certainly two distinct maximal cyclic subgroups $C_{1}$ and $C_{2}$, namely the unique maximal integral subgroups of $D_{1}$ and $D_{2}$. If $F$ does not act on $\IZ^{2}$ as multiplication by $-1$ (and hence $F =\IZ/2\IZ$), the rationalization of each other normal infinite maximal cyclic subgroup $C$ has to be either $D_{1}$ or $D_{2}$, and hence $C = C_{1}$ or $C = C_{2}$.  Note that in these cases, $\sigma_{1}$ and $\sigma_{2}$ are always different. 
\end{proof}

\begin{rem}
Note that it may happen that $C_{1} \oplus C_{2} \neq \IZ^{2}$; for example for the representation of $\IZ/2$ on $\IZ^{2}$ given by
\begin{align*}
i(1,0) = (1,0) \\
 i(0,1) = (-1,-1)
\end{align*}
with $i$ the generator of $\IZ/2$ has $C_{1}$ generated by $(1,0)$ and $C_{2}$ generated by $(1,2)$. 
\end{rem}

\begin{prop}
Let $G$ be a crystallographic group of rank two which has a normal infinite cyclic subgroup. Then $G$ satisfies the Baum-Connes conjecture. 
\end{prop}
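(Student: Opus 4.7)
We aim to show that every such $G$ is a Farrell--Hsiang group with respect to the family $\mathcal{VCYC}$ of virtually cyclic subgroups; the Baum--Connes conjecture for $G$ then follows from the main theorem of the paper combined with the transitivity principle \ref{transitivity}, since virtually cyclic groups satisfy Baum--Connes. By Proposition \ref{2dim}, the holonomy $F = G/\IZ^2$ is trivial, $\IZ/2$, or $\IZ/2 \oplus \IZ/2$, acting on $\IZ^2$ through a pair of sign characters $(\sigma_1, \sigma_2)$. The cases $F = 1$ (Proposition \ref{C2}) and $G = \IZ^2 \rtimes_{-\id} \IZ/2$ (the preceding proposition) have already been handled, so only the cases in which $F$ is non-trivial but at least one $\sigma_i$ is trivial (yielding the normal infinite cyclic subgroup) remain.

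The plan is to adapt the infinite dihedral argument. By Bieberbach, $G$ acts cocompactly and properly discontinuously on $\IR^2$ by isometries, so evaluation $e \colon G \to \IR^2$ at any basepoint is a quasi-isometry by \v{S}varc--Milnor, giving constants $A, B > 0$ with $d_{euc}(e(g), e(h)) \leq A\, d_G(g,h) + B$. Fix $n \in \IN$ and pick large primes $p, q$ with $p \equiv q \equiv 1 \pmod{|F|}$; Proposition \ref{exp} then supplies a $p$-expansive endomorphism $\phi_p \colon G \to G$ realised by an affine map $f_p(x) = p\, x + u_p$ on $\IR^2$. We use $\alpha_{pq} \colon G \twoheadrightarrow G_{pq} := G/(pq\IZ^2)$ as the required finite quotient. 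For an elementary subgroup $E \subset G_{pq}$, without loss of generality $q$-elementary, project modulo $p$ to obtain a cyclic subgroup $C \subset (\IZ/p)^2$, and apply Lemma \ref{lattice} to produce $r \colon \IZ^2 \to \IZ$ whose mod-$p$ reduction has kernel $C$ and with $|r_\IR(x) - r_\IR(y)| \leq \sqrt{2p}\, \|x-y\|$.

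The key step is to extend $r$ to an $H$-equivariant map $\overline r \colon G \to G_1$ into a rank-$1$ crystallographic group $G_1$ (either $\IZ$ or $D_\infty$, depending on the sign characters and on whether $C$ is $F$-stable) such that the composite $f_H := \tfrac1p\, r_\IR \circ e \colon G \to E_H := \IR$ is $H$-equivariant for the simplicial $H$-action on $\IR$ obtained by transporting the natural action of $G_1$ through $\phi_p$, exactly as in the dihedral proof (this is where Proposition \ref{exp}(ii) is used, to produce the $u$ that makes the rescaling work on the nose). The isotropy of the $H$-action on $E_H$ sits inside $\ker \overline r$, which is an extension of a subgroup of $F$ by an infinite cyclic (or trivial) group and is therefore virtually cyclic. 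The contracting estimate
\[
d^1_{E_H}(f_H(g), f_H(h)) \leq \tfrac{2\sqrt 2}{\sqrt p}\bigl(A\, d_G(g,h) + B\bigr)
\]
is then identical to the one in the dihedral proof, so choosing $p$ and $q$ sufficiently large forces $d^1_{E_H}(f_H(g), f_H(h)) \leq 1/n$ whenever $d_G(g,h) \leq n$, exhibiting $G$ as a Farrell--Hsiang group with respect to $\mathcal{VCYC}$.

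The main obstacle is the case in which $C \subset (\IZ/p)^2$ is not $F$-stable, so no $F$-equivariant linear lift of $r$ to a one-dimensional target exists. This is handled by a finite case analysis over the finitely many possibilities for $(\sigma_1, \sigma_2)$ from Proposition \ref{2dim}: when $C$ lies along a coordinate axis fixed by some $\sigma_i$ the coordinate projection is $F$-equivariant for a sign character and the argument above applies directly; in the remaining configurations one either combines coordinate projections to achieve $F$-equivariance, or takes $E_H$ two-dimensional (the Farrell--Hsiang definition allows $\dim E_H \leq N$ for any fixed $N$, so $N = 2$ suffices here) and triangulates $\IR^2$ equivariantly, using the rescaled quasi-isometry $\tfrac1p\, e$ as the contracting map. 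In all subcases the isotropy of the $H$-action remains virtually cyclic, which is where the assumption of a normal infinite cyclic subgroup enters decisively via Proposition \ref{2dim}.
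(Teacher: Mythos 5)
Your overall architecture (finite quotients of the form $G/(\text{prime})\IZ^{2}$, expansive endomorphisms, rescaled evaluation maps, contraction estimates) is the right one, but there is a genuine gap in how you dispose of the subgroups of the finite quotient, and it is exactly the point where the paper's proof is forced to abandon the family $\mathcal{VCYC}$. Every preimage $\bar H=\alpha_{pq}^{-1}(E)$ contains $pq\IZ^{2}$ and hence has rank two, so $E_{H}=*$ is never available for the family $\mathcal{VCYC}$. When $E$ surjects onto the holonomy $F$, the intersection $E\cap \IZ^{2}/pq\IZ^{2}$ is $F$-invariant and (after one checks cyclicity) automatically lies along one of the two $F$-eigenlines $C_{1},C_{2}$ supplied by \ref{2dim}; this is why the paper does not need \ref{lattice} here at all and instead uses the fixed projections $\nu_{C_{i}}\colon G\to\Delta_{C_{i}}$ onto rank-one crystallographic groups. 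Your route through Lemma \ref{lattice} produces an $r$ with no equivariance whatsoever, and your proposed repairs for a non-$F$-stable $C$ do not close the gap: ``combining coordinate projections'' is not a construction, and the two-dimensional fallback with the transported action through a $p$-expansive $\phi\colon G\to G$ requires $\bar H=\im(\phi)$, which fails whenever $E\cap(\IZ/pq)^{2}\neq 0$ (the image of $\phi$ meets the lattice exactly in $p\IZ^{2}$, whereas $\bar H\cap\IZ^{2}$ is the full preimage of $E\cap(\IZ/pq)^{2}$).

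The case you cannot handle at all within $\mathcal{VCYC}$ is $E\not\twoheadrightarrow F$: there $\bar H$ is a rank-two crystallographic group with strictly smaller holonomy, $E\cap(\IZ/pq)^{2}$ need not be $F$-stable, and producing a uniformly contracting $\bar H$-equivariant map to a complex with virtually cyclic isotropy amounts to re-proving the proposition for $\bar H$. The paper's proof resolves this by enlarging the family $\calf$ to contain all rank-two crystallographic groups with smaller holonomy, inducting on $\lvert F\rvert$ (which is only $2$ or $4$), and invoking the transitivity principle \ref{transitivity}; with that family one may simply take $E_{H}=*$ in the non-surjective case. A secondary omission: you never verify that $H\cap\IZ^{2}/p\IZ^{2}$ is cyclic, which in the paper requires a dedicated argument using the hyperelementary structure of $H$, the oddness of $p$, and the fact that $F$ acts by $-1$ on one of the summands; without cyclicity the choice of $C_{i}$ and hence the whole one-dimensional construction collapses.
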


\begin{proof}
This is \cite[2.15]{BFL}. By \ref{2dim}, there are only very few cases we need to consider. Since we already dealt with $\IZ^{2}$ and $\IZ^{2} \rtimes_{-\Id} \IZ/2$, we can assume that $G$ has precisely two maximal infinite cyclic subgroups which are invariant under the action of the holonomy group $F$. \\
Pick a maximal normal infinite cyclic $C \subset \IZ^{2} \subset G$ which is $F$-invariant. Consider the diagram
\[
\xymatrix{
1 \ar[r] & \IZ^{2} \ar[d] \ar[r] & G \ar[d] \ar[r] & F \ar[r] \ar[d] & 1\\
1 \ar[r] & \IZ^{2}/C \ar[r] & G/C \ar[r] & F \ar[r] & 1 \\
}
\] 
with the vertical maps the quotient projections. The group $G/C$ is virtually abelian and hence projects with finite kernel onto a crystallographic group $\Delta_{C}$ of rank one, say via $p: G/C \rightarrow \Delta_{C}$. Let $\IZ \cong A_{C} \subset \Delta_{C}$ be the subgroup of translations and $F_{\Delta_{C}}$ the holonomy group of $\Delta_{C}$. We have that $\Delta_{C}$ is either $\IZ$ or $\IZ \rtimes_{-\id} \IZ/2\IZ$. There is a commutative diagram
\[
\xymatrix{
1 \ar[r] & \IZ^{2}/C \ar[d] \ar[r] & G/C \ar[d]^{p} \ar[r] & F \ar[r] \ar[d] & 1\\
1 \ar[r] & A_{C} \ar[r] & \Delta_{C} \ar[r] & F_{\Delta_{C}} \ar[r] & 1 \\
}
\] 
From the two diagrams, we obtain maps
\begin{align*}
\nu_{C} \colon G &\rightarrow \Delta_{C} \\
\mu_{C} \colon \IZ^{2} &\rightarrow A_{C}
\end{align*}
Let $d_{G}$ and $d_{\Delta_{C}}$ be word metrics. Then there are constants $K_{1}, K_{2}$ such that
\[
d_{\Delta_{C}}(\nu_{C}(g), \nu_{C}(h)) \leq K_{1}d_{G}(g,h)+K_{2}
\]
since $\nu_{C}$ is onto and each two word metrics are quasiisometric. Furthermore, $K_{i}$ can be chosen independent from $C$ since there are only finitely many (in fact, two) maximal infinite $F$-invariant subgroups $C \subset \IZ^{2} \subset G$. \\
The group $\Delta_{C}$ acts on $\IR$ in the usual way. Equip $\IR$ with the simplicial complex structure with $0$-simplices $\{ n/2 \mid n \in \IZ\}$. Then $\Delta_{C}$ acts simplicially on $\IR$. \\
Let $\ev_{C}: \Delta_{C} \rightarrow \IR$ be given by evaluation at one point. By the \v{S}varc-Milnor Lemma \cite[8.19]{BH}, the estimate $d^{1}(x,y) \leq 2d^{\euc}(x,y)$ and the existence of the constants $K_{1}, K_{2}$ above, we conclude that there are constants $D_{1}, D_{2}$ such that
\[
d^{1}(\ev_{C}(\nu_{C}(g)) , \ev_{C}(\nu_{C}(h))) \leq D_{1} d_{G}(g,h)+ D_{2}
\]
for all $g,h \in G$. \\
Assume by induction that the Baum-Connes conjecture is known for all crystallographic groups of rank $2$ whose holonomy group has lower order than $F$. Let $\calf$ be the family obtained by adding these groups to $\mathcal{VCYC}$. We prove that $G$ is a Farrell-Hsiang group with respect to $\calf$, which implies via the transitivity principle that it satisfies the Baum-Connes conjecture. In fact, the induction is not really an induction since the order of $F$ is $2$ or $4$ anyway, so there are only two cases to consider.\\
Fix a natural number $n$. For an odd prime $p$, which we will eventually pick large enough, let $G_{p} = G/p\IZ^{2}$ and consider the extension
\[
1 \rightarrow \IZ^{2}/p\IZ^{2} \rightarrow G_{p} \rightarrow F \rightarrow 1
\]
which exists since the order of $F$ is either $2$ or $4$.  Let $a_{n} \colon  G \rightarrow G_{p}$ be the projection. This will be the map occuring in the definition of a Farrell-Hsiang group. \\
Pick a hyperelementary subgroup $H \subset G_{p}$ and let $\bar H = a_{n}^{-1}(H)$. If the projection $pr: G_{p} \rightarrow F$ does not send $H$ onto $G$, $\bar H \in \calf$ and we can pick $E_{H} = *$. So assume that $pr(H) = F$. \\
We claim that then, $\IZ^{2}/p\IZ^{2} \cap H$ is cyclic. There is a prime $q$, a $q$-group $Q$  and a cyclic group $D$ of order prime to $q$  such that there is a short exact sequence
\[
1 \rightarrow D \rightarrow H \rightarrow Q \rightarrow 1
\]
If $p \neq q$, $\IZ^{2}/p\IZ^{2} \cap H$ is a subgroup of $D$ and hence cyclic. If $p = q$, assume that $\IZ^{2}/p\IZ^{2}  \cap H = \IZ^{2}/p\IZ^{2}$ which is the only chance for $\IZ^{2}/p\IZ^{2}  \cap H$ to be noncyclic. Since $H$ projects onto $F$, this forces $H = G_{p}$. But then the composite 
\[
\IZ^{2}/p\IZ^{2} \rightarrow G_{p} = H \rightarrow Q
\]
 must be an isomorphism since $p$ is odd and $F$ has order $2$ or $4$. It follows that $\IZ^{2}/p\IZ^{2} \rightarrow G_{p}$ splits and that the conjugation action of $F$ on $\IZ^{2}/p\IZ^{2}$ is trivial. Since $p$ is odd, this contradicts the fact from \ref{2dim} that $F$ acts on $\IZ^{2}/p\IZ^{2}$ via multiplication by $-1$ on a cyclic subgroup of $\IZ^{2}/p\IZ^{2}$. Hence $\IZ^{2}/p\IZ^{2} \cap H$ must be cyclic. \\
 Recall that $\IZ^{2}$ has two maximal infinite cyclic subgroups $C_{1}, C_{2}$ on which $F$ acts by multiplication by $\pm 1$. Integrally, it may be $C_{1} \oplus C_{2} \neq \IZ^{2}$, but after reduction$\mod p$, we must have
 \[
 \IZ^{2}/p\IZ^{2} = C_{1}/p \oplus C_{2}/p
 \]
where $F$ acts by multiplication by $\pm 1$ and differently on the two summands. Since $H$ projects onto $F$, $\IZ^{2}/p\IZ^{2} \cap H$ is invariant under the $F$-action and hence contained in one of the two summands $C_{i}/p$. Let $C = C_{i}$ for this $i$. \\
Let $\xi \colon \IZ^{2} \rightarrow \IZ^{2}/C$ be the projection. We claim that
\[
\xi(\bar H \cap \IZ^{2}) \subset p(A/C)
\]
Indeed, $(\bar H \cap \IZ^{2})/p$ is contained in $C/p$, so $\xi(\bar H \cap \IZ^{2}) \subset p(A/C)$.  Furthermore, we have
\[
\mu_C( \bar H \cap \IZ^{2}) \subset pA_{C}
\]
since the map $\phi \colon \IZ^{2}/C \rightarrow A_{C}$ is injective and $\mu_{C} = \phi \circ \xi$. This forces
\[
\mu_{C}(\bar H) \cap A_{C} \subset pA_{C}
\]
since $\mu_C( \bar H \cap \IZ^{2})$ is a finite-index subgroup of $\mu_{C}(\bar H) \cap A_{C}$ of even index since the order of $F$ is $2$ or $4$.
We now need a $p$-expansive map $\phi: \Delta_{C} \rightarrow \Delta_{C}$, together with a $\phi$-invariant affine map
\[
a_{(p,w)} \colon \IR \rightarrow \IR, x \mapsto px+w
\]
such that
\[
\nu_{C}(\bar H) \subset \im(\phi)
\]
If $\Delta_{C} = \IZ$, just take $\phi$ to be multiplication by $p$ and $u = 0$. If $\Delta = \IZ \rtimes_{-\id} \IZ/2$, let $t$ be the generator of $\IZ/2$ and for $u \in \IZ^{n}$ define $\phi_{u} \colon \Delta \rightarrow \Delta$ by $\phi_{u}(t) = ut$ and $\phi_{u}(x) = px$ for $x \in \IZ^{n}$. It is easy to check that this yields a group homomorphism as claimed. 
If $\nu_{C}(\bar H)$ projects to $0$ in $\IZ/2$, we can take $\phi_{0}$. If not, pick $u \in \IZ^{n}$ such that $ut \in \bar H$. We claim that $\phi_{u}$ has the desired properties. If $h \in \nu_{C}(\bar H)$, we either have $h \in \nu_{C}(\bar H) \cap \IZ$, so $h$ is divisible by $p$ and in the image of $\phi_{u}$, or $h = xt$ for $x \in \IZ$. But since $ut \in \im(\phi)$ and $(xt)(ut) = x-u \in \nu_{C}(\bar H) \subset p \IZ$ is also in the image of $\phi$, $xt \in \im(\phi)$ as claimed. Finally, $a_{(p, u/2)}$ is $\phi$-invariant. \\
Let $E_{H}$ be the simplicial complex with underlying space $\IR$ and $0$-simplices $\{ n/2 \mid n \in \IZ\}$. The group $\Delta_{C}$ acts on $E_{H}$. Define
\[
f_{H} \colon G \stackrel{\nu_{C}}{\longrightarrow} \Delta_{C} \stackrel{\ev_{C}}{\longrightarrow} \IR \stackrel{(a_{(p,w)})^{-1}}{\longrightarrow} \IR = E_{H}
\]
Equip $E_{H}$ with the $\bar H$-action where $h \in \bar H$ acts by the unique $g \in \Delta_{C}$ which satisfies $\phi(g) = \nu_{C}(h)$. It is easy to check that $f_{H}$ is $\bar H$-invariant and $E_{H}$ has virtually cyclic isotropy. Finally, we estimate for $g,h \in G$ with $d_{G}(g,h) \leq n$
\begin{align*}
d^{1}(f_{H}(g), f_{H}(h)) &\leq 2d^{\euc}(f_{H}(g), f_{H}(h)) \\
&= \frac{2}{p} d^{\euc}(\ev \circ \nu_{C}(g),\ev \circ \nu_{C}(h)) \\
&\leq \frac{2}{p}(D_{1}d_{G}(g,h)+D_{2}) \\
&\leq \frac{2}{p}(D_{1}n+D_{2})
\end{align*}
Now pick $p$ large enough such that the last expression is at most $\frac{1}{n}$. This finishes the proof.
\end{proof}

Now we can state one more inheritance property we will need.

\begin{prop}
\label{inherit}
Let
\[
1 \rightarrow V \rightarrow G \rightarrow H \rightarrow 1 
\]
be a short exact sequences of groups with $V$ virtually cyclic such that $H$ satisfies the Baum-Connes conjecture. Then also $G$ satisfies the Baum-Connes conjecture.
\end{prop}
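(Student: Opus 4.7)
The plan is to apply the Oyono-Oyono inheritance theorem recorded earlier in this chapter as \cite[3.3]{oyono} to the given exact sequence $1 \to V \to G \stackrel{p}{\to} H \to 1$. Since $H$ satisfies the Baum-Connes conjecture with coefficients by hypothesis, that theorem reduces the problem to verifying the same conjecture for every preimage $K := p^{-1}(W)$, with $W$ ranging over the virtually cyclic subgroups of $H$.

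The second step is a group-theoretic analysis of these preimages. Each $K$ fits into a short exact sequence $1 \to V \to K \to W \to 1$ of two virtually cyclic groups. If either $V$ or $W$ is finite, then $K$ is itself virtually cyclic, and the Baum-Connes conjecture with coefficients is classical in that case. Otherwise both $V$ and $W$ contain $\IZ$ as a subgroup of finite index; intersecting the corresponding preimages in $K$ produces a finite-index subgroup $K_{0} \subset K$ fitting in an extension
\[
1 \to \IZ \to K_{0} \to \IZ \to 1.
\]
The only two isomorphism classes of such extensions are $\IZ^{2}$ and the Klein bottle group $\IZ \rtimes_{-1} \IZ$, so $K$ is in every case virtually $\IZ^{2}$, i.e.\ a virtually finitely generated abelian group of Hirsch length at most two. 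The Baum-Connes conjecture with coefficients for these groups has already been handled in this chapter: $\IZ^{2}$ and $\IZ^{2}\rtimes_{-1}\IZ/2$ were shown directly to be Farrell-Hsiang groups with respect to $\mathcal{VCYC}$, while the Klein bottle group is a crystallographic group of rank two containing an infinite normal cyclic subgroup and is therefore covered by the proposition treated immediately before the present statement. Together with passage to subgroups, this provides BC with coefficients for each $K$, and Oyono-Oyono then concludes BC with coefficients for $G$.

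The substantive part of the argument is the bookkeeping in the second step: one has to classify the $\IZ$-by-$\IZ$ extensions (ruling out anything beyond $\IZ^{2}$ and the Klein bottle group) and check that the finite-index reductions leading to $K_{0}$ can be done compatibly, so that the groups appearing really fall into the rank-two classes already established. Once that is in place, the proposition is a formal consequence of Oyono-Oyono and the rank-two base cases, and forms exactly the inductive input used in the proof strategy for virtually finitely generated abelian groups outlined in Chapter~11.
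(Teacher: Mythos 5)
Your first step --- reducing via the Oyono-Oyono extension theorem to the preimages $K=p^{-1}(W)$ of the virtually cyclic subgroups $W\subset H$ --- is exactly how the paper proceeds, and your observation that each such $K$ is either virtually cyclic or virtually $\IZ^{2}$ is correct. The gap is in your final step. You establish the conjecture for a finite-index subgroup $K_{0}\subset K$ isomorphic to $\IZ^{2}$ or to the Klein bottle group and then invoke ``passage to subgroups'' to handle $K$. That inheritance runs in the wrong direction: the Baum-Connes conjecture passes from a group to its subgroups, not from a finite-index subgroup up to the ambient group, and $K$ itself need not be isomorphic to any group on your list (it is merely commensurable with $\IZ^{2}$; for instance $\IZ^{2}$ extended by a coordinate swap arises this way). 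Nor can you quote a blanket result for virtually finitely generated abelian groups of Hirsch length $2$: at this point of the chapter the conjecture is only available for $\IZ^{2}$, for $\IZ^{2}\rtimes_{-\id}\IZ/2$, and for rank-$2$ crystallographic groups that possess a \emph{normal infinite cyclic subgroup} --- the proposition is itself an ingredient in the induction establishing the general case.

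The missing idea is to produce a normal infinite cyclic subgroup of $K$ itself. Since $V$ is infinite virtually cyclic it contains an infinite cyclic subgroup $C$ of finite index, and the characteristic core $C'=\bigcap_{\phi\in\Aut(V)}\phi(C)$ is a finite intersection (as $V$ has only finitely many subgroups of each index), is still infinite cyclic of finite index in $V$, and, being characteristic in the normal subgroup $V$, is normal in all of $K$. By \ref{vfga} the virtually finitely generated abelian group $K$ surjects with finite kernel onto a crystallographic group $\Delta$ of rank at most $2$; the image of $C'$ is a normal infinite cyclic subgroup of $\Delta$, so $\Delta$ falls under the proposition proved immediately before this one (or is virtually cyclic if the rank is at most $1$). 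One then climbs back from $\Delta$ to $K$ along the finite-kernel extension $1\rightarrow \ker \rightarrow K\rightarrow \Delta\rightarrow 1$ by applying the Oyono-Oyono theorem once more, since preimages of virtually cyclic subgroups of $\Delta$ are virtually cyclic. With this replacement for your last paragraph the argument closes and coincides with the paper's proof.
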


\begin{proof}
This is essentially the Baum-Connes version of \cite[2.22]{BFL}. Let $\calf$ be the family of subgroups of $G$ obtained by pulling back the family $\mathcal{VCYC}$ of subgroups of $H$. If all $F \in \calf$ satisfy the Baum-Connes conjecture, the transitivity principle tells us that $G$ satisfies the Baum-Connes conjecture. Each such $F$ sits in a short exact sequence
\[
1 \rightarrow V \rightarrow F \rightarrow K \rightarrow 1 
\]
with also $K$ virtually cyclic. We claim that $F$ satisfies the Baum-Connes conjecture. This is clear if $V$ is finite, so let $C \subset V$ be an infinite cyclic normal subgroup. Let $C' = \cap_{\phi \in \Aut(V)} \phi(C)$. Since $V$ contains only finitely many subgroups of a given index, this is a finite intersection, and $C'$ is characteristic and hence in particular normal in $F$. The group $F$ is virtually finitely generated abelian and hence 
admits a surjection onto a crystallographic group $G$ of rank $2$ with finite kernel by \ref{vfga}. Then the image of $C'$ is an infinite cyclic normal subgroup of $G$ and hence $G$ is a Farrell-Hsiang group with respect to the family $\mathcal{VCYC}$. It follows that $G$ satisfies the Baum-Connes conjecture.
\end{proof}

In the induction step, we will use the following:

\begin{prop}
\label{numbers}
Let $\Delta$ be a crystallographic group and let
\[
0 \rightarrow \IZ^{n} \rightarrow \Delta \rightarrow G \rightarrow 1
\]
be the canonical group extension with $G$ finite of order $2^{k} l$ with $l$ odd. Let $p$ be a prime with $(p,l) = 1$, $(p-1, l) = 1$ and $p = 3 \mod 4$. Consider for some  $r \in \IN$ the extension
\[
0 \rightarrow \IZ^{n}/p^{r}\IZ^{n} \rightarrow \Delta/p^{r} \IZ^n \rightarrow G \rightarrow 1
\]
Let $H \subset \Delta/p^{r}\IZ^n$ be a hyperelementary subgroup such that $H \rightarrow G$ is surjective but not injective. Then $\IZ^{n}$ has an infinite cyclic subgroup which is invariant under the $G$-action.
\end{prop}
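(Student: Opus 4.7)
The plan is to exploit the hyperelementary structure of $H$, dividing into cases according to the prime $q$ for which $H$ is hyperelementary. Write $N := \ker(H \to G) \subset \IZ^n/p^r\IZ^n$; by hypothesis $N$ is a non-trivial abelian normal $p$-subgroup of $H$, and since $|G| = 2^k l$ is coprime to $p$ (the prime $p$ being odd with $(p,l)=1$), $N$ is in fact the full Sylow $p$-subgroup of $H$ and $|H|/|N| = |G|$. Let $H = K \rtimes Q$ be a hyperelementary decomposition: $K$ is cyclic and normal in $H$ with $|K|$ coprime to $q$, and $Q$ is a $q$-group.

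Suppose first $q \neq p$. Then $N \cap K$ is a $p$-subgroup of the cyclic group $K$, hence cyclic, while $N/(N \cap K)$ embeds into $Q$ with order simultaneously a $p$-power and a $q$-power, so trivial; thus $N \subset K$ and $N$ is cyclic. Consequently $N[p]$ is one-dimensional over $\IF_p$ and, being characteristic in the normal subgroup $N$, is a $G$-invariant line in $\IZ^n/p\IZ^n$, giving a character $\chi\colon G \to \IF_p^\times$. The arithmetic hypotheses give $\gcd(|G|, p-1) = \gcd(2^k l, p-1) = \gcd(2^k, p-1) = 2$, the last equality because $p \equiv 3 \pmod 4$ means $p-1 = 2m$ with $m$ odd. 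So $\chi$ factors through $\{\pm 1\}$ and may be regarded as a rational character of $G$. Since $p \nmid |G|$, the central idempotent $e_\chi = \tfrac{1}{|G|}\sum_{g \in G}\chi(g) g^{-1}$ lies in $\IZ_{(p)}[G]$, and the non-vanishing of $e_\chi(\IZ^n/p\IZ^n)$ lifts by Nakayama to the non-vanishing of $e_\chi\IZ^n_{(p)}$, hence of $e_\chi \IQ^n$. Any nonzero element of $e_\chi \IZ^n$ then generates an infinite cyclic $G$-invariant subgroup on which $G$ acts by $\chi$.

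Suppose now $q = p$. Then $|K|$ is coprime to $p$ while $|Q|$ is a $p$-power; comparing with $|H| = |N|\cdot|G|$ forces $|K| = |G|$ and $|Q| = |N|$, so $K$ is a normal complement to $N$ in $H$, and both $K$ and $N$ being normal with trivial intersection forces $H = K \times N$, i.e.\ $[K,N] = 1$. Choosing $c \in K \subset \Delta/p^r\IZ^n$ lifting $\sigma \in G$, the extension structure gives $cnc^{-1} = \sigma(n)$ for $n \in \IZ^n/p^r\IZ^n$, so the commuting condition translates precisely to $N \subset (\IZ^n/p^r\IZ^n)^G$. In particular $(\IZ^n/p^r\IZ^n)^G \neq 0$. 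The long exact sequence arising from $0 \to \IZ^n \xrightarrow{p^r} \IZ^n \to \IZ^n/p^r\IZ^n \to 0$ yields
\[
0 \to (\IZ^n)^G/p^r(\IZ^n)^G \to (\IZ^n/p^r\IZ^n)^G \to H^1(G,\IZ^n)[p^r] \to 0,
\]
and $H^1(G,\IZ^n)$ has exponent dividing $|G|$, coprime to $p$, so the right-hand term vanishes. By Nakayama $(\IZ^n)^G \neq 0$, and any nonzero fixed vector spans an infinite cyclic $G$-invariant subgroup on which $G$ acts trivially.

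The main obstacle is the case $q = p$: one must recognise that hyperelementarity is strong enough to force $N$ into the $G$-fixed part of $\IZ^n/p^r\IZ^n$ (the complement $K$ to the Sylow $p$-subgroup $N$ is automatically normal by hyperelementarity, and normality of two subgroups with trivial intersection in a group of product order forces them to commute). Once this structural input is identified, both branches of the case analysis reduce to lifting a piece of data modulo $p$ or $p^r$ back to $\IZ$, which succeeds uniformly because $(p, |G|) = 1$ makes all obstruction groups $p$-torsion-free.
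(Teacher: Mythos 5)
Your proof is correct, and your case split ($q\neq p$ versus $q=p$ for the hyperelementary prime) coincides with the paper's split on whether $\ker(H\to G)$ meets the cyclic normal part $C_s$. In the case $q\neq p$ the two arguments are close in spirit: the paper lets $G$ act by conjugation on the cyclic $p$-group $N$, uses $(p-1,l)=1$ and $p\equiv 3 \bmod 4$ to see that the image in $\Aut(N)$ has order at most $2$, lifts a vector fixed by the kernel $F$ of that action from $(\IZ^{n}/p^{r}\IZ^{n})^{F}$ to $(\IZ^{n})^{F}$ using that $H^{1}(F,\IZ^{n})$ has no $p$-torsion, and then either symmetrizes over $G/F$ or notes the complement acts by $-1$; you package the same arithmetic into a character $\chi\colon G\to\{\pm1\}$ on $N[p]$ and produce a $\chi$-semi-invariant integral vector via the idempotent $e_{\chi}\in\IZ_{(p)}[G]$ and Nakayama — equivalent input, and it avoids the final dichotomy. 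In the case $q=p$ the routes genuinely differ: the paper deduces that $G$ is cyclic (since $C_{s}\to G$ is onto) and then tries to exhibit a fixed vector by summing the $M$-translates of a lattice basis vector, justified by the claim that $1+x+\cdots+x^{m-1}$ cannot be the minimal polynomial of $M$; as stated, that step presupposes that $M$ has eigenvalue $1$, which does not follow from cyclicity alone (an integral matrix of finite order need not have eigenvalue $1$, e.g. the order-$3$ symmetry of the hexagonal lattice). Your observation that the cyclic normal part $K$ maps isomorphically onto $G$ and commutes with $N$ (two normal subgroups with trivial intersection), so that $N\subset(\IZ^{n}/p^{r}\IZ^{n})^{G}\neq 0$, supplies exactly the missing input, and the subsequent Bockstein/$H^{1}$ lifting is the same device the paper uses in its other case; so here your argument is more robust than the paper's. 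Two cosmetic points: $\gcd(2^{k},p-1)=2$ only when $k\geq 1$ (for $|G|$ odd the character is trivial, which is harmless); and for $r>1$ you should say that $N[p]$ is viewed in $\IZ^{n}/p\IZ^{n}$ via the $G$-equivariant identification $(\IZ^{n}/p^{r}\IZ^{n})[p]=p^{r-1}\IZ^{n}/p^{r}\IZ^{n}\cong\IZ^{n}/p\IZ^{n}$, and write $e_{\chi}\IQ^{n}\cap\IZ^{n}$ (or clear denominators) rather than $e_{\chi}\IZ^{n}$.
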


\begin{proof}
We find a number $k$ and a prime $q$ such that there is an extension
\[
1 \rightarrow C_{s} \rightarrow H \rightarrow Q \rightarrow 1
\]
with $C_{s}$ cyclic of order $s$ and $Q$ a $q$-group, with $(q,s) = 1$. We know that $H \cap \IZ^n/p^{r}\IZ^{n} \neq 0$. \\
First let us consider the case that $\IZ^{n}/p^{r}\IZ^{n} \cap C_{s} = 0$. The order of $H \cap \IZ^{n}/p^{r}\IZ^{n}$ divides both $q^{i}s$ and $p^{r}$. Also, $H \cap \IZ^{n}/p^{r}\IZ^{n}$ cannot contain elements of order dividing $s$ since $C_{s} \cap \IZ^{n}/p^{r}\IZ^{n} = 0$ and $C_{s}$ is the group of elements in $H$ whose order divides $s$. It follows that $q = p$. So, by our assumption on $p$, we must have $(q, \left\vert G \right\vert) = 1$. Since $H \rightarrow G$ is onto and all elements of order $q^{j}$ have to go to zero since $(q, \left\vert G \right\vert) = 1$, this implies that $C_{s} \rightarrow G$ is onto.
So $G$ is cyclic and we have an extension 
\[
1 \rightarrow \IZ^{n} \rightarrow \Delta \rightarrow \IZ/m\IZ \rightarrow 1
\]
Represent $\Delta$ as a group of isometries of $\IR^{n}$. Let $M \in O(n)$ be a generator of $\IZ/m\IZ$. Regarding $\IZ^{n}$ as sitting inside $\IR^{n}$, pick a basis $v_{1}, \dots v_{n} \in \IZ^{n}$ of $\IR^{n}$. Note that $\IR^{n}$ does not necessarily carry the standard scalar product. There must be an $i$ such that $v = v_{i}+Mv_{i}+ \dots + M^{m-1}v_{i}$ is not the zero vector since $1+x+x^{2}+ \dots + x^{m-1}$ cannot be the minimal polynomial of $M$ since it lacks the Eigenvalue $1$. Then $v$ is an eigenvector of $M$ for the eigenvalue $1$, and hence $v$ is an eigenvector of all orthogonal matrices occuring in $\Delta$. Since we can regard $M$ as a matrix with integral entries, it follows that $v \in \IZ^{n}$. Clearly, $v$ spans a normal infinite cyclic subgroup. \\
Now assume that $\IZ^{n}/p^{r}\IZ^{n} \cap C_{s} \neq 0$. Then no element of $H$ except those whose order divides $s$ can go to $0$ in $G$ since $(s,p^{r}) \neq 1$ forces $q \neq p$. So $H \cap\IZ^{n}/p^{r}\IZ^{n}$ is cyclic of order, say,  $p^{v}$. Now consider the conjugation-representation
\[
\rho: G \rightarrow \Aut(H \cap \IZ^{n}/p^{r}\IZ^{n})
\]
obtained from the short exact sequence
\[
1 \rightarrow H \cap \IZ^{n}/p^{r}\IZ^{n} \rightarrow H \rightarrow G \rightarrow 1
\]
and let $G_{0}$ be the image of $\rho$. We must have
\[
\left\vert G_{0} \right\vert \mid (\left\vert  \Aut(H \cap \IZ^{n}/p^{r}\IZ^{n}) \right\vert, \left\vert G \right\vert) = ((p-1)p^{v-1}, 2^{k}l) = 1 \text{ or } 2
\]
by our assumptions on $p$. Let $F$ be the kernel of $\rho$. Then clearly, $(\IZ^{n}/p^{r}\IZ^{n})^{F} \neq 0$ and so $(\IZ^{n})^{F} \neq 0$ since there is an exact sequence
\[
(\IZ^{n})^{F} \rightarrow (\IZ^{n}/p^{r}\IZ^{n})^{F}   \rightarrow H^{1}(F, p^{r}\IZ^{n}) \stackrel{f}{\rightarrow} H^{1}(F, \IZ^{n})
\]
where the map $f$ is multiplication by $p^{r}$ and hence an isomorphism since $p$ does not divide the order of $F$, so $(\IZ^{n})^{F} \rightarrow (\IZ^{n}/p^{r}\IZ^{n})^{F}$ has to be onto. \\
Let $x \in \IZ^{n}$ be an $F$-fixed vector and $g \in G, \neq F$. Then either $x+\rho(g)(x)$ is nonzero, and then $G$-fixed, or $g$ and hence also all other $g' \in G-F$ act as multiplication by $-1$ on $x$. Then the subgroup of $\IZ^{n}$ generated by $x$ is $G$-invariant.
\end{proof}

Now we are ready for the induction step:

\begin{prop}
Let $G$ be a crystallographic group of rank $n$. Assume that the Baum-Connes conjecture is known for all virtually finitely generated abelian groups of virtual cohomological dimension at most $n-1$ and all crystallographic groups of rank $n$ whose holonomy has smaller rank than the holonomy of $G$. Then $G$ satisfies the Baum-Connes conjecture as well.
\end{prop}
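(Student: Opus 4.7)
The plan is to split into two cases depending on whether $G$ contains an infinite normal cyclic subgroup, and to handle the first case by an inheritance argument and the second by showing $G$ is a Farrell-Hsiang group with respect to a family for which the Baum-Connes conjecture is already known.

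First I would treat the easy case: suppose $G$ contains an infinite normal cyclic subgroup $C$. Then $G/C$ is virtually finitely generated abelian, and since $C \cap \IZ^{n}$ is infinite cyclic of rank $1$, the group $\IZ^{n}/(C \cap \IZ^{n})$ is virtually $\IZ^{n-1}$, so $G/C$ has virtual cohomological dimension $n-1$. By the induction hypothesis on $\vcd$, the Baum-Connes conjecture holds for $G/C$. Applying the inheritance result \ref{inherit} to the extension $1 \to C \to G \to G/C \to 1$ yields the conjecture for $G$. (This also covers the induction start, since $\IZ^{n}$ always satisfies the hypothesis of Case 1 for $n \geq 1$.)

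Now assume $G$ has no infinite normal cyclic subgroup. Let $\calf$ be the family of those subgroups of $G$ which are either (a) virtually finitely generated abelian of virtual cohomological dimension $<n$, or (b) crystallographic of rank $n$ with strictly smaller holonomy. By the two induction hypotheses, every group in $\calf$ satisfies the Baum-Connes conjecture, so by the transitivity principle \ref{transitivity} it suffices to show that $G$ is a Farrell-Hsiang group with respect to $\calf$. For each $n \in \IN$, choose a prime $p_n$ and an exponent $r_n$ satisfying $(p_n, \lvert F \rvert) = 1$, $(p_n - 1, \lvert F \rvert) = 1$, $p_n \equiv 3 \pmod 4$, $p_n^{r_n} \equiv 1 \pmod {\lvert F \rvert}$, and with $p_n^{r_n}$ arbitrarily large; set $F_n = G/p_n^{r_n}\IZ^n$ and let $\alpha_n \colon G \to F_n$ be the projection. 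For each hyperelementary $H \subset F_n$ with preimage $\bar H$, distinguish two subcases according to whether the composite $H \hookrightarrow F_n \twoheadrightarrow F$ is surjective. If it is not, then $\bar H$ is a crystallographic group of rank $n$ with smaller holonomy, hence lies in $\calf$, and we take $E_H = *$ with the $\bar H$-equivariant map $f_H \colon G \to *$. If the composite is surjective, then Proposition \ref{numbers} together with the standing assumption that $G$ contains no infinite normal cyclic subgroup forces the composite to be injective as well, so $H \to F$ is a bijection.

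In this remaining subcase, use Proposition \ref{exp} to produce a $p_n^{r_n}$-expansive homomorphism $\phi \colon G \to G$, together with a $\phi$-equivariant affine map $f_{(p_n^{r_n},u)} \colon \IR^n \to \IR^n$, $x \mapsto p_n^{r_n} x + u$, chosen so that $\bar H \subset \phi(G)$. Let $E_H$ be $\IR^n$ with a standard $G$-invariant triangulation, let $\bar H$ act on $E_H$ through the isomorphism $\phi^{-1} \colon \bar H \xrightarrow{\cong} \phi^{-1}(\bar H) \subset G$ followed by the given crystallographic action of $G$ on $\IR^n$, and define $f_H \colon G \to E_H$ as the composite of the evaluation $G \to \IR^n$ with $(f_{(p_n^{r_n},u)})^{-1}$. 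The $\phi$-equivariance of $f_{(p_n^{r_n},u)}$ makes $f_H$ an $\bar H$-equivariant map; the isotropy of the $\bar H$-action on $E_H$ is a subgroup of a stabilizer of the crystallographic $G$-action, hence finite and thus in $\calf$; and the composition with $(f_{(p_n^{r_n},u)})^{-1}$ introduces a contraction factor $1/p_n^{r_n}$, so the $\ell^1$-metric estimate $d^1_{E_H}(f_H(g), f_H(h)) \leq C/p_n^{r_n} \cdot (d_G(g,h)+D)$ can be made smaller than $1/n$ uniformly in $g,h$ with $d_G(g,h) \leq n$ by choosing $p_n^{r_n}$ large enough. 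The dimension of $E_H$ is bounded by $n$, independent of everything. This verifies the Farrell-Hsiang condition.

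The main obstacle is the construction of the expansive homomorphism $\phi$ for which $\bar H \subset \phi(G)$: the obvious $\phi$ guaranteed by Proposition \ref{exp}(i) need not contain $\bar H$ in its image, and one must adjust $\phi$ by a coboundary (the generalisation of the choice of $u \in \IZ^n$ in the $\IZ^2 \rtimes_{-\id} \IZ/2$ case) using that both $\bar H$ and $\phi(G)$ are lifts of $F$ to $G/p_n^{r_n}\IZ^n$, which differ by a class in $H^1(F, \IZ^n/p_n^{r_n}\IZ^n)$ that is killed by the cohomological freedom built into the construction of $\phi$.
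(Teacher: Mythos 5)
Your proposal is essentially the paper's own proof, only reorganized: the paper does not make the dichotomy "normal infinite cyclic subgroup or not" at the outset, but runs the Farrell--Hsiang verification directly and, in the subcase where a hyperelementary $H$ surjects non-injectively onto $F$, invokes \ref{numbers} to produce a normal infinite cyclic subgroup and finishes at once via \ref{inherit} and the induction hypothesis; this is logically the same as your top-level case split, and the remaining steps (family $\calf$, transitivity, quotients $G/p^{r}\IZ^{n}$, expansive $\phi$ from \ref{exp} with a $\phi$-equivariant affine contraction, finite isotropy, contraction estimate) match the paper's argument. Two points should be tightened. First, your conditions on the primes are misstated: $(p_{n}-1,\left\vert F \right\vert)=1$ is impossible whenever $\left\vert F \right\vert$ is even, since $p_{n}-1$ is even; what \ref{numbers} requires is $(p,l)=1$, $(p-1,l)=1$ and $p \equiv 3 \pmod 4$, where $\left\vert F \right\vert = 2^{k}l$ with $l$ odd, and such primes exist by Dirichlet. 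Second, the step "choose $\phi$ so that $\bar H \subset \phi(G)$" needs the precise mechanism: $a_{p^{r}}\circ\phi$ kills $A$ and so factors through a splitting $\bar\phi\colon F \to G_{p^{r}}$ of $0 \to A_{p^{r}} \to G_{p^{r}} \to F \to 1$ (with $A_{p^{r}} = \IZ^{n}/p^{r}\IZ^{n}$), while $H$ is another splitting; the difference class is killed not by any freedom in the construction of $\phi$ but because $H^{1}(F,A_{p^{r}})=0$, which holds since $(p,\left\vert F \right\vert)=1$. Hence the two splittings are conjugate in $G_{p^{r}}$, and after conjugating (either $H$ or $\phi$) one even gets $\bar H = \im(\phi)$, which is what makes the twisted $\bar H$-action via $\phi^{-1}$ well defined. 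Finally, your linear comparison between $d^{1}$ and the euclidean metric on the triangulated $\IR^{n}$ is not needed and not literally justified as stated; it suffices to contract euclidean distance by $1/p^{r}$ and then use the uniform modulus-of-continuity comparison between $d^{\euc}$ and $d^{1}$ coming from local finiteness of the $G$-cocompact triangulation, as in the paper.
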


\begin{proof}
\fxnote{this is the proof of part iii) of 11.10; indicate this somehow}
This is \cite[Section 2.3]{BFL}. Let $G$ be a crystallographic group of rank $n$, and let
\[
0 \rightarrow A \rightarrow G \rightarrow F \rightarrow 1
\]
be the associated extension with $A$ finitely generated free abelian. Let $\calf$ be the family of subgroups of $G$ consisting of all subgroups of $G$ of rank at most $n-1$ and all subgroups of $G$ of rank $n$ whose holonomy group has lower order than $F$. By induction hypothesis and the transitivity principle, it suffices to see that $G$ satisfies the Baum-Connes conjecture with respect to the family $\calf$ since the groups in $\calf$ are either virtually finitely generated abelian of rank at most $n-1$ or crystallographic of rank $n$ with lower-order holonomy group.  We proceed to prove that $G$ is a Farrell-Hsiang group with respect to $\calf$.\\
Equip $G$ with some word metric $d_{G}$. Evaluation at some point of $\IR^{n}$ gives a map
\[
\ev \colon G \rightarrow \IR^{n}
\]
and by the \v{S}varc-Milnor Lemma \cite[8.19]{BH}, we find constants $C_{1}, C_{2} > 0$ such that
\[
d(\ev(g), \ev(h)) \leq C_{1} d_{G}(g,h)+ C_{2}.
\]
Fix a natural number $n$. We can equip $\IR^{n}$ with the structure of a simplicial complex such that the $G$-action is simplicial; denote $\IR^{n}$ with this simplicial structure by $E$. All simplicial complexes occuring from now on are $E$, but with varying group actions. \fxnote{reference for this claim?} \\
Since we want to employ \ref{numbers}, write $\left\vert F \right\vert = 2^{k}l$ with $l$ odd and pick a prime $p$ with
\begin{align*}
(p,l) &=  1\\
(p-1,l) &= 1 \\
p &= 3 \mod 4
\end{align*}
There are infinitely many such primes: Pick a number $m$ which is $2$ modulo $l$ and $3$ modulo $4$, which is possible since $(4,l) = 1$, and apply Dirichlet's theorem to conclude that there are infinitely many primes $p$ with
\[
p \mod 4l = m
\]
and each such prime will satisfy $p = 2 \mod l$ and $p = 3 \mod 4$, which forces $(p,l) = (p-1,l) = 1$ since $l$ is at least $3$.
We will eventually use this to choose $p$ very large in order to obtain the necessary contracting properties. Finally, since $(p, 2^{k}l) = 1$, we find a number $r > 0$ such that
\[
p^{r} =1 \mod \left\vert F \right\vert
\]
Since $A$ is normal in $G$ and $p^{r}A$ is a characteristic subgroup of $A$, also $p^{r}A$ is normal in $G$. Let
\begin{align*}
A_{p^{r}} &= A/(p^{r}A) \\
G_{p^{r}} &= G/(p^{r}A)
\end{align*}
The projection
\[
a_{p^{r}}: G \rightarrow G_{p^{r}}
\]
will play the role of the map $\alpha_{n}$ appearing in the definition of a Farrell-Hsiang group. \\
Let $H \subset G_{p^{r}}$ be a hyperelementary subgroup (for the Baum-Connes conjecture, elementary would suffice) and let $\overline{H}$ be its preimage under $a_{p^{r}}$.
If under the projection $pr_{p^{r}}: G_{p^{r}} \rightarrow F$, $H$ is not sent onto all of $F$, $\overline{H}$ is a crystallographic group of the same rank as $G$, but with lower-order holonomy and is hence contained in the family $\calf$. This allows us to set $E_{H} = *$. So we now assume that $pr_{p^{r}}(H) = F$. \\
If $pr_{p^{r}}: H \rightarrow F$ is not an isomorphism, we can employ \ref{numbers} to conclude that $G$ has a normal infinite cyclic subgroup $C$. Then \ref{inherit} shows that it suffices to see the Baum-Connes conjecture for $G/C$ which is a virtually abelian group of virtual cohomological dimension $1$ lower than the rank of $G$, for which we know the Baum-Connes conjecture by induction hypothesis. \\
So we can assume that $pr_{p^{r}}: H \rightarrow F$ is an isomorphism. In particular, the short exact sequence
\[
0 \rightarrow A_{p^{r}} \rightarrow G_{p^{r}} \rightarrow F \rightarrow 1 
\]
splits. Since $p^{r} = 1 \mod \left\vert F \right\vert$, we find by \ref{exp} a $p^{r}$-expansive map $\phi: G \rightarrow G$. The composite $a_{p^{r}} \circ \phi: G \rightarrow G_{p^{r}}$ vanishes on $A$ and hence factors over $\overline{\phi}: F \rightarrow G_{p^{r}}$, i.e. we have
\begin{align*}
a_{p^{r}} \circ \phi &= \overline{\phi} \circ pr \\
pr_{p^{r}} \circ \overline{\phi} &= \id_{F}
\end{align*}
where $pr: G \rightarrow F$ is the projection. Hence $\overline{\phi}$ is another splitting of 
\[
0 \rightarrow A_{p^{r}} \rightarrow G_{p^{r}} \rightarrow F \rightarrow 1 
\]
We conclude from \cite[IV.2.3]{brown} and $H^{1}(F; A_{p^{r}}) = 0$ that the two splittings are conjugated and hence that $H$ and $\im(a_{p^{r}} \circ \phi) = \im(\overline{\phi})$ are conjugated in $G_{p^{r}}$. Hence also their preimages in $G$ are conjugated. \\
If we can handle $ \im(\overline{\phi})$, i.e. find a suitable map $f: G \rightarrow E_{ \im(\overline{\phi})}$, then we can define an $H$-action on $E_{ \im(\overline{\phi})}$ by conjugating the $\im(\overline{\phi})$-action; precomposing $f$ with the conjugation which maps $\im(\overline{\phi})$ to $H$ then yields a map $f_{H}: G \rightarrow E_{H}$ with the desired properties. \\
So we may assume that $H = \im(\overline{\phi})$ and claim that then
\[
\overline{H} = \im{\phi}
\]
Indeed, if $h \in \overline{H}$, we find $g \in G$ with $a_{p^{r}}(h) = a_{p^{r}}(\phi(g))$ since $H = \im(a_{p^{r}} \circ \phi)$ and hence $h^{-1}\phi(g)$ is in the kernel of $a_{p^{r}}$ which is $p^{r}A$, which by definition is in the image of $\phi$.  Conversely, if $g = \phi(s) \in \im(\phi)$, we have $a_{p^{r}}(g) = a_{p^{r}}(\phi(s)) \in H$ and so $g \in \overline{H}$.
The reader should compare this point to the condition $$r(H) \subset p\IZ$$ appearing in the proof for $\IZ^{2}$; it is the analogue in our more complicated situation. \\
Finally, there is by \ref{exp} a $u \in \IR^{n}$ such that the affine map
\[
f = f_{p^{r},u}: \IR^{n} \rightarrow \IR^{n}, x \mapsto p^{r}x+u
\]
is $\phi$-equivariant, i.e. satisfies 
\[
f(gx) = \phi(g)f(x)
\]
for all $g \in G$, $x \in \IR^{n}$. Now consider the map
\[
f_{H}: G \stackrel{\ev}{\rightarrow} \IR^{n} \stackrel{f^{-1}}{\rightarrow} \IR^{n} = E_{H}
\]
This map has the required contracting properties if $p$ is large enough: We estimate for $g,h \in G$ with $d_{G}(g,h) < n$
\begin{align*}
d^{\euc}(f_{H}(g), f_{H}(h)) &= d^{\euc}(f^{-1}(\ev(g)), f^{-1}(\ev(h))) \\
&\leq \frac{1}{p^{r}} d(\ev(g), \ev(h)) \\
&\leq \frac{1}{p^{r}}(C_{1}d_{G}(g,h)+C_{2}) \\
&\leq \frac{1}{p^{r}}(C_{1}n+C_{2})
\end{align*}
The last line can be made arbitrarily small by picking large $p$. Unfortunately, we have to consider the $l^{1}$-metric on $E$ and not the euclidean one, so we are not yet done.  But since the simplicial complex structure on $\IR^{n}$ is locally finite, we find that for given $\epsilon > 0$ there is $\delta > 0$ such that
\[
d^{\euc}(x,y) \leq \delta \Rightarrow d^{l^{1}}(x,y) \leq \epsilon
\]
Picking the $\delta$ for $\epsilon = \frac{1}{n}$ and choosing $p$ large enough such that 
\[
\frac{1}{p^{r}}(C_{1}n+C_{2}) \leq \delta
\]
then shows that $f_{H}$ has the required contracting properties. Finally, we have to define an $\overline{H}$-action on $E_{H}$. We do this by letting $h \in \overline{H}$ act on $E_{H}$ as the unique $g \in G$ with $\phi(g) = h$ which is possible since $\im(\phi) = \overline{H}$. Since $f$ is $\phi$-equivariant, it is easy to check that $f_{H}$ is $\overline{H}$-equivariant. Finally, since the isotropy of $G$ acting on $\IR^{n}$ is finite, also the isotropy of $\overline{H}$ acting on $E$ is finite and hence contained in $\calf$. Hence $G$ is a Farrell-Hsiang group with respect to $\calf$. Since we know the Baum-Connes conjecture for all groups in $\calf$, the Transitivity principle implies that $G$ satisfies the Baum-Connes conjecture.
\end{proof}

\begin{rem}
The reader should note that nothing interesting happens once $F$ is not elementary itself. This can be understood in the context of \ref{trivialFH}: All preimages of elementary subgroups of $F$ satisfy the Baum-Connes conjecture by induction hypothesis, and hence so does $G$.  
\end{rem}

\chapter{Outlook}

\section{The transfer}

An immediate question is whether the transfer from \cite{BLR} or \cite{CAT0} can be carried over to the topological $K$-theory setting. In the construction, three things stand out which do not directly work in topological $K$-theory:

\begin{enumerate}
\item The Waldhausen construction, cf. \cite{Waldhausen}, plays an important role, and it is not immediately clear how to generalize the Waldhausen construction to topological $K$-theory. 
\item The singular chain complex of a space $X$ plays an important role. In the topological setting, we would like that the complex singular chain complex is a complex of pre-Hilbert spaces and bounded operators between them, which it is not. Any potential construction along the lines of \cite{BLR} should run into continuity problems at this point.
\item Finally, the transfer in \cite{BLR} depends on the fact that each morphism is actually controlled. In the topological setup, this is no longer true, and we do not have completion devices at our disposal since the (rather weird) categories considered in \cite{BLR} do not carry norms.
\end{enumerate}

The third point is actually the most serious one, whereas the first one is in fact a non-factor: We seek to prove a surjectivity result, so we may work with algebraic $K$-theory in the first place since there is a natural surjection $K_i^{\alg}(\C) \rightarrow K_i(\C), i = 0,1,$ for any $C^*$-category $\C$. The second one needs some treatment; it could for example be solved by additionally assuming that the space $X$ under consideration is a finite simplicial complex. This, however, would severely restrict the usefulness of the transfer. Altogether, it seems that a genuine new idea is necessary to obtain a transfer in the topological $K$-theory setup under the conditions of \cite{BLR}. \\
In contrast, the transfer we have considered is substantially easier: The necessary section is essentially given by the formal difference of two functors, where ``formal difference''
 only makes sense in $K$-theory. This avoids the three difficulties above altogether. \\
 
\bibliographystyle{alpha}
\bibliography{bib}

\begin{thebibliography}{BLR08b}

\bibitem[Aus65]{Auslander}
Louis Auslander.
\newblock An account of the theory of crystallographic groups.
\newblock {\em Proc. Amer. Math. Soc.}, 16:1230--1236, 1965.

\bibitem[BEL08]{BEL}
Arthur Bartels, Siegfried Echterhoff, and Wolfgang L{\"u}ck.
\newblock Inheritance of isomorphism conjectures under colimits.
\newblock In {\em {$K$}-theory and noncommutative geometry}, EMS Ser. Congr.
  Rep., pages 41--70. Eur. Math. Soc., Z\"urich, 2008.

\bibitem[BFJR04]{BFJR}
Arthur Bartels, Tom Farrell, Lowell Jones, and Holger Reich.
\newblock On the isomorphism conjecture in algebraic {$K$}-theory.
\newblock {\em Topology}, 43(1):157--213, 2004.

\bibitem[BFL]{BFL}
Arthur Bartels, Tom Farrell, and Wolfgang L\"uck.
\newblock The {F}arrell-{J}ones conjecture for cocompact lattices in virtually
  connected lie groups.
\newblock {P}reprint, 2011, arXiv:1101.0469.

\bibitem[BH99]{BH}
Martin~R. Bridson and Andr{\'e} Haefliger.
\newblock {\em Metric spaces of non-positive curvature}, volume 319 of {\em
  Grundlehren der Mathematischen Wissenschaften [Fundamental Principles of
  Mathematical Sciences]}.
\newblock Springer-Verlag, Berlin, 1999.

\bibitem[BL]{FH}
Arthur Bartels and Wolfgang L\"uck.
\newblock The {F}arrell-{H}siang method revisited.
\newblock {P}reprint, 2011, to appear in \emph{{M}ath. {A}nnalen},
  arXiv:1101.0466.

\bibitem[BL07]{BLind}
Arthur Bartels and Wolfgang L{\"u}ck.
\newblock Induction theorems and isomorphism conjectures for {$K$}- and
  {$L$}-theory.
\newblock {\em Forum Math.}, 19(3):379--406, 2007.

\bibitem[BL12a]{CAT0}
Arthur Bartels and Wolfgang L\"uck.
\newblock The {B}orel conjecture for hyperbolic and {$CAT(0)$}-groups.
\newblock {\em Annals of Mathematics}, 175:631--689, 2012.

\bibitem[BL12b]{catflow}
Arthur Bartels and Wolfgang L\"uck.
\newblock Geodesic flow for {CAT}(0)-groups.
\newblock {\em Geometry and Topology}, 16:1345--1391, 2012.

\bibitem[Bla98]{Blackadar}
Bruce Blackadar.
\newblock {\em {$K$}-theory for operator algebras}, volume~5 of {\em
  Mathematical Sciences Research Institute Publications}.
\newblock Cambridge University Press, Cambridge, second edition, 1998.

\bibitem[BLR08a]{hypercover}
Arthur Bartels, Wolfgang L{\"u}ck, and Holger Reich.
\newblock Equivariant covers for hyperbolic groups.
\newblock {\em Geom. Topol.}, 12(3):1799--1882, 2008.

\bibitem[BLR08b]{BLR}
Arthur Bartels, Wolfgang L{\"u}ck, and Holger Reich.
\newblock The {$K$}-theoretic {F}arrell-{J}ones conjecture for hyperbolic
  groups.
\newblock {\em Invent. Math.}, 172(1):29--70, 2008.

\bibitem[BR05]{BR}
Arthur Bartels and Holger Reich.
\newblock On the {F}arrell-{J}ones conjecture for higher algebraic
  {$K$}-theory.
\newblock {\em J. Amer. Math. Soc.}, 18(3):501--545 (electronic), 2005.

\bibitem[BR07]{coeff}
Arthur Bartels and Holger Reich.
\newblock Coefficients for the {F}arrell-{J}ones conjecture.
\newblock {\em Adv. Math.}, 209(1):337--362, 2007.

\bibitem[Bro94]{brown}
Kenneth~S. Brown.
\newblock {\em Cohomology of groups}, volume~87 of {\em Graduate Texts in
  Mathematics}.
\newblock Springer-Verlag, New York, 1994.
\newblock Corrected reprint of the 1982 original.

\bibitem[CE01]{inheritanceBC}
J{\'e}r{\^o}me Chabert and Siegfried Echterhoff.
\newblock Permanence properties of the {B}aum-{C}onnes conjecture.
\newblock {\em Doc. Math.}, 6:127--183 (electronic), 2001.

\bibitem[CP97]{karoubifil}
Manuel C{\'a}rdenas and Erik~K. Pedersen.
\newblock On the {K}aroubi filtration of a category.
\newblock {\em $K$-Theory}, 12(2):165--191, 1997.

\bibitem[CR06]{curtis-reiner}
Charles~W. Curtis and Irving Reiner.
\newblock {\em Representation theory of finite groups and associative
  algebras}.
\newblock AMS Chelsea Publishing, Providence, RI, 2006.
\newblock Reprint of the 1962 original.

\bibitem[DL98]{DL}
James~F. Davis and Wolfgang L{\"u}ck.
\newblock Spaces over a category and assembly maps in isomorphism conjectures
  in {$K$}- and {$L$}-theory.
\newblock {\em $K$-Theory}, 15(3):201--252, 1998.

\bibitem[Far81]{Farkas}
Daniel~R. Farkas.
\newblock Crystallographic groups and their mathematics.
\newblock {\em Rocky Mountain J. Math.}, 11(4):511--551, 1981.

\bibitem[FH78]{farrellhsiang}
Tom Farrell and Wu-Chung Hsiang.
\newblock The topological-{E}uclidean space form problem.
\newblock {\em Invent. Math.}, 45(2):181--192, 1978.

\bibitem[FJ86]{kd1}
Tom Farrell and Lowell Jones.
\newblock {$K$}-theory and dynamics. {I}.
\newblock {\em Ann. of Math. (2)}, 124(3):531--569, 1986.

\bibitem[FJ87]{kd2}
Tom Farrell and Lowell Jones.
\newblock {$K$}-theory and dynamics. {II}.
\newblock {\em Ann. of Math. (2)}, 126(3):451--493, 1987.

\bibitem[GHT00]{trout}
Erik Guentner, Nigel Higson, and Jody Trout.
\newblock Equivariant {$E$}-theory for {$C^*$}-algebras.
\newblock {\em Mem. Amer. Math. Soc.}, 148(703):viii+86, 2000.

\bibitem[Gra77]{graysonendo}
Daniel~R. Grayson.
\newblock The {$K$}-theory of endomorphisms.
\newblock {\em J. Algebra}, 48(2):439--446, 1977.

\bibitem[Hig90]{kkprimer}
Nigel Higson.
\newblock A primer on {$KK$}-theory.
\newblock In {\em Operator theory: operator algebras and applications, {P}art 1
  ({D}urham, {NH}, 1988)}, volume~51 of {\em Proc. Sympos. Pure Math.}, pages
  239--283. Amer. Math. Soc., Providence, RI, 1990.

\bibitem[HK01]{haagerup}
Nigel Higson and Gennadi Kasparov.
\newblock {$E$}-theory and {$KK$}-theory for groups which act properly and
  isometrically on {H}ilbert space.
\newblock {\em Invent. Math.}, 144(1):23--74, 2001.

\bibitem[HP04]{PH}
Ian Hambleton and Erik~K. Pedersen.
\newblock Identifying assembly maps in {$K$}- and {$L$}-theory.
\newblock {\em Math. Ann.}, 328(1-2):27--57, 2004.

\bibitem[HPR97]{hpr}
Nigel Higson, Erik~Kj{\ae}r Pedersen, and John Roe.
\newblock {$C^\ast$}-algebras and controlled topology.
\newblock {\em $K$-Theory}, 11(3):209--239, 1997.

\bibitem[HR00]{HigsonRoe}
Nigel Higson and John Roe.
\newblock {\em Analytic {$K$}-homology}.
\newblock Oxford Mathematical Monographs. Oxford University Press, Oxford,
  2000.
\newblock Oxford Science Publications.

\bibitem[Kar08]{kar}
Max Karoubi.
\newblock {\em {$K$}-theory}.
\newblock Classics in Mathematics. Springer-Verlag, Berlin, 2008.
\newblock An introduction, Reprint of the 1978 edition, With a new postface by
  the author and a list of errata.

\bibitem[Kas88]{Kasparov}
Gennadi Kasparov.
\newblock Equivariant {$KK$}-theory and the {N}ovikov conjecture.
\newblock {\em Invent. Math.}, 91(1):147--201, 1988.

\bibitem[Laf02]{lafforgue}
Vincent Lafforgue.
\newblock {$K$}-th\'eorie bivariante pour les alg\`ebres de {B}anach et
  conjecture de {B}aum-{C}onnes.
\newblock {\em Invent. Math.}, 149(1):1--95, 2002.

\bibitem[Lan95]{Lance}
E.~Christopher Lance.
\newblock {\em Hilbert {$C^*$}-modules}, volume 210 of {\em London Mathematical
  Society Lecture Note Series}.
\newblock Cambridge University Press, Cambridge, 1995.
\newblock A toolkit for operator algebraists.

\bibitem[LM89]{LM}
H.~Blaine Lawson, Jr. and Marie-Louise Michelsohn.
\newblock {\em Spin geometry}, volume~38 of {\em Princeton Mathematical
  Series}.
\newblock Princeton University Press, Princeton, NJ, 1989.

\bibitem[LR05]{KThandbook}
Wolfgang L{\"u}ck and Holger Reich.
\newblock The {B}aum-{C}onnes and the {F}arrell-{J}ones conjectures in {$K$}-
  and {$L$}-theory.
\newblock In {\em Handbook of {$K$}-theory. {V}ol. 1, 2}, pages 703--842.
  Springer, Berlin, 2005.

\bibitem[Mit02]{cstarcat}
Paul Mitchener.
\newblock ${C}^*$-categories.
\newblock {\em Proc. London Math. Society}, 84:375--404, 2002.

\bibitem[Mur90]{Murphy}
Gerard~J. Murphy.
\newblock {\em {$C^*$}-algebras and operator theory}.
\newblock Academic Press Inc., Boston, MA, 1990.

\bibitem[MY02]{mineyev}
Igor Mineyev and Guoliang Yu.
\newblock The {B}aum-{C}onnes conjecture for hyperbolic groups.
\newblock {\em Invent. Math.}, 149(1):97--122, 2002.

\bibitem[OO01]{oyono}
Herv{\'e} Oyono-Oyono.
\newblock Baum-{C}onnes conjecture and extensions.
\newblock {\em J. Reine Angew. Math.}, 532:133--149, 2001.

\bibitem[Qui73]{quillenktheory}
Daniel Quillen.
\newblock Higher algebraic {$K$}-theory. {I}.
\newblock In {\em Algebraic {$K$}-theory, {I}: {H}igher {$K$}-theories ({P}roc.
  {C}onf., {B}attelle {M}emorial {I}nst., {S}eattle, {W}ash., 1972)}, pages
  85--147. Lecture Notes in Math., Vol. 341. Springer, Berlin, 1973.

\bibitem[Qui12]{quinn}
Frank Quinn.
\newblock Algebraic {$K$}-theory over virtually abelian groups.
\newblock {\em J. Pure Appl. Algebra}, 216(1):170--183, 2012.

\bibitem[RLL00]{RLL}
Mikael R{\o}rdam, Flemming Larsen, and Niels Laustsen.
\newblock {\em An introduction to {$K$}-theory for {$C^*$}-algebras}, volume~49
  of {\em London Mathematical Society Student Texts}.
\newblock Cambridge University Press, Cambridge, 2000.

\bibitem[Ros94]{Ros}
Jonathan Rosenberg.
\newblock {\em Algebraic {$K$}-theory and its applications}, volume 147 of {\em
  Graduate Texts in Mathematics}.
\newblock Springer-Verlag, New York, 1994.

\bibitem[Rud91]{rudin}
Walter Rudin.
\newblock {\em Functional analysis}.
\newblock International Series in Pure and Applied Mathematics. McGraw-Hill
  Inc., New York, second edition, 1991.

\bibitem[Ser77]{Serre}
Jean-Pierre Serre.
\newblock {\em Linear representations of finite groups}.
\newblock Springer-Verlag, New York, 1977.
\newblock Translated from the second French edition by Leonard L. Scott,
  Graduate Texts in Mathematics, Vol. 42.

\bibitem[Str]{cgwh}
Neil Strickland.
\newblock The {C}ategory of {CGWH} {S}paces.
\newblock \url{http://math.mit.edu/~mbehrens/18.906/cgwh.pdf}.

\bibitem[Swa60]{swaninduced}
Richard~G. Swan.
\newblock Induced representations and projective modules.
\newblock {\em Ann. of Math. (2)}, 71:552--578, 1960.

\bibitem[Ull]{ull}
Mark Ullmann.
\newblock Controlled algebra for simplicial rings and the algebraic k-theory
  assembly map.
\newblock Phd thesis, 2011, available online at
  \url{http://docserv.uni-duesseldorf.de/servlets/DocumentServlet?id=17133}.

\bibitem[Val02]{valette}
Alain Valette.
\newblock {\em Introduction to the {B}aum-{C}onnes conjecture}.
\newblock Lectures in Mathematics ETH Z\"urich. Birkh\"auser Verlag, Basel,
  2002.
\newblock From notes taken by Indira Chatterji, With an appendix by Guido
  Mislin.

\bibitem[Wal85]{Waldhausen}
Friedhelm Waldhausen.
\newblock Algebraic {$K$}-theory of spaces.
\newblock In {\em Algebraic and geometric topology ({N}ew {B}runswick,
  {N}.{J}., 1983)}, volume 1126 of {\em Lecture Notes in Math.}, pages
  318--419. Springer, Berlin, 1985.

\bibitem[Weg12]{wegner}
Christian Wegner.
\newblock The {$K$}-theoretic {F}arrell-{J}ones conjecture for {CAT}(0)-groups.
\newblock {\em Proc. Amer. Math. Soc.}, 140(3):779--793, 2012.

\bibitem[WO93]{WO}
Niels~Erik Wegge-Olsen.
\newblock {\em {$K$}-theory and {$C^*$}-algebras}.
\newblock Oxford Science Publications. The Clarendon Press Oxford University
  Press, New York, 1993.
\newblock A friendly approach.

\end{thebibliography}

\pdfbookmark[1]{Abstract}{Abstract}
\begingroup
\let\clearpage\relax
\let\cleardoublepage\relax
\let\cleardoublepage\relax

\chapter*{Abstract}
The Farrell-Jones conjecture in algebraic $K$-theory proposes a formula for the $K$-groups of group rings. Similarly, the Baum-Connes conjecture proposes a formula for the topological $K$-groups of the reduced group $C^{*}$-algebra. Both conjectures are known for large classes of groups. We present, for the first time, a proof for some cases of the Baum-Connes conjecture which employs the methods used for proving cases of the Farrell-Jones conjecture. The main ingredients are controlled algebra and induction theorems for group representations.

\vfill

\pdfbookmark[1]{Zusammenfassung}{Zusammenfassung}
\chapter*{Zusammenfassung}
Die Farrell-Jones-Vermutung in der algebraischen $K$-Theorie sagt vor\-her, wie sich die $K$-Gruppen von Gruppenringen verhalten. Die Baum-Connes-Vermutung ist eine \"ahnliche Vermutung \"uber die topologi\-sche $K$-Theorie von Gruppen-$C^*$-Algebren. Beide Vermutungen sind f\"ur viele Gruppen bekannt. In dieser Arbeit wird ein Beweis einiger F\"alle der Baum-Connes-Vermutung mit den Beweismethoden der \\ Farrell-Jones-Vermutung gegeben. Die wichtigsten Hilfsmittel sind kontrollierte Algebra und Induktionstheoreme aus der Darstellungstheorie endlicher Gruppen.

\endgroup			

\vfill

\end{document}